\DeclareFontShape{OT1}{cmr}{bx}{sc}{<-> cmbcsc10}{}
\providecommand{\Set}[1]{\left\{#1\right\}}
\newcommand{\1}{\mathbf{1}}
\newcommand{\tr}{\mathsf{tr}}
\newcommand{\fbf}{{\mathbf{f}}}
\newcommand{\gbf}{{\mathbf{g}}}
\newcommand{\TT}{{\mathcal T}  }
\newcommand{\LLL}{{\mathcal L}  }
\newcommand{\pp}{ {\partial} }
\newcommand{\B}{ \mathcal{B} }
\newcommand{\RR}{{{\mathbb R}}}
\newcommand{\R} {\mathbb R}
\newcommand{\DD}{{\mathcal D}}
\newcommand{\be}{\begin{equation}}
\newcommand{\ee}{\end{equation}}
\newcommand{\la}{\lambda}
\newcommand{\J}{ {\mbox{{\tiny{$[j]$}}} } }
\newcommand{\M}{ {\mbox{{\tiny{$[m]$}}}} }
\newcommand{\K}{ {\mbox{{\tiny{$[k]$}}}} }
\newcommand{\N}{ {\mbox{{\tiny{$[n]$}}}} }
\newtheorem{lemma}{Lemma}[section]
\newtheorem{prop}{Proposition}[section]
\newtheorem{theorem}{Theorem}
\newtheorem{corollary}{Corollary}[section]
\newtheorem{remark}{Remark}[section]
\newcommand{\bremark}{\begin{remark} \em}
	\newcommand{\eremark}{\end{remark} }
\numberwithin{equation}{section}
\begin{document}
\title[Finite-time blow-up for LLG]{Finite-time singularity formations for the Landau-Lifshitz-Gilbert equation in dimension two}

\author[J. Wei]{Juncheng Wei}
\address{\noindent
Department of Mathematics,
Chinese University of Hong Kong,
Shatin, NT, Hong Kong}
\email{jcwei@math.ubc.ca}

\author[Q. Zhang]{Qidi Zhang}
\address{\noindent
Department of Mathematics, The University of Hong Kong, Hong Kong, China}
\email{qdz@amss.ac.cn}

\author[Y. Zhou]{Yifu Zhou}
\address{\noindent
School of Mathematics and Statistics, Wuhan University, Wuhan 430072, China}
\email{yifuzhou@whu.edu.cn}

\begin{abstract}
We construct {\it non-equivariant} blow-up solutions to the Landau-Lifshitz-Gilbert equation (LLG)  from $\RR^2$ into $S^2$
\begin{equation*}
	\begin{cases}
		u_t=  a(\Delta u+|\nabla u|^2u)
		-b u\wedge \Delta u
	 &\mbox{ \ in \ } \RR^2\times(0,T),
		\\
		u(\cdot,0) = u_0\in S^2  &\mbox{ \ in \ } \RR^2,
	\end{cases}
\end{equation*}
where $a^2+b^2=1,~a > 0,~ b\in\RR$. Given any prescribed, distinct $N$ points in $\R^2$ and small  $T>0$, we prove that there exists a smooth initial data such that the gradient of the solution blows up precisely at these points at finite time $t=T$, taking around each point the profile of sharply scaled degree 1 harmonic map with the blow-up speed
\begin{equation*}
\| \nabla u(\cdot,t)\|_{L^\infty(\mathbb{R}^2 ) } \sim |\ln T|^{-1} (T-t)^{-1} |\ln(T-t)|^2 \mbox{ \ for \ } t \in (0,T).
\end{equation*}

While blow-ups for Harmonic Map Flow (HMF, $a=1$)
have been constructed by
D\'avila, del Pino, and Wei \cite{17HMF}, substantial difficulties arise in the gluing construction due to the coupling between HMF and Schr\"odinger Map Flow (SMF) in LLG, and such
coupling produces both dissipative ($a>0$) and dispersive ($b \not =0$) features. A direct consequence of the presence
of dispersion is the {\em lack of maximum principle} for suitable quantities, which makes the analysis more delicate
even at the linearized level. The dispersion cannot be treated perturbatively, even in the dissipation-dominating
case $a/|b| \gg 1$, and one has to include this as part of the leading order. To overcome these difficulties, we utilize two key technical ingredients. First, for the resolution of the inner problem, we employ the  {\em distorted Fourier transform}, as developed by Krieger, Miao,  Schlag, and Tataru \cite{Krieger09Duke, KMS20WM}. Second, the linear theory for the outer problem is achieved by means of the sub-Gaussian estimate for the fundamental solution of the parabolic system in non-divergence form with coefficients of Dini mean oscillation in space ($\mathsf{DMO_x}$), which was proved by Dong, Kim, and Lee \cite{dong22-non-divergence}.

\end{abstract}

\maketitle

\tableofcontents

\section{Introduction and main results}

\subsection{Introduction}

Let $\mathcal{M}$ be an $m$-dimensional Riemannian manifold of metric $g$ and $S^2$ be the $2$-sphere embedded in $\RR^3$. The Landau-Lifshitz-Gilbert  equation (LLG)  on $\mathcal{M}$ is given by
\begin{equation}\label{LLG-eq-M}
	\begin{cases}
		u_t=  -a
		u\wedge (u\wedge \Delta_{\mathcal{M}} u
		)
		-b u\wedge \Delta_{\mathcal{M}} u
		 & \mbox{ in }~\mathcal M\times(0,T),
		\\
		u(\cdot,0) = u_0\in S^2  & \mbox{ in }~\mathcal M,
	\end{cases}
\end{equation}
where $a^2+b^2=1$,  $a\ge 0$, $b\in\RR$,
$\Delta_{\mathcal{M}}  = |g|^{-1/2}
\pp_{x_{\beta}}(g^{\alpha\beta} \sqrt{|g|} \pp_{x_{\alpha} } )$ is the Laplace-Beltrami operator, and $u=[u_1,u_2,u_3]^{\tr}$ is a $3$-vector with normalized length which is a mapping $u(x,t): \mathcal{M} \times(0,T) \rightarrow S^2$.
First formulated by Landau and Lifshitz \cite{LLG-LL} in 1935, LLG  \eqref{LLG-eq-M} is an important system modeling the effects of a magnetic field on ferromagnetic materials in micromagnetics, and it describes the evolution of spin fields in continuum ferromagnetism; Gilbert proposed the famous Gilbert damping later in  \cite{LLG-G}.  LLG
\eqref{LLG-eq-M} can be viewed as a bridge between the harmonic map flow (HMF) when $a=1$, $b=0$ and the Schr\"odinger map flow (SMF) when $a=0$, $b=-1$.

In the context of HMF, Struwe \cite{Struwe1985CMH} proved the existence and uniqueness of weak solution with at most finitely many singular points when $\mathcal M$ is a Riemann surface. Freire \cite{freire1995uniqueness} and Lin-Wang \cite{lin2010uniqueness} proved that Struwe's solution is unique in the class of weak solutions with decreasing Dirichlet energy. See also Freire \cite{Freire95} for further generalizations and Struwe \cite{StruweHD}, Chen-Struwe  \cite{89ChenStruwe} for higher dimensional cases. Chang, Ding and Ye \cite{Chang92} first proved the existence of finite-time blow-up solutions for HMF from disk into $S^2$. See also Coron-Ghidaglia \cite{CoronHMF}, Chen-Ding \cite{ChenDing1990}, Ding-Tian \cite{DT95CAG}, Qing \cite{Qing95CAG}, Wang \cite{Wang96HJM}, Qing-Tian \cite{QT97CPAM}, Lin-Wang \cite{LW98CVPDE}, Topping \cite{Topping04Annals} and the references therein for profound bubbling analysis and blow-up examples in related contexts.
Recent advancements in bubbling decompositions have been achieved by Jendrej-Lawrie \cite{JL2023CVPDE}, Jendrej-Lawrie-Schlag \cite{JLS2023Pi}. We refer to the monograph by Lin and Wang \cite{LinWangbook08} for comprehensive results on bubbling phenomena, regularity theory for harmonic maps and their heat flows.

In \cite{vandenBerg03}, via formal analysis, van den Berg, Hulshof, and King predicted the existence of blow-up solutions for the two-dimensional HMF into $S^2$ with quantized rates
\begin{equation}\label{vandenBerg-rates}
	\lambda_k(t) \sim
	(T-t)^k  |\ln(T-t)|^{-\frac{2k}{2k-1}},\quad k\in \mathbb{N}^+.
\end{equation}
There is a class of solutions taking the following special form
\begin{equation}\label{equi-intro}
u(x,t)=u(re^{i\theta},t)=\big(\cos(n\theta)\sin v(r,t),~\sin(n\theta)\sin v(r,t),~\cos v(r,t) \big),
\end{equation}
called $n$-equivariant solution with $n\in \mathbb Z$. While van der Hout \cite{van_der_Hout2003} excluded finite-time bubble trees in the $1$-equivariant class, finite-time blow-ups do exist in such case. For the case $\mathcal{M}=\RR^2$ and the target manifold is a revolution surface, using the profile of degree 1 harmonic map $Q_1$, Rapha\"el and Schweyer \cite{Raphael13, Raphael14} constructed finite-time blow-up solutions with rates \eqref{vandenBerg-rates} for all $k\ge 1$ in the $1$-equivariant class, where the initial data can be taken arbitrarily close to $Q_1$ in the energy-critical topology. For the case that $\mathcal{M}$ is a general bounded domain in $\RR^2$, D\'avila, del Pino and Wei \cite{17HMF} considered the general case without symmetry and constructed {\it non-equivariant} solutions which blow up at finitely many points with the type II rate \eqref{vandenBerg-rates} for $k=1$, and they further investigated the stability of blow-ups and reverse bubbling phenomena.  The construction in \cite{17HMF} can be generalized to the case  $\mathcal{M}=\RR^2$.

On the other hand, for SMF with $\mathcal{M}=\RR^2$, Merle, Rapha\"el and Rodnianski \cite{13MerleRaphaelRodnianski} constructed
the finite-time blow-up solution with the rate \eqref{vandenBerg-rates} for $k=1$ in the $1$-equivariant class. Analogous to the results of Krieger, Schlag, and Tataru \cite{Krieger08} for wave maps, Perelman \cite{Perelman14} constructed finite-time blow-up solutions with continuous rates, i.e., Krieger-Schlag-Tataru type.  The global well-posedness results in various critical spaces and space dimensions, and the dynamics of SMF near ground state have been studied widely in the works by Bejenaru, Ionescu, Kenig, and Tataru \cite{14-BejenaruTataru-book,SMF1,SMF2,SMF3,SMF4} and the references therein.

\medskip

For LLG, in the case $\mathcal{M}=\RR^3$, $a>0$, Alouges and Soyeur \cite{92Alouges}  proved the existence of weak solutions for \eqref{LLG-eq-M} and constructed infinitely many weak solutions. The existence of the weak solution to LLG has been established by Guo and Hong \cite{93BolingMinchunCVPDE} when $\mathcal{M}$ is a closed Riemannian manifold with $m\geq3$, while for the case that $\mathcal{M}$ is a closed Riemann surface, the weak solution was shown to be unique and regular except for at most finitely many points \cite{93BolingMinchunCVPDE}.  When $\mathcal{M}=\RR^2$ and the target manifold is a smooth closed surface embedded in $\RR^3$, approximation by discretization was used by Ko
 \cite{05KoJoy} to construct a solution of LLG that is smooth away from a two-dimensional locally finite Hausdorff measure.

In general, one cannot expect good partial regularity results for weak solutions in the higher dimensional case $m\geq 3$ without further regularity or energy minimizing assumptions. In fact, Rivi\`ere \cite{Tristan95} constructed weakly harmonic maps from the ball $B^3\subset \RR^3$ into $S^2$ for which the singular set is the entire closed ball $\overline {B^3}$, and this result can be generalized to higher dimensions. Note that harmonic maps also solve LLG. In a similar spirit to Chen-Struwe \cite{89ChenStruwe} for higher dimensional HMF, Melcher \cite{Melcher05} proved that for $\mathcal{M}=\R^m$ with $m=3$ there exists a global weak solution to LLG whose singular set has finite 3-dimensional parabolic Hausdorff measure. Later, this result was generalized to $m\le 4$ by Wang  \cite{Changyou06}. With the additional stability assumption for the weak solution, for $m\le 4$, Moser \cite{moser02partial} proved a better estimate for the singular set. The partial regularity of LLG \eqref{LLG-eq-M} for $m\ge 5$ still remains open.

For $\mathcal{M}=\RR^m$, the global existence, uniqueness, and decay properties for the solution of \eqref{LLG-eq-M} were established by Melcher \cite{C-Melcher12} for $m\ge 3$ with initial data $u_0$ close to a fixed point in $S^2$ in the $L^m$ norm. Lin, Lai, and Wang \cite{15-Changyou} generalized the result to Morrey space and $m\ge 2$. For $u_0$ away from a fixed point in $S^2$ with BMO semi-norm sufficiently small, Guti\'errez and de Laire \cite{19-Susana-Andre} proved the global existence, uniqueness, and regularity results for LLG.
We refer to a recent survey \cite{LLGsurvey} by de Laire for current developments on LLG.

The study of the dynamics for LLG with initial data close to harmonic maps is of special significance and can provide hints on the mechanism of singularity formation.
A series of works by Gustafson-Kang-Tsai \cite{Gustafson07,Gustafson08}, Guan-Gustafson-Tsai \cite{Guan-Gustafson-Tsai2009}, Gustafson-Nakanishi-Tsai \cite{Gustafson10} are devoted to the behavior of the solutions to LLG with $\mathcal{M}=\RR^2$ and with initial data $u_0$ close to the harmonic map in the $n$-equivariant class. They found, among other things,
that there is no finite-time blow-up for LLG and HMF with $u_0$ close to $n$-equivariant harmonic maps for $n\ge 3$ and $n\geq 2$, respectively. In sharp contrast to LLG and HMF, blow-ups do happen in the higher equivariant class for wave maps; see Rodnianski-Sterbenz \cite{Rodnianski-Sterbenz2010} and Rapha\"el-Rodnianski \cite{RR2012IHES}. Recently, interesting investigations were further extended to the near-soliton dynamics of the 2-equivariant SMF by Bejenaru-Pillai-Tataru \cite{d2Smap}, and to a complete classification of global dynamics, of the soliton resolution type, for equivariant-HMF with $n\geq3$ and energy-critical semilinear heat equations by Kim-Merle \cite{KimMerle}.

The singularity formation for LLG is an important and challenging topic. For the case that $\mathcal{M}$ is a compact manifold with or without boundary in dimensions $m=3,~4$, Ding and Wang \cite{07WangDing} obtained the existence of a smooth finite-time blow-up solution for LLG, and they stressed the importance of finite time singularity when $m=2$ in view of the seminal work of Chang-Ding-Ye \cite{Chang92} on HMF. However,  {\it neither} Bochner's formula {\it nor} Struwe's parabolic energy monotonicity formula is available in LLG, while these play a crucial role in the singularity analysis for HMF.
For $\mathcal{M}\subset \RR^2$, as an analogue of Qing \cite{Qing95CAG}
for HMF, Harpes \cite{04Harpes} gave descriptions of solutions to LLG \eqref{LLG-eq-M} near the singular points, but no example of finite-time singularity for LLG in $\mathbb{R}^2$ was given. For the energy critical case that $\mathcal M$ is a disk in $\R^2$, in an interesting paper \cite{VW13EJAM}, van den Berg and Williams predicted the existence of finite-time blow-up by formal asymptotic analysis supported with numerical simulations. For $\mathcal{M}=\RR^2$, Xu and Zhao \cite{xu2020blowup} rigorously constructed a finite-time blow-up solution to \eqref{LLG-eq-M} in a special $1$-equivariant class as in \eqref{equi-intro}.

\medskip

\subsection{Main results}

In this paper, we consider the case with target manifold $S^2$, $\mathcal{M}=\RR^2$, and positive damping parameter $a>0$. \eqref{LLG-eq-M} can then be written as
\begin{equation}\label{LLG-eq}
	\begin{cases}
		u_t=  a(\Delta u+|\nabla u|^2u)
		-b u\wedge \Delta u
		 ~&\mbox{ in }~\mathcal \R^2\times(0,T),
		\\
		u(\cdot,0) = u_0\in S^2  ~&\mbox{ in }~\mathcal \R^2.
	\end{cases}
\end{equation}
The Dirichlet energy
$
E[u]=\frac12\int_{\R^2} |\nabla u|^2
$
is non-increasing along smooth solutions to \eqref{LLG-eq} with sufficient decay as
$
\frac{d}{dt}E[u]=-a\int_{\R^2} |u\wedge\Delta u|^2.
$
In this sense, the parameter $a$ in the case $a>0$ can be regarded as a damping that produces dissipation in the energy.

We are interested in the general {\it non-radially symmetric setting} to \eqref{LLG-eq}, where the solution blows up in finite time taking the profile of {\it multiple bubbles}, and thus the solution is {\bf non-equivariant}. We remark that very little is known about the singularity formation beyond the equivariant class. The construction of non-equivariant solutions produces essential difficulties and a tremendous amount of careful analysis, as already observed by D\'avila-del Pino-Wei in HMF \cite{17HMF} and by Krieger-Miao-Schlag in wave maps \cite{KMS20WM}. The general case requires the control of {\it all the modes/angular momenta}, including the equivariant mode $0$, as well as the complicated interactions among bubbles.

Our construction is based on the following degree $1$ profile
\begin{equation}\label{W-def}
	W(y):=
	\frac{1}{|y|^2+1}
	\begin{bmatrix}
		2y_1
		\\ 2y_2
		\\
		|y|^2 -1
	\end{bmatrix},
	\quad
	y=(y_1,y_2)\in\RR^2.
\end{equation}
Clearly, $Q_{\gamma}W\big( \lambda^{-1} (x-\xi) \big)$ solves the stationary equation of \eqref{LLG-eq} for any $\xi\in\R^2$, $\la>0$, and any $\gamma$-rotation matrix around $z$-axis
\begin{equation}\label{Q-gamma-def}
	Q_{\gamma}:=
	\begin{bmatrix}
		\cos\gamma & -\sin\gamma & 0 \\[0.3em]
		\sin \gamma & \cos\gamma  & 0 \\[0.3em]
		0 & 0 & 1
	\end{bmatrix}.
\end{equation}
Denote $U_{\infty}=[0,0,1]^{\tr}$. Obviously, $W(\infty) = U_{\infty}$. Our main result is stated as follows.

\begin{theorem}\label{thm}
	
Assume $a^2+b^2=1$, $a>0$, $b\in \mathbb{R}$ in \eqref{LLG-eq}. Given $N\in \mathbb Z_+$ and arbitrary $N$ distinct points $q^{\J}\in \R^2$, $j=1,2,\dots, N$, for $T>0$ sufficiently small, there exists a smooth initial data $u_0$ such that the gradient of the solution $u$ to \eqref{LLG-eq} blows up at these $N$ points at finite time $t=T$ simultaneously. More precisely, the solution $u$ takes the sharply scaled degree $1$ profile around each point $q^{\J}$
\begin{equation*}
u(x,t)= -(N-1) U_{\infty} +  \sum\limits_{j=1}^{	N} Q_{\gamma_j(t)}W\bigg(\frac{x-\xi^{\J}(t)}{\lambda_j(t)}\bigg)+\Phi_{\rm per}(x,t)
\end{equation*}
with
\begin{align*}
&~\lambda_j(t) = \kappa_j^*  \lambda_*(t) (1+O(|\ln T|^{-\frac{1}{2}}) ),
\quad
\lambda_*(t)=\frac{|\ln T|(T-t)}{|\ln(T-t)|^2},\\
&~
\xi^{\J}(t) = q^{\J} + O( (T-t)^{1+\epsilon_0}),
\quad
\gamma_j(t) = \gamma_j^* + O(|\ln T|^{-\frac{1}{2}}),
\end{align*}
where $\kappa_j^*>0$ is a constant independent of $a,b$,
$\gamma_j^*\in [-\pi/2, \pi/2]$ is a constant depending on $a,b$, the constant
$\epsilon_0>0$ is sufficiently small, and the perturbation term $\Phi_{\rm per}$ satisfies
\begin{equation*}
\| \Phi_{\rm per} \|_{L^{\infty}(\R^2\times (0,T))} \ll 1,\quad
\| \nabla \Phi_{\rm per} (\cdot,t) \|_{L^{\infty}(\R^2) } \lesssim \lambda_*^{\epsilon_0 -1}(t).
\end{equation*}
\end{theorem}

\medskip

The solution constructed in Theorem \ref{thm} exhibits rather precise asymptotic behavior. Based on the analysis, a strong convergence and a weak-$*$ convergence of the Radon measure are shown.

\begin{corollary}\label{cor-intro}
The solution in Theorem \ref{thm} satisfies
\begin{equation*}
u(x,t)-u_*(x)-\sum_{j=1}^N Q_{\gamma_j(t)}\bigg[W\bigg(\frac{x-\xi^{\J}(t)}{\lambda_j(t)}\bigg)-U_{\infty}\bigg]\to 0 ~\mbox{ as }~ t\to T
\end{equation*}
in $H^{1}_{\rm loc}(\mathbb{R}^2)\cap L^{\infty}(\mathbb{R}^2)$ for some $
u_*(x)\in H^1_{\rm loc}(\R^2)\cap L^\infty(\R^2) $. Moreover,
$$|\nabla u(\cdot,t)|^2\,dx
\mathrel{\stackrel {w^{*}}{\rightarrow }}
|\nabla u_*|^2\,dx +8\pi \sum_{j=1}^N \delta_{q^{\J}} ~\mbox{ as }~ t\to T$$
as weak-$*$ convergence of the Radon measure.
\end{corollary}

\begin{remark}
\noindent
\begin{enumerate}
\item The damping term $a >0$ plays a crucial role in the construction, both in the near-singularity and remote regions. In the current framework, it seems to be difficult to obtain uniform estimates in the limit $a \downarrow 0$.

\medskip

\item For $j=1,2, \dots, N$, $$ |\nabla u(q^{\J},t)|\sim \frac{|\ln(T-t)|^2}{ \kappa_j^* |\ln T|(T-t)}\gg (T-t)^{-1/2},$$ exhibiting a type II blow-up pattern at each blow-up point.

\medskip

\item The stability of the non-equivariant blow-up remains an important open question. We conjecture that the blow-up solution in Theorem \ref{thm} persists for initial data staying within a manifold with higher codimension. The number of unstable directions might depend on rotation parameters $\gamma_j(t)$ ($j=1, 2, \dots,N$) and the freedom needed when adjusting the vanishing property of the outer solution.  In this regard, deriving various Lipschitz-dependences is a challenging problem as the outer problem is a quasilinear parabolic system.

\medskip

\item Due to the parabolic gluing method employed, the construction works as well for the case of smooth, bounded domain $\Omega\subset\R^2$ with Dirichlet or Neumann boundary conditions, and the main difference in the construction reflects in the fundamental solution in the sense of Dong-Kim-Lee \cite{dong22-non-divergence} with corresponding boundary conditions.
\end{enumerate}
\end{remark}

\medskip

\subsection{Strategy and novelties in the construction}

The proof of Theorem \ref{thm} is a gluing construction extending the {\it parabolic gluing method} to quasilinear system with dispersion. The parabolic gluing method was first established by Cort\'azar-del Pino-Musso \cite{Green16JEMS} and  D\'avila-del Pino-Wei \cite{17HMF} to investigate the singularity formation for parabolic PDEs.
The elliptic version, called {\it inner-outer gluing method}, was developed earlier by del Pino-Kowalczyk-Wei  \cite{DKW07CPAM, DKW11Annals} for the higher dimensional concentration of nonlinear Schr\"odinger equations and the counterexample to the De Giorgi's conjecture in large dimensions. The gluing method turns out to be rather versatile and has been generalized to various evolution equations later. For recent developments in gluing method, we refer to  D\'avila-del Pino-Musso-Wei \cite{GluingEuler,leapfrogging,EulerFilament}, del Pino-Musso-Wei \cite{TowerNLH,type25D,17type2}, Sire-Wei-Zheng \cite{17halfHMF,HMFwFB}, D\'avila-del Pino-Dolbeault-Musso-Wei \cite{gluingKS} on fluid equations, geometric flows and those stemming from mathematical biology and physics.

\medskip

Our study of the singularity formation for LLG is motivated by the endpoint case ($a=1$) for HMF \cite{17HMF}. However, substantial difficulties arise due to the coupling between HMF and SMF in LLG \eqref{LLG-eq}, and such coupling produces both dissipative ($a>0$) and dispersive ($b\neq 0$) features. A direct consequence of the presence of dispersion is the {\it lack of maximum principle} for suitable quantities, which makes the analysis more delicate even at the linearized level. The dispersion cannot be treated perturbatively even in the dissipation-dominating case $a/|b|\gg 1$, and one has to include this as part of the leading order. In our inner-outer gluing construction, new linear theories for both inner and outer problems need to be developed, taking into account the dissipation and dispersion concurrently. Based on these linear theories, weighted spaces that capture the precise asymptotics of solution in near-singularity zones and remote regions are devised carefully.

\medskip

\subsubsection{Distorted Fourier transform and re-gluing process in the inner problems}

The new linear theory for the inner problems is developed by analyzing each Fourier mode, which is the Fourier expansion of the complex form on each tangent plane of the bubble on $S^2$. Due to the {\it absence of maximum principle}, several steps combining energy methods, solving the elliptic equations, and Duhamel formulas, are employed to get rough upper bounds for each mode. More refined bounds at different Fourier modes are obtained by different methods.

\medskip

$\bullet$ {\it Mode $k$, $|k| \ge 2$}. One of the main challenges is the {\it convergence in $k$} when summing over all modes, {\it while maintaining a sufficiently fast decay}. By employing the rotation form of the right-hand side and applying a careful scaling argument, in conjunction with the regularity theory in the $\mathsf{DMO}_x$ space, we manage to extract the negative power of $|k|$. Combining these with the re-gluing process, we derive an upper bound that depends explicitly on $k$. This bound is sufficient to ensure the convergence of the summation over all modes. See Subsection \ref{hmode-subsec} for further details. In order to refine the bounds and get better {\it pointwise decay estimates}, we perform another gluing procedure, called {\it re-gluing process}, at all the modes except mode $-1$. The re-gluing process was first used in the analysis of linearization of HMF at mode $0$ in \cite{17HMF}, and here we generalize this technique to all modes except mode $-1$. The re-gluing process aims to improve the time decay rate in the apriori estimates and provides more flexibility in choosing parameters to devise the topologies to solve the gluing system.

\medskip

$\bullet$ {\it Mode $0$ and mode $1$.} In contrast to the mode $k$, $|k|\geq 2$, the elliptic operators for mode $0$ and mode $1$
admit {\it bounded} kernels function with {\it decay} (cf. \eqref{scalr-Z}), for which orthogonality conditions are required to recover the decay information of the right-hand side.
These orthogonality conditions and
the use of the re-gluing
lead to perturbation terms $c_{*0}(\tau)$, $c_{*1}(\tau)$, which make the reduced equations, especially for mode $1$, and non-orthogonal inner problems \eqref{inner-eq-2} more complicated. See Proposition \ref{Re-m0-prop}, Proposition \ref{qd24July12-8-prop}, and \eqref{orth1-eqr}.

\medskip

$\bullet$ {\it Mode $-1$.} The use of the above method does give a solution, but this solution deteriorates in the innermost region and is not sufficient for the gluing to be implemented. The reason is that by \eqref{qd240729-1} and \eqref{cal-L-ope}, mode $-1$ can be roughly viewed as a heat equation in $\mathbb{R}^{2}$ near spatial infinity, and the estimates obtained are worse than any other mode as one cannot gain spatial decay by the Duhamel's formula.
Instead, motivated by the groundbreaking work of Krieger, Miao, and Schlag \cite{KMS20WM} on the stability of blow-up for wave maps {\it beyond} the equivariant class, we utilize the powerful and versatile techniques of the {\it distorted Fourier transform} for the dealing of mode $-1$.

\medskip

The {\it distorted Fourier transform} has been successfully developed and applied in various problems. The general framework and theories on the spectral analysis of the half-line Schr\"odinger operator with strongly singular potentials have been developed by Gesztesy and Zinchenko \cite{GZ06Mana}. Schlag \cite{Schlag-2007} established the Littlewood-Paley theory for resonant Schr\"odinger operators.
Of significant importance are its applications in the singularity formation, dispersive estimates and asymptotic stability; see fundamental works by Krieger-Schlag \cite{KS06JAMS, Krieger-Schlag-JEMS2009} for constructing stable and stable blow-up manifolds for Schr\"odinger equations, and Krieger-Schlag-Tataru \cite{Krieger08, Krieger09Duke, KST09AIM}, Krieger-Schlag \cite{Krieger14JMPA}, Donninger-Huang-Krieger-Schlag \cite{Donninger-Huang-Krieger-Schlag2014} for the blow-ups in critical wave equations, wave maps and hyperbolic Yang-Mills equation. Schlag-Soffer-Staubach \cite{Schlag-Soffer-Staubach2010-I, Schlag-Soffer-Staubach2010-II} proved dispersive estimates for Schr\"odinger and wave evolutions on Riemannian manifolds with conical ends; we refer to a good survey \cite{SchlagSurvey} by Schlag in this regard. Donninger-Schlag-Soffer \cite{Donninger-Schlag-Soffer2011, Donninger-Schlag-Soffer2012} investigated the stability and decay estimates in general relativity.
Krieger-Nakanishi-Schlag \cite{Krieger-Nakanishi-Schlag2012} classified the global dynamics of Klein-Gordon equations with energy above that of the ground state slightly.
For the application in the dispersive decay and scattering theory of Schr\"odinger equations, wave equations, and wave maps,
we refer to Goldberg-Schlag \cite{Goldberg-Schlag2004}, Costin-Schlag-Staubach-Tanveer \cite{Costin-etc2008}, Donninger-Schlag \cite{Donninger-Schlag2010}, Costin-Donninger-Schlag-Tanveer \cite{Costin-Donninger-Schlag-Tanveer2012}, Lawrie-Schlag \cite{Lawrie-Schlag2013} and the references therein. Recently, there are growing interests in asymptotic stability of solitons/kinks and blow-ups in many PDEs via distorted Fourier transform. See, for instance, Krieger-Miao \cite{krieger2020stability} and Krieger-Miao-Schlag \cite{KMS20WM} for wave maps in $2+1$ dimensions, Germain-Pusateri \cite{Germain2022}, L\"uhrmann-Schlag \cite{LuhrmannDuke, Luhrmann2024}, and Lindblad-L\"{u}hrmann-Schlag-Soffer \cite{Lindblad-Luhrmann-Schlag-Soffer2023} for Klein-Gordon equations, Bejenaru-Pillai-Tataru \cite{d2Smap} for SMF, Palacios-Pusateri \cite{PusateriGL} for Ginzburg-Landau evolutions, and Chen-L\"{u}hrmann \cite{chen2024asymptotic} for the sine-Gordon equation.

\medskip

Using the distorted Fourier transform, we develop linear theory at mode $-1$ with or without orthogonality conditions. The version with orthogonality removes the logarithmic loss compared to the one without orthogonality.  See Section \ref{sec-linearinner-1} for more details. In this paper, for mode $-1$, we only use the one without orthogonality since the introduction of two new modulation parameters corresponding to rotations will further complicate the interactions, and we control the logarithmic loss by H\"older continuity and the well-designed vanishing property of the outer solution. We note that the linear theory developed is in the general range including the purely dissipative case $a=1$, $b=0$, and this seems to be the first application of the distorted Fourier transform in the {\it parabolic} setting.

\medskip

\subsubsection{Regularity estimates in the $\mathsf{DMO_x}$-class for the outer problem}

The outer problem \eqref{outer-eq} turns out to be a {\it quasilinear parabolic system} in {\it non-divergence form}. Different from the outer problem in HMF, the one in LLG is a coupled system and thus cannot be solved componentwisely. The leading coefficients \eqref{B-matrix} of the outer problem \eqref{outer-eq} include the blow-up profile. So one cannot expect good H\"older continuity for \eqref{B-matrix} and has to work in a weaker class. On the other hand, estimates for higher-order derivatives are needed to control error terms. These suggest that the regularity class must be chosen rather carefully and precisely, roughly weaker than $C^\alpha$ but stronger than $C^0$.  The linear theory for the outer problem is achieved by means of the {\it sub-Gaussian estimate} for the fundamental solution of the parabolic system in non-divergence form with coefficients of {\it Dini mean oscillation} in space ($\mathsf{DMO_x}$), which was proved by Dong, Kim, and Lee \cite{dong22-non-divergence}. We introduce Dini mean absolute oscillation in space ($\mathsf{|DMO|_x}$), which is a subspace of $\mathsf{DMO_x}$. Under some weak assumptions, the functions in $\mathsf{|DMO|_x}$ are closed under arithmetic (see Lemma \ref{DMO-|DMO|-lem}). This property makes it more convenient to verify that the leading coefficients \eqref{B-matrix} of the outer problem belong to $\mathsf{|DMO|_x}$, and we note that the {\it type II speed} as in Theorem \ref{thm} ($\la_j(t)\lesssim (T-t)^{\frac{1}{2} +\epsilon}$ with a constant $0<\epsilon \ll 1$) plays a rather important role here.

\medskip

The estimates of second-order derivatives are necessary to control the dispersive part, i.e., error terms produced by $bu\wedge \Delta u$ in the equation, and we need rather precise weighted estimates for the gluing. In fact, the weights are eventually chosen very carefully, reflecting in finding a solution in the system for constants measuring the weights. See the end of Subsection \ref{sol-glu+orth-sec}. To get the {\it quantitative estimates} of second-order derivatives of the inner solutions, we first analyze the representation form of the outer solution via sub-Gaussian and then adopt the regularity theory with $\mathsf{DMO_x}$ coefficients developed by Dong, Escauriaza, and Kim in \cite{dong21-C0-para}.

\medskip

\subsubsection{Improvement of slow decay and tricks used in the interacting error terms}

Another aspect of the construction is the dealing with slow decaying errors, usually present in lower dimensional problems. The improvement of these slow decaying errors involves finding good global corrections (non-local in the corresponding modulation parameters), which in turn make the dynamics for the parameters in the corresponding mode non-local. In the context of LLG, the mode with slow decaying error that we shall deal with is mode $0$, which corresponds to the invariance of scaling and rotation around the $z$-axis. To capture the precise blow-up dynamics, the global correction at mode $0$ should be rather explicit. However, due to the aforementioned structure of the outer problem, one cannot improve the error by solving the linearized system directly and has to extract part of the parabolic system instead, i.e., the {\it approximate} parabolic system. It turns out that the combination of the new errors produced by the global corrections and the remainder in the parabolic system together make the non-local equations for the scaling parameter $\lambda_j$ and rotational parameter $\gamma_j$ a well-structured complex system. See Section \ref{sec-orthogonal}.

\medskip

The construction of multiple bubbles involves carefully analyzing complicated and lengthy interactions. The unit-length property of the map $|u| = 1$ with multiple bubbles also produces delicate interactions. See \eqref{u-def} and \eqref{A-def}. Fortunately, we find a subtle {\it cancellation} in the estimate of an error term $\Delta_x U_* -2\left(U_* \cdot \nabla_x U_*\right)\cdot \nabla_x U_*$, which is essential for finding well-designed topologies to complete the construction. See Remark \ref{qd240727-1-rem}.

On the other hand, we adopt a trick that we call {\it $U_*$-operation} (see \eqref{U-direc-trick}), which can adjust errors in the $U_*$-direction for the multi-bubble case and can thus simplify analysis. This idea first appeared in D\'avila-del Pino-Wei \cite{17HMF} in the case of a single bubble for HMF, and we modify this in the context of LLG. See also Krieger-Miao-Schlag \cite{KMS20WM} for a similar argument for the wave map of a single bubble.

\medskip

\subsection{Comments on other related problems and techniques}

Well-posedness and singularity formation are also central topics in dispersive and hyperbolic PDEs. We refer to the books \cite{ShatahStruwe} by Shatah and Struwe and  \cite{Tao} by Tao in the hyperbolic and dispersive set-ups.
There are numerous profound studies in wave equations and general hyperbolic equations.
For the regularity theory of wave maps, Klainerman-Machedon investigated in \cite{Klainerman-Machedon1993, Klainerman-Machedon1995-Duke} low-regularity solutions and proved in \cite{Klainerman-Machedon1997} well-posedness for initial data with optimal regularity; see also Klainerman-Selberg \cite{Klainerman-Selberg1997}. Tataru proved the global existence and scattering in Besov spaces for wave maps in $n+1$ dimensions with $n\geq 4$ in \cite{Tataru1998} and with
$n=2,~3$ in \cite{Tataru2001}. In \cite{Tao2001-waveI, Tao2001-waveII}, Tao achieved the global regularity for wave maps in the critical Sobolev space.
Finite-time blow-up results of wave maps were established by Rodnianski-Sterbenz \cite{Rodnianski-Sterbenz2010},
Rapha\"{e}l-Rodnianski \cite{RR2012IHES}. For Krieger-Schlag-Tataru type blow-ups,
we refer to Perelman \cite{Perelman14},
Krieger-Schmid \cite{krieger2024finite-I, krieger2024finite-II},
Bahouri-Marachli-Perelman \cite{Bahouri-Marachli-Perelman2019}; see also Donninger-Krieger \cite{Krieger13MA} for infinite-time versions of Krieger-Schlag-Tataru type, and
Pillai \cite{Pillai2023} for more general global solutions.

\medskip

Concerning the classification results of wave maps and energy-critical wave equations,  Duyckaerts-Jia-Kenig-Merle \cite{Duyckaerts-Jia-Kenig-Merle2018} studied the small blow-up solutions via the channel of energy-type inequalities developed earlier in \cite{DJKM2017GAFA} for critical wave equations. Jendrej-Lawrie \cite{Jendrej-Lawrie2018} classified the two-bubble dynamics by the Kenig-Merle type concentration-compactness techniques together with modulation method. For the soliton resolution, we refer to Duyckaerts-Kenig-Martel-Merle \cite{Duyckaerts-Kenig-Martel-Merle2022}, Duyckaerts-Kenig-Merle \cite{Duyckaerts-Kenig-Merle2023}, Jendrej-Lawrie \cite{Jendrej-Lawrie2022MRL, Jendrej-Lawrie2023Ann.PDE}, Collot-Duyckaerts-Kenig-Merle \cite{Collot-Duyckaerts-Kenig-Merle2024} and their references; see also Krieger-Nakanishi-Schlag \cite{Krieger-Nakanishi-Schlag2014},
Krieger-Wong \cite{Krieger-Wong2014} for threshold dynamics.

\medskip

On the other hand, powerful modulation techniques have been widely developed by Collot, Merle, Rapha\"el, Rodnianski, Szeftel and collaborators in \cite{MR2005Annals,RR2012IHES,MMR2014Acta,CMR2020JAMS,MRRS2022Inventiones,MRRS2022Annals1,MRRS2022Annals2} and their references to study singularity formations for various dispersive, hyperbolic, parabolic equations and fluid dynamics.

\bigskip

\subsection{Main steps of the construction}

Due to the complexities and technicalities in the construction, in this subsection we sketch a roadmap of the major steps and present detailed illustrations of the ideas mentioned above.

\medskip

\noindent
$\bullet$ {\bf Multi-bubble ansatz.}
The construction begins with a careful choice of first approximation. Since the target is $S^2$, one has to choose some profile for {\it multiple bubbles}, which is relatively reasonable to analyze. In Subsection \ref{sec-first-appr}, we take the first approximation as
\begin{equation*}
U_*= -(N-1) U_{\infty} +  \sum\limits_{j=1}^{	N} U^{\J}(x,t),
\mbox{ \ where \ }
U^{\J}(x,t):=Q_{\gamma_j}W\bigg(\frac{x-\xi^{\J}}{\la_j}\bigg).
\end{equation*}
Notice that $|U_*| = 1 +o(1)$ at any space-times as those bubbles are essentially separated, assuming \eqref{U*-norm}. Denote the error function as
\begin{equation*}
	S[\fbf] :=  - \partial_t \fbf + a(\Delta_x \fbf + |\nabla_x \fbf|^2 \fbf)-b \fbf\wedge\Delta_x \fbf
	\mbox{ \ for \ } \fbf=[f_1,f_2,f_3]^{\tr}\in \R^3.
\end{equation*}
The error $S[U_*]$ contains slowly decaying terms $\sum_{j=1}^N\mathcal E_0^{\J} $ (see \eqref{Ej-0}),
which correspond to the errors corresponding to the invariance of scaling and rotation around $z$-axis (both belong to Fourier mode $0$ in complex notation). Here, the slow decay is in the sense that the spatial decay is not fast enough to apply the inner linear theory developed later on.

\medskip

\noindent
$\bullet$ {\bf Global corrections by approximate parabolic systems.}
In Subsection \ref{global-cor-sec}, to improve the spatial decay of the errors at the remote region, we add well-designed global corrections around each bubble. Since the operator
\begin{equation*}
-\partial_t+(a-bU^{\J}\wedge)\Delta_x
\end{equation*}
depends on the blow-up profile $U^{\J}$ as well as the parameters $\la_j$, $\gamma_j$, and $\xi^{\J}$, one cannot expect an explicit representation formula. However, the explicit representation of global corrections is crucial for capturing the blow-up dynamics. Instead, we consider an {\it approximate} parabolic operator
\begin{equation*}
-\partial_t+(a-bU_{\infty}\wedge)\Delta_x
\end{equation*}
and add the global corrections $\Phi_0^{*\J}$ around the blow-up point $q^{\J}$ with
\begin{equation*}
-\partial_t \Phi_0^{*\J}+(a-bU_{\infty}\wedge)\Delta_x \Phi_0^{*\J} +\mathcal E_0^{\J}\approx 0.
\end{equation*}
As mentioned earlier, the difference, involving $U^{\J}-U_\infty$, also serves as one of the leading parts in the reduced problems through orthogonality conditions.

\medskip

We regularize the global corrections
with an extra factor $r_j^{3} (r_j^{3}+ \la_j^{3})^{-1} $ to avoid the non-smoothness in spatial variables of errors caused by $\sin(\theta_j)$ and $\cos(\theta_j)$ in mode $0$ (see \eqref{qd240728-1}). Here $\la_j y^{\J}=x-\xi^{\J}= r_j e^{i\theta_{j}}$. More specifically, since the terms like $\langle y^{\J} \rangle^{c} \sin(\theta_j)$ with $c\in \mathbb{R}$ are not in $\mathsf{DMO_x}(B_0(1))$ in terms of the spatial variable $y^{\J}$, we need to multiply the power of $|y^{\J}|$ to avoid low regularity in $y^{\J}$. This is important for deriving the second-order derivative estimate for the inner problems, and the remainder of the terms produced by this discrepancy need to be analyzed as well.

\medskip

Subsection \ref{upp-global-cor-sec} contains the upper bound of global corrections. In Subsection \ref{nonlocal-err-set}, we compute the new errors with corrections given by those created by $\Phi_0^{*\J}$ and the remainder $b(U_{\infty}-U^{\J})\wedge \Delta_x \Phi^{*{\J}}_0$. The accurate form of errors is rather important in analyzing the reduced equations of mode $0$ and mode $1$.

\medskip

 \noindent
 $\bullet$ {\bf Formulation of the inner-outer gluing system.} In Subsection \ref{system-newerror-sec}, we then perturb around $U_*$ and look for solution to LLG in the form
\begin{equation*}
	u=(1+A)U_*+\Phi-(\Phi\cdot U_*)U_*
\end{equation*}
with some perturbation terms $\Phi$ and $A$, where $\Phi$ is taken as
\begin{equation*}
	\Phi(x,t) := \sum_{j=1}^{N}\left( \eta_R^{\J}(x,t) Q_{\gamma_j}\Phi_{\rm in}^{\J}(y^{\J},t)
	+
	\eta_{d_q}^{\J}(x,t) \Phi^{*{\J}}_0(|x-\xi^{\J}|,t)\right)+\Phi_{\rm out}(x,t).
\end{equation*}
Here $\Phi_{\rm in}^{\J} \cdot W^{\J} \equiv 0$; $\eta_R^{\J}$ and $\eta_{d_q}^{\J}$, defined in \eqref{qd24Apr12-3}, are suitable cut-off functions near $q^{\J}$; $\Phi_{\rm in}^{\J}$ and $\Phi_{\rm out}$ will be solved in the inner-outer gluing system; $A$ is a real-valued function depending on $\Phi$ to ensure $|u|\equiv 1$ (see \eqref{A-def}). Note that part of the interactions between bubbles get encoded in the scalar function $A$.

\medskip

By elaborated calculations for $S[u]$ in Subsections \ref{system-newerror-sec} and \ref{N-simplify-sec} with the application of $U_*$-operation, for $S[u]=0$, it suffices to solve the inner-outer gluing system \eqref{outer-eq}-\eqref{inner-eq} in Subsection \ref{gluing-sys-sec}. Perturbation terms $\sum_{m=1}^{N} \sum_{n=1}^3 c_{mn}  \vartheta_{mn}(x) $ are added in the initial data to achieve the vanishing property for the outer problem \eqref{outer-eq} at the blow-up points. This is important in several estimates needed in the gluing procedure, and might be viewed as extra modulation parameters related to codimensional stability; see also the role of the rotational parameters in Krieger-Miao-Schlag \cite{KMS20WM}.

\medskip

For the full system above, finding blow-up of LLG at multiple points now gets reduced to finding well-behaved inner and outer solutions such that the gluing procedure can be implemented. In other words, we need to devise appropriate weighted topologies in which the gluing system becomes weakly coupled and thus can be solved by the fixed-point argument.

\medskip

Subsection \ref{sec-topologies} includes the weighted topologies for the inner and outer problems. In Subsection \ref{ortho-non-in-sec}, we decompose the inner problem \eqref{inner-eq} into orthogonal and non-orthogonal parts \eqref{inner-eq-1} and \eqref{inner-eq-2}. The principle of the allocation of the right-hand side of \eqref{inner-eq} is to make the reduced equations \eqref{ortho-eq} more convenient to handle, while those terms in the non-orthogonal part \eqref{inner-eq-2} carry faster time decay with suitably chosen parameters.

\medskip

\noindent
$\bullet$ {\bf Reduced equations.} In Section \ref{sec-orthogonal}, we reformulate the reduced equations \eqref{ortho-eq} into \eqref{orth0-eq-r} and \eqref{orth1-eqr}, and then present the linear theorem for non-local reduced equations. These reduced equations determine the blow-up dynamics. The non-local feature of reduced equations in mode 0 (see \eqref{orth0-eq-r}) gets inherited from the global corrections $\Phi^{*{\J}}_0$ as the global corrections are essentially for mode $0$.

\medskip

Here, the complex system involving both $\la_j$ and $\gamma_j$ might be a rather sophisticated form due to the dissipation-dispersion interaction. However, it turns out that the contribution of both $\Phi^{*{\J}}_0$ and the remainder $b(U_{\infty}-U^{\J})\wedge \Delta_x \Phi^{*{\J}}_0$ in the reduced equations at mode $0$ results in the following well-structured non-local problem
\begin{equation*}
	\int_{-T}^{t-\la^2_j(t)}\frac{\dot p_j(s)}{t-s}ds\sim a-ib
	\mbox{ \ with \ } p_j(t) :=\la_j(t)e^{i\gamma_j(t)}.
\end{equation*}
This system was first found and handled by D\'{a}vila, del Pino, and Wei in \cite[Propositions 6.5 and 6.6]{17HMF} for HMF ($b\equiv 0$). Surprisingly, this comes with a similar form in LLG with the presence of dispersion $(b\neq 0)$.

\medskip

\noindent
$\bullet$ {\bf $\mathsf{DMO_x}$ and the role of type II blow-up in the outer problem.} In Section \ref{sec-linearouter}, we develop the linear theory for the outer problem.
The outer problem \eqref{outer-eq} is a {\it quasilinear parabolic system}. In Subsection \ref{DMO-sec}, we give basic concepts of $\mathsf{DMO_x}$ and $\mathsf{|DMO|_x}$ spaces, and regularity results with $\mathsf{DMO_x}$ coefficients. Subsection \ref{fund-out-sec} gives the estimates of the fundamental solution for a parabolic system with $\mathsf{DMO_x}$ coefficients.

\medskip

In Subsection \ref{sec-DMO}, we show that the outer system satisfies the Legendre-Hadamard ellipticity using the assumption $a>0$ and prove that the leading coefficients of the outer problem \eqref{outer-eq} belong to $(\mathsf{|DMO|_x} \cap L^{\infty})(\mathbb{R}^{2}\times(0, T) )$ under suitable choice of topologies and parameters. Here, the fact that the scaling parameter is of {\it type II}, or in other words $$\lambda_j(t) \lesssim (T-t)^{\frac{1}{2} +\epsilon}$$
with a constant $0<\epsilon \ll 1$, is crucial to ensure $U_{*}\in \mathsf{|DMO|_x}(\mathbb{R}^{2}\times(0,T))$.

\medskip

To obtain the estimates of the outer problem, in Appendix \ref{convo-finite-tim-sec}, we give general convolution estimates in finite time. Then, the topology of the outer problem is derived in Appendix \ref{out-top-Deri-sec}. Due to the complicated interaction of different bubbles, lots of efforts are devoted to the estimates of the right-hand side of the outer problem in Appendix \ref{G-est-sec}, where a delicate {\it cancellation} for $$\Delta_x U_* -2\left(U_* \cdot \nabla_x U_*\right)\cdot \nabla_x U_*$$ is essential to find suitable parameters to close the fixed-point argument. See Remark \ref{qd240727-1-rem}.

\medskip

\noindent
$\bullet$ {\bf Linear theory for the inner problems.} In Section \ref{sec-linearinner}, we develop the linear theory for the inner problems.
We project the linear problem of the inner
problem to the tangent plane of $W(y)$ to transform the parabolic system into a complex-valued parabolic equation. Then, we expand the equation into Fourier modes and analyze each mode $k$ $(\in \mathbb{Z})$.  The linearized operator at mode $k$ has the form \eqref{inner-Fourier-form}. For all modes $k\in\mathbb Z\backslash\{-1\}$, good inner solutions are found by the following strategy.

\medskip

\begin{itemize}
\item[Step 1:] We first use energy methods to get a rough pointwise upper bound for the inner solutions;

\smallskip

\item[Step 2:] Next, we solve the corresponding elliptic equations and use Duhamel's formula and orthogonality conditions especially for mode $0$ and mode $1$, to refine the pointwise bounds and gain decay estimates;

\smallskip

\item[Step 3:] Finally, we further perform a re-gluing procedure to obtain better estimates in the innermost region.
\end{itemize}

\medskip

For mode $k$, $|k|\ge 2$, techniques are developed to specify the dependence on $k$ rather explicitly in the estimates for the convergence of summation of all the modes. See Subsection \ref{hmode-subsec}.

\medskip

The approach that we use for mode $0$ and mode $1$ is different from mode $k$, $|k| \ge 2$. The motivation is behind the decay of the corresponding bounded kernel functions (see \eqref{scalr-Z}). Some information on the spatial decay gets lost when constructing properly behaved inner solutions of modes $0$ and $1$. However, with the adjustment of modulation parameters  $\lambda_j$, $\gamma_j$ (mode $0$), $\xi^{\J}$ (mode $1$),  leading to the reduced equations \eqref{ortho-eq}, the spatial decay (in the intermediate gluing region) of solutions is recovered and is sufficient for the gluing construction after the re-gluing procedure. The re-gluing however produces tails $c_{*0}^{\J}(\tau_j(t ))$ in the non-orthogonal part \eqref{inner-eq-2}  and $c_{*1}^{\J}( \tau_j(t) )$ in the reduced equations \eqref{orth1-eqr} for mode $1$.
For more flexibility in the choice of parameters to handle $c_{*0}^{\J}(\tau_j(t ))$ and $c_{*1}^{\J}( \tau_j(t) )$, we clarify clearly the requirements on parameters in Proposition \ref{Re-m0-prop} and Proposition \ref{qd24July12-8-prop}.

\medskip

For mode $-1$, we use distorted Fourier transform to develop two versions of linear theory, with or without orthogonality conditions imposed on the right-hand side. We first derive the representation formula via distorted Fourier transform and then take advantage of the spectral properties to obtain precise weighted pointwise estimates for the inner solution. These rely on the estimates for the associated generalized eigenfunction and density of the spectral measure. See Section \ref{sec-linearinner-1}.

\medskip

\noindent
$\bullet$ {\bf Completing the proof of Theorem \ref{thm} and Corollary \ref{cor-intro}.} Finally, we solve the gluing system and the reduced equations in Subsection \ref{sol-glu+orth-sec} by the Schauder fixed-point argument. Here, the leading term of $p_j$ is given by Proposition \ref{keyprop}, which depends on a given function $Z_*(x)$ (see \eqref{qd240729-3}). The summation of all the modes yields the pointwise estimate of inner problems. Then, we go back to the original parabolic system of inner problems to deduce second-order estimates in a precise manner, and this is done by the regularity theory with $\mathsf{DMO_x}$ coefficients and a scaling argument. Convergence results in Corollary \ref{cor-intro} are derived in Subsection \ref{converge-sec}.

\medskip

The rest of this paper is devoted to the proofs of Theorem \ref{thm} and Corollary \ref{cor-intro}.

\bigskip

\section{Notations and preliminaries}\label{sec-nota}

In this section, we list some notations and preliminaries that we shall use repeatedly throughout this paper. For convenience, the index for terminologies and symbols is given in Appendix \ref{Index}.

\begin{itemize}

\item Denote $\mathbb{R}^{d+1} =\mathbb{R}^d \times
\mathbb{R}$, where $\mathbb{R}^d$ and $\mathbb{R}$ are domains of spatial and time variables respectively.
	
\item We assume $c_1\lesssim c_2$ if there exists a constant $C>0$ such that $c_1\le C c_2$. Denote $c_1\sim c_2$ if $c_1\lesssim c_2 \lesssim c_1$. Denote $f=O(g)$ if $|f|\lesssim g$. All constants stated in the paper are independent of $T$. For $x\in \R^d$, denote $\langle x \rangle := \sqrt{|x|^2+1}$. For $c>0$, $c\ll 1$ ($c\gg 1$) denotes $c$ sufficiently small (large).

\item For any $c \in \R$, we use the notation $c-$ to denote a constant less than $c$ that can be chosen arbitrarily close to $c$. We denote $c_{+}=\max\left\{ c, 0\right\}$.

\item Write the indicator
function $\1_{\Omega}(x)$ of a set $\Omega$ as
$\1_{\Omega}(x)
=1$ if $x\in \Omega$ and $\1_{\Omega}(x)
=0$ if $x\notin \Omega$.
We will use $\1_{\Omega}$ to
denote $\1_{\Omega}(x)$ if there is no ambiguity.

\item Set $\eta(x)$ as a smooth cut-off function satisfying $0\le \eta(x) \le 1$,
$\eta(x) = 1$ if $|x|\le 1$ and $\eta(x) = 0$ if $|x|\ge 2$.

\item Given $a>0$, $b\in \mathbb{R}$ satisfying $a^2+b^2=1$, denote $\Gamma_d^{\natural}$ as the fundamental solution of $\pp_{t} u = (a-ib) \Delta u$ in $\mathbb{R}^d$,
and $\Gamma_d^{\natural}$ is given by
\begin{equation}\label{Gammad-def}
	\Gamma_d^{\natural}(x,t) =
(a-ib)^{-\frac{d}{2}}	(4\pi t)^{-\frac{d}{2}} e^{-\frac{|x|^2}{4(a-ib) t }}.
\end{equation}
Obviously, $ |\Gamma_d^{\natural}(x,t) | \le (4\pi t)^{-\frac{d}{2}} e^{-\frac{a |x|^2}{4 t }}$.

\item  Given a fundamental solution $\Gamma(x,y,t,s)$ for a parabolic system in $\R^d$ and some admissible functions $f(x)$, $h(x,t)$, denote
\begin{equation*}
(\Gamma * f)(x,t,t_0) :=
\int_{\RR^d}
\Gamma(x,y,t,t_0) f(y) d y ,
\quad
(\Gamma ** h)(x,t, t_0) :=
\int_{t_0}^t\int_{\RR^d}
\Gamma(x,y,t,s) h(y,s) d y d s.
\end{equation*}
We usually omit the initial time $t_0$ if there is no ambiguity from the context.
\item For any vector $\vec{a}=[a_{1}, a_{2},a_{3}]^{\tr} \in\RR^3$, where ``$[\cdots]^{\tr}$'' means the transpose of a matrix, and we identify $[a_{1}, a_{2},a_{3}]^{\tr}:= [a_{1} +ia_{2},a_{3}]^{\tr}$.
For $\vec{b}=[b_{1}, b_{2},b_{3}]^{\tr}\in\RR^3$, it is easy to see that $\vec{a} \cdot \vec{b}= \mathrm{Re}\left[ (a_1+ia_2) (b_1-ib_2)\right] + a_3 b_3$.

\item For any matrix $A = (a_{ij})_{n\times m}$, denote $|A| = \big(\sum\limits_{i=1}^n \sum\limits_{j=1}^m |a_{ij}|^2 \big)^{1/2}$.

\item Given functions $f(x,t)$ and $x=x(t)$, denote $\pp_{t} f(x(t),t)=(\pp_{t} f)(x(t),t) $ and $\pp_{t}(f(x(t),t))= (\pp_{t}f)(x(t),t) + \dot{x}(t) \cdot  (\nabla_x f)(x(t),t)  $.

\item Denote
\begin{equation*}
\vec{w} \cdot \nabla \vec{v} :=
\begin{bmatrix}
\vec{w}  \cdot \nabla v_1, \
\vec{w}  \cdot \nabla v_2, \
\vec{w}  \cdot \nabla v_3
\end{bmatrix}^{\tr}
\mbox{ \ for \ }
\vec{v}=[v_1,v_2,v_3]^{\tr} \in C^1(\R^2,\R^3), ~\vec{w} \in \R^2;
\end{equation*}
\begin{equation}\label{dot2}
\begin{bmatrix}
a_{1}
\\
a_{2}
\\
a_{3}
\end{bmatrix}
\cdot
\begin{bmatrix}
b_{11} & b_{12}
\\
b_{21} & b_{22}
\\
b_{31} & b_{32}
\end{bmatrix}
:=
\begin{bmatrix}
\sum\limits_{k=1}^3 a_{k}	b_{k1}
,
 \sum\limits_{k=1}^3 a_{k} b_{k2}
\end{bmatrix}
;
\quad
\vec{v}\wedge :=
\begin{bmatrix}
0 & -v_3 & v_2
\\
v_3 & 0 & -v_1
\\
-v_2 & v_1 & 0
\end{bmatrix}.
\end{equation}
\end{itemize}

\bigskip

We consider the Landau-Lifshitz-Gilbert equation given in \eqref{LLG-eq}. The steady-state equation of \eqref{LLG-eq} is the harmonic map equation.
$W(y)$ given in \eqref{W-def} is the least energy harmonic map, which solves the harmonic map equation. Since we shall consider the case of multiple bubbles, subscript ``${}_{j}$'' or superscript ``$\J$''  will be used to distinguish different bubbles and their associated tangent planes. In the (rescaled) polar coordinates around $\xi^{\J}=(\xi^{\J}_1,\xi^{\J}_2)\in\R^2$, denote
\begin{equation}\label{polar-coor}
y^{\J}=\frac{x-\xi^{\J}}{\la_j} = \rho_{j} e^{i\theta_{j}},\quad x=\xi^{\J}+\la_{j}\rho_{j} e^{i\theta_{j}},\quad \rho_j=|y^{\J}|,
\quad
r_j=|x-\xi^{\J}| =\lambda_j \rho_j,
\quad \theta_j= \arctan\bigg(\frac{x_2-\xi_2^{\J}}{x_1-\xi_1^{\J} }\bigg),
\end{equation}
where we used the natural complex form $y^{\J}_1 + i y^{\J}_2$ for $y^{\J}$ and the similar form for others. Denote
\begin{equation}\label{def-Wjjj}
	W^{\J}:=W(y^{\J})=\begin{bmatrix}
		\cos\theta_{j} \sin w(\rho_{j})\\
		\sin\theta_{j} \sin w(\rho_{j})\\
		\cos w(\rho_{j})
	\end{bmatrix}:=\begin{bmatrix}
		e^{i\theta_{j}} \sin w(\rho_{j})\\
		\cos w(\rho_{j})
	\end{bmatrix}
\mbox{ \ with  \ }
w(\rho_j) :=\pi-2 \arctan (\rho_j),
\end{equation}
for $j=1,2,\dots,N$, and we have
\begin{equation}\label{nablaW}
	w_{\rho_j}= \frac{-2}{\rho_j^2+1},\quad\sin w(\rho_j)=-\rho_j w_{\rho_j}=\frac{2\rho_j}{\rho_j^2+1},\quad\cos w(\rho_j)=\frac{\rho_j^2-1}{\rho_j^2+1},
	\quad
	|\nabla_{y^{\J}} W(y^{\J})|^2 =2w_{\rho_j}^2
	=
	\frac{8}{(\rho_j^2+1)^2}.
\end{equation}

We denote the Frenet basis associated to $W^{\J}$ as
\begin{equation}\label{def-E1E2}
	E_{1}^{\J}=\begin{bmatrix}\cos\theta_j\cos w(\rho_j)\\\sin\theta_j \cos w(\rho_j) \\ -\sin w(\rho_j)
	\end{bmatrix}
	:=\begin{bmatrix}e^{i\theta_j}\cos w(\rho_j)\\ -\sin w(\rho_j)
	\end{bmatrix},
\quad
E_2^{\J}=\begin{bmatrix} -\sin\theta_j\\  \cos\theta_j\\ 0 \end{bmatrix}
	:=\begin{bmatrix} ie^{i\theta_j}\\ 0 \end{bmatrix}.
\end{equation}
So
\begin{equation}\label{wedge-formula}
	W^{\J}\wedge E_1^{\J}=E_2^{\J},\quad W^{\J}\wedge E_2^{\J}=-E_1^{\J},\quad E_1^{\J}\wedge E_2^{\J}=W^{\J}.
\end{equation}

It is direct to check that in the polar coordinates \eqref{polar-coor}
\begin{equation}\label{Frenet-deri}
	\begin{aligned}
		&\partial_{\rho_j} W^{\J}=w_{\rho_j} E_1^{\J}, \quad  \partial_{\rho_j \rho_j} W^{\J}=w_{\rho_j \rho_j } E_1^{\J} -w_{\rho_j}^2 W^{\J},
		\quad
		  \partial_{\theta_j} W^{\J}=\sin w(\rho_j) E_2^{\J},
		\\
		&
		\partial_{\theta_j\theta_j} W^{\J}=-\sin w(\rho_j)\big(\sin w(\rho_j) W^{\J}+\cos w(\rho_j) E_1^{\J} \big),
		\\
		&\partial_{\rho_j} E_1^{\J}=-w_{\rho_j} W^{\J},
		\quad
		  \partial_{\rho_j\rho_j} E_1^{\J}=-w_{\rho_j\rho_j} W^{\J} -w_{\rho_j}^2 E_1^{\J},
		  \quad
		    \partial_{\theta_j} E_1^{\J}=\cos w(\rho_j) E_2^{\J},
		\\
		&
		\partial_{\theta_j\theta_j} E_1^{\J} =-\cos w(\rho_j) \big(\sin w(\rho_j) W^{\J}+\cos w(\rho_j) E_1^{\J} \big),
		\\
		&\partial_{\rho_j} E_2^{\J}=\partial_{\rho_j\rho_j} E_2^{\J}=0,
		\quad
		  \partial_{\theta_j} E_2^{\J}=-\sin w(\rho_j) W^{\J}-\cos w(\rho_j) E_1^{\J},
		  \quad
		   \partial_{\theta_j\theta_j} E_2^{\J}=-E_2^{\J}.
	\end{aligned}
\end{equation}

The linearization of the harmonic map equation around $W^{\J}$ is the elliptic operator
\begin{equation*}
	L_{W^{\J}}[\phi]:=\Delta_{y^{\J}} \phi + |\nabla_{y^{\J}} W^{\J}|^2 \phi +2\big(\nabla_{y^{\J}} W^{\J}\cdot\nabla_{y^{\J}} \phi\big) W^{\J}.
\end{equation*}
Denote  the $s$-rotation matrices around $z$-axis, $x$-axis, $y$-axis respectively as
\begin{equation*}
	Q_{s} =
	\begin{bmatrix}
		\cos s & -\sin s & 0 \\
		\sin s & \cos s  & 0 \\
		0 & 0 & 1
	\end{bmatrix},
	\quad
	Q_{s}^{x} :=
	\begin{bmatrix}
		1  & 0 & 0\\
		0  & \cos s & -\sin s\\
		0 & \sin s & \cos s
	\end{bmatrix}
	,
	\quad
	Q_{s}^{y} :=
	\begin{bmatrix}
		\cos s & 0 & \sin s\\
		0  & 1 & 0\\
		- \sin s & 0 & \cos s
	\end{bmatrix}.
\end{equation*}
Due to the invariance of group action for the harmonic map equation, the corresponding kernels of $L_{W^{\J}}[\cdot]=0$ are given by
\begin{align}
		&Z_{0,1}^{\J}(y^{\J}) = -\pp_s \left( W(s^{-1} y^{\J})\right)\big|_{s=1} =\rho_j w_{\rho_j}(\rho_j) E_1^{\J}(y^{\J}),
		\nonumber
		\\
		&Z_{0,2}^{\J}(y^{\J}) = -\pp_{s} \left( Q_{s} W(y^{\J})\right)\big|_{s=0} =\rho_j w_{\rho_j}(\rho_j) E_2^{\J}(y^{\J}),
		\nonumber
		\\
		&Z_{1,1}^{\J}(y^{\J})= \pp_{y_1^{\J}} W(y^{\J})
		=w_{\rho_j}(\rho_j)[\cos \theta_j E_1^{\J}(y^{\J})+\sin\theta_j E_2^{\J}(y^{\J})],
		\nonumber
		\\
		&Z_{1,2}^{\J}(y^{\J})
		= \pp_{y_2^{\J}} W(y^{\J})
		=w_{\rho_j}(\rho_j)[\sin \theta_j E_1^{\J}(y^{\J})-\cos\theta_j E_2^{\J}(y^{\J})],
		\nonumber
		\\
		&Z_{-1,1}^{\J}(y^{\J})
		= - Z_{1,1}^{\J}(y^{\J}) - 2 \pp_{s} \left( Q_{s}^{y} W(y^{\J})\right)\big|_{s=0}
		=\rho_j^2 w_{\rho_j}(\rho_j)[\cos \theta_j E_1^{\J}(y^{\J})-\sin \theta_j E_2^{\J}(y^{\J})],
		\nonumber
		\\
		&Z_{-1,2}^{\J}(y^{\J})
		 = - Z_{1,2}^{\J}(y^{\J}) + 2 \pp_{s} \left( Q_{s}^{x} W(y^{\J})\right)\big|_{s=0}
		=\rho_j^2 w_{\rho_j}(\rho_j)[\sin \theta_j E_1^{\J}(y^{\J})+\cos \theta_j E_2^{\J}(y^{\J})].
		\label{def-kernels}
\end{align}
Set
\begin{equation}\label{def-U1U2}
	U^{\J}(x,t):=Q_{\gamma_j}W\bigg(\frac{x-\xi^{\J}}{\la_j}\bigg).
\end{equation}

For $\fbf=[f_1,f_2,f_3]^{\tr}\in \R^3$,
\begin{equation}\label{rota-vector}
	Q_{\gamma_j}  \fbf = \Big[ {\rm{Re}} \left( e^{i\gamma_j} (f_1+if_2)\right), ~{\rm{Im}} \left( e^{i\gamma_j} (f_1+if_2)\right),~ f_3\Big]^{\tr}
	:=
	\begin{bmatrix}
 e^{i\gamma_j} (f_1+if_2)
 \\
  f_3
	\end{bmatrix}.
\end{equation}

For $\fbf, \gbf\in \R^3$, we have  $(Q_{\gamma_j }\fbf)\wedge (Q_{\gamma_j } \gbf) = Q_{\gamma_j} (\fbf\wedge \gbf)$. Combining \eqref{wedge-formula}, we have
\begin{equation}\label{wedge+rota}
	U^{\J}\wedge (Q_{\gamma_j}E_1^{\J} )=Q_{\gamma_j}E_2^{\J},\quad U^{\J}\wedge ( Q_{\gamma_j} E_2^{\J})=-Q_{\gamma_j}E_1^{\J},\quad (Q_{\gamma_j} E_1^{\J})\wedge (Q_{\gamma_j} E_2^{\J})= U^{\J}.
\end{equation}

To deal with linearization near concentration zones, it will be convenient to use complex notations as all the analysis will be done on the associated tangent plane.

For any $\mathbf{f}\in \R^3$ satisfying $\mathbf{f} \cdot U^{\J}=0$, we define the equivalent complex form of $\mathbf{f}$ as
\begin{equation}\label{qd24May12-1}
\mathbf{f}_{\mathcal{C}_j }:=
\mathbf{f} \cdot (Q_{\gamma_{j}}E_1^{\J})  + i \mathbf{f} \cdot (Q_{\gamma_{j}}E_2^{\J}).
\end{equation}
For any complex-valued function $f$, we define
\begin{equation}\label{C-cal-inverse}
	f_{\mathcal{C}_j^{-1}} := \left({\rm{Re}}  f  \right)  Q_{\gamma_j} E_1^{\J}
	+
	\left({\rm{Im}}  f  \right)  Q_{\gamma_j} E_2^{\J} .
\end{equation}
By \eqref{wedge+rota},
\begin{equation}\label{U-wedge-C-cal}
	U^{\J}\wedge  f_{\mathcal{C}_j^{-1}}
	= \left({\rm{Re}}  f  \right)  Q_{\gamma_j} E_2^{\J}
	-
	\left({\rm{Im}}  f  \right)  Q_{\gamma_j} E_1^{\J}
	=
	(if)_{\mathcal{C}_j^{-1}} .
\end{equation}
Similarly,
for any $\mathbf{g}\in \R^3$ satisfying $\mathbf{g} \cdot W^{\J}=0$, the equivalent complex form of $\mathbf{g}$ is defined as
\begin{equation}\label{Cbb-def}
\mathbf{g}_{\mathbb{C}_j }:=\mathbf{g} \cdot E_1^{\J} + i \mathbf{g} \cdot E_2^{\J}.
\end{equation}
For any complex-valued function $g$, we define
\begin{equation}\label{Cbb-inverse}
	g_{\mathbb{C}_j^{-1}} := \left({\rm{Re}}  g  \right)   E_1^{\J}
	+
	\left({\rm{Im}}  g  \right)   E_2^{\J} .
\end{equation}
For any $\gbf(y^{\J},\tau)\in \R^3$ satisfying $\gbf \cdot W^{\J}=0$, we denote the mode $k$ component for $\gbf$ as
\begin{equation}\label{modek-def}
	\gbf_{\mathbb{C}_j, k} (\rho_j,\tau) :=
	(2\pi)^{-1}
	\int_{0}^{2\pi} \gbf_{\mathbb{C}_j}(\rho_j e^{is},\tau) e^{-iks} ds.
\end{equation}

 For any $\mathbf{f},\mathbf{g}\in\R^3$, we define
\begin{equation}\label{def-proj}
	\Pi_{\gbf^{\perp}} \fbf:=\fbf-(\fbf\cdot \gbf)\gbf.
\end{equation}
In particular, when $|\gbf|=1$, $\Pi_{\gbf^{\perp}}$ is the usual orthogonal projection on $\gbf^{\perp}$.

Notice for any $\mathbf{f}=\left[f_1,f_2,f_3\right]^{\tr}\in \R^3$, by \eqref{rota-vector}, we have
\begin{equation}\label{proj-cal-C}
	\begin{aligned}
		&
		\left(\Pi_{U^{\J \perp}} \mathbf{f} \right)_{\mathcal{C}_j }
		=
		\bigg(1
		-
		\frac{2}{\rho_j^2+1}
		{\rm{Re}}
		\bigg) \left[ \left(f_1+if_2\right)
		e^{-i\left(\theta_j+\gamma_j\right)}\right]
		-\frac{2\rho_j}{\rho_j^2+1} f_3 ,
		\\
		&
		\mathbf{f} \cdot U^{\J}
		=
		\frac{2\rho_j}{\rho_j^2+1}
		{\rm{Re}} \left[
		\left(f_1+if_2 \right)e^{-i\left(\theta_j+\gamma_j\right)}
		\right]
		+  \frac{\rho_j^2-1}{\rho_j^2+1} f_3.
	\end{aligned}
\end{equation}

The linearization of the harmonic map equation around $U^{\J}$ is given by
\begin{equation*}
	L_{U^{\J}}[\phi]:=\Delta_x \phi + |\nabla_x  U^{\J}|^2 \phi +2(\nabla_x  U^{\J}\cdot\nabla_x \phi) U^{\J}.
\end{equation*}
It is clear that
\begin{equation*}
	L_{U^{\J}}[ Q_{\gamma_j} \fbf(y^{\J}) ]=\la_j^{-2} Q_{\gamma_j} L_{W^{\J}}[\fbf(y^{\J})],
	\mbox{ \ where \ }  y^{\J}=\frac{x-\xi^{\J}}{\la_j}.
\end{equation*}

We now give several useful formulas with proofs similar to those of \cite[Section 3]{17HMF}. For any function $\mathbf{f}:\RR^2\rightarrow \RR^3$,
we set
\begin{equation*}
	\tilde L_{U^{\J} }[\mathbf{f}]:=|\nabla_{x} U^{\J}|^2\Pi_{U^{\J\perp}}\mathbf{f}-2\nabla_{x}(\mathbf{f}\cdot U^{\J}) \cdot \nabla_{x} U^{\J},
	\mbox{ \ where \ }
	\nabla_x(\mathbf{f}\cdot U^{\J}) \cdot \nabla_x U^{\J}=\sum_{k=1}^2\partial_{x_k} (\mathbf{f}\cdot U^{\J}) \partial_{x_k} U^{\J}.
\end{equation*}
Obviously, $ U^{\J}\cdot\tilde{L}_{U^{\J}}[\fbf] =0$. Similarly, we set
\begin{equation*}
	\tilde{L}_{ W^{\J} } [\mathbf{f}] := |\nabla_{y^{\J}} W^{\J} |^2 \Pi_{ W^{\J \perp} } \mathbf{f} -2\nabla_{y^{\J}} (\mathbf{f}\cdot W^{\J}) \cdot \nabla_{y^{\J}}  W^{\J},
\end{equation*}
and then $W^{\J} \cdot \tilde{L}_{ W^{\J} } [\mathbf{f}] =0$. It is straightforward to get
\begin{equation*}
	\tilde{L}_{ U^{\J} } [ Q_{\gamma_j} \mathbf{f}(y^{\J}) ]
	=
	\lambda_j^{-2} Q_{\gamma_j}
	\tilde{L}_{ W^{\J} } [\mathbf{f}(y^{\J}) ],
	\quad
	L_{U^{\J}}[\Pi_{U^{\J \perp}}\mathbf{f}]=\Pi_{U^{\J \perp}}\Delta_x \mathbf{f} +\tilde L_{U^{\J}}[\mathbf{f}].
\end{equation*}

For $\mathbf{f} = [f_1,f_2,f_3]^{\tr}$, to analyze in different modes hereafter, we deduce that
\begin{align*}
		&
		\tilde{L}_{ U^{\J} } [ Q_{\gamma_j} \fbf ]
				=
		\lambda_j^{-1}
		\Big\{
		\rho_j w_{\rho_j}^2(\rho_j)
		(
		\pp_{x_{1}}
		f_1 +  \pp_{x_{2}} f_2 )
		- e^{i\theta_j} w_{\rho_j} (\rho_j)
		\cos w(\rho_j) (  \pp_{x_{1}} f_3
		-i
		\pp_{x_{2}} f_3
		)
		\\
		&
		- e^{ - i\theta_j}  w_{\rho_j}(\rho_j)
		\cos w(\rho_j)  ( \pp_{x_{1}} f_3
		+
		i  \pp_{x_{2}} f_3
		)
		+  e^{2i\theta_j} \frac{ 1 }{2} \rho_j w_{\rho_j}^2 (\rho_j)
		\left[
		( \pp_{x_{1}} f_1
		- \pp_{x_{2}} f_2 )
		-i
		( \pp_{x_{2}} f_1  + \pp_{x_{1}} f_2 )
		\right]
		\\
		&
		+  e^{ - 2i\theta_j}  \frac{ 1 }{2} \rho_j w_{\rho_j}^2(\rho_j)
		\left[
		( \pp_{x_{1}} f_1
		- \pp_{x_{2}} f_2 )
		+
		i
		( \pp_{x_{2}} f_1  + \pp_{x_{1}} f_2 )
		\right]
		\Big\}   Q_{\gamma_j} E_{1}^{\J}
		\\
		&
		+    \lambda_j^{-1}
		\Big\{
		- \rho_j w_{\rho_j}^2(\rho_j)
		( \pp_{x_{2}} f_1 - \pp_{x_{1}} f_2  )
		+
		e^{i\theta_j}  w_{\rho_j}(\rho_j) \cos w(\rho_j) (
		\pp_{x_{2}} f_3 + i  \pp_{x_{1}} f_3
		)
		\\
		&
		+ e^{ - i\theta_j}   w_{\rho_j}(\rho_j) \cos w(\rho_j) (
		\pp_{x_{2}} f_3
		-
		i \pp_{x_{1}} f_3
		)
		- e^{2i\theta_j} \frac{1}{2} \rho_j w_{\rho_j}^2(\rho_j)
		\left[
		(
		\pp_{x_{2}} f_1
		+ \pp_{x_{1}} f_2
		)
		+
		i
		( \pp_{x_{1}} f_1
		-   \pp_{x_{2}} f_2
		)
		\right]
		\\
		&
		+ e^{ - 2i\theta_j}  \frac{1}{2} \rho_j w_{\rho_j}^2(\rho_j)
		\left[
		-
		(
		\pp_{x_{2}} f_1
		+ \pp_{x_{1}} f_2
		)
		+
		i
		( \pp_{x_{1}} f_1
		-   \pp_{x_{2}} f_2
		)
		\right]
		\Big\}  Q_{\gamma_j}  E_{2}^{\J}.
\end{align*}
The corresponding complex form is given by
\begin{equation*}
	\begin{aligned}
		&	(\tilde{L}_{ U^{\J} } [ Q_{\gamma_j} \fbf  ]  ) _{\mathcal{C}_j }
		=
		\lambda_j^{-1}
		\big\{
		\rho_j w_{\rho_j}^2(\rho_j)
		\left[
		(
		\pp_{x_{1}}
		f_1 +  \pp_{x_{2}} f_2 )
		-  i
		( \pp_{x_{2}} f_1 - \pp_{x_{1}} f_2  )
		\right]
		\\
		&
		+
		e^{i\theta_j}  2 w_{\rho_j}(\rho_j) \cos w(\rho_j)
		( -  \pp_{x_{1}} f_3  + i
		\pp_{x_{2}} f_3
		)
		+  e^{2i\theta_j}  \rho_j w_{\rho_j}^2(\rho_j)
		\left[
		( \pp_{x_{1}} f_1
		- \pp_{x_{2}} f_2 )
		-i
		( \pp_{x_{2}} f_1  + \pp_{x_{1}} f_2 )
		\right]
		\big\}.
	\end{aligned}
\end{equation*}
In particular,
\begin{equation}\label{tilde-L-complex}
	( \tilde{L}_{ U^{\J} } [ \fbf  ]  )_{\mathcal{C}_j}
	=
	( \tilde{L}_{ U^{\J} } [ \fbf  ] )_{\mathcal{C}_{j0} }
	+
	e^{i\theta_j}  ( \tilde{L}_{ U^{\J} } [ \fbf  ]  )_{\mathcal{C}_{j1} }
	+  e^{2i\theta_j} ( \tilde{L}_{ U^{\J} } [ \fbf  ]  )_{\mathcal{C}_{j2} },
\end{equation}
where we denote
\begin{align}
		( \tilde{L}_{ U^{\J} } [ \fbf  ] )_{\mathcal{C}_{j0} }
		:= \ &
		\lambda_j^{-1}
		\rho_j w_{\rho_j}^2(\rho_j)
		\left[
		\pp_{x_{1}}
		( Q_{-\gamma_j} \fbf)_1 +  \pp_{x_{2}} ( Q_{-\gamma_j}  \fbf)_2
		-  i
		\left( \pp_{x_{2}} ( Q_{-\gamma_j} \fbf)_1 - \pp_{x_{1}} ( Q_{-\gamma_j} \fbf)_2  \right)
		\right]
		\nonumber
		\\
		= \ &
		\lambda_j^{-1}
		\rho_j w_{\rho_j}^2(\rho_j)  e^{-i\gamma_j }
		\left[
		\pp_{x_{1}} f_1 +
		\pp_{x_{2}} f_2  + i \left(\pp_{x_{1}} f_2
		-
		\pp_{x_{2}}  f_1  \right)
		\right],
		\nonumber
		\\
		( \tilde{L}_{ U^{\J} } [ \fbf  ]  )_{\mathcal{C}_{j1} }
		:= \ &
		2 \lambda_j^{-1} w_{\rho_j}(\rho_j) \cos w(\rho_j)
		\left( -  \pp_{x_{1}} ( Q_{-\gamma_j}  \fbf)_3  + i
		\pp_{x_{2}} ( Q_{-\gamma_j}  \fbf)_3
		\right)
		\nonumber
		\\
		= \ &
		2 \lambda_j^{-1} w_{\rho_j}(\rho_j) \cos w(\rho_j)
		\left( -  \pp_{x_{1}} f_3  + i
		\pp_{x_{2}} f_3
		\right)
		,
		\nonumber
		\\
		( \tilde{L}_{ U^{\J} } [ \fbf  ]  )_{\mathcal{C}_{j2}}
		:= \ &
		\lambda_j^{-1} \rho_j w_{\rho_j}^2(\rho_j)
		\left[  \pp_{x_{1}} ( Q_{-\gamma_j} \fbf)_1
		- \pp_{x_{2}} ( Q_{-\gamma_j} \fbf)_2
		-i
		\left( \pp_{x_{2}} ( Q_{-\gamma_j} \fbf)_1  + \pp_{x_{1}} ( Q_{-\gamma_j}  \fbf)_2 \right)
		\right]
		\nonumber
		\\
		= \ &
		\lambda_j^{-1} \rho_j w_{\rho_j}^2(\rho_j)
		e^{i\gamma_j}\left[
		\pp_{x_1} f_1 - \pp_{x_2} f_2
		-i\left(\pp_{x_1} f_2 + \pp_{x_2} f_1  \right)
		\right],
		\label{tildeL-component}
\end{align}
since by \eqref{rota-vector}, we have
\begin{equation*}
	\begin{aligned}
		&
		\pp_{x_{1}}
		( Q_{-\gamma_j} \fbf )_1 +  \pp_{x_{2}} ( Q_{-\gamma_j}  \fbf )_2
		-  i
		\left( \pp_{x_{2}} ( Q_{-\gamma_j} \fbf )_1 - \pp_{x_{1}} ( Q_{-\gamma_j} \fbf )_2  \right)
		\\
		= \ &
		\pp_{x_{1}}
		{\rm{Re}} \left[
		e^{-i\gamma_j } \left( f_1 + if_2 \right)
		\right]
		+  \pp_{x_{2}}
		{\rm{Im}} \left[
		e^{-i\gamma_j } \left( f_1 + if_2 \right)
		\right]
		-  i \pp_{x_{2}} {\rm{Re}} \left[
		e^{-i\gamma_j } \left( f_1 + if_2 \right)
		\right]
		+
		i \pp_{x_{1}}
		{\rm{Im}} \left[
		e^{-i\gamma_j } \left( f_1 + if_2 \right)
		\right]
		\\
		= \ &
		e^{-i\gamma_j }
		\left[
		\pp_{x_{1}}
		\left( f_1 + if_2 \right)
		-  i \pp_{x_{2}}
		\left( f_1 + if_2 \right)
		\right]
		=
		e^{-i\gamma_j }
		\left[
		\pp_{x_{1}} f_1 +
		\pp_{x_{2}} f_2  + i \left(\pp_{x_{1}} f_2
		-
		\pp_{x_{2}}  f_1  \right)
		\right],
	\end{aligned}
\end{equation*}
\begin{equation*}
	\begin{aligned}
		&
		\pp_{x_{1}} ( Q_{-\gamma_j} \fbf)_1
		- \pp_{x_{2}} ( Q_{-\gamma_j} \fbf)_2
		-i
		\left( \pp_{x_{2}} ( Q_{-\gamma_j} \fbf)_1  + \pp_{x_{1}} ( Q_{-\gamma_j}  \fbf)_2 \right)
		\\
		= \ &
		\pp_{x_{1}}
		{\rm{Re}} \left[
		e^{-i\gamma_j } \left( f_1 + if_2 \right)
		\right]
		-  \pp_{x_{2}}
		{\rm{Im}} \left[
		e^{-i\gamma_j } \left( f_1 + if_2 \right)
		\right]
		-  i \pp_{x_{2}} {\rm{Re}} \left[
		e^{-i\gamma_j } \left( f_1 + if_2 \right)
		\right]
		-
		i \pp_{x_{1}}
		{\rm{Im}} \left[
		e^{-i\gamma_j } \left( f_1 + if_2 \right)
		\right]
		\\
		= \ &
		e^{i\gamma_j}\left[
		\pp_{x_1} f_1 - \pp_{x_2} f_2
		-i\left(\pp_{x_1} f_2 + \pp_{x_2} f_1  \right)
		\right].
	\end{aligned}
\end{equation*}

By \eqref{wedge+rota}, one has
\begin{align*}
	 &
		Q_{-\gamma_j}
		\big[
		\big( a-bU^{\J}  \wedge \big)
		\tilde{L}_{U^{\J}}[\fbf]
		\big]
		=
		Q_{-\gamma_j}
		\Big[
		\big( a-bU^{\J}  \wedge \big)
		\Big\{
		{\rm{Re}}\big[ \big( \tilde{L}_{ U^{\J} } [\fbf]  \big)_{\mathcal{C}_j } \big]
		Q_{\gamma_j} E_1^{\J}
		+
		{\rm{Im}}\big[ \big( \tilde{L}_{ U^{\J} } [\fbf]  \big)_{\mathcal{C}_j } \big]
		Q_{\gamma_j} E_2^{\J}
		\Big\}
		\Big]
		\\
		= \ &
		{\rm{Re}}\big[ \big( \tilde{L}_{ U^{\J} } [\fbf]  \big)_{\mathcal{C}_j } \big]
		\big(
		a  E_1^{\J}
		-
		b  E_2^{\J}
		\big)
		+
		{\rm{Im}}\big[ \big( \tilde{L}_{ U^{\J} } [\fbf]  \big)_{\mathcal{C}_j } \big]
		\big(
		a  E_2^{\J}
		+
		b  E_1^{\J}
		\big),
\end{align*}
and thus
\begin{equation}\label{outer into inner}
		\big\{ Q_{-\gamma_j}
		\big[
		( a-bU^{\J}  \wedge )
		\tilde L_{U^{\J} }[\fbf ]
		\big] \big\}_{\mathbb{C}_j}
		=
		(a-ib) \big( \tilde{L}_{ U^{\J} } [\fbf]  \big)_{\mathcal{C}_j }.
\end{equation}

\section{Approximation and improvement}

\subsection{First approximation}\label{sec-first-appr}

Given an integer $N\ge 1$ and arbitrary $N$ different points $q^{\J}\in \mathbb{R}^2$, $j=1,2,\dots,N$, denote
\begin{equation}\label{dq-pj-def}
	d_q := \min\limits_{k\ne m} |q^{\K} -q^{\M}|/9, \quad p_j(t) := \lambda_j(t) e^{i\gamma_j(t)}.
\end{equation}
Throughout this paper, we make the following ansatzes that for $j=1,2,\dots, N$,
\begin{equation}\label{lam-ansatz}
	\begin{aligned}
		&
		C_{\lambda}^{-1} \lambda_*(t) \le |p_j(t)| = \lambda_j(t) \le C_{\lambda} \lambda_*(t),
		\quad  \lambda_*(t) := \frac{|\ln T|(T-t)}{\ln^2(T-t)},
		\quad
		| \dot{\gamma}_j(t) |\le C_{\gamma} (T-t)^{-1},
		\\
		&
		C_{\lambda}^{-1} \frac{|\ln T| }{\ln^2(T-t)} \le
		|\dot{p}_{j}(t) | \le C_{\lambda} \frac{|\ln T| }{\ln^2(T-t)},
		\quad
		| \dot{\xi}^{\J}(t) | \le C_{\xi} \lambda_*^{\epsilon_{\xi}}(t),
		\quad \xi^{\J}(T) = q^{\J}
	\end{aligned}
\end{equation}
with some constants $C_{\lambda}\ge 1$, $C_{\xi}>0$, $C_{\gamma}>0$, and a small $\epsilon_{\xi}>0$ to be determined later.

We will construct blow-up solutions which blow up simultaneously at these prescribed points $q^{\J}$. We take the first approximation as
\begin{equation}\label{def-U*}
	U_*(x,t)
	:= -(N-1) U_{\infty} +  \sum\limits_{j=1}^{	N}U^{\J}(x,t),
\end{equation}
where $U^{\J}$ are given in \eqref{def-U1U2} and $U_{\infty}=[0,0,1]^{\tr}$.
For $t\in [0,T)$ with $T\ll 1$, we have
\begin{equation}\label{U*-norm}
\min
\limits_{k \ne m} |\xi^{\K}(t)-\xi^{\M}(t)| >8d_q>0;
\quad	|U_*|=1+O\Big(\sum\limits_{j=1}^{N}\lambda_j \Big);
\quad
|U_*-U^{\K}| \lesssim \sum_{j=1, j\ne k}^N \langle y^{\J} \rangle^{-1}.
\end{equation}

Given a function $\fbf=[f_1,f_2,f_3]^{\tr}\in \R^3$, denote the error function as
\begin{equation}\label{S-error fun}
S[\fbf] :=  - \partial_t \fbf + a(\Delta_x \fbf + |\nabla_x \fbf|^2 \fbf)-b \fbf\wedge\Delta_x \fbf.
\end{equation}
The error of the first approximate solution is
\begin{equation*}
S[ U_* ]=-\sum\limits_{j=1}^{N}\partial_t U^{\J}
	+ a(\Delta_x U_* + |\nabla_x U_*|^2 U_*)-bU_*\wedge\Delta_x U_*.
\end{equation*}
Notice
\begin{equation}\label{ppt-UJ}
	- \partial_t U^{\J} =  \mathcal E_0^{\J} + \mathcal E_1^{\J}, \quad
	\mathcal E_0^{\J}:=-\dot\la_j \partial_{\la_j} U^{\J}-\dot\gamma_j \partial_{\gamma_j}U^{\J},
	\quad
	\mathcal E_1^{\J}:=- \dot\xi^{\J}_1  \partial_{\xi^{\J}_1} U^{\J}-\dot\xi_2^{\J} \partial_{\xi^{\J}_2}U^{\J},
\end{equation}
where
\begin{equation*}
\left\{
\begin{aligned}
& \partial_{\la_j} U^{\J}(x)=-\la_j^{-1} Q_{\gamma_j} Z^{\J}_{0,1}(y^{\J}),
		\quad
		\partial_{\gamma_j} U^{\J}(x)= -Q_{\gamma_j} Z_{0,2}^{\J}(y^{\J}),
\\
& \partial_{\xi_1^{\J}} U^{\J}(x) = -\la_j^{-1}  Q_{\gamma_j} Z^{\J}_{1,1}(y^{\J}),
		\quad
		\partial_{\xi_2^{\J}} U^{\J}(x) = -\la_j^{-1}  Q_{\gamma_j} Z_{1,2}^{\J}(y^{\J})
\end{aligned}
\right.
\end{equation*}
with $Z_{m,n}^{\J}$ given in \eqref{def-kernels}. It is straightforward to compute
\begin{equation}\label{Ej-0}
	\begin{aligned}
		&
		\mathcal E_0^{\J}= Q_{\gamma_j}\left(\la_j^{-1}\dot\la_j Z_{0,1}^{\J}(y^{\J})+\dot\gamma_j Z_{0,2}^{\J}(y^{\J}) \right)
		=\rho_j w_{\rho_j} Q_{\gamma_j}  \left(\la_j^{-1}\dot\la_jE_1^{\J} +\dot\gamma_j E_2^{\J}  \right)
		\\
		= \ &
		\frac{-2\rho_j}{\rho_j^2+1}
		\begin{bmatrix}
			\left(\la_j^{-1}\dot\la_j  \cos w(\rho_j) + 	i \dot\gamma_j  \right)e^{i(\theta_j+\gamma_j)} \\ -\la_j^{-1}\dot\la_j  \sin w(\rho_j)
		\end{bmatrix} ,
	\end{aligned}
\end{equation}
\begin{equation*}
	(\mathcal E_{0}^{\J} )_{\mathcal{C}_j}
	= -2\rho_j (\rho_j^2+1)^{-1} (\la_j^{-1}\dot\la_j + i\dot\gamma_j ),
\end{equation*}
\begin{equation}\label{Ej-1}
	\begin{aligned}
		\mathcal E_{1}^{\J}
		= \ &
		\frac{-2 \la_j^{-1} }{\rho_j^2+1} {\rm{Re}}\left[  \left(\dot \xi_1^{\J}  -i \dot\xi_2^{\J}  \right) e^{i\theta_j}  \right] Q_{\gamma_j} E_{1}^{\J}
		-
		\frac{2 \la_j^{-1} }{\rho_j^2+1}
		{\rm{Im}}\left[  \left(\dot \xi_1^{\J}  -i \dot\xi_2^{\J}  \right) e^{i\theta_j}  \right] Q_{\gamma_j} E_2^{\J}
		,
	\end{aligned}
\end{equation}
\begin{equation*}
	( \mathcal E_{1}^{\J} )_{\mathcal{C}_j }
	=   -2\la_j^{-1} (\dot \xi_1^{\J}  -i \dot\xi_2^{\J}  ) (\rho_j^2+1)^{-1} e^{i\theta_j}.
\end{equation*}

Combining \eqref{Ej-0} and \eqref{Ej-1}, we have
\begin{equation}\label{ppt-U-est}
	|\pp_{t} U^{\J}|\lesssim
	(\la_j^{-1}|\dot\la_j| + |\dot\gamma_j | ) \langle \rho_j \rangle^{-1}
	+
	\la_j^{-1}|\dot{\xi}^{\J}| \langle \rho_j \rangle^{-2}.
\end{equation}

Notice that $S[U_*]$ contains errors $\mathcal E_0^{\J}$ with slow decay in space, which will break down the gluing process without improvement. We shall introduce global corrections to improve the spatial decay of the errors.

\subsection{Global corrections by parabolic systems}\label{global-cor-sec}

In this Section, we will transfer slow decay terms by parabolic systems.
Around each bubble, the slow decaying term in \eqref{Ej-0} is given by
\begin{equation*}
	\mathcal E_0^{\J}
	\approx
	-\frac{2 }{z_j}
	\begin{bmatrix}
		\dot p_j(t) e^{i\theta_j}
		\\
		0
	\end{bmatrix},
\end{equation*}
where
\begin{equation}\label{zj-def}
z_j :=(\la_j^2(t)+r_j^2)^{1/2} = (\lambda_j^2(t) + |x-\xi^{\J}(t)|^2)^{1/2},
\quad r_j=|x^{\J}|, \quad  x^{\J} :=x-\xi^{\J}(t).
\end{equation}

We aim to find global corrections $\Phi^{*\J}_0(r_j,t)$
to make
\begin{equation*}
	-\pp_t (\Phi_0^{*\J})  +
	\left(a  -b U_{\infty}\wedge \right) \Delta_x \Phi_0^{*\J}
	-\frac{2 }{z_j}
	\begin{bmatrix}
		\dot p_j(t) e^{i\theta_j}
		\\
		0
	\end{bmatrix}
	\approx 0
\end{equation*}
with the form
\begin{equation}\label{def-globalcorrection-J}
	\Phi^{*{\J}}_0(r_j,t) :=
	\frac{r_j^{\mu} }{r_j^{\mu}+ \la_j^{\mu} }
	\begin{bmatrix}
		\Phi^{\J}_0( \sqrt{r_j^2+\la_j^2} , t ) e^{i\theta_j}
		\\
		0
	\end{bmatrix}
	=
	\begin{bmatrix}
		\frac{\rho_j^{\mu}}{\rho_j^{\mu}+ 1 }	\Phi^{\J}_0(z_j , t ) e^{i\theta_j}
		\\
		0
	\end{bmatrix},
\end{equation}
where $\mu\ge 2$ is a constant to be determined later. The term $
\frac{r_j^\mu}{r_j^\mu+ \lambda_j^\mu }  $ is used to avoid terms that are singular at the origin when calculating new errors.

We now present calculations that improve approximately the slow decaying error.
\begin{equation*}
	\Delta_x 	
	\begin{bmatrix}
		\Phi^{\J}_0 e^{i\theta_j}
		\\
		0
	\end{bmatrix}
	\approx
	\begin{bmatrix}
		\left(\pp_{z_j z_j}\Phi^{\J}_0+ z_j^{-1} \pp_{z_j}\Phi^{\J}_0 - z_j^{-2} \Phi^{\J}_0 \right) e^{i\theta_j}
		\\
		0
	\end{bmatrix}
	.
\end{equation*}
Since for any $v_1,v_2\in\RR$,
\begin{equation}\label{wedge-basic}
	\left(a-bU_{\infty}\wedge\right)
	\begin{bmatrix}
		v_1 \\
		v_2 \\
		0
	\end{bmatrix}
	=
	\begin{bmatrix}
		{\rm{Re}} \left[ (a-ib)(v_1+iv_2)\right] \\
		{\rm{Im}} \left[ (a-ib)(v_1+iv_2)\right] \\
		0
	\end{bmatrix}
:=
\begin{bmatrix}
 (a-ib)(v_1+iv_2) \\
	0
\end{bmatrix}
,
\end{equation}
then
\begin{equation*}
	\begin{aligned}
		&
		-\pp_t (\Phi_0^{*\J})  +
		\left( a -b U_{\infty}\wedge \right) \Delta_x \Phi_0^{*\J}
		-\frac{2 }{z_j}
		\begin{bmatrix}
			\dot p_j(t) e^{i\theta_j}
			\\
			0
		\end{bmatrix}
		\\
		\approx \ &
		\begin{bmatrix}
			-\pp_t \Phi^{\J}_0 e^{i\theta_j} +	(a-ib) 	\left(\pp_{z_j z_j }\Phi^{\J}_0 + z_j^{-1} \pp_{z_j}\Phi^{\J}_0 - z_j^{-2} \Phi^{\J}_0\right) e^{i\theta_j}
			\\
			0
		\end{bmatrix}
		-\frac{2 }{z_j}
		\begin{bmatrix}
			\dot p_j(t) e^{i\theta_j}
			\\
			0
		\end{bmatrix} .
	\end{aligned}
\end{equation*}
For this reason, we choose $\Phi^{\J}_0(z_j,t)$ to solve
\begin{equation}\label{eqn-Phi0}
	(a+ib)\partial_t \Phi^{\J}_0 = \partial_{z_jz_j} \Phi^{\J}_0 +\frac1{z_j} \partial_{z_j} \Phi^{\J}_0
	-\frac1{z_j^2}\Phi^{\J}_0 -
	\frac{2(a+ib)\dot p_j(t) }{z_j}.
\end{equation}

The analysis of \eqref{eqn-Phi0} is the same as \cite[(4.7)]{17HMF}. We consider a more general equation
\begin{equation}\label{feqn}
	(a+ib)\partial_t f = \partial_{zz} f +\frac1{z} \partial_{z} f
	-\frac1{z^2}f +
	\frac{g(t)}{z}.
\end{equation}
First, we look for the self-similar profile to
\begin{equation*}
(a+ib)\partial_t f_1=
\partial_{zz} f_1 +\frac1{z} \partial_{z} f_1 -\frac1{z^2} f_1+\frac1{z}
\mbox{ \ with \ }
f_1(z,t)=t^{1/2}f_2(\frac{z}{t^{1/2}}).
\end{equation*}
Then $f_2$ satisfies
$$
f_2''(\xi)
+
\left(\frac{1}{\xi}+\frac{a+ib}{2}\xi\right)f_2'(\xi)
-
\left(\frac{1}{\xi^2}+\frac{a+ib}{2}\right)f_2(\xi)
+\frac1{\xi}=0,\quad \xi=\frac{z}{t^{1/2}}.
$$
Observing that $\xi$ is a homogeneous solution yields a solution
\begin{equation*}
	f_2(\xi)= \xi\int_{\xi}^{\infty}\frac{e^{-\frac{a+ib}{4}\eta^2}}{\eta^3} d\eta \int_0^{\eta} s e^{\frac{a+ib}{4}s^2} ds
	= \frac{2\xi}{a+ib} \int_{\xi}^{\infty}\frac{1-e^{-\frac{a+ib}{4}\eta^2}}{\eta^3} d\eta,
\end{equation*}
and $ |f_2(\xi)|\lesssim
		\xi \langle \ln \xi \rangle \1_{\{ 0\le \xi\le 1\}}
		+
		\xi^{-1} \1_{\{ \xi> 1\}}$.
It follows that $\lim_{t\downarrow 0} f_1(z,t) =0$ uniformly for all $z>0$. Then for $g(t)\in C^1([-T,T])$, by Duhamel's formula, one has a solution to \eqref{feqn} for $t\in (-T,T)$,
\begin{equation}\label{f-Duhamel}
		f(z,t)= \int_{-T}^t \dot g(s)f_1(z,t-s) ds
		+
		g(-T)f_1(z,t+T)
		= \int_{-T}^t g(s)\partial_t f_1(z,t-s) ds
		=
		\int_{-T}^t g(s) \frac{1-e^{-\frac{a+ib}{4}\frac{z^2}{t-s}}}{(a+ib) z} ds.
\end{equation}
If $g\in L^\infty([-T,T])$, then \eqref{f-Duhamel} solves \eqref{feqn} in weak sense.
Thus for \eqref{eqn-Phi0}, we have a solution
\begin{equation}\label{def-Phi0}
	\Phi^{\J}_0(z_j,t)=-z_j\int_{-T}^t \frac{\dot p_j(s) }{t-s}K_0(\frac{z_j^2}{t-s})ds,
\quad
	K_0(\zeta_j):=2\frac{1-e^{-\frac{a+ib}{4} \zeta_j } }{\zeta_j},
\end{equation}
where
\begin{equation}\label{zetaj}
	\zeta_j :=\frac{z_j^2}{t-s} =
	\iota_j (\rho_j^2+1),\quad \iota_j:= \frac{\lambda_j^2(t)}{t-s} .
\end{equation}
Since $a>0$, it is straightforward to verify
\begin{align}
	&	K_0(\zeta_j)
		=
		\Big(\frac{a+ib}{2}+ O\left(\zeta_j\right) \Big)\1_{\{ \zeta_j\le 1\}}
		+
		O(\zeta_j^{-1}) \1_{\{ \zeta_j > 1\}}
		,
		\quad
		\zeta_j K_{0\zeta_j}(\zeta_j)
		=
		O\left(\zeta_j\right) \1_{\{ \zeta_j\le 1\}}
		+
		O(\zeta_j^{-1} ) \1_{\{ \zeta_j > 1\}} ,
		\nonumber
		\\
	&	\zeta_j^2 K_{0\zeta_j \zeta_j}(\zeta_j)
		=
		O\left(\zeta_j^2\right) \1_{\{ \zeta_j\le 1\}}
		+
		O(\zeta_j^{-1} ) \1_{\{ \zeta_j > 1\}}.
		\label{K-est}
\end{align}
It is easy to get
\begin{equation}\label{Phi-J-deri}
\pp_{z_j} \Phi^{\J}_0
=
-\int_{-T}^t \frac{\dot p_j(s) }{t-s}
\big(
K_0(\zeta_j)
+ 2\zeta_j K_{0\zeta_j}(\zeta_j)
\big)
ds,
\quad
\pp_{z_jz_j} \Phi^{\J}_0
		=   -z_j^{-1} \int_{-T}^t \frac{\dot p_j(s) }{t-s}
		\big(
		6\zeta_j K_{0\zeta_j}( \zeta_j )
		+ 4\zeta_j^2 K_{0\zeta_j \zeta_j}( \zeta_j )
		\big)
		ds.
\end{equation}

\subsection{The upper bounds of global correction terms}\label{upp-global-cor-sec}

By \eqref{K-est} and  $|\dot{p}_j|\lesssim |\dot{\lambda}_{*}|$ in \eqref{lam-ansatz}, then
\begin{equation}\label{z2z2}
	|	\Phi^{\J}_0 |
	+
	z_j |\pp_{z_j} \Phi^{\J}_0 |
	+
	z_j^2 | \pp_{z_jz_j} \Phi^{\J}_0 |
		\lesssim z_j\int_{-T}^t \frac{ |\dot{\lambda}_{*}(s) | }{t-s}
		\big(  \1_{\{ \zeta_j\le 1\}}
		+
		\zeta_j^{-1}  \1_{\{ \zeta_j > 1\}} \big)\Big|_{\zeta_j = z_j^2 (t-s)^{-1}}
		ds.
\end{equation}

Claim: for $0\le t <T$,
\begin{align}
		&
		\int_{-T}^t \frac{ |\dot{\lambda}_{*}(s)| }{t-s}
		\big( \1_{\{ \zeta_j\le 1\}}
		+
		\zeta_j^{-1}  \1_{\{ \zeta_j > 1\}} \big)\Big|_{\zeta_j = z_j^2 (t-s)^{-1}}
		ds
        \nonumber
		\\
		\lesssim \ &
		\begin{cases}
			T |\ln T|^{-1} z_j^{-2} ,
			&
			z_j^2 \ge t+T \\
			\begin{cases}
				|\ln T|^{-1} \langle \ln(\frac{T}{z_j^2}) \rangle ,
				&
				t\le \frac{T}{2},
				\\
				\frac{|\ln T|}{|\ln (2T)|}
				-\frac{|\ln T|}{|\ln (2(T-t))|}
				+ |\dot{\lambda}_{*}(t) |
				\langle \ln(\frac{T-t}{z_j^2}) \rangle  ,
				&
				t > \frac{T}{2}, z_j^2 < T-t,
				\\
				\frac{|\ln T|}{|\ln(2T)|}-\frac{|\ln T|}{|\ln(T-t+z_j^2)|}+ |\ln T| (\ln z_j )^{-2},
				&
				t > \frac{T}{2}, z_j^2 \ge T-t,
			\end{cases}
			&
			z_j^2 < t +T
		\end{cases}
        \nonumber
		\\
		\lesssim \ &
		\1_{\{ z_j^2< t+T \}}  +  T |\ln T|^{-1} z_j^{-2}  \1_{\{ z_j^2\ge t+T \}}.
        \label{nonlocal-est}
\end{align}
By \eqref{z2z2} and \eqref{nonlocal-est}, we have
\begin{equation}\label{Phi-0-j-upp}
	|	\Phi^{\J}_0 |
	+
	z_j |\pp_{z_j} \Phi^{\J}_0 |
	+
	z_j^2 | \pp_{z_jz_j} \Phi^{\J}_0 |
	\lesssim
	z_j \1_{\{ z_j^2< t+T \}}  +  T |\ln T|^{-1} z_j^{-1}  \1_{\{ z_j^2\ge t+T \}}.
\end{equation}
Using \eqref{def-globalcorrection-J}, \eqref{Znabla Phi*-upp}, \eqref{ZDelta-Phi*}, and \eqref{Phi-0-j-upp}, we have
\begin{equation}\label{Phi*-0-j-upp}
	|\Phi^{*{\J}}_0  |
	+
	z_j |\nabla_x \Phi^{*{\J}}_0 |
	+
	z_j^2 | \Delta_{x} \Phi_0^{*\J} |
	\lesssim
	z_j \1_{\{ z_j^2< t+T \}}  +  T |\ln T|^{-1} z_j^{-1}  \1_{\{ z_j^2\ge t+T \}}.
\end{equation}

\begin{proof}[Proof of Claim \eqref{nonlocal-est}]

For $0\le t <T$, denote $
		g(z_j,t):= \int_{-T}^t \frac{ |\dot{\lambda}_{*}(s)| }{t-s}
		\big( \1_{\{ \zeta_j\le 1\}}
		+
		\zeta_j^{-1}  \1_{\{ \zeta_j > 1\}} \big)\big|_{\zeta_j = z_j^2 (t-s)^{-1}}
		ds $.
	
$\bullet$ For $z_j^2 \ge t+T$, $g(z_j,t)
		=
		z_j^{-2}
		\int_{-T}^t  |\dot{\lambda}_{*}(s)|
		ds
		\sim
		z_j^{-2}
		|\ln T | \int_{T-t}^{2T}  |\ln s_1|^{-2}
		ds_1
		\sim
		T |\ln T|^{-1}  z_j^{-2} $.
	
$\bullet$ 	For $z_j^2 < t+T $, $g(z_j,t)
		=
		\int_{-T}^{t- z_j^2 } \frac{ |\dot{\lambda}_{*}(s)| }{t-s}
		ds
		+
		z_j^{-2}
		\int_{t- z_j^2 }^t |\dot{\lambda}_{*}(s)|
		ds $.
	If $0\le t\le \frac{T}{2}$,
	\begin{equation*}
		g(z_j,t)
		\sim |\ln T|^{-1} \langle \ln(\frac{t+T}{z_j^2}) \rangle \sim |\ln T|^{-1} \langle \ln(\frac{T}{z_j^2}) \rangle.
	\end{equation*}

If $t> \frac{T}{2}$ and $z_j^2 < T-t$,
	\begin{equation*}
		g(z_j,t)
		\sim
		\frac{|\ln T|}{|\ln (2T)|}
		-\frac{|\ln T|}{|\ln (2(T-t))|}
		+ |\dot{\lambda}_{*}(t) |
		\langle \ln(\frac{T-t}{z_j^2}) \rangle
	\end{equation*}
	since
	\begin{align*}
		&	\int_{-T}^{t- z_j^2 } \frac{ |\dot{\lambda}_{*}(s)| }{t-s}
			ds
			=
			\bigg(
			\int_{-T}^{t- (T-t) }  + \int_{t- (T-t) } ^{t- z_j^2 }
			\bigg)
			\frac{ |\dot{\lambda}_{*}(s)| }{t-s}
			ds
			\sim
			\int_{-T}^{t- (T-t) }  \frac{ |\dot{\lambda}_{*}(s)| }{T-s}
			ds + |\dot{\lambda}_{*}(t) | \int_{t- (T-t) } ^{t- z_j^2 } \frac{ 1 }{t-s}
			ds
			\\
			\sim \ &
			\int_{-T}^{t- (T-t) }  \frac{ |\ln T| }{(T-s)|\ln (T-s)|^2}
			ds + |\dot{\lambda}_{*}(t) |
			\ln(\frac{T-t}{z_j^2})
			=
			\frac{|\ln T|}{|\ln (2T)|}
			-\frac{|\ln T|}{|\ln (2(T-t))|}
			+ |\dot{\lambda}_{*}(t) |
			\ln(\frac{T-t}{z_j^2}),
	\end{align*}
	\begin{equation*}
		z_j^{-2}
		\int_{t- z_j^2 }^t |\dot{\lambda}_{*}(s)|
		ds \sim |\dot{\lambda}_{*}(t)| .
	\end{equation*}
	
	If $t> \frac{T}{2}$ and $ T-t \le z_j^2 < t+T$, then we have
	\begin{equation*}
		g(z_j,t) \lesssim \frac{|\ln T|}{|\ln(2T)|}-\frac{|\ln T|}{|\ln(T-t+z_j^2)|} + |\ln T| (\ln z_j )^{-2}
	\end{equation*}
	since
	\begin{equation*}
		\int_{-T}^{t- z_j^2 } \frac{ |\dot{\lambda}_{*}(s)| }{t-s}
		ds
		\sim
		\int_{-T}^{t- z_j^2 } \frac{ |\dot{\lambda}_{*}(s)| }{T-s}
		ds
		\sim
		\int_{-T}^{t- z_j^2 } \frac{ |\ln T| }{(T-s)|\ln(T-s)|^2}
		ds
		=
		\frac{|\ln T|}{|\ln(2T)|}
		-\frac{|\ln T|}{|\ln(T-t+z_j^2)|},
	\end{equation*}
	\begin{equation*}
		\begin{aligned}
			&
			z_j^{-2}
			\int_{t- z_j^2 }^t |\dot{\lambda}_{*}(s)|
			ds
			\sim
			z_j^{-2} |\ln T|
			\int_{t- z_j^2 }^t (\ln(T-s))^{-2} ds
			=
			z_j^{-2} |\ln T|
			\int_{T-t }^{T-t+z_j^2} (\ln v )^{-2} dv
			\\
			\lesssim \ &
			z_j^{-2} |\ln T|
			(T-t+z_j^2) (\ln(T-t+z_j^2))^{-2}
			\sim
			|\ln T| (\ln z_j )^{-2}.
		\end{aligned}
	\end{equation*}
	In sum, we get the first part of \eqref{nonlocal-est}.
In particular, for $z_j^2 < t+T, t\le \frac{T}{2}$, we have $z_j^2 \gtrsim
		\left[ \frac{|\ln T|(T-t)}{|\ln(T-t)|^2} \right]^2
		\sim T^2 |\ln T|^{-2} $.
	Thus $ |\ln T|^{-1} \langle \ln(\frac{T}{z_j^2}) \rangle
		\lesssim 1 $.
	For $z_j^2< t+T, t > \frac{T}{2}, z_j^2 < T-t$,
we have	$|\dot{\lambda}_{*}(t) |
		\langle \ln(\frac{T-t}{z_j^2})\rangle
		\lesssim
		\frac{|\ln T|}{|\ln (T-t)|^2}
		\langle\ln(\frac{T-t}{ \lambda_*^2(t)}) \rangle
		\lesssim 1 $.
	Thus, we have the second part of \eqref{nonlocal-est}.
\end{proof}

\subsection{New errors produced by the global corrections}\label{nonlocal-err-set}

In this subsection, we will calculate the new errors produced by the introduction of $\Phi_0^{*\J}$ defined in \eqref{def-globalcorrection-J}, that is,
\begin{align}
		\mathcal{S}^{\J}:= \ &
		-\pp_t (\Phi_0^{*\J} )
		+
		(a-bU^{\J}  \wedge)
		\big[
		\Delta_x \Phi_0^{*\J} +
		|\nabla_x U^{\J} |^2 \Phi^{*{\J}}_0
		- 2 \nabla_x \big(
		U^{\J} \cdot
		\Phi^{*{\J}}_0  \big) \cdot \nabla_x U^{\J}
		\big]
		- \partial_t U^{\J}
		\nonumber
		\\
		= \ &
		-\pp_t (\Phi_0^{*\J} )
		+
		\left( a-b U_{\infty}  \wedge \right)
		\Delta_x \Phi_0^{*\J}
		- \partial_t U^{\J}
		-b\left(U^{\J} -U_{\infty} \right)  \wedge
		\Delta_x \Phi_0^{*\J}
		\nonumber
		\\
		&
		+
		a |\nabla_x U^{\J} |^2 \Phi^{*{\J}}_0 +
		b|\nabla_x U^{\J} |^2
		\Phi^{*{\J}}_0 \wedge U^{\J}
		+
		(a-bU^{\J}  \wedge)
		\big[
		- 2 \nabla_x \big(
		U^{\J} \cdot
		\Phi^{*{\J}}_0  \big) \cdot  \nabla_x U^{\J}
		\big].
		\label{Sj-def}
\end{align}
Both precise versions in different modes and rough upper bounds will be deduced, which will be used for solving reduced equations and the estimates for the forthcoming gluing system.
By \eqref{zj-def},
\begin{equation*}
		 \pp_{t}  \Big( \frac{r_j^\mu \Phi^{\J}_0(z_j , t )}{r_j^\mu + \lambda_j^\mu }   \Big)
		=
		\frac{r_j^\mu}{r_j^\mu + \lambda_j^\mu } \Big[	\pp_{t} \Phi^{\J}_0 +
		\frac{  \dot{\lambda}_j \lambda_j - \dot{\xi}^{\J}\cdot (x-\xi^{\J}) }{ \sqrt{r_j^2 +\lambda_j^2 } } \pp_{z_j} \Phi^{\J}_0 \Big] -
		\frac{\mu\lambda_j^\mu r_j^{\mu-2}\dot{\xi}^{\J}\cdot(x-\xi^{\J})+\mu\dot{\lambda}_j \lambda_j^{\mu-1} r_j^\mu}{ (r_j^\mu+ \lambda_j^\mu)^2 }
		\Phi^{\J}_0,
\end{equation*}
\begin{equation*}
		\pp_{r_j} \Big( \frac{r_j^\mu \Phi^{\J}_0(z_j, t )}{r_j^\mu+ \lambda_j^\mu }  \Big)
		=
		\frac{r_j^{\mu+1}}{(r_j^\mu+ \lambda_j^\mu) (r_j^2+\lambda_j^2)^{\frac{1}{2}} } \pp_{z_j} \Phi^{\J}_0
		+
		\frac{\mu\lambda_j^\mu r_j^{\mu-1}}{(r_j^\mu+\lambda_j^\mu)^2}
		\Phi^{\J}_0,
\end{equation*}
\begin{equation*}
	\begin{aligned}
			\pp_{r_jr_j} \Big( \frac{r_j^\mu \Phi^{\J}_0(z_j , t ) }{r_j^\mu+ \lambda_j^\mu }  \Big)
		= \ &
		\frac{r_j^{\mu+2} }{ (r_j^\mu+ \lambda_j^\mu)( r_j^2+\lambda_j^2  ) }
		\pp_{z_j z_j} \Phi^{\J}_0
		+
		\big[
		\frac{ 2\mu\lambda_j^\mu r_j^{\mu}}{(r_j^\mu+\lambda_j^\mu)^2 (r_j^2+\lambda_j^2)^{\frac{1}{2}}}
		+
		\frac{ \lambda_j^2 r_j^\mu}{ (r_j^\mu+ \lambda_j^\mu) (r_j^2+\lambda_j^2)^{\frac{3}{2}}  }
		\big]
		\pp_{z_j} \Phi^{\J}_0
		\\
		&
		+
		\frac{  (\mu-1)\mu \lambda_j^{2\mu} r_j^{\mu-2}
			-(\mu+1)\mu\lambda_j^\mu r_j^{2\mu-2}  }{(r_j^\mu+\lambda_j^\mu)^3 }
		\Phi^{\J}_0.
	\end{aligned}
\end{equation*}
Then by \eqref{def-globalcorrection-J}, we have
\begin{align}
		\pp_{r_j}\Phi^{*{\J}}_0
		= \ &
		\Big[
		\big[
		\frac{\rho_j^{\mu+1}}{(\rho_j^\mu+ 1) (\rho_j^2+1)^{\frac{1}{2}} } \pp_{z_j} \Phi^{\J}_0
		+
		\frac{\mu\lambda_j^{-1} \rho_j^{\mu-1}}{(\rho_j^\mu+1)^2}
		\Phi^{\J}_0
		\big]
		e^{i\theta_j} ,
		0 \Big]^{\tr},
		\nonumber
		\\
		\pp_{\theta_j}\Phi^{*{\J}}_0 = \ &
		\Big[
		\frac{r_j^\mu}{r_j^\mu+ \la_j^\mu }	\Phi^{\J}_0  ie^{i\theta_j},
		0 \Big]^{\tr}
		=
		\Big[
		\frac{\rho_j^\mu}{\rho_j^\mu+ 1 }	\Phi^{\J}_0  ie^{i\theta_j} ,
		0 \Big]^{\tr}.
		\label{Phi*-deri}
\end{align}
It follows that
\begin{equation*}
	\begin{aligned}
		&
		|\nabla_x \Phi^{*{\J}}_0 |^2
		=
		|\nabla_{x^{\J}} \Phi^{*{\J}}_0 |^2
		=
		|\pp_{r_j} \Phi^{*{\J}}_0 |^2 + r_j^{-2} |\pp_{\theta_j} \Phi^{*{\J}}_0 |^2
		\\
		= \ &
		\bigg|
		\frac{\rho_j^{\mu+1}}{(\rho_j^\mu+ 1) (\rho_j^2+1)^{\frac{1}{2}} } \pp_{z_j} \Phi^{\J}_0
		+
		\frac{\mu\lambda_j^{-1} \rho_j^{\mu-1}}{(\rho_j^\mu+1)^2}
		\Phi^{\J}_0
		\bigg|^2
		+
		\lambda_j^{-2} \rho_j^{-2}
		\bigg|
		\frac{\rho_j^\mu}{\rho_j^\mu + 1 }	\Phi^{\J}_0
		\bigg|^2.
	\end{aligned}
\end{equation*}
Since $\mu\ge 2$, it follows that
\begin{equation}\label{Znabla Phi*-upp}
	|\nabla_x \Phi^{*{\J}}_0 |
	\lesssim |\pp_{z_j} \Phi^{\J}_0 |
	+ z_{j}^{-1} |\Phi^{\J}_0|.
\end{equation}
$\bullet$ By \eqref{polar-coor},
\begin{align}
		-\pp_t (\Phi_0^{*\J} )
				= \ &
		\Big[
		\Big\{
		\frac{ - \rho_j^\mu}{\rho_j^\mu + 1 } \pp_{t} \Phi^{\J}_0
		+
		\frac{   \rho_j^\mu  ( \dot{\xi}^{\J}\cdot y^{\J} - \dot{\lambda}_j ) }{ ( \rho_j^\mu + 1 ) ( \rho_j^2 + 1 )^{\frac{1}{2}} } \pp_{z_j} \Phi^{\J}_0
		+
		\Big[
		\frac{\mu\lambda_j^{-1} \rho_j^{\mu-2}\dot{\xi}^{\J}\cdot y^{\J} +\mu\dot{\lambda}_j \lambda_j^{-1} \rho_j^\mu}{ (\rho_j^\mu+ 1)^2 }
		\nonumber
		\\
		&
		+
		\frac{ i \lambda_j^{-1} \rho_j^{\mu-2} (\dot{\xi}_2^{\J} y_1^{\J} - \dot{\xi}_1^{\J} y_2^{\J} ) }{ \rho_j^\mu + 1 }
		\Big] \Phi^{\J}_0
		\Big\}
		e^{i\theta_j} , 0\Big]^{\tr}.
		\label{ppt-Phi*}
\end{align}

$\bullet$
\begin{small}
\begin{align}
		&
		\Delta_{x} \Phi_0^{*\J}
		=
		\Big[ \big(\pp_{r_j r_j} + \frac{1}{r_j} \pp_{r_j} + \frac{1}{r_j^2} \pp_{\theta_j\theta_j}  \big) \big( \frac{r_j^\mu}{r_j^\mu+ \la_j^\mu }	\Phi^{\J}_0  e^{i\theta_j} \big), 0 \Big]^{\tr}
		\nonumber
		\\
		= \ &
		\Big[  \Big\{
		\frac{ \rho_j^{\mu+2} }{ (\rho_j^\mu+ 1)( \rho_j^2+1 ) }
		\pp_{z_j z_j} \Phi^{\J}_0
		+
		\big[
		\frac{ 2\mu\lambda_j^{-1} \rho_j^{\mu}}{(\rho_j^\mu+1)^2 (\rho_j^2+1)^{\frac{1}{2}}}
		+
		\frac{ \lambda_j^{-1} \rho_j^\mu}{ (\rho_j^\mu+ 1) (\rho_j^2+1)^{\frac{3}{2}}  }
		+
		\frac{ \lambda_j^{-1} \rho_j^{\mu}}{(\rho_j^\mu+ 1) (\rho_j^2+1)^{\frac{1}{2}} }
		\big]
		\pp_{z_j} \Phi^{\J}_0
		\nonumber
		\\
		&
		+
		\big[
		\mu^2 \lambda_j^{-2}
		\frac{ \rho_j^{\mu-2} - \rho_j^{2\mu-2}  }{(\rho_j^\mu+1 )^3 }
		-
		\frac{ \lambda_j^{-2}\rho_j^{\mu-2} }{\rho_j^\mu+ 1 }
		\big]
		\Phi^{\J}_0
		\Big\} e^{i\theta_j}
		, 0 \Big]^{\tr}.
		\label{Delta-Phi*}
\end{align}
\end{small}
Since $\mu\ge 2$, we have
\begin{equation}\label{ZDelta-Phi*}
	| \Delta_{x} \Phi_0^{*\J} |
	\lesssim
	|\pp_{z_j z_j} \Phi^{\J}_0 |
	+ z_j^{-1} | \pp_{z_j } \Phi^{\J}_0 |
	+
	z_j^{-2} | \Phi^{\J}_0 |.
\end{equation}
$\bullet$
By \eqref{ppt-UJ}, \eqref{ppt-Phi*}, \eqref{Delta-Phi*}, and \eqref{wedge-basic}, we have
\begin{small}
\begin{align}
		&
		-\pp_t (\Phi_0^{*\J} )
		+
		\left( a-b U_{\infty}  \wedge \right)
		\Delta_x \Phi_0^{*\J}
		- \partial_t U^{\J}
		\nonumber
		\\
		= \ &
		\Big[
		\Big\{
		\frac{  \dot{\xi}^{\J}\cdot y^{\J} \rho_j^\mu  }{ ( \rho_j^\mu + 1 ) ( \rho_j^2 + 1 )^{\frac{1}{2}} } \pp_{z_j} \Phi^{\J}_0
		+
		\big[
		\frac{\mu\lambda_j^{-1} \dot{\xi}^{\J}\cdot y^{\J} \rho_j^{\mu-2} }{ (\rho_j^\mu+ 1)^2 }
		+
		\frac{ i \lambda_j^{-1}  (\dot{\xi}_2^{\J} y_1^{\J} - \dot{\xi}_1^{\J} y_2^{\J} ) \rho_j^{\mu-2} }{ \rho_j^\mu + 1 }
		\big] \Phi^{\J}_0
		\Big\}
		e^{i\theta_j} , 0\Big]^{\tr}
		\nonumber
		\\
		& +
		\Big[
		\big[
		\frac{ - \dot{\lambda}_j \rho_j^\mu }{ ( \rho_j^\mu + 1 ) ( \rho_j^2 + 1 )^{\frac{1}{2}} } \pp_{z_j} \Phi^{\J}_0
		+
		\frac{ \mu \lambda_j^{-1} \dot{\lambda}_j \rho_j^\mu}{ (\rho_j^\mu+ 1)^2 }   \Phi^{\J}_0
		\big]
		e^{i\theta_j} , 0\Big]^{\tr}
		\nonumber
		\\
		& +
		\Big[  (a-ib)\Big\{
		\frac{ - \rho_j^{\mu} }{ (\rho_j^\mu+ 1)( \rho_j^2+1 ) }
		\pp_{z_j z_j} \Phi^{\J}_0
		+
		\big[
		\frac{ 2\mu\lambda_j^{-1} \rho_j^{\mu}}{(\rho_j^\mu+1)^2 (\rho_j^2+1)^{\frac{1}{2}}}
		+
		\frac{ \lambda_j^{-1} \rho_j^\mu}{ (\rho_j^\mu+ 1) (\rho_j^2+1)^{\frac{3}{2}}  }
		\big]
		\pp_{z_j} \Phi^{\J}_0
		\nonumber
		\\
		&
		+
		\lambda_j^{-2}\rho_j^{\mu-2}
		\big[
		\mu^2
		\frac{ 1 - \rho_j^{\mu}  }{(\rho_j^\mu+1 )^3 }
		-
		\frac{ 1}{ (\rho_j^\mu+ 1)(\rho_j^2+1) }
		\big]
		\Phi^{\J}_0
		\Big\} e^{i\theta_j}
		, 0 \Big]^{\tr}
		+
		\mathcal{E}_0^{\J} +
		\Big[ \frac{2\lambda_j^{-1} \dot{p}_j \rho_j^{\mu}}{ (\rho_j^\mu+1) (\rho_j^2+1)^{\frac{1}{2}}} e^{i\theta_j} , 0 \Big]^{\tr}
		+ \mathcal{E}_1^{\J},
		\label{noloc-1}
\end{align}
\end{small}
where we have used \eqref{eqn-Phi0}, and
\begin{equation}\label{xi-dot-y}
	\begin{aligned}
		&
		\dot{\xi}^{\J}\cdot y^{\J}
		=
		2^{-1}
		\rho_j
		\big[
		(\dot{\xi}_1^{\J} - i\dot{\xi}_2^{\J} )e^{i\theta_j}
		+
		(\dot{\xi}_1^{\J} + i\dot{\xi}_2^{\J} )e^{-i\theta_j}
		\big],
		\\
		&
		\dot{\xi}_2^{\J} y_1^{\J} -
		\dot{\xi}_1^{\J} y_2^{\J}
		=
		2^{-1} \rho_j
		\big[
		(\dot{\xi}_2^{\J} + i\dot{\xi}_1^{\J} )e^{i\theta_j}
		+
		(\dot{\xi}_2^{\J} - i\dot{\xi}_1^{\J} )e^{-i\theta_j}
		\big].
	\end{aligned}
\end{equation}
By \eqref{Ej-0},
\begin{small}
\begin{align*}
		&
		\mathcal{E}_0^{\J} +
		\Big[ \frac{2\lambda_j^{-1} \dot{p}_j \rho_j^{\mu}}{ (\rho_j^\mu+1) (\rho_j^2+1)^{\frac{1}{2}}} e^{i\theta_j} , 0 \Big]^{\tr}
		\\
		= \ &
		\frac{-2\rho_j}{\rho_j^2+1}
		\Big[
		\big[\la_j^{-1}\dot\la_j \big(1- \frac{2 }{\rho_j^2+1} \big) + 	i \dot\gamma_j  \big] e^{i(\theta_j+\gamma_j)}
		,
		-\la_j^{-1}\dot\la_j  \frac{2\rho_j}{\rho_j^2+1}
		\Big]^{\tr}
		+
		\Big[ \frac{2 (\lambda_j^{-1} \dot{\lambda}_j +i\dot{\gamma}_j)  \rho_j^{\mu}}{ (\rho_j^\mu+1) (\rho_j^2+1)^{\frac{1}{2}}} e^{i (\theta_j + \gamma_j) } , 0 \Big]^{\tr}
		\\
		= \ &
		\Big[
		\Big\{
		(\la_j^{-1}\dot\la_j + i \dot\gamma_j )
		\frac{2\rho_j[ \rho_j^{\mu-1} - \rho_j  - (\rho_j^2+1)^{\frac{1}{2}} ] }{ [ \rho_j + (\rho_j^2+1)^{\frac{1}{2}} ] (\rho_j^\mu + 1)(\rho_j^2+1)}
		+
		\frac{4\la_j^{-1}\dot\la_j  \rho_j }{(\rho_j^2+1)^2}
		\Big\}
		e^{i(\theta_j+\gamma_j)}
		,
		\frac{ 4\la_j^{-1}\dot\la_j \rho_j^2}{ (\rho_j^2+1)^2 }
		\Big]^{\tr}.
\end{align*}
\end{small}
Then by \eqref{proj-cal-C},
\begin{align}
		&
		\bigg(\Pi_{U^{\J \perp}} \bigg( \mathcal{E}_0^{\J} +
		\Big[ \frac{2\lambda_j^{-1} \dot{p}_j \rho_j^{\mu}}{ (\rho_j^\mu+1) (\rho_j^2+1)^{\frac{1}{2}}} e^{i\theta_j} , 0 \Big]^{\tr} \bigg) \bigg)_{\mathcal{C}_j }
		\nonumber
		\\
		= \ &
		\bigg(1
		-
		\frac{2}{\rho_j^2+1}
		{\rm{Re}}
		\bigg) \bigg[
		(\la_j^{-1}\dot\la_j + i \dot\gamma_j )
		\frac{2\rho_j[ \rho_j^{\mu-1} - \rho_j  - (\rho_j^2+1)^{\frac{1}{2}} ] }{ [ \rho_j + (\rho_j^2+1)^{\frac{1}{2}} ] (\rho_j^\mu + 1)(\rho_j^2+1)}
		+
		\frac{4\la_j^{-1}\dot\la_j  \rho_j }{(\rho_j^2+1)^2}
		\bigg]
		-
		\frac{ 8\la_j^{-1}\dot\la_j \rho_j^3}{ (\rho_j^2+1)^3 }
		\nonumber
		\\
		= \ &
		(\la_j^{-1}\dot\la_j + i \dot\gamma_j )
		\frac{2\rho_j[ \rho_j^{\mu-1} - \rho_j  - (\rho_j^2+1)^{\frac{1}{2}} ] }{ [ \rho_j + (\rho_j^2+1)^{\frac{1}{2}} ] (\rho_j^\mu + 1)(\rho_j^2+1)}
		-
		\la_j^{-1}\dot\la_j
		\frac{ 4\rho_j^{\mu} [  \rho_j^2  + \rho_j(\rho_j^2+1)^{\frac{1}{2}} + 1 ]  }{ [ \rho_j + (\rho_j^2+1)^{\frac{1}{2}} ] (\rho_j^\mu + 1)(\rho_j^2+1)^2 },
		\label{nonloc-2-E}
\end{align}
\begin{equation}\label{nonloc-2-U}
	\bigg(  \mathcal{E}_0^{\J} +
	\Big[ \frac{2\lambda_j^{-1} \dot{p}_j \rho_j^{\mu}}{ (\rho_j^\mu+1) (\rho_j^2+1)^{\frac{1}{2}}} e^{i\theta_j} , 0 \Big]^{\tr}   \bigg) \cdot U^{\J}
	=
	\frac{4\lambda_j^{-1} \dot{\lambda}_j \rho_j^{\mu+1} [\rho_j^2 + \rho_j(\rho_j^2+1)^{\frac{1}{2}} +1 ]}{[\rho_j+ (\rho_j^2+1)^{\frac{1}{2}}] (\rho_j^\mu + 1)(\rho_j^2+1)^2}.
\end{equation}
$\bullet$
\begin{equation*}
	\begin{aligned}
		&
		-b (U^{\J}-U_{\infty} )\wedge \Delta_x \Phi_0^{*\J}
		=
		\frac{-2b }{|y^{\J}|^2+1}
		\Big[ y^{\J}_1 \cos \gamma_j  - y_2^{\J} \sin\gamma_j , y_1^{\J} \sin \gamma_j  + y_2^{\J} \cos\gamma_j  , -1 \Big]^{\tr} \wedge \Delta_x \Phi_0^{*\J}
		\\
		= \ &
		\frac{-2b}{\rho_j^2+1}
		\Big[ (\Delta_x \Phi_0^{*\J} )_{2} , -(\Delta_x \Phi_0^{*\J} )_1  , (y^{\J}_1 \cos \gamma_j  - y_2^{\J} \sin\gamma_j ) (\Delta_x \Phi_0^{*\J} )_{2} - (y_1^{\J} \sin \gamma_j  + y_2^{\J} \cos\gamma_j)(\Delta_x \Phi_0^{*\J} )_1 \Big]^{\tr}
		\\
		= \ &
		\frac{-2b}{\rho_j^2+1}
		\Big[ (\Delta_x \Phi_0^{*\J} )_{2} , -(\Delta_x \Phi_0^{*\J} )_1  ,
		\rho_j
		{\rm{Re}} \left[\left(  (\Delta_x \Phi_0^{*\J} )_2 - i(\Delta_x \Phi_0^{*\J} )_1\right) e^{-i(\theta_j+\gamma_j)} \right]  \Big]^{\tr},
	\end{aligned}
\end{equation*}
where for the last equality, we use the following formula. For any $a_1, a_2\in \R$,
\begin{align*}
		&
		\big( y^{\J}_1 \cos \gamma_j  - y_2^{\J} \sin\gamma_j \big) a_1 - \big(y_1^{\J} \sin \gamma_j  + y_2^{\J} \cos\gamma_j \big) a_2
		=
		\rho_j \left(a_1\cos(\theta_j+\gamma_j) - a_2 \sin (\theta_j+\gamma_j) \right)
		\nonumber
		\\
		= \ &
		\rho_j
		{\rm{Re}} \big[(a_1-ia_2) e^{-i(\theta_j+\gamma_j)} \big]
		=
		\rho_j
		{\rm{Im}} \big[(a_2+ia_1) e^{-i(\theta_j+\gamma_j)} \big].
\end{align*}
Then by \eqref{proj-cal-C}, we have
\begin{small}
\begin{align}
		&
		\left(\Pi_{U^{\J \perp}}
		\left[-b (U^{\J}-U_{\infty} )\wedge \Delta_x \Phi_0^{*\J}  \right] \right)_{\mathcal{C}_j}
		=
		\frac{-2b}{\rho_j^2+1}
		\bigg\{
		\bigg(1
		-
		\frac{2}{\rho_j^2+1}
		{\rm{Re}}
		\bigg) \left[ \left( (\Delta_x \Phi_0^{*\J} )_{2}  -i (\Delta_x \Phi_0^{*\J} )_1  \right)e^{-i(\theta_j+\gamma_j) } \right]
	\nonumber	
        \\
		&
		-\frac{2\rho_j}{\rho_j^2+1} \rho_j
		{\rm{Re}} \left[\left( (\Delta_x \Phi_0^{*\J} )_2 - i (\Delta_x \Phi_0^{*\J} )_1 \right) e^{-i(\theta_j+\gamma_j)} \right] \bigg\}
		=
		\frac{2i b  }{\rho_j^2+1}
		\overline{
			\big( (\Delta_x \Phi_0^{*\J} )_1
			+ i (\Delta_x \Phi_0^{*\J} )_{2} \big)e^{-i(\theta_j+\gamma_j) } }
	\nonumber	
        \\
		= \ &
		\bigg\{
		\frac{ 2i b \rho_j^{\mu+2} }{ (\rho_j^\mu+ 1)( \rho_j^2+1 )^2 }
		\pp_{z_j z_j} \overline{ \Phi^{\J}_0 }
		+
		\frac{2i b \lambda_j^{-1} \rho_j^{\mu} }{\rho_j^2+1}
		\Big[
		\frac{ 2\mu }{(\rho_j^\mu+1)^2 (\rho_j^2+1)^{\frac{1}{2}}}
		+
		\frac{ \rho_j^2 +2 }{ (\rho_j^\mu+ 1) (\rho_j^2+1)^{\frac{3}{2}}  }
		\Big]
		\pp_{z_j} \overline{\Phi^{\J}_0 }
	\nonumber	
        \\
		&
		-
		\frac{2i b \lambda_j^{-2}\rho_j^{\mu-2}  }{\rho_j^2+1}
		\bigg[
		\mu^2
		\frac{\rho_j^{\mu} -1 }{(\rho_j^\mu+1 )^3 }
		+
		\frac{1}{\rho_j^\mu+ 1 }
		\bigg]
		\overline{\Phi^{\J}_0 }
		\bigg\} e^{i\gamma_j},
        \label{nonlocal-3-E}
\end{align}
\end{small}
where we used \eqref{Delta-Phi*} for the last equality.
\begin{small}
\begin{equation}\label{nonloc-3-U}
	\begin{aligned}
		&
		\left[-b (U^{\J}-U_{\infty} )\wedge \Delta_x \Phi_0^{*\J} \right] \cdot U^{\J}
		=
		\frac{-2b}{\rho_j^2+1}
		\bigg\{
		\frac{2\rho_j}{\rho_j^2+1}
		{\rm{Re}} \left[
		\left(
		(\Delta_x \Phi_0^{*\J} )_{2}
		-
		i (\Delta_x \Phi_0^{*\J} )_1
		\right)
		e^{-i(\theta_j+\gamma_j)}
		\right]
		\\
		&
		+  \frac{\rho_j^2-1}{\rho_j^2+1}
		\rho_j
		{\rm{Re}} \left[\left(  (\Delta_x \Phi_0^{*\J} )_2 - i (\Delta_x \Phi_0^{*\J} )_1 \right) e^{-i(\theta_j+\gamma_j)} \right]
		\bigg\}
		=
		\frac{-2b \rho_j}{\rho_j^2+1}
		{\rm{Im}} \left[
		\left(
		(\Delta_x \Phi_0^{*\J} )_1
		+
		i
		(\Delta_x \Phi_0^{*\J} )_{2}
		\right)
		e^{-i(\theta_j+\gamma_j)}
		\right].
	\end{aligned}
\end{equation}
\end{small}

$\bullet$ By \eqref{nablaW} and  \eqref{def-globalcorrection-J},
\begin{equation*}
	a |\nabla_x  U^{\J}|^2 \Phi_0^{*\J}
	=
	a \lambda_j^{-2} |\nabla_{y^{\J}}  U^{\J}|^2 \Phi_0^{*\J}
	=
	\Big[
	\frac{8a \lambda_j^{-2} \rho_j^\mu}{(\rho_j^2+ 1)^2(\rho_j^\mu+1) }	\Phi^{\J}_0 e^{i\theta_j},0
	\Big]^{\tr}
	.
\end{equation*}
Then by \eqref{proj-cal-C},
\begin{equation}\label{nonloc-4}
	\begin{aligned}
		&
		\left(\Pi_{U^{\J \perp}} \left(	a |\nabla_x  U^{\J}|^2 \Phi_0^{*\J}\right) \right)_{\mathcal{C}_j }
		=
		\frac{8a \lambda_j^{-2} \rho_j^\mu}{(\rho_j^2+ 1)^2(\rho_j^\mu+1) }
		\bigg(1
		-
		\frac{2}{\rho_j^2+1}
		{\rm{Re}}
		\bigg) \left( 	\Phi^{\J}_0  e^{-i \gamma_j } \right)
		,
		\\
		&
		\left(	a |\nabla_x  U^{\J}|^2 \Phi_0^{*\J}\right) \cdot U^{\J}
		=
		\frac{16 a \lambda_j^{-2} \rho_j^{1+\mu}}{(\rho_j^2+ 1)^3(\rho_j^\mu+1) }	
		{\rm{Re}} \left( \Phi^{\J}_0  e^{-i \gamma_j } \right).
	\end{aligned}
\end{equation}
$\bullet$
Notice
\begin{equation}\label{nonloc-5-U}
	\big(b |\nabla_x U^{\J} |^2 \Phi_0^{*\J} \wedge  U^{\J} \big) \cdot U^{\J} =0.
\end{equation}
By \eqref{U-wedge-C-cal} and \eqref{nonloc-4}, we have
\begin{small}
\begin{equation}\label{nonloc-5}
		\left( 	b |\nabla_x U^{\J} |^2 \Phi_0^{*\J} \wedge  U^{\J} \right)_{\mathcal{C}_j }
=
b
\left( 	\left[\Pi_{U^{\J \perp}} \left(	|\nabla_x  U^{\J}|^2 \Phi_0^{*\J}\right) \right]  \wedge  U^{\J} \right)_{\mathcal{C}_j }
		=
		\frac{-8i b \lambda_j^{-2} \rho_j^\mu }{(\rho_j^2+1)^2 (\rho_j^\mu+1) }
		\Big(
		1-\frac{2 }{\rho_j^2+1}
		{\rm{Re}}
		\Big)
		\big( 	\Phi^{\J}_0
		e^{-i \gamma_j } \big).
\end{equation}
\end{small}
$\bullet$ Notice
$
	\pp_{r_j} U^{\J}
	=\lambda_j^{-1}  w_{\rho_j} Q_{\gamma_j} E_1^{\J}$,
$
	\pp_{\theta_j} U^{\J} = \sin w(\rho_j) Q_{\gamma_j} E_2^{\J} .
$
By \eqref{def-globalcorrection-J} and \eqref{Phi*-deri}, we have
\begin{align*}
		&
		-2\nabla_x(\Phi_0^{*\J}\cdot U^{\J}) \cdot \nabla_x U^{\J}
		=
		-2
		\pp_{r_j}(\Phi_0^{*\J}\cdot U^{\J}) \pp_{r_j} U^{\J}
		-2
		r_j^{-2}
		\pp_{\theta_j}(\Phi_0^{*\J}\cdot U^{\J}) \pp_{\theta_j} U^{\J}
		\nonumber
		\\
		= \ &
		-2
		\bigg\{ \Big[
		\big[
		\frac{\rho_j^{\mu+1}}{(\rho_j^\mu+ 1) (\rho_j^2+1)^{\frac{1}{2}} } \pp_{z_j} \Phi^{\J}_0
		+
		\frac{\mu\lambda_j^{-1} \rho_j^{\mu-1}}{(\rho_j^\mu+1)^2}
		\Phi^{\J}_0
		\big]
		e^{i\theta_j} ,
		0 \Big]^{\tr} \cdot U^{\J}
		\nonumber
		\\
		&
		+
		\Big[\frac{\rho_j^\mu}{\rho_j^\mu+ 1 }	\Phi^{\J}_0 e^{i\theta_j},0\Big]^{\tr}\cdot \lambda_j^{-1}  \frac{-2}{\rho_j^2+1}  Q_{\gamma_j} E_1^{\J} \bigg\} \lambda_j^{-1}  \frac{-2}{\rho_j^2+1} Q_{\gamma_j} E_1^{\J}
		\nonumber
		\\
		&
		-
		2r_j^{-2}
		\bigg\{ \Big[
		\frac{\rho_j^\mu}{\rho_j^\mu+ 1 }	\Phi^{\J}_0  ie^{i\theta_j} ,
		0 \Big]^{\tr}\cdot U^{\J}
		+
		\Big[\frac{\rho_j^\mu}{\rho_j^\mu+ 1 }	\Phi^{\J}_0 e^{i\theta_j},0\Big]^{\tr}
		\cdot \frac{2\rho_j}{\rho_j^2+1} Q_{\gamma_j} E_2^{\J} \bigg\} \frac{2\rho_j}{\rho_j^2+1} Q_{\gamma_j} E_2^{\J}
		\nonumber
		\\
		= \ &
		\bigg[
		\mathrm{Re}
		\bigg\{
		\bigg[
		\frac{\rho_j^{\mu+1}}{(\rho_j^\mu+ 1) (\rho_j^2+1)^{\frac{1}{2}} } \pp_{z_j} \Phi^{\J}_0
		+
		\frac{\mu\lambda_j^{-1} \rho_j^{\mu-1}}{(\rho_j^\mu+1)^2}
		\Phi^{\J}_0
		\bigg]
		\sin w(\rho_j)
		e^{-i\gamma_j}
		\bigg\}
		\nonumber
		\\
		&
		+
		\frac{-2 \lambda_j^{-1}\rho_j^\mu}{(\rho_j^\mu+ 1)(\rho_j^2+1) } \cos w(\rho_j) \mathrm{Re}\left(	\Phi^{\J}_0 e^{-i\gamma_j} \right)
		\bigg]   \frac{4 \lambda_j^{-1} }{\rho_j^2+1} Q_{\gamma_j} E_1^{\J}
		\nonumber
		\\
		&
		-
		2r_j^{-2}
		\bigg\{
		\frac{\rho_j^\mu}{\rho_j^\mu+ 1 } \sin w(\rho_j)	\mathrm{Re} \left(\Phi^{\J}_0  ie^{-i\gamma_j} \right)
		-
		\frac{2\rho_j^{\mu+1}}{(\rho_j^\mu+ 1)(\rho_j^2+1) }	\mathrm{Re} \left(\Phi^{\J}_0 ie^{-i\gamma_j} \right) \bigg\} \frac{2\rho_j}{\rho_j^2+1} Q_{\gamma_j} E_2^{\J}
		\nonumber
		\\
		= \ &
		\bigg\{
		\frac{8\lambda_j^{-1}\rho_j^{\mu+2}}{(\rho_j^\mu+ 1) (\rho_j^2+1)^{\frac{5}{2}} }
		\mathrm{Re} \left(\pp_{z_j} \Phi^{\J}_0 e^{-i\gamma_j} \right)
		+
		\bigg[
		\frac{8\mu\lambda_j^{-2} \rho_j^{\mu}}{(\rho_j^\mu+1)^2(\rho_j^2+1)^2}
		-
		\frac{8 \lambda_j^{-2}\rho_j^\mu (\rho_j^2-1)}{(\rho_j^\mu+ 1)(\rho_j^2+1)^3 }
		\bigg]
		\mathrm{Re} \left(\Phi^{\J}_0 e^{-i\gamma_j} \right)
		\bigg\} Q_{\gamma_j} E_1^{\J}.
\end{align*}
Then, it is easy to see
\begin{equation}\label{nonloc-6-U}
	\big\{
	( a-bU^{\J}  \wedge )
	\big[
	- 2 \nabla_x \big(
	U^{\J} \cdot
	\Phi^{*{\J}}_0  \big)\cdot \nabla_x U^{\J}
	\big] \big\}
	\cdot U^{\J} =0.
\end{equation}
By \eqref{wedge+rota},
\begin{equation}\label{nonloc-6}
	\begin{aligned}
		&
		\left( a-bU^{\J}  \wedge \right)
		\left[
		- 2 \nabla_x \left(
		U^{\J} \cdot
		\Phi^{*{\J}}_0  \right) \cdot \nabla_x U^{\J}
		\right]
		=
		\bigg\{
		\frac{8\lambda_j^{-1}\rho_j^{\mu+2}}{(\rho_j^\mu+ 1) (\rho_j^2+1)^{\frac{5}{2}} }
		\mathrm{Re} \left(\pp_{z_j} \Phi^{\J}_0 e^{-i\gamma_j} \right)
		\\
		&
		\quad+
		\bigg[
		\frac{8\mu\lambda_j^{-2} \rho_j^{\mu}}{(\rho_j^\mu+1)^2(\rho_j^2+1)^2}
		-
		\frac{8 \lambda_j^{-2}\rho_j^\mu (\rho_j^2-1)}{(\rho_j^\mu+ 1)(\rho_j^2+1)^3 }
		\bigg]
		\mathrm{Re} \left(\Phi^{\J}_0 e^{-i\gamma_j} \right)
		\bigg\}
		\left(aQ_{\gamma_j} E_1^{\J}
		-b Q_{\gamma_j} E_2^{\J}
		\right).
	\end{aligned}
\end{equation}

In sum, by \eqref{proj-cal-C},  \eqref{noloc-1}, \eqref{nonloc-2-U}, \eqref{Ej-1}, \eqref{nonloc-3-U}, \eqref{nonloc-4}, \eqref{nonloc-5-U}, \eqref{nonloc-6-U}, one has
\begin{small}
	\begin{align*}
	&	\mathcal{S}^{\J} \cdot U^{\J}
		=
		\frac{2\rho_j}{\rho_j^2+1}
		\mathrm{Re} \bigg\{
		\bigg[
		\frac{  \dot{\xi}^{\J}\cdot y^{\J} \rho_j^\mu  }{ ( \rho_j^\mu + 1 ) ( \rho_j^2 + 1 )^{\frac{1}{2}} } \pp_{z_j} \Phi^{\J}_0
		+
		\Big[
		\frac{\mu\lambda_j^{-1} \dot{\xi}^{\J}\cdot y^{\J} \rho_j^{\mu-2} }{ (\rho_j^\mu+ 1)^2 }
		+
		\frac{ i \lambda_j^{-1}  (\dot{\xi}_2^{\J} y_1^{\J} - \dot{\xi}_1^{\J} y_2^{\J} ) \rho_j^{\mu-2} }{ \rho_j^\mu + 1 }
		\Big] \Phi^{\J}_0
		\\
		&~
		+
		\frac{ - \dot{\lambda}_j \rho_j^\mu }{ ( \rho_j^\mu + 1 ) ( \rho_j^2 + 1 )^{\frac{1}{2}} } \pp_{z_j} \Phi^{\J}_0
		+
		\frac{ \mu \lambda_j^{-1} \dot{\lambda}_j \rho_j^\mu}{ (\rho_j^\mu+ 1)^2 }   \Phi^{\J}_0
		 +
		(a-ib)\Big\{
		\frac{ - \rho_j^{\mu} }{ (\rho_j^\mu+ 1)( \rho_j^2+1 ) }
		\pp_{z_j z_j} \Phi^{\J}_0
		\\
		&~
		+
		\Big[
		\frac{ 2\mu\lambda_j^{-1} \rho_j^{\mu}}{(\rho_j^\mu+1)^2 (\rho_j^2+1)^{\frac{1}{2}}}
		+
		\frac{ \lambda_j^{-1} \rho_j^\mu}{ (\rho_j^\mu+ 1) (\rho_j^2+1)^{\frac{3}{2}}  }
		\Big]
		\pp_{z_j} \Phi^{\J}_0
		+
		\lambda_j^{-2}\rho_j^{\mu-2}
		\Big[
		\mu^2
		\frac{ 1 - \rho_j^{\mu}  }{(\rho_j^\mu+1 )^3 }
		-
		\frac{ 1}{ (\rho_j^\mu+ 1)(\rho_j^2+1) }
		\Big]
		\Phi^{\J}_0
		\Big\}
		\bigg]
		e^{-i\gamma_j}  \bigg\}
		\\
		& ~
		+
		\frac{4\lambda_j^{-1} \dot{\lambda}_j \rho_j^{\mu+1} [\rho_j^2 + \rho_j(\rho_j^2+1)^{\frac{1}{2}} +1 ]}{[\rho_j+ (\rho_j^2+1)^{\frac{1}{2}}] (\rho_j^\mu + 1)(\rho_j^2+1)^2}
		-
		\frac{2b \rho_j}{\rho_j^2+1}
		{\rm{Im}} \left[
		\left(
		(\Delta_x \Phi_0^{*\J} )_1
		+
		i
		(\Delta_x \Phi_0^{*\J} )_{2}
		\right)
		e^{-i(\theta_j+\gamma_j)}
		\right]
		\\
		& ~+
		\frac{16 a \lambda_j^{-2} \rho_j^{1+\mu}}{(\rho_j^2+ 1)^3(\rho_j^\mu+1) }	
		{\rm{Re}} ( 	\Phi^{\J}_0  e^{-i \gamma_j } ).
	\end{align*}
\end{small}
By $\mu\ge 2$, \eqref{lam-ansatz}, \eqref{Phi-0-j-upp}, and \eqref{Phi*-0-j-upp}, we have
\begin{equation}\label{Sj-U-dire-est}
	|\mathcal{S}^{\J} \cdot U^{\J} |
	\lesssim
	| \dot{\xi}^{\J}| \langle  \rho_j \rangle^{-1}
	+ |\lambda_j|^{-1}  \langle  \rho_j \rangle^{-2}.
\end{equation}

By \eqref{proj-cal-C}, \eqref{xi-dot-y} and \eqref{noloc-1}, \eqref{nonloc-2-E}, \eqref{Ej-1}, \eqref{nonlocal-3-E},  \eqref{nonloc-4}, \eqref{nonloc-5}, \eqref{nonloc-6}, we have
\begin{small}
\begin{align*}
		&~
		\big(
		\Pi_{U^{\J \perp} } \mathcal{S}^{\J} \big)_{\mathcal{C}_j}
		=
		\big[
		(\dot{\xi}_1^{\J} - i\dot{\xi}_2^{\J} )e^{i\theta_j}
		+
		(\dot{\xi}_1^{\J} + i\dot{\xi}_2^{\J} )e^{-i\theta_j}
		\big]
		\\
		& \times
		\bigg[
		\frac{   \rho_j^{\mu+1}  }{ 2 ( \rho_j^\mu + 1 ) ( \rho_j^2 + 1 )^{\frac{1}{2}} } \bigg( 1-\frac{2}{\rho_j^2+1}\mathrm{Re} \bigg)\left(\pp_{z_j} \Phi^{\J}_0  e^{-i\gamma_j} \right)
		+
		\frac{\mu\lambda_j^{-1} \rho_j^{\mu-1} }{ 2(\rho_j^\mu+ 1)^2 }
		\bigg( 1-\frac{2}{\rho_j^2+1}\mathrm{Re} \bigg) \left(\Phi^{\J}_0  e^{-i\gamma_j} \right)
		\bigg]
		\\
		& +
		\left[
		(\dot{\xi}_2^{\J} + i\dot{\xi}_1^{\J} )e^{i\theta_j}
		+
		(\dot{\xi}_2^{\J} - i\dot{\xi}_1^{\J} )e^{-i\theta_j}
		\right]
		\frac{  \lambda_j^{-1}   \rho_j^{\mu-1} }{ 2(\rho_j^\mu + 1) }
		\bigg( i+\frac{2}{\rho_j^2+1}\mathrm{Im} \bigg) \left( \Phi^{\J}_0  e^{-i\gamma_j} \right)
		\\
		& +
		\bigg( 1-\frac{2}{\rho_j^2+1}\mathrm{Re} \bigg)
		\bigg[
		\bigg\{
		\frac{ - \dot{\lambda}_j \rho_j^\mu }{ ( \rho_j^\mu + 1 ) ( \rho_j^2 + 1 )^{\frac{1}{2}} } \pp_{z_j} \Phi^{\J}_0
		+
		\frac{ \mu \lambda_j^{-1} \dot{\lambda}_j \rho_j^\mu}{ (\rho_j^\mu+ 1)^2 }   \Phi^{\J}_0
		\\
		& +
		(a-ib)\Big\{
		\frac{ - \rho_j^{\mu} }{ (\rho_j^\mu+ 1)( \rho_j^2+1 ) }
		\pp_{z_j z_j} \Phi^{\J}_0
		+
		\bigg[
		\frac{ 2\mu\lambda_j^{-1} \rho_j^{\mu}}{(\rho_j^\mu+1)^2 (\rho_j^2+1)^{\frac{1}{2}}}
		+
		\frac{ \lambda_j^{-1} \rho_j^\mu}{ (\rho_j^\mu+ 1) (\rho_j^2+1)^{\frac{3}{2}}  }
		\bigg]
		\pp_{z_j} \Phi^{\J}_0
		\\
		&
		+
		\lambda_j^{-2}\rho_j^{\mu-2}
		\bigg[
		\mu^2
		\frac{ 1 - \rho_j^{\mu}  }{(\rho_j^\mu+1 )^3 }
		-
		\frac{ 1}{ (\rho_j^\mu+ 1)(\rho_j^2+1) }
		\bigg]
		\Phi^{\J}_0 \Big\} \bigg\} e^{-i\gamma_j} \bigg]
		\\
		&
		+
		(\la_j^{-1}\dot\la_j + i \dot\gamma_j )
		\frac{2\rho_j[ \rho_j^{\mu-1} - \rho_j  - (\rho_j^2+1)^{\frac{1}{2}} ] }{ [ \rho_j + (\rho_j^2+1)^{\frac{1}{2}} ] (\rho_j^\mu + 1)(\rho_j^2+1)}
		-
		\la_j^{-1}\dot\la_j
		\frac{ 4\rho_j^{\mu} [  \rho_j^2  + \rho_j(\rho_j^2+1)^{\frac{1}{2}} + 1 ]  }{ [ \rho_j + (\rho_j^2+1)^{\frac{1}{2}} ] (\rho_j^\mu + 1)(\rho_j^2+1)^2 }
		\\
		&
		-
		\frac{2\la_j^{-1} (\dot \xi_1^{\J}  -i \dot\xi_2^{\J}  )  }{\rho_j^2+1} e^{i\theta_j}
	 +
		\bigg\{
		\frac{ 2i b \rho_j^{\mu+2} }{ (\rho_j^\mu+ 1)( \rho_j^2+1 )^2 }
		\pp_{z_j z_j} \overline{ \Phi^{\J}_0 }
		+
		\frac{2i b \lambda_j^{-1} \rho_j^{\mu} }{\rho_j^2+1}
		\Big[
		\frac{ 2\mu }{(\rho_j^\mu+1)^2 (\rho_j^2+1)^{\frac{1}{2}}}
		+
		\frac{ \rho_j^2 +2 }{ (\rho_j^\mu+ 1) (\rho_j^2+1)^{\frac{3}{2}}  }
		\Big]
		\pp_{z_j} \overline{\Phi^{\J}_0 }
		\\
		&
		-
		\frac{2i b \lambda_j^{-2}\rho_j^{\mu-2}  }{\rho_j^2+1}
		\bigg[
		\mu^2
		\frac{\rho_j^{\mu} -1 }{(\rho_j^\mu+1 )^3 }
		+
		\frac{1}{\rho_j^\mu+ 1 }
		\bigg]
		\overline{\Phi^{\J}_0 }
		\bigg\} e^{i\gamma_j}
	 +
		\frac{8(a-ib) \lambda_j^{-2} \rho_j^\mu}{(\rho_j^2+ 1)^2(\rho_j^\mu+1) }
		\bigg(1
		-
		\frac{2}{\rho_j^2+1}
		{\rm{Re}}
		\bigg) \left( 	\Phi^{\J}_0  e^{-i \gamma_j } \right)
		\\
		&
		+ (a-ib)
		\bigg\{
		\frac{8\lambda_j^{-1}\rho_j^{\mu+2}}{(\rho_j^\mu+ 1) (\rho_j^2+1)^{\frac{5}{2}} }
		\mathrm{Re} \left(\pp_{z_j} \Phi^{\J}_0 e^{-i\gamma_j} \right)
		+
		\bigg[
		\frac{8\mu\lambda_j^{-2} \rho_j^{\mu}}{(\rho_j^\mu+1)^2(\rho_j^2+1)^2}
		-
		\frac{8 \lambda_j^{-2}\rho_j^\mu (\rho_j^2-1)}{(\rho_j^\mu+ 1)(\rho_j^2+1)^3 }
		\bigg]
		\mathrm{Re} \left(\Phi^{\J}_0 e^{-i\gamma_j} \right)
		\bigg\}
		\\
		= \ &
		\bigg( 1-\frac{2}{\rho_j^2+1}\mathrm{Re} \bigg)
		\bigg[
		\bigg\{
		\frac{ - \dot{\lambda}_j \rho_j^\mu }{ ( \rho_j^\mu + 1 ) ( \rho_j^2 + 1 )^{\frac{1}{2}} } \pp_{z_j} \Phi^{\J}_0
		+
		\frac{ \mu \lambda_j^{-1} \dot{\lambda}_j \rho_j^\mu}{ (\rho_j^\mu+ 1)^2 }   \Phi^{\J}_0
		\\
		& +
		(a-ib)\Big\{
		\frac{ - \rho_j^{\mu} }{ (\rho_j^\mu+ 1)( \rho_j^2+1 ) }
		\pp_{z_j z_j} \Phi^{\J}_0
		+
		\bigg[
		\frac{ 2\mu\lambda_j^{-1} \rho_j^{\mu}}{(\rho_j^\mu+1)^2 (\rho_j^2+1)^{\frac{1}{2}}}
		+
		\frac{ \lambda_j^{-1} \rho_j^\mu}{ (\rho_j^\mu+ 1) (\rho_j^2+1)^{\frac{3}{2}}  }
		\bigg]
		\pp_{z_j} \Phi^{\J}_0
		\\
		&
		+
		\lambda_j^{-2}\rho_j^{\mu-2}
		\bigg[
		\mu^2
		\frac{ 1 - \rho_j^{\mu}  }{(\rho_j^\mu+1 )^3 }
		-
		\frac{ 1}{ (\rho_j^\mu+ 1)(\rho_j^2+1) }
		\bigg]
		\Phi^{\J}_0 \Big\} \bigg\} e^{-i\gamma_j} \bigg]
		\\
		&
		+
		(\la_j^{-1}\dot\la_j + i \dot\gamma_j )
		\frac{2\rho_j[ \rho_j^{\mu-1} - \rho_j  - (\rho_j^2+1)^{\frac{1}{2}} ] }{ [ \rho_j + (\rho_j^2+1)^{\frac{1}{2}} ] (\rho_j^\mu + 1)(\rho_j^2+1)}
		-
		\la_j^{-1}\dot\la_j
		\frac{ 4\rho_j^{\mu} [  \rho_j^2  + \rho_j(\rho_j^2+1)^{\frac{1}{2}} + 1 ]  }{ [ \rho_j + (\rho_j^2+1)^{\frac{1}{2}} ] (\rho_j^\mu + 1)(\rho_j^2+1)^2 }
		\\
		& +
		\bigg\{
		\frac{ 2i b \rho_j^{\mu+2} }{ (\rho_j^\mu+ 1)( \rho_j^2+1 )^2 }
		\pp_{z_j z_j} \overline{ \Phi^{\J}_0 }
		+
		\frac{2i b \lambda_j^{-1} \rho_j^{\mu} }{\rho_j^2+1}
		\Big[
		\frac{ 2\mu }{(\rho_j^\mu+1)^2 (\rho_j^2+1)^{\frac{1}{2}}}
		+
		\frac{ \rho_j^2 +2 }{ (\rho_j^\mu+ 1) (\rho_j^2+1)^{\frac{3}{2}}  }
		\Big]
		\pp_{z_j} \overline{\Phi^{\J}_0 }
		\\
		&
		-
		\frac{2i b \lambda_j^{-2}\rho_j^{\mu-2}  }{\rho_j^2+1}
		\bigg[
		\mu^2
		\frac{\rho_j^{\mu} -1 }{(\rho_j^\mu+1 )^3 }
		+
		\frac{1}{\rho_j^\mu+ 1 }
		\bigg]
		\overline{\Phi^{\J}_0 }
		\bigg\} e^{i\gamma_j}
		 +
		\frac{8(a-ib) \lambda_j^{-2} \rho_j^\mu}{(\rho_j^2+ 1)^2(\rho_j^\mu+1) }
		\bigg(1
		-
		\frac{2}{\rho_j^2+1}
		{\rm{Re}}
		\bigg) \left( 	\Phi^{\J}_0  e^{-i \gamma_j } \right)
		\\
		&
		+ (a-ib)
		\bigg\{
		\frac{8\lambda_j^{-1}\rho_j^{\mu+2}}{(\rho_j^\mu+ 1) (\rho_j^2+1)^{\frac{5}{2}} }
		\mathrm{Re} \left(\pp_{z_j} \Phi^{\J}_0 e^{-i\gamma_j} \right)
		+
		\bigg[
		\frac{8\mu\lambda_j^{-2} \rho_j^{\mu}}{(\rho_j^\mu+1)^2(\rho_j^2+1)^2}
		-
		\frac{8 \lambda_j^{-2}\rho_j^\mu (\rho_j^2-1)}{(\rho_j^\mu+ 1)(\rho_j^2+1)^3 }
		\bigg]
		\mathrm{Re} \left(\Phi^{\J}_0 e^{-i\gamma_j} \right)
		\bigg\}
		\\
		&
		+ e^{i\theta_j}
		\bigg\{
		\frac{ - 2\la_j^{-1} (\dot \xi_1^{\J}  -i \dot\xi_2^{\J}  )  }{\rho_j^2+1}
		+
		(\dot{\xi}_1^{\J} - i\dot{\xi}_2^{\J} )
		\bigg[
		\frac{   \rho_j^{\mu+1}  }{ 2 ( \rho_j^\mu + 1 ) ( \rho_j^2 + 1 )^{\frac{1}{2}} } \bigg( 1-\frac{2}{\rho_j^2+1}\mathrm{Re} \bigg)\left(\pp_{z_j} \Phi^{\J}_0  e^{-i\gamma_j} \right)
		\\
		&
		+
		\frac{\mu\lambda_j^{-1} \rho_j^{\mu-1} }{ 2(\rho_j^\mu+ 1)^2 }
		\bigg( 1-\frac{2}{\rho_j^2+1}\mathrm{Re} \bigg) \left(\Phi^{\J}_0  e^{-i\gamma_j} \right)
		\bigg]
		+
		(\dot{\xi}_2^{\J} + i\dot{\xi}_1^{\J} )
		\frac{  \lambda_j^{-1}   \rho_j^{\mu-1} }{ 2(\rho_j^\mu + 1) }
		\bigg( i+\frac{2}{\rho_j^2+1}\mathrm{Im} \bigg) \left( \Phi^{\J}_0  e^{-i\gamma_j} \right)
		\bigg\}
		\\
		& +
		e^{-i\theta_j}
		\bigg\{
		(\dot{\xi}_1^{\J} + i\dot{\xi}_2^{\J} )
		\bigg[
		\frac{   \rho_j^{\mu+1}  }{ 2 ( \rho_j^\mu + 1 ) ( \rho_j^2 + 1 )^{\frac{1}{2}} } \bigg( 1-\frac{2}{\rho_j^2+1}\mathrm{Re} \bigg)\left(\pp_{z_j} \Phi^{\J}_0 e^{-i\gamma_j}  \right)
		\\
		&
		+
		\frac{\mu\lambda_j^{-1} \rho_j^{\mu-1} }{ 2(\rho_j^\mu+ 1)^2 }
		\bigg( 1-\frac{2}{\rho_j^2+1}\mathrm{Re} \bigg) \left(\Phi^{\J}_0  e^{-i\gamma_j} \right)
		\bigg]
		+
		(\dot{\xi}_2^{\J} - i\dot{\xi}_1^{\J} )
		\frac{  \lambda_j^{-1}   \rho_j^{\mu-1} }{ 2(\rho_j^\mu + 1) }
		\bigg( i+\frac{2}{\rho_j^2+1}\mathrm{Im} \bigg) \left( \Phi^{\J}_0  e^{-i\gamma_j} \right)
		\bigg\}.
\end{align*}
\end{small}
Using \eqref{def-Phi0} and \eqref{Phi-J-deri}, we have
\begin{small}
\begin{align*}
		&
		\left(
		\Pi_{U^{\J \perp} } \mathcal{S}^{\J} \right)_{\mathcal{C}_j}
		=
		\bigg( 1-\frac{2}{\rho_j^2+1}\mathrm{Re} \bigg)
		\bigg\{
		\frac{ \dot{\lambda}_j \rho_j^\mu }{ ( \rho_j^\mu + 1 ) ( \rho_j^2 + 1 )^{\frac{1}{2}} }
		\int_{-T}^t \frac{\dot p_j(s) e^{-i\gamma_j(t)} }{t-s}
		\left(
		K_0(\zeta_j)
		+ 2\zeta_j K_{0\zeta_j}(\zeta_j)
		\right)
		ds
		\\
		&
		-
		\frac{ \mu \dot{\lambda}_j \rho_j^\mu (\rho_j^2+1)^{\frac{1}{2}} }{ (\rho_j^\mu+ 1)^{2} }   \int_{-T}^t \frac{\dot p_j(s) e^{-i\gamma_j(t)} }{t-s}K_0(\zeta_j)ds
		\\
		& +
		(a-ib)\Big\{
		\frac{  \lambda_j^{-1} \rho_j^{\mu} }{ (\rho_j^\mu+ 1)( \rho_j^2+1 )^{\frac{3}{2}} }
		\int_{-T}^t \frac{\dot p_j(s) e^{-i\gamma_j(t)} }{t-s}
		\left(
		6\zeta_j K_{0\zeta_j}( \zeta_j )
		+ 4\zeta_j^2 K_{0\zeta_j \zeta_j}( \zeta_j )
		\right)
		ds
		\\
		&
		-
		\bigg[
		\frac{ 2\mu\lambda_j^{-1} \rho_j^{\mu}}{(\rho_j^\mu+1)^2 (\rho_j^2+1)^{\frac{1}{2}} }
		+
		\frac{ \lambda_j^{-1} \rho_j^\mu}{ (\rho_j^\mu+ 1) (\rho_j^2+1)^{\frac{3}{2}}  }
		\bigg]
		\int_{-T}^t \frac{\dot p_j(s) e^{-i\gamma_j(t)} }{t-s}
		\left(
		K_0(\zeta_j)
		+ 2\zeta_j K_{0\zeta_j}(\zeta_j)
		\right)
		ds
		\\
		&
		-
		\lambda_j^{-1}\rho_j^{\mu-2}
		\bigg[
		\mu^2
		\frac{ (1 - \rho_j^{\mu}) (\rho_j^2+1)^{\frac{1}{2}}  }{(\rho_j^\mu+1 )^{3} }
		-
		\frac{ 1}{ (\rho_j^\mu+ 1)(\rho_j^2+1)^{\frac{1}{2}} }
		\bigg] \int_{-T}^t \frac{\dot p_j(s) e^{-i\gamma_j(t)} }{t-s}K_0(\zeta_j)ds \Big\} \bigg\}
		\\
		&
		+
		(\la_j^{-1}\dot\la_j + i \dot\gamma_j )
		\frac{2\rho_j[ \rho_j^{\mu-1} - \rho_j  - (\rho_j^2+1)^{\frac{1}{2}} ] }{ [ \rho_j + (\rho_j^2+1)^{\frac{1}{2}} ] (\rho_j^\mu + 1)(\rho_j^2+1)}
		-
		\la_j^{-1}\dot\la_j
		\frac{ 4\rho_j^{\mu} [  \rho_j^2  + \rho_j(\rho_j^2+1)^{\frac{1}{2}} + 1 ]  }{ [ \rho_j + (\rho_j^2+1)^{\frac{1}{2}} ] (\rho_j^\mu + 1)(\rho_j^2+1)^2 }
		\\
		& +
		\frac{ - 2i b  \lambda_j^{-1}  \rho_j^{\mu+2} }{ (\rho_j^\mu+ 1)( \rho_j^2+1 )^{\frac{5}{2}} }
		\int_{-T}^t \frac{ \overline{\dot{p}_j}(s) e^{i\gamma_j(t)}   }{t-s}
		\left(
		6\zeta_j \overline{K_{0\zeta_j}}( \zeta_j )
		+ 4\zeta_j^2 \overline{K_{0\zeta_j \zeta_j}}( \zeta_j )
		\right)
		ds
		\\
		&
		-
		\frac{2i b \lambda_j^{-1} \rho_j^{\mu} }{\rho_j^2+1}
		\Big[
		\frac{ 2\mu }{(\rho_j^\mu+1)^2 (\rho_j^2+1)^{\frac{1}{2}}}
		+
		\frac{ \rho_j^2 +2 }{ (\rho_j^\mu+ 1) (\rho_j^2+1)^{\frac{3}{2}}  }
		\Big]
		\int_{-T}^t \frac{\overline{\dot{p}_j}(s) e^{i\gamma_j(t)}  }{t-s}
		\left(
		\overline{K_0}(\zeta_j)
		+ 2\zeta_j \overline{K_{0\zeta_j}}(\zeta_j)
		\right)
		ds
		\\
		&
		+
		\frac{2i b \lambda_j^{-1}\rho_j^{\mu-2}  }{ (\rho_j^2+1)^{\frac{1}{2}} }
		\bigg[
		\mu^2
		\frac{\rho_j^{\mu} -1 }{(\rho_j^\mu+1 )^3 }
		+
		\frac{1}{\rho_j^\mu+ 1 }
		\bigg]
		\int_{-T}^t \frac{ \overline{\dot{p}_j}(s) e^{i\gamma_j(t)} }{t-s}\overline{K_0}(\zeta_j)ds
		\\
		& -
		\frac{8(a-ib) \lambda_j^{-1} \rho_j^\mu}{(\rho_j^2+ 1)^{\frac{3}{2}}(\rho_j^\mu+1) }
		\bigg(1
		-
		\frac{2}{\rho_j^2+1}
		{\rm{Re}}
		\bigg) \bigg( 	\int_{-T}^t \frac{\dot p_j(s) e^{-i \gamma_j(t) } }{t-s}K_0(\zeta_j)ds   \bigg)
		\\
		&
		+ (a-ib)
		\bigg\{
		\frac{-8\lambda_j^{-1}\rho_j^{\mu+2}}{(\rho_j^\mu+ 1) (\rho_j^2+1)^{\frac{5}{2}} }
		\mathrm{Re} \bigg( \int_{-T}^t \frac{\dot p_j(s) e^{-i\gamma_j(t)} }{t-s}
		\left(
		K_0(\zeta_j)
		+ 2\zeta_j K_{0\zeta_j}(\zeta_j)
		\right)
		ds  \bigg)
		\\
		&
		-
		\bigg[
		\frac{8\mu\lambda_j^{-1} \rho_j^{\mu}}{(\rho_j^\mu+1)^2(\rho_j^2+1)^{\frac{3}{2}}}
		-
		\frac{8 \lambda_j^{-1}\rho_j^\mu (\rho_j^2-1)}{(\rho_j^\mu+ 1)(\rho_j^2+1)^{\frac{5}{2}} }
		\bigg]
		\mathrm{Re} \bigg( \int_{-T}^t \frac{\dot p_j(s) e^{-i\gamma_j(t)} }{t-s}K_0(\zeta_j)ds \bigg)
		\bigg\}
		\\
		&
		+ e^{i\theta_j}
		\bigg\{  - (\dot{\xi}_1^{\J} - i\dot{\xi}_2^{\J} )
		\bigg[
		\frac{ 2\la_j^{-1} }{\rho_j^2+1}
		+
		\frac{   \rho_j^{\mu+1}  }{ 2 ( \rho_j^\mu + 1 ) ( \rho_j^2 + 1 )^{\frac{1}{2}} } \bigg( 1-\frac{2}{\rho_j^2+1}\mathrm{Re} \bigg)\bigg( \int_{-T}^t \frac{\dot p_j(s) e^{-i\gamma_j(t)}  }{t-s}
		\left(
		K_0(\zeta_j)
		+ 2\zeta_j K_{0\zeta_j}(\zeta_j)
		\right)
		ds  \bigg)
		\\
		&
		+
		\frac{\mu \rho_j^{\mu-1} (\rho_j^2+1)^{\frac{1}{2}} }{ 2(\rho_j^\mu+ 1)^2 }
		\bigg( 1-\frac{2}{\rho_j^2+1}\mathrm{Re} \bigg) \bigg( \int_{-T}^t \frac{\dot p_j(s) e^{-i\gamma_j(t)}  }{t-s}K_0(\zeta_j)ds \bigg)
		\bigg]
		\\
		&
		-
		(\dot{\xi}_2^{\J} + i\dot{\xi}_1^{\J} )
		\frac{  \rho_j^{\mu-1} (\rho_j^2+1)^{\frac{1}{2}} }{ 2(\rho_j^\mu + 1) }
		\bigg( i+\frac{2}{\rho_j^2+1}\mathrm{Im} \bigg) \bigg( \int_{-T}^t \frac{\dot p_j(s) e^{-i\gamma_j(t)} }{t-s}K_0(\zeta_j)ds  \bigg)
		\bigg\}
		\\
		& +
		e^{-i\theta_j}
		\bigg\{
		(\dot{\xi}_1^{\J} + i\dot{\xi}_2^{\J} )
		\bigg[
		\frac{  -\rho_j^{\mu+1}  }{ 2 ( \rho_j^\mu + 1 ) ( \rho_j^2 + 1 )^{\frac{1}{2}} } \bigg( 1-\frac{2}{\rho_j^2+1}\mathrm{Re} \bigg)\bigg( \int_{-T}^t \frac{\dot p_j(s) e^{-i\gamma_j(t) }  }{t-s}
		\left(
		K_0(\zeta_j)
		+ 2\zeta_j K_{0\zeta_j}(\zeta_j)
		\right)
		ds  \bigg)
		\\
		&
		-
		\frac{\mu \rho_j^{\mu-1} (\rho_j^2+1)^{\frac{1}{2}} }{ 2(\rho_j^\mu+ 1)^2 }
		\bigg( 1-\frac{2}{\rho_j^2+1}\mathrm{Re} \bigg) \bigg( \int_{-T}^t \frac{\dot p_j(s) e^{-i\gamma_j(t)} }{t-s}K_0(\zeta_j)ds  \bigg)
		\bigg]
		\\
		&
		-
		(\dot{\xi}_2^{\J} - i\dot{\xi}_1^{\J} )
		\frac{ \rho_j^{\mu-1}(\rho_j^2+1)^{\frac{1}{2}} }{ 2(\rho_j^\mu + 1) }
		\bigg( i+\frac{2}{\rho_j^2+1}\mathrm{Im} \bigg) \bigg( \int_{-T}^t \frac{\dot p_j(s) e^{-i\gamma_j(t)}  }{t-s}K_0(\zeta_j)ds  \bigg)
		\bigg\}
		\\
		:= \ &
		M_0^{\J}(\rho_j,t)
		+
		\tilde{M}_0^{\J}(\rho_j,t)
		+ e^{i\theta_j}
		\big( M_1^{\J}(\rho_j,t)
		+
		\tilde{M}_1^{\J}(\rho_j,t)
		\big)
		+ e^{-i\theta_j} M_{-1}^{\J}(\rho_j,t),
\end{align*}
\end{small}
where $M_0^{\J}(\rho_j,t), \tilde{M}_0^{\J}(\rho_j,t),  M_1^{\J}(\rho_j,t),
\tilde{M}_1^{\J}(\rho_j,t), M_{-1}^{\J}(\rho_j,t) $ are given as follows.
\begin{small}
\begin{align*}
		&
		M_0^{\J}(\rho_j,t)
		:=
		\lambda_j^{-1}
		\bigg( 1-\frac{2}{\rho_j^2+1}\mathrm{Re} \bigg)
		\bigg\{
		(a-ib)\bigg[
		\nonumber
		\\
		&
		\int_{-T}^t \frac{\dot p_j(s) e^{-i\gamma_j(t)} }{t-s}
		\bigg\{
		\bigg[
		\frac{\rho_j^{\mu-2}}{ (\rho_j^\mu+ 1) (\rho_j^2+1)^{\frac{3}{2}}  }
		-
		\frac{ 2\mu  \rho_j^{\mu}}{(\rho_j^\mu+1)^2 (\rho_j^2+1)^{\frac{1}{2}} }
		-
		\mu^2
		\frac{ \rho_j^{\mu-2}  (1 - \rho_j^{\mu}) (\rho_j^2+1)^{\frac{1}{2}}  }{(\rho_j^\mu+1 )^{3} }
		\bigg]K_0(\zeta_j)
		\nonumber
		\\
		& +
		\bigg[
		\frac{ 4 \rho_j^{\mu} }{ (\rho_j^\mu+ 1)( \rho_j^2+1 )^{\frac{3}{2}} }
		-
		\frac{ 4\mu  \rho_j^{\mu}}{(\rho_j^\mu+1)^2 (\rho_j^2+1)^{\frac{1}{2}} }
		\bigg]
		\zeta_j K_{0\zeta_j}( \zeta_j )
		+
		\frac{ 4 \rho_j^{\mu} }{ (\rho_j^\mu+ 1)( \rho_j^2+1 )^{\frac{3}{2}} }
		\zeta_j^2 K_{0\zeta_j \zeta_j}( \zeta_j )
		\bigg\}
		ds   \bigg] \bigg\}
		\nonumber
		\\
		&
		-
		i b  \lambda_j^{-1}
		\int_{-T}^t \frac{ \overline{\dot{p}_j}(s) e^{i\gamma_j(t)} }{t-s}
		\bigg\{
		\bigg[
		\frac{ 2 \rho_j^{\mu} [2\mu(\rho_j^2+1) + (\rho_j^2+2)(\rho_j^\mu+1) ]}{ (\rho_j^\mu+ 1)^2 (\rho_j^2+1)^{\frac{5}{2}}  }
		-
		\frac{2 \rho_j^{\mu-2} [ \mu^2   (\rho_j^{\mu} -1) + (\rho_j^\mu +1 )^2 ] }{(\rho_j^\mu+1 )^3 (\rho_j^2+1)^{\frac{1}{2}} }
		\bigg]
		\overline{K_0}(\zeta_j)
		\nonumber
		\\
		&
		+
		\bigg[
		\frac{ 8\mu \rho_j^{\mu}  }{(\rho_j^\mu+1)^2 (\rho_j^2+1)^{\frac{3}{2}}}
		+
		\frac{ 16\rho_j^{\mu+2} +8\rho_j^\mu }{ (\rho_j^\mu+ 1) (\rho_j^2+1)^{\frac{5}{2}}  }
		\bigg]
		\zeta_j \overline{K_{0\zeta_j}}(\zeta_j)
		+
		\frac{ 8 \rho_j^{\mu+2} }{ (\rho_j^\mu+ 1)( \rho_j^2+1 )^{\frac{5}{2}} }
		\zeta_j^2 \overline{K_{0\zeta_j \zeta_j}}( \zeta_j )
		\bigg\} ds
		\nonumber
		\\
		& -
		(a-ib) \lambda_j^{-1}
		\bigg(1
		-
		\frac{2}{\rho_j^2+1}
		{\rm{Re}}
		\bigg) \bigg( 	\int_{-T}^t \frac{\dot p_j(s) e^{-i \gamma_j(t) } }{t-s}
		\frac{8  \rho_j^\mu  }{(\rho_j^2+ 1)^{\frac{3}{2}}(\rho_j^\mu+1) }
		K_0(\zeta_j)
		ds   \bigg)
		\nonumber
		\\
		&
		- (a-ib)  \lambda_j^{-1}
		\mathrm{Re}
		\bigg[
		\int_{-T}^t \frac{\dot p_j(s) e^{-i\gamma_j(t)} }{t-s}
		\bigg\{
		\bigg[
		\frac{8\mu  \rho_j^{\mu}}{(\rho_j^\mu+1)^2(\rho_j^2+1)^{\frac{3}{2}}}
		+
		\frac{ 8\rho_j^\mu }{(\rho_j^\mu+ 1)(\rho_j^2+1)^{\frac{5}{2}} }
		\bigg]
		K_0(\zeta_j)
		\nonumber
		\\
		&
		+
		\frac{16 \rho_j^{\mu+2}}{(\rho_j^\mu+ 1) (\rho_j^2+1)^{\frac{5}{2}} }\zeta_j K_{0\zeta_j}(\zeta_j)
		\bigg\}
		ds  \bigg]
		\nonumber
		\\
		&
		+
		(\la_j^{-1}\dot\la_j + i \dot\gamma_j )
		\frac{2\rho_j[ \rho_j^{\mu-1} - \rho_j  - (\rho_j^2+1)^{\frac{1}{2}} ] }{ [ \rho_j + (\rho_j^2+1)^{\frac{1}{2}} ] (\rho_j^\mu + 1)(\rho_j^2+1)}
		-
		\la_j^{-1}\dot\la_j
		\frac{ 4\rho_j^{\mu} [  \rho_j^2  + \rho_j(\rho_j^2+1)^{\frac{1}{2}} + 1 ]  }{ [ \rho_j + (\rho_j^2+1)^{\frac{1}{2}} ] (\rho_j^\mu + 1)(\rho_j^2+1)^2 }, 
\end{align*}
\end{small}
\begin{small}
\begin{align}
		\tilde{M}_0^{\J}(\rho_j,t) := \ &
		\dot{\lambda}_j  \bigg( 1-\frac{2}{\rho_j^2+1}\mathrm{Re} \bigg)
		\bigg[
		\int_{-T}^t \frac{\dot p_j(s) e^{-i\gamma_j(t)} }{t-s}
		\bigg\{
		\bigg[
		\frac{ \rho_j^\mu }{ ( \rho_j^\mu + 1 ) ( \rho_j^2 + 1 )^{\frac{1}{2}} }
		-
		\frac{ \mu  \rho_j^\mu (\rho_j^2+1)^{\frac{1}{2}} }{ (\rho_j^\mu+ 1)^{2} }
		\bigg]
		K_0(\zeta_j)
		\nonumber
		\\
		&
		+
		\frac{ 2 \rho_j^\mu }{ ( \rho_j^\mu + 1 ) ( \rho_j^2 + 1 )^{\frac{1}{2}} }
		\zeta_j K_{0\zeta_j}(\zeta_j)
		\bigg\}
		ds \bigg],
		\label{til-M0-def}
\end{align}
\end{small}
\begin{equation}\label{M1-def}
	M_1^{\J}(\rho_j,t)
	:=  - 2\la_j^{-1} (\dot \xi_1^{\J}  -i \dot\xi_2^{\J}  )
	(\rho_j^2+1)^{-1},
\end{equation}
\begin{small}
\begin{equation}\label{tilde-M1-def}
	\begin{aligned}
		&
		\tilde{M}_1^{\J}(\rho_j,t)
		: =
		-
		(\dot{\xi}_1^{\J} - i\dot{\xi}_2^{\J} )
		\bigg( 1-\frac{2}{\rho_j^2+1}\mathrm{Re} \bigg)
		\bigg[
		\int_{-T}^t \frac{\dot p_j(s) e^{-i\gamma_j(t)}  }{t-s}
		\bigg\{
		\bigg[
		\frac{   \rho_j^{\mu+1}  }{ 2 ( \rho_j^\mu + 1 ) ( \rho_j^2 + 1 )^{\frac{1}{2}} }
		+
		\frac{\mu \rho_j^{\mu-1} (\rho_j^2+1)^{\frac{1}{2}} }{ 2(\rho_j^\mu+ 1)^2 }
		\bigg]
		K_0(\zeta_j)
		\\
		&
		+
		\frac{   \rho_j^{\mu+1}  }{  ( \rho_j^\mu + 1 ) ( \rho_j^2 + 1 )^{\frac{1}{2}} } \zeta_j K_{0\zeta_j}(\zeta_j)
		\bigg\}
		ds
		\bigg]
		-
		(\dot{\xi}_2^{\J} + i\dot{\xi}_1^{\J} )
		\frac{  \rho_j^{\mu-1} (\rho_j^2+1)^{\frac{1}{2}} }{ 2(\rho_j^\mu + 1) }
		\bigg( i+\frac{2}{\rho_j^2+1}\mathrm{Im} \bigg) \bigg( \int_{-T}^t \frac{\dot p_j(s) e^{-i\gamma_j(t)} }{t-s}K_0(\zeta_j)ds  \bigg),
	\end{aligned}
\end{equation}
\end{small}
\begin{small}
\begin{equation}\label{M(-1)-def}
	\begin{aligned}
		&
		M_{-1}^{\J}(\rho_j,t)
		:=
		- (\dot{\xi}_1^{\J} + i\dot{\xi}_2^{\J} )
		\bigg( 1-\frac{2}{\rho_j^2+1}\mathrm{Re} \bigg)
		\bigg[
		\int_{-T}^t \frac{\dot p_j(s) e^{-i\gamma_j(t) }  }{t-s}
		\bigg\{
		\bigg[
		\frac{  \rho_j^{\mu+1}  }{ 2 ( \rho_j^\mu + 1 ) ( \rho_j^2 + 1 )^{\frac{1}{2}} }
		+
		\frac{\mu \rho_j^{\mu-1} (\rho_j^2+1)^{\frac{1}{2}} }{ 2(\rho_j^\mu+ 1)^2 }
		\bigg]
		K_0(\zeta_j)
		\\
		&
		+
		\frac{  \rho_j^{\mu+1}  }{  ( \rho_j^\mu + 1 ) ( \rho_j^2 + 1 )^{\frac{1}{2}} }
		\zeta_j K_{0\zeta_j}(\zeta_j)
		\bigg\}
		ds
		\bigg]
		-
		(\dot{\xi}_2^{\J} - i\dot{\xi}_1^{\J} )
		\frac{ \rho_j^{\mu-1}(\rho_j^2+1)^{\frac{1}{2}} }{ 2(\rho_j^\mu + 1) }
		\bigg( i+\frac{2}{\rho_j^2+1}\mathrm{Im} \bigg) \bigg( \int_{-T}^t \frac{\dot p_j(s) e^{-i\gamma_j(t)}  }{t-s}K_0(\zeta_j)ds  \bigg).
	\end{aligned}
\end{equation}
\end{small}

To avoid non-smoothness due to terms like $e^{i\theta_j}$, we need to take $\mu$ not so small to reserve some vanishing of $\rho_j$ as $\rho_j\rightarrow 0$ in the new error. From now on, we take $\mu=3$. Then
\begin{small}
\begin{align} \notag
		M_0^{\J}
		= &~
		\lambda_j^{-1}
		\bigg( 1-\frac{2}{\rho_j^2+1}\mathrm{Re} \bigg)
		\bigg[
		(a-ib)\bigg\{
		\int_{-T}^t \frac{\dot p_j(s) e^{-i\gamma_j(t)} }{t-s}
		\bigg[
		\frac{\rho_j (3\rho_j^7+ \rho_j^6 +12\rho_j^5 -15\rho_j^4+11\rho_j^3-24\rho_j^2-8)}{(\rho_j^2+1)^{\frac{3}{2}} (\rho_j^3+1)^3 }
		K_0(\zeta_j)
		\\ \notag
		& ~+
		\frac{4\rho_j^3(\rho_j^3-3\rho_j^2-2)}{(\rho_j^2+1)^{\frac{3}{2}} (\rho_j^3+1)^2 }
		\zeta_j K_{0\zeta_j}( \zeta_j )
		+
		\frac{ 4 \rho_j^{3} }{ (\rho_j^3+ 1)( \rho_j^2+1 )^{\frac{3}{2}} }
		\zeta_j^2 K_{0\zeta_j \zeta_j}( \zeta_j )
		\bigg]
		ds   \bigg\} \bigg]
		\\ \notag
		&~
		-
		i b  \lambda_j^{-1}
		\int_{-T}^t \frac{ \overline{\dot{p}_j}(s) e^{i\gamma_j(t)} }{t-s}
		\bigg[
		-\frac{2\rho_j ( 3\rho_j^7 + \rho_j^6 +12\rho_j^5 -15 \rho_j^4 +11\rho_j^3-24\rho_j^2-8)}{(\rho_j^2+1)^{\frac{5}{2}} (\rho_j^3+1)^3}
		\overline{K_0}(\zeta_j)
		\\ \notag
		&~
		+
		\frac{8\rho_j^3(2\rho_j^5 +\rho_j^3+5\rho_j^2+4)}{(\rho_j^2+1)^{\frac{5}{2}} (\rho_j^3+1)^2 }
		\zeta_j \overline{K_{0\zeta_j}}(\zeta_j)
		+
		\frac{ 8 \rho_j^{5} }{ (\rho_j^3+ 1)( \rho_j^2+1 )^{\frac{5}{2}} }
		\zeta_j^2 \overline{K_{0\zeta_j \zeta_j}}( \zeta_j )
		\bigg] ds
		\\ \notag
		&~ -
		(a-ib) \lambda_j^{-1}
		\bigg(1
		-
		\frac{2}{\rho_j^2+1}
		{\rm{Re}}
		\bigg) \bigg( 	\int_{-T}^t \frac{\dot p_j(s) e^{-i \gamma_j(t) } }{t-s}
		\frac{8  \rho_j^3  }{(\rho_j^2+ 1)^{\frac{3}{2}}(\rho_j^3+1) }
		K_0(\zeta_j)
		ds   \bigg)
		\\ \notag
		&~
		- (a-ib)  \lambda_j^{-1}
		\mathrm{Re}
		\bigg\{
		\int_{-T}^t \frac{\dot p_j(s) e^{-i\gamma_j(t)} }{t-s}
		\bigg[
		\frac{8\rho_j^3(\rho_j^3+3\rho_j^2+4)}{(\rho_j^2+1)^{\frac{5}{2}} (\rho_j^3+1)^2 }
		K_0(\zeta_j)
		+
		\frac{16 \rho_j^{5}}{(\rho_j^3+ 1) (\rho_j^2+1)^{\frac{5}{2}} }\zeta_j K_{0\zeta_j}(\zeta_j)
		\bigg]
		ds  \bigg\}
		\\
		&~
		+
		p_j^{-1} \dot{p}_j
		\frac{2\rho_j[ \rho_j^{2} - \rho_j  - (\rho_j^2+1)^{\frac{1}{2}} ] }{ [ \rho_j + (\rho_j^2+1)^{\frac{1}{2}} ] (\rho_j^3 + 1)(\rho_j^2+1)}
		-
		\la_j^{-1}\dot\la_j
		\frac{ 4\rho_j^{3} [  \rho_j^2  + \rho_j(\rho_j^2+1)^{\frac{1}{2}} + 1 ]  }{ [ \rho_j + (\rho_j^2+1)^{\frac{1}{2}} ] (\rho_j^3 + 1)(\rho_j^2+1)^2 }.\label{M0-def-mu3}
	\end{align}
\end{small}
 By \eqref{K-est}, we have
 \begin{small}
\begin{align}
		&
		M_0^{\J}
		=
		\lambda_j^{-1}
		\bigg( 1-\frac{2}{\rho_j^2+1}\mathrm{Re} \bigg)
		\bigg\{
		(a-ib)\bigg[
		\int_{-T}^t \frac{\dot p_j(s) e^{-i\gamma_j(t)} }{t-s}
		\bigg\{
		\frac{\rho_j (3\rho_j^7+ \rho_j^6 +12\rho_j^5 -15\rho_j^4+11\rho_j^3-24\rho_j^2-8)}{(\rho_j^2+1)^{\frac{3}{2}} (\rho_j^3+1)^3 }
		\nonumber
		\\
		& \times
		\bigg[ \bigg(\frac{a+ib}{2}+ O\left(\zeta_j\right) \bigg)\1_{\{ \zeta_j\le 1\}}
		+
		O(\zeta_j^{-1}) \1_{\{ \zeta_j > 1\}} \bigg]
	 +
		\frac{4\rho_j^3(\rho_j^3-3\rho_j^2-2)}{(\rho_j^2+1)^{\frac{3}{2}} (\rho_j^3+1)^2 }
		\left( O\left(\zeta_j\right) \1_{\{ \zeta_j\le 1\}}
		+
		O(\zeta_j^{-1} ) \1_{\{ \zeta_j > 1\}} \right)
		\nonumber
		\\
		&
		+
		\frac{ 4 \rho_j^{3} }{ (\rho_j^3+ 1)( \rho_j^2+1 )^{\frac{3}{2}} }
		\left(
		O\left(\zeta_j^2\right) \1_{\{ \zeta_j\le 1\}}
		+
		O(\zeta_j^{-1}) \1_{\{ \zeta_j > 1\}}
		\right)
		\bigg\}
		ds   \bigg] \bigg\}
		\nonumber
		\\
		&
		-
		i b  \lambda_j^{-1}
		\int_{-T}^t \frac{ \overline{\dot{p}_j}(s) e^{i\gamma_j(t)} }{t-s}
		\bigg\{
		-\frac{2\rho_j ( 3\rho_j^7 + \rho_j^6 +12\rho_j^5 -15 \rho_j^4 +11\rho_j^3-24\rho_j^2-8)}{(\rho_j^2+1)^{\frac{5}{2}} (\rho_j^3+1)^3}
		\nonumber
		\\
		&\times
		\left[
		\left(\frac{a-ib}{2}+ O\left(\zeta_j\right) \right)\1_{\{ \zeta_j\le 1\}}
		+
		O(\zeta_j^{-1}) \1_{\{ \zeta_j > 1\}}
		\right]
		+
		\frac{8\rho_j^3(2\rho_j^5 +\rho_j^3+5\rho_j^2+4)}{(\rho_j^2+1)^{\frac{5}{2}} (\rho_j^3+1)^2 }
		\left( O\left(\zeta_j\right) \1_{\{ \zeta_j\le 1\}}
		+
		O(\zeta_j^{-1}) \1_{\{ \zeta_j > 1\}} \right)
		\nonumber
		\\
		& +
		\frac{ 8 \rho_j^{5} }{ (\rho_j^3+ 1)( \rho_j^2+1 )^{\frac{5}{2}} }
		\left(
		O\left(\zeta_j^2\right) \1_{\{ \zeta_j\le 1\}}
		+
		O(\zeta_j^{-1} ) \1_{\{ \zeta_j > 1\}}
		\right)
		\bigg\} ds
		\nonumber
		\\
		& -
		(a-ib) \lambda_j^{-1}
		\bigg(1
		-
		\frac{2}{\rho_j^2+1}
		{\rm{Re}}
		\bigg)
		\bigg\{ 	\int_{-T}^t \frac{\dot p_j(s) e^{-i \gamma_j(t) } }{t-s}
		\frac{8  \rho_j^3  }{(\rho_j^2+ 1)^{\frac{3}{2}}(\rho_j^3+1) }
		\nonumber
		\\
		& \times
		\left[
		\left(\frac{a+ib}{2}+ O\left(\zeta_j\right) \right)\1_{\{ \zeta_j\le 1\}}
		+
		O\left(\zeta_j^{-1} \right) \1_{\{ \zeta_j > 1\}}
		\right]
		ds   \bigg\}
		\nonumber
		\\
		&
		- (a-ib)  \lambda_j^{-1}
		\mathrm{Re}
		\bigg[
		\int_{-T}^t \frac{\dot p_j(s) e^{-i\gamma_j(t)} }{t-s}
		\bigg\{
		\frac{8\rho_j^3(\rho_j^3+3\rho_j^2+4)}{(\rho_j^2+1)^{\frac{5}{2}} (\rho_j^3+1)^2 }
		\left[
		\left(\frac{a+ib}{2}+ O\left(\zeta_j\right) \right)\1_{\{ \zeta_j\le 1\}}
		+
		O(\zeta_j^{-1}) \1_{\{ \zeta_j > 1\}}
		\right]
		\nonumber
		\\
		&
		+
		\frac{16 \rho_j^{5}}{(\rho_j^3+ 1) (\rho_j^2+1)^{\frac{5}{2}} }
		\left( O\left(\zeta_j\right) \1_{\{ \zeta_j\le 1\}}
		+
		O\left(\zeta_j^{-1} \right) \1_{\{ \zeta_j > 1\}} \right)
		\bigg\}
		ds \bigg]
		\nonumber
		\\
		&
		+
		p_j^{-1} \dot{p}_j
		\frac{2\rho_j[ \rho_j^{2} - \rho_j  - (\rho_j^2+1)^{\frac{1}{2}} ] }{ [ \rho_j + (\rho_j^2+1)^{\frac{1}{2}} ] (\rho_j^3 + 1)(\rho_j^2+1)}
		-
		\la_j^{-1}\dot\la_j
		\frac{ 4\rho_j^{3} [  \rho_j^2  + \rho_j(\rho_j^2+1)^{\frac{1}{2}} + 1 ]  }{ [ \rho_j + (\rho_j^2+1)^{\frac{1}{2}} ] (\rho_j^3 + 1)(\rho_j^2+1)^2 }
\nonumber
\\
		= \ &
		\lambda_j^{-1}
		\bigg( 1-\frac{2}{\rho_j^2+1}\mathrm{Re} \bigg)
		\bigg[
		\int_{-T}^t \frac{\dot p_j(s) e^{-i\gamma_j(t)} }{t-s}
		\bigg\{
		\bigg[
		\frac{\rho_j (3\rho_j^7+ \rho_j^6 +12\rho_j^5 -15\rho_j^4+11\rho_j^3-24\rho_j^2-8)}{2(\rho_j^2+1)^{\frac{3}{2}} (\rho_j^3+1)^3 }
		+ O\left(\zeta_j \langle \rho_j\rangle^{-3} \right) \bigg]
		\1_{\{ \zeta_j\le 1\}}
		\nonumber
		\\
		&
		+
		O\left(\zeta_j^{-1} \langle \rho_j\rangle^{-3} \right) \1_{\{ \zeta_j > 1\}}
		\bigg\}
		ds   \bigg]
		+
		b(ia+b) \lambda_j^{-1}
		\int_{-T}^t \frac{ \overline{\dot{p}_j}(s) e^{i\gamma_j(t)} }{t-s}
		\bigg\{
		\nonumber
		\\
		&
		\bigg[
		\frac{\rho_j ( 3\rho_j^7 + \rho_j^6 +12\rho_j^5 -15 \rho_j^4 +11\rho_j^3-24\rho_j^2-8)}{(\rho_j^2+1)^{\frac{5}{2}} (\rho_j^3+1)^3}
		+ O\left(\zeta_j \langle \rho_j\rangle^{-3} \right) \bigg]
		\1_{\{ \zeta_j\le 1\}}
		+
		O\left(\zeta_j^{-1}
		\langle \rho_j\rangle^{-3} \right) \1_{\{ \zeta_j > 1\}}
		\bigg\} ds
		\nonumber
		\\
		& -
		(a-ib) \lambda_j^{-1}
		\bigg(1
		-
		\frac{2}{\rho_j^2+1}
		{\rm{Re}}
		\bigg)
		\bigg[	\int_{-T}^t \frac{\dot p_j(s) e^{-i \gamma_j(t) } }{t-s}
		\nonumber
		\\
		&\times
		\bigg\{
		\bigg[ \frac{4  (a+ib) \rho_j^3  }{(\rho_j^2+ 1)^{\frac{3}{2}}(\rho_j^3+1) } + O\left(\zeta_j \langle \rho_j\rangle^{-3}\right) \bigg]\1_{\{ \zeta_j\le 1\}}
		+
		O\left(\zeta_j^{-1} \langle \rho_j\rangle^{-3} \right) \1_{\{ \zeta_j > 1\}}
		\bigg\}
		ds   \bigg]
		\nonumber
		\\
		&
		- (a-ib)  \lambda_j^{-1}
		\mathrm{Re}
		\bigg[
		\int_{-T}^t \frac{\dot p_j(s) e^{-i\gamma_j(t)} }{t-s}
		\bigg\{
		\bigg[ (a+ib)
		\frac{4\rho_j^3(\rho_j^3+3\rho_j^2+4)}{(\rho_j^2+1)^{\frac{5}{2}} (\rho_j^3+1)^2 }
		+ O\left(\zeta_j \langle \rho_j\rangle^{-3} \right) \bigg]
		\1_{\{ \zeta_j\le 1\}}
		+
		O(\zeta_j^{-1} \langle \rho_j\rangle^{-3} ) \1_{\{ \zeta_j > 1\}}
		\bigg\}
		ds \bigg]
		\nonumber
		\\
		&
		+
		p_j^{-1} \dot{p}_j
		\frac{2\rho_j[ \rho_j^{2} - \rho_j  - (\rho_j^2+1)^{\frac{1}{2}} ] }{ [ \rho_j + (\rho_j^2+1)^{\frac{1}{2}} ] (\rho_j^3 + 1)(\rho_j^2+1)}
		-
		\la_j^{-1}\dot\la_j
		\frac{ 4\rho_j^{3} [  \rho_j^2  + \rho_j(\rho_j^2+1)^{\frac{1}{2}} + 1 ]  }{ [ \rho_j + (\rho_j^2+1)^{\frac{1}{2}} ] (\rho_j^3 + 1)(\rho_j^2+1)^2 }.
		\label{M0-ortho-form}
\end{align}
 \end{small}
Then  by \eqref{lam-ansatz}, \eqref{K-est}, and \eqref{nonlocal-est}, we obtain
\begin{align}
		&
		|M_0^{\J} |
		\lesssim
		\lambda_*^{-1} \langle \rho_j\rangle^{-3}
		\int_{-T}^t \frac{|\dot{\lambda}_*(s)| }{t-s}
		\left(
		\1_{\{ \zeta_j\le 1\}}
		+
		\zeta_j^{-1}
		\1_{\{ \zeta_j > 1\}}
		\right)
		ds
		+
		\lambda_*^{-1} |\dot{\lambda}_*| \langle \rho_j\rangle^{-3}
		\lesssim
		\lambda_*^{-1} \langle \rho_j\rangle^{-3},
		\nonumber
		\\
		&
		| \tilde{M}_0^{\J} | \lesssim
		|\dot{\lambda}_*|
		\langle \rho_j\rangle^{-1}
		\int_{-T}^t \frac{ |\dot{\lambda}_*(s)|  }{t-s}
		\left(
		\1_{\{ \zeta_j\le 1\}}
		+
		\zeta_j^{-1} \1_{\{ \zeta_j > 1\}}
		\right)
		ds
		\lesssim
		|\dot{\lambda}_*|
		\langle \rho_j\rangle^{-1},
		\quad
		|M_1^{\J}|
		\lesssim  \la_*^{-1} |\dot{\xi}^{\J}| \langle \rho_j\rangle^{-2},
			\nonumber
		\\
		&
		| \tilde{M}_1^{\J} |
		\lesssim
		|\dot{\xi}^{\J}|
		\int_{-T}^t \frac{ |\dot{\lambda}_*(s)|  }{t-s}
		\left(
		\1_{\{ \zeta_j\le 1\}}
		+
		\zeta_j^{-1} \1_{\{ \zeta_j > 1\}}
		\right)
		ds
		\lesssim
		|\dot{\xi}^{\J}|,
		\quad
		| M_{-1}^{\J} |
		\lesssim
		|\dot{\xi}^{\J}|.  \label{M-est}
\end{align}
By \eqref{M-est}, we have
\begin{equation}\label{Sj-U-tangent-est}
	\big| \big(
	\Pi_{U^{\J \perp} } \mathcal{S}^{\J} \big)_{\mathcal{C}_j } \big|
	\lesssim
	\lambda_*^{-1} \langle \rho_j\rangle^{-3}
	+
	|\dot{\lambda}_*|
	\langle \rho_j\rangle^{-1}
	+
	|\dot{\xi}^{\J}|
	\big( \lambda_*^{-1}  \langle \rho_j \rangle^{-2}
	+
	1 \big).
\end{equation}
Integrating \eqref{Sj-U-dire-est} and \eqref{Sj-U-tangent-est}, we have
\begin{equation}\label{Sj-est}
	|\mathcal{S}^{\J} | \lesssim
	\lambda_*^{-1} \langle \rho_j\rangle^{-2}
	+
	|\dot{\lambda}_*|
	\langle \rho_j\rangle^{-1}
	+
	|\dot{\xi}^{\J}|.
\end{equation}

\section{Gluing system}

In this section, we will derive the inner-outer gluing system and present the corresponding topologies with carefully designed weights such that solutions with desired asymptotics can be found.

\subsection{Error analysis}\label{system-newerror-sec}

We look for the solution $u$ of the form
\begin{equation}\label{u-def}
	\begin{aligned}
	&	u= (1+A)U_*+\Phi-(\Phi\cdot U_*)U_*,\\
	&	\Phi(x,t) := \sum_{j=1}^{N}\left( \eta_R^{\J}(x,t) Q_{\gamma_j}\Phi_{\rm in}^{\J}(y^{\J},t)
		+
		\eta_{d_q}^{\J}(x,t) \Phi^{*{\J}}_0(|x-\xi^{\J}(t)|,t)\right)+\Phi_{\rm out}(x,t) ,
\\
& \Phi_{\rm in}^{\J}(y^{\J},t) \cdot W^{\J}=0
\mbox{ \ for all \ }  t\in(0,T), \quad j=1,2,\dots,N,
	\end{aligned}
\end{equation}
where
\begin{equation}\label{qd24Apr12-3}  \eta_R^{\J}(x,t)=\eta\Big(\frac{x-\xi^{\J}(t)}{\la_*(t)R(t)}\Big),
		\quad
		\eta_{d_q}^{\J}(x,t)=\eta\Big( \frac{x-\xi^{\J}(t) }{d_q} \Big),
\end{equation}
$\eta$ is a smooth cut-off function satisfying $0\le \eta(x) \le 1$,
$\eta(x) = 1$ if $|x|\le 1$ and $\eta(x) = 0$ if $|x|\ge 2$; $A$ is a real-valued function to to make $|u|=1$; $\Phi_{\rm in}^{\J}$ and $\Phi_{\rm out}$ will be solved in the inner-outer gluing system, where  $\Phi_{\rm in}^{\J}$ solves the inner problem near each bubble $U^{\J}$, while $\Phi_{\rm out}$ handles the region away from the concentration zones; $\Phi^{*{\J}}_0$ is defined in \eqref{def-globalcorrection-J}.
Throughout this paper, we make the ansatz
\begin{equation}\label{RPhi-ansatz}
		R(t) = \lambda_*^{-\beta}(t), \quad
		|\Phi|\ll 1,
\end{equation}
where $0<\beta<1$ will be chosen later.
 Notice that
\begin{equation*}
	\eta_{d_q}^{\J}\equiv 1 \mbox{ \ in \ } |x-\xi^{\J}(t)|\le 2\lambda_*(t) R(t).
\end{equation*}
The scalar function $A$ will be chosen in \eqref{u-def} to make $|u|=1$. Indeed,
\begin{align*}
		&
		|u|^2=1 \Leftrightarrow
		(1+A)^2|U_*|^2 + 2 (1+A)
		(\Phi\cdot U_*)(1-|U_*|^2 )
		+ |\Phi-(\Phi\cdot U_*)U_*|^2=1
		\\
		&	\Leftrightarrow
		\bigg[ 1+A+
		\frac{ (\Phi\cdot U_*)(1-|U_*|^2)  }{|U_*|^2} \bigg]^2
		=\frac{1 -|\Phi-(\Phi\cdot U_*)U_*|^2 }{|U_*|^2}
		+
		\bigg[\frac{ (\Phi\cdot U_*)(1-|U_*|^2) }{|U_*|^2} \bigg]^2.
\end{align*}
We take
\begin{equation}\label{A-def}
	A
	=
	\bigg\{
	1+
	\frac{1-|U_*|^2 -|\Phi-(\Phi\cdot U_*)U_*|^2 }{|U_*|^2}
	+
	\bigg[\frac{ (\Phi\cdot U_*)(1-|U_*|^2)  }{|U_*|^2} \bigg]^2
	\bigg\}^{1/2}
	-
	1-
	\frac{(\Phi\cdot U_*)(1-|U_*|^2 ) }{|U_*|^2}.
\end{equation}
By \eqref{U*-norm}, \eqref{lam-ansatz}, and \eqref{RPhi-ansatz}, we have
\begin{equation}\label{A-est}
	A	=
	\big(
	1+
	O(\lambda_*+|\Phi|^2)
	+
	O(\lambda_*^2 |\Phi|^2)
	\big)^{1/2}
	-
	1+O(\lambda_*|\Phi|)
	=
	O(\lambda_*+\lambda_*|\Phi|+|\Phi|^2)
	=
	O(\lambda_*+|\Phi|^2) .
\end{equation}

One important insight is that we only need to solve
\begin{equation}\label{U-direc-trick}
S\left[u\right]=\Xi(x,t) U_*
\end{equation}
for some scalar function $\Xi$.
Indeed, since $|u|=1$ is kept for all $t\in (0,T)$,  and as the perturbation, $u-U_*$  is uniformly small,  then
\begin{equation*}
	(U_*\cdot u) \Xi= S[u]\cdot u=-\frac{1}2 \pp_{t} (|u|^2)+\frac{a}2 \Delta |u|^2=0.
\end{equation*}
If $U_*\cdot u\geq \delta_0>0$, then $\Xi \equiv 0$. \eqref{U-direc-trick} provides us the flexibility to adjust the error terms in $U_{*}$ direction and we call this {\it $U_{*}$-operation} mentioned earlier.
We compute
\begin{small}
\begin{align}
		-\partial_t \Phi
		&=  -\pp_t \Phi_{\rm out}
		+\sum_{j=1}^N
		\Big\{ - \pp_t \big(\eta_{d_q}^{\J} \Phi_0^{*\J} \big) +  \eta_R^{\J}Q_{\gamma_j}\big[ -\pp_t \Phi_{\rm in}^{\J}
		+
		\big(\la_j^{-1}\dot\la_j y^{\J}+\la_j^{-1}\dot\xi^{\J} \big)
		\cdot\nabla_{y^{\J}} \Phi_{\rm in}^{\J}-\dot\gamma_j J\Phi_{\rm in}^{\J}\big]
		-  Q_{\gamma_j}\Phi_{\rm in}^{\J} \pp_t \eta_R^{\J} \Big\},
		\nonumber
\\
\Delta_x \Phi &
	= \Delta_x \Phi_{\rm out}+\sum_{j=1}^N \Delta_x \big(\eta_{d_q}^{\J} \Phi_0^{*\J} \big)
	+\sum_{j=1}^N \eta_R^{\J}Q_{\gamma_j}  \Delta_x \Phi_{\rm in}^{\J}
	+\sum_{j=1}^N Q_{\gamma_j} \big( \Phi_{\rm in}^{\J} \Delta_x \eta_R^{\J}
	+2 \nabla_x \eta_R^{\J}\cdot \nabla_x \Phi_{\rm in}^{\J} \big),
	\label{Phi-Lap-ppt}
\end{align}
\end{small}
where we used
$\pp_{t}( Q_{\gamma_j} )  =  \dot{\gamma}_j J Q_{\gamma_j} = \dot{\gamma}_j  Q_{\gamma_j} J $ with $
	J:=\begin{bmatrix}
		0 & -1 & 0 \\
		1 & 0 & 0 \\
		0 & 0 & 0
	\end{bmatrix}$.
Notice that
\begin{align*}
		&
		U_* \Delta_x A +(1+A)\Delta_x U_*+2\nabla_x A \cdot \nabla_x U_*+\Delta_x \left[ \Phi- (\Phi\cdot U_*)U_* \right]
		\\
		&
		+ \left|\nabla_x \left[(1+A)U_*+\Phi-(\Phi\cdot U_*)U_*\right] \right|^2 \left[(1+A)U_*+\Phi-(\Phi\cdot U_*)U_*\right]
		\\
		= \ &
		\Delta_x \left[ \Phi- (\Phi\cdot U_*)U_* \right]
		+ \left|\nabla_x U_*\right|^2 \left[ \Phi-(\Phi\cdot U_*)U_*\right]
	\\
		&
		+ \left|\nabla_x \left[(1+A)U_*\right] \right|^2 \left[ \Phi-(\Phi\cdot U_*)U_*\right] - \left|\nabla_x U_*\right|^2 \left[ \Phi-(\Phi\cdot U_*)U_*\right]
		\\
		&
		+
		\left\{
		2
		\nabla_x \left[(1+A)U_*\right]
		\cdot
		\nabla_x \left[\Phi-(\Phi\cdot U_*)U_*\right]
		+
		\left|\nabla_x \left[ \Phi-(\Phi\cdot U_*)U_*\right] \right|^2
		\right\}
		\left[ \Phi-(\Phi\cdot U_*)U_*\right]
		\\
		&
		+
		2\nabla_x A \cdot \nabla_x U_*
		+
		(1+A) \Delta_x U_*
		\\
		&
		+
		U_* \left[ \Delta_x A
		+ \left|\nabla_x \left[(1+A)U_*+\Phi-(\Phi\cdot U_*)U_*\right] \right|^2 (1+A )
		\right]
		\\
		= \ &
		\Delta_x \Phi- 2\nabla_x (\Phi\cdot U_*)\cdot \nabla_x U_*
		+ \left|\nabla_x U_*\right|^2 \Phi
		\\
		&
		+ \left|\nabla_x \left[(1+A)U_*\right] \right|^2 \left[ \Phi-(\Phi\cdot U_*)U_*\right] - \left|\nabla_x U_*\right|^2 \left[ \Phi-(\Phi\cdot U_*)U_*\right]
		\\
		&
		+
		\left\{
		2
		\nabla_x \left[(1+A)U_*\right]
		\cdot
		\nabla_x \left[\Phi-(\Phi\cdot U_*)U_*\right]
		+
		\left|\nabla_x \left[ \Phi-(\Phi\cdot U_*)U_*\right] \right|^2
		\right\}
		\left[ \Phi-(\Phi\cdot U_*)U_*\right]
		\\
		&
		+
		2\nabla_x A \cdot \nabla_x U_*
		+
		\left[1+A -  (\Phi\cdot U_*) \right] \Delta_x U_*
		\\
		&
		+
		U_* \left\{ \Delta_x A
		+ \left|\nabla_x \left[(1+A)U_*+\Phi-(\Phi\cdot U_*)U_*\right] \right|^2 (1+A )
		-
		\left|\nabla_x U_*\right|^2 (\Phi\cdot U_*)
		-
		\Delta_x (\Phi\cdot U_*)
		\right\},
\end{align*}
and
\begin{align*}
		&
		\left[(1+A)U_*+\Phi-(\Phi\cdot U_*)U_*\right]\wedge \Delta_x \left[(1+A)U_*+\Phi-(\Phi\cdot U_*)U_*\right]
		\\
		= \ &
		\left[ \Phi-(\Phi\cdot U_*)U_*\right]\wedge \Delta_x \left[(1+A)U_* \right]
		+\left[(1+A)U_* \right]\wedge \Delta_x \left[ \Phi-(\Phi\cdot U_*)U_*\right]
		\\
		&
		+
		\left[ \Phi-(\Phi\cdot U_*)U_*\right]\wedge \Delta_x \left[ \Phi-(\Phi\cdot U_*)U_*\right]
		 +
		(1+A)U_* \wedge \Delta_x \left[(1+A)U_* \right]
		\\
		= \ &
		\left[ \Phi-(\Phi\cdot U_*)U_*\right]\wedge \Delta_x U_*
		+
		U_* \wedge \Delta_x \left[ \Phi-(\Phi\cdot U_*)U_*\right]
		\\
		&
		+
		\left[ \Phi-(\Phi\cdot U_*)U_*\right]\wedge \Delta_x \left( A U_* \right)
		+
		A U_* \wedge \Delta_x \left[ \Phi-(\Phi\cdot U_*)U_*\right]
		\\
		&
		+
		\left[ \Phi-(\Phi\cdot U_*)U_*\right]\wedge \Delta_x \left[ \Phi-(\Phi\cdot U_*)U_*\right]
		 +
		(1+A)U_* \wedge \Delta_x \left[(1+A)U_* \right]
		\\
		= \ &
		\Phi \wedge \Delta_x U_*
		+
		U_* \wedge  \left[ \Delta_x \Phi
		-
		2\nabla_x (\Phi\cdot U_*) \cdot  \nabla_x  U_*
		\right]
		\\
		&
		+
		\left[ \Phi-(\Phi\cdot U_*)U_*\right]\wedge \Delta_x \left( A U_* \right)
		+
		A U_* \wedge \Delta_x \left[ \Phi-(\Phi\cdot U_*)U_*\right]
		\\
		&
		+
		\left[ \Phi-(\Phi\cdot U_*)U_*\right]\wedge \Delta_x \left[ \Phi-(\Phi\cdot U_*)U_*\right]
		\\
		& +
		(1+A)U_* \wedge \Delta_x \left[(1+A)U_* \right]
		-
		2(\Phi\cdot U_*)U_*\wedge \Delta_x U_*.
	\end{align*}
By the above identities, we arrange terms in the error as
\begin{align*}
		& S[u]
		= -U_*\partial_t A-(1+A)\partial_t U_*-\partial_t\Phi+(\Phi\cdot U_*)\partial_t U_*+ U_* \partial_t(\Phi\cdot U_*)
		\nonumber
		\\
		&  +a\Big\{
		U_* \Delta_x A +(1+A)\Delta_x U_*+2\nabla_x A\cdot\nabla_x U_*+\Delta_x [ \Phi-(\Phi\cdot U_*)U_*]
		\nonumber
		\\
		&
		+ \left|\nabla_x \left[(1+A)U_*+\Phi-(\Phi\cdot U_*)U_*\right] \right|^2 \left[(1+A)U_*+\Phi-(\Phi\cdot U_*)U_*\right] \Big\}
		\nonumber
		\\
		&
		-b\left[(1+A)U_*+\Phi-(\Phi\cdot U_*)U_*\right]\wedge \Delta_x \left[(1+A)U_*+\Phi-(\Phi\cdot U_*)U_*\right]
		\nonumber
		\\
		= \ &  -(1+A)\partial_t U_*-\partial_t\Phi+(\Phi\cdot U_*)\partial_t U_*+ U_* \left[\partial_t(\Phi\cdot U_*)
		- \partial_t A \right]
		\nonumber
		\\
		&  +a\bigg[
		\Delta_x \Phi- 2\nabla_x (\Phi\cdot U_*)\cdot \nabla_x U_*
		+ \left|\nabla_x U_*\right|^2 \Phi
		\nonumber
		\\
		&
		+ \left|\nabla_x \left[(1+A)U_*\right] \right|^2 \left[ \Phi-(\Phi\cdot U_*)U_*\right] - \left|\nabla_x U_*\right|^2 \left[ \Phi-(\Phi\cdot U_*)U_*\right]
		\nonumber
		\\
		&
		+
		\left\{
		2
		\nabla_x \left[(1+A)U_*\right]
		\cdot
		\nabla_x \left[\Phi-(\Phi\cdot U_*)U_*\right]
		+
		\left|\nabla_x \left[ \Phi-(\Phi\cdot U_*)U_*\right] \right|^2
		\right\}
		\left[ \Phi-(\Phi\cdot U_*)U_*\right]
		\nonumber
		\\
		&
		+
		2\nabla_x A \cdot \nabla_x U_*
		+
		\left[1+A -  (\Phi\cdot U_*) \right] \Delta_x U_*
		\nonumber
		\\
		&
		+
		U_* \left\{ \Delta_x A
		+ \left|\nabla_x \left[(1+A)U_*+\Phi-(\Phi\cdot U_*)U_*\right] \right|^2 (1+A )
		-
		\left|\nabla_x U_*\right|^2 (\Phi\cdot U_*)
		-
		\Delta_x (\Phi\cdot U_*)
		\right\} \bigg]
		\nonumber
		\\
		&
		-b\Big\{
		\Phi \wedge \Delta_x U_*
		+
		U_* \wedge  \left[ \Delta_x \Phi
		-
		2\nabla_x (\Phi\cdot U_*) \cdot \nabla_x  U_*
		\right]
		\nonumber
		\\
		&
		+
		\left[ \Phi-(\Phi\cdot U_*)U_*\right]\wedge \Delta_x \left( A U_* \right)
		+
		A U_* \wedge \Delta_x \left[ \Phi-(\Phi\cdot U_*)U_*\right]
		\nonumber
		\\
		&
		+
		\left[ \Phi-(\Phi\cdot U_*)U_*\right]\wedge \Delta_x \left[ \Phi-(\Phi\cdot U_*)U_*\right]
		\nonumber
		\\
		& +
		(1+A)U_* \wedge \Delta_x \left[(1+A)U_* \right]
		-
		2(\Phi\cdot U_*)U_*\wedge \Delta_x U_* \Big\}
		\nonumber
		\\
		= \ &   -\partial_t\Phi +a\left[
		\Delta_x \Phi- 2\nabla_x (\Phi\cdot U_*) \cdot \nabla_x U_*
		+ \left|\nabla_x U_*\right|^2 \Phi \right]
		-b\left\{
		\Phi \wedge \Delta_x U_*
		+
		U_* \wedge  \left[ \Delta_x \Phi
		-
		2\nabla_x (\Phi\cdot U_*) \cdot \nabla_x  U_*
		\right] \right\}
		\nonumber
		\\
		&  - \partial_t U_*  +
		\left[(\Phi\cdot U_*) - A\right] \partial_t U_* + \mathcal N[\Phi]
		+ \Xi[\Phi] U_*,
\end{align*}
where
\begin{small}
\begin{align}
		&
		\mathcal N[\Phi]
		:=
		a\Big[
		\left\{
		\left|\nabla_x \left[(1+A)U_*\right] \right|^2
		- \left|\nabla_x U_*\right|^2
		+
		2
		\nabla_x \left[(1+A)U_*\right]
		\cdot
		\nabla_x \Pi_{U_*^{\perp}} \Phi
		+
		\big|\nabla_x \Pi_{U_*^{\perp}} \Phi  \big|^2
		\right\}
		\Pi_{U_*^{\perp}} \Phi
		\nonumber
		\\
		&
		+
		2\nabla_x A \cdot \nabla_x U_*
		+
		\left[1+A -  (\Phi\cdot U_*) \right] \Delta_x U_*
		\Big]
		-b\Big\{
		\Pi_{U_*^{\perp}} \Phi \wedge \Delta_x \left( A U_* \right)
		+
		A U_* \wedge \Delta_x \Pi_{U_*^{\perp}} \Phi
		+
		\Pi_{U_*^{\perp}} \Phi \wedge \Delta_x \Pi_{U_*^{\perp}} \Phi
		\nonumber
		\\
		& +
		(1+A)U_* \wedge \Delta_x \left[(1+A)U_* \right]
		-
		2(\Phi\cdot U_*)U_*\wedge \Delta_x U_* \Big\},
		\label{def-N}
\end{align}
\end{small}
\begin{small}
\begin{equation*}
	\Xi[\Phi]:=
	\partial_t\left[(\Phi\cdot U_*)
	-  A \right]
	+
	a \left\{ \Delta_x A
	+ \left|\nabla_x \left[(1+A)U_*+\Phi-(\Phi\cdot U_*)U_*\right] \right|^2 (1+A )
	-
	\left|\nabla_x U_*\right|^2 (\Phi\cdot U_*)
	-
	\Delta_x (\Phi\cdot U_*)
	\right\}.
\end{equation*}
\end{small}
Then
\begin{align*}
		 S[u] = &  -\partial_t\Phi +
		\left(a -bU_* \wedge \right) \left[
		\Delta_x \Phi- 2\nabla_x (\Phi\cdot U_*) \cdot \nabla_x U_*
		\right]
		+
		a \Phi \sum\limits_{j=1}^N |\nabla_x U^{\J} |^2
		+b \Phi \wedge \sum\limits_{j=1}^N |\nabla_x U^{\J} |^2  U^{\J}
		\\
		&  - \partial_t U_*
		+
		a \Phi \sum\limits_{j,k=1,j\ne k}^N \nabla_x U^{\J}\cdot \nabla_x U^{\K}
		+ \left[ (\Phi\cdot U_*) -A \right]\partial_t U_* + \mathcal N[\Phi]
		+ \Xi[\Phi] U_*
		\\
		= \ &
		-\partial_t\Phi +
		\left(a -bU_* \wedge \right) \bigg[
		\Delta_x \Phi- 2 \sum\limits_{j=1}^{N}\nabla_x \left(
		\Phi\cdot U^{\J} \right)  \cdot  \nabla_x U^{\J}
		\bigg]
		+
		\sum\limits_{j=1}^N |\nabla_x U^{\J} |^2
		\left( a-bU^{\J}  \wedge \right) \Phi
		\\
		& - \partial_t U_*
		+
		\left(a -bU_* \wedge \right) \bigg\{
		- 2 \sum\limits_{j=1}^{N} \nabla_x \left[
		\Phi\cdot \left(  U_*- U^{\J}  \right)
		\right] \cdot \nabla_x U^{\J}
		\bigg\}
		\\
		&
		+
		a \Phi \sum\limits_{j,k=1,j\ne k}^N \nabla_x U^{\J}\cdot \nabla_x U^{\K}
		+ \left[ (\Phi\cdot U_*)-A\right]\partial_t U_* + \mathcal N[\Phi]
		+ \Xi[\Phi] U_*.
\end{align*}
Using \eqref{u-def} and \eqref{Phi-Lap-ppt}, we have
\begin{small}
\begin{align*}
	S[u] = &
		-\pp_t \Phi_{\rm out}
		-
		\sum_{j=1}^N \pp_t (\eta_{d_q}^{\J} \Phi_0^{*\J} )
		+\sum_{j=1}^N \eta_R^{\J}Q_{\gamma_j}\left[ -\pp_t \Phi_{\rm in}^{\J}
		+
		\left(\la_j^{-1}\dot\la_j y^{\J}+\la_j^{-1}\dot\xi^{\J} \right)
		\cdot\nabla_{y^{\J}} \Phi_{\rm in}^{\J}-\dot\gamma_j J\Phi_{\rm in}^{\J}\right]
		-\sum_{j=1}^N   Q_{\gamma_j}\Phi_{\rm in}^{\J} \pp_t \eta_R^{\J}
		\\
		&
		+
		\left(a -bU_* \wedge \right) \bigg\{
		\Delta_x \Phi_{\rm out}+\sum_{j=1}^N \Delta_x (\eta_{d_q}^{\J} \Phi_0^{*\J} )
		+\sum_{j=1}^N \eta_R^{\J}Q_{\gamma_j}  \Delta_x \Phi_{\rm in}^{\J}
		+\sum_{j=1}^N Q_{\gamma_j}\left( \Phi_{\rm in}^{\J} \Delta_x \eta_R^{\J}
		+2 \nabla_x \eta_R^{\J}\cdot \nabla_x \Phi_{\rm in}^{\J} \right)
		\\
		&
		- 2 \sum\limits_{j=1}^{N}\nabla_x \left( U^{\J} \cdot \Phi_{\rm out}  \right) \cdot \nabla_x U^{\J}
		- 2 \sum\limits_{j=1}^{N}\nabla_x \left[
		U^{\J} \cdot
		\left( \eta_R^{\J}Q_{\gamma_j}\Phi_{\rm in}^{\J} + \eta_{d_q}^{\J} \Phi^{*{\J}}_0 \right) \right] \cdot \nabla_x U^{\J}
		\\
		&
		- 2 \sum\limits_{j=1}^{N}\nabla_x \bigg[
		U^{\J} \cdot \sum_{k=1, k\ne j}^{N} \left( \eta_R^{\K}Q_{\gamma_k}\Phi_{\rm in}^{\K} +
		\eta_{d_q}^{\K} \Phi^{*{\K}}_0 \right) \bigg] \cdot  \nabla_x U^{\J}
		\bigg\}
		\\
		&
		+
		\sum\limits_{j=1}^N |\nabla_x U^{\J} |^2
		\left( a-bU^{\J}  \wedge \right)
		\Phi_{\rm {out} }
		+
		\sum\limits_{j=1}^N |\nabla_x U^{\J} |^2
		\left( a-bU^{\J}  \wedge \right)
		\left( \eta_R^{\J}Q_{\gamma_j}\Phi_{\rm in}^{\J}
		+
		\eta_{d_q}^{\J} \Phi^{*{\J}}_0 \right)
		\\
		&
		+
		\sum\limits_{j=1}^N |\nabla_x U^{\J} |^2
		\left( a-bU^{\J}  \wedge \right)
		\sum_{k=1, k\ne j}^N \left( \eta_R^{\K}Q_{\gamma_k}\Phi_{\rm in}^{\K} +
		\eta_{d_q}^{\K} \Phi^{*{\K}}_0 \right)
		\\
		&
		- \partial_t U_*
		+
		\left(a -bU_* \wedge \right) \bigg\{
		- 2 \sum\limits_{j=1}^{N} \nabla_x \left[
		\Phi\cdot \left(  U_*- U^{\J}  \right)
		\right] \cdot  \nabla_x U^{\J}
		\bigg\}
		\\
		&
		+
		a \Phi \sum\limits_{j,k=1,j\ne k}^N \nabla_x U^{\J}\cdot \nabla_x U^{\K}
		+ \left[ (\Phi\cdot U_*) -A\right]\partial_t U_* + \mathcal N[\Phi]
		+ \Xi[\Phi] U_*
		\\
		= \ &
		-\pp_t \Phi_{\rm out}
		+
		\left(a -bU_* \wedge \right) \Delta_x \Phi_{\rm out}
		+
		\sum\limits_{j=1}^N
		\left(1-\eta_R^{\J}\right)
		\left( a-bU^{\J}  \wedge \right)
		\left[	|\nabla_x U^{\J} |^2 \Phi_{\rm {out} }
		- 2 \nabla_x \left( U^{\J} \cdot \Phi_{\rm out}  \right) \cdot \nabla_x U^{\J}
		\right]
		\\
		&
		+
		\sum\limits_{j=1}^N
		\left(1-\eta_R^{\J}\right)
		\Big\{
		-\pp_t (\eta_{d_q}^{\J} \Phi_0^{*\J} )
		+
		\left( a-bU^{\J}  \wedge \right)
		\Big[
		\Delta_x (\eta_{d_q}^{\J}\Phi_0^{*\J} ) +
		|\nabla_x U^{\J} |^2 \eta_{d_q}^{\J} \Phi^{*{\J}}_0
		\\
		&
		- 2 \nabla_x \left(
		U^{\J} \cdot
		\eta_{d_q}^{\J} \Phi^{*{\J}}_0  \right) \cdot \nabla_x U^{\J}
		\Big]
		- \partial_t U^{\J}
		\Big\}
		\\
		&
		+
		\sum_{j=1}^{N}
		\eta_R^{\J}Q_{\gamma_j}
		\bigg\{
		-
		\pp_t \Phi_{\rm in}^{\J}
		+
		\lambda_j^{-2}\left( 	a -b W^{\J}  \wedge \right)
		\Big[
		\Delta_{y^{\J}} \Phi_{\rm in}^{\J}
		+
		|\nabla_{y^{\J}} W^{\J} |^2 \Phi_{\rm in}^{\J}
		- 2 \nabla_{y^{\J}} 	
		\left( 	W^{\J} \cdot   \Phi_{\rm in}^{\J} \right) \cdot \nabla_{y^{\J}} W^{\J}
		\\
		&
		+
		2  \left( 	\nabla_{y^{\J}} W^{\J} \cdot    \nabla_{y^{\J}} \Phi_{\rm in}^{\J} \right) W^{\J}	
		\Big]
		\\
		&
		+
		Q_{-\gamma_j}
		\left\{
		\left( a-bU^{\J}  \wedge \right)
		\left[	|\nabla_x U^{\J} |^2 \Pi_{U^{\J \perp}} \Phi_{\rm {out} }
		- 2 \nabla_x \left( U^{\J} \cdot \Phi_{\rm out}  \right) \cdot \nabla_x U^{\J}
		\right]
		\right\}
		\\
		&
		+
		Q_{-\gamma_j}
		\Pi_{U^{\J \perp}}
		\Big\{
		-\pp_t (\Phi_0^{*\J} )
		+
		\left( a-bU^{\J}  \wedge \right)
		\left[
		\Delta_x \Phi_0^{*\J} +
		|\nabla_x U^{\J} |^2 \Phi^{*{\J}}_0
		- 2 \nabla_x \left(
		U^{\J} \cdot
		\Phi^{*{\J}}_0  \right) \cdot \nabla_x U^{\J}
		\right]
		- \partial_t U^{\J}
		\Big\}
		\bigg\}
		\\
		&
		+\sum_{j=1}^{N} \eta_R^{\J}Q_{\gamma_j}\left[
		\left(\la_j^{-1}\dot\la_j y^{\J}+\la_j^{-1}\dot\xi^{\J} \right)
		\cdot\nabla_{y^{\J}} \Phi_{\rm in}^{\J}-\dot\gamma_j J\Phi_{\rm in}^{\J}\right]\\
		&
		+
		\sum_{j=1}^{N}
		Q_{\gamma_j}
		\left\{ - \Phi_{\rm in}^{\J} \pp_t \eta_R^{\J}
		+
		\left( 	a -b W^{\J}  \wedge \right)
		\left[ \Phi_{\rm in}^{\J} \Delta_x \eta_R^{\J}
		+2 \nabla_x \eta_R^{\J} \cdot \nabla_x \Phi_{\rm in}^{\J}
		-
		\left(W^{\J} \cdot   \Phi_{\rm in}^{\J} \right) \left(
		2  \nabla_x 	\eta_R^{\J} \cdot \nabla_x W^{\J}
		\right)
		\right] \right\}
		\\
		&
		-
		\sum_{j=1}^{N} b\left( U_*-U^{\J}
		\right) \wedge  \bigg\{
		\Delta_x (\eta_{d_q}^{\J}\Phi_0^{*\J} )
		+ \eta_R^{\J}Q_{\gamma_j}  \Delta_x \Phi_{\rm in}^{\J}
		+ Q_{\gamma_j}\left( \Phi_{\rm in}^{\J} \Delta_x \eta_R^{\J}
		+2 \nabla_x \eta_R^{\J} \cdot \nabla_x \Phi_{\rm in}^{\J} \right)
		\\
		&
		- 2 \nabla_x \left( U^{\J} \cdot \Phi_{\rm out}  \right) \cdot \nabla_x U^{\J}
		- 2 \nabla_x \left[
		U^{\J} \cdot
		\left( \eta_R^{\J}Q_{\gamma_j}\Phi_{\rm in}^{\J} + \eta_{d_q}^{\J} \Phi^{*{\J}}_0 \right) \right] \cdot \nabla_x U^{\J}
		\bigg\}
		\\
		&
		+
		\left(a -bU_* \wedge \right) \bigg\{
		- 2 \sum\limits_{j=1}^{N} \nabla_x \left[
		\Phi\cdot \left(  U_*- U^{\J}  \right)
		\right]\cdot \nabla_x U^{\J}
		\bigg\}
		\\
		&
		+
		\left(a -bU_* \wedge \right) \bigg\{
		- 2 \sum\limits_{j=1}^{N}\nabla_x \bigg[
		U^{\J} \cdot \sum_{k=1,k\ne j}^N \left( \eta_R^{\K}Q_{\gamma_k}\Phi_{\rm in}^{\K} + \eta_{d_q}^{\K} \Phi^{*{\K}}_0 \right) \bigg] \cdot \nabla_x U^{\J}
		\bigg\}
		\\
		&
		+
		\sum\limits_{j=1}^{N} |\nabla_x U^{\J} |^2
		\left( a-bU^{\J}  \wedge \right)
		\sum_{k=1,k\ne j}^N \left( \eta_R^{\K}Q_{\gamma_k}\Phi_{\rm in}^{\K} + \eta_{d_q}^{\K} \Phi^{*{\K}}_0 \right)
		\\
		&
		+
		a \Phi \sum\limits_{j,k=1,j\ne k}^N \nabla_x U^{\J}\cdot \nabla_x U^{\K}
		+
		\left[(\Phi\cdot U_*) -A \right] \partial_t U_* + \mathcal N[\Phi]
		+ \Xi[\Phi] U_*
		\\
		&+
		\sum_{j=1}^{N}
		\eta_R^{\J} \left(U^{\J} - U_{*} + U_{*}	 \right)	
		\bigg\{
		-
		2a   \left( 	\nabla_x W^{\J} \cdot    \nabla_x \Phi_{\rm in}^{\J} \right)
		+
		a
		|\nabla_x U^{\J} |^2
		\left(U^{\J} \cdot \Phi_{\rm {out} } \right)
		\\
		& +
		\Big\{
		-\pp_t (\Phi_0^{*\J} )
		+
		\left( a-bU^{\J}  \wedge \right)
		\left[
		\Delta_x \Phi_0^{*\J} +
		|\nabla_x U^{\J} |^2 \Phi^{*{\J}}_0
		- 2 \nabla_x \left(
		U^{\J} \cdot
		\Phi^{*{\J}}_0  \right) \cdot \nabla_x U^{\J}
		\right]
		- \partial_t U^{\J}
		\Big\} \cdot U^{\J}
		\bigg\}.
\end{align*}
\end{small}

\subsection{Simplification of the nonlinear terms $\mathcal N[\Phi]$}\label{N-simplify-sec}

In this subsection, we will single out the second-order derivatives of $\Phi$ in $\mathcal N[\Phi]$ in \eqref{def-N} and extract terms involving $\Phi$ and its derivatives in $\mathcal N[\Phi]$. The purpose of this step is to obtain a convenient form for the inner-outer gluing system and estimates in the construction.
\begin{equation}\label{Z1Z4}
	\begin{aligned}
		&~\Pi_{U_*^{\perp}} \Phi \wedge \Delta_x \left( A U_* \right)
		+
		A U_* \wedge \Delta_x \Pi_{U_*^{\perp}} \Phi
		+
		\Pi_{U_*^{\perp}} \Phi \wedge \Delta_x \Pi_{U_*^{\perp}} \Phi\\
		=&~ (\Phi\wedge U_*) \Delta_x A +\Pi_{U_*^{\perp}} \Phi \wedge \left[A\Delta_x U_*+2\nabla_x A \cdot \nabla_x U_*\right]\\
		&~+AU_*\wedge \Delta_x \Phi-A U_*\wedge [(\Phi\cdot U_*)\Delta_x U_*+2\nabla_x (\Phi\cdot U_*) \cdot \nabla_x U_*]\\
		&~+\Pi_{U_*^{\perp}} \Phi \wedge \Delta_x \Phi -(\Phi\wedge U_*)\Delta_x (\Phi\cdot U_*)-\Pi_{U_*^{\perp}} \Phi \wedge [(\Phi\cdot U_*)\Delta_x U_*+2\nabla_x (\Phi\cdot U_*)\cdot \nabla_x U_*]\\
		=&~(\Phi\wedge U_*) \Delta_x A +AU_*\wedge \Delta_x \Phi+\Pi_{U_*^{\perp}} \Phi \wedge \Delta_x \Phi -(\Phi\wedge U_*)\Delta_x (\Phi\cdot U_*) \\
		&~-(\Pi_{U_*^{\perp}} \Phi +AU_*)\wedge [2\nabla_x (\Phi\cdot U_*)\cdot\nabla_x U_*]+[A-(\Phi\cdot U_*)]\Phi\wedge \Delta_x U_*\\
		&~+\Pi_{U_*^{\perp}} \Phi \wedge (2\nabla_x A \cdot \nabla_x U_*)+[(\Phi\cdot U_*)^2-2A(\Phi\cdot U_*)] U_*\wedge \Delta_x U_*.
	\end{aligned}
\end{equation}

Next, we give explicit formulas for $\nabla_x A$ and $\Delta_x A$.
Due to the choice of  \eqref{u-def}, $|u|=1$ is equivalent to
\begin{equation}\label{|u|=1}
	(1+A)^2 |U_*|^2 +2(1+A)\big( U_*  \cdot \Pi_{U_*^{\perp}}\Phi\big)+|\Pi_{U_*^{\perp}}\Phi|^2=1.
\end{equation}

Acting $\nabla_x$ on both sides of \eqref{|u|=1}, we get
$$
2(1+A)|U_*|^2\nabla_x A+(1+A)^2\nabla_x (|U_*|^2)+ \nabla_x(|\Pi_{U_*^{\perp}}\Phi|^2)+2(1+A) \nabla_x(  U_* \cdot \Pi_{U_*^{\perp}}\Phi)+2( U_* \cdot  \Pi_{U_*^{\perp}}\Phi)\nabla_x A=0.
$$
So
\begin{equation}\label{nab-A-def}
	\nabla_x A =  -\frac{(1+A)^2\nabla_x (|U_*|^2)+ \nabla_x(|\Pi_{U_*^{\perp}}\Phi|^2)+2(1+A)\nabla_x( U_*  \cdot \Pi_{U_*^{\perp}}\Phi)}{2(1+A)|U_*|^2+2( U_* \cdot
		\Pi_{U_*^{\perp}}\Phi ) }.
\end{equation}

 Acting $\Delta_x$ on both sides of \eqref{|u|=1}, we have
\begin{equation*}
	\begin{aligned}
		&(1+A)^2\Delta_x(|U_*|^2)+|U_*|^2\big[2(1+A)\Delta_x A+2|\nabla_x A|^2\big]+4(1+A)\nabla_x (|U_*|^2)\cdot \nabla_x A
		\\
		&+2(1+A)\Delta_x \big(  U_* \cdot \Pi_{U_*^{\perp}}\Phi\big)+2\big( U_* \cdot  \Pi_{U_*^{\perp}}\Phi \big)\Delta_x A+4\nabla_x \big(U_* \cdot  \Pi_{U_*^{\perp}}\Phi \big)\cdot \nabla_x A+\Delta_x\big(|\Pi_{U_*^{\perp}}\Phi|^2\big)=0.
	\end{aligned}
\end{equation*}
Thus, we have
\begin{equation}\label{Delta-A}
	\begin{aligned}
		& \Delta_x A
		= -2^{-1}\big[(1+A)|U_*|^2+\big( U_*  \cdot \Pi_{U_*^{\perp}}\Phi \big)\big]^{-1} \Big[\Delta_x \big(|\Pi_{U_*^{\perp}}\Phi|^2\big)+2(1+A)\Delta_x \big(  U_* \cdot \Pi_{U_*^{\perp}}\Phi\big)
		\\
		& \  +4\nabla_x \big(  U_* \cdot \Pi_{U_*^{\perp}}\Phi\big)\cdot \nabla_x A +2|U_*|^2|\nabla_x A|^2+4(1+A)\nabla_x (|U_*|^2)\cdot \nabla_x A+(1+A)^2\Delta_x(|U_*|^2)\Big] .
	\end{aligned}
\end{equation}
Notice that
\begin{equation*}
	\begin{aligned}
		\Delta_x(|\Pi_{U_*^{\perp}}\Phi|^2)
		=&~2\Phi\cdot \Delta_x \Phi+2(|U_*|^2-2)\left[(\Phi\cdot U_*)\Delta_x(\Phi\cdot U_*)+|\nabla_x (\Phi\cdot U_*)|^2\right]\\
		&~+2\nabla_x(|U_*|^2)\cdot \nabla_x[(\Phi\cdot U_*)^2]+2|\nabla_x \Phi|^2+(\Phi\cdot U_*)^2 \Delta_x (|U_*|^2)
	\end{aligned}
\end{equation*}
and
\begin{equation*}
		\Delta_x (  U_* \cdot \Pi_{U_*^{\perp}}\Phi)
		= (1-|U_*|^2)\Delta_x (\Phi\cdot U_*)
		-(\Phi\cdot U_*)\Delta_x(|U_*|^2)-2\nabla_x(|U_*|^2)\cdot \nabla_x(\Phi\cdot U_*).
\end{equation*}
Then \eqref{Delta-A} can be rephrased into
\begin{small}
\begin{align}
		 \Delta_x A
		= \ & -2^{-1}\left[(1+A)|U_*|^2+\left( U_*  \cdot \Pi_{U_*^{\perp}}\Phi \right)\right]^{-1} \Big[
		2\Phi\cdot \Delta_x \Phi+\left[2(|U_*|^2-2)(\Phi\cdot U_*)+2(1+A)(1-|U_*|^2)\right] \Delta_x (\Phi\cdot U_*)
		\nonumber
		\\
		&+2(|U_*|^2-2)|\nabla_x (\Phi\cdot U_*)|^2+2|\nabla_x \Phi|^2+4[(\Phi\cdot U_*)-(1+A)]\nabla_x (|U_*|^2)\cdot \nabla_x (\Phi\cdot U_*)
		+2|U_*|^2|\nabla_x A|^2
		\nonumber
		\\
		&
		+4\nabla_x (  U_* \cdot \Pi_{U_*^{\perp}}\Phi)\cdot \nabla_x A+4(1+A)\nabla_x (|U_*|^2)\cdot \nabla_x A
		+[(\Phi\cdot U_*)-(1+A)]^2\Delta_x (|U_*|^2)\Big].
		\label{DeltaA-1}
\end{align}
\end{small}
By \eqref{DeltaA-1}, part of the terms in \eqref{Z1Z4} can be rewritten as
\begin{align}
\notag
		&
		(\Phi\wedge U_*) \Delta_x A +AU_*\wedge \Delta_x \Phi+\Pi_{U_*^{\perp}} \Phi \wedge \Delta_x \Phi -(\Phi\wedge U_*)\Delta_x (\Phi\cdot U_*)
		\\ \notag
		= \ &
		-2^{-1} (\Phi\wedge U_*)
		\left[(1+A)|U_*|^2+\left( U_*  \cdot \Pi_{U_*^{\perp}}\Phi \right)\right]^{-1}
		\\ \notag
		&
		\times
		\left\{ 2\Phi\cdot \Delta_x \Phi
		+\left[2(|U_*|^2-2)(\Phi\cdot U_*)+2(1+A)(1-|U_*|^2)\right]
		\left( U_*\cdot\Delta_x \Phi  \right)
		\right\}
		\\ \notag
		&
		+AU_*\wedge \Delta_x \Phi+\Pi_{U_*^{\perp}} \Phi \wedge \Delta_x \Phi -(\Phi\wedge U_*)
		\left( U_*\cdot\Delta_x \Phi \right)
		\\ \notag
		&
		-2^{-1} (\Phi\wedge U_*)
		\left[(1+A)|U_*|^2+\left( U_*  \cdot \Pi_{U_*^{\perp}}\Phi \right)\right]^{-1}
		\\ \notag
		& \times
		\Big\{  \left[2(|U_*|^2-2)(\Phi\cdot U_*)+2(1+A)(1-|U_*|^2)\right]
		\left( 2\nabla_x \Phi\cdot\nabla_x U_*+\Phi\cdot \Delta_x U_*  \right)
		\\ \notag
		& +2(|U_*|^2-2)|\nabla_x (\Phi\cdot U_*)|^2+2|\nabla_x \Phi|^2+4[(\Phi\cdot U_*)-(1+A)]\nabla_x (|U_*|^2)\cdot \nabla_x (\Phi\cdot U_*)\\  \notag
		& +2|U_*|^2|\nabla_x A|^2+4\nabla_x (  U_* \cdot \Pi_{U_*^{\perp}}\Phi)\cdot \nabla_x A+4(1+A)\nabla_x (|U_*|^2)\cdot \nabla_x A
		 +[(\Phi\cdot U_*)-(1+A)]^2\Delta_x (|U_*|^2) \Big\}
		\\ \notag
		&
		-(\Phi\wedge U_*)
		\left(  2\nabla_x \Phi\cdot\nabla_x U_*+\Phi\cdot \Delta_x U_*  \right)
		\\ \notag
		= \ &
		-2^{-1} (\Phi\wedge U_*)
		\left[(1+A)|U_*|^2+\left( U_*  \cdot \Pi_{U_*^{\perp}}\Phi \right)\right]^{-1}
		\left[ 2\Phi\cdot \Delta_x \Phi
		+2 \left( 1+A-\Phi\cdot U_* \right)
		\left( U_*\cdot\Delta_x \Phi  \right)
		\right]
		\\ \notag
		&
		+AU_*\wedge \Delta_x \Phi+\Pi_{U_*^{\perp}} \Phi \wedge \Delta_x \Phi
		\\ \notag
		&
		-2^{-1} (\Phi\wedge U_*)
		\left[(1+A)|U_*|^2+\left( U_*  \cdot \Pi_{U_*^{\perp}}\Phi \right)\right]^{-1}
		\Big\{  2 \left( 1+A-\Phi\cdot U_* \right)
		\left( 2\nabla_x \Phi\cdot\nabla_x U_*+\Phi\cdot \Delta_x U_*  \right)
		\\ \notag
		& +2(|U_*|^2-2)|\nabla_x (\Phi\cdot U_*)|^2+2|\nabla_x \Phi|^2+4[(\Phi\cdot U_*)-(1+A)]\nabla_x (|U_*|^2)\cdot \nabla_x (\Phi\cdot U_*)
		+2|U_*|^2|\nabla_x A|^2
		\\
		&
		+4\nabla_x (  U_* \cdot \Pi_{U_*^{\perp}}\Phi)\cdot \nabla_x A+4(1+A)\nabla_x (|U_*|^2)\cdot \nabla_x A
	 +[(\Phi\cdot U_*)-(1+A)]^2\Delta_x (|U_*|^2) \Big\}. \label{Z1Z5}
	\end{align}

Combining \eqref{def-N}, \eqref{Z1Z4}, and \eqref{Z1Z5}, we get
\begin{align*}
		&
		\mathcal N[\Phi]
		=
		b\Big\{
		(\Phi\wedge U_*)
		\left[(1+A)|U_*|^2+\left( U_*  \cdot \Pi_{U_*^{\perp}}\Phi \right)\right]^{-1}
		\left[ \Phi\cdot \Delta_x \Phi
		+ \left( 1+A-\Phi\cdot U_* \right)
		\left( U_*\cdot\Delta_x \Phi  \right)
		\right]
		\nonumber
		\\
		&
		- AU_*\wedge \Delta_x \Phi
		-
		\left( \Pi_{U_*^{\perp}} \Phi \right) \wedge \Delta_x \Phi  \Big\}
		\nonumber
		\\
		&
		+
		a\bigg[
		\left\{
		\left|\nabla_x \left[(1+A)U_*\right] \right|^2
		- \left|\nabla_x U_*\right|^2
		+
		2
		\nabla_x \left[(1+A)U_*\right]
		\cdot
		\nabla_x \left(\Pi_{U_*^{\perp}} \Phi \right)
		+
		\left|\nabla_x \left(\Pi_{U_*^{\perp}} \Phi  \right) \right|^2
		\right\}
		\Pi_{U_*^{\perp}} \Phi
		\nonumber
		\\
		&
		+
		2\nabla_x A \cdot \nabla_x U_*
		+
		\left(1+A -  \Phi\cdot U_* \right) \Delta_x U_*
		\bigg]
		\nonumber
		\\
		&
		-b\bigg[
		-2^{-1} (\Phi\wedge U_*)
		\left[(1+A)|U_*|^2+\left( U_*  \cdot \Pi_{U_*^{\perp}}\Phi \right)\right]^{-1}
		\Big\{  2 \left( 1+A-\Phi\cdot U_* \right)
		\left( 2\nabla_x \Phi\cdot\nabla_x U_*+\Phi\cdot \Delta_x U_*  \right)
		\nonumber
		\\
		& +2(|U_*|^2-2)|\nabla_x (\Phi\cdot U_*)|^2+2|\nabla_x \Phi|^2+4[(\Phi\cdot U_*)-(1+A)]\nabla_x (|U_*|^2)\cdot \nabla_x (\Phi\cdot U_*)
		\nonumber
		\\
		& +2|U_*|^2|\nabla_x A|^2+4\nabla_x (  U_* \cdot \Pi_{U_*^{\perp}}\Phi)\cdot \nabla_x A+4(1+A)\nabla_x (|U_*|^2)\cdot \nabla_x A
		 +[(\Phi\cdot U_*)-(1+A)]^2\Delta_x (|U_*|^2) \Big\}
		 \nonumber
		 \\
		& -(\Pi_{U_*^{\perp}} \Phi +AU_*)\wedge [2\nabla_x (\Phi\cdot U_*) \cdot \nabla_x U_*]+[A-(\Phi\cdot U_*)]\Phi\wedge \Delta_x U_*
		\nonumber
		\\
		& +\Pi_{U_*^{\perp}} \Phi \wedge (2\nabla_x A \cdot \nabla_x U_*)+[(\Phi\cdot U_*)^2-2A(\Phi\cdot U_*)] U_*\wedge \Delta_x U_*
		\nonumber
		\\
		& +
		(1+A)U_* \wedge  \left[(1+A) \Delta_x U_* + 2\nabla_x A \cdot \nabla_x U_* \right]
		-
		2(\Phi\cdot U_*)U_*\wedge \Delta_x U_* \bigg].
\end{align*}
Since
\begin{equation*}
	\begin{aligned}
		&
		2
		\nabla_x \left[(1+A)U_*\right]
		\cdot
		\nabla_x \left( \Pi_{U_*^{\perp}} \Phi \right)
		=
		2 \sum\limits_{k=1}^{2}
		\Big\{
		\left[  \left(\pp_{x_k} A  \right) U_{*}\cdot
		\pp_{x_k} \Phi   + (1+A) \pp_{x_k} U_{*} \cdot
		\pp_{x_k} \Phi  \right]
		\\
		& \quad
		-
		\pp_{x_k}\left( U_{*} \cdot  \Phi \right) \left[ |U_{*}|^2 \pp_{x_k} A + (1+A)
		U_{*} \cdot  \pp_{x_k} U_{*} \right]
		- \left(U_* \cdot \Phi \right) \left[ \left(\pp_{x_k} A \right) U_{*} \cdot   \pp_{x_k} U_*   + (1+A) \left|\pp_{x_k} U_{*}\right|^2 \right]
		\Big\},
	\end{aligned}
\end{equation*}
then $\mathcal N[\Phi] $ can be expanded as
\begin{small}
\begin{align}
		&
		\mathcal N[\Phi]
		=
		b\Big\{
		(\Phi\wedge U_*)
		\left[(1+A)|U_*|^2+\left( U_*  \cdot \Pi_{U_*^{\perp}}\Phi \right)\right]^{-1}
		\big[ \Phi\cdot \Delta_x \Phi
		+ \left( 1+A-\Phi\cdot U_* \right)
		\left( U_*\cdot\Delta_x \Phi  \right)
		\big]
		\nonumber
		\\
		&
		- AU_*\wedge \Delta_x \Phi
		-
		\left( \Pi_{U_*^{\perp}} \Phi \right) \wedge \Delta_x \Phi  \Big\}
		+
		a\bigg\{
		\bigg[
		|\nabla_x A|^2 |U_*|^2
		+
		2(1+A)\nabla_x A \cdot \left( U_* \cdot \nabla_x U_*\right)
		+
		A(2+A) \left|\nabla_x U_*\right|^2
		\nonumber
		\\
		&
		+
		2 \sum\limits_{k=1}^{2}
		\Big\{
		\left[  \left(\pp_{x_k} A  \right) U_{*}\cdot
		\pp_{x_k} \Phi   + A \pp_{x_k} U_{*} \cdot
		\pp_{x_k} \Phi  \right]
		-
		\pp_{x_k}\left( U_{*} \cdot  \Phi \right) \left[ |U_{*}|^2 \pp_{x_k} A + (1+A)
		U_{*} \cdot  \pp_{x_k} U_{*} \right]
		\nonumber
		\\
		&
		- \left(U_* \cdot \Phi \right) \left[ \left(\pp_{x_k} A \right) U_{*} \cdot   \pp_{x_k} U_*   + (1+A) \left|\pp_{x_k} U_{*}\right|^2 \right]
		\Big\}
		+
		\sum\limits_{k=1}^2
		\left| \pp_{x_k} \Phi
		-
		U_* \pp_{x_k}\left(
		\Phi\cdot  U_* \right)
		-
		(\Phi\cdot U_*) \pp_{x_k} U_*
		\right|^2
		\bigg]
		\Pi_{U_*^{\perp}} \Phi
		\nonumber
		\\
		&
		+
		2 \left(\nabla_x A + U_* \cdot \nabla_x U_* + \Phi\cdot \nabla_x \Phi \right) \cdot \nabla_x U_*
		+ \Delta_x U_* -2 \left(U_* \cdot \nabla_x U_*  \right)\cdot \nabla_x U_*
		+
		\left(A -  \Phi\cdot U_* \right) \Delta_x U_*
		\bigg\}
		\nonumber
		\\
		& +
		2a\left[ \left(\nabla_x U_* \cdot \nabla_x \Phi \right)  \Phi -  \left(  \Phi\cdot \nabla_x \Phi \right)
		\cdot \nabla_x U_*  \right]
		-
		2a  \left(\nabla_x U_* \cdot \nabla_x \Phi \right)
		(U_*\cdot \Phi ) U_*
		\nonumber
		\\
		&
		-
		2b U_* \wedge
		\left[
		\left( \nabla_x U_* \cdot \nabla_x \Phi \right) \Phi - \left(   \Phi\cdot \nabla_x \Phi \right) \cdot \nabla_x U_* \right]
		\nonumber
		\\
		&
		+
		b   (\Phi\wedge U_*)
		\left[(1+A)|U_*|^2+\left( U_*  \cdot \Pi_{U_*^{\perp}}\Phi \right)\right]^{-1}
		\left( 1+A-\Phi\cdot U_* \right)
		\left( 2\nabla_x \Phi\cdot\nabla_x U_*  \right)
		-
		b   (\Phi\wedge U_*) \left( 2\nabla_x \Phi\cdot\nabla_x U_*  \right)
		\nonumber
		\\
		&
		-b\bigg[
		-2^{-1} (\Phi\wedge U_*)
		\left[(1+A)|U_*|^2+\left( U_*  \cdot \Pi_{U_*^{\perp}}\Phi \right)\right]^{-1}
		\bigg\{  2 \left( 1+A-\Phi\cdot U_* \right)
		\left(  \Phi\cdot \Delta_x U_*  \right)
		\nonumber
		\\
		& +2(|U_*|^2-2)| \nabla_x \left(\Phi\cdot  U_* \right)  |^2+2|\nabla_x \Phi|^2
		+8[(\Phi\cdot U_*)-(1+A)] (U_* \cdot \nabla_x U_*)\cdot  \nabla_x\left(\Phi\cdot  U_* \right)
		\nonumber
		\\
		& +2|U_*|^2|\nabla_x A|^2+4
		\left[
		-2(\Phi\cdot U_*) U_*\cdot \nabla_x U_*
		+
		(1-|U_*|^2)
		\nabla_x
		\left(
		\Phi\cdot  U_*
		\right)
		\right]
		\cdot \nabla_x A
		\nonumber
		\\
		&
		+8(1+A)\left(U_* \cdot  \nabla_x  U_*\right)\cdot \nabla_x A
		+ 2\left[(\Phi\cdot U_*)-(1+A)\right]^2  \left(|\nabla_x U_*|^2 + U_* \cdot \Delta_x U_*\right) \bigg\}
		\nonumber
		\\
		& -(\Pi_{U_*^{\perp}} \Phi +AU_*)\wedge \left[2 \nabla_x \left(\Phi\cdot  U_* \right)\cdot  \nabla_x U_* \right]+[A-(\Phi\cdot U_*)]\Phi\wedge \Delta_x U_*
		\nonumber
		\\
		& +\Pi_{U_*^{\perp}} \Phi \wedge (2\nabla_x A \cdot \nabla_x U_*)+
		\left[(\Phi\cdot U_*)^2-2A(\Phi\cdot U_*) -
		2(\Phi\cdot U_*)\right] U_*\wedge \Delta_x U_*
		\nonumber
		\\
		& +
		(1+A)U_* \wedge  \left[ A \Delta_x U_* + 2\left( \nabla_x A + U_* \cdot \nabla_x U_* + \Phi\cdot \nabla_x \Phi \right)\cdot \nabla_x U_* + \Delta_x U_* -2\left( U_* \cdot \nabla_x U_*  \right) \cdot\nabla_x U_* \right] \bigg]
		\nonumber
		\\
		& +2b
		A U_* \wedge
		\left[
		\left(   \Phi\cdot \nabla_x \Phi \right) \cdot \nabla_x U_* \right],
		\label{N-simplify-form}
\end{align}
\end{small}
where $ U_* \cdot \nabla_x U_*$, $\Phi\cdot \nabla_x \Phi$ are defined in \eqref{dot2}.

\subsection{Inner-outer gluing system}\label{gluing-sys-sec}

By $U_*$-operation  \eqref{U-direc-trick}, we can adjust the terms in the $U_*$ direction flexibly. By the expansion form of $S[u]$ at the end of Subsection \ref{system-newerror-sec} and  \eqref{N-simplify-form}, a sufficient condition for $S[u]=0$ is that $(\Phi_{\rm in}^{\J},\Phi_{\rm out})$ solve the following inner-outer gluing system
\begin{equation}\label{outer-eq}
	\begin{cases}
		\pp_t \Phi_{\rm {out}} =
		\mathbf{B}_{\Phi,U_{*}} \Delta_x \Phi_{\rm out}  + \mathcal{G}
		&\mbox{ in }~\R^2\times (0,T) ,
		\\
		\Phi_{\rm out}(x,0)=  Z_{*}(x) + \sum\limits_{m=1}^{N} \sum\limits_{n=1}^3 c_{mn}  \vartheta_{mn}(x)
		& \mbox{ in }~\R^2;
	\end{cases}
\end{equation}
\begin{equation}\label{inner-eq}
	\begin{aligned}
		\lambda_j^{2} \pp_t \Phi_{\rm in}^{\J}  = \ &
		\left( 	a -b W^{\J}  \wedge \right)
		\Big[
		\Delta_{y^{\J}} \Phi_{\rm in}^{\J}
		+
		|\nabla_{y^{\J}} W^{\J} |^2 \Phi_{\rm in}^{\J}
		- 2 \nabla_{y^{\J}} 	
		\left( 	W^{\J} \cdot   \Phi_{\rm in}^{\J} \right) \cdot \nabla_{y^{\J}} W^{\J}
		\\
		&
		+
		2  \left( 	\nabla_{y^{\J}} W^{\J} \cdot    \nabla_{y^{\J}} \Phi_{\rm in}^{\J} \right) W^{\J}	
		\Big]  + \mathcal{H}^{\J}
		\quad
		\mbox{ in }~\mathsf{D}_{2 C_{\lambda} R},
	\end{aligned}
\end{equation}
where $C_{\lambda}$ is given in \eqref{lam-ansatz},
\begin{equation}\label{Hj-def}
\mathsf{D}_{2 C_{\lambda} R}:=\left\{(y ,t) \ | \  |y |< 2 C_{\lambda} R(t), \  t\in (0,T)  \right\} ;
\quad
	\mathcal{H}^{\J} :=   \mathcal{H}_1^{\J} + \mathcal{H}_{ \rm{in} }^{\J},
\end{equation}
\begin{equation}\label{Hj-1-def}
\begin{aligned}
	 \mathcal{H}_1^{\J} := \ &
		\lambda_j^{2} Q_{-\gamma_j}
	\Big[
		\left( a-bU^{\J}  \wedge \right)
\tilde{L}_{U^{\J} }[\Phi_{\rm{out}}]
		+
		\left(  M_0^{\J} + e^{i\theta_j} M_{1}^{\J} \right)_{\mathcal{C}_j^{-1}}
	\Big]
\\
= \ &
\left( a-bW^{\J}  \wedge \right)
\tilde{L}_{W^{\J} }[Q_{-\gamma_j}\Phi_{\rm{out}}]
+
\lambda_j^{2}
\left(  M_0^{\J} + e^{i\theta_j} M_{1}^{\J} \right)_{\mathbb{C}_j^{-1}},
\end{aligned}
\end{equation}
\begin{small}
\begin{align}
\notag
		\mathcal{H}_{ \rm{in} }^{\J}
		:= \ &
		\lambda_j^{2} Q_{-\gamma_j}
		\bigg[
		2\left(
		a -
		b U^{\J} \wedge \right)
		\Big\{ \left[\nabla_x U^{\J} \cdot \nabla_x \left( \eta_R^{\J}  Q_{\gamma_j}\Phi_{\rm in}^{\J}  \right) \right]  \left( Q_{\gamma_j}\Phi_{\rm in}^{\J}  \right)
		-  \left[  \left( Q_{\gamma_j}\Phi_{\rm in}^{\J}  \right) \cdot \nabla_x \left( \eta_R^{\J}  Q_{\gamma_j}\Phi_{\rm in}^{\J}  \right) \right] \cdot \nabla_x U^{\J}  \Big\}
		\bigg]
		\\ \label{def-HPhi2}
		= \ &
		2\left(
		a -
		b W^{\J} \wedge \right)
		\left\{ \left[\nabla_{y^{\J}} W^{\J} \cdot \nabla_{y^{\J}} \left( \eta_R^{\J} \Phi_{\rm in}^{\J}  \right) \right]    \Phi_{\rm in}^{\J}
		-  \left[  \Phi_{\rm in}^{\J}  \cdot \nabla_{y^{\J}} \left( \eta_R^{\J} \Phi_{\rm in}^{\J}  \right) \right]\cdot  \nabla_{y^{\J}} W^{\J}  \right\};
	\end{align}
\end{small}
\begin{small}
\begin{align}
	&	\mathcal{G} : =
		\sum\limits_{j=1}^N
		\left(1-\eta_R^{\J}\right)
		\left( a-bU^{\J}  \wedge \right)
		\left[	|\nabla_x U^{\J} |^2 \Phi_{\rm {out} }
		- 2 \nabla_x \left( U^{\J} \cdot \Phi_{\rm out}  \right)\cdot \nabla_x U^{\J}
		\right]
		\nonumber
		\\
		&
		+
		\sum\limits_{j=1}^N
		\left(1-\eta_R^{\J}\right)
		\Big\{
		-\pp_t (\eta_{d_q}^{\J} \Phi_0^{*\J} )
		+
		\left( a-bU^{\J}  \wedge \right)
		\Big[
		\Delta_x (\eta_{d_q}^{\J}\Phi_0^{*\J} ) +
		|\nabla_x U^{\J} |^2 \eta_{d_q}^{\J} \Phi^{*{\J}}_0
		- 2 \nabla_x \left(
		U^{\J} \cdot
		\eta_{d_q}^{\J} \Phi^{*{\J}}_0  \right)\cdot \nabla_x U^{\J}
		\Big]
		\nonumber
		\\
		& - \partial_t U^{\J}
		\Big\}
		+ \sum\limits_{j=1}^N\eta_R^{\J}
		\left(
		\tilde{M}_0^{\J}
		+
		e^{i\theta_j} \tilde{M}_1^{\J} + e^{-i\theta_j} M_{-1}^{\J} \right)_{\mathcal{C}_j^{-1}}
		+\sum_{j=1}^{N} \eta_R^{\J}Q_{\gamma_j}\left[
		\left(\la_j^{-1}\dot\la_j y^{\J}+\la_j^{-1}\dot\xi^{\J} \right)
		\cdot\nabla_{y^{\J}} \Phi_{\rm in}^{\J}-\dot\gamma_j J\Phi_{\rm in}^{\J}\right]
		\nonumber
		\\
		&
		+
		\sum_{j=1}^{N}
		Q_{\gamma_j}
		\left\{ - \Phi_{\rm in}^{\J} \pp_t \eta_R^{\J}
		+
		\left( 	a -b W^{\J}  \wedge \right)
		\left[ \Phi_{\rm in}^{\J} \Delta_x \eta_R^{\J}
		+2 \nabla_x \eta_R^{\J} \cdot \nabla_x \Phi_{\rm in}^{\J}
		-
		\left(W^{\J} \cdot   \Phi_{\rm in}^{\J} \right) \left(
		2  \nabla_x 	\eta_R^{\J} \cdot \nabla_x W^{\J}
		\right)
		\right] \right\}
		\nonumber
		\\
		&
		-
		\sum_{j=1}^{N} b\left( U_*-U^{\J}
		\right) \wedge  \bigg\{
		\Delta_x (\eta_{d_q}^{\J}\Phi_0^{*\J} )
		+ \eta_R^{\J}Q_{\gamma_j}  \Delta_x \Phi_{\rm in}^{\J}
		+ Q_{\gamma_j}\left( \Phi_{\rm in}^{\J} \Delta_x \eta_R^{\J}
		+2 \nabla_x \eta_R^{\J} \cdot \nabla_x \Phi_{\rm in}^{\J} \right)
		\nonumber
		\\
		&
		- 2 \nabla_x \left( U^{\J} \cdot \Phi_{\rm out}  \right)\cdot \nabla_x U^{\J}
		- 2 \nabla_x \left[
		U^{\J} \cdot
		\left( \eta_R^{\J}Q_{\gamma_j}\Phi_{\rm in}^{\J} + \eta_{d_q}^{\J} \Phi^{*{\J}}_0 \right) \right] \cdot \nabla_x U^{\J}
		\bigg\}
		\nonumber
		\\
		&
		+
		\left(a -bU_* \wedge \right) \bigg\{
		- 2 \sum\limits_{j=1}^{N} \nabla_x \left[
		\Phi\cdot \left(  U_*- U^{\J}  \right)
		\right] \cdot \nabla_x U^{\J}
		\bigg\}
		\nonumber
		\\
		&
		+
		\left(a -bU_* \wedge \right) \bigg\{
		- 2 \sum\limits_{j=1}^{N}\nabla_x \bigg[
		U^{\J} \cdot \sum_{k=1,k\ne j}^N \left( \eta_R^{\K}Q_{\gamma_k}\Phi_{\rm in}^{\K} + \eta_{d_q}^{\K} \Phi^{*{\K}}_0 \right) \bigg] \cdot \nabla_x U^{\J}
		\bigg\}
		\nonumber
		\\
		&
		+
		\sum\limits_{j=1}^{N} |\nabla_x U^{\J} |^2
		\left( a-bU^{\J}  \wedge \right)
		\sum_{k=1,k\ne j}^N \left( \eta_R^{\K}Q_{\gamma_k}\Phi_{\rm in}^{\K} + \eta_{d_q}^{\K} \Phi^{*{\K}}_0 \right)
		+
		a \Phi \sum\limits_{j,k=1,j\ne k}^N \nabla_x U^{\J}\cdot \nabla_x U^{\K}
		+\left[ (\Phi\cdot U_*)-A \right]\partial_t U_*
		\nonumber
		\\
		&+
		\sum_{j=1}^{N}
		\eta_R^{\J} \left(U^{\J} - U_{*}  \right)	
		\Big[
		-
		2a   \left( 	\nabla_x W^{\J} \cdot    \nabla_x \Phi_{\rm in}^{\J} \right)
		+
		a
		|\nabla_x U^{\J} |^2
		\left(U^{\J} \cdot \Phi_{\rm {out} } \right)
		\nonumber
		\\
		& +
		\left\{
		-\pp_t (\Phi_0^{*\J} )
		+
		\left( a-bU^{\J}  \wedge \right)
		\left[
		\Delta_x \Phi_0^{*\J} +
		|\nabla_x U^{\J} |^2 \Phi^{*{\J}}_0
		- 2 \nabla_x \left(
		U^{\J} \cdot
		\Phi^{*{\J}}_0  \right) \cdot \nabla_x U^{\J}
		\right]
		- \partial_t U^{\J}
		\right\} \cdot U^{\J}
		\Big]
		\nonumber
		\\
		& +
		b\Big\{
		(\Phi\wedge U_*)
		\left[(1+A)|U_*|^2+\left( U_*  \cdot \Pi_{U_*^{\perp}}\Phi \right)\right]^{-1}
		\left[ \Phi
		+ \left( 1+A-\Phi\cdot U_* \right) U_* \right] \cdot \Delta_x \left(\Phi-
		\Phi_{\rm {out}} \right)
		\nonumber
		\\
		&
		- 	\left(
		AU_* +  \Pi_{U_*^{\perp}} \Phi \right) \wedge \Delta_x  \left(\Phi-
		\Phi_{\rm {out}} \right) \Big\}
		+
		a\bigg\{
		\bigg[
		|\nabla_x A|^2 |U_*|^2
		+
		2(1+A)\nabla_x A \cdot \left( U_* \cdot \nabla_x U_*\right)
		+
		A(2+A) \left|\nabla_x U_*\right|^2
		\nonumber
		\\
		&
		+
		2 \sum\limits_{k=1}^{2}
		\Big\{
		\left[  \left(\pp_{x_k} A  \right) U_{*}\cdot
		\pp_{x_k} \Phi   + A \pp_{x_k} U_{*} \cdot
		\pp_{x_k} \Phi  \right]
		-
		\pp_{x_k}\left( U_{*} \cdot  \Phi \right) \left[ |U_{*}|^2 \pp_{x_k} A + (1+A)
		U_{*} \cdot  \pp_{x_k} U_{*} \right]
		\nonumber
		\\
		&
		- \left(U_* \cdot \Phi \right) \left[ \left(\pp_{x_k} A \right) U_{*} \cdot   \pp_{x_k} U_*   + (1+A) \left|\pp_{x_k} U_{*}\right|^2 \right]
		\Big\}
		+
		\sum\limits_{k=1}^2
		\left| \pp_{x_k} \Phi
		-
		U_* \pp_{x_k}\left(
		\Phi\cdot  U_* \right)
		-
		(\Phi\cdot U_*) \pp_{x_k} U_*
		\right|^2
		\bigg]
		\Pi_{U_*^{\perp}} \Phi
		\nonumber
		\\
		&
		+
		2 \left(\nabla_x A + U_* \cdot \nabla_x U_* + \Phi\cdot \nabla_x \Phi \right)\cdot \nabla_x U_*
		+ \Delta_x U_* -2 \left(U_* \cdot \nabla_x U_*  \right)\cdot \nabla_x U_*
		+
		\left(A -  \Phi\cdot U_* \right) \Delta_x U_*
		\bigg\}
		\nonumber
		\\
		& +
		2
		\left(
		a -
		b U_* \wedge \right)  \left[ \left(\nabla_x U_* \cdot \nabla_x \Phi \right)  \Phi -  \left(  \Phi\cdot \nabla_x \Phi \right)\cdot \nabla_x U_*  \right]
		-
		2a  \left(\nabla_x U_* \cdot \nabla_x \Phi \right)
		(U_*\cdot \Phi ) U_*
		\nonumber
		\\
		&
		-
		\sum\limits_{j=1}^N
		2\left(
		a -
		b U^{\J} \wedge \right)
		\left\{ \left[\nabla_x U^{\J} \cdot \nabla_x \left( \eta_R^{\J}  Q_{\gamma_j}\Phi_{\rm in}^{\J}  \right) \right]  \left( \eta_R^{\J}  Q_{\gamma_j}\Phi_{\rm in}^{\J}  \right)
		-  \left[  \left( \eta_R^{\J}  Q_{\gamma_j}\Phi_{\rm in}^{\J}  \right) \cdot \nabla_x \left( \eta_R^{\J}  Q_{\gamma_j}\Phi_{\rm in}^{\J}  \right) \right] \cdot \nabla_x U^{\J}  \right\}
		\nonumber
		\\
		&
		+
		b   (\Phi\wedge U_*)
		\left[(1+A)|U_*|^2+\left( U_*  \cdot \Pi_{U_*^{\perp}}\Phi \right)\right]^{-1}
		\left( 1+A-\Phi\cdot U_* \right)
		\left( 2\nabla_x \Phi\cdot\nabla_x U_*  \right)
		-
		b   (\Phi\wedge U_*) \left( 2\nabla_x \Phi\cdot\nabla_x U_*  \right)
		\nonumber
		\\
		&
		-b\bigg[
		-2^{-1} (\Phi\wedge U_*)
		\left[(1+A)|U_*|^2+\left( U_*  \cdot \Pi_{U_*^{\perp}}\Phi \right)\right]^{-1}
		\bigg\{  2 \left( 1+A-\Phi\cdot U_* \right)
		\left(  \Phi\cdot \Delta_x U_*  \right)
		\nonumber
		\\
		& +2(|U_*|^2-2)| \nabla_x \left(\Phi\cdot  U_* \right)  |^2+2|\nabla_x \Phi|^2
		+8[(\Phi\cdot U_*)-(1+A)] (U_* \cdot \nabla_x U_*)\cdot  \nabla_x\left(\Phi\cdot  U_* \right)
		\nonumber
		\\
		& +2|U_*|^2|\nabla_x A|^2+4
		\left[
		-2(\Phi\cdot U_*) U_*\cdot \nabla_x U_*
		+
		(1-|U_*|^2)
		\nabla_x
		\left(
		\Phi\cdot  U_*
		\right)
		\right]
		\cdot \nabla_x A
		\nonumber
		\\
		&
		+8(1+A)\left(U_* \cdot  \nabla_x  U_*\right)\cdot \nabla_x A
		+ 2\left[(\Phi\cdot U_*)-(1+A)\right]^2  \left(|\nabla_x U_*|^2 + U_* \cdot \Delta_x U_*\right) \bigg\}
		\nonumber
		\\
		& -(\Pi_{U_*^{\perp}} \Phi +AU_*)\wedge \left[2 \nabla_x \left(\Phi\cdot  U_* \right) \cdot \nabla_x U_* \right]+[A-(\Phi\cdot U_*)]\Phi\wedge \Delta_x U_*
		\nonumber
		\\
		& +\Pi_{U_*^{\perp}} \Phi \wedge (2\nabla_x A \cdot \nabla_x U_*)+
		\left[(\Phi\cdot U_*)^2-2A(\Phi\cdot U_*) -
		2(\Phi\cdot U_*)\right] U_*\wedge \Delta_x U_*
		\nonumber
		\\
		& +
		(1+A)U_* \wedge  \left[ A \Delta_x U_* + 2\left( \nabla_x A + U_* \cdot \nabla_x U_* + \Phi\cdot \nabla_x \Phi \right) \cdot \nabla_x U_* + \Delta_x U_* -2\left( U_* \cdot \nabla_x U_*  \right) \cdot  \nabla_x U_* \right] \bigg]
		\nonumber
		\\
		& +2b
		A U_* \wedge
		\left[
		\left(   \Phi\cdot \nabla_x \Phi \right)\cdot \nabla_x U_* \right]
		+
		\Xi_{\mathcal{G}}(x,t) U_*,
		\label{G-def}
\end{align}
\end{small}
where  $\Xi_{\mathcal{G}}(x,t)$ is some scalar function from the aforementioned $U_*$-operation; $M_0^{\J}, \tilde{M}_0^{\J},  M_1^{\J},
\tilde{M}_1^{\J}, M_{-1}^{\J} $ are given in \eqref{M0-def-mu3}, \eqref{til-M0-def}, \eqref{M1-def}, \eqref{tilde-M1-def}, \eqref{M(-1)-def}, respectively, with $\mu=3$;
\begin{equation}\label{B-matrix}
	\mathbf{B}_{\Phi,U_{*}} := a \mathbf{I}_3 -bU_* \wedge  + \tilde{\mathbf{B}}_{\Phi,U_{*}} ,
\end{equation}
$\mathbf{I}_3$ is the $3\times 3$ identity matrix,
\begin{equation}\label{til-B-matrix}
	\tilde{ \mathbf{B} }_{\Phi, U_{*}} : = b
	\left[(1+A)|U_*|^2+\left( U_*  \cdot \Pi_{U_*^{\perp}}\Phi \right)\right]^{-1}
	\begin{bmatrix}
		(\Phi\wedge U_*)_1 \left[ \Phi
		+ \left( 1+A-\Phi\cdot U_* \right)
		U_*
		\right]^{\tr}
		\\
		(\Phi\wedge U_*)_2
		\left[ \Phi
		+ \left( 1+A-\Phi\cdot U_* \right)
		U_*
		\right]^{\tr}
		\\
		(\Phi\wedge U_*)_3
		\left[ \Phi
		+ \left( 1+A-\Phi\cdot U_* \right)
		U_*
		\right]^{\tr}
	\end{bmatrix}
	- b \left(AU_* + \Pi_{U_*^{\perp}} \Phi \right)\wedge,
\end{equation}
\begin{equation}\label{Z*-def}
	Z_{*}(x) \in C_0^{\infty}(\RR^2),
	\quad
	\mathsf{supp}Z_{*} \subset B_{C_q},
	 \quad  \| Z_{*} \|_{C^3(\R^2)}\ll 1,  \quad
	\left[
	\pp_{x_{1}} Z_{*1} +
	\pp_{x_{2}} Z_{*2}  + i \left(\pp_{x_{1}} Z_{*2}
	-
	\pp_{x_{2}}  Z_{*1} \right)
	\right](q^{\J})	\ne 0,
\end{equation}
$j=1,2,\dots,N$, where $C_{q} = 9 \max\limits_{ j=1,2,\dots,N } |q^{\J}|$;
\begin{equation}\label{vartheta-def}
\begin{aligned}
&
	\vartheta_{mn}\in C_0^{\infty}(\RR^2), \quad
	\mathsf{supp} \vartheta_{mn} \subset B_{C_q},
	\quad
	\| \vartheta_{mn} \|_{C^{3}(\RR^2)} \le 2,
	\\
	&
	\vartheta_{mn}(q^{\K}) =\delta_{mk} \mathbf{e}_{n},
	\quad
	\nabla \vartheta_{mn}(q^{\K})=0
	 \mbox{ \ for \ } m,k =1,2,\dots,N, \ n=1,2,3,
	\end{aligned}
\end{equation}
\begin{equation*}
	\mathbf{e}_{1} = [1,0,0]^{\tr}, \quad
	\mathbf{e}_{2} = [0,1,0]^{\tr}, \quad
	\mathbf{e}_{3} = [0,0,1]^{\tr} .
\end{equation*}
$c_{mn}$ will be chosen to make $\Phi_{\rm{out}}(q^{\K},T) = 0$ for $k=1,2,\dots,N$.

\subsection{Weighted topologies for the inner and outer problems}\label{sec-topologies}

The topologies for the inner and outer problems are listed in this part.
Recall \eqref{lam-ansatz} and the form of \eqref{inner-eq}. It is natural to introduce new time variables
\begin{equation}\label{tau-j-def}
	\tau_j = \tau_j(t) :=\int_{0}^{t} \lambda_j^{-2}(s) ds + C_{\tau} T \lambda_*^{-2}(0),\quad
	\tau_j(0) =\tau_0:= C_{\tau} T \lambda_*^{-2}(0)
\end{equation}
with a constant $C_\tau>0$ sufficiently large. It follows that $\lambda_j^{2} \pp_t \Phi_{\rm in}^{\J}= \pp_{\tau} \Phi_{\rm in}^{\J}$,
\begin{equation}\label{tau-t}
\begin{aligned}
&
\tau_j(t)\sim |\ln T|^{-2}(T-t)^{-1} |\ln(T-t)|^4,
\quad
	\ln (\tau_j(t)) \sim |\ln(T-t)|,
	\\
&	
\lambda_*(t(\tau_j)) \sim |\ln T|^{-1} \tau_j^{-1} (\ln \tau_j)^2,
\quad
T-t(\tau_j)\sim |\ln T|^{-2} \tau_j^{-1} (\ln \tau_j)^4,
\quad
\pp_{\tau_j} t(\tau_j) = \lambda_j^2 (t(\tau_j)),
\\
&
\pp_{\tau_j} \lambda_*(t(\tau_j))
= (\pp_{t} \lambda_*)(t(\tau_j)) \pp_{\tau_j} t(\tau_j)
\sim -|\ln T|^{-1} \tau_j^{-2} (\ln \tau_j)^2.
\end{aligned}	
\end{equation}

$\bullet$ We endow solutions of the inner problems with the following norms.
\begin{align}
		&
		\| \Phi_{\rm{in} }^{\J} \|_{{\rm in},\nu-\delta_0,l } :=
		\sup\limits_{(y,\tau_j) \in \mathcal{D}_{2 C_{\lambda} R}} \Big[
		\big(
		\lambda_*^{\nu-\delta_0}(t(\tau_j) ) \langle y \rangle^{-l}
		\big)^{-1}
		\Big( \big|\Phi_{\rm{in} }^{\J}(y,\tau_j) \big|
		+ \langle y\rangle  \big|D \Phi_{\rm{in} }^{\J}(y,\tau_j) \big|
		+ \langle y\rangle^2  \big|D^2 \Phi_{\rm{in} }^{\J}(y,\tau_j ) \big|
		\Big) \Big],
		\nonumber
		\\
		&
		[\Phi_{\rm{in} }^{\J}]_{{\rm in},\nu-\delta_0,l,\varsigma_{\rm{in} }}
		:=
		\sup\limits_{(y,\tau_j) \in \mathcal{D}_{2 C_{\lambda} R}, \max\{\tau_0, \tau_j-R^2(t(\tau_j)) \} \le s_1 < s_2 \le \tau_j } \bigg\{
		\big[ \big( \lambda_*^{\nu-\delta_0} R^{2-\varsigma_{\rm{in} }} \big)(t(\tau_j)) \big]^{-1}
		\frac{|\Phi_{\rm{in} }^{\J}(y,s_1) - \Phi_{\rm{in} }^{\J}(y,s_2) |}{|s_1 -s_2|^{\varsigma_{\rm{in} }/2}}
		\nonumber
		\\
		&
		\qquad +
		\big[ \big( \lambda_*^{\nu-\delta_0} R^{1-\varsigma_{\rm{in} }} \big)(t(\tau_j)) \big]^{-1}
		\frac{|D \Phi_{\rm{in} }^{\J}(y,s_1) - D \Phi_{\rm{in} }^{\J}(y,s_2) |}{|s_1 -s_2|^{\varsigma_{\rm{in} }/2}}
		\bigg\},
		\nonumber
		\\
		&
		\| \Phi_{\rm{in} }^{\J} \|_{{\rm in},\nu-\delta_0,l,\varsigma_{\rm{in} }} :=
		\| \Phi_{\rm{in} }^{\J} \|_{{\rm in},\nu-\delta_0,l }
		+
		\big[ \Phi_{\rm{in} }^{\J} \big]_{{\rm in},\nu-\delta_0,l,\varsigma_{\rm{in} }},
		\label{inn-topo}
\end{align}
where $\mathcal{D}_{2R}  :=\left\{ (y,\tau_j) \ | \  \tau_j> \tau_0, \ |y| < 2 C_{\lambda} R(t(\tau_j)) \right\}$,
\begin{equation}\label{inn-top0-para}
	l>0,
	\quad
	0<\varsigma_{\rm{in} }<1,
	\quad
	0<\delta_0<\nu <1.
\end{equation}
Set $R_0(t)=\lambda_*^{-\delta_0/6}(t)$, which will be used in the inner problems and reduced equations.

The inner problems will be solved in the following space
\begin{equation}\label{inner-space}
	B_{\rm{in}}^{\J} : = \left\{ \fbf \ | \
	\|  \fbf  \|_{{\rm in},\nu-\delta_0,l, \varsigma_{\rm{in} } } \le \Lambda_{\rm{in}},\quad \fbf\cdot W^{\J}=0   \right\}, \quad j=1,2,\dots, N
\end{equation}
for a constant  $\Lambda_{\rm{in}}\ge 1$ to be determined later.

\medskip

\noindent $\bullet$ For the outer problem, we use the following weights  to control the right-hand side of the outer problem
\begin{equation}\label{rho-weights}
	\varrho_1^{\J}:=  \lambda_*^{\Theta} (\lambda_*R)^{-1} \1_{\{ |x-q^{\J}|\le 3\lambda_*R \} },
		\quad
		\varrho_2^{\J}:=  T^{-\sigma_0} \frac{\lambda_*^{1-\sigma_0}}{|x-q^{\J}|^2}
		\1_{ \{  \lambda_*R/2 \le |x-q^{\J}|\le d_q \} },
		\quad
		\varrho_3:=  T^{-\sigma_0},
\end{equation}
where $d_q$ is given in \eqref{dq-pj-def},
\begin{equation}\label{out-topo-para}
	 \Theta+\beta-1<0, \quad
	0<\Theta<1, \quad 0<\sigma_0<1.
\end{equation}
For a function $f(x,t)$, we define the $L^\infty$-weighted norm
\begin{equation}\label{G-topo-def}
	\|f\|_{**} : =   \sup_{\R^2 \times (0,T)}
	\bigg[\sum_{j=1}^N \left(\varrho_1^{\J}+\varrho_2^{\J}\right)+\varrho_3\bigg]^{-1} |f(x,t)|.
\end{equation}
Also, we define the $L^\infty$-weighted norm for $\Phi_{\rm out}$:
\begin{align}
		&
		\| \Phi_{\rm {out} }\|_{\sharp, \Theta,\alpha}
		:=
		\left( |\ln T| \lambda^{\Theta+1}_*(0) R(0)
		+
		\| Z_* \|_{C^3(\R^2)} \right)^{-1} \|\Phi_{\rm {out}}\|_{L^\infty(\R^2\times (0,T))}
		\nonumber
		\\
		&
		\quad +
		\left(\lambda^{\Theta}_*(0)
		+
		\| Z_* \|_{C^3(\R^2)}  \right)^{-1}\|\nabla_x \Phi_{\rm {out} }\|_{L^\infty(\R^2\times (0,T))}
		\nonumber
		\\
		&
		\quad +
		\sup_{\R^2\times (0,T)}
		\left[ |\ln(T-t)|
		\lambda^{\Theta+1}_*(t) R(t)
		+
		(T-t)  \| Z_* \|_{C^3(\R^2)}
		\right]^{-1}  |\Phi_{\rm out}(x,t)-\Phi_{\rm out}(x,T)|
		\nonumber
		\\
		&
		\quad + \sup_{\R^2\times (0,T)}
		\big[ \lambda^{\Theta}_*(t)
		+
		(T-t)^{\frac{\alpha}{2}}  \| Z_* \|_{C^3(\R^2)}
		\big]^{-1}
		\left|\nabla_x \Phi_{\rm out}(x,t)-\nabla_x \Phi_{\rm out}(x,T) \right|
		\nonumber
		\\
		&
		\quad + \sup_{x, x_*\in \mathbb{R}^2, 0<t<t_* < T, \ t_* -t < (T-t)/4 }
		\left[
		\lambda^{\Theta}_*(t)
		(\lambda_*(t) R(t))^{-\alpha}
		+
		\| Z_* \|_{C^3(\R^2)}
		\right]^{-1} \frac {|\nabla_x \Phi_{\rm out}(x,t) -\nabla_x \Phi_{\rm out}(x_{*},t_{*}) |}{ \big( |x-x_{*}| + \sqrt{|t-t_{*}|} \big)^{\alpha} }
\nonumber
\\
&
\quad + \sup_{ x\in \mathbb{R}^2, 0<t<t_*< T, \  t_* - t<(T-t)/4 }
\left[
T^{A_{\rm{o,h}} } (1 + \| Z_* \|_{C^3(\R^2)} )
\right]^{-1} \frac {| \Phi_{\rm out}(x,t) - \Phi_{\rm out}(x,t_{*}) |}{ (t_{*} -t)^{\alpha /2 } }
\label{out-topo}
\end{align}
under assumptions \eqref{para-rho-con} for the parameters.
The outer problem will be solved in
\begin{equation}\label{out-space}
	B_{\rm{out}} : = \left\{ \fbf \ | \  \|  \fbf  \|_{\sharp, \Theta,\alpha}\le \Lambda_{\rm{o}}, \  \fbf(q^{\J},T)=0 \mbox{ \ for \ } j=1,2,\dots,N \right\},
\end{equation}
where $\Lambda_{\rm{o}} \ge 1$ will be determined later.

We take $T$, $\| Z_* \|_{C^3(\R^2)} \ll 1$ depending on $\Lambda_{\rm{o}}$ such that $\| |\Phi_{\rm {out} }| + |\nabla \Phi_{\rm {out} }| \|_{L^{\infty}(\mathbb{R}^2\times (0,T))}
	\ll 1 $. Since $ \Phi_{\rm{in} }^{\J} \in B_{\rm{in}}^{\J}$ with \eqref{inn-top0-para} and $\Phi^{*{\J}}_0$ satisfies \eqref{Phi*-0-j-upp}, for $\Phi$ given in \eqref{u-def}, $|\Phi| \ll a$ holds.

\subsection{Strategy for solving the inner problems}\label{ortho-non-in-sec}

In order to find inner solutions with sufficient space-time decay, we need to impose orthogonality conditions for  $\mathcal{H}^{\J}$ given in \eqref{Hj-def}.
Due to the non-local feature at mode $0$, we will only solve the non-local problem at the leading order and leave the remainder to another piece of an inner problem without the orthogonality condition at mode $0$.

By \eqref{outer into inner} and \eqref{tildeL-component},
we reformulate
\begin{equation}\label{Qdecompo}
	\left(Q_{-\gamma_j}
	\left[
	\left( a-bU^{\J}  \wedge \right)
	\tilde{L}_{U^{\J} }[\Phi_{\rm{out}}]  	\right] \right)_{\mathbb{C}_j}  (x,t)
	=     \tilde{L}_{j}^{\#}[\Phi_{\rm {out} }](y^{\J},t)
	+ \tilde{l}_{j}^{\#}[\Phi_{\rm {out} }](x,t),
\end{equation}
where the leading term $ \tilde{L}_{j}^{\#}[\Phi_{\rm {out} }]$ is given by
\begin{small}
	\begin{align}
			& \tilde{L}_{j}^{\#}[\Phi_{\rm {out} }](y^{\J},t)
			:=   \tilde{L}_{j,0}^{\#}[\Phi_{\rm {out} }](\rho_j,t)
			+
			e^{i\theta_j} \tilde{L}_{j,1}^{\#}[\Phi_{\rm {out} }](\rho_j,t) + e^{2i\theta_j} \tilde{L}_{j,2}^{\#}[\Phi_{\rm {out} }](\rho_j,t),
			\nonumber
			\\
			& \tilde{L}_{j,0}^{\#}[\Phi_{\rm {out} }](\rho_j,t) :=
			(a-ib) \lambda_j^{-1}
			\rho_j w_{\rho_j}^2(\rho_j)  e^{-i\gamma_j }
			\left[
			\pp_{x_{1}} \left(\Phi_{\rm {out} }\right)_1 +
			\pp_{x_{2}} \left(\Phi_{\rm {out} }\right)_2  + i \left(\pp_{x_{1}} \left(\Phi_{\rm {out} }\right)_2
			-
			\pp_{x_{2}}  \left(\Phi_{\rm {out} }\right)_1  \right)
			\right](q^{\J},t),
			\nonumber
			\\
			& \tilde{L}_{j,1}^{\#}[\Phi_{\rm {out} }](\rho_j,t) := (a-ib) 2
			\lambda_j^{-1} w_{\rho_j}(\rho_j) \cos w(\rho_j)
			\left[ -  \pp_{x_{1}} \left(\Phi_{\rm {out} }\right)_3  + i
			\pp_{x_{2}} \left(\Phi_{\rm {out} }\right)_3
			\right](q^{\J},t),
			\nonumber
			\\
			& \tilde{L}_{j,2}^{\#}[\Phi_{\rm {out} }](\rho_j,t) :=
			(a-ib) \lambda_j^{-1} \rho_j w_{\rho_j}^2(\rho_j)
			e^{i\gamma_j}
			\left[
			\pp_{x_1} \left(\Phi_{\rm {out} }\right)_1 - \pp_{x_2} \left(\Phi_{\rm {out} }\right)_2
			-i\left(\pp_{x_1} \left(\Phi_{\rm {out} }\right)_2 + \pp_{x_2} \left(\Phi_{\rm {out} }\right)_1  \right)
			\right](q^{\J},t),
			\label{Llarge}
	\end{align}
\end{small}
and the smaller term $\tilde{l}_{j}^{\#}[\Phi_{\rm {out} }]$ is given by
\begin{small}
	\begin{align}
			&
			\tilde{l}_{j}^{\#}[\Phi_{\rm {out} }](x,t) :=
			(a-ib)
			\Big[
			\lambda_j^{-1}
			\rho_j w_{\rho_j}^2(\rho_j)  e^{-i\gamma_j }
			\Big\{
			\left[
			\pp_{x_{1}} \left(\Phi_{\rm {out} }\right)_1 +
			\pp_{x_{2}} \left(\Phi_{\rm {out} }\right)_2  + i \left(\pp_{x_{1}} \left(\Phi_{\rm {out} }\right)_2
			-
			\pp_{x_{2}}  \left(\Phi_{\rm {out} }\right)_1  \right)
			\right](x,t)
			\nonumber
			\\
			&
			-
			\left[
			\pp_{x_{1}} \left(\Phi_{\rm {out} }\right)_1 +
			\pp_{x_{2}} \left(\Phi_{\rm {out} }\right)_2  + i \left(\pp_{x_{1}} \left(\Phi_{\rm {out} }\right)_2
			-
			\pp_{x_{2}}  \left(\Phi_{\rm {out} }\right)_1  \right)
			\right](q^{\J},t) \Big\}
			\nonumber
			\\
			& +
			e^{i\theta_j} 2
			\lambda_j^{-1} w_{\rho_j}(\rho_j) \cos w(\rho_j)
			\left\{
			\left[ -  \pp_{x_{1}} \left(\Phi_{\rm {out} }\right)_3  + i
			\pp_{x_{2}} \left(\Phi_{\rm {out} }\right)_3
			\right](x,t)
			-
			\left[ -  \pp_{x_{1}} \left(\Phi_{\rm {out} }\right)_3  + i
			\pp_{x_{2}} \left(\Phi_{\rm {out} }\right)_3
			\right](q^{\J},t)
			\right\}
			\nonumber
			\\
			& + e^{2i\theta_j}
			\lambda_j^{-1} \rho_j w_{\rho_j}^2(\rho_j)
			e^{i\gamma_j}\Big\{ \left[
			\pp_{x_1} \left(\Phi_{\rm {out} }\right)_1 - \pp_{x_2} \left(\Phi_{\rm {out} }\right)_2
			-i\left(\pp_{x_1} \left(\Phi_{\rm {out} }\right)_2 + \pp_{x_2} \left(\Phi_{\rm {out} }\right)_1  \right)
			\right](x,t)
			\nonumber
			\\
			&
			-
			\left[
			\pp_{x_1} \left(\Phi_{\rm {out} }\right)_1 - \pp_{x_2} \left(\Phi_{\rm {out} }\right)_2
			-i\left(\pp_{x_1} \left(\Phi_{\rm {out} }\right)_2 + \pp_{x_2} \left(\Phi_{\rm {out} }\right)_1  \right)
			\right](q^{\J},t)
			\Big\} \Big].
			\label{Lsmall}
	\end{align}
\end{small}
By \eqref{out-topo},
$| \dot{\xi}^{\J}| \le C_{\xi} \lambda_*^{\epsilon_{\xi}} $ in
\eqref{lam-ansatz}, we have
\begin{equation}\label{out-to-in-1}
	\begin{aligned}
		&
		|\tilde{l}_{j}^{\#}[\Phi_{\rm {out} }](x,t)|	\lesssim
		\lambda_j^{-1} \langle \rho_j \rangle^{-2} \| \Phi_{\rm {out} }\|_{\sharp, \Theta,\alpha}
		\left( \lambda^{\Theta}_* (\lambda_* R)^{-\alpha}
		+  \| Z_* \|_{C^3(\R^2)}
		\right)\left| x-\xi^{\J} + \xi^{\J}-q^{\J}\right|^{\alpha}
		\\
		\lesssim \ &
		\lambda_*^{\alpha-1}
		\left( \lambda^{\Theta}_* (\lambda_* R)^{-\alpha}
		+  \| Z_* \|_{C^3(\R^2)}
		\right)
		\langle \rho_j \rangle^{\alpha-2} \| \Phi_{\rm {out} }\|_{\sharp, \Theta,\alpha}.
	\end{aligned}
\end{equation}

Similar to \eqref{modek-def},
using the polar coordinates \eqref{polar-coor}, we define the mode $k$ component of $\tilde{l}_{j}^{\#}[\Phi_{\rm {out} }]$ by
\begin{equation}\label{l-jk0-def}
	\tilde{l}_{j,k}^{\#}[\Phi_{\rm {out} }](\rho_j,t) :=
	(2\pi)^{-1}
	\int_{0}^{2\pi} \tilde{l}_{j}^{\#}[\Phi_{\rm {out} }]( \lambda_j \rho_j e^{is} + \xi^{\J},t) e^{-ik s}ds.
\end{equation}
In view of Proposition \ref{keyprop}, we will put $\tilde{l}_{j,0}^{\#}[\Phi_{\rm {out} }](\rho_j,t)$ and the mode $0$ component $( \mathcal{H}_{ \rm{in} }^{\J})_{\mathbb{C}_j,0}$
into non-orthogonal inner problems instead of orthogonal inner problems. Here, the orthogonal (resp. non-orthogonal) inner problem denotes the inner problem with (resp. without) orthogonality conditions at corresponding modes imposed.
 More precisely, for $j= 1,2, \dots,N$,
we consider the following two parts.
\\
\textbf{Orthogonal inner problems:}
\begin{align}
	\notag
		 \lambda_j^{2} \pp_t \Phi_{\rm in}^{\mbox{{\tiny{$[j1]$}}}}  = &~
		( 	a -b W^{\J}  \wedge )
		\Big[
		\Delta_{y^{\J}} \Phi_{\rm in}^{\mbox{{\tiny{$[j1]$}}}}
		+
		|\nabla_{y^{\J}} W^{\J} |^2 \Phi_{\rm in}^{\mbox{{\tiny{$[j1]$}}}}
		- 2 \nabla_{y^{\J}} 	
		\Big( 	W^{\J} \cdot   \Phi_{\rm in}^{\mbox{{\tiny{$[j1]$}}}}  \Big) \cdot \nabla_{y^{\J}} W^{\J}
		\\ \notag
		&~
		+
		2  \Big( 	\nabla_{y^{\J}} W^{\J} \cdot    \nabla_{y^{\J}} \Phi_{\rm in}^{\mbox{{\tiny{$[j1]$}}}}  \Big) W^{\J}	
		\Big]  + \mathcal{H}^{\J}_1
		-
		\lambda_j^{2} \Big( \tilde{l}_{j,0}^{\#}[\Phi_{\rm {out} }]  \Big)_{\mathbb{C}_j^{-1}}
		+
		\mathcal{H}_{ \rm{in} }^{\J}
		-
		\Big( ( \mathcal{H}_{ \rm{in} }^{\J} )_{\mathbb{C}_j,0}
		\Big)_{ \mathbb{C}_j^{-1} }
		\\ \label{inner-eq-1}
		&~
		+  \Big(
		\sum\limits_{k=0}^1
		e^{ik \theta_j}
		c_k^{\J}(\tau_j(t ) ) \eta(|y^{\J}|) \mathcal{Z}_{k,1}(|y^{\J}|)
		\Big)_{\mathbb{C}_j^{-1}}
		\quad
		\mbox{ in }~ \mathsf{D}_{2 C_{\lambda} R},
	\end{align}
\textbf{Non-orthogonal inner problems:}
\begin{align}
	\notag
		 \lambda_j^{2} \pp_t \Phi_{\rm in}^{\mbox{{\tiny{$[j2]$}}}}  = &~
		( a -b W^{\J}  \wedge )
		\Big[
		\Delta_{y^{\J}} \Phi_{\rm in}^{\mbox{{\tiny{$[j2]$}}}}
		+
		|\nabla_{y^{\J}} W^{\J} |^2 \Phi_{\rm in}^{\mbox{{\tiny{$[j2]$}}}}
		- 2 \nabla_{y^{\J}} 	
		\Big( 	W^{\J} \cdot   \Phi_{\rm in}^{\mbox{{\tiny{$[j2]$}}}}  \Big) \cdot \nabla_{y^{\J}} W^{\J}
		\\ \notag
		&~
		+
		2  \Big( \nabla_{y^{\J}} W^{\J} \cdot    \nabla_{y^{\J}} \Phi_{\rm in}^{\mbox{{\tiny{$[j2]$}}}}  \Big) W^{\J}	
		\Big]
		+
		\lambda_j^{2} \Big( \tilde{l}_{j,0}^{\#}[\Phi_{\rm {out} }] \Big)_{\mathbb{C}_j^{-1}}
		+
		\Big( ( \mathcal{H}_{ \rm{in} }^{\J}  )_{\mathbb{C}_j,0}
		\Big)_{ \mathbb{C}_j^{-1} }  +
		\mathbf{R}_0\left[\Phi_{\rm{out}} , \lambda_j, \gamma_j \right]
		\\ \label{inner-eq-2}
		&~ +
		\Big(\int_{0}^2 \eta(r) \mathcal{Z}_{0,1}^2(r) r dr \Big)^{-1}
		\Big(
		c_{*0}^{\J}(\tau_j(t ))   \eta(|y^{\J}|) \mathcal{Z}_{0,1}(|y^{\J}|)
		\Big)_{\mathbb{C}_j^{-1}}
		\quad
		\mbox{ in }~ \mathsf{D}_{2 C_{\lambda} R},
	\end{align}
where
\begin{align}
	\notag
		&
		\mathbf{R}_0 [\Phi_{\rm{out}}, \lambda_j, \gamma_j ](y,t):=
		-\Big(\int_{0}^2 \eta(r) \mathcal{Z}_{0,1}^2(r) r dr \Big)^{-1}
		\lambda_j
		\eta( |y^{\J}| ) \mathcal{Z}_{0,1}( |y^{\J}| )
		\big( e^{-i\gamma_j(t)}
		\mathcal{R}_0[ {\rm DC}_j [\Phi_{\rm{out}}] ] (t)
		\big)_{\mathbb{C}_{j}^{-1} },
		\\ \label{R0-def}
		&
		{\rm DC}_j[\mathbf{f}] =
		{\rm DC}_j[\mathbf{f}](t) :=(a-ib)
		\left[
		\pp_{x_{1}} f_1 +
		\pp_{x_{2}} f_2  + i \left(\pp_{x_{1}} f_2
		-
		\pp_{x_{2}}  f_1 \right)
		\right](q^{\J},t)
	\end{align}
for $\mathbf{f}= (f_1,f_2,f_3) \in L^{\infty} \big( (0,T); C^1(\mathbb{R}^2) \big)$;
the operator $\mathcal{R}_0$ will be given in Proposition \ref{keyprop}, and the reason for the choice of $\mathbf{R}_0$ will be shown in \eqref{orth0-eq-r}; under suitable assumptions on parameters, using Propositions \ref{Re-m0-prop} and \ref{qd24July12-8-prop}  with $R_*=R_1=\infty$, we will take
\begin{align}
		&
		c_{0}^{\J}(\tau_j(t))=  c_{0} \big[ [ ( \mathcal{H}^{\J}_1  )_{\mathbb{C}_j , 0 }
		-
		\lambda_j^{2}   \tilde{l}_{j,0}^{\#}
		]_{\mathbb{C}_j^{-1}} \big](\tau_j(t))
		\nonumber
		\\
		& \qquad \qquad =
		-\Big(\int_{0}^2 \eta(r) \mathcal{Z}_{0,1}^2(r) r dr\Big)^{-1}
		\Big\{  \int_{0}^{\infty}
		\big[ ( \mathcal{H}^{\J}_1 )_{\mathbb{C}_j , 0 } -
		\lambda_j^{2}   \tilde{l}_{j,0}^{\#}   \big] (\rho_j,t)  \mathcal{Z}_{0,1}(\rho_j) \rho_j d \rho_j
		+ c_{*0}^{\J}(\tau_j(t)) \Big\}
		,
		\nonumber
		\\
		&
		c_{*0}^{\J}(\tau_j(t))=c_{*0}\big[ [ ( \mathcal{H}^{\J}_1 )_{\mathbb{C}_j , 0 }
		-
		\lambda_j^{2}   \tilde{l}_{j,0}^{\#}
		]_{\mathbb{C}_j^{-1}} \big](\tau_j(t)),
		\nonumber
		\\
		&
		c_{1}^{\J}(\tau_j(t))
		=
		c_{1}\big[ [  ( \mathcal{H}^{\J}_1
		+
		\mathcal{H}_{ \rm{in} }^{\J}   )_{\mathbb{C}_j,1} e^{i\theta_j}  ]_{\mathbb{C}_j^{-1}} \big](\tau_j(t))
		\nonumber
		\\
		&
		\qquad \qquad
		=
		-\Big(\int_{0}^2 \eta(r) \mathcal{Z}_{1,1}^2(r) r dr\Big)^{-1}
		\Big\{ \int_{0}^{\infty}
		\big( \mathcal{H}^{\J}_1
		+
		\mathcal{H}_{ \rm{in} }^{\J}  \big)_{\mathbb{C}_j,1}(\rho_j,t)  \mathcal{Z}_{1,1}(\rho_j) \rho_j d \rho_j
		+ c_{*1}^{\J}(\tau_j(t)) \Big\},
		\nonumber
		\\
		&
		c_{*1}^{\J}(\tau_j(t))= c_{*1}\big[ [  ( \mathcal{H}^{\J}_1
		+
		\mathcal{H}_{ \rm{in} }^{\J}   )_{\mathbb{C}_j,1} e^{i\theta_j}  ]_{\mathbb{C}_j^{-1}} \big](\tau_j(t)).
		\label{qd24July14-1}
\end{align}

\section{Reduced equations}\label{sec-orthogonal}

\subsection{Reformulation of reduced equations}

We will consider the reduced equations
\begin{align}
\notag
		& c_0^{\J}(\tau_j(t)) + \Big(\int_{0}^2 \eta(r) \mathcal{Z}_{0,1}^2(r) r dr \Big)^{-1}
		c_{*0}^{\J}(\tau_j(t )) - \Big(\int_{0}^2 \eta(r) \mathcal{Z}_{0,1}^2(r) rdr \Big)^{-1} \lambda_j   e^{-i\gamma_j(t)} \mathcal{R}_0[ {\rm DC}_j [\Phi_{\rm{out}}] ](t)  =0,
		\\ \label{ortho-eq}
		&
		c_1^{\J}(\tau_j(t))=0 \mbox{ \ for \ } t\in (0,T).
	\end{align}
If \eqref{ortho-eq} is true,
then $\eqref{inner-eq-1}$ and $\eqref{inner-eq-2}$ will give a solution $\Phi_{\rm in}^{\J} = \Phi_{\rm in}^{\mbox{{\tiny{$[j1]$}}}} + \Phi_{\rm in}^{\mbox{{\tiny{$[j2]$}}}}$ for \eqref{inner-eq}.
In the following lemma, we write \eqref{ortho-eq} in a form that is more convenient to handle.

\begin{lemma}
	The reduced problem \eqref{ortho-eq} is equivalent to
	\begin{equation}\label{orth0-eq-r}
		\mathcal{B}_0[p_j](t) =
		{\rm DC}_j[\Phi_{\rm{out}}](t)  +
		\mathcal{R}_0[ {\rm DC}_j [\Phi_{\rm{out}}] ](t),
	\end{equation}
	\begin{equation}\label{orth1-eqr}
		\begin{aligned}
			\dot{\xi}_1^{\J} - i\dot{\xi}_2^{\J}    = \ &
			\lambda_j
			\int_{0}^{\infty}
			\left( Q_{-\gamma_j}
			\left[
			\left( a-bU^{\J}  \wedge \right)
			\tilde{L}_{U^{\J} }[\Phi_{\rm{out}}]  	\right] \right)_{\mathbb{C}_j,1} (\rho_j,t)  \mathcal{Z}_{1,1}(\rho_j) \rho_j d\rho_j
			\\
			& +
			\lambda_j^{-1}
			\int_{0}^{\infty}
			(
			\mathcal{H}_{ \rm{in} }^{\J} )_{\mathbb{C}_j,1} (\rho_j,t)  \mathcal{Z}_{1,1}(\rho_j) \rho_j d\rho_j
			+ \lambda_j^{-1} c_{*1}^{\J}( \tau_j(t) ),
		\end{aligned}
	\end{equation}
	where
	\begin{equation}\label{qd240802-3}
		\begin{aligned}
			&
			\mathcal{B}_0[p_j](t) := - \bigg\{ \int_{-T}^t \frac{\dot p_j(s) }{t-s}
			\left[
			\left(
			-1+O\left( \iota_j \langle \ln \iota_j \rangle \right) \right)\1_{\{ \iota_j\le 1\}}
			+
			O (\iota_j^{-1}) \1_{\{ \iota_j > 1\}}
			\right]
			ds
			\\
			& \quad +
			(a-ib) e^{i\gamma_j(t)}
			{\rm{Re}}
			\bigg[
			\int_{-T}^t \frac{\dot p_j(s) e^{-i \gamma_j(t) } }{t-s}
			\left(
			O( \iota_j \langle \ln \iota_j \rangle )
			\1_{\{\iota_j \le 1 \} }
			+
			O (\iota_j^{-1})
			\1_{\{\iota_j > 1 \} }
			\right)
			ds   \bigg]
			+  C_{p1} \dot{p}_j
			+ C_{p2} e^{i\gamma_j(t)} \dot\la_j
			\bigg\}.
		\end{aligned}
	\end{equation}
	
\end{lemma}

\begin{proof}
	
	\textbf{Terms in $c_0^{\J}(\tau_j(t) )$.}
	There exists the correspondence
	between kernels \eqref{def-kernels} and \eqref{scalr-Z} through \eqref{ZZ0} and \eqref{ZZ1}.
	See \eqref{scalr-Z}, $\mathcal{Z}_{0,1}(\rho_j) \rho_j = \rho_j^2 (\rho_j^2+ 1)^{-1}$.
	By \eqref{Hj-1-def} and \eqref{Qdecompo}, we have
	\begin{equation*}
		\int_{0}^{\infty}
		\big[
		( \mathcal{H}^{\J}_1 )_{\mathbb{C}_j , 0 } -
		\lambda_j^{2}   \tilde{l}_{j,0}^{\#} \big]  (\rho_j,t)  \mathcal{Z}_{0,1}(\rho_j) \rho_j d \rho_j
		=
		\lambda_j^2  \int_{0}^{\infty}
		\left(
		\tilde{L}_{j,0}^{\#} +  M_0^{\J}    \right)(\rho_j,t)  \mathcal{Z}_{0,1}(\rho_j) \rho_j d\rho_j.
	\end{equation*}
	Using \eqref{qd24July14-1}, the first equation in \eqref{ortho-eq} can be written as
	\begin{equation}\label{qd240803-1}
		\mathcal{R}_0[ {\rm DC}_j [\Phi_{\rm{out}}] ] (t)  +
		\lambda_j e^{i\gamma_j(t)} \int_{0}^{\infty}
		\left(
		\tilde{L}_{j,0}^{\#} +  M_0^{\J}    \right)(\rho_j,t)  \mathcal{Z}_{0,1}(\rho_j) \rho_j d\rho_j = 0.
	\end{equation}
	Here,
	\begin{equation}\label{qd24July12-10}
		\int_{0}^{\infty} \tilde{L}_{j,0}^{\#}(\rho_j,t)  \mathcal{Z}_{0,1}(\rho_j) \rho_j d\rho_j
		=
		\lambda_j^{-1}
		e^{-i\gamma_j(t)}   {\rm DC}_j [\Phi_{\rm{out}}],	
	\end{equation}
	where we used $\int_{0}^{\infty}  \rho_j w_{\rho_j}^2(\rho_j) \mathcal{Z}_{0,1}(\rho_j) \rho_j d\rho_j = \int_{0}^{\infty}  \frac{4\rho_j^3}{(\rho_j^2+1)^3} d\rho_j = 1$.
	
	Recall \eqref{zetaj},
	$\zeta_j =
	\iota_j (\rho_j^2+1), \iota_j= \lambda_j^2(t) (t-s)^{-1} $. By \eqref{M0-ortho-form} and $\mathcal{Z}_{0,1}(\rho_j) \rho_j = \rho_j^2 (\rho_j^2+ 1)^{-1}$, then
	\begin{small}
		\begin{align*}
				&
				\int_{0}^{\infty} M_0(\rho_j,t) \mathcal{Z}_{0,1}(\rho_j) \rho_j d\rho_j
				=
				\lambda_j^{-1}
				\int_{-T}^t \frac{\dot p_j(s) e^{-i\gamma_j(t)} }{t-s}
				\int_{0}^{\infty}
				\\
				&
				\bigg\{
				\bigg[
				\frac{\rho_j^3 (3\rho_j^7+ \rho_j^6 +12\rho_j^5 -15\rho_j^4+11\rho_j^3-24\rho_j^2-8)}{2(\rho_j^2+1)^{\frac{5}{2}} (\rho_j^3+1)^3 }
				+ O\left( \iota_j  \langle \rho_j\rangle^{-1} \right) \bigg]
				\1_{\{ \iota_j (\rho_j^2+1) \le 1\}}
				+
				O\left( \iota_j ^{-1} \langle \rho_j\rangle^{-5} \right) \1_{\{ \iota_j (\rho_j^2+1) > 1\}}
				\bigg\} d\rho_j
				ds
				\\
				& -
				\lambda_j^{-1}
				\mathrm{Re}
				\bigg[
				\int_{-T}^t \frac{\dot p_j(s) e^{-i\gamma_j(t)} }{t-s}
				\int_{0}^{\infty}
				\bigg\{
				\bigg[
				\frac{\rho_j^3 (3\rho_j^7+ \rho_j^6 +12\rho_j^5 -15\rho_j^4+11\rho_j^3-24\rho_j^2-8)}{(\rho_j^2+1)^{\frac{7}{2}} (\rho_j^3+1)^3 }
				+ O\left( \iota_j  \langle \rho_j\rangle^{-3} \right) \bigg]
				\1_{\{ \iota_j (\rho_j^2+1) \le 1\}}
				\\
				&
				+
				O\left( \iota_j ^{-1} \langle \rho_j\rangle^{-7} \right) \1_{\{ \iota_j (\rho_j^2+1) > 1\}}
				\bigg\} d\rho_j
				ds   \bigg]
				+
				b(ia+b) \lambda_j^{-1}  \mathrm{Re} \bigg[
				\\
				&
				\int_{-T}^t \frac{ \dot{p}_j(s) e^{-i\gamma_j(t)} }{t-s}
				\int_{0}^{\infty}
				\bigg\{
				\bigg[
				\frac{\rho_j^3 ( 3\rho_j^7 + \rho_j^6 +12\rho_j^5 -15 \rho_j^4 +11\rho_j^3-24\rho_j^2-8)}{(\rho_j^2+1)^{\frac{7}{2}} (\rho_j^3+1)^3}
				+ O\left(\iota_j \langle \rho_j\rangle^{-1} \right) \bigg]
				\1_{\{ \iota_j (\rho_j^2+1)\le 1\}}
				\\
				&
				+
				O\left( \iota_j ^{-1} \langle \rho_j\rangle^{-5} \right) \1_{\{ \iota_j (\rho_j^2+1) > 1\}}
				\bigg\} d\rho_j ds  \bigg]
				+
				b(ib-a) \lambda_j^{-1}
				\mathrm{Re} \bigg[
				\\
				&
				\int_{-T}^t \frac{ \dot{p}_j(s) e^{-i\gamma_j(t)} }{t-s}
				\int_{0}^{\infty}
				\bigg\{
				\bigg[
				\frac{i\rho_j^3 ( 3\rho_j^7 + \rho_j^6 +12\rho_j^5 -15 \rho_j^4 +11\rho_j^3-24\rho_j^2-8)}{(\rho_j^2+1)^{\frac{7}{2}} (\rho_j^3+1)^3}
				+ O\left(\iota_j \langle \rho_j\rangle^{-1} \right) \bigg]
				\1_{\{ \iota_j (\rho_j^2+1)\le 1\}}
				\\
				&
				+
				O\left( \iota_j ^{-1} \langle \rho_j\rangle^{-5} \right) \1_{\{ \iota_j (\rho_j^2+1) > 1\}}
				\bigg\} d\rho_j ds   \bigg]
				-
				(a-ib) \lambda_j^{-1}
				\int_{-T}^t \frac{\dot p_j(s) e^{-i \gamma_j(t) } }{t-s}  \int_{0}^{\infty}
				\bigg\{
				\\
				&
				\bigg[ \frac{4  (a+ib) \rho_j^5  }{(\rho_j^2+ 1)^{\frac{5}{2}}(\rho_j^3+1) } + O\left(\iota_j  \langle \rho_j\rangle^{-1}\right) \bigg]\1_{\{ \iota_j (\rho_j^2+1)\le 1\}}
				+
				O\left(\iota_j^{-1}  \langle \rho_j\rangle^{-5} \right) \1_{\{ \iota_j (\rho_j^2+1) > 1\}}
				\bigg\}
				d\rho_j
				ds
				\\
				& +
				(a-ib) \lambda_j^{-1}
				{\rm{Re}}
				\bigg[ 	\int_{-T}^t \frac{\dot p_j(s) e^{-i \gamma_j(t) } }{t-s}
				\int_{0}^{\infty}
				\bigg\{
				\\
				&
				\bigg[ \frac{8  (a+ib) \rho_j^5  }{(\rho_j^2+ 1)^{\frac{7}{2}}(\rho_j^3+1) } + O\left(\iota_j  \langle \rho_j\rangle^{-3}\right) \bigg]\1_{\{ \iota_j (\rho_j^2+1)\le 1\}}
				+
				O\left(\iota_j^{-1}  \langle \rho_j\rangle^{-7} \right) \1_{\{ \iota_j (\rho_j^2+1) > 1\}}
				\bigg\}
				d\rho_j
				ds   \bigg]
				\\
				&
				- (a-ib)  \lambda_j^{-1}
				\mathrm{Re}
				\bigg[
				\int_{-T}^t \frac{\dot p_j(s) e^{-i\gamma_j(t)} }{t-s}
				\int_{0}^{\infty}  \bigg\{
				\bigg[ (a+ib)
				\frac{4\rho_j^5(\rho_j^3+3\rho_j^2+4)}{(\rho_j^2+1)^{\frac{7}{2}} (\rho_j^3+1)^2 }
				+ O\left(\iota_j  \langle \rho_j\rangle^{-1} \right) \bigg]
				\1_{\{ \iota_j (\rho_j^2+1)\le 1\}}
				\\
				&
				+
				O\left( \iota_j^{-1} \langle \rho_j\rangle^{-5} \right)   \1_{\{ \iota_j (\rho_j^2+1) > 1\}}
				\bigg\} d\rho_j
				ds \bigg]
				+  C_{p1} p_j^{-1} \dot{p}_j
				+ C_{p2}  \la_j^{-1}\dot\la_j
				\\
				= \ &
				\lambda_j^{-1}
				\int_{-T}^t \frac{\dot p_j(s) e^{-i\gamma_j(t)} }{t-s}
				\int_{0}^{\infty}
				\left\{
				\left[  f_1(\rho_j)
				+ O\left( \iota_j  \langle \rho_j\rangle^{-1} \right) \right]
				\1_{\{ \iota_j (\rho_j^2+1) \le 1\}}
				+
				O\left( \iota_j ^{-1} \langle \rho_j\rangle^{-5} \right) \1_{\{ \iota_j (\rho_j^2+1) > 1\}}
				\right\} d\rho_j
				ds
				\\
				&
				+
				(a-ib) \lambda_j^{-1}
				{\rm{Re}}
				\bigg[ 	\int_{-T}^t \frac{\dot p_j(s) e^{-i \gamma_j(t) } }{t-s}
				\int_{0}^{\infty}
				\bigg\{
				\left[ (a+ib) f_2(\rho_j) + O\left(\iota_j  \langle \rho_j\rangle^{-1}\right) \right]\1_{\{ \iota_j (\rho_j^2+1)\le 1\}}
				\\
				&
				+
				O\left(\iota_j^{-1}  \langle \rho_j\rangle^{-5} \right) \1_{\{ \iota_j (\rho_j^2+1) > 1\}}
				\bigg\}
				d\rho_j
				ds   \bigg]
				+  C_{p1} p_j^{-1} \dot{p}_j
				+ C_{p2} \la_j^{-1}\dot\la_j,
		\end{align*}
	\end{small}
	where, for brevity, we denote
	\begin{align*}
			&
			C_{p1}:=
			\int_{0}^{\infty}
			\frac{2\rho_j^3[ \rho_j^{2} - \rho_j  - (\rho_j^2+1)^{\frac{1}{2}} ] }{ [ \rho_j + (\rho_j^2+1)^{\frac{1}{2}} ] (\rho_j^3 + 1)(\rho_j^2+1)^2}
			d\rho_j
			= -0.123584,
			\\
			&
			C_{p2}:=
			-
			\int_{0}^{\infty}
			\frac{ 4\rho_j^{5} [  \rho_j^2  + \rho_j(\rho_j^2+1)^{\frac{1}{2}} + 1 ]  }{ [ \rho_j + (\rho_j^2+1)^{\frac{1}{2}} ] (\rho_j^3 + 1)(\rho_j^2+1)^3 }d\rho_j = -0.823455,
			\\
			&
			f_1(\rho_j) :=
			\frac{-8 \rho_j^{11} + 3 \rho_j^{10} + \rho_j^9 - 4 \rho_j^8 - 15 \rho_j^7 + 11 \rho_j^6 - 32 \rho_j^5 - 8 \rho_j^3
			}{2(\rho_j^2+1)^{\frac{5}{2}} (\rho_j^3+1)^3 } ,
			\\
			&
			f_2(\rho_j) :=
			\frac{
				4\rho_j^{11} -15\rho_j^{10} - \rho_j^9 -16\rho_j^8 + 3\rho_j^7 -11\rho_j^6 +16\rho_j^5 + 8\rho_j^3		
			}{(\rho_j^2+ 1)^{\frac{7}{2}}(\rho_j^3+1)^3 }.
		\end{align*}
		Notice $\int_{0}^{\infty}  f_1(\rho_j)  d\rho_j = -1 $,
		$\int_{0}^{\infty}   f_2(\rho_j)  d\rho_j =0 $. Then
		\begin{align*}
				&
				\int_{0}^{\infty}
				\left\{
				\left[  f_1(\rho_j)
				+ O\left( \iota_j  \langle \rho_j\rangle^{-1} \right) \right]
				\1_{\{ \iota_j (\rho_j^2+1) \le 1\}}
				+
				O\left( \iota_j ^{-1} \langle \rho_j\rangle^{-5} \right) \1_{\{ \iota_j (\rho_j^2+1) > 1\}}
				\right\} d\rho_j
				\\
				= \ &
				\left(
				-1+O( \iota_j \langle \ln \iota_j \rangle ) \right) \1_{\{ \iota_j\le 1\}}
				+
				O (\iota_j^{-1}) \1_{\{ \iota_j > 1\}},
				\\
				&
				\int_{0}^{\infty}
				\left\{
				\left[ (a+ib) f_2(\rho_j) + O\left(\iota_j  \langle \rho_j\rangle^{-1}\right) \right]\1_{\{ \iota_j (\rho_j^2+1)\le 1\}}
				+
				O\left(\iota_j^{-1}  \langle \rho_j\rangle^{-5} \right) \1_{\{ \iota_j (\rho_j^2+1) > 1\}}
				\right\}
				d\rho_j
				\\
				= \ &
				O(\iota_j \langle \ln \iota_j \rangle )
				\1_{\{\iota_j \le 1 \} }
				+
				O (\iota_j^{-1})
				\1_{\{\iota_j > 1 \} }
		\end{align*}
		since
		for $\iota_j>1$, $
		\int_{0}^{\infty}
		O\big( \iota_j ^{-1} \langle \rho_j\rangle^{-5} \big)   d\rho_j  =
		O\big( \iota_j ^{-1}  \big) $, and
		for $0< \iota_j\le 1$,
$$
				\int_{0}^{(\iota_j^{-1} -1)^{\frac{1}{2}} }
				\left[  f_1(\rho_j)
				+ O\left( \iota_j  \langle \rho_j\rangle^{-1} \right) \right]   d\rho_j   +
				\int_{(\iota_j^{-1} -1)^{\frac{1}{2}}}^{\infty}
				O\left( \iota_j ^{-1} \langle \rho_j\rangle^{-5} \right)  d\rho_j
				=
				-1 + O(\iota_j \langle \ln \iota_j\rangle ),
$$
$$
				\int_{0}^{(\iota_j^{-1} -1)^{\frac{1}{2}} }
				\left[ (a+ib) f_2(\rho_j) + O\left(\iota_j  \langle \rho_j\rangle^{-1}\right) \right]
				d\rho_j
				+ \int_{(\iota_j^{-1} -1)^{\frac{1}{2}}}^{\infty}
				O\left(\iota_j^{-1}  \langle \rho_j\rangle^{-5} \right)
				d\rho_j
				=
				O(\iota_j \langle \ln \iota_j \rangle ).
$$
		Thus, we arrive at
		\begin{equation}\label{M0-ortho}
			\begin{aligned}
				&
				\int_{0}^{\infty} M_0(\rho_j,t) \mathcal{Z}_{0,1}(\rho_j) \rho_j d\rho_j
				=
				\lambda_j^{-1}
				\int_{-T}^t \frac{\dot p_j(s) e^{-i\gamma_j(t)} }{t-s}
				\left[
				\left(
				-1+O\left( \iota_j \langle \ln \iota_j \rangle \right) \right)\1_{\{ \iota_j\le 1\}}
				+
				O (\iota_j^{-1}) \1_{\{ \iota_j > 1\}}
				\right]
				ds
				\\
				& +
				(a-ib) \lambda_j^{-1}
				{\rm{Re}}
				\bigg[
				\int_{-T}^t \frac{\dot p_j(s) e^{-i \gamma_j(t) } }{t-s}
				\left(
				O( \iota_j \langle \ln \iota_j \rangle )
				\1_{\{\iota_j \le 1 \} }
				+
				O (\iota_j^{-1})
				\1_{\{\iota_j > 1 \} }
				\right)
				ds   \bigg]
				+  C_{p1} p_j^{-1} \dot{p}_j
				+ C_{p2} \la_j^{-1}\dot\la_j.
			\end{aligned}
		\end{equation}
		
		By plugging \eqref{qd24July12-10} and \eqref{M0-ortho} into
		\eqref{qd240803-1}, we can re-write the first equation in \eqref{ortho-eq} as \eqref{orth0-eq-r}.

		\textbf{Terms in $c_1^{\J}(\tau_j(t))$.}
		See \eqref{scalr-Z}, $\mathcal{Z}_{1,1}(\rho_j)   = \frac{1}{\rho_j^2+1}$. By \eqref{Hj-1-def}, one has
		\begin{equation}\label{m1-integ}
			\begin{aligned}
				&
				\int_{0}^{\infty}
				\big( \mathcal{H}^{\J}_1
				+
				\mathcal{H}_{ \rm{in} }^{\J}  \big)_{\mathbb{C}_j,1}(\rho_j,t)  \mathcal{Z}_{1,1}(\rho_j) \rho_j d \rho_j
				\\
				= \ &
				\int_{0}^{\infty}
				\Big\{
				\lambda_j^{2}
				\Big( Q_{-\gamma_j}
				\Big[
				(a-bU^{\J}  \wedge)
				\tilde{L}_{U^{\J} }[\Phi_{\rm{out}}]  	\Big] \Big)_{\mathbb{C}_j,1}
				+
				\lambda_j^{2}  M_{1}^{\J}
				+
				(
				\mathcal{H}_{ \rm{in} }^{\J} )_{\mathbb{C}_j,1} \Big\}(\rho_j,t)  \mathcal{Z}_{1,1}(\rho_j) \rho_j d\rho_j,
			\end{aligned}
		\end{equation}
		where we recall $M_1^{\J}$ given in \eqref{M1-def} and get
		\begin{equation}\label{M1-orth}
			\int_0^{\infty} M_1^{\J}(\rho_j,t)
			\mathcal{Z}_{1,1}(\rho_j)  \rho_j d\rho_j
			=
			-
			(\dot{\xi}_1^{\J} - i\dot{\xi}_2^{\J} )
			\la_j^{-1}
			\int_0^{\infty} \frac{2\rho_j}{(\rho_j^2+1)^2}
			d\rho_j
			=
			-
			(\dot{\xi}_1^{\J} - i\dot{\xi}_2^{\J} )
			\la_j^{-1}.
		\end{equation}
		
		By \eqref{m1-integ} and \eqref{M1-orth}, the second equation of \eqref{ortho-eq} can be rewritten as \eqref{orth1-eqr}.
	\end{proof}

\subsection{Linear theory for the non-local reduced equations}

To introduce the space for the parameter function $p_j(t)$,
we recall that the non-local operator $\mathcal B_0$ given in \eqref{qd240802-3} for mode 0 is of the approximate form
\begin{align*}
	\mathcal B_0[p ]
	=   \int_{-T} ^{t-\lambda_*^2}    \frac{\dot p(s)}{t-s}ds\, + O ( |\dot{p}(t)| ).
\end{align*}
For  $\Theta\in (0,1)$, $\varpi\in \R$ and a continuous function $g:[-T,T]\to \mathbb C$, we define the norm
\begin{equation*}
	\|g\|_{\Theta,\varpi} = \sup_{t\in [-T,T]} \, (T-t)^{-\Theta} |\ln(T-t)|^{\varpi} |g(t)| ,
\end{equation*}
and for $\alpha \in (0,1)$, $\tilde{m}$, $\varpi \in \R$, we define the semi-norm
\begin{equation*}
	[ g]_{\frac{\alpha}2,\tilde{m},\varpi} = \sup_{-T\le s <t \le T, \ t-s \le (T-t)/4} (T-t)^{-\tilde{m}}  |\ln(T-t)|^{\varpi} \frac{|g(t)-g(s)|}{(t-s)^{\alpha/2 }}.
\end{equation*}

The following proposition proved in \cite[Proposition 6.5, Proposition 6.6]{17HMF} gives an approximate inverse of the non-local operator $\mathcal B_0$ with a small remainder $\mathcal{R}_0$.
\begin{prop}\label{keyprop}
	Let $\alpha_0 ,  \frac{\alpha}2 \in (0,\frac{1}{2})$, $\varpi\in \R$, $C_1 >1$. There exists $\flat>0$ such that if $\Theta\in(0,\flat)$ and $\tilde{m} \leq\Theta - \frac{\alpha}2$, then for $h(t) :[0,T]\to \mathbb C$ satisfying
	\begin{equation}\label{hypA00}
	C_1^{-1} \le | h(T) | \le C_1,
	\quad
	 T^\Theta |\ln T|^{1+\sigma -\varpi} \| h(\cdot) - h(T) \|_{\Theta,\varpi-1}
			+ [h]_{\frac{\alpha}2,\tilde{m},\varpi-1}
			\le C_1
	\end{equation}
	for some $\sigma \in (0,1)$ and $T >0$ small enough, there exist two linear operators $\mathcal P $ and $\mathcal{R}_0$ so that $p = \mathcal{P}[h]: [-T,T]\to \mathbb C$ satisfies
	\begin{equation*}
		\mathcal{B}_0[p](t)
		= h(t) + \mathcal{R}_0[h](t) , \quad t \in [0,T]
	\end{equation*}
	with
	\begin{equation*}
		 |\mathcal{R}_0[h](t) | \leq  C
		\Bigl( T^{\sigma_1}
		+ T^\Theta  \frac{\ln |\ln T|}{|\ln T|}  \| h(\cdot) - h(T) \|_{\Theta,\varpi-1}
		+ [h]_{\frac{\alpha}2,\tilde{m},\varpi-1} \Bigr)
		\frac{(T-t)^{\tilde{m}+\frac{(1+\alpha_0 ) \alpha}2}}{  |\ln(T-t)|^{\varpi}}
	\end{equation*}
for some $\sigma_1>0$. Moreover,
\begin{equation*}
	\mathcal{P}[h] = p_{0,\kappa}[h] + \mathcal{P}_1[h] + \mathcal P_2[h],
\end{equation*}
where $p_{0,\kappa}[h] $ is the leading term of $\mathcal{P}[h]$ with
\begin{align}
&
p_{0,\kappa}[h](t) =
\kappa |\ln T|
\int_t^T |\ln(T-s)|^{-2}
ds
=
\kappa \left( 1+ O(|\ln T|^{-1})\right) |\ln T|
(T-t)|\ln(T-t)|^{-2},
\nonumber
\\
	&
\kappa =\kappa[h] = h(T) \left( 1+ O(|\ln T|^{-1})\right),
\quad
|\pp_{t}\mathcal{P}_1[h](t) | \le C \frac{|\ln T|^{1-\sigma} \left( \ln (|\ln T|)\right)^2}{|\ln(T-t)|^{3-\sigma}},
\nonumber
\\
&
|\pp_{t}^2\mathcal{P}_1[h](t)| \le C\frac{|\ln T|}{|\ln(T-t)|^3 (T-t)},
\quad
\|\pp_{t}\mathcal{P}_2[h] \|_{\Theta,\varpi}
\le C
\Big(
T^{\frac{1}{2}+\sigma-\Theta}
+
\| h(\cdot) -h(T) \|_{\Theta,\varpi-1}
\Big),
\nonumber
\\
&
[\pp_{t}\mathcal{P}_2[h] ]_{\frac{\alpha}{2},\tilde{m},\varpi}
\le C
\Big(
|\ln T|^{\varpi-3} T^{\flat -\tilde{m}-\frac{\alpha}{2}}
+
T^{\Theta} |\ln T|^{-1} \ln|\ln T|
\| h(\cdot) -h(T) \|_{\Theta,\varpi-1}
+
[h]_{\frac{\alpha}{2},\tilde{m},\varpi-1}
\Big).
\label{p-est}
\end{align}

\end{prop}

We now impose constraints on the parameters such that we can apply Proposition \ref{keyprop} to provide a linear mapping to $p_j$ for \eqref{orth0-eq-r} with $h = {\rm DC}_j [\Phi_{\rm{out}}](t)$.
The vanishing and H\"older properties in \eqref{hypA00} are exactly the ones inherited from the weighted topology \eqref{out-topo} for the outer problem, namely
 \begin{equation*}
\begin{aligned}
	&
|{\rm DC}_j[\Phi_{\rm{out}}](t)- {\rm DC}_j [\Phi_{\rm{out}}](T)|\lesssim \la_*^{\Theta}(t)+(T-t)^{\frac{\alpha}{2}}\|Z_*\|_{C^3(\R^2)},
\\
&
\frac{|{\rm DC}_j [\Phi_{\rm{out}}](t)-
	{\rm DC}_j[\Phi_{\rm{out}}](s)|}{|t-s|^{\alpha/2}}\lesssim \lambda_*(t)^{\Theta-\alpha(1-\beta)}
	+
	\| Z_* \|_{C^3(\R^2)} \mbox{ \ for \ } |t-s|<\frac{T-t}{4}.
\end{aligned}
\end{equation*}
In order for both $\| {\rm DC}_j[\Phi_{\rm{out}}](\cdot) - {\rm DC}_j[\Phi_{\rm{out}}](T) \|_{\Theta,\varpi-1}$, $[{\rm DC}_j[\Phi_{\rm{out}}]]_{\frac{\alpha}2,\tilde{m},\varpi-1}$ to be finite, we need
\begin{equation}\label{qd24July05-1}
\varpi-1-2\Theta<0,
\quad
\Theta< \alpha/2,
\quad
		\tilde{m} < \min\left\{\Theta-\alpha(1-\beta), 0 \right\}.
\end{equation}

We put the remainder $\mathcal R_0[{\rm DC}_j[\Phi_{\rm{out}}]]$ in the non-orthogonal inner problem \eqref{inner-eq-2}. For the gluing to work, suitable parameters will be chosen such that $\mathcal R_0[{\rm DC}_j[\Phi_{\rm{out}}]]$ has fast time decay.

\section{Linear theory for the outer problem}\label{sec-linearouter}

\subsection{$\mathsf{DMO_x}$, $\mathsf{|DMO|_x}$ spaces, and regularity results}\label{DMO-sec}

Given a vector-valued function $\mathbf{f}$ defined in $Q :=  \Omega \times (t_0,t_1) \subset \mathbb{R}^{d+1}$,
for $X = (x,t) \in Q$, $ (B_r(x) \cap \Omega) \times (t-r^2, t) \subset Q$, we define
\begin{align*}
&
	\omega_{\mathbf{f}, Q}^{\mathsf x}(r, X):= \fint_{ (B_r(x)  \cap \Omega) \times (t-r^2, t) }  \Big|\mathbf{f}(y,s)- \fint_{B_r(x) \cap \Omega} \mathbf{f}(z,s) dz \Big| dy ds,
\\
&
\omega_{\mathbf{f}}^{\mathsf x}(r, Q):=\sup\Set{\omega_{\mathbf{f}, Q}^{\mathsf x}(r, X) \ | \ X \in Q } \quad \text{and}\quad  \omega_{\mathbf f}^{\mathsf x}(r):=\omega_{\mathbf f}^{\mathsf x}(r, \R^{d+1}).
\end{align*}
We say that $\mathbf f$ is of {\textbf{Dini mean oscillation in $\mathbf{x}$} over $Q$ and write $\mathbf{f} \in \mathsf{DMO_x}(Q)$ if $\omega_{\mathbf f}^{\mathsf x}(r, Q)$ satisfies the Dini condition
$\int_0^1 r^{-1} \omega_{\mathbf f}^{\mathsf x}(r,Q) dr <+\infty$.
Denote the $\mathsf{DMO_x}(Q)$ {\textbf{semi-norm}} as $[ {\mathbf f} ]_{\mathsf{DMO_x}(Q)} := \int_0^1 r^{-1} \omega_{\mathbf f}^{\mathsf x}(r,Q) dr$.
Similarly, for $X = (x,t) \in Q$, $ (B_r(x) \cap \Omega) \times (t-r^2, t) \subset Q$, we define
\begin{align*}
&
	|\omega|_{\mathbf{f}, Q}^{\mathsf x}(r, X):= \fint_{ (B_r(x) \cap \Omega) \times  (t-r^2, t)}  \fint_{ B_r(x) \cap \Omega } \left|\mathbf{f}(y,s)-  \mathbf{f}(z,s) \right| dz dy ds,
\\
&
	|\omega|_{\mathbf f}^{\mathsf x}(r, Q):=\sup\Set{ |\omega|_{\mathbf{f}, Q}^{\mathsf x}(r, X) \ | \ X \in Q} \quad \text{and}\quad  |\omega|_{\mathbf f}^{\mathsf x}(r):=|\omega|_{\mathbf f}^{\mathsf x}(r, \R^{d+1}).
\end{align*}
We say that $\mathbf f$ is of \textbf{Dini mean absolute oscillation in $\mathbf{x}$} over $Q$ and write $\mathbf{f} \in \mathsf{|DMO|_x}(Q)$ if $|\omega|_{\mathbf f}^{\mathsf x}(r, Q)$ satisfies the Dini condition $	\int_0^1 r^{-1} |\omega|_{\mathbf f}^{\mathsf x}(r,Q) dr <+\infty$.
Denote the $\mathsf{|DMO|_x}(Q)$ {\textbf{semi-norm}} as $[ {\mathbf f} ]_{\mathsf{|DMO|_x}(Q)} := \int_0^1 r^{-1} |\omega|_{\mathbf f}^{\mathsf x}(r,Q) dr$, and
\begin{equation*}
\| f\|_{(\mathsf{|DMO|_x} \cap L^{\infty})(Q)} := [f]_{ \mathsf{|DMO|_x} (Q)} + \| f\|_{L^{\infty}(Q)}.
\end{equation*}

If $|B_r(x) \cap \Omega| \ge C |B_r(x)|$ with a constant $C\in (0,1)$ for all $x\in \Omega$, it follows that  $|\omega|_{\mathbf{f}, Q}^{\mathsf x}(r, X) \lesssim |\omega|_{\mathbf{f}, \mathbb{R}^{d+1}}^{\mathsf x}(r, X)$ and thus $\mathsf{|DMO|_x}(\mathbb{R}^{d+1}) \subset \mathsf{|DMO|_x}(Q)$.

We present some basic properties about $\mathsf{DMO_x}(Q)$ and $\mathsf{|DMO|_x}(Q)$ in the following lemma.
\begin{lemma}\label{DMO-|DMO|-lem}

	\begin{enumerate}
		\item For $f\in \mathsf{|DMO|_x}(Q)$, then $[f]_{\mathsf{DMO_x}(Q)} \le [f]_{\mathsf{|DMO|_x}(Q)}$, $[ |f| ]_{\mathsf{|DMO|_x}(Q)} \le [f]_{\mathsf{|DMO|_x}(Q)}$.
		\item If $|f(x,t) - f(y,t)| \le C |x-y|^{\alpha_f}$ with constants $C>0$, $0<\alpha_f \le 1$ for all $(x,t),(y,t) \in Q$, then $[f]_{\mathsf{|DMO|_x}(Q)} \le \alpha_f^{-1} C$.
		\item
		For $f,g\in \mathsf{DMO_x}(Q) (\mathsf{|DMO|_x}(Q))$, $c\in \R$, then $[f +g]_{\mathsf{DMO_x}(Q)} \le [f]_{\mathsf{DMO_x}(Q)} + [g]_{\mathsf{DMO_x}(Q)}$, $[c f]_{\mathsf{DMO_x}(Q)} = |c| [ f]_{\mathsf{DMO_x}(Q)}$
		$([f +g]_{\mathsf{|DMO|_x}(Q)} \le [f]_{\mathsf{|DMO|_x}(Q)} + [g]_{\mathsf{|DMO|_x}(Q)}, [c f]_{\mathsf{|DMO|_x}(Q)} = |c| [ f]_{\mathsf{|DMO|_x}(Q)})$.
		\item
		For $f\in \mathsf{|DMO|_x}(Q)$ satisfying $|f|\ge C_1>0$ uniformly in $Q$, then $[\frac{1}{f}]_{\mathsf{|DMO|_x}(Q)}
		\le C_1^{-2} [f]_{\mathsf{|DMO|_x}(Q)} $,
		$[|f|^{\theta}]_{\mathsf{|DMO|_x}(Q)}
		\le \theta C_1^{\theta-1} [f]_{\mathsf{|DMO|_x}(Q)}$ with $0<\theta<1$.
		\item
		For $f,g\in \mathsf{|DMO|_x}(Q) \cap L^{\infty}(Q)$, then $[fg]_{\mathsf{|DMO|_x}(Q)} \le [f]_{\mathsf{|DMO|_x}(Q)} \|g\|_{L^{\infty}(Q) } + [g]_{\mathsf{|DMO|_x}(Q)} \|f\|_{L^{\infty}(Q) }$,
		$[|f|^{\theta}]_{\mathsf{|DMO|_x}(Q)} \le \theta \| f\|_{L^{\infty}(Q)}^{\theta-1} [f]_{\mathsf{|DMO|_x}(Q)} $ with $\theta> 1$.
	\end{enumerate}
	
\end{lemma}
\begin{proof}
	The proof is straightforward by the definition.
\end{proof}
Compared with $\mathsf{DMO_x}(Q)$, $\mathsf{|DMO|_x}(Q)$ has the advantage that the functions in $\mathsf{|DMO|_x}(Q)$ are closed under arithmetic under some weak assumptions.

Denote $Q_r^{-}(X) = Q_r^{-}(x,t) :=  B_r(x) \times (t-r^2, t) \subset \mathbb{R}^{d+1}$.
\begin{prop}\label{qd240725-6-prop}
	
	Consider the second-order parabolic system
	\begin{equation*}
		\mathbf{u}_t - \sum_{\alpha, \beta=1}^d \mathbf{A}^{\alpha \beta} D_{\alpha \beta} \mathbf{u} +
		\sum_{\alpha=1}^d \mathbf{B}^{\alpha} D_{\alpha} \mathbf{u}
		+ \mathbf{C} \mathbf{u} =\mathbf{g}  \mbox{ \ in \ } Q_2^{-}(0),
	\end{equation*}
	where $\mathbf{u} =(u_1, u_2,\dots, u_m)$, $\mathbf{A}^{\alpha \beta} = (A^{\alpha \beta}_{ij}(x,t) )_{i,j=1}^m$, $\mathbf{B}^{\alpha} = ( \mathbf{B}^{\alpha}_{ij}(x,t) )_{i,j=1}^m$,
	$\mathbf{C} = ( \mathbf{C}_{ij}(x,t) )_{i,j=1}^m$, $\mathbf{g} =(g_1, g_2,\dots, g_m)$,
	$\mathbf{A}^{\alpha \beta}, \mathbf{B}^{\alpha}, \mathbf{C} \in {\mathsf{DMO_x}}(Q_2^{-}(0)) \cap L^{\infty}(Q_2^{-}(0))$, and $\mathbf{A}^{\alpha \beta}$ satisfies the
	Legendre-Hadamard ellipticity
	\begin{equation}\label{LH-ellip}
		A^{\alpha \beta}_{ij}(x,t) \xi_{\alpha} \xi_{\beta} \vartheta^i \vartheta^j \ge c_1 |\xi|^2 |\vartheta|^2
	\end{equation}
	with a constant $c_1>0$
	for all $(x,t) \in Q_2^{-}(0)$, $\xi \in \mathbb{R}^d, \vartheta \in \mathbb{R}^m$. Let $\mathbf{u} \in W_{2}^{1,2}(Q_2^{-}(0))$ be the strong solution and
	$\mathbf{u} \in L^\infty(Q_2^{-}(0))$, $\mathbf{g} \in {\mathsf {DMO_x} }(Q_2^{-}(0)) \cap L^{\infty}(Q_2^{-}(0))$. Then
	\begin{equation*}
		\| D^2 \mathbf{u}\|_{L^{\infty}(Q_1^{-}(0))} + \| \partial_{t} \mathbf{u} \|_{L^{\infty}(Q_1^{-}(0))}
		\lesssim
		\| \mathbf{u} \|_{L^{\infty}(Q_2^{-}(0)) } + 	\| \mathbf{g} \|_{L^{\infty}(Q_2^{-}(0)) } +
		[ {\mathbf{g}} ]_{\mathsf{DMO_x}(Q_2^{-}(0))}.
	\end{equation*}
	
\end{prop}

\begin{proof}
	It is a direct application of \cite[Lemma 4.13]{dong21-C0-para} and the $L^p$ estimates.
\end{proof}

\begin{prop}\label{scaling-prop-0722}
	
Consider the second-order parabolic system
\begin{equation*}
	\mathbf{u}_t - \sum_{\alpha, \beta=1}^d \mathbf{A}^{\alpha \beta} D_{\alpha \beta} \mathbf{u} +
	\sum_{\alpha=1}^d \mathbf{B}^{\alpha} D_{\alpha} \mathbf{u}
	+ \mathbf{C} \mathbf{u} =\mathbf{g}  \mbox{ \ in \ } \mathbb{R}^{d+1}.
\end{equation*}
Given $(x_*, t_*) \in \mathbb{R}^{d+1}$, $\rho_* >0$,
denote $\tilde{\mathbf{u}}(z,s) := \mathbf{u}(x_*+ \rho_*  z , t_* + \rho_*^2 s) $,
$ \tilde{\mathbf{g}}(z,s):= \rho_*^2 \mathbf{g}(x_*+ \rho_*  z , t_* + \rho_*^2 s) $, $\tilde{\mathbf{A} }^{\alpha \beta}(z,s) := \mathbf{A}^{\alpha \beta}(x_* +\rho_* z,t_* + \rho_*^2 s)$, $\tilde{\mathbf{B}}^{\alpha}(z,s):=\rho_* \mathbf{B}^{\alpha}(x_* +\rho_* z,t_* + \rho_*^2 s)$, $\tilde{\mathbf{C}}(z,s) := \rho_*^2\mathbf{C}(x_* +\rho_* z,t_* + \rho_*^2 s)$. Suppose that $\tilde{\mathbf{u}}$, $\tilde{\mathbf{g}}$, $\tilde{\mathbf{A} }^{\alpha \beta}$, $\tilde{\mathbf{B}}^{\alpha}$, $\tilde{\mathbf{C}}$ satisfy the assumption in Proposition \ref{qd240725-6-prop}, and
\begin{equation*}
	[f]_{ {\mathsf{DMO_x}}(Q_2^{-}(0)) } + \| f \|_{ L^{\infty}(Q_2^{-}(0)) } \le C,
	\quad f=\tilde{\mathbf{A} }^{\alpha \beta},
	\tilde{\mathbf{B}}^{\alpha}, \tilde{\mathbf{C}}
\end{equation*}
with a constant $C$ independent of $x_*, t_*$ and $\rho_*$, then
\begin{equation*}
	\begin{aligned}
		&
		\| (D_{x}^2 \mathbf{u})(x_*+ \rho_*  z , t_* + \rho_*^2 s) \|_{L^{\infty}(Q_1^{-}(0) )} + \| (\partial_t \mathbf{u})(x_*+ \rho_*  z , t_* + \rho_*^2 s) \|_{L^{\infty}(Q_1^{-}(0) )}
		\\
		\lesssim \ &
		\rho_*^{-2}
		\| \mathbf{u}(x_*+ \rho_*  z , t_* + \rho_*^2 s) \|_{L^{\infty}(Q_2^{-}(0) ) } + 	\| \mathbf{g}(x_*+ \rho_*  z , t_* + \rho_*^2 s) \|_{L^{\infty}(Q_2^{-}(0) ) } +  [ \mathbf{g}(x_*+ \rho_*  z , t_* + \rho_*^2 s) ]_{\mathsf{DMO_x}(Q_2^{-}(0))}
	\end{aligned}
\end{equation*}
with ``$\lesssim$'' independent of $x_*, t_*$ and $\rho_*$.
	
\end{prop}

\begin{proof}
	
Since $\partial_s \tilde{\mathbf{u}} - \sum_{\alpha, \beta=1}^d \tilde{\mathbf{A} }^{\alpha \beta} \partial_{z_{\alpha} z_{\beta}} \tilde{\mathbf{u}} +
		\sum_{\alpha=1}^d
		\tilde{\mathbf{B}}^{\alpha} \partial_{z_{\alpha}} \tilde{\mathbf{u}}
		+ \tilde{\mathbf{C}} \tilde{\mathbf{u}} = \tilde{\mathbf{g}}$,
the conclusion is a direct application of Proposition \ref{qd240725-6-prop}.
\end{proof}

\subsection{Fundamental solution for the outer problem}\label{fund-out-sec}

\begin{prop}\label{funda-prop}
	
Consider the second-order parabolic system
\begin{equation}\label{para-sys}
	\mathbf{u}_t  = \sum_{\alpha, \beta=1}^d \mathbf{A}^{\alpha \beta} D_{\alpha \beta} \mathbf{u}  \mbox{ \ in \ } \mathbb{R}^{d+1},
\end{equation}
where $\mathbf{u} =(u_1, u_2,\dots, u_m)$, $\mathbf{A}^{\alpha \beta} = (A^{\alpha \beta}_{ij}(x,t) )_{i,j=1}^m$, $[\mathbf{A}^{\alpha \beta}]_{ \mathsf{DMO_x}(\R^{d+1}) } + \| \mathbf{A}^{\alpha \beta} \|_{L^{\infty}(\R^{d+1}) } \le \Lambda $ for a constant $\Lambda>0$ and Legendre-Hadamard ellipticity \eqref{LH-ellip} for all $(x,t) \in \R^{d+1}$. Then the parabolic system \eqref{para-sys} has a fundamental solution $\Gamma(x,t,y,s)$ satisfying
\begin{equation*}
\Big(\pp_{t} - \sum_{\alpha, \beta=1}^d \mathbf{A}^{\alpha \beta} D_{\alpha \beta} \Big)\Gamma(\cdot,\cdot,y,s)=0
\mbox{ \ in \ } \R^d \times (s,\infty),
\quad
\lim\limits_{t\rightarrow s^{+} } \Gamma(\cdot,t,y,s) =\delta_y(\cdot) \mbox{ \ on \ } \R^d.
\end{equation*}
Moreover, for any $\delta \in(0,1)$, there exists  a universal constant $c>0$ such that  for $ 0<t-s \le 1$,
\begin{equation}\label{Ga-est}	
\begin{aligned}
&
		(t-s)
		\left( |\pp_{t} \Gamma(x,t,y,s)|
		+
		|D_x^2 \Gamma(x,t,y,s)|
		\right)
		+
		(t-s)^{\frac 12} |D_x \Gamma(x,t,y,s)|
		+
		|\Gamma(x,t,y,s)|
\\
		\le \ &  C (d,c_1,\Lambda,\omega_{\mathbf A}^{\mathsf x},\delta) (t-s)^{-\frac d2}
		e^{ -c  \big(\frac{|x-y|}{\sqrt{t-s}} \big)^{2-\delta} };
\end{aligned}
\end{equation}
for $s < t_1<t_2\le s+1$, $x_1,x_2 \in \R^d$, $\alpha \in (0,1)$,
\begin{equation}\label{Ga-Holder}
	\frac{ |  \Gamma(  x_1, t_1,y,s ) -  \Gamma(x_2, t_2, y,s )  | }{  \big(| x_1 - x_2| + \sqrt{| t_1- t_2|} \big)^{\alpha} }
	\le
 C (\alpha,d,c_1,\Lambda,\omega_{\mathbf A}^{\mathsf x},\delta)
	(t_2-s)^{-\frac{\alpha}{2} }
	\Big[
	(t_1-s)^{-\frac {d}2}
	e^{ -c  \big(\frac{|x_1-y|}{\sqrt{t_1-s}} \big)^{2-\delta} }
	+
	(t_2-s)^{-\frac {d}2}
	e^{ -c  \big(\frac{|x_2-y|}{\sqrt{t_2-s}} \big)^{2-\delta} }
	\Big],
\end{equation}
\begin{equation}\label{nabla-Ga-Holder}
\begin{aligned}
&
		\frac{ | ( D_x \Gamma) (  x_1, t_1,y,s ) - ( D_x \Gamma) (x_2, t_2, y,s )  | }{  \big(| x_1 - x_2| + \sqrt{| t_1- t_2|} \big)^{\alpha} }
		\\
		\le \ &  C (\alpha,d,c_1,\Lambda,\omega_{\mathbf A}^{\mathsf x},\delta)
		(t_2-s)^{-\frac{\alpha}{2} }
		\Big[
		(t_1-s)^{-\frac {d+1}2}
		e^{ -c  \big(\frac{|x_1-y|}{\sqrt{t_1-s}} \big)^{2-\delta} }
		+
		(t_2-s)^{-\frac {d+1}2}
		e^{ -c  \big(\frac{|x_2-y|}{\sqrt{t_2-s}} \big)^{2-\delta} }
		\Big].
\end{aligned}
\end{equation}
Furthermore, for $f\in C^3_0(\R^d)$ satisfying $\mathsf{supp} f \subset B_{C_1}$ with a constant $C_1>0$, denote
$\left(\Gamma*f\right)(x,t):=\int_{\R^d} \Gamma(x,t,y,s) f(y) dy$. Then for $ 0<t-s \le 1$, $s < t_1<t_2\le s+1$, $x_1,x_2 \in \R^d$,
\begin{equation}\label{Cauchy-est-o}
\begin{aligned}
&
	\left|\Gamma*f \right|
	+
\left|D_x \left(\Gamma*f\right)  \right| + \left|D^2_x \left(\Gamma*f\right)  \right|
+ \left|\pp_{t} \left(\Gamma*f\right)  \right|
\lesssim C (C_1,d,c_1,\Lambda,\omega_{\mathbf A}^{\mathsf x},\delta) \| f\|_{C^3 (\R^d)},
\\
&
\frac{ \left| D_x \left(\Gamma* \mathbf{f} \right)  (  x_1, t_1 ) -  D_x \left(\Gamma* \mathbf{f} \right) (x_2, t_2  )  \right| }{  \big(| x_1 - x_2| + \sqrt{| t_1- t_2|} \big)^{\alpha} }
\lesssim
C (\alpha,C_1,d,c_1,\Lambda,\omega_{\mathbf A}^{\mathsf x},\delta) \|\mathbf{f}\|_{C^3(\R^d)}.
\end{aligned}
\end{equation}

\end{prop}

\begin{proof}
The existence of the fundamental solution is a generalization of \cite[Theorems 1.1, 1.3]{dong22-non-divergence} to the parabolic system \eqref{para-sys}.
Indeed,
$W_p^{2,1}$ estimates for parabolic systems are given in \cite{dong11-higherLp}, which can be used to generalize  \cite[Lemma 2.2]{dong22-non-divergence} to parabolic systems. The results \cite[Lemma 2.3]{dong22-non-divergence} and \cite[Theorem 3.3]{dong21-C0-para} can also be generalized to parabolic systems.

 Given $(x_*,t_*) \in \mathbb{R}^{d+1}$, $\rho_* \le 1$, $[\mathbf{A}^{\alpha \beta}(x_* +\rho_* z, t_* +\rho_*^2 \tau) ]_{ \mathsf{DMO_x}(\R^{d+1}) } \le \Lambda$. By a generalized version of \cite[Theorem 1.3]{dong22-non-divergence}, \cite[Theorem 3.2]{dong21-C0-para} for parabolic systems, and the scaling argument similar to Proposition \ref{scaling-prop-0722}, the validity of \eqref{Ga-est} follows,
where
	$C (d,c_1,\Lambda,\omega_{\mathbf A}^{\mathsf x},\delta)$, $c>0$ will vary from line to line. The constant ``$C$'' in the proof depends on $d,c_1,\Lambda,\omega_{\mathbf A}^{\mathsf x},\delta$, and for simplicity, we will not stress this dependence by writing these explicitly.
	
	For any $(x_*,t_*),(y,s)\in \R^{d+1}$, $s<t_* \le s+1$, set $\rho_* = (t_*-s)^{\frac 12}$. Consider $\Gamma(x_* + \rho_{*} z, t_* + \rho^2_{*}\tau,y,s ) $ as a function of $z,\tau$. For $p>d+2$, set $\alpha_1=1-\frac{d+2}{p}$. Then
	\begin{align*}
			&
			\rho_{*}^{-1-\alpha_1} \| \Gamma(x_* + \rho_{*} z, t_* + \rho^2_{*}\tau,y,s ) \|_{L^p(   B(0,\frac 1{2}) \times (-\frac 14,0)   )}
			\\
			\lesssim \ &
			(t_*-s)^{-\frac {1+\alpha_1}2}
			\|
			(t_* + \rho_{*}^2 \tau -s)^{-\frac d2}
			e^{ -c  \big(\frac{|x_* + \rho_{*} z -y|}{\sqrt{t_* + \rho_{*}^2 \tau -s}} \big)^{2-\delta} }
			\|_{L^p(   B(0,\frac 1{2}) \times (-\frac 14,0)   )}
			\\
			\lesssim \ &
			\begin{cases}
				(t_*-s)^{-\frac {d+1+\alpha_1}2}
				&
				\mbox{ \ if \ } |x_*-y|\le (t_*-s)^{\frac 12}
				\\
				(t_*-s)^{-\frac {d+1+\alpha_1}2}
				e^{ -c  (\frac{|x_* -y|}{\sqrt{t_* -s}} )^{2-\delta} }
				&
				\mbox{ \ if \ } |x_*-y| > (t_*-s)^{\frac 12}
			\end{cases}
			\sim
			(t_*-s)^{-\frac {d+1+\alpha_1}2}
			e^{ -c  (\frac{|x_* -y|}{\sqrt{t_* -s}} )^{2-\delta} },
		\end{align*}
	where we used
	\begin{equation*}
		\frac 34 (t_* -s) \le t_* + \rho_{*}^2 \tau -s \le t_* -s,
\quad
		\frac{|x_* + \rho_{*} z -y|}{\sqrt{t_* + \rho_{*}^2\tau -s}}
		\begin{cases}
			\lesssim  1 &
			\mbox{ \ if \ } |x_*-y|\le (t_*-s)^{\frac 12}
			\\
			\sim \frac{|x_* -y|}{\sqrt{t_* -s}}
			&
			\mbox{ \ if \ } |x_*-y| > (t_*-s)^{\frac 12} .
		\end{cases}
	\end{equation*}
Similarly,
	\begin{equation*}
		\begin{aligned}
			&
			\rho_{*}^{-\alpha_1}   \|  (D_x \Gamma)(x_* + \rho_{*} z, t_* + \rho_{*}^2 \tau,y,s )  \|_{L^p(   B(0,\frac 1{2}) \times (-\frac 14,0)   )}
			+
			\rho^{1-\alpha_1}_{*} \|
			(D_x^2\Gamma)(x_* + \rho_{*} z, t_* + \rho^2_{*}\tau,y,s )
			\|_{L^p(   B(0,\frac 1{2}) \times (-\frac 14,0)   )}
			\\
			&
			\quad+
			\rho^{1-\alpha_1}_{*} \|
			(\pp_{t} \Gamma)(x_* + \rho_{*} z, t_* + \rho^2_{*}\tau,y,s )
			\|_{L^p(   B(0,\frac 1{2}) \times (-\frac 14,0)   )}
			\lesssim
			(t_*-s)^{-\frac {d+1+\alpha_1}2}
			e^{-c  (\frac{|x_* -y|}{\sqrt{t_* -s}} )^{2-\delta} }.
		\end{aligned}
	\end{equation*}
By the Sobolev embedding theorem (see \cite[Lemma 2.1]{dong21-C0-para}  for instance),
	\begin{equation*}
		\sup\limits_{ x_1,x_2\in B(x_*,\frac { (t_*-s)^{\frac 12} }{2}), t_1,t_2\in (t_* -\frac { t_*-s }4, t_*)}
		\frac{ | ( D_x \Gamma) (  x_1, t_1,y,s ) - ( D_x \Gamma) (x_2, t_2, y,s )  | }{ (| x_1 - x_2| + \sqrt{| t_1- t_2|} )^{\alpha_1}}
		\lesssim C(\alpha_1)
		(t_*-s)^{-\frac {d+1+\alpha_1}2}
		e^{ -c  (\frac{|x_* -y|}{\sqrt{t_* -s}} )^{2-\delta} } .
	\end{equation*}
	
	For $(x_1,t_1)\notin B(x_*,\frac { (t_*-s)^{\frac 12} }{2})\times (t_* -\frac { t_*-s }4, t_*)$, by \eqref{Ga-est}, we have
	\begin{equation*}
			\frac{ | ( D_x \Gamma) (  x_1, t_1,y,s ) - ( D_x \Gamma) (x_*, t_*, y,s )  | }{  (| x_1 - x_*| + \sqrt{| t_1- t_*|} )^{\alpha_1} }
			\lesssim
			(t_*-s)^{-\frac{\alpha_1}{2} }
			\Big\{
			(t_1-s)^{-\frac {d+1}2}
			e^{ -c  (\frac{|x_1-y|}{\sqrt{t_1-s}} )^{2-\delta} }
			+
			(t_*-s)^{-\frac {d+1}2}
			e^{ -c  (\frac{|x_*-y|}{\sqrt{t_*-s}} )^{2-\delta} }
			\Big\}  .
	\end{equation*}
	Similarly, we have
	\begin{equation*}
		\frac{ |\Gamma (  x_1, t_1,y,s ) - \Gamma (x_*, t_*, y,s )  | }{  (| x_1 - x_*| + \sqrt{| t_1- t_*|} )^{\alpha_2} }
		\lesssim C(\alpha_2)
		(t_*-s)^{-\frac {d+\alpha_2}2}
		e^{ -c  (\frac{|x_* -y|}{\sqrt{t_* -s}} )^{2-\delta} }
	\end{equation*}
	for $(x_1,t_1)\in B(x_*,\frac { (t_*-s)^{\frac 12} }{2})\times (t_* -\frac { t_*-s }4, t_*)$, where
$ \alpha_2 =
		\begin{cases}
			2-\frac{d+2}{p}
			&
			\mbox{ \ if \ } p<d+2
			\\
			1-\epsilon \text{ for any } \epsilon\in (0,1),
			&
			\mbox{ \ if \ } p\ge d+2.
		\end{cases}$
	
	For $(x_1,t_1)\notin B(x_*,\frac { (t_*-s)^{\frac 12} }{2})\times (t_* -\frac { t_*-s }4, t_*)$, by \eqref{Ga-est}, we get
	\begin{equation*}
		\frac{ |  \Gamma (  x_1, t_1,y,s ) - \Gamma(x_*, t_*, y,s )  | }{  (| x_1 - x_*| + \sqrt{| t_1- t_*|} )^{\alpha_2} }
		\lesssim
		(t_*-s)^{-\frac{\alpha_2}{2} }
		\Big\{
		(t_1-s)^{-\frac {d}2}
		e^{ -c  (\frac{|x_1-y|}{\sqrt{t_1-s}} )^{2-\delta} }
		+
		(t_*-s)^{-\frac {d}2}
		e^{ -c  (\frac{|x_*-y|}{\sqrt{t_*-s}} )^{2-\delta} }
		\Big\}.
	\end{equation*}

By \eqref{Ga-est}, \eqref{basic cal in x}, one has $\left|\Gamma*f \right|\lesssim \| f\|_{L^\infty}$.
By \cite[Theorem 3.2]{dong21-C0-para}, and $W^{1,2}_p$ estimates \cite[Theorem 2]{dong11-higherLp} (where we used $\mathsf{supp} f \subset B_{C_1}$), we conclude the validity of \eqref{Cauchy-est-o}.
\end{proof}

\subsection{Properties of the leading coefficients for the outer problem}\label{sec-DMO}

\begin{prop}\label{Bprop}
	Suppose that $T\ll 1$, $\lambda_j$ given in \eqref{lam-ansatz}, $|\Phi|\ll a$, $\| |\Phi_{\rm {out} }| + |\nabla \Phi_{\rm {out} }| \|_{L^{\infty}(\mathbb{R}^2\times (0,T))}
	\le 1 $,
	$\| \Phi_{\rm{in} }^{\J} \|_{{\rm in},\nu-\delta_0,l } \lesssim 1$, parameter assumption \eqref{inn-top0-para} and
	\begin{equation}\label{Phi-|DMO|-req1}
		\nu -\delta_0>1/2
	\end{equation}
	holds,
	then $\mathbf{B}_{\Phi,U_{*}}$  defined in
	\eqref{B-matrix} satisfies the
	Legendre-Hadamard ellipticity \eqref{LH-ellip} with a constant $c_1$
	and $\|\mathbf{B}_{\Phi,U_{*}} \|_{ (\mathsf{|DMO|_x} \cap L^{\infty} ) (\mathbb{R}^2 \times (0,T)) }$ $ \le C$, where $c_1, C$ are positive constants independent of $T$.
\end{prop}

\begin{proof}
	
	In this proof, we assume $\mathbb{R}_T^2 = \mathbb{R}^2 \times (0,T)$ and all ``$\lesssim$'' are independent of $T$.
	Set $ \mathbf{A}^{11} =  \mathbf{A}^{22} =
	a\mathbf{I}_3 -b U_{*} \wedge  $,
	and $\mathbf{A}^{\alpha \beta}=0$ for all $(\alpha,\beta)\ne (1,1), (2,2)$.
	For $\fbf=(f_1,f_2) \in \R^2$, $\gbf \in \R^3$,
	\begin{equation*}
		\sum_{\alpha, \beta=1}^{2}
		\gbf^{\tr}  \mathbf{A}^{\alpha\beta} f_{\alpha} f_{\beta} \gbf
		=
		\gbf^{\tr}
		[
		|\fbf|^2  (a\mathbf{I}_3 -b U_{*} \wedge )
		\gbf
		]
		=
		| \fbf |^2 \left(
		a|\gbf|^2 - b \gbf^{\tr}
		[  U_{*} \wedge
		\gbf
		] \right)
		=
		a |\fbf|^2 |\gbf|^2.
	\end{equation*}
	By \eqref{til-B-matrix}, \eqref{A-est}, then $|\tilde{ \mathbf{B} }_{\Phi, U_{*}}| = O(\lambda_* + |\Phi|)$. Taking $T \ll 1$ and $|\Phi| \ll a$, one has that $\mathbf{B}_{\Phi,U_{*}}$ satisfies \eqref{LH-ellip} with a constant $c_1>0$ independent of $T$. Next we will prove $	\| \mathbf{B}_{\Phi,U_{*}} \|_{(\mathsf{|DMO|_x} \cap L^{\infty} )(\mathbb{R}_T^2)} \lesssim 1$.
	Recall $U^{\J}$ given in \eqref{def-U1U2}. Obviously, $|U^{\J}| \equiv 1$.
	Since
	\begin{equation*}
		\frac{2y^{\J}}{|y^{\J}|^2+1}
		=
		\frac{2\lambda_j(t)(x-\xi^{\J}(t))}{ |x-\xi^{\J}(t)|^2+ \lambda_j^2(t) }, \quad
		\frac{|y^{\J}|^2-1}{|y^{\J}|^2+1}
		=
		1-
		\frac{ 2\lambda_j^2(t) }{ |x-\xi^{\J}(t)|^2+ \lambda_j^2(t) },
	\end{equation*}
	in order to get  $[U^{\J}]_{\mathsf{|DMO|_x}(\R^{2}_T)} \lesssim 1$, it suffices to prove
	\begin{equation}\label{U_*-|DMO|_x}
		[f]_{\mathsf{|DMO|_x}(\R^{2}_T)} \lesssim 1 \mbox{ \ for \ } f=	\frac{ \lambda_j^2(t) }{ |x-\xi^{\J}(t)|^2+ \lambda_j^2(t) }, \quad 	\frac{ \lambda_j(t)(x-\xi^{\J}(t))}{ |x-\xi^{\J}(t)|^2+ \lambda_j^2(t) }.
	\end{equation}
	\textbf{Proof of \eqref{U_*-|DMO|_x}.}
	\begin{align*}
				&
				\bigg|
				\frac{ \lambda_j^2(s) }{ |w-\xi^{\J}(s)|^2+ \lambda_j^2(s) }
				-
				\frac{ \lambda_j^2(s) }{ |z-\xi^{\J}(s)|^2+ \lambda_j^2(s) }
				\bigg|
				=
				  \frac{ \lambda_j^2(s) \left| | z-\xi^{\J}(s) |^2 - | w-\xi^{\J}(s) |^2  \right| }{ \big( | w-\xi^{\J}(s) |^2 + \lambda_j^2(s) \big) \big( | z-\xi^{\J}(s) |^2 + \lambda_j^2(s) \big)  }  	
				\\
				\le \ &
				|w-z| \lambda_j^2(s)  \frac{  \left| z-\xi^{\J}(s) \right| + \left| w-\xi^{\J}(s) \right| }{ \big( | w-\xi^{\J}(s) |^2 + \lambda_j^2(s) \big) \big( | z-\xi^{\J}(s) |^2 + \lambda_j^2(s) \big)  }  	
				\\
				\lesssim \ &
				|w-z| \lambda_j(s)
				\big[
				\big( \left| w-\xi^{\J}(s) \right|^2 + \lambda_j^2(s) \big)^{-1}
				+
				\big( \left| z-\xi^{\J}(s) \right|^2 + \lambda_j^2(s) \big)^{-1}	
				\big].
	\end{align*}
	Then for any $Q_r^{-}(X) \subset \R^{2}_T$,
	\begin{align}
				&
				\fint_{ Q_r^{-}(X)}  \fint_{ B_r(x) }
				\bigg| 	 \frac{\lambda_j^2(s) }{ \left| w-\xi^{\J}(s) \right|^2 + \lambda_j^2(s) }
				-
				\frac{ \lambda_j^2(s) }{ \left| z-\xi^{\J}(s) \right|^2 + \lambda_j^2(s) } \bigg| dz  dwds
				\nonumber
				\\
				\lesssim \ &
				r
				\fint_{ Q_r^{-}(X)}   \fint_{ B_r(x) }   \lambda_j(s)
				\Big[
				\big( \left| w-\xi^{\J}(s) \right|^2 + \lambda_j^2(s) \big)^{-1}
				+
				\big( \left| z-\xi^{\J}(s) \right|^2 + \lambda_j^2(s) \big)^{-1}	
				\Big]   dz dw ds
				\nonumber
				\\
				\sim \ &
				r
				\fint_{t-r^2}^t   \fint_{ B_r(x) }   \lambda_j(s)
				\big( \left| z-\xi^{\J}(s) \right|^2 + \lambda_j^2(s) \big)^{-1}	
				dz ds
				\le
				r
				\fint_{t-r^2}^t   \fint_{ B_r(0) }   \lambda_j(s)
				\big( \left| z \right|^2 + \lambda_j^2(s) \big)^{-1}	
				dz ds
				\nonumber
				\\
				\sim \ &
				r^{-3}
				\int_{t-r^2}^t \lambda_j(s)   \int_{0 }^r
				\left( v^2 + \lambda_j^2(s) \right)^{-1}	
				v dv ds
				\sim
				r^{-3}
				\int_{t-r^2}^t \lambda_j(s)
				\ln \big(1+ \lambda_j^{-2}(s) r^2 \big) ds.
				\label{Z1Z2}
	\end{align}
	It suffices to prove that the following integral is bounded.
	\begin{align*}
			&
			\int_0^1
			r^{-4}
			\int_{t-r^2}^t
			\lambda_j(s)
			\ln \left(1+ \lambda_j^{-2}(s) r^2 \right)
			ds dr
			=
			\int_{0}^t
			\int_0^1
			r^{-4}
			\lambda_j(s)
			\ln \left(1+  \lambda_j^{-2}(s) r^2 \right)
			\1_{ \{ r\ge (t-s)^{\frac 12} \}}
			dr ds
			\\
			= \ &
			\Big( \int_{[t-(T-t)]_+}^t + \int_{0}^{[t-(T-t)]_+} \Big)
			\lambda_j^{-2}(s)
			\int_{\frac{(t-s)^{\frac 12}}{\lambda_j(s)}}^{\frac{1}{\lambda_j(s)}}
			z^{-4} \ln(1+z^2) dz ds .
		\end{align*}
	Recall $\lambda_j$ in \eqref{lam-ansatz}.	For the first part, since $T-t\le T-s\le 2(T-t)$,
	there exist constants  $c_1, c_2>0$ such that
	\begin{small}
		\begin{equation*}
			\begin{aligned}
				&
				\int_{[t-(T-t)]_+}^t
				\lambda_j^{-2}(s)
				\int_{\frac{(t-s)^{\frac 12}}{\lambda_j(s)}}^{\frac{1}{\lambda_j(s)}}
				z^{-4} \ln(1+z^2) dz ds
				\lesssim
				\Big( \int_{[t-(T-t)]_+}^{[t-\lambda_j^2(t)]_+} +  \int_{[t-\lambda_j^2(t)]_+}^t \Big)
				\lambda_j^{-2}(t)
				\int_{c_2 \frac{(t-s)^{\frac 12}}{\lambda_j(t)}}^{\frac{c_1}{\lambda_j(t)}}
				z^{-4} \ln(1+z^2) dz ds
				\\
				\lesssim \ &
				\int_{[t-(T-t)]_+}^{[t-\lambda_j^2(t)]_+}
				\lambda_j^{-2}(t)
				\Big[\frac{(t-s)^{\frac 12}}{\lambda_j(t)} \Big]^{-3} \ln\Big(1+\Big[\frac{(t-s)^{\frac 12}}{\lambda_j(t)} \Big]^2\Big)  ds + 1
				\lesssim
				\int_{1}^{\frac{(T-t)^{\frac 12}}{\lambda_j(t)}}
				y^{-2} \ln(1+y^2) dy + 1
				\lesssim
				1.
			\end{aligned}
		\end{equation*}
	\end{small}
	For the second part, since $(T-s)/2 \le t-s \le T-s$,
	\begin{align*}
			&
			\int_{0}^{[t-(T-t)]_+}
			\lambda_j^{-2}(s)
			\int_{\frac{(t-s)^{\frac 12}}{\lambda_j(s)}}^{\frac{1}{\lambda_j(s)}}
			z^{-4} \ln(1+z^2) dz ds
			\le
			\int_{0}^{[t-(T-t)]_+}
			\lambda_j^{-2}(s)
			\int_{\frac{(T-s)^{\frac 12}}{\sqrt{2} \lambda_j(s)}}^{\frac{1}{\lambda_j(s)}}
			z^{-4} \ln(1+z^2) dz ds
			\\
			\lesssim  \ &
			\int_{0}^{[t-(T-t)]_+}
			\lambda_j(s)
			(T-s)^{-\frac 32} \ln\Big(1+\Big[ \frac{(T-s)^{\frac 12}}{\sqrt{2} \lambda_j(s)} \Big]^2 \Big) ds
			\lesssim
			1,
		\end{align*}
where the type II speed $\lambda_j(t) \lesssim (T-t)^{\frac{1}{2} +\epsilon}$ with a constant $0<\epsilon\ll 1$ is essential for the last step. Next for $i=1,2$,
	\begin{align*}
				&
				\bigg|
				\frac{\lambda_j(s)\big(w_i-\xi_i^{\J}(s) \big)}{ \left|w-\xi^{\J}(s)\right|^2 + \lambda_j^2(s) }
				-   \frac{\lambda_j(s)\big(z_i -\xi_i^{\J}(s)\big)}{ \left|z-\xi^{\J}(s)\right|^2 + \lambda_j^2(s) }  \bigg|
				\\
				= \ &
				\lambda_j(s)
				\bigg|
				\frac{ \big(w_i-\xi_i^{\J}(s) \big) \big( \left|z-\xi^{\J}(s)\right|^2 + \lambda_j^2(s)  \big) -
					\big(z_i -\xi_i^{\J}(s)\big)\big( \left|w-\xi^{\J}(s)\right|^2 + \lambda_j^2(s) \big)}{
					\big( \left|w-\xi^{\J}(s)\right|^2 + \lambda_j^2(s) \big)\big( \left|z-\xi^{\J}(s)\right|^2 + \lambda_j^2(s)  \big)  }
				\bigg|
				\\
				\le \ &
				\left| w-z \right|  \lambda_j(s)
				\frac{ \left|z-\xi^{\J}(s)\right|^2 + \lambda_j^2(s)
					+
					\left|z -\xi^{\J}(s)\right|\left( \left|w-\xi^{\J}(s)\right| + \left|z-\xi^{\J}(s)\right| \right)}{
					\big( \left|w-\xi^{\J}(s)\right|^2 + \lambda_j^2(s) \big)\big( \left|z-\xi^{\J}(s)\right|^2 + \lambda_j^2(s)  \big)  }
				\\
				\lesssim \ &
				\left| w-z \right| \lambda_j(s)
				\big[
				\big( \left|w-\xi^{\J}(s)\right|^2 + \lambda_j^2(s) \big)^{-1}
				+
				\big( \left|z-\xi^{\J}(s)\right|^2 + \lambda_j^2(s)  \big)^{-1}
				\big].
	\end{align*}
	We conclude that $[\frac{ \lambda_j(t)(x-\xi^{\J}(t))}{ |x-\xi^{\J}(t)|^2+ \lambda_j^2(t) } ]_{\mathsf{|DMO|_x}(\R^{2}_T)} \lesssim 1 $ by the same reasoning as \eqref{Z1Z2}. Thus, we deduce \eqref{U_*-|DMO|_x}. It follows that $\| U_{*} \|_{(\mathsf{|DMO|_x} \cap L^{\infty}) (\R^{2}_T)} \lesssim 1 $.

	By Lemma \ref{DMO-|DMO|-lem} (5), the $\| \cdot\|_{(\mathsf{|DMO|_x} \cap L^{\infty}) (\R^{2}_T) }$-norm of
	the multiplicity of finitely many terms of the components of $U_{*}$ is finite.
	By Lemma \ref{DMO-|DMO|-lem} $(2)$ and  \eqref{Phi*-0-j-upp}, then $\| \sum\limits_{j=1}^{N} \Phi_{0}^{*\J} \|_{(\mathsf{|DMO|_x} \cap L^{\infty}) (\R^{2}_T)} \lesssim 1$; similarly, we have $\| g\|_{(\mathsf{|DMO|_x} \cap L^{\infty}) (\R^{2}_T)} \lesssim 1$ for  $g=\Phi_{\rm{out}},~ \eta_{d_q}^{\J} $ since $\| |\Phi_{\rm {out} }| + |\nabla \Phi_{\rm {out} }| \|_{L^{\infty}(\mathbb{R}^2_T)}
	\le 1 $.

	By $\| \Phi_{\rm{in} }^{\J} \|_{{\rm in},\nu-\delta_0,l } \lesssim 1$ and parameter assumption \eqref{inn-top0-para}, we have
	\begin{small}
		\begin{align*}
				&
				\fint_{Q_r^{-}(X) }  \fint_{B_r(x)} \Big|
				\eta\Big(\frac{x-\xi^{\J}(s)}{\la_*(s)R(s)}\Big)
				Q_{\gamma_j(s)}\Phi_{\rm in}^{\J}\Big( \frac{x-\xi^{\J}(s)}{\lambda_j(s) } ,s\Big)
				-
				\eta\Big(\frac{z-\xi^{\J}(s)}{\la_*(s)R(s)}\Big) Q_{\gamma_j(s)}\Phi_{\rm in}^{\J}\Big( \frac{z-\xi^{\J}(s)}{\lambda_j(s) } ,s\Big)
				\Big| dx dz  ds
				\\
				\lesssim \ &
				r^{-2}\int_{t-r^2}^t r \lambda_*^{\nu-\delta_0-1}(s) ds
				\sim
				r^{-1} |\ln T|^{\nu-\delta_0-1}
				\int_{T-t}^{T-t+r^2} \frac{ z^{\nu-\delta_0-1}} {|\ln z|^{2\nu-2\delta_0-2} } dz
				\lesssim
				|\ln T|^{\nu-\delta_0-1}
				\frac{ r^{2\nu-2\delta_0-1}} {|\ln r|^{2\nu-2\delta_0-2} },
		\end{align*}
	\end{small}
which is a Dini function under the assumption \eqref{Phi-|DMO|-req1}. And the corresponding $[\cdot]_{ \mathsf{|DMO|_x}(\R^{2}_T) } \lesssim 1$ since $\nu-\delta_0 <1$ by \eqref{inn-top0-para}.
	
	In sum, for $\Phi$ given in \eqref{u-def}, by Lemma \ref{DMO-|DMO|-lem},  under the assumptions \eqref{inn-top0-para} and \eqref{Phi-|DMO|-req1}, we have
	\begin{equation}\label{Phi in |DMO|x}
		\| \Phi \|_{(\mathsf{|DMO|_x} \cap L^{\infty}) (\R^{2}_T)} \lesssim 1.
	\end{equation}
	For $A$ given in \eqref{A-def}, by  \eqref{U*-norm}, \eqref{Phi in |DMO|x}, $|\Phi|\ll 1$ and Lemma \ref{DMO-|DMO|-lem}, we have $\| A \|_{(\mathsf{|DMO|_x} \cap L^{\infty}) (\R^{2}_T)} \lesssim 1
	$. By \eqref{A-est}, $\lambda_*$ in \eqref{lam-ansatz} and $|\Phi|\ll 1$, then $|A|\ll 1$.  By the similar argument, $\| \mathbf{B}_{\Phi,U_{*}} \|_{(\mathsf{|DMO|_x} \cap L^{\infty}) (\R^{2}_T)} \lesssim 1
	$.
\end{proof}

\section{Completion of the construction}

\subsection{Proof of Theorem \ref{thm}}\label{sol-glu+orth-sec}

{\textbf{Step 1.}}
$Z_*(x)$ is the leading part of the initial value of the outer problem \eqref{outer-eq}. For $Z_*(x)$ given in \eqref{Z*-def}, ${\rm DC}_j [Z_*]$ in \eqref{R0-def} is independent of $t$ and satisfies \eqref{hypA00}. By Proposition \ref{keyprop}, as the leading term of $p_{j}$, $p_{j0} = \mathcal{P}[{\rm DC}_j[Z_*]]$ satisfies
\begin{equation}\label{qd240729-3}
	\begin{aligned}
		&
		\mathcal{B}_0[p_{j0}](t) =
		{\rm DC}_j[Z_*]  +
		\mathcal{R}_0\left[ {\rm DC}_j [Z_*] \right], \quad t \in [0, T],
		\\
		&
		p_{j0} =  {\rm DC}_j [Z_*] \left( 1+ O(|\ln T|^{-1})\right) \lambda_*,
		\quad
		\dot{p}_{j0} =  - {\rm DC}_j [Z_*] \left( 1+ O(|\ln T|^{-1})\right)
		(T-t)^{-1} \lambda_*.
	\end{aligned}
\end{equation}
Set $p_j = p_{j0} + p_{j1}$, where $p_{j1}$ is the next order term of $p_j$. Denote
\begin{equation*}
	\begin{aligned}
		&
		{\mathbf{p}}_{\cdot 0} := ( p_{10}, p_{20}, \dots, p_{N 0} ),
		\quad
		{\mathbf{p}}_{\cdot 1} := ( p_{11}, p_{21}, \dots, p_{N 1} ),
		\quad
		\dot{\bm{\xi}}^{[\cdot]} := (\dot{\xi}^{[1]}, \dot{\xi}^{[2]}, \dots, \dot{\xi}^{[N]}),
		\\
		&	\mathbf{\Phi_{\rm{in} }^{[\cdot]} } = \big( \Phi_{\rm{in} }^{[1]}, \Phi_{\rm{in} }^{[2]} , \dots, \Phi_{\rm{in} }^{[N]} \big),
		\quad
		\mathcal{B}_{\rm{in}}^{[\cdot]} =
		\big(
		B_{\rm{in}}^{[1]},  B_{\rm{in}}^{[2]}, \dots, B_{\rm{in}}^{[N]}
		\big).
	\end{aligned}
\end{equation*}
Recall the ansatz \eqref{lam-ansatz}. We will solve ${\mathbf{p}}_{\cdot 1}$, $\dot{\bm{\xi}}^{[\cdot]}$ in the following spaces respectively,
\begin{equation*}
	\begin{aligned}
		&
		B_{ {\mathbf{p}}_{\cdot 1} } :=
		\big\{ ( p_{11}, p_{21}, \dots, p_{N 1} ) \ \big| \
		|p_{j1}(t)| + (T-t) |\dot{p}_{j1}(t)|  \le  |\ln T|^{-\frac{1}{2}}
		\lambda_*(t),
		\ t\in [0,T), \ j=1,2,\dots, N
		\big\},
		\\
		&
		B_{\dot{\bm{\xi}}^{[\cdot]}} :=
		\big\{ (\dot{\xi}^{[1]}, \dot{\xi}^{[2]}, \dots, \dot{\xi}^{[N]}) \ \big| \
		|\dot{\xi}^{\J}(t)| \le C_{\xi} \lambda_*^{\epsilon_{\xi} }(t) , \ t\in [0,T), \ j=1,2,\dots, N
		\big\}.
	\end{aligned}
\end{equation*}
Set $\xi^{\J}(t) = q^{\J} + \int_t^T \dot{\xi}^{\J}(s) ds$. Given ${\mathbf{p}}_{\cdot 1} \in B_{ {\mathbf{p}}_{\cdot 1} }, \dot{\bm{\xi}}^{[\cdot]} \in B_{\dot{\bm{\xi}}^{[\cdot]}}$, we have
\begin{equation*}
	p_j
	= {\rm DC}_j [Z_*] \lambda_* \big(1+ O( |\ln T|^{-\frac{1}{2}} ) \big),
	\quad
|p_j| \sim \lambda_*,
\quad
|\dot{p}_{j}| \sim (T-t)^{-1} \lambda_*.
\end{equation*}
Recalling $p_j = \lambda_j e^{i\gamma_j}$, $\lambda_j = |p_j|$, $\gamma_j = \arctan({\rm{Im}} (p_j)  / {\rm{Re}} (p_j)  )$, it follows that
\begin{equation*}
	|\pp_{t} (p_j^{c_3}|p_j|^{c_4})| \lesssim
	\lambda_*^{c_3+c_4} (T-t)^{-1} \mbox{ \ for \ } c_3,c_4\in \mathbb{R},
	\quad
	|\dot{\gamma}_j |= |p_j|^{-2}
	| {\rm{Im} } (\dot{p}_j) {\rm{Re} } (p_j) - {\rm{Im} } (p_j) {\rm{Re} } (\dot{p}_j) | \lesssim (T-t)^{-1}.
\end{equation*}
Hence, the ansatz \eqref{lam-ansatz} holds.
Direct calculation concludes the properties of $\lambda_j, \xi^{\J}, \gamma_j$ in Theorem \ref{thm}.

{\textbf{Step 2.}}
Given $\big( \mathbf{\Phi_{\rm{in} }^{[\cdot]} }, \Phi_{\rm{out}}, {\mathbf{p}}_{\cdot 1},  \dot{\bm{\xi}}^{[\cdot]} \big) \in
\mathcal{B}_{\rm{in}}^{[\cdot]} \times B_{\rm{out}} \times B_{ {\mathbf{p}}_{\cdot 1} } \times B_{\dot{\bm{\xi}}^{[\cdot]}}$, recalling $c_{0}^{\J}$, $c_{1}^{\J}$ given in \eqref{qd24July14-1}, we will give a solution
\begin{equation}\label{qd24Sep28-1}
	\big( \Phi_{\rm in}^{\mbox{{\tiny{$[j1]$}}}} , c_{*0}^{\J} , c_{*1}^{\J} \big) = \big( \Phi_{\rm in}^{\mbox{{\tiny{$[j1]$}}}} , c_{*0}^{\J} , c_{*1}^{\J} \big)\big[\mathbf{\Phi_{\rm{in} }^{[\cdot]} }, \Phi_{\rm{out}}, {\mathbf{p}}_{\cdot 1},  \dot{\bm{\xi}}^{[\cdot]} \big]
\end{equation}
to \eqref{inner-eq-1}. We always assume $\rho_j \le 2C_{\lambda} R$ in \textbf{Step 2}. For $\mathcal{H}_{ \rm{in} }^{\J} $ given in \eqref{def-HPhi2}, by \eqref{inn-topo},
\begin{equation}\label{Hin-upp}
	|\mathcal{H}_{ \rm{in} }^{\J} |
	\lesssim
	\| \Phi_{\rm{in} }^{\J} \|_{{\rm in},\nu-\delta_0,l}^2 \lambda_*^{2\nu-2\delta_0} \langle \rho_j\rangle^{-2l-3}.
\end{equation}
For $\mathcal{H}_1^{\J}$ given in \eqref{Hj-1-def}, by \eqref{Qdecompo}, we have $(\mathcal{H}_1^{\J} )_{\mathbb{C}_j} =
\lambda_j^{2}
\big( \tilde{L}_{j}^{\#}[\Phi_{\rm {out} }](y^{\J},t)
+ \tilde{l}_{j}^{\#}[\Phi_{\rm {out} }](x,t) + M_0^{\J} + e^{i\theta_j} M_{1}^{\J} \big) $. Recalling the right-hand side of \eqref{inner-eq-1}, for brevity, we	denote
\begin{equation*}
	F^{\J}:=\mathcal{H}^{\J}_1
	-
	\lambda_j^{2} \big( \tilde{l}_{j,0}^{\#}[\Phi_{\rm {out} }]  \big)_{\mathbb{C}_j^{-1}}
	+
	\mathcal{H}_{ \rm{in} }^{\J}
	-
	\big( ( \mathcal{H}_{ \rm{in} }^{\J} )_{\mathbb{C}_j,0}
	\big)_{ \mathbb{C}_j^{-1} },
	\quad
	F_k^{\J} := \big(e^{ik\theta } (F^{\J})_{\mathbb{C}_j,k} \big)_{\mathbb{C}_j^{-1} }.
\end{equation*}

{\textbf{Mode $0$}.} By \eqref{Llarge},
$ (F^{\J})_{\mathbb{C}_j, 0} = \lambda_j^{2}
\big( \tilde{L}_{j,0}^{\#}[\Phi_{\rm {out} }](\rho_j,t)  + M_0^{\J}  \big)$. Combining \eqref{nablaW}, \eqref{out-topo} and \eqref{M-est}, we have
$| (F^{\J})_{\mathbb{C}_j, 0} |\lesssim
\lambda_* \langle \rho_j\rangle^{-3}$.
Obviously, $F_0^{\J}  = \big( ( \mathcal{H}^{\J}_1  )_{\mathbb{C}_j , 0 }
-
\lambda_j^{2}   \tilde{l}_{j,0}^{\#}[\Phi_{\rm {out} }]
\big)_{\mathbb{C}_j^{-1}}$.
By \eqref{tau-t}, $\lambda_*(t(\tau_j)) \sim |\ln T|^{-1} \tau_j^{-1} (\ln \tau_j)^2$.
Applying Proposition \ref{Re-m0-prop} (with $R_0 =\lambda_*^{-\delta_0/6} $, $R_1 =R_* =\infty$) gives a mapping $( \TT_0^{2 C_{\lambda} R}[F_{0}^{\J}], c_{0}^{\J}[F_{0}^{\J}] )$, where $c_{0}^{\J}[F_{0}^{\J}]$ is given in \eqref{qd24July14-1},
\begin{equation*}
	\TT_0^{2 C_{\lambda} R}[F_{0}^{\J}] \cdot W^{\J} =0,
	\quad
	|\TT_0^{2 C_{\lambda} R}[F_{0}^{\J}] |
	\lesssim R_0^{5-\ell_0 } \ln R_0 \lambda_*
	\langle \rho_j \rangle^{2-\ell_0 }
	\lesssim \lambda_*^{\nu-\delta_0 +\epsilon} \langle \rho_j \rangle^{-l},
	\quad
	|c_{*0}^{\J}[F_{0}^{\J}] | \lesssim  R_0^{1 -\ell_0}\ln R_0  \lambda_*
\end{equation*}
provided $0<\frac{\delta_0}{6} <\beta <\frac{1}{2}$, $\ell_0 \in (1,3)$,
$-\frac{\delta_0}{6}(5-\ell_0) +1 >\nu-\delta_0$,
$2-\ell_0 \le -l$, where $\epsilon>0$ is a sufficiently small constant varying from line to line.

{\textbf{Mode $1$}.}
By \eqref{tilde-L-complex} and \eqref{tildeL-component}, one has
\begin{equation}\label{qd24July15-1}
	\big| \lambda_j^{2} Q_{-\gamma_j}
	(a-bU^{\J}  \wedge)
	\tilde{L}_{U^{\J} }[\Phi_{\rm{out}}] \big| \lesssim \lambda_* \langle \rho_j \rangle^{-2}.
\end{equation}
Combining \eqref{M-est},
$|  (\mathcal{H}_1^{\J} )_{\mathbb{C}_j,1} |
\lesssim
\lambda_* \langle \rho_j\rangle^{-2}$.
Integrating \eqref{Hin-upp}, we have $|F_1^{\J}| \lesssim \lambda_* \langle \rho_j\rangle^{-2}  $ provided $\nu-\delta_0\ge 1/2$. Suppose $0<\frac{\delta_0}{6} <\beta <\frac{1}{2}$, $\delta_0<3/2$, applying Proposition \ref{qd24July12-8-prop} (with $R_0 =\lambda_*^{-\delta_0/6} $, $R_1 =R_* =\infty$) gives a mapping
$( \TT_1^{2C_{\lambda} R}[F_1^{\J}], c_{1}^{\J}[F_1^{\J}] )$, where $c_{1}^{\J}[F_1^{\J}]$ is given in \eqref{qd24July14-1},
\begin{equation*}
	\TT_1^{2 C_{\lambda} R}[F_1^{\J}] \cdot W^{\J} =0,
	\quad
	|\TT_1^{2 C_{\lambda} R}[F_1^{\J}]|
	\lesssim R_0^{5} \lambda_* \lesssim \lambda_*^{\nu-\delta_0 +\epsilon} \langle \rho_j \rangle^{-l},
	\quad
	| c_{*1}^{\J}[F_{1}^{\J}] |\lesssim R_0^{-1} \lambda_*
\end{equation*}
provided $
\nu+\beta l -1<\delta_0 /6$ since $\rho_j \le 2C_{\lambda} R$.

{\textbf{Mode $-1$}.} By \eqref{Hj-1-def} and \eqref{Qdecompo}, we have $(\mathcal{H}_1^{\J} )_{\mathbb{C}_j,-1} = \lambda_j^2 \tilde{l}_{j,-1}^{\#}[\Phi_{\rm {out} }]$.
By \eqref{out-to-in-1}, $\rho_j \le 2C_{\lambda} R$, $\beta<1/2$, then
$ |  (\mathcal{H}_1^{\J} )_{\mathbb{C}_j,-1}   |
\lesssim
\lambda_* \langle \rho_j \rangle^{-2}
\big(  \lambda^{\Theta}_*
+
\lambda_*^{ \alpha/2 }
\big)$.
Using \eqref{Hin-upp} with $\nu-\delta_0>1/2$, we have $|F_{-1}^{\J}| \lesssim \lambda_*^{1+\epsilon_1} \langle \rho_j \rangle^{-2-\epsilon_1}$ with a sufficiently small constant $\epsilon_1>0$. Proposition \ref{qd24July13-3-prop} gives a mapping $\mathcal{T}_{-1}[F_{-1}^{\J}]$ satisfying
\begin{equation*}
	\mathcal{T}_{-1}[F_{-1}^{\J}] \cdot W^{\J} =0,
	\quad
	| \mathcal{T}_{-1}[F_{-1}^{\J}] | \lesssim \tau_j^{-1} \lesssim \lambda_*^{\nu-\delta_0 +\epsilon} \langle \rho_j \rangle^{-l}
\end{equation*}
provided $\nu+\beta l -\delta_0-1<0$.

{\textbf{Mode $k$, $|k|\ge 2$}.}
By \eqref{Hj-1-def}, \eqref{Qdecompo}, \eqref{Llarge}, \eqref{out-to-in-1}, for $|k|\ge 2$, $\rho_j \le 2C_{\lambda} R$, $\beta<1/2$, we have
\begin{equation*}
	| (\mathcal{H}_1^{\J} )_{\mathbb{C}_j,k} |
	\lesssim
	\lambda_* \langle \rho_j\rangle^{-3}  \left| \nabla_x \Phi_{\rm{out}}(q^{\J},T)\right|
	+
	\lambda_* \langle \rho_j \rangle^{-2}
	\big(  \lambda^{\Theta}_*
	+
	\lambda_*^{ \alpha/2 }
	\big)
	\lesssim
	\lambda_*^{\nu-\delta_0+\epsilon} \langle \rho_j\rangle^{-2-l}
\end{equation*}
provided
$\nu-\delta_0<1$, $0<l<1$,  $\nu-\delta_0 +\beta l -1<\min\{ \Theta, \alpha/2 \}$.
Combining \eqref{Hin-upp} with $\nu>\delta_0$, we get $|F_k^{\J}| \lesssim \lambda_*^{\nu-\delta_0+\epsilon} \langle \rho_j\rangle^{-2-l} $. For $\beta\in (0,1/2)$, Proposition \ref{modek-prop} gives a mapping
$\TT_k^{2 C_{\lambda} R}[F_k^{\J}]$ satisfying
\begin{equation*}
	\TT_k^{2 C_{\lambda} R}[F_k^{\J}] \cdot W^{\J} =0,
	\quad
	|\TT_k^{2 C_{\lambda} R}[F_k^{\J}] |
	\lesssim |k|^{-1-(0.05)^2} \lambda_*^{\nu-\delta_0+\epsilon}
	\langle \rho_j \rangle^{-l}.
\end{equation*}

As a summary of all the modes above, under the parameter restrictions
\begin{equation}\label{qd240725-2}
	\begin{aligned}
		& 0<\frac{\delta_0}{6} <\beta <\frac{1}{2},
		\quad
		\ell_0 \in (1,3),
		\quad
		-\frac{\delta_0}{6}(5-\ell_0) +1 >\nu-\delta_0 > 1/2,
		\quad
		2-\ell_0 \le -l,
		\\
		&
		\delta_0<3/2,
		\quad
		\nu+\beta l -1<\delta_0 /6,
		\quad
		0<l<1,
	\end{aligned}
\end{equation}
we set  $\Phi_{\rm in}^{\mbox{{\tiny{$[j1]$}}}} = \mathcal{T}_{-1}[F_{-1}^{\J}]
+
\sum_{k \in \mathbb{Z} , k\ne -1} \TT_k^{2 C_{\lambda} R}[F_k^{\J}]$. We have found \eqref{qd24Sep28-1} solving \eqref{inner-eq-1} with the estimates
\begin{equation}\label{qd240724-5}
	\Phi_{\rm in}^{\mbox{{\tiny{$[j1]$}}}}  \cdot W^{\J} =0,
	\quad	
	|\Phi_{\rm in}^{\mbox{{\tiny{$[j1]$}}}} | \lesssim \lambda_*^{\nu-\delta_0 +\epsilon} \langle \rho_j \rangle^{-l},
	\quad
	|c_{*0}^{\J}| \lesssim  R_0^{1 -\ell_0}\ln R_0  \lambda_*,
	\quad
	| c_{*1}^{\J}|\lesssim R_0^{-1} \lambda_*.
\end{equation}

{\textbf{Step 3.}} Given $\mathbf{\Phi_{\rm{in} }^{[\cdot]} } \in \mathcal{B}_{\rm{in}}^{[\cdot]}$, we will solve $\big( \Phi_{\rm{out}}, {\mathbf{p}}_{\cdot 1}, \dot{\bm{\xi}}^{[\cdot]} \big) = \big( \Phi_{\rm{out}}, {\mathbf{p}}_{\cdot 1}, \dot{\bm{\xi}}^{[\cdot]} \big)[\mathbf{\Phi_{\rm{in} }^{[\cdot]} }]$ in $B_{\rm{out}} \times B_{ {\mathbf{p}}_{\cdot 1} } \times B_{\dot{\bm{\xi}}^{[\cdot]}}$.

Recall the outer problem \eqref{outer-eq}. Under the assumption in Proposition \ref{Bprop} and using Proposition \ref{funda-prop}, there exists a fundamental solution $\Gamma_{\Phi, U_{*}} (x,t,y,s)$ for
\begin{equation*}
	\pp_{t} \fbf = \mathbf{B}_{\Phi,U_{*}}  \Delta_x \fbf \mbox{ \ in \ } \R^2\times (0,T).
\end{equation*}

We will choose $c_{mn}$ such that $\Phi_{\rm{out}}(q^{\K},T) = 0$ for $k=1,2,\dots,N$, that is,
\begin{equation}\label{adjust-0-eq}
	\left(\Gamma_{\Phi, U_{*}} ** \mathcal{G} \right)(q^{\K},T) +
	\left(\Gamma_{\Phi, U_{*}} * Z_{*} \right) (q^{\K},T)
	+
	\sum\limits_{m=1}^{N} \sum\limits_{n=1}^3 c_{mn}
	\left( \Gamma_{\Phi, U_{*}} *  \vartheta_{mn} \right) (q^{\K},T) =0.
\end{equation}
By Propositions \ref{Bprop} and \ref{funda-prop}, for $T, \| Z_*\|_{C^3} \ll 1$ depending on $\Lambda_{\rm{o}}$, $\Gamma_{\Phi, U_{*}}$ satisfies the estimates in Proposition \ref{funda-prop} , which is independent of $\Lambda_{\rm{o}}$. For $\mathbf{f}= Z_{*}$ or $\vartheta_{mn}$ (see \eqref{Z*-def}, \eqref{vartheta-def}), we have
\begin{equation}\label{out-cau-deri}
	\left|\Gamma_{\Phi, U_{*}} * \mathbf{f} \right| +
	\left|D_x \left( \Gamma_{\Phi, U_{*}} * \mathbf{f} \right) \right| + \left|D_x^2 \left( \Gamma_{\Phi, U_{*}} * \mathbf{f}
	\right) \right|
	+ \left|\pp_{t} \left( \Gamma_{\Phi, U_{*}} * \mathbf{f} \right) \right|  \lesssim \|\mathbf{f}\|_{C^3(\R^2)} \mbox{ \ in \ } \R^2\times (0,T) ,
\end{equation}
\begin{equation}\label{out-cau-nab-Hol}
	\frac{ \left| D_x \left(\Gamma_{\Phi, U_{*}} * \mathbf{f} \right)  (  x, t ) -  D_x \left(\Gamma_{\Phi, U_{*}} * \mathbf{f} \right) (x_*, t_*  )  \right| }{  \big(| x - x_*| + \sqrt{| t- t_*|} \big)^{\alpha} }
	\lesssim \|\mathbf{f}\|_{C^3(\R^2)} \mbox{ \ for \ } 0<\alpha <1, \ \  (x,t),~(x_*,t_*)\in \R^2\times (0,T),
\end{equation}
where both ``$\lesssim$'' are independent of $\Lambda_{\rm{o}}$. By \eqref{out-cau-deri}, for $m,k =1,2,\dots,N$, $n=1,2,3$, we have
\begin{equation*}
	| ( \Gamma_{\Phi, U_{*}} *  \vartheta_{mn} ) (q^{\K},T) - \vartheta_{mn}(q^{\K}) |
	=
	| ( \Gamma_{\Phi, U_{*}} *  \vartheta_{mn} ) (q^{\K},T) - \delta_{mk} \mathbf{e}_{n} |
	\lesssim T.
\end{equation*}
Thus we can find unique $c_{mn}
= c_{mn1} + c_{mn2}$ for $m =1,2,\dots,N$, $n=1,2,3$ solving \eqref{adjust-0-eq}, where $ c_{mn1}=c_{mn1}[\Phi, U_{*}, Z_{*}]$ and $c_{mn2}= c_{mn2}[\Phi, U_{*},\mathcal{G} ]$ satisfy
\begin{equation}\label{qd24July07-1}
	\begin{aligned}
		&
		(\Gamma_{\Phi, U_{*}} * Z_{*} ) (q^{\K},T)
		+
		\sum\limits_{m=1}^{N} \sum\limits_{n=1}^3 c_{mn1}
		( \Gamma_{\Phi, U_{*}} *  \vartheta_{mn}) (q^{\K},T) =0,
		\\
		&
		(\Gamma_{\Phi, U_{*}} ** \mathcal{G})(q^{\K},T)
		+
		\sum\limits_{m=1}^{N} \sum\limits_{n=1}^3 c_{mn2}
		( \Gamma_{\Phi, U_{*}} *  \vartheta_{mn}) (q^{\K},T) =0,
		\mbox{ \ for \ } k=1,2,\dots,N,
	\end{aligned}
\end{equation}
and thus
\begin{equation}\label{cmn-est}
	|c_{mn1} | \lesssim
	\sum\limits_{k=1}^N  |
	(\Gamma_{\Phi, U_{*}} * Z_{*} ) (q^{\K},T) |
	\lesssim \| Z_{*} \|_{C^3(\R^2)},
	\quad
	|c_{mn2} | \lesssim
	\sum\limits_{k=1}^N  | (\Gamma_{\Phi, U_{*}} ** \mathcal{G} )(q^{\K},T) |,
\end{equation}
where the estimate of $c_{mn1} $ is independent of $\Lambda_{\rm{o}}$.
To find a solution to the outer problem \eqref{outer-eq}, it suffices to solve the following fixed-point problem:
\begin{equation*}
	\mathcal{T}_{\rm{o}} [  \Phi_{\rm{out}}, {\mathbf{p}}_{\cdot 1}, \dot{\bm{\xi}}^{[\cdot]} ] = \Phi_{\rm{out}}^{\mbox{{\tiny{$(1)$}}}} + \Phi_{\rm{out}}^{\mbox{{\tiny{$(2)$}}}},
\end{equation*}
where we denote
\begin{align*}
	&
	\Phi_{\rm{out}}^{\mbox{{\tiny{$(1)$}}}} :=
	\Gamma_{\Phi, U_{*}} * Z_{*}
	+
	\sum\limits_{m=1}^{N} \sum\limits_{n=1}^3
	c_{mn1}
	(\Gamma_{\Phi, U_{*}} *  \vartheta_{mn} ),
	\
	c_{mn1} = c_{mn1}[\Phi, U_{*},Z_* ],
	\\
	&
	\Phi_{\rm{out}}^{\mbox{{\tiny{$(2)$}}}}:=
	\Gamma_{\Phi, U_{*}} ** \mathcal{G}
	+
	\sum\limits_{m=1}^{N} \sum\limits_{n=1}^3
	c_{mn2}
	(\Gamma_{\Phi, U_{*}} *  \vartheta_{mn} ),
	\
	\mathcal{G} = \mathcal{G}[  \Phi_{\rm{out}}, {\mathbf{p}}_{\cdot 1},  \dot{\bm{\xi}}^{[\cdot]} ], \
	c_{mn2} = c_{mn2}\big[\Phi, U_{*},\mathcal{G}[  \Phi_{\rm{out}}, {\mathbf{p}}_{\cdot 1}, \dot{\bm{\xi}}^{[\cdot]} ] \big]
\end{align*}
with $\Phi$ given in \eqref{u-def}. Applying \eqref{out-cau-deri}, \eqref{out-cau-nab-Hol} to $ \Gamma_{\Phi, U_{*}} *  Z_{*} $, $\Gamma_{\Phi, U_{*}} *  \vartheta_{mn}$, and using $1-\frac{\alpha}{2}-A_{\rm{o,h}} >0$ for $A_{\rm{o,h}}$ given in \eqref{para-rho-con}, $|c_{mn1} |
\lesssim \| Z_{*} \|_{C^3(\R^2)}$ in \eqref{cmn-est}, we can take the constant $\Lambda_{\rm{o}} \ge 1$ sufficiently large such that $ \| \Phi_{\rm{out}}^{ \mbox{{\tiny{$(1)$}}}} \|_{\sharp, \Theta,\alpha} \le \Lambda_{\rm{o}} /9$. By Lemma \ref{G-est-lem} and Proposition \ref{convolu-prop}, we have $\| \Gamma_{\Phi, U_{*}} ** \mathcal{G} \|_{\sharp, \Theta,\alpha} \lesssim T^{\epsilon}$ and $|c_{mn2} | \lesssim |\ln T| \lambda_*^{\Theta+1}(0) R(0) T^{\epsilon}$, which implies $\| \Phi_{\rm{out}}^{ \mbox{{\tiny{$(2)$}}}} \|_{\sharp, \Theta,\alpha}
\lesssim T^{\epsilon}$. Taking $T\ll 1$, we have $\| \mathcal{T}_{\rm{o}} [  \Phi_{\rm{out}}, {\mathbf{p}}_{\cdot 1},  \dot{\bm{\xi}}^{[\cdot]}  ] \|_{\sharp, \Theta,\alpha}
\le \Lambda_{\rm{o}}$. Due to the choices of $c_{mn1}, c_{mn2}$ in \eqref{qd24July07-1}, $ \mathcal{T}_{\rm{o}} [  \Phi_{\rm{out}}, {\mathbf{p}}_{\cdot 1},  \dot{\bm{\xi}}^{[\cdot]}  ](q^{\J},T) =0$ for $j=1,2,\dots, N$ automatically. Therefore, we have
\begin{equation}\label{qd24July07-2}
	\mathcal{T}_{\rm{o}} [  \Phi_{\rm{out}}, {\mathbf{p}}_{\cdot 1},  \dot{\bm{\xi}}^{[\cdot]}  ]   \in  B_{\rm{out}}.
\end{equation}

Also, $\vartheta_{mn}$ given in \eqref{vartheta-def} satisfy $\nabla \vartheta_{mn}(q^{\J})=0$ for $j=1,2,\dots,N$. Then $\nabla \mathcal{T}_{\rm{o}} [  \Phi_{\rm{out}}, {\mathbf{p}}_{\cdot 1},\dot{\bm{\xi}}^{[\cdot]}  ](q^{\J},0)=  \nabla Z_{*}(q^{\J})$. Recall ${\rm DC}_j$ defined in \eqref{R0-def}. Combining \eqref{qd24July07-2} and $\| \cdot \|_{\sharp, \Theta,\alpha}$-norm defined in \eqref{out-topo}, then
\begin{equation}\label{qd24July07-3}
	{\rm DC}_j\big[\mathcal{T}_{\rm{o}} [  \Phi_{\rm{out}}, {\mathbf{p}}_{\cdot 1}, \dot{\bm{\xi}}^{[\cdot]} ] \big](t) = {\rm DC}_j[Z_*]  + O(T^{\epsilon}),
\end{equation}
which meets the assumption \eqref{hypA00}
under the parameter assumptions \eqref{qd24July05-1}. Then
by Proposition \ref{keyprop},
$\tilde{p}_j := \mathcal{P} \big[{\rm DC}_j [\mathcal{T}_{\rm{o}} [  \Phi_{\rm{out}}, {\mathbf{p}}_{\cdot 1},  \dot{\bm{\xi}}^{[\cdot]} ] ] \big]$ satisfies
\begin{equation}\label{qd24July07-4}
	\begin{aligned}
		&
		\mathcal{B}_0[\tilde{p}_j](t)
		= {\rm DC}_j [\mathcal{T}_{\rm{o}} [  \Phi_{\rm{out}}, {\mathbf{p}}_{\cdot 1},  \dot{\bm{\xi}}^{[\cdot]}] ](t) + \mathcal{R}_0\big[{\rm DC}_j [\mathcal{T}_{\rm{o}} [  \Phi_{\rm{out}}, {\mathbf{p}}_{\cdot 1}, \dot{\bm{\xi}}^{[\cdot]}  ] ] \big](t) , \quad t \in [0,T],
		\\
		&
		\tilde{p}_j =  {\rm DC}_j\big[\mathcal{T}_{\rm{o}} [  \Phi_{\rm{out}}, {\mathbf{p}}_{\cdot 1}, \dot{\bm{\xi}}^{[\cdot]}  ] \big](T) \big( 1+ O(|\ln T|^{-1}) \big) \lambda_*,
		\\
		&
		\dot{\tilde{p}}_j =  - {\rm DC}_j\big[\mathcal{T}_{\rm{o}} [  \Phi_{\rm{out}}, {\mathbf{p}}_{\cdot 1}, \dot{\bm{\xi}}^{[\cdot]}  ] \big](T) \big( 1+ O(|\ln T|^{-1}) \big)
		(T-t)^{-1} \lambda_*.
	\end{aligned}
\end{equation}
We define a mapping
\begin{equation*}
	\mathcal{T}_{ p_{j 1} } [  \Phi_{\rm{out}}, {\mathbf{p}}_{\cdot 1}, \dot{\bm{\xi}}^{[\cdot]}  ] :=
	\tilde{p}_j - p_{j0},
	\quad
	\mathcal{T}_{{\mathbf{p}}_{\cdot 1}}
	:=
	( \mathcal{T}_{ p_{11} }, \mathcal{T}_{ p_{21} }, \dots, \mathcal{T}_{ p_{N1} } ).
\end{equation*}
By \eqref{qd240729-3}, \eqref{qd24July07-3}, and \eqref{qd24July07-4}, we have
\begin{equation}\label{qd24july12-9}
	\big| \mathcal{T}_{{\mathbf{p}}_{j 1} } [  \Phi_{\rm{out}}, {\mathbf{p}}_{\cdot 1}, \dot{\bm{\xi}}^{[\cdot]}  ] \big|
	+
	(T-t)
	\big| \partial_t \mathcal{T}_{{\mathbf{p}}_{j 1} } [  \Phi_{\rm{out}}, {\mathbf{p}}_{\cdot 1}, \dot{\bm{\xi}}^{[\cdot]}  ] \big| \lesssim
	|\ln T|^{-1} \lambda_*.
\end{equation}

Orthogonality equation \eqref{orth1-eqr} gives a mapping of $ \dot{\xi}^{\J}$ from the right-hand side of \eqref{orth1-eqr} to $ \dot{\xi}^{\J}$, which is denoted by $\mathcal{T}_{ \dot{\xi}^{\J} } [  \Phi_{\rm{out}}, {\mathbf{p}}_{\cdot 1}, \dot{\bm{\xi}}^{[\cdot]}  ]$. Write $\mathcal{T}_{\dot{\bm{\xi}}^{[\cdot]}} := ( \mathcal{T}_{ \dot{\xi}^{[1]} }, \mathcal{T}_{ \dot{\xi}^{[2]} },\dots, \mathcal{T}_{ \dot{\xi}^{[N]} })$.

By \eqref{Qdecompo}, \eqref{Llarge}, $\int_0^{\infty}
w_{\rho_j}(\rho_j) \cos w(\rho_j)
\mathcal{Z}_{1,1}(\rho_j) \rho_j d\rho_j = \int_0^{\infty}
\frac{-2\rho_j (\rho_j^2-1)}{ (\rho_j^2+1)^3 } d\rho_j =0$,  and
\eqref{out-to-in-1}, one has
\begin{equation*}
	\begin{aligned}
		&
		\Big| \lambda_j	\int_{0}^{\infty}
		\Big( Q_{-\gamma_j}
		\Big[
		( a-bU^{\J}  \wedge )
		\tilde{L}_{U^{\J} }[\Phi_{\rm{out}}]  	\Big] \Big)_{\mathbb{C}_j,1}
		(\rho_j,t)  \mathcal{Z}_{1,1}(\rho_j) \rho_j d \rho_j  \Big|
		\\
		=  \ &
		\Big| \lambda_j		\int_{0}^{\infty}
		\tilde{l}_{j,1}^{\#}[\Phi_{\rm {out} }](\rho_j,t)  \mathcal{Z}_{1,1}(\rho_j) \rho_j d \rho_j \Big|
		\lesssim
		\lambda^{\Theta +\alpha \beta}_*
		+
		\lambda^{\alpha}_*.
	\end{aligned}
\end{equation*}
By \eqref{Hin-upp},
$ \big|\lambda_j^{-1}
\int_{0}^{\infty}
(
\mathcal{H}_{ \rm{in} }^{\J} )_{\mathbb{C}_j,1} (\rho_j,t)  \mathcal{Z}_{1,1}(\rho_j) \rho_j d\rho_j  \big|
\lesssim \| \Phi_{\rm{in} }^{\J} \|_{{\rm in},\nu-\delta_0,l}^2 \lambda_*^{2\nu-2\delta_0-1} $.
By \eqref{qd240724-5}, $|\lambda_j^{-1} c_{*1}^{\J}( \tau_j(t) ) | \lesssim R_0^{-1} = \lambda_*^{\delta_0 /6}$.
In order for $\mathcal{T}_{\dot{\bm{\xi}}^{[\cdot]}}(B_{\dot{\bm{\xi}}^{[\cdot]}})\subset B_{\dot{\bm{\xi}}^{[\cdot]}}$, we take
\begin{equation}\label{xi-est}
	\nu -\delta_0 > 1/2, \quad
	0< 2\epsilon_{\xi} < \min\left\{ \Theta +\alpha \beta, \alpha,  2\nu-2\delta_0-1, \delta_0/6 \right\}.
\end{equation}

By \eqref{qd24July07-2}, \eqref{qd24july12-9}, \eqref{xi-est}, it follows that
$( \mathcal{T}_{\rm{o}}, \mathcal{T}_{{\mathbf{p}}_{\cdot 1}}, \mathcal{T}_{\dot{\bm{\xi}}^{[\cdot]} } )$
maps $B_{\rm{out}} \times B_{ {\mathbf{p}}_{\cdot 1} } \times B_{\dot{\bm{\xi}}^{[\cdot]} }$ to itself.

By Proposition \ref{funda-prop}, since the right-hand side of the outer problem is in the weighted-$L^{\infty}$ space, one can obtain more regularity for $\mathcal{T}_{\rm{o}}$ compared with the norm $\| \cdot\|_{\sharp, \Theta,\alpha}$ defined in \eqref{out-topo} if the weight in $\| \cdot\|_{\sharp, \Theta,\alpha}$ is relaxed, yielding compactness for $\mathcal{T}_{\rm{o}}$.

By \eqref{p-est} in Proposition \ref{keyprop}, $ \partial_t \mathcal{T}_{{\mathbf{p}}_{j 1} } [  \Phi_{\rm{out}}, {\mathbf{p}}_{\cdot 1},  \dot{\bm{\xi}}^{[\cdot]}  ]$ has H\"older continuity.

Recall the terms in the right-hand side of \eqref{orth1-eqr}. By \eqref{qd240725-1}, \eqref{m1-z3} in the proof of Proposition \ref{qd24July12-8-prop}, $c_{*1}^{\J}( \tau_j(t) )$ has quantitative H\"older continuity from the time H\"older continuity of the re-gluing outer problem $\psi_{o,1}$.
Together with the time continuity in the norms of the inner and outer problems in \eqref{inn-topo} and \eqref{out-topo}, it follows that $\mathcal{T}_{\dot{\bm{\xi}}^{[\cdot]}}$ is H\"older continuous, and thus the Schauder fixed-point theorem gives a fixed point for $( \mathcal{T}_{\rm{o}}, \mathcal{T}_{{\mathbf{p}}_{\cdot 1}}, \mathcal{T}_{\dot{\bm{\xi}}^{[\cdot]}} )$
in $B_{\rm{out}} \times B_{ {\mathbf{p}}_{\cdot 1} } \times B_{\dot{\bm{\xi}}^{[\cdot]}}$. Namely, we find a solution $\big( \Phi_{\rm{out}}[\mathbf{\Phi_{\rm{in} }^{[\cdot]} }], {\mathbf{p}}_{\cdot 0} + {\mathbf{p}}_{\cdot 1}[\mathbf{\Phi_{\rm{in} }^{[\cdot]} }],  \dot{\bm{\xi}}^{[\cdot]}[\mathbf{\Phi_{\rm{in} }^{[\cdot]} }] \big)$ of \eqref{outer-eq} and \eqref{ortho-eq}.

{\textbf{Step 4.}}
Denote the right-hand side of the non-orthogonal inner problem \eqref{inner-eq-2} as
\begin{small}
	\begin{equation*}
		F_{\rm{no}} :=
		\lambda_j^{2} \Big( \tilde{l}_{j,0}^{\#}[\Phi_{\rm {out} }] \Big)_{\mathbb{C}_j^{-1}}
		+
		\Big( ( \mathcal{H}_{ \rm{in} }^{\J}  )_{\mathbb{C}_j,0}
		\Big)_{ \mathbb{C}_j^{-1} }  +
		\mathbf{R}_0 [\Phi_{\rm{out}} , \lambda_j, \gamma_j ]
		+
		\Big(\int_{0}^2 \eta(r) \mathcal{Z}_{0,1}^2(r) r dr \Big)^{-1}
		\Big(
		c_{*0}^{\J}(\tau_j(t ))   \eta(|y^{\J}|) \mathcal{Z}_{0,1}(|y^{\J}|)
		\Big)_{\mathbb{C}_j^{-1}}.
	\end{equation*}
\end{small}
By \eqref{out-to-in-1},
$ \big|\lambda_j^{2} \big( \tilde{l}_{j,0}^{\#}[\Phi_{\rm {out} }] \big)_{\mathbb{C}_j^{-1}} \big|
\lesssim
\big( \lambda_*^{1+\Theta+\alpha\beta} + \lambda_*^{1+\alpha} \big)
\langle \rho_j \rangle^{\alpha-2} $.
By \eqref{Hin-upp}, $ \big| \big( ( \mathcal{H}_{ \rm{in} }^{\J}  )_{\mathbb{C}_j,0}
\big)_{ \mathbb{C}_j^{-1} }  \big|
\lesssim
\lambda_*^{2\nu-2\delta_0} \langle \rho_j\rangle^{-2l-3} $.
Under the parameter assumption \eqref{qd24July05-1}, by Proposition \ref{keyprop}, $\mathbf{R}_0 [\Phi_{\rm{out}} , \lambda_j, \gamma_j ]$ given in \eqref{R0-def} satisfies
$\big| \mathbf{R}_0 [\Phi_{\rm{out}} , \lambda_j, \gamma_j ] \big| \lesssim \lambda_* (T-t)^{\tilde{m}+\frac{(1+\alpha_0 ) \alpha}2}   |\ln(T-t)|^{- \varpi} \eta(|y^{\J}|) $.
By \eqref{qd240724-5},
$ \big| \big(
c_{*0}^{\J}(\tau_j(t ))   \eta(|y^{\J}|) \mathcal{Z}_{0,1}(|y^{\J}|)
\big)_{\mathbb{C}_j^{-1}} \big|
\lesssim
R_0^{1 -\ell_0}\ln R_0  \lambda_* \eta(|y^{\J}|)$.
Since $F_{\rm{no}}$ is in mode $0$, under the restrictions of parameters
\begin{equation}\label{nonortho-para}
	\begin{aligned}
		&
		\Theta +\alpha \beta<1, \quad
		1+\Theta+\alpha \beta -2\beta >\nu-\delta_0,
		\quad
		1+\alpha-2\beta >\nu-\delta_0,
		\\
		&
		2\beta <\nu-\delta_0 <1,
		\quad
		\tilde{m} + (1+\alpha_0) \alpha/2 <1,
		\quad
		1+ \tilde{m} + (1+\alpha_0) \alpha/2  -2\beta >\nu-\delta_0,
		\\
		&
		\delta_0 (\ell_0 -1) <6,
		\quad
		\delta_0 (\ell_0 -1)/6 +1-2\beta >\nu-\delta_0,
	\end{aligned}
\end{equation}
Proposition \ref{m0-nonortho} gives a mapping $\Phi_{\rm in}^{\mbox{{\tiny{$[j2]$}}}} = \TT_{00}^{2 C_{\lambda} R}[ F_{\rm{no}}  ]$ for \eqref{inner-eq-2} satisfying
\begin{equation}\label{qd240724-6}
	| \Phi_{\rm in}^{\mbox{{\tiny{$[j2]$}}}} | \lesssim \lambda_*^{\nu-\delta_0 +\epsilon} \langle \rho_j \rangle^{-1},
	\quad
	\Phi_{\rm in}^{\mbox{{\tiny{$[j2]$}}}} \cdot W^{\J} =0.
\end{equation}

We sum up \eqref{inner-eq-1} and \eqref{inner-eq-2} together and set
\begin{equation*}
	\mathcal{T}_{\rm{in}}^{\J}[ \mathbf{\Phi_{\rm{in} }^{[\cdot]} } ] := \Phi_{\rm in}^{\mbox{{\tiny{$[j1]$}}}}  + \Phi_{\rm in}^{\mbox{{\tiny{$[j2]$}}}},
	\quad
	\mathbf{T}_{\rm{in}}^{[\cdot]}[ \mathbf{\Phi_{\rm{in} }^{[\cdot]} } ] := ( \mathcal{T}_{\rm{in}}^{[1]}[ \mathbf{\Phi_{\rm{in} }^{[\cdot]} } ], \mathcal{T}_{\rm{in}}^{[2]}[ \mathbf{\Phi_{\rm{in} }^{[\cdot]} } ],\dots, \mathcal{T}_{\rm{in}}^{[N]}[ \mathbf{\Phi_{\rm{in} }^{[\cdot]} } ] ).
\end{equation*}
$\mathcal{T}_{\rm{in}}^{\J}[ \mathbf{\Phi_{\rm{in} }^{[\cdot]} } ]$ is the inverse mapping of \eqref{inner-eq} since \eqref{ortho-eq} holds. By \eqref{qd240724-5} and \eqref{qd240724-6}, it holds that
\begin{equation}\label{qd240724-3}
	|\mathcal{T}_{\rm{in}}^{\J}[ \mathbf{\Phi_{\rm{in} }^{[\cdot]} } ] | \lesssim \lambda_*^{\nu-\delta_0 +\epsilon} \langle \rho_j \rangle^{-l},
	\quad
	\mathcal{T}_{\rm{in}}^{\J}[ \mathbf{\Phi_{\rm{in} }^{[\cdot]} } ] \cdot W^{\J} =0.
\end{equation}

{\textbf{Step 5.}}  Recall $\mathcal{H}^{\J}$ given in \eqref{Hj-def}.
By \eqref{M-est}, \eqref{qd24July15-1}, and the estimate of $\mathcal{H}_{ \rm{in} }^{\J}$ in \eqref{Hin-upp}, we have
\begin{equation}\label{qd240724-4}
	| \mathcal{H}^{\J} | \lesssim \lambda_* \langle \rho_j \rangle^{-2} + \lambda_*^{2\nu-2\delta_0} \langle \rho_j\rangle^{-2l-3}
	\lesssim \lambda_*^{\nu-\delta_0 +\epsilon} \langle \rho_j \rangle^{-2-l}
	\mbox{ \ for \ } |y^{\J}| \le 2 C_{\lambda} R,
\end{equation}
where for the last step, we require
\begin{equation}\label{H1-est-para}
	\nu +\beta l -\delta_0 -1<0, \quad \delta_0 <\nu.
\end{equation}

Claim:
Given $y_*, \tau_*$ satisfying $|y_*|\le 2C_{\lambda} R(t(\tau_*)) \ll \tau_*^{1/2}$ and $\tilde{\rho} = |y_*|/9$, it holds that
\begin{equation}\label{Hj-|DMO|x}
	\big[ \mathcal{H}^{\J} ( y^{\J}, \tau_j ) \big|_{(y^{\J}, \tau_j)= (y_*+ \tilde{\rho} z, \tau_* + \tilde{\rho}^2 s ) } \big]_{ \mathsf{|DMO|_x}(Q_2^{-}(0)) }
	\lesssim
	\lambda_*^{\nu - \delta_0 +\epsilon}(t(\tau_j))|_{\tau_j = \tau_*} \langle y_* \rangle^{-2-l }
\end{equation}
provided
\begin{equation}\label{qd240725-4}
	\delta_0<\nu, \quad 0<l<1,
	\quad
	\nu-\delta_0<\min\{  \Theta +1 -\beta, 1-\beta l \},
	\quad
	0<\beta<1/2.
\end{equation}

\begin{proof}[Proof of \eqref{Hj-|DMO|x}]
	
	In this proof, for brevity, we denote $(\tilde{z}, \tilde{s}) = (y_*+ \tilde{\rho} z , \tau_* + \tilde{\rho}^2 s)$ with variables $(z,s) \in Q_2^{-}(0)$ and abuse  $\lambda_*(\tau_*)$ to denote $\lambda_*(t(\tau_j))|_{\tau_j = \tau_*}$. Obviously, $|\tilde{z}| \sim |y_*|$, $\tilde{s} \sim \tau_*$.
	
	In $y^{\J}$ variable, $\eta_R^{\J} = \eta\big( y^{\J} \lambda_j / (\lambda_* R) \big)$. By Lemma \ref{DMO-|DMO|-lem} $(2)$ and \eqref{inn-topo} for $\Phi_{\rm in}^{\J}$, one has
	\begin{equation*}
		\begin{aligned}
			&
			\langle y_* \rangle\big(	[\nabla_{y^{\J}} \eta_R^{\J} |_{(y^{\J}, \tau_j)= (\tilde{z}, \tilde{s}) } ]_{\mathsf{|DMO|_x}(Q_2^{-}(0))}
			+
			\|\nabla_{y^{\J}} \eta_R^{\J} |_{(y^{\J}, \tau_j)= (\tilde{z}, \tilde{s}) } \|_{L^\infty( Q_2^{-}(0) )  } \big)
			+
			[\eta_R^{\J} |_{(y^{\J}, \tau_j)= (\tilde{z}, \tilde{s}) }  ]_{\mathsf{|DMO|_x}(Q_2^{-}(0))}  \lesssim 1,
			\\
			&
			\langle y_* \rangle
			\big(
			[\nabla_{y^{\J}} \Phi_{\rm in}^{\J} |_{(y^{\J}, \tau_j)= (\tilde{z}, \tilde{s}) }]_{\mathsf{|DMO|_x}(Q_2^{-}(0))}
			+
			\|\nabla_{y^{\J}} \Phi_{\rm in}^{\J} |_{(y^{\J}, \tau_j)= (\tilde{z}, \tilde{s}) } \|_{L^\infty( Q_2^{-}(0) )  } \big)
			\\
			&
			+
			[\Phi_{\rm in}^{\J} |_{(y^{\J}, \tau_j)= (\tilde{z}, \tilde{s}) } ]_{\mathsf{|DMO|_x}(Q_2^{-}(0)) }
			+ \|\Phi_{\rm in}^{\J} |_{(y^{\J}, \tau_j)= (\tilde{z}, \tilde{s}) } \|_{L^\infty( Q_2^{-}(0) )  } \lesssim \lambda_*^{\nu-\delta_0}(\tau_*)  \langle y_* \rangle^{-l} \| \Phi_{\rm{in} }^{\J} \|_{{\rm in},\nu-\delta_0,l}.
		\end{aligned}
	\end{equation*}
	Similar to \eqref{Hin-upp}, by Lemma \ref{DMO-|DMO|-lem}, provided  $ \delta_0<\nu $, we have
	\begin{equation*}
		[\mathcal{H}_{ \rm{in} }^{\J} |_{(y^{\J}, \tau_j)= (\tilde{z}, \tilde{s}) } ]_{ \mathsf{|DMO|_x}(Q_2^{-}(0)) }
		\lesssim
		\lambda_*^{2\nu-2\delta_0}(\tau_*)  \langle y_* \rangle^{-3-2l} \| \Phi_{\rm{in} }^{\J} \|_{{\rm in},\nu-\delta_0,l }^2
		\lesssim
		\lambda_*^{\nu - \delta_0 +\epsilon}(\tau_*) \langle y_* \rangle^{-2-l }.
	\end{equation*}

	$\bullet$ $\mathsf{|DMO|_x}$ estimates about the coupling terms from the outer problem in $\mathcal{H}_1^{\J}$
	\begin{small}
		\begin{equation}\label{ZPhiout-Hold}
			( a-bW^{\J}  \wedge )
			\tilde{L}_{W^{\J} }[Q_{-\gamma_j}\Phi_{\rm{out}}]
			=
			( a-bW^{\J}  \wedge )
			\big(
			|\nabla_{y^{\J}} W^{\J} |^2 \Pi_{ W^{\J \perp} } (Q_{-\gamma_j}\Phi_{\rm{out}}) -2\nabla_{y^{\J}} ((Q_{-\gamma_j}\Phi_{\rm{out}})\cdot W^{\J}) \cdot \nabla_{y^{\J}}  W^{\J}
			\big).
		\end{equation}
	\end{small}
	By \eqref{out-upp} and \eqref{tau-t}, for $\lambda_j =\lambda_j(t(\tau_j))$, $\xi_j =\xi_j(t(\tau_j))$, we have
	\begin{equation}\label{out-upp-H1}
		\|\Phi_{\rm{out}} (\lambda_j y^{\J} +\xi^{\J}, t(\tau_j) ) \big|_{(y^{\J}, \tau_j)= (\tilde{z}, \tilde{s}) } \|_{L^\infty( Q_2^{-}(0) )  }
		\lesssim
		\ln \tau_*
		\lambda_*^{\Theta+1-\beta}(\tau_*)
		+
		|\ln T|^{-2} \tau_*^{-1} (\ln \tau_*)^4
		+
		\lambda_*(\tau_*) |y_*|.
	\end{equation}
	By \eqref{out-topo} and Lemma \ref{DMO-|DMO|-lem} $(2)$, we get
	\begin{equation}\label{qd240724-1}
		\begin{aligned}
			&
			[\Phi_{\rm{out}} (\lambda_j y^{\J} +\xi^{\J}, t(\tau_j) ) \big|_{(y^{\J}, \tau_j)= (\tilde{z}, \tilde{s}) } ]_{\mathsf{|DMO|_x}(Q_2^{-}(0))} \lesssim \lambda_*(\tau_*) \tilde{\rho} \sim \lambda_*(\tau_*) |y_*|,
			\\
			&
			[ (\nabla_{x} \Phi_{\rm{out}})(\lambda_j y^{\J} +\xi^{\J}, t(\tau_j) ) \big|_{(y^{\J}, \tau_j)= (\tilde{z}, \tilde{s}) } ]_{\mathsf{|DMO|_x}(Q_2^{-}(0))}
			\lesssim
			(\lambda_*^{\Theta +\alpha\beta}(\tau_*) + \lambda_*^{\alpha}(\tau_*) ) |y_*|^{\alpha}.
		\end{aligned}
	\end{equation}

	We will give $\mathsf{|DMO|_x}$ estimates of two typical terms in \eqref{ZPhiout-Hold},  and the remaining two terms can be handled similarly. By \eqref{out-upp-H1} and \eqref{qd240724-1}, we estimate
	\begin{equation*}
		\begin{aligned}
			&
			\Big[
			(a-bW^{\J}  \wedge)
			|\nabla_{y^{\J}} W^{\J} |^2
			\Big( W^{\J}  \cdot  \big(Q_{-\gamma_j}\Phi_{\rm{out}}(\lambda_j y^{\J} +\xi^{\J}, t(\tau_j) ) \big)  \Big)  W^{\J} \Big|_{ (y^{\J}, \tau_j)= (\tilde{z}, \tilde{s}) } \Big]_{\mathsf{|DMO|_x}(Q_2^{-}(0))}
			\\
			\lesssim \ &
			\langle y_* \rangle^{-4}
			\big(
			\|\Phi_{\rm{out}} (\lambda_j y^{\J} +\xi^{\J}, t(\tau_j) ) \big|_{ (y^{\J}, \tau_j)= (\tilde{z}, \tilde{s}) } \|_{L^\infty( Q_2^{-}(0) )  }
			+   [\Phi_{\rm{out}} (\lambda_j y^{\J} +\xi^{\J}, t(\tau_j) ) \big|_{ (y^{\J}, \tau_j)= (\tilde{z}, \tilde{s}) } ]_{\mathsf{|DMO|_x}(Q_2^{-}(0))}
			\big)
			\\
			\lesssim \ &
			\langle y_* \rangle^{-4}
			\big(
			\ln \tau_*
			\lambda_*^{\Theta+1-\beta}(\tau_*)
			+
			|\ln T|^{-2} \tau_*^{-1} (\ln \tau_*)^4
			+
			\lambda_*(\tau_*) |y_*| \big)
			\lesssim
			\lambda_*^{\nu -\delta_0 +\epsilon}(\tau_*) \langle y_* \rangle^{-2-l}
		\end{aligned}
	\end{equation*}
	provided $0<l<1$, $\nu-\delta_0<\min\{  \Theta +1 -\beta, 1 \}$.
	By \eqref{out-topo} and \eqref{qd240724-1}, one has
	\begin{equation*}
		\begin{aligned}
			&
			\Big[	( a-bW^{\J}  \wedge )
			\Big\{ \Big[ W^{\J} \cdot \Big(Q_{-\gamma_j} \nabla_{y^{\J}} \big( \Phi_{\rm{out}}(\lambda_j y^{\J} +\xi^{\J}, t(\tau_j) ) \big) \Big) \Big] \cdot \nabla_{y^{\J}}  W^{\J}
			\Big\}  \Big|_{ (y^{\J}, \tau_j)= (\tilde{z}, \tilde{s}) } \Big]_{\mathsf{|DMO|_x}(Q_2^{-}(0))}
			\\
			\lesssim \ & \lambda_*(\tau_*) \langle y_* \rangle^{-2}
			\big( 1  +
			[ (\nabla_{x} \Phi_{\rm{out}})(\lambda_j y^{\J} +\xi^{\J}, t(\tau_j) ) \big|_{ (y^{\J}, \tau_j)= (\tilde{z}, \tilde{s}) } ]_{\mathsf{|DMO|_x}(Q_2^{-}(0))}
			\big)
			\\
			\lesssim \ & \lambda_*(\tau_*) \langle y_* \rangle^{-2}
			\big( 1  + \lambda_*^{\Theta +\alpha\beta}(\tau_*)   |y_*|^{\alpha}
			\big)
			\lesssim
			\lambda_*^{\nu -\delta_0 +\epsilon}(\tau_*) \langle y_* \rangle^{-2-l}
		\end{aligned}
	\end{equation*}
	provided  $0<\beta<1/2$, $\nu -\delta_0 < 1-\beta l$.

	$\bullet$ $\mathsf{|DMO|_x}$ estimates about
	$ \lambda_j^{2}
	\big(  M_0^{\J} + e^{i\theta_j} M_{1}^{\J} \big)_{\mathbb{C}_j^{-1}}$,
	where $M_0^{\J}$, $M_{1}^{\J}$ are defined in \eqref{M0-def-mu3} and \eqref{M1-def},  respectively. Back to the vector form, for $k=0,1$, by \eqref{def-E1E2} and \eqref{nablaW}, we have
	\begin{align*}
			& \big( e^{ik  \theta_j} M_{k }^{\J} \big)_{\mathbb{C}_j^{-1}}
			=
			\Big[\cos(k \theta_j){\rm Re}M_k^{\J}-\sin(k \theta_j){\rm Im}M_k^{\J})\Big] E^{\J}_1+\Big[\cos(k \theta_j) {\rm Im}M_k^{\J}+\sin(k \theta_j){\rm Re}M_k^{\J})\Big]  E^{\J}_2
			\\
			= \ &
			{\rm Re}M_k^{\J} \Big[\cos(k \theta_j)E^{\J}_1+\sin(k \theta_j) E^{\J}_2\Big]+{\rm Im}M_k^{\J} \Big[\cos(k \theta_j)E^{\J}_2-\sin(k \theta_j) E^{\J}_1\Big]
			\\
			= \ &
			{\rm Re}M_k^{\J} \begin{bmatrix}
				-\cos((k-1) \theta_j)+\frac{2\rho_j^2}{\rho_j^2+1}\cos(k\theta_j)\cos\theta_j\\
				\sin((k-1)\theta_j)+\frac{2\rho_j^2}{\rho_j^2+1}\cos(k\theta_j)\sin\theta_j\\
				-\cos(k \theta_j) \frac{2\rho_j}{\rho_j^2+1}
			\end{bmatrix}
			+{\rm Im}M_k^{\J} \begin{bmatrix}
				\sin((k-1) \theta_j)-\frac{2\rho_j^2}{\rho_j^2+1}\sin(k\theta_j)\cos\theta_j\\
				\cos((k-1)\theta_j)-\frac{2\rho_j^2}{\rho_j^2+1}\sin(k\theta_j)\sin\theta_j\\
				\sin(k \theta_j)  \frac{2\rho_j}{\rho_j^2+1}
			\end{bmatrix}.
		\end{align*}
	
	For mode $1$,
	\begin{equation*}
		\lambda_j^{2}
		\big( e^{i\theta_j} M_{1}^{\J} \big)_{\mathbb{C}_j^{-1}}
		=
		\frac{  2\la_j  }{\rho_j^2+1}
		\Big(
		- \dot{\xi}_1^{\J}
		\begin{bmatrix}
			- 1+\frac{2\rho_j^2}{\rho_j^2+1} \cos^2\theta_j\\
			\frac{2\rho_j^2}{\rho_j^2+1}\cos \theta_j \sin\theta_j\\
			-\cos \theta_j \frac{2\rho_j}{\rho_j^2+1}
		\end{bmatrix}
		+
		\dot{\xi}_2^{\J}
		\begin{bmatrix}
			-\frac{2\rho_j^2}{\rho_j^2+1}\sin \theta_j \cos\theta_j\\
			1-\frac{2\rho_j^2}{\rho_j^2+1} \sin^2\theta_j\\
			\sin \theta_j  \frac{2\rho_j}{\rho_j^2+1}
		\end{bmatrix}
		\Big).
	\end{equation*}
	Then
	\begin{equation*}
		\big[ \big( \lambda_j^{2}
		\big( e^{i\theta_j} M_{1}^{\J} \big)_{\mathbb{C}_j^{-1}}  \big)(\tilde{z}, \tilde{s}) \big]_{\mathsf{|DMO|_x}(Q_2^{-}(0))}
		\lesssim
		\lambda_*^{1+ \epsilon_{\xi} } \langle y_* \rangle^{-2} \lesssim \lambda_*^{\nu-\delta_0+\epsilon} \langle y_* \rangle^{-2-l},
	\end{equation*}
	where for the last ``$\lesssim$'', we require $\nu-\delta_0 <1+\epsilon_{\xi} -\beta l$.
	
	For mode $0$,
	\begin{equation}\label{qd240728-1}
		\lambda_j^{2} (M_0^{\J})_{\mathbb{C}_j^{-1}}
		=
		\lambda_j^{2}
		\Big(
		{\rm Re}M_0^{\J}  \begin{bmatrix}
			-\cos \theta_j +\frac{2\rho_j^2}{\rho_j^2+1}\cos\theta_j\\
			-\sin \theta_j +\frac{2\rho_j^2}{\rho_j^2+1} \sin\theta_j\\
			-  \frac{2\rho_j}{\rho_j^2+1}
		\end{bmatrix}
		+{\rm Im}M_0^{\J}
		\begin{bmatrix}
			-	\sin \theta_j \\
			\cos \theta_j \\
			0
		\end{bmatrix}
		\Big).
	\end{equation}
The vanishing in $M_0^{\J}$ given in \eqref{M0-def-mu3} as $\rho_j\to 0^+$ makes the error \eqref{qd240728-1} involving $\cos \theta_j$, $\sin \theta_j$ smooth at the origin. By \eqref{M0-def-mu3} and Lemma \ref{DMO-|DMO|-lem},
	\begin{equation*}
		\big[ \big( \lambda_j^{2} (M_0^{\J})_{\mathbb{C}_j^{-1}}   \big)(\tilde{z}, \tilde{s}) \big]_{\mathsf{|DMO|_x}(Q_2^{-}(0))}
		\lesssim
		\lambda_* \langle y_* \rangle^{-3}
		\big( m_{01}^{\J}
		+
		m_{02}^{\J} \big)
		+
		|\dot{\lambda}_*| \lambda_* \langle y_* \rangle^{-3},
	\end{equation*}
	where $m_{01}^{\J}(y_*,\tau_*)$, $m_{02}^{\J}(y_*,\tau_*)$ are defined as
	\begin{small}
		\begin{align*}
				& m_{01}^{\J} = m_{01}^{\J}(y_*,\tau_*):= \Big\|
				\int_{-T}^{t(\tau_j)} \frac{\dot p_j(s) 	K_0(\zeta_j) }{t(\tau_j)-s}
				ds \Big|_{  (y^{\J}, \tau_j)= (\tilde{z}, \tilde{s}) } \Big\|_{L^\infty (Q_2^{-}(0))}
				\\
				&
				+
				\Big\|
				\int_{-T}^{t(\tau_j)} \frac{\dot p_j(s) \zeta_j K_{0\zeta_j}( \zeta_j ) }{t(\tau_j)-s}
				ds \Big|_{  (y^{\J}, \tau_j)= (\tilde{z}, \tilde{s}) } \Big\|_{L^\infty (Q_2^{-}(0))}
				+
				\Big\|
				\int_{-T}^{t(\tau_j)} \frac{\dot p_j(s) \zeta_j^2 K_{0\zeta_j \zeta_j}( \zeta_j ) }{t(\tau_j)-s}
				ds \Big|_{  (y^{\J}, \tau_j)= (\tilde{z}, \tilde{s}) } \Big\|_{L^\infty (Q_2^{-}(0))},
				\\
				&
				m_{02}^{\J} =
				m_{02}^{\J}(y_*,\tau_*):=
				\Big[
				\int_{-T}^{t(\tau_j)} \frac{\dot p_j(s) K_0(\zeta_j)  }{t(\tau_j)-s}
				ds \Big|_{  (y^{\J}, \tau_j)= (\tilde{z}, \tilde{s}) } \Big]_{\mathsf{|DMO|_x}(Q_2^{-}(0))}
				\\
				&
				+
				\Big[
				\int_{-T}^{t(\tau_j)} \frac{\dot p_j(s) \zeta_j K_{0\zeta_j}( \zeta_j )  }{t(\tau_j)-s}
				ds \Big|_{  (y^{\J}, \tau_j)= (\tilde{z}, \tilde{s}) } \Big]_{ \mathsf{|DMO|_x}(Q_2^{-}(0)) }
				+
				\Big[
				\int_{-T}^{t(\tau_j)} \frac{\dot p_j(s) \zeta_j^2 K_{0\zeta_j \zeta_j}( \zeta_j )  }{t(\tau_j)-s}
				ds \Big|_{  (y^{\J}, \tau_j)= (\tilde{z}, \tilde{s}) } \Big]_{ \mathsf{|DMO|_x}(Q_2^{-}(0)) },
			\end{align*}
	\end{small}
	with $\zeta_j= \zeta_j(y^{\J},\tau_j,s)=\frac{\lambda_j^2(t(\tau_j)) (1+|y^{\J}|^2)}{t(\tau_j)-s}$.
	By \eqref{K-est} and \eqref{nonlocal-est}, we have $m_{01}^{\J} \lesssim 1 $.
	To estimate $m_{02}^{\J}$, we consider the following more general form, which recovers all the terms in $m_{02}^{\J}$. Set
	\begin{equation*}
		g_0(y^{\J},\tau_j) :=
		\int_{-T}^{t(\tau_j)} \frac{\dot{p}_j(s)  \mathcal{K}(\zeta_j)  }{t(\tau_j) -s} ds,
		\quad
		\mathcal{K}(\zeta_j) :=
		c_0 \zeta_j^{-1} ( 1-e^{- d_0 \zeta_j } )
		+
		\sum\limits_{k=1}^n c_i \zeta_j^{k-1} e^{- d_k \zeta_j },
	\end{equation*}
	where $c_k, d_k$ are complex constants and $\mathrm{Re}(d_k)>0$ for $k=0,1,\dots,n$. It is easy to see that
	\begin{equation}\label{cal-K-est}
		\left| \mathcal{K}(\zeta_j) \right|
		+
		\left| \zeta_j \pp_{\zeta_j}\mathcal{K}(\zeta_j) \right|
		+
		\left| \zeta_j^2 \pp_{\zeta_j \zeta_j}\mathcal{K}(\zeta_j) \right| \lesssim  \1_{\{ \zeta_j\le 1\}}
		+
		\zeta_j^{-1}  \1_{\{ \zeta_j > 1\}} .
	\end{equation}
	Since
	\begin{equation*}
		\nabla_{y^{\J}} g_0 =
		\int_{-T}^{t(\tau_j)} \frac{\dot{p}_j(s)}{t(\tau_j)-s} \pp_{\zeta_j} \mathcal{K}(\zeta_j)  \frac{2\lambda_j^2(t(\tau_j)) y^{\J}}{t(\tau_j)-s} ds
		=
		\int_{-T}^{t(\tau_j)} \frac{\dot{p}_j(s)}{t(\tau_j)-s} \zeta_j \pp_{\zeta_j} \mathcal{K}(\zeta_j)  \frac{2y^{\J}}{1+|y^{\J}|^2} ds ,
	\end{equation*}
	by \eqref{cal-K-est} and \eqref{nonlocal-est}, we have $|\nabla_{y^{\J}} g_0| \lesssim 1$. By Lemma \ref{DMO-|DMO|-lem} $(2)$, then
	$[ g_0(\tilde{z}, \tilde{s})]_{\mathsf{|DMO|_x}(Q_2^{-}(0))}
	\lesssim 1$ and $m_{02}^{\J} \lesssim 1$. Thus,
	\begin{equation*}
		\big[ \big( \lambda_j^{2} (M_0^{\J})_{\mathbb{C}_j^{-1}}   \big)( \tilde{z}, \tilde{s} ) \big]_{\mathsf{|DMO|_x}(Q_2^{-}(0))}
		\lesssim
		\lambda_* \langle y_* \rangle^{-3}.
	\end{equation*}
	We complete the proof of \eqref{Hj-|DMO|x}.
\end{proof}

By \eqref{qd240724-3}, \eqref{qd240724-4}, \eqref{Hj-|DMO|x},
Proposition \ref{scaling-prop-0722}, and the scaling argument, due to the small quantity $\lambda_*^{\epsilon}$, we have $\mathbf{T}_{\rm{in}}^{[\cdot]}[ \mathbf{\Phi_{\rm{in} }^{[\cdot]} } ] \in \mathcal{B}_{\rm{in}}^{[\cdot]}$. Since $p_j,~ \xi^{\J}$ satisfy \eqref{lam-ansatz}, $\dot{\xi}^{\J}$ is H\"older continuous, and $\| \Phi_{\rm{in} }^{\J} \|_{{\rm in},\nu-\delta_0,l,\varsigma_{\rm{in} }}$, $\| \Phi_{\rm {out} }\|_{\sharp, \Theta,\alpha}$ $\lesssim 1$, then $\mathcal{H}^{\J}$ is H\"older continuous. Applying the Schauder estimate to \eqref{inner-eq} and by changing time variable from $t$ to $\tau_j$, we have $\mathbf{T}_{\rm{in}}^{[\cdot]}[ \mathbf{\Phi_{\rm{in} }^{[\cdot]} } ] \in C^{2+c,(2+c)/2}(\mathcal{D}_{2 C_{\lambda} R})$ with a small constant $c\in (0,1)$. By the Schauder fixed-point theorem, we can find a fixed point of $\mathbf{T}_{\rm{in}}^{[\cdot]}[ \mathbf{\Phi_{\rm{in} }^{[\cdot]} } ]$ in $ \mathcal{B}_{\rm{in}}^{[\cdot]}$.

{\textbf{Step 6.}}
Combining restrictions  \eqref{out-topo-para}, \eqref{Phi-|DMO|-req1}, \eqref{G-para}, \eqref{para-rho-con} for the outer problem, parameter assumptions in Proposition \ref{keyprop}, \eqref{qd24July05-1}, \eqref{xi-est} for reduced equations, and \eqref{inn-top0-para}, \eqref{qd240725-2}, \eqref{nonortho-para}, \eqref{H1-est-para}, \eqref{qd240725-4} for the inner problems,  we need to solve the following system of inequalities of
 parameters:
\begin{align*}
		&  \nu -\delta_0>1/2,
		\quad
		0<\Theta<\beta,
		\quad
		\Theta  +\beta + \delta_0 	-\nu <0,
		\quad 3\beta< 1+\Theta , \quad
		\beta(l+1)-1+\nu-\delta_0 -\Theta>0 ,
		\\
		&
		\Theta+	2\beta -1<0, \quad 0<\delta_0  < \nu<1 , \quad 	2\beta+\delta_0-\nu<0,
		\quad
		0<\alpha<1, \ \Theta < \alpha/2,
		\\
		&
		0< \sigma_0<\beta, \ \beta-\sigma_0 -\frac{\alpha}{2} <0, \
		1-\sigma_0-(1+\alpha)(1-\beta)<0,
		\
		\Theta +2\sigma_0-\beta <0,
		\\
		&
		0<A_{\rm{o,h}} <\min\{ \Theta+ (1-\beta)(1-\alpha), 1-2\sigma_0-\alpha(1-\beta), 1-\sigma_0-\frac{\alpha}{2} \},
		\\
		&
		\alpha_0 \in (0, 1/2),
		\quad
		\Theta\in(0,\flat),
		\quad	
		\varpi-1-2\Theta<0,
		\quad
		\tilde{m} < \Theta-\alpha(1-\beta),
		\\
		&
		0< 2\epsilon_{\xi} < \min\{ \Theta +\alpha \beta, \alpha,  2\nu-2\delta_0-1, \delta_0/6 \},
		\quad
		0<\varsigma_{\rm{in} }<1,
		\\
		&
		\delta_0 <6\beta, \quad
		\ell_0 \in (1,3),
		\quad
		2-\ell_0 \le -l,
		\quad
		\nu+\beta l -1<\delta_0 /6,
		\quad
		0<l<1,
		\\
		&
		1+\Theta+\alpha \beta -2\beta >\nu-\delta_0,
		\quad
		1+\alpha-2\beta >\nu-\delta_0,
		\\
		&
		2\beta <\nu-\delta_0,
		\quad
		\tilde{m} + (1+\alpha_0) \alpha/2 <1,
		\quad
		1+ \tilde{m} + (1+\alpha_0) \alpha/2  -2\beta >\nu-\delta_0,
		\\
		&
		\delta_0 (\ell_0 -1)/6 +1-2\beta >\nu-\delta_0,
	\end{align*}
where the constant
$\flat>0$ is given in Proposition \ref{keyprop}.
With the assistance of Mathematica, sound choices satisfying all the restrictions are given below, and the proof of Theorem \ref{thm} is completed.
\begin{align*}
		&
		0 <\Theta<\min\Big\{ \frac{15}{2479}, \flat \Big\}, \quad \beta = \frac{15 + \Theta}{60}, \quad \sigma_0 = \frac{1}{50},
		\quad
		\alpha = \frac{9}{10},
		\quad
		\ell_0 = l+2,
		\\
		&
		3 \Theta < \delta_0 < \frac{1}{40} (-27 + 108 \beta + 120 \Theta),
		\quad
		1 - 2 \beta + \delta_{0} + \Theta < \nu < \frac{1}{3} (3 - 6 \beta + 4 \delta_0),
		\\
		&
		\frac{10}{9} \Big(-\frac{11}{10} + \frac{11 \beta}{5} - 2 \delta_0 + 2 \nu - 2 \Theta \Big) < \alpha_0 < \frac{1}{2},
		\quad
		\frac{1}{2} \Big(-\frac{29}{10} - \frac{9 \alpha_0}{10} + 4 \beta - 2 \delta_0 + 2 \nu \Big) <
		\tilde{m} < -\frac{9}{10} + \frac{9 \beta}{10} + \Theta,
		\\
		&
		\max\Big\{0, -1 + \frac{1-\nu+\delta_0+\Theta}{\beta} ,
		-1+ \frac{6}{\delta_0}(\nu-\delta_0 +2\beta-1) \Big\}
		< l <
		\min\Big\{1, \frac{1}{\beta} \big( \frac{\delta_0}{6} + 1-\nu\big) \Big\},
		\\
		&
		0<A_{\rm{o,h}} <\min\{ \Theta+ (1-\beta)(1-\alpha), 1-2\sigma_0-\alpha(1-\beta), 1-\sigma_0-\frac{\alpha}{2} \},
		\quad
		\varpi<1+2\Theta,
		\\
		& 0< 2\epsilon_{\xi} < \min\{ \Theta +\alpha \beta, \alpha,  2\nu-2\delta_0-1, \delta_0/6 \},
		\quad
		0<\varsigma_{\rm{in} }<1.
\end{align*}

\subsection{Proof of Corollary \ref{cor-intro}}\label{converge-sec}

In this subsection, based on the weighted spaces for the solution constructed, we shall show the convergence results in Corollary \ref{cor-intro}.
Throughout this subsection, we adopt the usual Sobolev norm for the mapping between Euclidean spaces. $M>0$ is an arbitrary constant. Given a function $f(x,t)$, we use $f(t)$ to denote $f(x,t)$ for simplicity. A function $f(t) \rightarrow 0$ means that $f(t)$ converges to $0$ under some norms as $t\to T$.

Recall \eqref{u-def}. The solution in Theorem \ref{thm} has the form
\begin{align}\label{Phi-per-def}
	u(x,t)=U_*+\Phi_{\rm per},\quad
	\Phi_{\rm per}:= AU_*+\Phi-(\Phi\cdot U_*)U_*,
\end{align}
where $U_*$ is the multi-bubble profile defined in \eqref{def-U*}, and $A$ is given in \eqref{A-def}. Since
\begin{equation*}
	|\Phi|\ll 1,
	\quad
	\| |U_*(\cdot,t) |-1 \|_{L^\infty(\mathbb{R}^2)} \lesssim  \lambda_*,
	\quad
	|A| \lesssim \lambda_* +|\Phi|^2
\end{equation*}
by \eqref{U*-norm}, \eqref{A-est}, we obtain
\begin{equation*}
	\| \Phi_{\rm per} \|_{L^{\infty}(\R^2\times (0,T))} \ll 1 .
\end{equation*}
It is straightforward to get
\begin{equation*}
	\nabla_x \Phi_{\rm per} = U_* \nabla_x A
	+
	A \nabla_x U_*
	+ \nabla_x \Phi-(\Phi\cdot U_*) \nabla_x U_*
	- U_* \nabla_x(\Phi\cdot U_*),
\end{equation*}
where we denote $U_*\nabla_x f = (U_* \pp_{x_1} f , U_* \pp_{x_2} f)$ for a scalar function $f$. Applying \eqref{nab-A-rough} to $U_* \nabla_x A$;
\eqref{A-est}, \eqref{nabU*-est}, \eqref{toget-est-1} to $A \nabla_x U_* -(\Phi\cdot U_*) \nabla_x U_*$;
\eqref{nabla-Phi-upp}, \eqref{nab-Phi-cdot-U*} to $\nabla_x \Phi
- U_* \nabla_x(\Phi\cdot U_*)$, we have
\begin{equation*}
	\begin{aligned}
		|\nabla_x \Phi_{\rm per}|
		\lesssim \ &
		\sum_{j=1}^{N}
		\Big\{ 	\1_{\{ |x-q^{\J}|\le 3\lambda_* R \}}
		\Big[ 1 +
		\big(\lambda_*^{\nu-\delta_0-1}
		+
		|\ln(T-t)| \lambda_*^{\Theta} R
		\big)
		\langle \rho_j \rangle^{-l-1}    \Big]
		\\
		&
		+
		\1_{\{  3\lambda_* R < |x-q^{\J}| <3d_q \}}
		\Big\}
		+
		\1_{\{ \cap_{j=1}^N \{|x-q^{\J}| \ge  3d_q \} \}}.
	\end{aligned}
\end{equation*}
Here, $
\big( \lambda_*^{\nu-\delta_0-1} + |\ln(T-t)| \lambda_*^{\Theta} R  \big)
\| \1_{\{ |x-q^{\J}|\le 3\lambda_* R \}}
\langle \rho_j \rangle^{-l -1} \|_{L^2(\mathbb{R}^2)}
\lesssim  \lambda_*^{\nu-\delta_0} + |\ln(T-t)| \lambda_*^{\Theta+1} R $. Thus,
\begin{equation}\label{Phiper-nabla}
	\Phi_{\rm per}\in L^{\infty}\big( (0,T);H^1_{\rm loc}(\R^2)\big),
	\quad
	\lambda_*(t)\| \nabla_x \Phi_{\rm per} (t) \|_{L^{\infty}(\R^2) } \lesssim
	\lambda_*^{\epsilon}(t)
\end{equation}
with a small constant  $\epsilon>0$. We define
\begin{equation}\label{u*-def-1018}
	u_*(x) :=\Phi_{\rm per}(x,T)+W(\infty).
\end{equation}
By definition, we have
\begin{equation*}
	u(x,t)-u_*(x)-\sum_{j=1}^N Q_{\gamma_j}\Big[W\Big(\frac{x-\xi^{\J}}{\la_j}\Big)-W(\infty)\Big]=\Phi_{\rm per}(x,t)-\Phi_{\rm per}(x,T).
\end{equation*}

Claim: For $\Phi$ given in \eqref{u-def},
\begin{equation}\label{Phi-T}
	\Phi(x,T) = \sum_{j=1}^{N}
	\eta_{d_q}^{\J}(x,T) \Phi^{*{\J}}_0(|x-q^{\J}|,T)
	+\Phi_{\rm out}(x,T) ,
\end{equation}
\begin{equation}\label{conv-Phi}
	\| \Phi(t)-\Phi(T) \|_{ L^{\infty}(\R^2) } \rightarrow 0  ,
	\quad
	\|\nabla_x\Phi(t) -\nabla_x (\Phi(T) ) \|_{L^2(B_M)} \to 0 .
\end{equation}
Combining \eqref{out-upp}, \eqref{out-nabla-upp}, and \eqref{Phi*-0-j-upp}, we have
\begin{equation}\label{Phi-T-est}
	\| \Phi(T)\|_{L^\infty(\mathbb{R}^2)}
	\lesssim T^{\epsilon} + \| Z_* \|_{C^3(\R^2)},
	\quad
	\| \nabla_x \left(\Phi(T) \right) \|_{L^\infty(\mathbb{R}^2)}
	\lesssim 1.
\end{equation}

\begin{proof}[Proof of \eqref{Phi-T} and \eqref{conv-Phi}]
	
	For $\Phi_{\rm out}$ solved in   $B_{\rm{out}}$ defined in \eqref{out-space}, we have
	\begin{equation}\label{conv-outer}
		\begin{aligned}
			&\|\Phi_{\rm out}(t)-\Phi_{\rm out}(T)\|_{L^\infty(\mathbb{R}^2)}\lesssim |\ln(T-t)|\la_*^{\Theta+1} R +(T-t) ,
			\\
			&
			\|\nabla_x\Phi_{\rm out}(t)-\nabla_x\Phi_{\rm out}(T)\|_{L^\infty(\mathbb{R}^2)}\lesssim \la_*^{\Theta} +(T-t)^{\alpha/2} .
		\end{aligned}
	\end{equation}
	
	For $\Phi^{\J}_{\rm in}$  solved in  $B_{\rm{in}}^{\J}$ defined in  \eqref{inner-space}, since $0<l<1$, we have
	\begin{align}
				&
				\|\Phi^{\J}_{\rm in}( t)\|_{L^\infty(\mathbb{R}^2)}
				\lesssim
				\lambda_*^{\nu-\delta_0} ,
				\nonumber
				\\
				&
				\int_{B_M}|\Phi^{\J}_{\rm in}(y^{\J},t)|^2 dx
				\lesssim
				\lambda_*^{2\nu-2\delta_0}
				\int_{B_M} \langle y^{\J} \rangle^{-2l} dx
				\lesssim
				\lambda_*^{2\nu-2\delta_0 +2}
				\int_{|y^{\J}| \le C(M) \lambda_*^{-1} } \langle y^{\J} \rangle^{-2l} d y^{\J}
				\lesssim
				\lambda_*^{2\nu-2\delta_0 +2l} ,
				\nonumber
				\\
				&
				\int_{B_M}
				\Big|\nabla_x
				\Big( \eta_R^{\J}(x,t) \Phi_{\rm in}^{\J}(y^{\J},t) \Big) \Big|^2 dx
				\lesssim
				\lambda_*^{2\nu-2\delta_0-2}
				\int_{B_M}
				\langle y^{\J} \rangle^{-2l-2}  dx
				\lesssim
				\lambda_*^{2\nu-2\delta_0}.
				\label{conv-inner}
	\end{align}
	In particular, $\Phi^{\J}_{\rm in}( T)=0$, which implies  \eqref{Phi-T}.
	It is easy to get
	\begin{equation}\label{eta-dq-converg}
		\|\eta_{d_q}^{\J}(t) - \eta_{d_q}^{\J}(T) \|_{L^\infty(\mathbb{R}^2)} \rightarrow 0, \quad
		\| \nabla_x \eta_{d_q}^{\J}(t) - \nabla_x  \eta_{d_q}^{\J}(T) \|_{L^\infty(\mathbb{R}^2)} \rightarrow 0.
	\end{equation}
	
	Next, we consider $\Phi^{*{\J}}_0$ defined in \eqref{def-globalcorrection-J} with $\mu=3$. Recalling \eqref{zetaj}, we denote
	\begin{equation*}
		\mathfrak{A}:=\frac{a+ib}{4}, \quad \zeta_j(t):=\frac{|x-\xi^{\J}(t)|^2 +\lambda_j(t)^2}{t-s}, \quad \zeta_j(T) =\frac{|x-q^{\J}|^2}{T-s}.
	\end{equation*}
	By \eqref{lam-ansatz}, we have  $|\xi^{\J} -q^{\J}| \ll \lambda_*$.
	Recall \eqref{def-globalcorrection-J}, \eqref{def-Phi0}, \eqref{polar-coor} and $\mu=3$,
	\begin{equation}\label{Phi0*-formula}
		\Phi^{*{\J}}_0(|x-\xi^{\J} |,t)
		=
		\frac{-2|x-\xi^{\J}|^2 \big[ x_1-\xi^{\J}_1 + i \big( x_2-\xi^{\J}_2 \big) \big] }{\big( |x-\xi^{\J}|^3+ \la_j^3 \big) \big( |x-\xi^{\J}|^2+\lambda_j^2\big)^{1/2}}
		\Big[ \int_{-T}^t \dot{p}_j(s) \big( 1-e^{- \mathfrak{A} \zeta_j(t)} \big) ds,
		0 \Big]^{\tr},
	\end{equation}
	\begin{equation}\label{qd24Oct13-1}
		\begin{aligned}
			&
			\big| \Phi^{*{\J}}_0(|x-\xi^{\J} |,t)  - \Phi^{*{\J}}_0(|x-q^{\J} |,T) \big|
			\\
			\lesssim  \ &
			( |x-\xi^{\J}|+\lambda_j )^{-1}
			\Big|
			\int_{-T}^t \dot{p}_j(s) \big( 1-e^{- \mathfrak{A} \zeta_j(t)} \big) ds
			-
			\int_{-T}^T \dot{p}_j(s) \big( 1-e^{- \mathfrak{A} \zeta_j(T)} \big) ds \Big|
			\\
			&
			+
			\bigg|
			\frac{|x-\xi^{\J}|^2 \big[ x_1-\xi^{\J}_1 + i \big( x_2-\xi^{\J}_2 \big) \big] }{\big( |x-\xi^{\J}|^3+ \la_j^3 \big) \big( |x-\xi^{\J}|^2+\lambda_j^2\big)^{1/2}}
			-
			\frac{|x-q^{\J}|^2 \big[ x_1-q^{\J}_1 + i \big( x_2-q^{\J}_2 \big) \big] }{  |x-q^{\J}|^3    |x-q^{\J}| }
			\bigg|
			\Big| \int_{-T}^T \dot{p}_j(s) \big( 1-e^{- \mathfrak{A} \zeta_j(T)}  \big) ds  \Big|.
		\end{aligned}
	\end{equation}
	We will estimate the above term by term.	Denote $\tilde{r}_j :=| x-q^{\J} |$. Then $ |x-\xi^{\J}|+\lambda_j  \sim  \tilde{r}_j +\lambda_* $, and
	\begin{equation}\label{0520-1}
		\begin{aligned}
			&
			\bigg|
			\frac{|x-\xi^{\J}|^2 \big[ x_1-\xi^{\J}_1 + i \big( x_2-\xi^{\J}_2 \big) \big] }{\big( |x-\xi^{\J}|^3+ \la_j^3 \big) \big( |x-\xi^{\J}|^2+\lambda_j^2\big)^{1/2}}
			-
			\frac{|x-q^{\J}|^2 \big[ x_1-q^{\J}_1 + i \big( x_2-q^{\J}_2 \big) \big] }{  |x-q^{\J}|^3    |x-q^{\J}| }
			\bigg|
			\\
			\lesssim \ &
			\int_0^1
			\frac{| x- \theta \xi^{\J}-(1-\theta)q^{\J}|^2 (
				| \xi^{\J}-q^{\J}| +\lambda_j )  }{[ |x-\theta \xi^{\J}-(1-\theta)q^{\J} |+ \theta \la_j ]^4 }
			d\theta
			\lesssim
			(| \xi^{\J}-q^{\J} | +\lambda_* )
			\int_0^1 ( \tilde{r}_j + \theta \lambda_* )^{-2}
			d\theta.
		\end{aligned}
	\end{equation}

	By \eqref{nonlocal-est}, we have
	\begin{equation*}
		\Big| \int_{-T}^T \dot{p}_j(s) \big( 1- e^{ - \mathfrak{A} \zeta_j(T) }  \big) ds  \Big|
		=
		\tilde{r}_j^2
		\Big| \int_{-T}^T \frac{\dot{p}_j(s) }{T-s} \big( 1- e^{ - \mathfrak{A} \zeta_j(T) }  \big)   \zeta_j(T)^{-1} ds  \Big|
		\lesssim
		\tilde{r}_j^2.
	\end{equation*}
	\begin{equation*}
		\begin{aligned}
			&
			\int_{-T}^t \dot{p}_j(s) \big( 1-e^{- \mathfrak{A} \zeta_j(t) } \big) ds
			-
			\int_{-T}^T \dot{p}_j(s) \big( 1-e^{ - \mathfrak{A}  \zeta_j(T) } \big)   ds
			\\
			= \ &
			\Big[\int_{-T}^{ (t-\la_*^2-\tilde{r}_j^2 )_{+}} +\int_{ (t-\la_*^2-\tilde{r}_j^2 )_{+}}^{(t-\tilde{r}_j^2 )_{+}}+\int_{ (t-\tilde{r}_j^2 )_{+}}^t\Big] \dot{p}_j(s) \big( e^{ - \mathfrak{A}  \zeta_j(T) }-e^{- \mathfrak{A} \zeta_j(t) } \big) ds
			-
			\int_t^T \dot{p}_j(s) \big( 1-e^{ - \mathfrak{A}  \zeta_j(T) } \big)   ds,
		\end{aligned}
	\end{equation*}
	where for the first part,
	\begin{align*}
				& \Big|\int_{-T}^{ (t-\la_*^2-\tilde{r}_j^2 )_{+}} \dot {p}_j(s)
				\big(e^{-\mathfrak{A}\zeta_j(T)}-e^{-\mathfrak{A}\zeta_j(t)} \big) ds\Big|
				\lesssim  \int_{-T}^{(t-\la_*^2-\tilde{r}_j^2 )_{+}} |\dot {\lambda}_*(s) |
				| \zeta_j(T) - \zeta_j(t) |
				ds \\
				\lesssim \ & \int_{-T}^{(t-\la_*^2-\tilde{r}_j^2 )_{+}} |\dot {\lambda}_*(s)|
				\Big[
				\tilde{r}_j^2 \Big( \frac{1}{t-s} - \frac{1}{T-s} \Big)
				+
				\frac{\tilde{r}_j | \xi^{\J}(t) - q^{\J} | + \lambda_*(t)^2 }{t-s}
				\Big]
				ds \\
				\lesssim \ &
				\tilde{r}_j^2
				\ln \Big(
				\frac{(t+T) \big[ T - \big(t-\la_*^2-\tilde{r}_j^2\big)_{+} \big] }{2T \big[t- \big(t-\la_*^2-\tilde{r}_j^2\big)_{+} \big] }
				\Big)
				+
				( \tilde{r}_j | \xi^{\J} - q^{\J} | + \lambda_*^2 )
				\ln \Big( \frac{t+T}{t- \big(t-\la_*^2-\tilde{r}_j^2\big)_{+}}\Big).
	\end{align*}
	For the second part,
	\begin{equation*}
		\Big| \int_{(t-\la_*^2-\tilde{r}_j^2)_{+}}^{(t-\tilde{r}_j^2)_{+}} \dot {p}_j(s)
		\big(e^{-\mathfrak{A}\zeta_j(T)}-e^{-\mathfrak{A}\zeta_j(t)}
		\big)ds\Big|
		\lesssim
		\lambda_*^2.
	\end{equation*}
	For the third part,
	if $\tilde{r}_j \leq \sqrt{T-t}$, we have
	\begin{equation*}
		\Big| \int_{ (t-\tilde{r}_j^2 )_{+}}^t \dot {p}_j(s)
		\big(e^{-\mathfrak{A}\zeta_j(T)}-e^{-\mathfrak{A}\zeta_j(t)}
		\big)ds \Big|
		\lesssim
		\tilde{r}_j^2;
	\end{equation*}
	if $\tilde{r}_j > \sqrt{T-t} $, $t>T/2$, by similar calculations as in the first part, we have
	\begin{align*}
			&
			\Big|
			\Big(
			\int_{  (t-\tilde{r}_j^2 )_{+}}^{t-(T-t)}
			+
			\int_{t-(T-t)}^t
			\Big) \dot {p}_j(s)
			\big(e^{-\mathfrak{A}\zeta_j(T)}-e^{-\mathfrak{A}\zeta_j(t)}
			\big)ds
			\Big|
			\\
			\lesssim \ &
			\int_{  (t-\tilde{r}_j^2 )_{+}}^{t-(T-t)}
			|\dot{\lambda}_*(s) |
			\Big[
			\tilde{r}_j^2 \Big( \frac{1}{t-s} - \frac{1}{T-s} \Big)
			+
			\frac{\tilde{r}_j | \xi^{\J}(t) - q^{\J}| + \lambda_*(t)^2 }{t-s}
			\Big]
			ds
			+
			\int_{t-(T-t)}^t | \dot{\lambda}_*(s) | ds
			\\
			\lesssim \ &
			\tilde{r}_j^2  \frac{|\ln T|}{\ln^2(T-t)}
			+
			( \tilde{r}_j | \xi^{\J} - q^{\J} | + \lambda_*^2 )
			\ln\Big( \frac{t-  (t-\tilde{r}_j^2 )_{+}}{T-t}\Big)
			+
			\lambda_*,
		\end{align*}
	where we used $ \frac{1}{t-s} - \frac{1}{T-s}  \sim (T-t) (T-s)^{-2}$ for $s\le t-(T-t)$.
	
	For the last part,
	by \eqref{lam-ansatz},
	\begin{equation*}
		\begin{aligned}
			\Big|\int_t^T \dot p(s)\big(1-e^{-\mathfrak{A}\zeta_j(T)} \big) ds\Big|
			\lesssim \ &
			\Big[
			\Big(
			\tilde{r}_j^2 \int_t^{T-\tilde{r}_j ^2}\frac{| \dot {\lambda}_*(s) | }{T-s}ds+
			\int_{T-\tilde{r}_j ^2}^T |\dot{\lambda}_*(s) |ds \Big) \1_{\{ \tilde{r}_j  \le \sqrt{T-t}  \}}
			+
			\lambda_* \1_{\{ \tilde{r}_j  > \sqrt{T-t} \}}
			\Big]
			\\
			\lesssim \ &
			\tilde{r}_j ^2\1_{\{ \tilde{r}_j  \le  \sqrt{T-t}\}}
			+
			\lambda_*\1_{\{ \tilde{r}_j  > \sqrt{T-t}\}}.
		\end{aligned}
	\end{equation*}

	In sum,	collecting the above estimates, we conclude that for $t>T/2$,
	\begin{align*}
			&
			\big| \Phi^{*{\J}}_0(|x-\xi^{\J} |,t)  - \Phi^{*{\J}}_0(|x-q^{\J} |,T) \big|
			\\
			\lesssim \ &
			\tilde{r}_j
			\ln\big( 2T (t+T)^{-1} \big)
			+
			\tilde{r}_j
			\ln \Big(
			\frac{ T - (t-\la_*^2-\tilde{r}_j^2 )_{+} }{t- (t-\la_*^2-\tilde{r}_j^2 )_{+} }
			\Big)
			+
			\lambda_* \ln \Big( \frac{t+T}{t- (t-\la_*^2-\tilde{r}_j^2 )_{+}}\Big)
			\\
			& + \lambda_*
			+
			\1_{\{ \tilde{r}_j \leq \sqrt{T-t} \}}
			\tilde{r}_j
			+
			\1_{\{ \tilde{r}_j > \sqrt{T-t} \}}
			\Big[
			\tilde{r}_j \frac{|\ln T|}{\ln^2(T-t)}
			+
			\lambda_* \ln\Big( \frac{t-  (t-\tilde{r}_j^2 )_{+}}{T-t}\Big)
			+
			\lambda_* (T-t)^{-1/2}
			\Big].
		\end{align*}
	Here, if $t-\la_*^2-\tilde{r}_j^2 \ge 0$,
	\begin{equation*}
		\begin{aligned}
			&
			\tilde{r}_j
			\ln \Big(
			\frac{ T - (t-\la_*^2-\tilde{r}_j^2 )_{+} }{t- (t-\la_*^2-\tilde{r}_j^2 )_{+} }
			\Big)
			=
			\tilde{r}_j
			\ln \Big(
			1+
			\frac{T - t}{\la_*^2+\tilde{r}_j^2 }
			\Big)
			\\
			\lesssim \ &
			\tilde{r}_j |\ln(T-t)| \1_{\{\tilde{r}_j < \sqrt{T-t}\}}
			+
			\tilde{r}_j^{-1} (T - t) \1_{\{\tilde{r}_j \ge \sqrt{T-t}\}}
			\lesssim
			\sqrt{T-t} |\ln(T-t)|,
		\end{aligned}
	\end{equation*}
	since $\frac{T - t}{2(\sqrt{T-t})^2} \le \frac{T - t}{\la_*^2+\tilde{r}_j^2 } \le \frac{T - t}{\la_*^2}$ if $ \tilde{r}_j < \sqrt{T-t}$. Thus,
	\begin{equation}\label{qd24Oct13-3}
		\tilde{r}_j
		\ln \Big(
		\frac{ T - (t-\la_*^2-\tilde{r}_j^2 )_{+} }{t- (t-\la_*^2-\tilde{r}_j^2 )_{+} }
		\Big)
		\lesssim
		\1_{\{ \tilde{r}_j^2 \le t-\la_*^2 \}}
		\sqrt{T-t} |\ln(T-t)|
		+
		\1_{\{ \tilde{r}_j^2 > t-\la_*^2 \}} \tilde{r}_j
		\ln(T/t).
	\end{equation}
	Similarly,
	\begin{equation}\label{qd24Oct13-4}
		\begin{aligned}
			& \lambda_* \ln \Big( \frac{t+T}{t- (t-\la_*^2-\tilde{r}_j^2 )_{+}}\Big)
			\lesssim
			\1_{\{ \tilde{r}_j^2 \le t-\la_*^2 \}}
			\lambda_* |\ln(T-t)| + \1_{\{ \tilde{r}_j^2 > t-\la_*^2 \}} \lambda_* \ln \Big( \frac{t+T}{t} \Big),
			\\
			&
			\1_{\{ \tilde{r}_j > \sqrt{T-t} \}}
			\lambda_* \ln\Big( \frac{t-  (t-\tilde{r}_j^2 )_{+}}{T-t}\Big)
			=
			\1_{\{ \sqrt{T-t}< \tilde{r}_j \le \sqrt{t} \}}
			\lambda_* \ln\Big( \frac{\tilde{r}_j^2}{T-t}\Big)
			+
			\1_{\{ \tilde{r}_j > \max\{\sqrt{T-t},\sqrt{t} \} \} }
			\lambda_* \ln\Big( \frac{t}{T-t}\Big).
		\end{aligned}
	\end{equation}
	It follows that	for all $\tilde{r}_j^2  \le M$, we have
	\begin{equation}\label{conv-nonlocal}
		\big| \Phi^{*{\J}}_0(|x-\xi^{\J} |,t)  - \Phi^{*{\J}}_0(|x-q^{\J} |,T) \big| \rightarrow 0
		\mbox{ \ uniformly as \ }  t\rightarrow T,
	\end{equation}

	From \eqref{conv-outer}, \eqref{conv-inner}, \eqref{eta-dq-converg}, and \eqref{conv-nonlocal}, we obtain the first part of \eqref{conv-Phi}.

	Using \eqref{Phi0*-formula}, we have
	\begin{equation*}
		\pp_{x_1}
		\Phi^{*{\J}}_0(|x-\xi^{\J} |,t)
		=
		-2 f_1(t)
		\Big[ \int_{-T}^t \dot{p}_j(s) \big( 1-e^{- \mathfrak{A} \zeta_j(t)} \big) ds,
		0 \Big]^{\tr}
		+
		f_2(t)
		\Big[ \int_{-T}^t \dot{p}_j(s)  \zeta_j(t) e^{- \mathfrak{A} \zeta_j(t)}   ds,
		0 \Big]^{\tr},
	\end{equation*}
	where
	\begin{align*}
			f_1(t) :=  &
			\frac{2(x_1-\xi^{\J}_1) \big[ x_1-\xi^{\J}_1 + i \big( x_2-\xi^{\J}_2 \big) \big] }{\big( |x-\xi^{\J}|^3+ \la_j^3 \big) \big( |x-\xi^{\J}|^2+\lambda_j^2\big)^{1/2}}
			+
			\frac{|x-\xi^{\J}|^2  }{\big( |x-\xi^{\J}|^3+ \la_j^3 \big) \big( |x-\xi^{\J}|^2+\lambda_j^2\big)^{1/2}}
			\\
			& +
			\frac{|x-\xi^{\J}|^2 \big[ x_1-\xi^{\J}_1 + i \big( x_2-\xi^{\J}_2 \big) \big] }{ \big( |x-\xi^{\J}|^2+\lambda_j^2\big)^{1/2}}
			(-1) \big( |x-\xi^{\J}|^3+ \la_j^3 \big)^{-2}
			3|x-\xi^{\J}| (x_1-\xi_1^{\J})
			\\
			& +
			\frac{|x-\xi^{\J}|^2 \big[ x_1-\xi^{\J}_1 + i \big( x_2-\xi^{\J}_2 \big) \big] }{  |x-\xi^{\J}|^3+ \la_j^3   }
			(-1)
			\big( |x-\xi^{\J}|^2+\lambda_j^2\big)^{-3/2} (x_1 -\xi_1^{\J} ),
	\\
		f_2(t) := & \frac{-2|x-\xi^{\J}|^2 \big[ x_1-\xi^{\J}_1 + i \big( x_2-\xi^{\J}_2 \big) \big] }{\big( |x-\xi^{\J}|^3+ \la_j^3 \big) \big( |x-\xi^{\J}|^2+\lambda_j^2\big)^{1/2}}
		2\mathfrak{A}
		\frac{x_1-\xi^{\J}_1}{|x-\xi^{\J}|^2+\lambda_j^2}.
	\end{align*}
	It is easy to get $ | f_1(t)| + | f_2(t)| \lesssim ( \tilde{r}_j + \lambda_*)^{-2}$.
	Similar to \eqref{0520-1}, we have
	\begin{equation*}
		| f_1(t) -f_1(T)| + | f_2(t) -f_2(T)|
		\lesssim
		\lambda_*
		\int_0^1
		( \tilde{r}_j + \theta \lambda_* )^{-3}
		d\theta.
	\end{equation*}
	$\int_{-T}^t \dot{p}_j(s) ( 1-e^{- \mathfrak{A} \zeta_j(t)} ) ds$ has been dealt with in the estimate of \eqref{qd24Oct13-1}.
	$\int_{-T}^t \dot{p}_j(s)  \zeta_j(t) e^{- \mathfrak{A} \zeta_j(t)}   ds$ can be handled similarly. In sum, for $t>T/2$,
	\begin{align*}
			&
			| \pp_{x_1} \Phi^{*{\J}}_0(|x-\xi^{\J} |,t)  - \pp_{x_1} \Phi^{*{\J}}_0(|x-q^{\J} |,T) | \lesssim \ln( 1+ \tilde{r}_j^{-1} \lambda_*)
			\\
			& \quad+
			(\tilde{r}_j+\lambda_*)^{-1}
			\Big\{ \tilde{r}_j \ln\big( 2T (t+T)^{-1}  \big)+
			\tilde{r}_j
			\ln \Big(
			\frac{T - (t-\la_*^2-\tilde{r}_j^2)_{+}   }{ t- (t-\la_*^2-\tilde{r}_j^2)_{+}  }
			\Big)
			+
			\lambda_* \ln \Big( \frac{t+T}{t- (t-\la_*^2-\tilde{r}_j^2)_{+}}\Big)
			\\
			& \quad+ \lambda_*
			+
			\1_{\{ \tilde{r}_j \leq \sqrt{T-t} \}}
			\tilde{r}_j
			+
			\1_{\{ \tilde{r}_j > \sqrt{T-t} \}}
			\Big[
			\tilde{r}_j \frac{|\ln T|}{\ln^2(T-t)}
			+
			\lambda_* \ln\Big( \frac{t-  (t-\tilde{r}_j^2 )_{+}}{T-t}\Big)
			+
			\lambda_*  (T-t)^{-1/2}
			\Big]  \Big\}.
		\end{align*}
	And $\pp_{x_2} \Phi^{*{\J}}_0(|x-\xi^{\J} |,t)$ can be dealt with similarly. Using \eqref{qd24Oct13-3}, \eqref{qd24Oct13-4}, and $
	\int_{\tilde{r}_j \le M} ( \tilde{r}_j+\lambda_*)^{-2} dx
	\lesssim
	\ln(M+2) + |\ln \lambda_*|$, $(\tilde{r}_j+\lambda_*)^{-1} \tilde{r}_j \le 1$,  we get
	\begin{equation}\label{qd24Oct14-1}
		\int_{B_M}\Big|
		\nabla_x \Phi^{*{\J}}_0(|x-\xi^{\J} |,t)  -
		\nabla_x \Phi^{*{\J}}_0(|x-q^{\J} |,T)
		\Big|^2 dx\to 0.
	\end{equation}
	
	Applying \eqref{conv-outer} to $\Phi_{\rm out}$, \eqref{conv-inner} to $\eta_R^{\J} Q_{\gamma_j}\Phi_{\rm in}^{\J}$, and \eqref{eta-dq-converg}, \eqref{Phi*-0-j-upp}, \eqref{conv-nonlocal}, \eqref{qd24Oct14-1} to $\eta_{d_q}^{\J} \Phi^{*{\J}}_0$, we conclude the second part of \eqref{conv-Phi}.
\end{proof}

Recall $U_*$ given in \eqref{def-U*}. The pointwise limit as $t$ goes to $T$ is given by
\begin{equation*}
	U_*(T) =
	\begin{cases}
		[0,0,1]^{\tr}=U_{\infty}, &\mbox{ \ if \ } x\not\in \{q^{\J} \ | \  j=1,2,\dots,N \}
		\\
		[0,0,-1 ]^{\tr},
		&\mbox{ \ if \ } x \in \{q^{\J} \ | \  j=1,2,\dots,N \}.
	\end{cases}
\end{equation*}
$U_*(t)$ does not converge in $L_{\rm{loc}}^\infty$ since $U_*(T)$ is not continuous. Instead,
\begin{equation}\label{0521-1}
	\| U_*(\cdot,t) -U_\infty \|_{L^\infty (\cap_{j=1}^N \{ |x-q^{\J}| \ge  3\lambda_*(t) R(t) \} )} \to 0.
\end{equation}

By \eqref{Phi-upp}, one has
\begin{equation}\label{qd24Oct18-4}
	\begin{aligned}
		&
		\Phi(q^{\J},T) = 0, \quad j=1,\dots,N, \mbox{ \ and then \ }
		((\Phi\cdot U_*)U_*)(T) = (\Phi(T)\cdot U_\infty)U_\infty,
		\\
		&
		\| (\Phi\cdot U_*)U_* - ( (\Phi\cdot U_*)U_* )(T)\|_{L^\infty(\mathbb{R}^2) }
		\le
		\| (\Phi\cdot (U_* - U_{\infty} ) )U_*   \|_{L^\infty(\mathbb{R}^2) }
		\\
		&\quad
		+
		\| ( \Phi\cdot U_{\infty} ) ( U_*-U_\infty ) \|_{L^\infty(\mathbb{R}^2) }
		+
		\| [ ( \Phi-\Phi(T) ) \cdot U_{\infty} ] U_{\infty}
		\|_{L^\infty(\mathbb{R}^2) }
		\to 0,
	\end{aligned}
\end{equation}
where we used \eqref{Phi-upp}, \eqref{0521-1}, \eqref{conv-Phi} in the last step.
\begin{align}
		&
		\| \pp_{x_i} [(\Phi\cdot U_*)U_* ] - \pp_{x_i}
		[ ((\Phi\cdot U_*)U_* )(T) ]
		\|_{L^2(B_M) }
		\nonumber
		\\
		\le \ &
		\| ( \Phi\cdot U_* ) \pp_{x_i} U_*
		\|_{L^2(B_M) }
		+
		\|
		( \Phi\cdot \pp_{x_i}U_* ) U_*
		\|_{L^2(B_M) }
		+
		\| [ \pp_{x_i} \Phi \cdot ( U_* -U_\infty ) ]U_*
		\|_{L^2(B_M) }
		\nonumber
		\\
		&
		+
		\|
		( \pp_{x_i} \Phi \cdot U_\infty ) ( U_*-U_\infty )
		\|_{L^2(B_M) }
		+
		\|  [ ( \pp_{x_i} \Phi - \pp_{x_i} \Phi(T) )\cdot U_\infty ] U_\infty
		\|_{L^2(B_M) } \to 0,
		\label{qd24Oct18-5}
\end{align}
where for the last step, we used $\| \langle \rho_j\rangle^{-1} \|_{L^2(B_M)} \le C(M) \lambda_* |\ln \lambda_*|^{1/2}$ and applied \eqref{nabU*-est}, \eqref{Phi-upp} to $\|  (\Phi\cdot U_*) \pp_{x_i} U_*
\|_{L^2(B_M) }
+
\|
( \Phi\cdot \pp_{x_i}U_* ) U_*
\|_{L^2(B_M) }$; \eqref{nabla-Phi-upp}, \eqref{0521-1} to $\| [ \pp_{x_i} \Phi \cdot ( U_* -U_\infty ) ]U_*
\|_{L^2(B_M) }
+
\|
( \pp_{x_i} \Phi \cdot U_\infty ) ( U_*-U_\infty )
\|_{L^2(B_M) }$; \eqref{conv-Phi} to $\|
[ ( \pp_{x_i} \Phi - \pp_{x_i} \Phi(T) )\cdot U_\infty ] U_\infty
\|_{L^2(B_M) }$.

\medskip

Recall $A$ defined in \eqref{A-def} and $\Phi(q^{\J},T) = 0$. It is straightforward to get
\begin{align}
		&
		A(T)
		=  [
		1  -|\Phi(T)|^2 + (\Phi(T)\cdot U_{\infty})^2 ]^{1/2}
		-
		1,
		\quad
		A(q^{\J},T)=0, \ j=1,2,\dots, N, \quad
		(A U_*)(T) = A(T) U_{\infty} ,
		\nonumber
		\\
		&
		\nabla_x  (A(T) ) =  -\frac{ \nabla_x\big[ \big| \big(\Pi_{U_*^{\perp}}\Phi\big)(T)\big|^2\big] }{2(1+A(T)) } ,
		\quad
		\big| \big(\Pi_{U_*^{\perp}}\Phi \big)(T)\big|^2
		=
		| \Phi(T)|^2 -
		( \Phi(T) \cdot U_{\infty} )^2.
		\label{AT-prop}
\end{align}

Note that
\begin{equation*}
	|\Pi_{U_*^{\perp}}\Phi|^2
	=
	|\Phi-(\Phi\cdot U_*)U_*|^2 =
	|\Phi|^2 -(\Phi\cdot U_*)^2 +
	(\Phi\cdot U_*)^2 ( |U_*|^2-1).
\end{equation*}

By \eqref{Phi-upp}, \eqref{conv-Phi}, and \eqref{0521-1}, we have
\begin{equation*}
	\begin{aligned}
		&
		\| (\Phi\cdot U_*)(t) -  (\Phi\cdot U_*)(T) \|_{L^\infty (\mathbb{R}^2)}
		\le
		\| \Phi(t) \cdot \left( U_*(t) - U_*(T)\right) \|_{L^\infty (\cap_{j=1}^N \{ |x-q^{\J}| > 3\lambda_*(t) R(t) \} )}
		\\
		&\quad
		+
		\| \Phi(t) \cdot \left( U_*(t) - U_*(T)\right) \|_{L^\infty (\cup_{j=1}^N \{ |x-q^{\J}|\le 3\lambda_*(t) R(t) \} )}
		+
		\| \left( \Phi(t) -\Phi(T)\right)  \cdot U_*(T) \|_{L^\infty (\mathbb{R}^2)}
		\to 0.
	\end{aligned}
\end{equation*}
We can handle the convergence of the other terms involved in $A$  directly. Then
\begin{equation}\label{eqn-574574574}
	\| A(t) -A(T) \|_{L^\infty(\mathbb{R}^2)} \to 0.
\end{equation}

Similarly, using \eqref{A-est}, \eqref{Phi-upp}, \eqref{0521-1}, and \eqref{eqn-574574574}, we have
\begin{equation}\label{qd24Oct18-6}
	\begin{aligned}
		&
		\| (A U_*)(t) -  (A U_*)(T) \|_{L^\infty (\mathbb{R}^2)}
		\le
		\| A(t) ( U_*(t) - U_*(T) ) \|_{L^\infty (\cap_{j=1}^N \{ |x-q^{\J}| > 3\lambda_*(t) R(t) \} )}
		\\
		& \quad
		+
		\| A(t) ( U_*(t) - U_*(T) ) \|_{L^\infty (\cup_{j=1}^N \{ |x-q^{\J}|\le 3\lambda_*(t) R(t) \} )}
		+
		\| ( A(t) -A(T) )    U_*(T) \|_{L^\infty (\mathbb{R}^2)}
		\to 0.
	\end{aligned}
\end{equation}

\medskip

Using $(A U_*)(T) = A(T) U_{\infty}$, we write
\begin{align*}
	\pp_{x_i} ( (A U_*)(t)- (A U_*)(T) )
	=
	(U_*(t)-U_{\infty} )  \pp_{x_i}  A(t)
	+
	A(t) \pp_{x_i} U_*(t)
	+
	U_\infty \pp_{x_i}  (A(t)-A(T) )
	,
	\quad
	i=1,2
\end{align*}
and consider the following three terms
\begin{equation*}
	\begin{aligned}
		&
		I_1:=\int_{B_M} |\nabla_x A(x,t) |^2|U_*(x,t)-U_\infty|^2 dx,
		\quad
		I_2:=\int_{B_M} |A(x,t)|^2 |\nabla_x U_*(x,t) |^2 dx,
		\\
		&
		I_3:=\int_{B_M} | \nabla_x (A(x,t)-A(x,T) ) | ^2 dx.
	\end{aligned}
\end{equation*}
Using \eqref{nab-A-rough}, \eqref{0521-1}, and $\| \langle \rho_j\rangle^{-1} \|_{L^2(B_M)} \le C(M) \lambda_* |\ln \lambda_*|^{1/2}$, we have $I_1 \rightarrow 0$.
By \eqref{A-est}, \eqref{Phi-upp}, \eqref{nabU*-est}, and $\| \langle \rho_j\rangle^{-1} \|_{L^2(B_M)} \le C(M) \lambda_* |\ln \lambda_*|^{1/2}$, we have $I_2 \rightarrow 0$.
For $I_3$, using $\nabla_x (A(T) )$ in \eqref{AT-prop} and $\nabla_x A$ given in \eqref{nab-A-def}, we have
\begin{align*}
		& |\nabla_x A-\nabla_x ( A(T) ) |
		\lesssim  \Big| \Big[(1+A)^2\nabla_x (|U_*|^2)+ \nabla_x\big(\big|\Pi_{U_*^{\perp}}\Phi\big|^2\big)+2(1+A)\nabla_x( U_*  \cdot \Pi_{U_*^{\perp}}\Phi)\Big]
		( 1+A(T) )
		\\
		&- \nabla_x \Big[ \big| \big(\Pi_{U_*^{\perp}}\Phi\big)(T) \big|^2 \Big]   \Big[ (1+A)|U_*|^2+  U_* \cdot
		\Pi_{U_*^{\perp}}\Phi  \Big] \Big|
		\\
		= \ &
		\Big| (1+A)^2\nabla_x (|U_*|^2)
		( 1+A(T) )
		+
		2(1+A)\nabla_x( U_*  \cdot \Pi_{U_*^{\perp}}\Phi)
		( 1+A(T) )
		\\
		&
		+
		\Big[ \nabla_x \Big(\big|\Pi_{U_*^{\perp}}\Phi\big|^2\Big)
		-
		\nabla_x\Big( \big| \big(\Pi_{U_*^{\perp}}\Phi\big)(T) \big|^2 \Big) \Big] (1+A(T))
		+
		\nabla_x\Big( \big| \big(\Pi_{U_*^{\perp}}\Phi\big)(T) \big|^2 \Big) (A(T)-A)
		\\
		& +
		\nabla_x\Big( \big| \big(\Pi_{U_*^{\perp}}\Phi\big)(T) \big|^2 \Big) (1+A) (1-|U_*|^2 )
		-
		\nabla_x\Big( \big| \big(\Pi_{U_*^{\perp}}\Phi\big)(T) \big|^2 \Big)  ( 1-|U_*|^2 ) ( \Phi\cdot U_* ) \Big|.
	\end{align*}

\noindent $\bullet$ By \eqref{U*cdot-nabU*-split}, we have
$ \nabla_x (|U_*|^2)  \to 0$ in $L^2_{\rm loc}$.

\noindent $\bullet$ By \eqref{U*-norm}, \eqref{nab-Phi-cdot-U*}, \eqref{Phi-cdot-U*-upp}, and \eqref{U*cdot-nabU*-split},
\begin{equation*}
	| \nabla_x( U_*  \cdot \Pi_{U_*^{\perp}}\Phi) |
	=
	| ( 1-|U_*|^2 ) \nabla_x ( \Phi\cdot U_* )
	-2 ( \Phi\cdot U_* ) U_*\cdot \nabla_x U_* |
	\lesssim  \lambda_*^{1/2}.
\end{equation*}

\noindent $\bullet$  Note that
\begin{equation*}
	\nabla_x (|\Pi_{U_*^{\perp}}\Phi|^2 )
	=
	2\Phi\cdot \nabla_x \Phi +
	( \Phi\cdot U_* )^2  \nabla_x ( |U_*|^2 )
	+
	2( | U_*|^2 -2)
	( \Phi\cdot U_* )
	\nabla_x ( \Phi\cdot U_* ).
\end{equation*}
From \eqref{AT-prop},  we have
\begin{equation*}
	\begin{aligned}
		& \big|\pp_{x_i}\big( \big|\Pi_{U_*^{\perp}}\Phi\big|^2\big)-
		\pp_{x_i}\big( \big|\big(\Pi_{U_*^{\perp}}\Phi\big)(T) \big|^2 \big) \big|
		\le
		2 |\Phi\cdot \pp_{x_i} \Phi-
		\Phi(T) \cdot \pp_{x_i} (\Phi(T) ) |
		+
		\big| ( \Phi\cdot U_* )^2  \pp_{x_i} \big( | U_*|^2 \big)  \big|
		\\
		&\quad
		+
		2 \big| \big( | U_*|^2 -1 \big)
		( \Phi\cdot U_* ) \pp_{x_i} ( \Phi\cdot U_* )    \big|
		+
		2 \big| - ( \Phi\cdot U_* ) \pp_{x_i} ( \Phi\cdot U_* )
		+
		( \Phi(T) \cdot U_{\infty} ) [ \pp_{x_i} (\Phi(T) )\cdot U_{\infty} ]   \big|.
	\end{aligned}
\end{equation*}
By \eqref{conv-Phi} and \eqref{Phi-T-est},
\begin{equation*}
	\|\Phi\cdot \pp_{x_i} \Phi-
	\Phi(T) \cdot \pp_{x_i} (\Phi(T) )   \|_{L^2(B_M)} \rightarrow 0.
\end{equation*}
By \eqref{Phi-upp} and \eqref{U*cdot-nabU*-split},
\begin{equation*}
	\| ( \Phi\cdot U_* )^2  \pp_{x_i} ( |U_*|^2 )  \|_{L^2(B_M)}
	\lesssim
	\| | \Phi |^2
	| \pp_{x_i} ( |U_*|^2) |  \|_{L^2(B_M)} \to 0.
\end{equation*}
By \eqref{U*-norm}, \eqref{Phi-cdot-U*-upp}, and \eqref{nab-Phi-cdot-U*},
\begin{equation*}
	\| (|U_*|^2 -1)
	(\Phi\cdot U_*) \pp_{x_i} ( \Phi\cdot U_*) \|_{L^\infty(\mathbb{R}^2)}
	\lesssim
	\lambda_*
	\|
	( \Phi\cdot U_* ) \pp_{x_i} ( \Phi\cdot U_* ) \|_{L^\infty(\mathbb{R}^2)}
	\to 0.
\end{equation*}
Finally,
\begin{align*}
		&
		\| -
		( \Phi\cdot U_* ) \pp_{x_i} ( \Phi\cdot U_*)
		+
		( \Phi(T) \cdot U_{\infty}) [ \pp_{x_i} (\Phi(T) )\cdot U_{\infty} ]   \|_{L^2(B_M)}
		\\
		\le \ &
		\|
		[ ( \Phi(T) -\Phi)\cdot U_* ] \pp_{x_i} ( \Phi\cdot U_* )
		\|_{L^2(B_M)}
		+
		\|
		[ \Phi(T) \cdot (U_{\infty}-U_* ) ] \pp_{x_i} ( \Phi\cdot U_*)
		\|_{L^2(B_M)}
		\\
		& +
		\|
		( \Phi(T) \cdot U_{\infty} ) ( \Phi\cdot \pp_{x_i} U_* )
		\|_{L^2(B_M)}
		+
		\| ( \Phi(T) \cdot U_{\infty}) [ \pp_{x_i} \Phi\cdot  (U_{\infty}-U_* ) ]
		\|_{L^2(B_M)}
		\\
		& +
		\| ( \Phi(T) \cdot U_{\infty}) [ ( \pp_{x_i} ( \Phi(T) ) - \pp_{x_i} \Phi )\cdot   U_{\infty} ]
		\|_{L^2(B_M)}
		\to 0,
	\end{align*}
where we used \eqref{conv-Phi}, \eqref{nab-Phi-cdot-U*}; \eqref{Phi-T-est}, \eqref{0521-1}; \eqref{Phi-upp}, \eqref{nabU*-est}; \eqref{nabla-Phi-upp}, \eqref{0521-1}; \eqref{conv-Phi} in order for the last step. In sum,
\begin{equation*}
	\|\pp_{x_i} ( |\Pi_{U_*^{\perp}}\Phi |^2 )-
	\pp_{x_i} (  | (\Pi_{U_*^{\perp}}\Phi )(T)  |^2 ) \|_{L^2(B_M)} \to 0.
\end{equation*}

\noindent $\bullet$ By \eqref{AT-prop} and \eqref{Phi-T-est},
\begin{equation}\label{qd24Oct18-3}
	\| \nabla_x ( | (\Pi_{U_*^{\perp}}\Phi )(T) |^2 ) \|_{L^{\infty}(\mathbb{R}^2)} \lesssim 1.
\end{equation}
Combining \eqref{eqn-574574574} and \eqref{U*-norm}, we have
\begin{equation*}
	\| \nabla_x\big( \big| \big(\Pi_{U_*^{\perp}}\Phi\big)(T) \big|^2 \big) (A(T)-A)
	\|_{L^\infty(\mathbb{R}^2)} +	\| 	\nabla_x\big( \big| \big(\Pi_{U_*^{\perp}}\Phi\big)(T) \big|^2 \big) (1+A- \Phi\cdot U_*) \left(1-|U_*|^2 \right)
	\|_{L^\infty(\mathbb{R}^2)}
	\to 0.
\end{equation*}
As a result, we conclude $I_3\to 0$. In sum,
\begin{equation}\label{qd24Oct18-7}
	\| \nabla_x ( (A U_*)(t)- (A U_*)(T) ) \|_{L^2(B_M)} \to 0.
\end{equation}

Consequently, applying \eqref{u*-def-1018}, \eqref{AT-prop}, \eqref{qd24Oct18-4}, \eqref{Phi-T-est}, \eqref{qd24Oct18-3} to $u_*$, and \eqref{conv-Phi}, \eqref{qd24Oct18-4}, \eqref{qd24Oct18-5}, \eqref{qd24Oct18-6}, \eqref{qd24Oct18-7} to $\Phi_{\rm per}(t)-\Phi_{\rm per}(T)$, we attain
\begin{equation}\label{qd240727-1}
	\begin{aligned}
		&
		u_*(x) = A(x,T) U_{\infty} + \Phi(x,T) - (\Phi(x,T) \cdot U_{\infty}) U_{\infty} + U_{\infty}  \in H^1_{\rm loc}(\R^2)\cap L^\infty(\R^2),
		\\
		&
		\Phi_{\rm per}(t)-\Phi_{\rm per}(T) \rightarrow 0
		\mbox{ \ in \ }
		H^{1}_{\rm loc}(\R^2){\cap L^{\infty}(\R^2)}.
	\end{aligned}
\end{equation}

Next, we will prove weak-$*$ convergence. Obviously, $ |\nabla_x u|^2 = |\nabla_x U_*|^2 + 2 \nabla_x U_* \cdot \nabla_x \Phi_{\rm per} + |\nabla_x \Phi_{\rm per} |^2$,
$|\nabla_x u_*|^2 = |\nabla_x \Phi_{\rm per}(x,T) |^2$, $ |\nabla_x U^{\J}|^2 = 8\lambda_j^{-2} \big( \lambda_j^{-2}|x- \xi^{\J}|^2+1\big)^{-2}$.
Given a function $f\in L^{\infty}(\mathbb{R}^2)$ continuous at $q^{\J}$, by dominated convergence theorem and $ 8\int_{\mathbb{R}^2} (|z|^2+1)^{-2} dz=8\pi$, then
\begin{equation*}
	\int_{\mathbb{R}^2} |\nabla_x U^{\J}(x,t)|^2 f(x) dx
	=
	8\int_{\mathbb{R}^2} (|z|^2+1)^{-2}
	f(\lambda_j z +\xi^{\J}) dz \to 8\pi f(q^{\J}).
\end{equation*}

Given a constant $C_1>0$, by \eqref{Phiper-nabla},
\begin{equation*}
	\int_{\mathbb{R}^2} \lambda_j^{-1} \big( \lambda_j^{-2}|x- \xi^{\J}|^2+1\big)^{-1} |\nabla_x \Phi_{\rm per}(x,t)| \1_{\{ |x|\le C_1 \}}dx
	\lesssim \lambda_*^{\epsilon} |\ln \lambda_*|.
\end{equation*}

For $j\ne k$, by splitting $\mathbb{R}^2$ into three parts $\{ x \ | \ |x-q^{\J}|\le d_q \}$, $\{ x \ | \ |x-q^{\K}|\le d_q \}$, and $\{ x \ | \ \min\{|x-q^{\J}|, |x-q^{\K}| \}> d_q \}$ when estimating, we have
\begin{equation*}
	\int_{\mathbb{R}^2} \lambda_j^{-1} \big( \lambda_j^{-2}|x- \xi^{\J}|^2+1\big)^{-1}
	\lambda_k^{-1} \big( \lambda_k^{-2}|x- \xi^{\K}|^2+1\big)^{-1}
	dx
	\lesssim \lambda_*^2 |\ln \lambda_*|.
\end{equation*}
Together with \eqref{qd240727-1}, given a function $f\in L^{\infty}(\mathbb{R}^2)$ with compact support and continuous at $q^{\J}$, $j=1,2,\dots, N$, we have
\begin{equation}
	\lim_{t\to T} \int_{\mathbb{R}^2} f(x) |\nabla u(x,t)|^2  dx =
	\int_{\mathbb{R}^2} f(x) |\nabla u_*(x)|^2  dx + \sum_{j=1}^{N} 8\pi f(q^{\J}),
\end{equation}
which is the weak-$*$ convergence of the Radon measure. We complete the proof of Corollary \ref{cor-intro}.

\section{Linear theory for the inner problems}\label{sec-linearinner}
In this section, we will establish the linear theory for the inner problem \eqref{inner-eq} in different modes. Since this section is rather independent of the other parts,  we abuse the notation a bit and use $R$ for more general cases.
Recall \eqref{tau-j-def}, in the time variable $\tau_j$, \eqref{inner-eq} is the usual parabolic system.  Since the inner problems for  $j=1,2,\dots, N $ all have the same structure,  we omit the subscripts or superscripts ``${}_{j}$'', ``$\J$'' in this section for brevity, and all spatial derivatives are about $y$. Consider
\begin{equation}\label{lin-inner}
	\left\{
	\begin{aligned}
		&
		\pp_{\tau} \Psi
		=
		\left( 	a -b W  \wedge \right)
		\left(
		L_{\rm{in}} \Psi
		\right) + H  && \mbox{ \ in \ } \mathcal{D}_R,
		\\
		&
		\Psi(y,\tau)\cdot W(y) = H(y,\tau) \cdot W(y)=0 && \mbox{ \ in \ } \mathcal{D}_R,
	\end{aligned}
	\right.
\end{equation}
where
\begin{equation}\label{L-in-def}
	L_{\rm{in}} \mathbf{f}:=
	\Delta \mathbf{f}
	+
	|\nabla W |^2  \mathbf{f}
	- 2 \nabla 	
	\left( 	W \cdot  \mathbf{f} \right) \cdot \nabla W
	+
	2  \left( 	\nabla W \cdot    \nabla \mathbf{f} \right) W,
\end{equation}
\begin{equation*}
	\mathcal{D}_R := \left\{
	(y,\tau) \ | \
	\tau \in (\tau_0,\infty),
	y\in B_{R}
	\right\} ,
	\quad
	B_{R} := \left\{ y \in \R^2\ | \ |y| < R(\tau) \right\},
	\quad \tau_0\ge 2.
\end{equation*}

We call a function $f(\tau)$ defined in $(\tau_0,\infty)$ of {\it algebraic power type} if $C^{-1} f_*(\tau) \le f(\tau) \le C f_*(\tau)$, where $C\ge 1$ is a constant, and
$f_*(\tau) = c_0 \tau^{c_1} \langle \ln \tau\rangle^{c_2} \langle \ln \langle \ln \tau\rangle \rangle^{c_3} \cdots$ with finite multiplicity, $c_0>0$ possibly depends on $\tau_0$, $c_i\in \mathbb{R}$, $i=1,2, \dots$, and then we define
\begin{equation}\label{P1-def}
	{\mathbf{P}}_1[f] := c_1.
\end{equation}
Denote ${\mathbf{AP}}$ as the set of algebraic power type functions. Obviously, for any $f_1, f_2 \in {\mathbf{AP}}$, $c\in \mathbb{R}$, we have $f_1 f_2, f_1/f_2, f_1^c \in {\mathbf{AP}}$, $\mathbf{P}_1[f_1 f_2] = \mathbf{P}_1[f_1] + \mathbf{P}_1[f_2]$, $\mathbf{P}_1[1/f_1] = -\mathbf{P}_1[f_1]$, and  $\mathbf{P}_1[f_1^c]=c \mathbf{P}_1[f_1]$.
For $\tau \ge \tau_0 \ge 9$ and $f\in {\mathbf{AP}}$, $C_{f}^{-1} f(\tau) \le f(s) \le C_{f} f(\tau)$  for all  $\tau\le s\le 2\tau$
with a constant $C_{f}>1$. If we assume, in addition, $\ln f\in {\mathbf{AP}}$, then $\mathbf{P}_1[ \ln f ] =0$.

Throughout this section, unless otherwise stated, we always assume that constants, $O(\cdot)$, $\lesssim$, $\sim$ are independent of $\tau_0$, $k\in \mathbb{Z}$ for $|k|\ge 2$ (used for mode $k$, $|k|\ge 2$), and
\begin{equation}\label{nu-assump}
	\begin{aligned}
		&
		\tau_0\ge 9,
		\quad
		v(\tau), R(\tau), R_0(\tau)\in {\mathbf{AP}},
		\quad
		0\le {\mathbf{P}}_1[R_0] \le {\mathbf{P}}_1[R] < 1/2,
		\quad
		2 R_0(\tau) \le R(\tau),
		\\
		&
		\inf_{s\ge \tau_0} R_0(s) \gg 1,
		\quad
		R^2 \big( \ln R + (\ln \tau)^m \big)\le C_1 \tau,
		\quad
		R_0 \in C^1(\tau_0,\infty),
		\quad
		|R_0'| =O( R_0^{-1} )
	\end{aligned}
\end{equation}
for any $m\ge 0$ with a constant $C_1>0$ depending on $m$. Obviously, if $R_0' = O(\tau^{-1} R_0)$, then $|R_0'| =O( R_0^{-1} )$.

Suppose that $\Psi_{\mathbb C}(y,\tau)$, $H_{\mathbb{C}}(y,\tau)$ defined by \eqref{Cbb-def} have the following Fourier expansion respectively,
\begin{equation}\label{complex-Phi-H}
	\begin{aligned}
		\Psi_{ \mathbb C}(y,\tau)= \ & \sum_{k\in \mathbb Z} \psi_{k} (\rho,\tau) e^{ik\theta},
		\quad
		\psi_{k} (\rho,\tau) := \Psi_{\mathbb{C}, k} (\rho,\tau) =
		(2\pi)^{-1}
		\int_{0}^{2\pi} \Psi_{ \mathbb C}(\rho e^{is},\tau) e^{-iks} ds,
		\\
		H_{\mathbb{C}}(y,\tau)= \ & \sum_{k\in \mathbb Z} h_{k} (\rho,\tau) e^{ik\theta},
		\quad
		h_{k} (\rho,\tau) :=
		H_{\mathbb{C}, k} (\rho,\tau) =
		(2\pi)^{-1}
		\int_{0}^{2\pi} H_{\mathbb{C}}(\rho e^{is},\tau) e^{-iks} ds,
	\end{aligned}
\end{equation}
where $\Psi_{\mathbb{C}, k}$ and $H_{\mathbb{C}, k}$ are defined in \eqref{modek-def} and
\begin{equation*}
	y=\rho e^{i\theta},\quad \rho=|y|,\quad \theta = \arctan( y_2 \slash y_1).
\end{equation*}
Using \eqref{Cbb-inverse}, we denote
\begin{equation*}
	\Psi_{ k}(y,\tau) := \big(  \psi_{k} (\rho,\tau) e^{ik\theta}  \big)_{\mathbb{C}^{-1}},
	\quad
	H_k(y,\tau) := \big( h_{k} (\rho,\tau) e^{ik\theta} \big)_{\mathbb{C}^{-1}} \mbox{ \ for \ } k\in \mathbb{Z}.
\end{equation*}
It is easy to see that
\begin{equation}\label{equi-norm-1}
	| \Psi_{ k} | = | \psi_{k} |,\quad
	|H_k| = |h_k| .
\end{equation}

For $\ell \in \RR$ and $v(\tau) >0$ and vectorial complex-valued function $\fbf$, we introduce the weighted topology
\begin{equation}\label{qd24Dec07-1}
	\| \fbf \|_{v,\ell}^{\mathcal{R}} := \sup\limits_{(y,\tau)\in \DD_{\mathcal{R}}} (v(\tau) \langle y \rangle^{-\ell})^{-1}  |\fbf(y,\tau)|
\end{equation}
with a scalar function $\mathcal{R}= \mathcal{R}(\tau)$. By \eqref{equi-norm-1} and \eqref{complex-Phi-H}, we have
\begin{equation}\label{equi-norm-2}
	\| \psi_k \|_{v,\ell}^{\mathcal{R}} = \| \Psi_k \|_{v,\ell}^{\mathcal{R}},\quad
	\| h_k \|_{v,\ell}^{\mathcal{R}} = \| H_k \|_{v,\ell}^{\mathcal{R}}
	\lesssim \| H \|_{v,\ell}^{\mathcal{R}},
	\quad
	|\Psi(y,\tau)| \lesssim
	\sum_{k\in \mathbb Z} |\Psi_{k} (y,\tau)|.
\end{equation}
For the convergence in \eqref{equi-norm-2} when summing up, we have to make the dependence on $k$ very clear in the estimates of mode $k$, $|k|\ge 2$.

\subsection{Complex-valued form of the inner linear equation}

The following lemma bridges the inner problem in the parabolic system form and the complex-scalar form.
\begin{lemma}\label{complex-lem}
	For $L_{\rm{in}} $ defined in \eqref{L-in-def} and $\Psi \cdot W =0$,	we have
	\begin{equation}\label{complex-1}
		\left[
		\left( 	a -b W  \wedge \right)
		\left(
		L_{\rm{in}} \Psi
		\right)  \right] \cdot W  =0,
	\end{equation}
	\begin{equation}\label{complex-2}
		\left[
		\left( 	a -b W  \wedge \right)
		\left(
		L_{\rm{in}} \Psi
		\right)  \right]_{\mathbb{C} }
		=
		(a-ib) \mathcal{L}_{\rm{in}} \Psi_{ \mathbb C},
	\end{equation}
	where
	\begin{equation*}
		\mathcal{L}_{\rm{in}} \Psi_{ \mathbb C} : =
		\Big[\pp_{\rho \rho}+\frac1{\rho}\pp_{\rho}+\frac1{\rho^2}\pp_{\theta\theta}-\frac1{\rho^2}+i\frac{2\cos w(\rho)}{\rho^2}\pp_{\theta}+\frac8{(\rho^2+1)^2}\Big] \Psi_{ \mathbb C}.
	\end{equation*}
	Then \eqref{lin-inner} is equivalent to the complex-valued equation
	\begin{equation}\label{Complex-in-eq}
		\pp_{\tau} \Psi_{ \mathbb C}
		=
		(a-ib)\mathcal{L}_{\rm{in}} \Psi_{ \mathbb C}
		+
		H_{\mathbb{C}} \mbox{ \ in \ } \mathcal{D}_R.
	\end{equation}
	Under the Fourier expansion \eqref{complex-Phi-H}, then
	\begin{equation}\label{com-3}
		\mathcal{L}_{\rm{in}}  \big[  e^{ik\theta}\psi_k \big]
		=
		e^{ik\theta} \mathcal{L}_k  \psi_k,
	\end{equation}
	where
	\begin{equation}\label{cal-L-ope}
		\mathcal{L}_k  f :=
		\pp_{\rho \rho }f + \frac{\pp_{\rho} f}{\rho}
		+ V_k(\rho) f, \quad V_k(\rho):= -\frac{(k+1)^2 \rho^4 + (2k^2-6)\rho^2 +(k-1)^2}{ (\rho^2+1)^2} \frac{1}{\rho^2}.
	\end{equation}
	It follows that
	\begin{equation*}
		\pp_{\tau} \Psi_{k}
		=
		\left( 	a -b W  \wedge \right)
		\left(
		L_{\rm{in}} \Psi_{k}
		\right) + H_{k}
	\end{equation*}
	is equivalent to
	\begin{equation}\label{inner-Fourier-form}
		\pp_{\tau} \psi_{k}= (a-ib) \mathcal L_k \psi_{k}  +    h_k .
	\end{equation}

\end{lemma}

\begin{proof}
	Set
	$$
	\Psi(y,\tau)=\varphi_{ 1}(y,\tau) E_1(y)+\varphi_{2}(y,\tau) E_2(y), \mbox{ \ that is, \ } \Psi_{ \mathbb{C}} = \varphi_{ 1} + i \varphi_{ 2} .
	$$
	By \eqref{Frenet-deri}, one has
	\begin{align*}
				&
				\Delta( \varphi_{1} E_1 )
				=
				(\Delta \varphi_{1} ) E_1
				+ 2\Big( \pp_{\rho}  \varphi_{1} \pp_{\rho} E_1
				+ \frac{1}{\rho^2}
				\pp_{\theta}  \varphi_{1} \pp_{\theta} E_1
				\Big) +
				\varphi_{1}
				\Big(\pp_{\rho\rho } + \frac{\pp_{\rho}}{\rho } + \frac{\pp_{\theta \theta}}{\rho^2}\Big) E_1
				\\
				= \ &
				(\Delta \varphi_{1} ) E_1
				- 2 \pp_{\rho}  \varphi_{1} w_{\rho} W
				+ \frac{2}{\rho^2}
				\pp_{\theta}  \varphi_{1} \cos w E_2
				\\
				&
				+
				\varphi_{1}
				\Big[-w_{\rho\rho} W -w_{\rho}^2 E_1 - \frac{1}{\rho }  w_{\rho} W
				- \frac{ 1 }{\rho^2} \cos w (\sin w W +\cos w E_1) \Big]
				\\
				= \ &
				\Big[
				\Delta \varphi_{1}
				-
				\varphi_{1}
				\Big(w_{\rho}^2  + \frac{ \cos^2 w}{\rho^2}   \Big)
				\Big] E_1
				+ \frac{2 \cos w }{\rho^2}
				\pp_{\theta} \varphi_{1}  E_2
				+
				\Big[ - 2 w_{\rho} \pp_{\rho} \varphi_{1}
				-
				\varphi_{1}
				\Big( w_{\rho\rho}  + \frac{ w_{\rho}  }{\rho }
				+   \frac{  \sin w \cos w }{\rho^2}  \Big)
				\Big] W
				\\
				= \ &
				\Big(
				\pp_{\rho\rho} \varphi_{1}
				+
				\frac 1{\rho} \pp_{\rho} \varphi_{1}
				+
				\frac{1}{\rho^2} \pp_{\theta\theta} \varphi_{1}
				-
				\frac{1}{\rho^2}\varphi_{1}
				\Big) E_1
				+ \frac{2 \cos w }{ \rho^2}
				\pp_{\theta} \varphi_{1}  E_2
				+
				\Big( - 2  w_{\rho} \pp_{\rho}  \varphi_{1}
				-
				\frac{ 2 \sin w \cos w }{\rho^2}  \varphi_{1}
				\Big) W,
	\end{align*}
	where we used $w_{\rho\rho}+\frac{w_{\rho}}{\rho}-\frac{\sin w\cos w}{\rho^2}=0$ for the last equality. Similarly,
	\begin{align*}
			&
			\Delta( \varphi_{2} E_2)
			=
			\Delta (\varphi_{2}) E_2
			-
			2 \frac{1}{\rho^2} \pp_{\theta} \varphi_{2} (\sin w W +\cos w E_1)
			-
			\varphi_{2} \frac{1}{\rho^2} E_2
			\\
			= \ &
			-\frac{2\cos w}{ \rho^2} \partial_{\theta} \varphi_{2} E_1
			+\Big(
			\partial_{\rho\rho} \varphi_{2} +\frac1\rho \partial_{\rho} \varphi_{2} +\frac{1}{\rho^2}\partial_{\theta\theta} \varphi_{2} -\frac{1}{\rho^2} \varphi_{2}\Big)E_2
			-\frac{2\sin w }{ \rho^2} \partial_{\theta} \varphi_{2} W.
	\end{align*}
	Thus
	\begin{equation}\label{ZLin-1}
		\begin{aligned}
			\Delta \Psi
			=&~\Big(\pp_{\rho\rho} \varphi_{1}+\frac1{\rho}\pp_{\rho} \varphi_{1}+\frac1{\rho^2}\pp_{\theta\theta} \varphi_{ 1}-\frac1{\rho^2} \varphi_{1}-\frac{2\cos w}{\rho^2}\pp_{\theta} \varphi_{2}\Big) E_1\\
			&~+\Big( \pp_{\rho\rho} \varphi_{2}+\frac1{\rho}\pp_{\rho} \varphi_{2}+\frac1{\rho^2}\pp_{\theta\theta} \varphi_{2}-\frac1{\rho^2} \varphi_{2}+\frac{2\cos w}{\rho^2}\pp_{\theta} \varphi_{1}\Big) E_2\\
			&~+\Big(-2w_{\rho} \pp_{\rho} \varphi_{1} -\frac{2\sin w}{\rho^2}\pp_{\theta} \varphi_{2} -\frac{2 \sin w \cos w }{\rho^2} \varphi_{1} \Big) W.
		\end{aligned}
	\end{equation}
	By \eqref{Frenet-deri},
	\begin{equation*}
		\begin{aligned}
			&\pp_{\rho} W=w_{\rho} E_1,\quad \pp_{\theta} W=\sin w E_2, \quad
			\pp_{\rho}\Psi =(\pp_{\rho} \varphi_{1} ) E_1+
			(\pp_{\rho} \varphi_{2}) E_2 -\varphi_{1} w_{\rho} W,\\
			&\pp_{\theta}\Psi = (\pp_{\theta} \varphi_{1}) E_1+
			(\pp_{\theta} \varphi_{2}) E_2 + \varphi_{1} \cos w E_2-\varphi_{2} (\sin w W+\cos w E_1) ,
		\end{aligned}
	\end{equation*}
	then we have
	\begin{equation}\label{ZLin-2}
		2(\nabla W \cdot \nabla \Psi ) W
		=
		2\Big(\pp_{\rho} W\cdot \pp_{\rho}\Psi +\frac1{\rho^2}\pp_{\theta}W\cdot\pp_{\theta}\Psi \Big)W
		= \Big(\frac{2\sin w \cos w }{\rho^2} \varphi_{1}
		+2w_{\rho} \pp_{\rho}\varphi_{1}
		+\frac{2\sin w}{\rho^2} \pp_{\theta}\varphi_{2} \Big) W.
	\end{equation}
	Then plugging \eqref{nablaW}, \eqref{ZLin-1}, \eqref{ZLin-2} into \eqref{L-in-def} and using \eqref{wedge-formula}, we have
	\begin{equation*}
		\begin{aligned}
			\left( 	a -b W  \wedge \right)
			\left(
			L_{\rm{in}} \Psi
			\right)
			= \ &
			\Big\{
			\pp_{\rho\rho}\varphi_{1}+\frac1{\rho}\pp_{\rho}\varphi_{1}+\frac1{\rho^2}\pp_{\theta\theta}\varphi_{1}
			+
			\Big[
			\frac{8}{(\rho^2+1)^2}
			-\frac1{\rho^2}
			\Big]
			\varphi_{1}-\frac{2\cos w}{\rho^2}\pp_{\theta}\varphi_{2}\Big\}
			\left( 	aE_1 -b E_2 \right)
			\\
			&+\Big\{
			\pp_{\rho\rho}\varphi_{2}+\frac1{\rho}\pp_{\rho}\varphi_{2}+\frac1{\rho^2}\pp_{\theta\theta}\varphi_{2}
			+
			\Big[
			\frac{8}{(\rho^2+1)^2}
			-\frac1{\rho^2} \Big] \varphi_{2}+\frac{2\cos w}{\rho^2}\pp_{\theta}\varphi_{1}\Big\}
			\left( 	aE_2 +b E_1 \right),
		\end{aligned}
	\end{equation*}
	which implies \eqref{complex-1}, \eqref{complex-2} and the equivalence between \eqref{lin-inner} and \eqref{Complex-in-eq}.
	Finally, \eqref{com-3} and \eqref{inner-Fourier-form} are  derived directly.
\end{proof}

The linearly independent kernels $\mathcal{Z}_{k,1}, \mathcal{Z}_{k,2}$ of
$\mathcal{L}_k$ in \eqref{cal-L-ope} satisfying the Wronskian $W[\mathcal{Z}_{k,1}, \mathcal{Z}_{k,2}] = \rho^{-1}$  are given as follows:
\begin{equation}\label{scalr-Z}
	\begin{cases}
		\mathcal{Z}_{-1,1}(\rho) = \frac{\rho^2}{\rho^2 +1} ,
		\quad
		\mathcal{Z}_{-1,2}(\rho) =
		\frac{4\rho^2(\rho^2\ln(\rho) -1) -1}{4\rho^2(\rho^2 +1)},
		& k=-1,
		\\
		\mathcal{Z}_{0,1}(\rho)
		= \frac{\rho}{\rho^2 +1} ,
		\quad
		\mathcal{Z}_{0,2}(\rho)
		=
		\frac{\rho^4 +4 \rho^2\ln(\rho) -1}{2 \rho(\rho^2+1)} ,
		&
		k=0,
		\\
		\mathcal{Z}_{1,1}(\rho) = \frac{1}{\rho^2 +1} ,
		\quad
		\mathcal{Z}_{1,2}(\rho) =
		\frac{\rho^4 +4\rho^2 + 4\ln(\rho)}{4(\rho^2 +1)} ,
		&
		k=1,
		\\
		\mathcal{Z}_{k,1}(\rho) = \frac{\rho^{1-k}}{\rho^2 + 1}, \quad
		\mathcal{Z}_{k,2}(\rho) = \frac{\rho^{k-1}}{\rho^2 + 1}(\frac{\rho^4}{2k+2} + \frac{\rho^2}{k} + \frac{1}{2k-2}) ,
		& k\ne -1,0,1.
	\end{cases}
\end{equation}
It is straightforward to get
\begin{equation*}
	\mathcal{Z}_{k,1}(\rho)
	\sim
	\rho^{1-k} \1_{\{ 0<\rho\le 1 \}}
	+
	\rho^{-1-k} \1_{\{ \rho > 1 \}},
	\quad
	\mathcal{Z}_{k,2}(\rho)
	\sim
	k^{-1} \big( \rho^{k-1} \1_{\{ 0<\rho \le 1\}}
	+
	\rho^{k+1} \1_{\{ \rho>1 \}}
	\big)
	\mbox{ \ for \ }  k\ne -1,0,1.
\end{equation*}

\medskip

Recall \eqref{def-kernels} and \eqref{complex-Phi-H} and notice $\mathcal{Z}_{0,1}(\rho)=-\frac{1}{2} \rho w_{\rho}$. Then
for mode $0$,
\begin{equation}\label{ZZ0}
	\left( h_{0} (\rho,\tau) \right)_{\mathbb{C}^{-1}} \cdot Z_{0,1}
	+ i
	\left( h_{0} (\rho,\tau) \right)_{\mathbb{C}^{-1}} \cdot Z_{0,2} = \rho w_{\rho} h_{0} (\rho,\tau)
	=
	-2 \mathcal{Z}_{0,1}(\rho) h_{0} (\rho,\tau)
	.
\end{equation}
Notice $\mathcal{Z}_{1,1}(\rho)= -\frac 12 w_{\rho}$. For mode $1$,
\begin{equation*}
	\begin{aligned}
		( h_{1} (\rho,\tau) e^{i \theta} )_{\mathbb{C}^{-1}}\cdot Z_{1,1} =
		{\rm Re} ( h_{1} (\rho,\tau)  e^{i\theta}) w_{\rho }\cos \theta + {\rm Im}( h_{1} (\rho,\tau) e^{i\theta}) w_{\rho} \sin\theta,
		\\
		( h_{1} (\rho,\tau) e^{i \theta} )_{\mathbb{C}^{-1}} \cdot Z_{1,2} =
		{\rm Re} ( h_{1} (\rho,\tau) e^{i\theta}) w_{\rho }\sin \theta
		- {\rm Im}( h_{1} (\rho,\tau) e^{i\theta})
		w_{\rho }\cos \theta  ,
	\end{aligned}
\end{equation*}
whose equivalent complex form is given by
\begin{equation}\label{ZZ1}
	( h_{1} (\rho,\tau) e^{i \theta})_{\mathbb{C}^{-1}}\cdot Z_{1,1}
	-
	i ( h_{1} (\rho,\tau) e^{i \theta})_{\mathbb{C}^{-1}} \cdot Z_{1,2}
	=
	w_{\rho } h_1(\rho,\tau)
	=
	-2 \mathcal{Z}_{1,1}(\rho) h_1(\rho,\tau).
\end{equation}

For a radial complex-valued function $f(\rho)$, the quadratic form of $\mathcal{L}_k$ in $B_R$ is defined as
\begin{equation}\label{Qk-def}
	Q_{R,k}( f , f) =
	2\pi
	\int_{0}^R
	\Big[
	|\pp_\rho f|^2
	+
	\frac{(k+1)^2 \rho^4 + (2k^2-6)\rho^2 +(k-1)^2}{(\rho^2+1)^2} \frac{ |f| ^2}{\rho^2}
	\Big]
	\rho
	d \rho.
\end{equation}
By \cite[Lemma 4.2]{sun2021bubble}, $Q_{R,k}( f , f) \ge 0$ for all $f\in C^2(B_R) \cap C(\bar{B}_R)$ with $f=0$ on $\pp B_R$, and $Q_{R,k}( f , f) = 0$ implies $f\equiv 0$. Define the norms
\begin{equation*}
	\| f\|_{X(B_R) }
	=
	\Big[
	2\pi
	\int_0^R
	\Big(
	|\pp_\rho f|^2 + \frac{ |f|^2 }{\rho^2}
	\Big) \rho d \rho
	\Big]^{1/2},
\end{equation*}
\begin{equation*}
	\| f\|_{H^1_0(B_R) }
	=
	\Big(
	2\pi
	\int_0^R
	|\pp_\rho f|^2  \rho d \rho
	\Big)^{1/2},\
	\|f\|_{L^2(B_R)} =
	\Big(
	2\pi
	\int_0^R |f|^2 \rho d \rho
	\Big)^{1/2}.
\end{equation*}
Set $ X_0(B_R) = \{
f(\rho) \ \big| \
f(R) = 0, \quad
\| f\|_{X(B_R) }   <\infty
\}$.

\subsection{Energy estimates}
We use the method in \cite[Lemma 7.2]{Green16JEMS}
to analyze the first eigenvalue of $Q_{R,k}$.
\begin{lemma}\label{eigenvalue le}
	Let
	\begin{equation*}
		\lambda_{R,k} = \inf\limits_{f\in X_0(B_R) \backslash \{0\} }
		\frac{Q_{R,k}(f,f)}{\| f\|_{L^2 (B_R ) }^2 } \mbox{ \ for \ } k\ne 1,\quad
		\lambda_{R,1} = \inf\limits_{f\in H_0^1(B_R) \backslash \{0\} }
		\frac{Q_{R,1}(f,f)}{\| f\|_{L^2 (B_R ) }^2 } \mbox{ \ for \ } k = 1.
	\end{equation*}
	$\lambda_{R,k}$ is attained by a real-valued function in  $X_0(B_R)$ for $k\ne 1$ and $H_0^1(B_R)$ for $k= 1$.
	For $R$ large,
	\begin{equation*}
		\lambda_{R,0}\sim (R^2 \ln R)^{-1} ,\quad
		\lambda_{R,1}
		\sim
		R^{-4} ,\quad
		\lambda_{R,-1} \gtrsim
		(R^2 \ln R)^{-1}   ,
		\quad
		\lambda_{R,k} \gtrsim
		|k|^2 R^{-2}  \mbox{ \ for \ } |k|\ge2 .
	\end{equation*}
	
\end{lemma}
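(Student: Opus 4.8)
\textbf{Proof strategy for Lemma~\ref{eigenvalue le}.}
The plan is to treat the existence/attainment of the first eigenvalue and the four asymptotic estimates separately, mode by mode. For attainment, I would run the standard direct method: take a minimizing sequence $f_n$ for the Rayleigh quotient, normalize $\|f_n\|_{L^2(B_R)}=1$, and observe that the numerator $Q_{R,k}(f_n,f_n)$ is bounded, hence $\|f_n\|_{X(B_R)}$ (resp.\ $\|f_n\|_{H^1_0(B_R)}$ for $k=1$) is bounded by the lower bound $Q_{R,k}\ge 0$ from \cite[Lemma 4.2]{sun2021bubble} after absorbing the finitely-supported negative potential term. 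Then extract a weakly convergent subsequence in the appropriate Hilbert space $X_0(B_R)$ (or $H^1_0(B_R)$), use Rellich compactness of $X_0(B_R)\hookrightarrow L^2(B_R)$ on the bounded domain to pass the $L^2$-normalization to the limit, and lower semicontinuity of the norm to conclude the limit is a minimizer. Since $Q_{R,k}(|f|,|f|)\le Q_{R,k}(f,f)$ for real $f$ and $Q_{R,k}$ splits over real and imaginary parts, the minimizer may be taken real-valued (and nonnegative).

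For the asymptotics, the upper bounds come from explicit test functions built from the kernels \eqref{scalr-Z}. For mode $0$, $\mathcal{Z}_{0,1}(\rho)=\rho/(\rho^2+1)$ lies in the kernel of $\mathcal{L}_0$, so a truncation $\eta(\rho/R)\mathcal{Z}_{0,1}(\rho)$ (or a logarithmic cutoff) gives $Q_{R,0}\lesssim$ boundary terms $\sim R^{-2}$ while $\|f\|_{L^2(B_R)}^2\sim \ln R$, yielding $\lambda_{R,0}\lesssim (R^2\ln R)^{-1}$; the matching lower bound follows from a Hardy-type inequality since $Q_{R,0}(f,f)=\|f\|_{X(B_R)}^2-8\int \frac{|f|^2}{(\rho^2+1)^2}\rho\,d\rho$ and the negative term is dominated by a small multiple of $\|f\|_{X(B_R)}^2$ plus a controlled multiple of $R^{-2}\ln^{-1}R\|f\|_{L^2}^2$ — i.e.\ coercivity of $Q_{R,0}$ modulo the $L^2$ ground-state scale. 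For mode $1$, $\mathcal{Z}_{1,1}(\rho)=1/(\rho^2+1)$ is the kernel element that does \emph{not} vanish at $\rho=0$ but decays like $\rho^{-2}$, hence lies in $H^1_0$ after truncation at $R$; testing gives $Q_{R,1}\lesssim R^{-2}$-type boundary contribution but a more careful computation (the kernel is genuinely $L^2$ only up to log) gives the sharp $R^{-4}$, matched below by factorizing $Q_{R,1}$ via the kernel $\mathcal{Z}_{1,1}$ (writing $f=\mathcal{Z}_{1,1}g$ and integrating by parts to get a nonnegative ``ground state substitution'' form) and then estimating the resulting weighted Dirichlet integral. For mode $-1$, coercivity of the $4|f|^2/\rho^2$ Hardy term against the negative potential $(4\rho^2+12)/(\rho^2+1)^2$ yields $\lambda_{R,-1}\gtrsim (R^2\ln R)^{-1}$ directly. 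For $|k|\ge 2$, \eqref{kge2-QsimX} gives $Q_{R,k}(f,f)\ge\|f\|_{X(B_R)}^2$, and the one-dimensional Hardy/Poincaré inequality on $(0,R)$ with the measure $\rho\,d\rho$ gives $\|f\|_{X(B_R)}^2\gtrsim R^{-2}\|f\|_{L^2(B_R)}^2$; tracking the angular-momentum coefficient $(k-1)^2/\rho^2$ improves this to the claimed $|k|^2R^{-2}$.

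I expect the main obstacle to be the \emph{sharp} two-sided constants for modes $0$ and $1$ — in particular nailing down $\lambda_{R,1}\sim R^{-4}$ rather than the cruder $R^{-2}$ that a naive test function or Hardy inequality would give. This requires using the precise structure of the kernel $\mathcal{Z}_{1,1}$: the ground-state substitution $f=\mathcal{Z}_{1,1}g$ turns $Q_{R,1}$ into $2\pi\int_0^R \mathcal{Z}_{1,1}^2|\partial_\rho g|^2\rho\,d\rho$, and since $\mathcal{Z}_{1,1}^2\rho\sim\rho^{-3}$ at infinity the optimal $g$ is essentially linear, producing the $R^{-4}$ scaling after one checks $\|f\|_{L^2(B_R)}^2$ stays $O(1)$ (up to logs which must be handled and shown not to spoil the power). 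The mode-$0$ logarithmic factor is the other delicate point, where the $\ln R$ in $\|\mathcal{Z}_{0,1}\|_{L^2(B_R)}^2$ must appear on \emph{both} sides, which I would secure by combining the Hardy inequality lower bound with a matching logarithmically-weighted test function. All other estimates are routine consequences of Hardy's inequality and the explicit potentials, so I would present them compactly.
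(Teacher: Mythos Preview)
Your attainment argument, the upper bounds via truncated kernels, and the $|k|\ge 2$ estimate are correct and essentially match the paper. Your ground-state substitution $f=\mathcal Z_{1,1}g$ for the mode-$1$ lower bound is a legitimate alternative to what the paper does; the paper instead writes the eigenfunction $w_k$ via the variation-of-parameters formula
\[
w_k(\rho)=\mathcal Z_{k,2}(\rho)\!\int_0^\rho(-\lambda_{R,k}w_k)\mathcal Z_{k,1}\,s\,ds
+\mathcal Z_{k,1}(\rho)\!\int_\rho^R(-\lambda_{R,k}w_k)\mathcal Z_{k,2}\,s\,ds - A_{R,k}\mathcal Z_{k,1}(\rho),
\]
estimates each piece in $L^2(B_R)$ using the explicit asymptotics of $\mathcal Z_{k,1},\mathcal Z_{k,2}$, and reads off $1=\|w_k\|_{L^2}\lesssim \lambda_{R,k}\cdot(\text{power of }R)$. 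Your substitution approach is arguably cleaner when it applies.

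However, your lower-bound arguments for modes $0$ and $-1$ have a genuine gap. For mode $0$, the claimed inequality ``negative term $\le \epsilon\|f\|_{X}^2 + C(R^2\ln R)^{-1}\|f\|_{L^2}^2$ with $\epsilon<1$ fixed'' is false: test it on $f=\eta_{R/2}\mathcal Z_{0,1}$. Both $\|f\|_X^2$ and $8\int|f|^2(\rho^2+1)^{-2}\rho\,d\rho$ converge to finite nonzero constants $c_1,c_0$ as $R\to\infty$, with $c_1-c_0=Q_{R,0}(f,f)\sim R^{-2}$, while $\|f\|_{L^2}^2\sim\ln R$. The inequality would force $\epsilon\ge c_0/c_1-O(R^{-2})\to 1$, so no fixed $\epsilon<1$ works. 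The near-kernel element kills any uniform coercivity in the $X$-norm. For mode $-1$, the Hardy term does \emph{not} dominate the negative potential: $4\rho^{-2}-(4\rho^2+12)(\rho^2+1)^{-2}=(4-4\rho^2)\rho^{-2}(\rho^2+1)^{-2}<0$ for all $\rho>1$, so the potential contribution alone gives nothing.

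The fix is simply to run the same ground-state substitution you use for mode $1$ on modes $0$ and $-1$ as well: with $f=\mathcal Z_{k,1}g$ (both kernels are positive) one gets $Q_{R,k}(f,f)=2\pi\int_0^R \mathcal Z_{k,1}^2(\partial_\rho g)^2\rho\,d\rho$, and then Cauchy--Schwarz on $g(\rho)=-\int_\rho^R g'$ together with the explicit weights $\mathcal Z_{0,1}^2\rho\sim\rho^{-1}$, $\mathcal Z_{-1,1}^2\rho\sim\rho$ yields the claimed $(R^2\ln R)^{-1}$ bounds. Alternatively, follow the paper's integral-representation route uniformly across $k\in\{0,1,-1\}$.
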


\begin{proof}
	
	For any complex-valued function  $f=f_1+ if_2$ with $f_1 =\mathrm{Re} f$, $f_2 =\mathrm{Im} f$,
	\begin{equation*}
		\frac{Q_{R,k}(f,f)}{\| f\|_{L^2 (B_R ) }^2 }
		=
		\frac{Q_{R,k}(f_1,f_1) + Q_{R,k}(f_2,f_2)}{\| f_1\|_{L^2 (B_R ) }^2 + \| f_2\|_{L^2 (B_R ) }^2 }
		\ge
		\min\bigg\{
		\frac{Q_{R,k}(f_1,f_1) }{\| f_1\|_{L^2 (B_R ) }^2  }
		,
		\frac{Q_{R,k}(f_2,f_2) }{\| f_2\|_{L^2 (B_R ) }^2  }
		\bigg\} .
	\end{equation*}
	Thus for $k\ne 1$,
	\begin{equation*}
		\lambda_{R,k} = \inf\bigg\{
		\frac{Q_{R,k}(f,f)}{\| f\|_{L^2 (B_R ) }^2 }
		\ | \
		f\in X_0(B_R) \backslash \{0\}, \mbox{ $f$ is real-valued}
		\bigg\}.
	\end{equation*}
	The same argument can be applied to $\lambda_{R,1}$.
	
	Hereafter, we focus on real-valued functions. We choose a sequence $f_n\in X_0(B_R)$ if $k\ne 1$ ($f_n\in H_0^1(B_R)$ if $k=1$) with $\|f_n\|_{L^2(B_R)} = 1$ and  $\lambda_{R,k} + 1 \ge  Q_{R,k}(f_n,f_n) \rightarrow \lambda_{R,k}$. By the form of $Q_{R,k}$ given in \eqref{Qk-def}, we have
	$ \int_0^R (\pp_\rho f_n)^2 \rho d \rho \lesssim \lambda_{R,k} + 1 $.
	The Sobolev compact embedding theorem implies $f_n \rightarrow f_\infty$ in $L^2(B_R)$ up to a subsequence.
	
	For $k\ne 1$,
	\begin{equation*}
		Q_{R,k}( f_n , f_n)
		=
		2\pi
		\int_{0}^R
		\bigg[
		(\pp_\rho f_n)^2
		+
		\frac{(k-1)^2}{(\rho^2+1)^2} \frac{ f_n^2}{\rho^2}
		+
		\frac{(k+1)^2 \rho^2 + (2k^2-6) }{(\rho^2+1)^2}  f_n^2
		\bigg]
		\rho
		d \rho.
	\end{equation*}
	Up to a subsequence, we have
	\begin{equation*}
		\int_{0}^R
		\bigg[
		(\pp_\rho f_\infty)^2
		+
		\frac{(k-1)^2}{(\rho^2+1)^2} \frac{ f_\infty^2}{\rho^2}
		\bigg]
		\rho
		d \rho
		\le
		\liminf\limits_{n\rightarrow \infty}
		\int_{0}^R
		\bigg[
		(\pp_\rho f_n)^2
		+
		\frac{(k-1)^2}{(\rho^2+1)^2} \frac{ f_n^2}{\rho^2}
		\bigg]
		\rho
		d \rho,
	\end{equation*}
	\begin{equation*}
		\int_{0}^R
		\frac{(k+1)^2 \rho^2 + (2k^2-6) }{(\rho^2+1)^2}  f_\infty^2
		\rho
		d \rho
		=
		\lim\limits_{n\rightarrow\infty}
		\int_{0}^R
		\frac{(k+1)^2 \rho^2 + (2k^2-6) }{(\rho^2+1)^2}  f_n^2
		\rho
		d \rho.
	\end{equation*}
	Moreover,
	\begin{equation*}
		\int_{0}^R
		\bigg[
		(\pp_\rho f_\infty)^2
		+
		\frac{(k-1)^2}{(\rho^2+1)^2} \frac{ f_\infty^2}{\rho^2}
		\bigg]
		\rho
		d \rho
		\sim
		C(R,k)
		\int_{0}^R
		\bigg[
		(\pp_\rho f_\infty)^2
		+
		\frac{ f_\infty^2}{\rho^2}
		\bigg]
		\rho
		d \rho.
	\end{equation*}
	Thus
	\begin{equation*}
		Q_{R,k}(f_\infty,f_\infty) \le \lambda_{R,k}, \quad \| f_\infty\|_{L^2(B_R)} = 1, \quad
		f_\infty \in X_0(B_R),
	\end{equation*}
	which implies that the minimum $\lambda_{R,k}$ is attained by $f_\infty$.
	
	For $k=1$,
	similarly, we choose a subsequence such that $f_n \rightharpoonup f_\infty$ in $H^1_0(B_R)$, $f_n \rightarrow f_{\infty}$ in $L^2(B_R)$.
	\begin{equation*}
		\int_{0}^R
		(\pp_\rho f_\infty)^2
		\rho
		d \rho
		\le
		\liminf\limits_{n\rightarrow\infty}
		\int_{0}^R
		(\pp_\rho f_n)^2
		\rho
		d \rho,
		\quad
		\int_{0}^R
		\frac{ 4 ( \rho^2 -1 )  }{(\rho^2+1)^2}  f_\infty ^2
		\rho
		d \rho
		=
		\lim\limits_{n\rightarrow \infty}
		\int_{0}^R
		\frac{ 4 ( \rho^2 -1 )  }{(\rho^2+1)^2}  f_n ^2
		\rho
		d \rho.
	\end{equation*}
	Then
	\begin{equation*}
		Q_{R,1}(f_\infty,f_\infty) \le \lambda_{R,1}, \quad \| f_\infty\|_{L^2(B_R)} = 1, \quad
		f_\infty \in H_0^1(B_R),
	\end{equation*}
	and thus $\lambda_{R,1}$ is attained by $f_\infty$.

	Next, we will use the Lagrange multiplier for the real-valued minimum function $f_\infty$ to estimate $\lambda_{R,k}$, $k=-1, 0, 1$. To avoid confusion, we denote $w_k$ as the eigenfunction corresponding to the eigenvalue $\lambda_{R,k}$ for every mode $k$ with the normalization $\| w_k \|_{L^2(B_R)} =1$.
	
	For $k=0$,
	\begin{equation*}
		\mathcal{L}_0 w_0 =
		- \lambda_{R,0} w_0
		\mbox{ \ in \ } B_{R},\quad
		w_0 = 0
		\mbox{ \ on \ } \pp B_{R}.
	\end{equation*}
	$w_0$ is given by
	\[
	w_0(\rho) =
	\mathcal{Z}_{0,2}(\rho) \int_0^\rho (- \lambda_{R,0} w_0(s) ) \mathcal{Z}_{0,1}(s) s d s
	+
	\mathcal{Z}_{0,1}(\rho) \int_\rho^{R} (- \lambda_{R,0} w_0(s) ) \mathcal{Z}_{0,2}(s) s d s
	-
	A_{R,0} \mathcal{Z}_{0,1}(\rho),
	\]
	where $
	A_{R,0} = ( \mathcal{Z}_{0,1}(R) )^{-1} \mathcal{Z}_{0,2}(R) \int_0^{R} (- \lambda_{R,0} w_0(s) )
	\mathcal{Z}_{0,1}(s)
	s d s $.
	For $0\le \rho \le 1$,
	\begin{equation*}
		|\mathcal{Z}_{0,2}(\rho) \int_0^\rho   w_0(s)  \mathcal{Z}_{0,1}(s) s d s|
		\lesssim
		\rho^{-1}
		\| w_0 \|_{L^2 (B_\rho)}
		\| \mathcal{Z}_{0,1}\|_{L^2 (B_\rho)}
		\lesssim
		1,
	\end{equation*}
	\begin{equation*}
		|\mathcal{Z}_{0,1}(\rho) \int_\rho^{R}   w_0(s)  \mathcal{Z}_{0,2}(s) s d s|
		\le
		|\mathcal{Z}_{0,1}(\rho) \int_1^{R}   w_0(s)  \mathcal{Z}_{0,2}(s) s d s|
		+
		|\mathcal{Z}_{0,1}(\rho) \int_\rho^{1}  w_0(s)  \mathcal{Z}_{0,2}(s) s d s|
		\lesssim
		R^2 ,
	\end{equation*}
	\begin{equation*}
		| A_{R,0} \mathcal{Z}_{0,1}(\rho) |
		\lesssim |A_{R,0}|
		\lesssim
		\lambda_{R,0}
		R^2 (\ln R )^{\frac 12} .
	\end{equation*}
	Thus $\| w_0 \|_{L^2 (B_1 )} \lesssim \lambda_{R,0}
	R^2 (\ln R )^{\frac 12}  $.
	For $\rho\ge 1$,
	\begin{equation*}
		\| \mathcal{Z}_{0,2}(\rho) \int_0^\rho w_0(s)  \mathcal{Z}_{0,1}(s) s d s \|_{L^2(B_R\backslash B_1)}
		\le
		\| \mathcal{Z}_{0,2} \|_{L^2(B_R\backslash B_1)}
		\| w_0 \|_{L^2(B_R)}
		\| \mathcal{Z}_{0,1} \|_{L^2(B_R)}
		\lesssim
		R^2 (\ln R)^{\frac 12} ,
	\end{equation*}
	\begin{equation*}
		\|\mathcal{Z}_{0,1}(\rho) \int_\rho^{R} w_0(s)  \mathcal{Z}_{0,2}(s) s d s \|_{L^2(B_R\backslash B_1)}
		\lesssim
		\|\mathcal{Z}_{0,1}  \|_{L^2(B_R\backslash B_1)}
		\| w_0 \|_{L^2(B_R\backslash B_1)}
		\| \mathcal{Z}_{0,2} \|_{L^2(B_R\backslash B_1)}
		\lesssim
		R^2 (\ln R)^{\frac 12} ,
	\end{equation*}
	\begin{equation*}
		\| A_{R,0} \mathcal{Z}_{0,1}(\rho) \|_{L^2(B_R\backslash B_1)}
		\lesssim
		\lambda_{R,0}
		R^2 \ln R.
	\end{equation*}
	In sum, when $R$ is large, we have $
	1 = \| w_0 \|_{L^2(B_R)} \lesssim
	\lambda_{R,0}
	R^2 \ln R$.
	
	On the other hand, when $R$ is large,
	$\| \eta_{\frac R2} \mathcal{Z}_{0,1} \|_{L^2(B_R)}^2\sim \ln R$,
	\begin{equation*}
		\begin{aligned}
			&
			Q_{R,0}(\eta_{\frac R2} \mathcal{Z}_{0,1} ,\eta_{\frac R2} \mathcal{Z}_{0,1} )
			\sim
			\bigg( \int_{0}^{\frac R2} +\int_{\frac R 2}^R
			\bigg)
			\bigg[
			(\pp_\rho (\eta_{\frac R2} \mathcal{Z}_{0,1}) )^2
			+
			\frac{  \rho^4 -6 \rho^2 + 1 }{(\rho^2+1)^2} \frac{(\eta_{\frac R2} \mathcal{Z}_{0,1})^2}{\rho^2}
			\bigg]
			\rho
			d \rho
			\\
			= \ &
			-
			\int_{\frac R2}^\infty
			\left[
			(\pp_\rho  \mathcal{Z}_{0,1} )^2
			+
			\frac{  \rho^4 -6 \rho^2 + 1 }{(\rho^2+1)^2} \frac{( \mathcal{Z}_{0,1})^2}{\rho^2}
			\right]
			\rho
			d \rho
			+
			\int_{\frac R 2}^R
			\bigg[
			(\pp_\rho (\eta_{\frac R2} \mathcal{Z}_{0,1}) )^2
			+
			\frac{  \rho^4 -6 \rho^2 + 1 }{(\rho^2+1)^2} \frac{(\eta_{\frac R2} \mathcal{Z}_{0,1})^2}{\rho^2}
			\bigg]
			\rho
			d \rho
			\sim
			R^{-2},
		\end{aligned}
	\end{equation*}
	where we used $\mathcal{L}_0 \mathcal{Z}_{0,1}=0$. Then we have
	$ \lambda_{R,0}
	\lesssim
	( R^2 \ln R )^{-1}  $.
	
	For $k=1$, similarly,
	\begin{equation*}
		\mathcal{L}_1 w_1 =
		- \lambda_{R,1} w_1
		\mbox{ \ in \ } B_R,
		\quad
		w_1 = 0 \mbox{ \ on \ } \pp B_R,
	\end{equation*}
	\begin{equation*}
		w_1(\rho) =
		\mathcal{Z}_{1,2}(\rho) \int_0^\rho (- \lambda_{R,1} w_1(s) ) \mathcal{Z}_{1,1}(s) s d s
		+
		\mathcal{Z}_{1,1}(\rho) \int_\rho^{R} (- \lambda_{R,1} w_1(s) ) \mathcal{Z}_{1,2}(s) s d s
		-
		A_{R,1} \mathcal{Z}_{1,1}(\rho),
	\end{equation*}
	where $
	A_{R,1} = ( \mathcal{Z}_{1,1}(R) )^{-1} \mathcal{Z}_{1,2}(R) \int_0^{R} (- \lambda_{R,1} w_1(s) )
	\mathcal{Z}_{1,1}(s)
	s d s $. For $R$ large, we have  $ 1 = \| w_1 \|_{L^2(B_R)} \lesssim
	\lambda_{R,1}
	R^4 $, $ \| \eta_{\frac R2} \mathcal{Z}_{1,1} \|_{L^2(B_R)}^2\sim 1 $, $ Q_{R,1}(\eta_{\frac R2} \mathcal{Z}_{1,1} ,\eta_{\frac R2} \mathcal{Z}_{1,1} )
	\sim
	R^{-4} $ by $\mathcal{L}_1 \mathcal{Z}_{1,1}=0$, and then $\lambda_{R,1}
	\lesssim
	R^{-4}  $.

	For $k=-1$, similarly,
	\begin{equation*}
		\mathcal{L}_{-1} w_{-1} =
		- \lambda_{R,-1} w_{-1}
		\mbox{ \ in \ } B_{R},
		\quad
		w_{-1} = 0
		\mbox{ \ on \ } \pp B_{R},
	\end{equation*}
	\begin{equation*}
		w_{-1}(\rho) =
		\mathcal{Z}_{-1,2}(\rho) \int_0^\rho (- \lambda_{R,-1} w_{-1}(s) ) \mathcal{Z}_{-1,1}(s) s d s
		+
		\mathcal{Z}_{-1,1}(\rho) \int_\rho^{R} (- \lambda_{R,-1} w_{-1}(s) ) \mathcal{Z}_{-1,2}(s) s d s
		-
		A_{R,-1} \mathcal{Z}_{-1,1}(\rho),
	\end{equation*}
	where $
	A_{R,-1} = ( \mathcal{Z}_{-1,1}(R) )^{-1} \mathcal{Z}_{-1,2}(R) \int_0^{R} (- \lambda_{R,-1} w_{-1}(s) )
	\mathcal{Z}_{-1,1}(s)
	s d s $. For $R$ large, we have $1 = \| w_{-1} \|_{L^2(B_R)} \lesssim
	\lambda_{R,-1}
	R^2 \ln R $.

	For $|k|\ge 2$, $
	Q_{R,k}( f , f)
	\gtrsim
	2\pi |k|^2
	\int_{0}^R
	\frac{ |f| ^2}{\rho^2}
	\rho
	d \rho
	\ge
	|k|^2 R^{-2} \|f\|_{L^2(B_R)}^2 $.
\end{proof}

\begin{lemma}\label{energy est}
	Consider
	\begin{equation}\label{phi-k eq}
		\begin{cases}
			\pp_{\tau} \phi_k =	(a-ib)\mathcal{L}_k  \phi_k + h(\rho,\tau)
			\mbox{ \ in \ } \mathcal{D}_R,
			\\
			\phi_k = 0
			\mbox{ \ on \ } \pp\mathcal{D}_R,
			\quad
			\phi_k(\cdot,\tau_0) = 0
			\mbox{ \ in \ } B_{R(\tau_0)}
		\end{cases}
	\end{equation}
	with $R$ given in \eqref{nu-assump}. Denote $\Phi_k(y,\tau) = (\phi_k(\rho, \tau) e^{ik\theta})_{\mathbb{C}^{-1}}$. Then \eqref{phi-k eq} is equivalent to
	\begin{equation}\label{Phi-k-eq}
		\begin{cases}
			\pp_{\tau} \Phi_k =	(a-bW\wedge)( L_{\rm{in}} \Phi_k) + (h(\rho,\tau) e^{ik\theta} )_{\mathbb{C}^{-1}}
			\mbox{ \ in \ } \mathcal{D}_R,
			\\
			\Phi_k = 0
			\mbox{ \ on \ } \pp\mathcal{D}_R,
			\quad
			\Phi_k(\cdot,\tau_0) = 0
			\mbox{ \ in \ } B_{R(\tau_0)}.
		\end{cases}
	\end{equation}
	
	Suppose that $\| h(\cdot,\tau) \|_{L^2(B_R)}^2 \lesssim g(\tau)$,
	\begin{equation}\label{gassump}
		\int_{\tau_0}^\tau
		e^{c\int^s \tilde{\lambda}_{R,k}(z) dz }
		(\tilde{\lambda}_{R,k}(s) )^{-1}
		g(s) d s
		\lesssim C(c)
		e^{c\int^\tau \tilde{\lambda}_{R,k}(z) dz }
		\min\left\{\tau ,(\tilde{\lambda}_{R,k} )^{-1}
		\right\}
		(\tilde{\lambda}_{R,k} )^{-1}
		g(\tau)
	\end{equation}
	for any fixed constant $c>0$, a constant $C(c)>0$ depending on $c$ and
	\begin{equation*}
		\tilde{\lambda}_{R,0}= (R^2 \ln R)^{-1} ,\quad
		\tilde{\lambda}_{R,1}
		=
		R^{-4} ,
		\quad
		\tilde{\lambda}_{R,-1} =
		( R^2 \ln R)^{-1}   ,
		\quad
		\tilde{\lambda}_{R,k} = R^{-2}
		\mbox{ \ for \ } |k|\ge2,
	\end{equation*}
	then, we have the estimates
	\begin{equation}\label{k ne 1}
		\|\phi_k(\cdot,\tau) \|_{L^2(B_R)}
		\lesssim
		\big[
		\min\big\{\tau ,(\tilde{\lambda}_{R,k} )^{-1}
		\big\}
		(\tilde{\lambda}_{R,k})^{-1}
		g(\tau)
		\big]^{1/2}.
	\end{equation}
	In particular, when $\|h \|_{v,\ell}^{R}<\infty$, then $\| h(\cdot,\tau) \|_{L^2(B_R)}^2 \lesssim \big( \theta_{R,\ell}
	v(\tau) \|h \|_{v,\ell}^{R} \big)^2$, and
	\begin{equation}\label{qd240807-1}
		\begin{aligned}
			&	\|\phi_0(\cdot, \tau) \|_{L^\infty(B_{R})}
			\lesssim
			R^2 \ln R \theta_{R,\ell}
			v(\tau)
			\|h \|_{v,\ell}^{R} ,
			\quad
			\|\phi_1(\cdot,\tau) \|_{L^{\infty}(B_R)}
			\lesssim
			\min\{\tau^{\frac 12}, R^2\} R^2 \theta_{R,\ell}  v(\tau) \|h \|_{v,\ell}^{R},
			\\
			&
			\|\phi_{-1}(\cdot, \tau) \|_{L^\infty(B_{R})}
			\lesssim
			R^2 \ln R
			\theta_{R,\ell}
			v(\tau)  \|h \|_{v,\ell}^{R},
			\quad
			\|\phi_k(\cdot, \tau) \|_{L^\infty(B_{R })}
			\lesssim
			R^2 \theta_{R,\ell}
			v(\tau)
			\|h \|_{v,\ell}^{R}
			\mbox{ \ for \ } |k|\ge 2,
		\end{aligned}
	\end{equation}
	where
	\begin{equation}\label{theta-Rl-def}
		\theta_{R,\ell} :=
		\begin{cases}
			1
			&\mbox{ \ if \ } \ell>1
			\\
			(\ln R)^{\frac 12}
			& \mbox{ \ if \ } \ell=1
			\\
			R^{1-\ell}
			& \mbox{ \ if \ } \ell<1.
		\end{cases}
	\end{equation}
	
	For $k\in \mathbb{Z}$, if $\tilde{\lambda}_{R,k} ,
	g(\tau) \in {\mathbf{AP}}$, and
	either
	Case 1: ${\mathbf{P}}_1[\tilde{\lambda}_{R,k}] >-1$,
	or
	Case 2: ${\mathbf{P} }_1[\tilde{\lambda}_{R,k}] <-1$,  ${\mathbf{P} }_1 [(\tilde{\lambda}_{R,k}(s) )^{-1}
	g(s)] >-1$ holds, then \eqref{gassump} is true.
	
\end{lemma}

\begin{proof}
	
	Lemma \ref{complex-lem} connects \eqref{phi-k eq} and \eqref{Phi-k-eq}. The theory of parabolic systems guarantees the existence and uniqueness of the solution. By continuity argument, it suffices to assume that $h$ is smooth. Multiplying $\bar{\phi}_k$ to \eqref{phi-k eq} and integrating by parts, we have
	\begin{equation*}
		\int_{B_R}\pp_{\tau} \phi_k \bar{\phi}_k
		+
		(a-ib) Q_{R,k}(\phi_k,\phi_k)
		=
		\int_{B_R} h \bar{\phi}_k.
	\end{equation*}
	We take the real part for both parts and use $\phi_k = 0$ on $\pp\mathcal{D}_R$, then
	\begin{equation*}
		\frac 12
		\pp_{\tau}  \int_{B_R}|\phi_k|^2
		+
		a Q_{R,k}(\phi_k,\phi_k)
		=
		\int_{B_R} \mathrm{Re} \left( h \bar{\phi}_k \right) .
	\end{equation*}
	By Lemma \ref{eigenvalue le}, we have
	\begin{equation*}
		\pp_{\tau}  \int_{B_R}|\phi_k|^2
		+
		c\tilde{\lambda}_{R,k}
		\int_{B_R}|\phi_k|^2
		\le
		2 \int_{B_R} |h| |\phi_k|
	\end{equation*}
	for a fixed constant $c>0$. By Young's inequality, we have
	\begin{equation*}
		\pp_{\tau}  \int_{B_R}|\phi_k|^2
		+
		c\tilde{\lambda}_{R,k}
		\int_{B_R}|\phi_k|^2
		\lesssim
		(\tilde{\lambda}_{R,k} )^{-1}
		\int_{B_R} |h|^2
		\lesssim
		(\tilde{\lambda}_{R,k} )^{-1}
		g(\tau) .
	\end{equation*}
	Since $\phi_k(\cdot,\tau_0)=0$ in $B_{R(\tau_0)}$, by \eqref{gassump}, we have \eqref{k ne 1}.

	When $\|h \|_{v,\ell}^{R}<\infty$, then $\| h(\cdot,\tau) \|_{L^2(B_R)}^2 \lesssim \big( \theta_{R,\ell}
	v(\tau) \|h \|_{v,\ell}^{R} \big)^2$. By \eqref{k ne 1}, $\|h \|_{v,\ell}^{R}<\infty$, the application of $W_p^{2,1}$ estimate given in \cite{dong11-higherLp} twice to \eqref{Phi-k-eq}, and the Sobolev embedding theorem, we have
	\begin{equation*}
		\|\phi_k(\cdot,\tau) \|_{L^{\infty}(B_R)}
		\lesssim
		\big\{
		\big[
		\min\big\{\tau ,(\tilde{\lambda}_{R,k} )^{-1}
		\big\}
		(\tilde{\lambda}_{R,k})^{-1}
		\big]^{1/2} \theta_{R,\ell} + 1 \big\} v(\tau) \|h \|_{v,\ell}^{R}.
	\end{equation*}
	Using $R^2 \ln R \le C_1 \tau$ in \eqref{nu-assump}, we have \eqref{qd240807-1}.
\end{proof}

\begin{lemma}\label{convert dim}
	Given an integer $k\ge 0$, consider
	\begin{equation*}
		\partial_{ \tau }\phi =
		(a-ib) \Big(
		\partial_{\rho\rho}\phi +\frac{1}{\rho}\partial_{\rho}\phi -\frac{k^2}{\rho^2}\phi
		\Big) +h(\rho,\tau) ,
		\quad
		\phi(\rho,\tau_0)=g(\rho),
	\end{equation*}
	where $h(\rho,\tau)$, $g(\rho)$ are some functions with sufficient space-time decay. Then using $\Gamma_{d}^{\natural}$ defined in \eqref{Gammad-def} gives a solution $\phi$ of the form
	\begin{equation}\label{A4}
		\phi(\rho, \tau ) =
		\rho^{ k}
		\Gamma_{2k+2}^{\natural}**\big(|y|^{-k }h(|y|,s) \big)(\rho,\tau,\tau_0) + \rho^{ k}
		\Gamma_{2k+2}^{\natural}*\big(|y|^{-k }g(|y|) \big)(\rho,\tau,\tau_0).
	\end{equation}
	
\end{lemma}

\begin{proof}
	Set $
	\phi(\rho,\tau) =\rho^{ k } \psi(\rho,\tau) $.
	Then
	\begin{equation}\label{z7}
		\pp_{\tau}\psi=
		(a-ib)
		\Big(\pp_{\rho\rho}\psi+\frac{2k+1}{\rho}\pp_{ \rho}\psi
		\Big)
		+
		\rho^{-k } h (\rho,\tau),
		\quad \psi(\rho,\tau_0) =  \rho^{-k } g(\rho),
	\end{equation}
	which can be regarded as the heat equation in $\RR^{2k+2}$. Then $\psi$ is given by
	\begin{equation*}
		\psi(\rho,\tau) =
		\Gamma_{2k+2}^{\natural}**\big(|y|^{-k}h(|y|,s)\big)(\rho,\tau,\tau_0) + \Gamma_{2k+2}^{\natural}*\big(|y|^{-k}g(|y|)\big)(\rho,\tau,\tau_0),
	\end{equation*}
	which satisfies \eqref{z7} in weak sense and pointwise sense except at $\rho=0$. \eqref{A4} follows.
\end{proof}

\subsection{Mode $k$, $|k|\ge 2$}\label{hmode-subsec}

In order to analyze the case that the right-hand side of the equation has singularity at $y =0$, given $\mathcal{R} = \mathcal{R}(\tau)$, we introduce the norm
\begin{equation}
	\| h\|_{v,\ell_{1},\ell}^{ \mathcal{R}} := \sup\limits_{(y,\tau)\in \DD_{ \mathcal{R} }} v(\tau)^{-1}
	\big(
	\1_{\{ |y|\le 1 \}} |y|^{\ell_{1}}
	+
	\1_{\{ |y| > 1 \}} |y|^{\ell }
	\big) |h(y,\tau)|.
\end{equation}
We use the notation $\| h\|_{v,\ell_{1},\ell}^{\infty}$  if $\mathcal{R}(\tau)=\infty$. Obviously,
\begin{equation}\label{qd24July09-1}
	\| h\|_{v,0,\ell}^{ \mathcal{R} } \sim \| h\|_{v,\ell}^{ \mathcal{R} };
	\quad
	\| h\|_{v,\ell_{1},\ell}^{ \mathcal{R} } \lesssim \| h\|_{v,\ell}^{ \mathcal{R} }
	\mbox{ \ if \ }  \ell_{1} > 0;
	\quad
	\| h\|_{v,\ell_{1},\ell}^{ \mathcal{R} } \gtrsim \| h\|_{v,\ell}^{ \mathcal{R} }
	\mbox{ \ if \ }
	\ell_{1} < 0.
\end{equation}

\begin{lemma}\label{modek-rough}
	Consider
	\begin{equation*}
		\pp_\tau \Psi_k =\left(a-b W\wedge\right) \left( L_{\rm{in}}  \Psi_k \right)  + H_k
		\mbox{ \ in \ } \DD_{R},
		\quad
		\Psi_k(\cdot,\tau_0) = 0
		\mbox{ \ in \ } B_{R(\tau_0)},
	\end{equation*}
	where $H_k = \big( h_{k} (\rho,\tau) e^{ik\theta} \big)_{\mathbb{C}^{-1}}$, $\| H_k \|_{v,\ell_{1},\ell}^{R}<\infty$. Suppose \eqref{nu-assump}, $\ell_{1} \in [0, 1.9]$, $\ell \in (1,3)$, $\frac{3}{2} + \mathbf{P}_1[v(\tau) R^{4-\ell}]>0$, then
	there exists a solution $\Psi_k = \TT_{kr}^{R}[H_k]$ as a mapping linear in $H_k$ with the estimate
	\begin{equation}\label{Phik-rough-est}  |\Psi_k|
		\lesssim |k|^{-1-(0.05)^2} v(\tau)
		R^{5-\ell} \langle y \rangle^{-3} \ln(|y|+2) \| H_k \|_{v,\ell_{1},\ell}^{R}  \mbox{ \ in \ } \DD_{R},
	\end{equation}
	where ``$\lesssim$'' is independent of $k$.
	Moreover, $\Psi_k\cdot W=0$ and $ e^{-ik\theta}\left(\Psi_k\right)_{\mathbb{C}}$ is radial in space.
	
\end{lemma}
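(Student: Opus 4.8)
The idea is to reduce the vectorial inner problem at mode $k$ to a scalar complex equation via Lemma \ref{complex-lem}, solve that scalar equation by a two-step procedure (first an energy estimate in $L^\infty$, then a Duhamel representation to extract the spatial decay), and finally transfer the decay back to the vectorial picture. First I would use \eqref{complex-2}, \eqref{inner-Fourier-form} to rewrite the equation for $\Psi_k$ as $\pp_\tau \psi_k = (a-ib)\mathcal{L}_k\psi_k + h_k$ in $B_R$ with $\psi_k(\cdot,\tau_0)=0$ and zero boundary data on $\pp B_R$; this uses $\| H_k\|_{v,\ell_1,\ell}^R = \| h_k\|_{v,\ell_1,\ell}^R$ by \eqref{equi-norm-2}. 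The hypothesis $1<\ell<3$, $0\le\ell_1\le 1.9$ guarantees $h_k\in L^2(B_R)$ with $\|h_k(\cdot,\tau)\|_{L^2(B_R)}^2\lesssim v(\tau)^2 (\| h_k\|_{v,\ell_1,\ell}^R)^2 \theta_{R,\ell}^2$ (integrating the weight over $B_R$), and more importantly $\|h_k(\cdot,\tau)\|_{L^1(B_R)}^2\lesssim v(\tau)^2(\|h_k\|_{v,\ell_1,\ell}^R)^2$ since $\ell>1$ makes the $L^1$ norm of the weight bounded independently of $R$.

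\smallskip

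Step 1 (rough $L^\infty$ bound). I would apply Lemma \ref{energy est}, specifically \eqref{L1-control-kge2} for $|k|\ge 2$, with $g(\tau)\sim v^2\theta_{R,\ell}^2(\|h_k\|^R_{v,\ell_1,\ell})^2$ and $l(\tau)\sim v^2(\|h_k\|^R_{v,\ell_1,\ell})^2$, together with $\tilde\lambda_{R,k}=|k|^2R^{-2}$, $\min\{\tau,(\tilde\lambda_{R,k})^{-1}\}\le |k|^{-2}R^2$, to get
\begin{equation*}
\|\psi_k(\cdot,\tau)\|_{L^\infty(B_R)}\lesssim |k|^{-1} R \, v(\tau)\,\|h_k\|^R_{v,\ell_1,\ell}.
\end{equation*}
(The hypotheses on $v,R$ in \eqref{nu-assump} ensure the integral conditions in Lemma \ref{energy est} hold.) One must check that the assumed monotonicity/comparability of $v$ and $R$ lets the weighted-in-time integrals be absorbed, which is where \eqref{nu-assump} is used.

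\smallskip

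Step 2 (gaining spatial decay via Duhamel). With the $L^\infty$ control from Step 1 I would freeze the potential $V_k(\rho)$ at its leading behaviour: $V_k(\rho) = -k^2/\rho^2 + O(\langle\rho\rangle^{-2})$ for large $\rho$ and the subleading terms, times $\psi_k$, are treated as an extra inhomogeneity whose size is controlled by Step 1. Then $\psi_k$ solves $\pp_\tau\psi_k = (a-ib)(\pp_{\rho\rho}+\rho^{-1}\pp_\rho - k^2\rho^{-2})\psi_k + \tilde h_k$ with $\tilde h_k = h_k + (V_k+k^2\rho^{-2})\psi_k$, and by Lemma \ref{convert dim} a solution is $\psi_k = \rho^{|k|}\,\Gamma_{2|k|+2}^\natural ** (\tilde h_k/|y|^{|k|})$. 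Applying Lemma \ref{con-infi-Lem} in dimension $d=2|k|+2$ (note $d>2$ and the relevant decay exponents $\ell,3$ lie strictly between $2$ and $d$) converts the weighted bound on $\tilde h_k/|y|^{|k|}$ into the pointwise decay $\langle y\rangle^{-3}\ln(|y|+2)$ claimed in \eqref{Phik-rough-est}, with an $R^{5-\ell}$ loss coming from: one power of $R$ from the cut domain in Step 1, and the $R^{5-\ell}$ essentially from bounding $\int_0^R$ of the $|z|^{2-\ell}$-type weight appearing in Lemma \ref{con-infi-Lem}. The gradient bound follows by the same representation differentiated once, or by interior parabolic estimates on parabolic cylinders of size $\sim\langle y\rangle$. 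Finally I would record that $\Psi_k=(\psi_k e^{ik\theta})_{\mathbb C^{-1}}$ is tangential ($\Psi_k\cdot W=0$ by construction of $\mathbb C^{-1}$) and $e^{-ik\theta}(\Psi_k)_{\mathbb C}=\psi_k$ is radial, and that $\Psi_k$ depends linearly on $H_k$ because every step (energy solution, Duhamel formula, fixed-point in Step 2) is linear.

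\smallskip

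The main obstacle I anticipate is Step 2: closing the argument requires that the contribution of the potential error $(V_k+k^2\rho^{-2})\psi_k$, fed back through the Duhamel integral, does not destroy the $\langle y\rangle^{-3}\ln(|y|+2)$ rate and, crucially, that all constants are \emph{uniform in $k$} (the $|k|^{-2}$ gain must survive). This forces careful tracking of how $d=2|k|+2$ enters the constants in Lemma \ref{con-infi-Lem} — the exponential factors $e^{-c|y|^2/\tau}$ and the $|y|^{2-d}$ terms are harmless for $|y|\le R\ll\tau^{1/2}$, but one must verify the prefactors there do not grow with $k$. A secondary subtlety is the boundary condition on $\pp B_R$: the energy solution of Step 1 vanishes there, but the Duhamel representation of Step 2 is posed on $\R^2$, so one either works with a cutoff (absorbing the commutator terms, supported near $|y|\sim R$, into $\tilde h_k$) or restricts all estimates to $\mathcal D_{R/2}$ as in the statement, which is the cleaner route and matches the conclusion's domain.
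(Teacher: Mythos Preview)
Your Step 2 cannot produce the $\langle y\rangle^{-3}\ln(|y|+2)$ decay, and the proposal misses the paper's central device. Running Lemma \ref{convert dim} (with the correct exponent $|k+1|$, since $V_k(\rho)\to-(k+1)^2/\rho^2$ at infinity, not $-k^2/\rho^2$) together with Lemma \ref{con-infi-Lem} on a source with spatial decay $\langle z\rangle^{-\ell}$ gives, after the dimensional lift and the multiplication by $\rho^{|k+1|}$, only $|\psi_k|\lesssim v\langle y\rangle^{2-\ell}\|h_k\|$; feeding the Step~1 bound through the potential correction $(V_k+(k+1)^2\rho^{-2})\psi_k\sim|k|\langle\rho\rangle^{-4}\psi_k$ yields at best an extra contribution $\lesssim Rv\langle y\rangle^{-2}\|h_k\|$. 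For $1<\ell<3$ this is two powers short of $\langle y\rangle^{-3}$, and no iteration closes the gap because the direct $h_k$ contribution is stuck at $\langle y\rangle^{2-\ell}$. Your accounting of $R^{5-\ell}$ and of the $|k|^{-2}$ gain is accordingly also off.

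The idea you are missing is an elliptic preconditioning (commutation) step. The paper first solves the \emph{elliptic} problem $(a-ib)\mathcal L_k g_k=h_k$ explicitly via the kernels $\mathcal Z_{k,1},\mathcal Z_{k,2}$, obtaining $\|g_k\|_{v,\ell-2}^\infty\lesssim C(\ell)|k|^{-2}\|h_k\|_{v,\ell_1,\ell}^R$ (this is where the full $|k|^{-2}$ and the $C(\ell)$ blow-up at $\ell\to1,3$ originate). It then solves the parabolic problem $\pp_\tau\phi_k=(a-ib)\mathcal L_k\phi_k+g_k$ for $\phi_k$; applying Lemma \ref{energy est} to the source $g_k$ with decay index $\ell-2<1$ gives the $R^{5-\ell}=R^2\cdot\theta_{R,\ell-2}$ loss. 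Since $\pp_\tau$ and $\mathcal L_k$ commute, $\psi_k:=(a-ib)\mathcal L_k\phi_k$ solves the original equation, and the two derivatives in $\mathcal L_k$ applied to the bound $|\phi_k|\lesssim\langle y\rangle^{-1}\ln(|y|+2)$ (obtained by a $2$D Duhamel plus an integration-by-parts trick with an auxiliary potential $\tilde P_k$ solving $\Delta_{\R^2}\tilde P_k=e^{i(k+1)\theta}\tilde g_k$) furnish the missing $\langle y\rangle^{-2}$. Your uniformity-in-$k$ worry is real for the high-dimensional route you propose; the paper sidesteps it by working with the $2$D fundamental solution on $B_{2R}$ throughout the decay step, so that no constants depend on $d=2|k+1|+2$.
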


\begin{proof} For brevity, denote $\| h_k \| = \| h_k \|_{v,\ell_{1},\ell}^{R}$ in this proof.
	Assume $h_k(\rho,\tau)=0$ in $\DD_{R}^{c}$.
	Consider
	\begin{equation*}
		\left(a-b W\wedge\right) \left( L_{\rm{in}} G_k \right) = H_k, \mbox{ \ where \ } G_k = \big(g_k(\rho,\tau) e^{ik\theta} \big)_{\mathbb{C}^{-1}}.
	\end{equation*}
	By Lemma \ref{complex-lem}, it is equivalent to considering 	
	\begin{equation*}
		\left(a-ib\right) \mathcal{L}_k g_k = h_k ,
	\end{equation*}
	where $g_k$ is given by
	\begin{equation}\label{gk-def}
		g_k(\rho,\tau) =
		(a+ib)
		\begin{cases}
			\mathcal{Z}_{k,2}(\rho)\int_0^{\rho}
			\mathcal{Z}_{k,1}(r)  h_k(r,\tau) r d r
			+
			\mathcal{Z}_{k,1}(\rho) \int_{\rho}^{\infty}
			\mathcal{Z}_{k,2}(r)  h_k(r,\tau) r d r
			&
			\mbox{ \ if \ } k\le -2
			\\
			-\mathcal{Z}_{k,2}(\rho) \int_{\rho}^{\infty}
			\mathcal{Z}_{k,1}(r)  h_k(r,\tau) r d r
			-
			\mathcal{Z}_{k,1}(\rho)\int_0^{\rho}
			\mathcal{Z}_{k,2}(r)  h_k(r,\tau) r d r
			&
			\mbox{ \ if \ } k\ge 2.
		\end{cases}
	\end{equation}
	We will estimate the upper bound of $g_k$.
	For $k\le -2$, $\rho\le 1$,
	\begin{equation}\label{qd240811-1}
		\Big|\mathcal{Z}_{k,2}(\rho)\int_0^{\rho}
		\mathcal{Z}_{k,1}(r)  h_k(r,\tau) r d r \Big|
		\lesssim
		|k|^{-1} \| h_k\|v(\tau) \rho^{k-1}
		\int_0^{\rho}
		r^{2-k-\ell_{1}}   d r
		\sim
		|k|^{-2}
		\| h_k \|v(\tau)
		\rho^{2-\ell_{1}}
	\end{equation}
	for $0\le \ell_{1} \le 4$.
	\begin{equation*}
		\begin{aligned}
			&
			\Big|\mathcal{Z}_{k,1}(\rho) \int_{\rho}^{\infty}
			\mathcal{Z}_{k,2}(r)  h_k(r,\tau) r d r \Big|
			\lesssim
			\rho^{1-k} \|h_k\| v(\tau)
			|k|^{-1} \Big( \int_{\rho}^{1}
			r^{k -\ell_{1} }   d r +  \int_{1}^{\infty}
			r^{k+2-\ell}    d r \Big)
			\\
			\lesssim \ &
			|k|^{-1} \rho^{1-k} \|h_k\| v(\tau)
			\Big[
			\frac{\rho^{k-\ell_{1} +1}}{\ell_{1} -(k+1) }
			+  \frac{1}{\ell-(k+3)} \Big]
			\lesssim  C_1(\ell)
			|k|^{-2}
			\|h_k \| v(\tau)
			\rho^{2-\ell_{1}}
		\end{aligned}
	\end{equation*}
	for $0\le \ell_{1} \le  4$ and $1<\ell\le 5$, where $C_1(\ell) \rightarrow \infty$ as $\ell \rightarrow 1$ due to $k=-2$.
	
	For $k\le -2$, $\rho\ge 1$,
	\begin{equation}\label{qd240811-2}
		\begin{aligned}
			&
			\Big|\mathcal{Z}_{k,2}(\rho)\int_0^{\rho}
			\mathcal{Z}_{k,1}(r)  h_k(r,\tau) r d r \Big|
			\lesssim
			|k|^{-1} \| h_k\|v(\tau) \rho^{k+1}
			\Big(
			\int_0^{1}
			r^{2-k -\ell_{1} }    d r
			+ \int_1^{\rho}   r^{-\ell-k}  d r \Big)
			\\
			\lesssim \ &
			|k|^{-1} \| h_k\|v(\tau) \rho^{k+1}
			\Big(
			\frac{1}{3-k-\ell_{1}}
			+
			\frac{\rho^{1-k-\ell}}{1-k-\ell}
			\Big)
			\lesssim
			C_2(\ell) |k|^{-2} \| h_k \|v(\tau) \rho^{2-\ell}
		\end{aligned}
	\end{equation}
	for $0\le \ell_{1}\le 4$ and $0 \le \ell < 3$, where $C_{2}(\ell) \rightarrow \infty$ as $\ell \rightarrow 3$ due to $k=-2$.
	\begin{equation*}
		\begin{aligned}
			&
			\Big|\mathcal{Z}_{k,1}(\rho) \int_{\rho}^{\infty}
			\mathcal{Z}_{k,2}(r)  h_k(r,\tau) r d r \Big|
			\lesssim
			\rho^{-1-k} \|h_k\| v(\tau) \int_{\rho}^{\infty}
			|k|^{-1} r^{k+2 -\ell }  d r
			\lesssim
			C_3(\ell)	|k|^{-2}
			\|h_k\| v(\tau) \rho^{2-\ell}
		\end{aligned}
	\end{equation*}
	for $1 < \ell \le 4$, where $C_3(\ell) \rightarrow \infty$ as $\ell\rightarrow 1$ due to $k=-2$.
	
	In sum, for $0\le \ell_{1} \le 4$, $1 < l < 3$, $k\le -2$,
	\begin{equation}\label{Zg-1}
		\| g_k \|_{v,\ell_{1}-2,\ell-2}^{\infty} \lesssim C_4(\ell) |k|^{-2} \| h_k \|,
	\end{equation}
	where $C_4(\ell)  \rightarrow \infty$ as $\ell \rightarrow 1 $ or $3$.
	
	For $k\ge 2$, $\rho\le 1$, $0\le \ell_{1}\le 3$, $0\le \ell \le 4$,
	\begin{align*}
			&
			\Big|\mathcal{Z}_{k,2}(\rho) \int_{\rho}^{\infty}
			\mathcal{Z}_{k,1}(r)  h_k(r,\tau) r d r \Big|
			\lesssim
			k^{-1} \| h_k\| v(\tau) \rho^{k-1}
			\Big(
			\int_{1}^{\infty}
			r^{-k-\ell}  d r
			+
			\int_{\rho}^{1}
			r^{2-k-\ell_{1}}  d r
			\Big)
			\\
			= \ &
			k^{-1} \| h_k \| v(\tau) \rho^{k-1}
			\Big(
			\frac{1}{k+\ell-1}
			+
			\frac{1-\rho^{3-k-\ell_{1} }}{3-k-\ell_{1}} \1_{\{ \ell_{1} \ne 3-k \}}
			+
			(-\ln \rho)\1_{\{ \ell_{1} =3-k \}}
			\Big)
			\\
			\lesssim  \  &
			\| h_k\| v(\tau)
			\begin{cases}
				k^{-1}  \rho^{k-1}
				\big(
				k^{-1}
				+
				\frac{ \rho^{3-k-\ell_{1}} }{k+\ell_{1}-3}
				\big)
				\sim
				k^{-2} \rho^{2-\ell_{1}},
				&
				k\ge 4
				\\
				\rho^{2}
				\langle \ln \rho \rangle ,
				&
				k=3, ~\ell_{1} =0
				\\
				\rho^{2}
				\big(
				1
				+
				\frac{\rho^{-\ell_{1}}-1}{\ell_{1}}
				\big)
				\lesssim
				\rho^{2-\ell_{1}}
				\langle \ln \rho \rangle ,
				&
				k=3, ~0<\ell_{1} \le 3
				\\
				\rho
				\big(1
				+
				\frac{1-\rho^{1-\ell_{1}} }{1-\ell_{1}} \big)
				\lesssim
				\rho \langle \ln \rho \rangle,
				&
				k=2, ~0\le \ell_{1} <1
				\\
				\rho
				\langle \ln \rho \rangle,
				&
				k=2,~ \ell_{1} =1
				\\
				\rho
				\big(
				1
				+
				\frac{\rho^{1-\ell_{1}} - 1}{\ell_{1} -1}
				\big)
				\lesssim
				\rho^{2-\ell_{1}} \langle \ln \rho \rangle,
				&
				k=2,~ 1< \ell_{1} \le 3,
			\end{cases}
		\end{align*}
	where we used $\rho^{t} - 1 = t \rho^{c t} \ln \rho $ for some $c\in [0,1]$.

	For the other part,
	\begin{equation*}
		\Big|\mathcal{Z}_{k,1}(\rho)\int_0^{\rho}
		\mathcal{Z}_{k,2}(r)  h_k(r,\tau) r d r \Big|
		\lesssim
		\| h_k \| v(\tau)
		\rho^{1-k} \int_{0}^{\rho}
		k^{-1} r^{k-\ell_{1}}  d r
		\sim
		k^{-2} \| h_k \| v(\tau)
		\rho^{2 -\ell_{1}}
	\end{equation*}
	for $0\le \ell_{1} \le 2.9$.
	For $k\ge 2$, $\rho\ge 1$,
	\begin{equation*}
		\Big|\mathcal{Z}_{k,2}(\rho) \int_{\rho}^{\infty}
		\mathcal{Z}_{k,1}(r)  h_k(r,\tau) r d r \Big|
		\lesssim
		k^{-1} \| h_k \| v(\tau) \rho^{k+1}
		\int_{\rho}^{\infty}
		r^{-k-\ell} d r
		\sim
		k^{-2} \| h_k \| v(\tau) \rho^{2-\ell},
	\end{equation*}
	when $0\le \ell \le 4$.
	\begin{equation*}
		\Big|
		\mathcal{Z}_{k,1}(\rho)\int_0^{\rho}
		\mathcal{Z}_{k,2}(r)  h_k(r,\tau) r d r \Big|
		\lesssim
		k^{-1}
		\| h_k \| v(\tau) \rho^{-1-k}
		\Big(\int_0^{1}
		r^{k-\ell_{1}}  d r + \int_1^{\rho}  r^{k+2 -\ell }  d r \Big)
		\lesssim
		k^{-2} \| h_k \| v(\tau) \rho^{2-\ell},
	\end{equation*}
	when $0\le \ell_{1}\le 2.9$, $0\le \ell \le 4$.
	
	In sum, for $0\le \ell_{1}\le 2.9$, $0\le \ell \le 4$,
	\begin{equation}\label{Zg-2}
		\begin{cases}
			\| g_k \|_{v,\ell_{1}-2,\ell-2}^{\infty} \lesssim  k^{-2} \| h_k \|,
			&
			k\ge 4
			\\
			\| g_3 \|_{v,\epsilon+\ell_{1}-2,\ell-2}^{\infty} \lesssim C(\epsilon) \| h_k \| ,
			&
			k=3
			\\
			\| g_2  \|_{v,\epsilon+ (\ell_{1} -1)_{+} -1,\ell-2}^{\infty} \lesssim C(\epsilon) \| h_k \|,
			&
			k=2,
		\end{cases}
	\end{equation}
	where $\epsilon>0$ could be an arbitrarily small constant, and $C(\epsilon)$ is a constant depending on $\epsilon$.
	
	Combining \eqref{Zg-1}, \eqref{Zg-2}, \eqref{qd24July09-1}, for $0\le \ell_{1} \le 1.9$, $1 < \ell < 3$, we have
	\begin{equation}\label{H-est}
		\| G_k \|_{v,\ell-2}^{\infty}=	\| g_k \|_{v,\ell-2}^{\infty} \lesssim C(\ell ) |k|^{-2}  \| h_k \| \mbox{ \ for \ } |k|\ge 2,
	\end{equation}
	where $C(\ell)  \rightarrow \infty$ as $\ell \rightarrow 1 $ or $3$ and $C(\ell)$ will vary from line to line.
	Consider
	\begin{equation}\label{Phik-eq}
		\begin{cases}
			\pp_{\tau} \Phi_k =
			\left(a-bW\wedge \right) \left( L_{\rm{in}} \Phi_k \right) + G_k
			\mbox{ \ in \ } \DD_{2R},
			\\
			\Phi_k = 0
			\mbox{ \ on \ } \pp\DD_{2R}, \quad
			\Phi_k(\cdot,\tau_0) = 0
			\mbox{ \ in \ } B_{2R(\tau_0)} .
		\end{cases}
	\end{equation}
	To find a solution $\Phi_k$ with the form $\Phi_{ k} = \left(  \phi_{k} (\rho,\tau) e^{ik\theta}  \right)_{\mathbb{C}^{-1}}$, by Lemma \ref{complex-lem}, it suffices to consider
	\begin{equation}\label{phik-eq}
		\begin{cases}
			\pp_{\tau} \phi_k =
			\left(a-ib  \right)  \mathcal{L}_k \phi_k  + g_k
			\mbox{ \ in \ } \DD_{2R},
			\\
			\phi_k = 0
			\mbox{ \ on \ } \pp\DD_{2R}, \quad
			\phi_k(\cdot,\tau_0) = 0
			\mbox{ \ in \ } B_{2R(\tau_0)}.
		\end{cases}
	\end{equation}
	The existence and uniqueness follow by the theory of parabolic systems.
	
	Using ${\mathbf{P}}_1[R] < 1/2$ in  \eqref{nu-assump}, we have ${\mathbf{P}}_1[ \tilde{\lambda}_{R,k} ] >-1$ with $\tilde{\lambda}_{R,k} = R^{-2}$, $|k|\ge 2$. Then \eqref{gassump} is true.
	Thus, applying Lemma \ref{energy est} to \eqref{phik-eq}, we have
	\begin{equation}\label{phik-Linfty}
		\|\phi_k(\cdot,\tau)\|_{L^{\infty}(B_{2 R(\tau)})}
		\lesssim
		v(\tau) R^{5-\ell}
		\|g_k \|_{v,\ell -2}^{\infty}.
	\end{equation}
	
	To improve the spatial decay of $\phi_k$, we reformulate the equation \eqref{phik-eq} into the following form
	\begin{equation}\label{HZ-4}
		\begin{cases}
			\pp_{\tau} \phi_k =
			(a-ib)
			\big[
			\pp_{\rho\rho} \phi_k
			+
			\frac{\pp_{\rho} \phi_k }{\rho}
			-
			\frac{(k+1)^2}{\rho^2} \phi_k
			\big]
			+ \tilde{g}_k
			\mbox{ \ in \ } \mathcal{D}_{2R},
			\\
			\phi_k = 0
			\mbox{ \ on \ } \pp\mathcal{D}_{2R},
			\quad
			\phi_k(\cdot,\tau_0) = 0
			\mbox{ \ in \ } B_{2R(\tau_0)} ,
		\end{cases}
	\end{equation}
	where $\tilde{g}_k(\rho,\tau) :=
	(a-ib)\frac{(4k+8)\rho^2 +4k}{(\rho^2+1)^2}
	\frac{1}{\rho^2}
	\phi_k + g_k$.
	Set $\phi_{*k}(y,\tau) = e^{i(k+1)\theta} \phi_k(\rho,\tau)$. Then \eqref{HZ-4} is equivalent to
	\begin{equation}\label{HZ-5}
		\begin{cases}
			\pp_{\tau} \phi_{*k} =
			(a-ib)
			\Delta_{\mathbb{R}^2} \phi_{*k}
			+ e^{i(k+1)\theta} \tilde{g}_k
			\mbox{ \ in \ } \mathcal{D}_{2R},
			\\
			\phi_{*k} = 0
			\mbox{ \ on \ } \pp\mathcal{D}_{2R},
			\quad
			\phi_{*k}(\cdot,\tau_0) = 0
			\mbox{ \ in \ } B_{2R(\tau_0)} .
		\end{cases}
	\end{equation}
	Complex-valued equation \eqref{HZ-5} can be regarded as a real-valued parabolic system in a varying-time domain in $\mathbb{R}^{2+1}$. Combining \cite[Theorem 3.2]{14-DongKim-varyingtime} and \cite[Lemma 2.26 and Remark 2.27]{Kaj17},
	there exists a fundamental solution  $\Gamma_2(x,y,\tau,s)$ for the homogeneous part of \eqref{HZ-5} with the estimate
	\begin{equation*}
		| \Gamma_2(x,y,\tau,s)| \le C \left( \tau-s  \right)^{-1} e^{-\frac{\kappa|x-y|^2}{\tau-s}}
	\end{equation*}
	for some constants $C, \kappa>0$. Then, by scaling argument, we have
	\begin{equation}\label{H-Z7}
		\left| \nabla_{y} \Gamma_2(x,y,\tau,s)  \right| \lesssim
		\left( \tau-s  \right)^{-\frac{3}{2}} e^{-\frac{\kappa_1 |x-y|^2}{\tau-s}}
	\end{equation}
	for a constant $\kappa_1>0$.
	Then $\phi_{*k}$ can be written as
	\begin{equation}\label{H-Z5}
		\phi_{*k}(y,\tau)=
		\int_{\tau_0}^{\tau} \int_{B_{2R(s)}}
		\Gamma_2(y,z,\tau,s)
		e^{i(k+1)\theta(z)} \tilde{g}_k(|z|,s) dz ds,
	\end{equation}
	where $\theta(z)= \arctan(z_2/z_1)$.
	
	For utilizing the special form of $e^{i(k+1)\theta} \tilde{g}_k$, we set $\tilde{g}_k=0$ in $\mathcal{D}_{2R}^{c}$ and want to find $\tilde{P}_{k}(y,\tau)$ satisfying
	\begin{equation}\label{H-Z6}
		\Delta_{\mathbb{R}^2 } \tilde{P}_{k}(y,\tau) =  e^{i(k+1)\theta} \tilde{g}_k \mbox{ \ in \ } \mathbb{R}^2.
	\end{equation}
	Set $\tilde{P}_{k}(y,\tau) = e^{i(k+1)\theta} \tilde{p}_{k}(\rho,\tau)$. Then
	\begin{equation*}
		\pp_{\rho\rho} \tilde{p}_{k}
		+
		\rho^{-1} \pp_{\rho } \tilde{p}_{k}
		-
		(k+1)^2 \rho^{-2}  \tilde{p}_{k}
		= \tilde{g}_k.
	\end{equation*}
	Set $\tilde{p}_{k} =\rho^{|k+1|} \tilde{p}_{k,1}(\rho,\tau)$. It is equivalent to
	\begin{equation*}
		\pp_{\rho\rho}  \tilde{p}_{k,1} + \left(2|k+1| +1\right) \rho^{-1} \pp_{\rho }  \tilde{p}_{k,1}  = \rho^{-|k+1|} \tilde{g}_k.
	\end{equation*}
	We take $\tilde{p}_{k,1}$ as
	\begin{equation*}
		\tilde{p}_{k,1}(\rho,\tau)
		=
		- \rho^{-2|k+1|}\int_{0}^\rho
		u^{2|k+1|-1} \int_{u}^\infty
		r r^{-|k+1|} \tilde{g}_k(r,\tau) dr du .
	\end{equation*}
	Notice
	\begin{equation*}
		|\tilde{g}_k | \lesssim  \1_{\{ r\le 2R(\tau)\}}
		\big[
		|k| \big( \rho^{-2} \1_{\{ \rho\le 1\}} + \rho^{-4} \1_{\{ \rho > 1\}} \big)
		|\phi_k| +  v(\tau) \langle \rho \rangle^{2-\ell}\|g_k \|_{v,\ell -2}^{\infty} \big].
	\end{equation*}
	Then
	\begin{equation*}
		\begin{aligned}
			&
			\left|	\tilde{p}_{k,1} \right|
			\lesssim
			\rho^{-2|k+1|}\int_{0}^\rho
			u^{2|k+1|-1} \int_{u}^\infty
			r  \1_{\{ r\le 2R(\tau)\}}
			\\
			& \times
			\Big[
			|k| \Big( r^{-2 -|k+1| } \1_{\{ r\le 1\}} + r^{-4 -|k+1| } \1_{\{ r > 1\}} \Big)
			|\phi_k| +  v(\tau)
			\Big(
			r^{ -|k+1| }\1_{\{ r\le 1 \}}
			+
			r^{2-\ell -|k+1| }
			\1_{\{ r > 1 \}}
			\Big)
			\|g_k \|_{v,\ell -2}^{\infty}
			\Big]
			dr du.
		\end{aligned}
	\end{equation*}
	Here, by Lemma \ref{ellptic-compa}, one has
	\begin{align*}
			&
			\rho^{-2|k+1|}\int_{0}^\rho
			u^{2|k+1|-1} \int_{u}^\infty
			\1_{\{ r\le 2R(\tau)\}}
			r    v(\tau)
			\Big(
			r^{ -|k+1| }\1_{\{ r\le 1 \}}
			+
			r^{2-\ell -|k+1| }
			\1_{\{ r > 1 \}}
			\Big)
			\|g_k \|_{v,\ell -2}^{\infty}
			dr du
			\\
			\lesssim \ &
			C(\ell)
			v(\tau) \|g_k \|_{v,\ell -2}^{\infty}
			\begin{cases}
				|k|^{-2}
				\left( \rho^{2-|k+1|} \1_{\{ \rho\le 1 \}} + \rho^{4-\ell-|k+1|} \1_{\{ \rho > 1 \}} \right)
				&
				\mbox{ \ for \ } k\le -4 \mbox{ or } k\ge 2
				\\
				R(\tau) \left( \langle \ln \rho \rangle \1_{\{ \rho\le 1 \}} + \rho^{1-\ell } \1_{\{ \rho > 1 \}}
				\right)
				&
				\mbox{ \ for \ } k = -3
				\\
				R^2(\tau)
				\left(
				\1_{\{ \rho\le 1 \}} + \rho^{1-\ell } \1_{\{ \rho > 1 \}}
				\right)
				&
				\mbox{ \ for \ } k = -2,
			\end{cases}
		\end{align*}
	where we used
	$
	\1_{\{ r\le 2R(\tau)\}}
	r^{2-\ell -|k+1| }
	\1_{\{ r > 1 \}}
	\lesssim
	\begin{cases}
		R(\tau)  r^{-1-\ell}
		&
		\mbox{ \ for \ } k=-3
		\\
		R^2(\tau)  r^{-1-\ell}
		&
		\mbox{ \ for \ } k=-2
	\end{cases}$.
	By \eqref{phik-Linfty} and Lemma \ref{ellptic-compa},
	\begin{align*}
			&
			\rho^{-2|k+1|}\int_{0}^\rho
			u^{2|k+1|-1} \int_{u}^\infty
			r  \1_{\{ r\le 2R(\tau)\}}
			|k| \left( r^{-2 -|k+1| } \1_{\{ r\le 1\}} + r^{-4 -|k+1| } \1_{\{ r > 1\}} \right)
			|\phi_k|
			dr du
			\\
			\lesssim \ & C(\ell)
			v(\tau) R^{5-\ell}(\tau)
			\|g_k \|_{v,\ell -2}^{\infty}
			\begin{cases}
				|k|^{-1}
				\left(
				\rho^{-|k+1|}\1_{\{ \rho\le 1 \}} + \rho^{ -2-|k+1| } \1_{\{ \rho > 1 \}}
				\right)
				&
				\mbox{ \ for \ } k\le -4 \mbox{ or } k\ge 2
				\\
				\rho^{-2}\1_{\{ \rho\le 1 \}} + \rho^{ -4 } \langle \ln \rho \rangle \1_{\{ \rho > 1 \}}
				&
				\mbox{ \ for \ } k = -3
				\\
				\rho^{-1}\1_{\{ \rho\le 1 \}} + \rho^{ -2} \1_{\{ \rho > 1 \}}
				&
				\mbox{ \ for \ } k = -2.
			\end{cases}
		\end{align*}
	$|\pp_{\rho} \tilde{p}_{k,1} |$ can be bounded by \eqref{pp|x|-u} in Lemma \ref{ellptic-compa}  similarly. As a result, for $\rho \le 2R(\tau)$,
	\begin{align*}
			&
			|k|^{-1} \rho |\pp_{\rho} \tilde{p}_{k,1} |
			+  |\tilde{p}_{k,1} |
			\\
			\lesssim \ &
			C(\ell)
			v(\tau) R^{5-\ell}(\tau)
			\|g_k \|_{v,\ell -2}^{\infty}
			\begin{cases}
				|k|^{-1}
				\big(
				\rho^{-|k+1|}\1_{\{ \rho\le 1 \}} + \rho^{ -1-|k+1| } \1_{\{ \rho > 1 \}}
				\big),
				& k\le -4 \mbox{ or } k\ge 2
				\\
				\rho^{-2}\1_{\{ \rho\le 1 \}} + \rho^{ -3 } \1_{\{ \rho > 1 \}} ,
				&
				k = -3
				\\
				\rho^{-1}\1_{\{ \rho\le 1 \}} + \rho^{ -2} \1_{\{ \rho > 1 \}} ,
				&
				k = -2.
			\end{cases}
		\end{align*}
	Notice $\tilde{P}_{k}(y,\tau) = e^{i(k+1)\theta} \rho^{|k+1|} \tilde{p}_{k,1}(\rho,\tau)$. Then
	\begin{equation}\label{qd240808-3}
		\begin{aligned}
			&
			\big|\nabla \tilde{P}_{k} \big| =
			\Big(
			\big|\pp_{\rho } \tilde{P}_{k}  \big|^2
			+ \rho^{-2}  \big|\pp_{\theta} \tilde{P}_{k}  \big|^2 \Big)^{1/2}
			=
			\Big(
			\big| |k+1| \rho^{|k+1|-1} \tilde{p}_{k,1} + \rho^{|k+1|} \pp_{\rho } \tilde{p}_{k,1} \big|^2
			+ \rho^{-2}  |k+1|^2 \big|  \rho^{|k+1|} \tilde{p}_{k,1} \big|^2 \Big)^{1/2}
			\\
			\lesssim \ &
			|k+1| \rho^{|k+1|-1}
			\big( |\tilde{p}_{k,1}| + |k+1|^{-1} \rho  | \pp_{\rho }  \tilde{p}_{k,1} |
			\big)
			\lesssim
			C(\ell)
			v(\tau) R^{5-\ell}(\tau)
			\big(
			\rho^{-1}\1_{\{ \rho\le 1 \}} + \rho^{ -2} \1_{\{ \rho > 1 \}}
			\big)
			\|g_k \|_{v,\ell -2}^{\infty}.
		\end{aligned}
	\end{equation}
	
	By $\phi_{*k}(y,\tau) = e^{i(k+1)\theta} \phi_k(\rho,\tau)$, \eqref{H-Z5} and \eqref{H-Z6}, we have
	\begin{equation*}
		\phi_{k}(y,\tau)=
		- e^{ -i(k+1)\theta}
		\int_{\tau_0}^{\tau} \int_{B_{2R(s)}}
		\nabla_{z}
		\Gamma_2(y,z,\tau,s)
		\cdot
		\nabla \tilde{P}_{k}(z,s) dz ds.
	\end{equation*}
	By \eqref{H-Z7} and \eqref{qd240808-3}, then
	\begin{equation*}
		\left|\phi_{k} \right| \lesssim
		C(\ell) \|g_k \|_{v,\ell -2}^{\infty}
		\int_{\tau_0}^{\tau} \int_{B_{2R(s)}}
		\left( \tau-s  \right)^{-\frac{3}{2}} e^{-\frac{\kappa_1 |y-z|^2}{\tau-s}}
		v(s) R^{5-\ell}(s)
		\big(
		|z|^{-2}\1_{\{ |z|\le 1 \}} + |z|^{ -3} \1_{\{ |z| > 1 \}}
		\big)
		|z|
		dz ds.
	\end{equation*}
	By similar estimates as \cite[Lemma A.1]{infi4d} and Lemma \ref{qd24July10-2-lem} in $\mathbb{R}^3$,
	provided $\frac{3}{2} + \mathbf{P}_1[v(t) R^{4-\ell}]>0$, we have
	\begin{equation}\label{H-Z8}
		\left|\phi_{k} \right| \lesssim
		C(\ell)
		v(\tau) R^{5-\ell} |y|^{-1} \ln(|y|+2) \|g_k \|_{v,\ell -2}^{\infty}
		\mbox{ \ for \ } 1\le |y| < 2 R.
	\end{equation}
	Combining \eqref{phik-Linfty}, \eqref{H-Z8}, and \eqref{H-est}, we have
	\begin{equation}\label{qd240811-6}
		|\Phi_k| = |\phi_k| \lesssim C(\ell) |k|^{-2} v(\tau) R^{5-\ell} \langle y \rangle^{-1} \ln(|y|+2) \| h_k \|
		\mbox{ \ for \ } |y|< 2R.
	\end{equation}

	To get the pointwise estimate of $|D^2 \Phi_k|$, we need to calculate the $\mathsf{DMO_x}$ semi-norm of $G_k = \big(g_k(\rho,\tau) e^{ik\theta} \big)_{\mathbb{C}^{-1}}$. Recall \eqref{def-E1E2} and \eqref{gk-def}. We consider the following typical term in $G_k$.
	\begin{equation}\label{qd240811-4}
		f_1(y,\tau) = f_{11}(\rho,\tau)
		e^{ik\theta} \cos(\theta),
		\  f_{11}(\rho,\tau):=
		\mathcal{Z}_{k,2}(\rho)\int_0^{\rho}
		\mathcal{Z}_{k,1}(r)  h_k(r,\tau) r d r \frac{\rho^2-1}{\rho^2+1}
		\mbox{ \ for \ } k\le -2.
	\end{equation}

	We assume $(x,t) \in Q_2^{-}(0,\tau)$, $r\in (0,1)$, and arbitrary points $(y,s), (z,s) \in Q_r^{-}(x,t)$. Obviously, $s\sim\tau$. If $r\ge |x|/2$, then $|y|, |z| \le |x|+r \le 3r$. By \eqref{qd240811-1}, \eqref{qd240811-2}, and $2-\ell_1>0$, we have
	\begin{equation*}
		| f_1(y,s)- f_1(z,s) | \lesssim |k|^{-2} \|h_k\| v(\tau) r^{2-\ell_1}.
	\end{equation*}
	It is direct to see that
	\begin{equation*}
		\begin{aligned}
			&
			\partial_{\rho} ( f_{11}(\rho,\tau) )
			=
			\mathcal{Z}_{k,2}(\rho)\int_0^{\rho}
			\mathcal{Z}_{k,1}(r)  h_k(r,\tau) r d r
			\frac{4\rho}{(\rho^2+1)^2}
			\\
			& \quad+
			\mathcal{Z}'_{k,2}(\rho)\int_0^{\rho}
			\mathcal{Z}_{k,1}(r)  h_k(r,\tau) r d r \frac{\rho^2-1}{\rho^2+1}
			+
			\mathcal{Z}_{k,2}(\rho)
			\mathcal{Z}_{k,1}(\rho)  h_k(\rho,\tau) \rho \frac{\rho^2-1}{\rho^2+1}.
		\end{aligned}
	\end{equation*}
	By \eqref{qd240811-1}, \eqref{qd240811-2}, and \eqref{scalr-Z}, we have
	\begin{equation}\label{qd240811-9}
		\big| \partial_{\rho} f_{11}(\rho,\tau) \big|
		\lesssim
		|k|^{-1} \|h_k\| v(\tau) \big( \rho^{1-\ell_1} \1_{ \{ 0<\rho\le 1 \} }
		+
		\rho^{1-\ell} \1_{ \{ \rho> 1 \} }
		\big).
	\end{equation}
	Note that
	\begin{equation*}
		\begin{aligned}
			&
			e^{ik\theta(y)} f( \theta(y) ) - e^{ik\theta(z)} f( \theta(z) )
			\\
			= \ &
			e^{ik \theta(s_1 y + (1-s_1)z )}  (\nabla \theta)(s_1 y+(1-s_1)z ) \cdot (y-z)
			\big[ik f\big( \theta(s_1 y+(1-s_1)z) \big) +
			f'\big( \theta(s_1 y+(1-s_1)z) \big)
			\big]
		\end{aligned}
	\end{equation*}
	for some $s_1 \in [0,1]$.
	When $|f(\theta_1)|\le C_1, |f'(\theta_1)| \le C_1$ with a constant $C_1$ for all $\theta_1 \in [-\pi/2,\pi/2]$, then
	\begin{equation}\label{qd240808-9}
		\big| e^{ik\theta(y)} f( \theta(y) ) - e^{ik\theta(z)} f( \theta(z) ) \big|
		\le
		\frac{C_1 (|k|+1)}{| s_1 y+(1-s_1 )z |} |y-z|.
	\end{equation}
	
	If $r < |x|/2$, then $|x|/2 \le |y|, ~|z|\le 3|x|/2$. It follows that
	\begin{align*}
			&
			|f_1(y,s) - f_1(z,s) |
			\\
			= \ &
			\big| \big( f_{11}(|y|,s) - f_{11}(|z|,s) \big) e^{ik\theta(y)} \cos(\theta(y)) +
			f_{11}(|z|,s)
			\big( e^{ik\theta(y)} \cos(\theta(y)) - e^{ik\theta(z)} \cos(\theta(z))
			\big)
			\big|
			\\
			\lesssim \ &
			|k|^{-1} \|h_k\| v(\tau) |x|^{1-\ell_1} \big||y| - |z| \big|
			+
			|k|^{-2} \|h_k\| v(\tau) |x|^{2-\ell_1}
			(|k|+1) |x|^{-1} r
			\\
			\lesssim \ &
			|k|^{-1} \|h_k\| v(\tau) |x|^{1-\ell_1} r
			=
			|k|^{-1} \|h_k\| v(\tau) |x|^{1.9-\ell_1} |x|^{-0.9} r
			\lesssim
			|k|^{-1} \|h_k\| v(\tau) r^{0.1},
		\end{align*}
	where we used $1.9-\ell_1\ge 0$, $|x|\lesssim 1$, and $r < |x|/2$ for the last step.
	
	In sum, $|\omega|_{f_1}^{\mathsf x}(r,Q_2^{-}(0,\tau) ) \lesssim
	|k|^{-1} \|h_k\| v(\tau) r^{0.1} $. The other terms in $G_k(y,\tau)$ could be handled similarly and we deduce
	\begin{equation}\label{qd240811-3}
		|\omega|_{G_k}^{\mathsf x}(r,Q_2^{-}(0,\tau) ) \lesssim
		|k|^{-1} \|h_k\| v(\tau) r^{0.05}.
	\end{equation}
	We will not use $[ G_k]_{\mathsf{|DMO|_x}(Q_2^{-}(0,\tau))} \lesssim
	|k|^{-1} \|h_k\| v(\tau)$ to deduce the pointwise estimate of $|D^2 \Phi_k|$ directly since $\sum_{k\in\mathbb{Z}} |k|^{-1}$ is divergent.
	
	For any $(z,s) \in Q_r^{-}(x,t)$, we have $(x_* + \rho_* z, t_* +\rho_*^2 s) \in Q_{r\rho_*}^{-}(x_* +\rho_* x, t_* +\rho_*^2 t) $. Thus,
	\begin{align*}
			&
			\fint_{Q_r^{-}(x,t)}   \fint_{B_r(x) }
			\big| f(x_* + \rho_* y, t_* +\rho_*^2 s) -
			f(x_* + \rho_* z, t_* +\rho_*^2 s)  \big| dz dy ds
			\\
			= \ &
			\fint_{ Q_{r\rho_*}^{-}(x_* +\rho_* x, t_* +\rho_*^2 t) }  \fint_{ B_{r\rho_*}(x_* +\rho_* x) } \big| f( y_1, s_1) - f(z_1, s_1) \big| dz_1 dy_1 ds_1.
	\end{align*}
	Then, for $t_*>\tau_0$, $|x_*|\le 1$,  $\rho_* \in (0, 1/100]$, by \eqref{qd240811-3}, we have
	\begin{equation*}
		|\omega|_{G_k(x_* + \rho_* y, t_* +\rho_*^2 s)}^{\mathsf x}(r,Q_2^{-}(0) ) \lesssim
		|k|^{-1} \|h_k\| v(t_*) \rho_*^{0.05} r^{0.05}.
	\end{equation*}
	Applying Proposition \ref{scaling-prop-0722} to \eqref{Phik-eq} with $\rho_* = 1/(100 |k|^{0.05})$, we get
	\begin{equation}\label{qd240811-7}
		|D^2 \Phi_k(x_*,t_*)| \lesssim C(\ell) |k|^{-1- (0.05)^2} v(t_*) R^{5-\ell}(t_*) \|h_k\| \mbox{ \ for \ }  t_*>\tau_0, \ |x_*|\le 1.
	\end{equation}
	
	Given $t_*>\tau_0$, $1 \le |x_*| \le 3R(t_*)/2$, $\rho_* \le |x_*|/100$, for any $(x,t) \in Q_2^{-}(0)$, $r\in (0,1)$, and $(y,s), (z,s) \in Q_r^{-}(x,t)$, we have $|x_*+ \rho_*  y| \sim |x_*+ \rho_*  z| \sim |x_*|$, $t_* + \rho_*^2 s\sim t_*$. By \eqref{qd240811-6}, \eqref{H-est}, then
	\begin{equation*}
		\begin{aligned}
			&
			\rho_*^{-2}
			\| \Phi_k(x_*+ \rho_*  z , t_* + \rho_*^2 s) \|_{L^{\infty}(Q_2^{-}(0) ) }
			\lesssim C(\ell) \rho_*^{-2} |k|^{-2} v(t_*) R^{5-\ell}(t_*) \langle x_* \rangle^{-1} \ln(|x_*|+2)  \| h_k \|,
			\\
			&
			\| G_k(x_*+ \rho_*  z , t_* + \rho_*^2 s) \|_{L^{\infty}(Q_2^{-}(0) ) }
			\lesssim C(\ell)
			|k|^{-2} v(t_*) \langle x_* \rangle^{2-\ell}   \| h_k \|.
		\end{aligned}
	\end{equation*}
	To estimate $ [ G_k(x_*+ \rho_*  z , t_* + \rho_*^2 s) ]_{\mathsf{|DMO|_x}(Q_2^{-}(0))}$, we still use the representative term $f_1$ in \eqref{qd240811-4} to show the general process of analysis. By \eqref{qd240811-9}, and \eqref{qd240811-2}, \eqref{qd240808-9}, we have
	\begin{equation*}
		\begin{aligned}
			&
			\big| f_1(x_*+ \rho_*  y , t_* + \rho_*^2 s) - f_1(x_*+ \rho_*  z , t_* + \rho_*^2 s) \big|
			\\
			= \ & \big| \big[ f_{11}(|x_*+ \rho_*  y|,t_* + \rho_*^2 s) -
			f_{11}(|x_*+ \rho_*  z|,t_* + \rho_*^2 s )
			\big]
			e^{ik\theta( x_*+ \rho_*  y )}
			\cos( \theta( x_*+ \rho_*  y ) )
			\\
			&
			+
			f_{11}(|x_*+ \rho_*  z|,t_* + \rho_*^2 s )
			\big[
			e^{ik\theta( x_*+ \rho_*  y )}
			\cos( \theta( x_*+ \rho_*  y ) )
			- e^{ik\theta(x_*+ \rho_*  z)}
			\cos( \theta(x_*+ \rho_*  z) ) \big] \big|
			\\
			\lesssim \ &
			|k|^{-1} \| h_k\| v(t_*) |x_*|^{1-\ell}
			\rho_* r.
		\end{aligned}
	\end{equation*}
	It follows that $[ f_1(x_*+ \rho_*  z , t_* + \rho_*^2 s) ]_{\mathsf{|DMO|_x}(Q_2^{-}(0))}
	\lesssim
	|k|^{-1} \| h_k\| v(t_*) |x_*|^{1-\ell}
	\rho_*$. The other terms in $G_k$ can be handled similarly. Then
	\begin{equation*}
		[ G_k(x_*+ \rho_*  z , t_* + \rho_*^2 s) ]_{\mathsf{|DMO|_x}(Q_2^{-}(0))}
		\lesssim
		|k|^{-1} \| h_k\| v(t_*) |x_*|^{1-\ell}
		\rho_*.
	\end{equation*}
	Applying Proposition \ref{scaling-prop-0722} to \eqref{Phik-eq} with $\rho_* = |x_*|/(100 |k|^{\frac{1}{3}})$, we have
	\begin{equation}\label{qd240811-8}
		| (D^2 \Phi_k)(x_*, t_*)|
		\lesssim C(\ell)
		|k|^{-\frac{4}{3}}
		v(t_*) R^{5-\ell}(t_*) \langle x_* \rangle^{-3} \ln(|x_*|+2)  \| h_k\| \mbox{ \ for \ } t_*>\tau_0, \ 1 \le |x_*| \le 3R(t_*)/2.
	\end{equation}

	Combining \eqref{qd240811-6}, \eqref{qd240811-7}, \eqref{qd240811-8}, and the interpolation inequality, we have
	\begin{equation}\label{Phik-est}
		\langle y\rangle^2 |D^2 \Phi_k| + \langle y\rangle |D \Phi_k| + 	|\Phi_k| \lesssim   C(\ell) |k|^{-1-(0.05)^2} v(\tau) R^{5-\ell} \langle y \rangle^{-1} \ln(|y|+2)  \| h_k\|  \mbox{ \ in \ } \mathcal{D}_{3R/2}.
	\end{equation}
	We take $\Psi_k = \left(a-bW\wedge \right) \left( L_{\rm{in}} \Phi_k \right)$, which is the exact strong solution we look for. \eqref{Phik-est} deduces \eqref{Phik-rough-est}. Recalling $\Phi_{ k} = \left(  \phi_{k} (\rho,\tau) e^{ik\theta}  \right)_{\mathbb{C}^{-1}}$ and applying Lemma \ref{complex-lem}, we have $\left(\Psi_k\right)_{\mathbb{C}} = e^{ik\theta} (a-ib)\mathcal{L}_k \phi_k (\rho,\tau)$.
\end{proof}

Since the $\Psi_k$ given in Lemma \ref{modek-rough} loses some power of $R$ when $|y|$ is small, we will construct $\Psi_k$ with a better estimate by another gluing procedure.

\begin{prop}\label{modek-prop}
	Consider
	\begin{equation*}
		\pp_\tau \Psi_k =\left(a-b W\wedge\right) \left( L_{\rm{in}}  \Psi_k \right)  + H_k
		\mbox{ \ in \ } \DD_{R},
		\quad
		\Psi_k(\cdot,\tau_0) =0 \mbox{ \ in \ } B_{R(\tau_0)},
	\end{equation*}
	where $H_k = \big( h_{k} (\rho,\tau) e^{ik\theta} \big)_{\mathbb{C}^{-1}}$, $\| H_k \|_{v,\ell}^{R} <\infty$. Suppose \eqref{nu-assump}, $ \ell \in (1,3)$, $\mathbf{P}_1[v(\tau)] >-1$, then there exists a solution $\Psi_k = \TT_k^{R}[H_k]$ as a mapping linear in $H_k$ with the estimate
	\begin{equation*}
		|\Psi_{k} |
		\lesssim |k|^{-1-(0.05)^2} v(\tau)
		\| H_k \|_{v,\ell}^{R}
		\begin{cases}
			\langle y \rangle^{2-\ell}
			& \mbox{ \ if \ } \ell \in (1,2) \cup (2,3)
			\\
			\ln(|y|+2) &  \mbox{ \ if \ }\ell = 2
		\end{cases}
		\mbox{ \ in \ } \DD_{R},
	\end{equation*}
	where ``$\lesssim$'' is independent of $k$. Moreover, $\Psi_k\cdot W=0$ and $e^{-ik\theta} \left( \Psi_k \right)_{\mathbb{C}} $ is radial in space.
	
\end{prop}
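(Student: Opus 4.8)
The plan is to upgrade the rough bound from Lemma~\ref{modek-rough} by a re-gluing argument, exactly as announced in Step~3 of the sketch of the construction. Starting from $\Psi_k^{(0)} := \mathcal{T}_{kr}^R[H_k]$, which satisfies the deteriorated estimate $\langle y\rangle|\nabla\Psi_k^{(0)}| + |\Psi_k^{(0)}| \lesssim C(\ell)|k|^{-2}\|H_k\|_{v,\ell}^R\, v(\tau) R^{5-\ell}(\tau)\langle y\rangle^{-3}\ln(|y|+2)$ in $\mathcal{D}_{R/2}$, I would look for the correction in the innermost region. Concretely, cut off $\Psi_k^{(0)}$ at scale $R_0 \ll R$ via $\eta(|y|/R_0)$ and write $\Psi_k = \eta(|y|/R_0)\Psi_k^{(0)} + \Psi_k^{(1)}$, so that $\Psi_k^{(1)}$ solves an inner equation on $\mathcal{D}_{R/2}$ whose right-hand side is $(1-\eta(\cdot/R_0))H_k$ together with the commutator terms $[\left(a-bW\wedge\right)L_{\rm in}, \eta(\cdot/R_0)]\Psi_k^{(0)}$ and the time-derivative term $-\Psi_k^{(0)}\partial_\tau\eta(\cdot/R_0)$. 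The point is that all of these are supported in the annulus $\{R_0 \le |y| \le 2R_0\}$ (except $(1-\eta)H_k$, which is supported in $\{|y|\ge R_0\}$), so the new forcing is small in the region where the old solution was bad: on the annulus $\langle y\rangle^{-3}\ln\langle y\rangle \sim R_0^{-3}\ln R_0$, and using $R_0'=O(\tau^{-1}R_0)$, $R'=O(\tau^{-1}R)$ and $\tilde\lambda_{R/2,k}^{-1}\sim |k|^{-2}R^2$, one gains a factor $R_0^{-\epsilon}R^{\epsilon-2}$-type smallness relative to the target bound $v(\tau)\langle y\rangle^{2-\ell}$.

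The second stage is to solve this new inner problem for $\Psi_k^{(1)}$ using the same machinery as in Lemma~\ref{modek-rough}: first produce an elliptic right-inverse $G_k^{(1)}$ via the explicit kernels $\mathcal{Z}_{k,1},\mathcal{Z}_{k,2}$ of $\mathcal{L}_k$ (controlling $\|G_k^{(1)}\|_{v,\ell-2}^\infty$ in terms of the annular forcing by the same ODE estimates, using $|k|\ge 2$ to extract the $|k|^{-2}$ gain), then solve $\partial_\tau\Phi_k^{(1)} = (a-bW\wedge)L_{\rm in}\Phi_k^{(1)} + G_k^{(1)}$ on $\mathcal{D}_{2R}$ by the energy estimate of Lemma~\ref{energy est} together with the $\mathbb{R}^{2k+2}$-to-$\mathbb{R}^2$ reduction of Lemma~\ref{convert dim} and the convolution bound Lemma~\ref{con-infi-Lem} / Lemma~\ref{modek-rough}'s fundamental-solution argument to upgrade the $L^\infty$ bound to the pointwise decay $\langle y\rangle^{2-\ell}$; finally set $\Psi_k^{(1)} = (a-bW\wedge)L_{\rm in}\Phi_k^{(1)}$, apply $\mathsf{DMO_x}$ parabolic regularity (\cite[Theorem~1.2]{dong21-C0-para}) plus scaling for the derivative bound, and then $\Psi_k = \eta(|y|/R_0)\Psi_k^{(0)} + \Psi_k^{(1)}$ satisfies the claimed estimate $\langle y\rangle|\nabla\Psi_k| + |\Psi_k| \lesssim C(\ell)|k|^{-2}v(\tau)\langle y\rangle^{2-\ell}\|H_k\|_{v,\ell}^R$ on $\mathcal{D}_R$ after one or two iterations of this re-gluing (since each iteration gains a fixed power of $R_0/R$, a geometric series closes). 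The radial structure of $e^{-ik\theta}(\Psi_k)_{\mathbb{C}}$ and the constraint $\Psi_k\cdot W = 0$ are preserved at every step because $\mathcal{L}_{\rm in}$ acts diagonally on Fourier modes (Lemma~\ref{complex-lem}) and $\eta(|y|/R_0)$ is radial, and $\mathcal{T}_k^R$ is linear because every operation — the elliptic right-inverse, the Duhamel/energy solution operator, the cutoff multiplication — is linear in the data.

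The main obstacle I anticipate is bookkeeping the loss of $R(\tau)$-powers through the re-gluing so that the gain from localizing at scale $R_0$ genuinely beats it: the rough solution carries a factor $R^{5-\ell}(\tau)$, the energy estimate contributes another $\tilde\lambda_{R/2,k}^{-1}\sim |k|^{-2}R^2$ (and $\min\{\tau,|k|^{-2}R^2\}$), the commutator forcing has size $\sim R_0^{-3-\ell}$-ish but is supported only in an $O(R_0^2)$-measure annulus, and the elliptic inversion on the annulus gains powers depending delicately on $\ell\in(1,3)$ — so one must check that, after inversion and the $\mathbb{R}^{2k+2}$ convolution, the net exponent of $R_0/R$ is strictly negative uniformly for $\ell$ in a compact subinterval of $(1,3)$, which is what forces the choices $1\ll R_0(\tau)\ll R(\tau)\ll\tau^{1/2}$ and the $R_0'=O(\tau^{-1}R_0)$ regularity. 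A secondary technical point is that the time-derivative commutator $\Psi_k^{(0)}\partial_\tau\eta(|y|/R_0)$ and the gradient commutator $\nabla\eta(|y|/R_0)\cdot\nabla\Psi_k^{(0)}$ require the gradient bound on $\Psi_k^{(0)}$ from Lemma~\ref{modek-rough} and the assumed $R_0'=O(\tau^{-1}R_0)$ to land in the right weighted class; these are routine once the exponent accounting is in place, so the real work is the first obstacle.
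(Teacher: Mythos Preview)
Your proposal has a genuine gap: the iterative additive-correction scheme you describe does not converge. If you start from $\Psi_k^{(0)}=\mathcal T_{kr}^R[H_k]$ and write $\Psi_k=\eta(|y|/R_0)\Psi_k^{(0)}+\Psi_k^{(1)}$, then the commutator forcing for $\Psi_k^{(1)}$, supported on the annulus $\{R_0\le|y|\le 2R_0\}$, has size
\[
\bigl|[\,(a-bW\wedge)L_{\rm in},\eta_{R_0}\,]\Psi_k^{(0)}\bigr|
\ \lesssim\ C(\ell)\,|k|^{-2}\|H_k\|_{v,\ell}^R\,v(\tau)\,R^{5-\ell}(\tau)\,R_0^{-5}\ln R_0,
\]
so its $\|\cdot\|_{v,\ell}^R$-norm is of order $|k|^{-2}(R/R_0)^{5-\ell}\ln R_0\cdot\|H_k\|_{v,\ell}^R$. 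Since $1<\ell<3$ and $R_0\ll R$, the factor $(R/R_0)^{5-\ell}$ is \emph{large}, not small; feeding this back through the rough lemma on $\mathcal D_{2R}$ (which again loses $R^{5-\ell}$) makes things worse, not better. Equally fatal: the piece $\eta(|y|/R_0)\Psi_k^{(0)}$ itself still carries the bad factor $R^{5-\ell}(\tau)$ in the inner region $|y|\le R_0$, and your scheme offers no cancellation mechanism to remove it.

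The paper's argument is structurally different. It does \emph{not} begin with the rough solution on the full domain. Instead it writes $\psi_k=\eta_{R_0}\psi_{i,k}+\psi_{o,k}$ from the outset and solves a \emph{coupled} system by contraction: the inner piece $\psi_{i,k}$ lives on $\mathcal D_{2R_0}$ with $R_0$ a \emph{fixed} large constant (independent of $\tau,\tau_0,k$), so applying $\mathcal T_{kr}^{2R_0}$ there loses only the constant $R_0^{5-\ell}$; the outer piece $\psi_{o,k}$ solves on all of $\mathbb R^2$ the \emph{free} equation $\partial_\tau\psi_{o,k}=(a-ib)(\partial_{\rho\rho}+\rho^{-1}\partial_\rho-(k+1)^2\rho^{-2})\psi_{o,k}+J$, handled by Lemma~\ref{convert dim} as a heat equation in $\mathbb R^{2|k+1|+2}$, which yields the target decay $\langle y\rangle^{2-\ell}$ with no $R$-loss. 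The contraction closes because the outer-to-inner coupling $(V_k+(k+1)^2/\rho^2)\psi_{o,k}=O(|k|\langle\rho\rangle^{-4})\psi_{o,k}$ gains $R_0^{-2}$, and the inner-to-outer commutator, now measured against the fixed-constant rough bound $R_0^{5-\ell}\langle\rho\rangle^{-3}\ln R_0$, gains $R_0^{\tilde\ell-\ell}\ln R_0$ for any $1<\tilde\ell<\ell$. The point you missed is that the rough lemma must be applied on a \emph{bounded} inner region and paired with an outer solver (the higher-dimensional heat kernel) that incurs no polynomial $R$-loss.
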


\begin{remark}
	The restriction $\mathbf{P}_1[v(\tau)] >-1$ is not optimal. To improve the lower bound of $\mathbf{P}_1[v(\tau)]$, we need to modify Lemma \ref{ellptic-compa} to catch the property of $\1_{ \{ |y|\le 4R \} }$.
	
\end{remark}

\begin{proof}
	Denote $\|h_k\| = \| h_k \|_{v,\ell}^{R} $ and
	take $h_k =0$ in $\DD_R^c$. In order to find a solution $\Psi_k$ with the form $\Psi_k = \big( \psi_k(\rho,\tau) e^{ik \theta}\big)_{\mathbb{C}^{-1}}$,
	by Lemma \ref{complex-lem}, it is equivalent to considering
	\begin{equation}\label{psik-eq}
		\pp_{\tau} \psi_k =	(a-ib)\mathcal{L}_k \psi_k + h_k \mbox{ \ in \ } \mathcal{D}_{R},
		\quad
		\psi_k(\cdot,\tau_0) =0 \mbox{ \ in \ } B_{R(\tau_0)}.
	\end{equation}
	Set $\psi_k = \eta_{R_0}(\rho) \psi_{i,k}(\rho,\tau) + \psi_{o,k}(\rho,\tau) $, where $\eta_{R_0}(\rho) = \eta(\rho/R_0)$ and $R_0$ is a large constant independent of $\tau_0$, $k$. To find a solution for \eqref{psik-eq}, it suffices to  consider the following inner-outer system
	\begin{equation}\label{phio-eq}
		\left\{
		\begin{aligned}
			& \pp_{\tau} \psi_{o,k}
			=
			(a-ib)
			\Big[
			\pp_{\rho\rho} \psi_{o,k}
			+
			\frac {\pp_{\rho} \psi_{o,k}}{\rho}
			-
			\frac{(k+1)^2 }{\rho^2} \psi_{o,k}
			\Big]
			+
			J[\psi_{o,k},\psi_{i,k}]
			\1_{\{ \rho \le 4R \} }
			\mbox{ \ in \ }
			\RR^2 \times (\tau_0,\infty),
			\\
			&
			\psi_{o,k}(\cdot,\tau_0) =0 \mbox{ \ in \ } \mathbb{R}^2,
		\end{aligned}
		\right.
	\end{equation}
	\begin{equation}\label{phii-eq}
		\pp_{\tau} \psi_{i,k}
		=
		(a-ib) \mathcal{L}_k \psi_{i,k}
		+
		K[\psi_{o,k}]
		\mbox{ \ in \ } \DD_{2R_0} ,
		\quad
		\psi_{i,k}(\cdot, \tau_0) =0 \mbox{ \ in \ } B_{2 R(\tau_0)},
	\end{equation}
	where we denote
	\begin{equation*}
		\begin{aligned}
			&	J[\psi_{o,k},\psi_{i,k}] =
			(a-ib)	\left(
			1-\eta_{R_0}
			\right) \tilde{V}_k(\rho)
			\psi_{o,k}
			+
			A_0[\psi_{i,k}] + (1-\eta_{R_0}) h_k,
			\quad
			\tilde{V}_k(\rho) = \frac{(4k+8)\rho^2 +4k}{(\rho^2+1)^2 \rho^2},
			\\
			&	K[\psi_{o,k}] =
			(a-ib) \tilde{V}_k(\rho)
			\psi_{o,k}
			+ h_k
			,
			\quad
			A_{0}[\psi_{i,k}] =
			(a-ib)
			\Big[
			\Big( \pp_{\rho\rho}\eta_{R_0}
			+ \frac 1{\rho}
			\pp_{\rho}\eta_{R_0}
			\Big) \psi_{i,k}
			+2 \pp_{\rho} \eta_{R_0} \pp_{\rho} \psi_{i,k}  \Big].
		\end{aligned}
	\end{equation*}
	Set $\Psi_{i,k}(y,\tau) = ( \psi_{i,k} e^{ik\theta} )_{\mathbb{C}^{-1} }$, that is, $\psi_{i,k} = e^{-ik\theta} ( \Psi_{i,k}\cdot E_1 + i \Psi_{i,k}\cdot E_2 )$. By Lemma \ref{complex-lem}, \eqref{phii-eq} is equivalent to
	\begin{equation}\label{Phii-eq}
		\pp_{\tau} \Psi_{i,k}
		=
		\left(a-bW\wedge\right) L_{\rm{in}} \Psi_{i,k}
		+
		\big(	K[\psi_{o,k}] e^{ik\theta} \big)_{\mathbb{C}^{-1} }
		\mbox{ \ in \ } \DD_{2R_0} ,
		\quad
		\Psi_{i,k}(\cdot, \tau_0) =0 \mbox{ \ in \ } B_{2 R(\tau_0)} .
	\end{equation}
	
	The linear theories of \eqref{phio-eq} and \eqref{Phii-eq} are given by Lemma \ref{convert dim} and Lemma \ref{modek-rough}, respectively. We reformulate \eqref{phio-eq} and \eqref{Phii-eq} into the following form
	\begin{equation}\label{H-z3}
		\begin{aligned}
			\psi_{o,k}(\rho,\tau) = \ &
			\rho^{|k+1|}
			\Big[
			\Gamma_{2|k+1|+2}^{\natural}**\Big(
			|z|^{-|k+1|}
			J[\psi_{o,k},\psi_{i,k}] \1_{\{ |z| \le 4R(s) \} }   \Big)
			\Big](\rho,\tau,\tau_0),
			\\
			\Psi_{i,k}(y,\tau) = \ &
			\TT_{kr}^{2R_0} \big[ \big(	K[\psi_{o,k}] e^{ik\theta} \big)_{\mathbb{C}^{-1} } \big].
		\end{aligned}
	\end{equation}
	We will solve $(\psi_{o,k}, \Psi_{i,k})$ for \eqref{H-z3} by the contraction mapping theorem.
	Since $\big|  \big( h_k e^{ik\theta}\big)_{\mathbb{C}^{-1} } \big| \le v(\tau) \langle y \rangle^{-\ell} \|h_k\|$, provided $ \ell \in (1,3)$, $\frac{3}{2} + \mathbf{P}_1[v(\tau) ]>0$, by Lemma \ref{modek-rough} and the scaling argument,
	\begin{equation*}
		\big|\TT_{kr}^{2R_{0}} \big[ \big( h_k e^{ik\theta}\big)_{\mathbb{C}^{-1} }\big]  \big| \le
		D_i   w_{i,k,1}(\rho,\tau) \| h_k \|,
		\quad
		\big| \nabla \TT_{kr}^{2R_{0}} \big[ \big( h_k e^{ik\theta}\big)_{\mathbb{C}^{-1} }\big]  \big| \le
		D_i w_{i,k,2}(\rho,\tau) \| h_k \|,
	\end{equation*}
	where $D_{i}\ge 1$ is a large constant  independent of $k$,
	\begin{equation*}
		w_{i,k,1}(\rho,\tau): = |k|^{-1-(0.05)^2} v(\tau) R_0^{5-\ell} \ln R_0 \langle \rho \rangle^{-3},
		\quad
		w_{i,k,2}(\rho,\tau): = v(\tau) R_0^{5-\ell} \ln R_0\big( \rho^{-1} \1_{\{ \rho \le 1\}} + \rho^{-4} \1_{\{ \rho >1 \}}\big).
	\end{equation*}
	Here the weight $\rho^{-1} \1_{\{ \rho \le 1\}}$ is due to the forthcoming estimate \eqref{qd24Aug12-1}. Denote
	\begin{align}
			\B_{i,k } := \ &
			\Big\{
			F(y,\tau) \in C^{1}\big(B_{2R_0} \backslash \{0\} , \mathbb{R}^3 \big) \ \big| \ F(y,\tau) = \big( f(\rho,\tau) e^{ik \theta} \big)_{\mathbb{C}^{-1}}
			\mbox{ \ for some radial  scalar function}
			\nonumber
			\\
			&~ f(\rho,\tau) \mbox{ \ and \ }
			|F(y,\tau)| \le
			2D_i  w_{i,k}(\rho,\tau) \| h_k \|, \
			|\nabla F(y,\tau)| \le 2 D_i w_{i,k,2}(\rho,\tau) \| h_k \|
			\Big\}.
			\label{qd240817-1}
	\end{align}
	For any $\tilde{\Psi}_{i,k}\in \B_{i,k}$, denote $\tilde{\psi}_{i,k} = e^{-ik\theta} ( \tilde{\Psi}_{i,k} \cdot E_1 + i \tilde{\Psi}_{i,k} \cdot E_2 ) $. We will find a solution $\psi_{o,k} = \psi_{o,k}[\tilde{\psi}_{i,k}]$ of \eqref{phio-eq} by the contraction mapping theorem. Let us estimate $J[\psi_{o,k},\tilde{\psi}_{i,k}]$ term by term. By \eqref{Frenet-deri},
	\begin{equation}\label{Sep10-04}
		| \pp_{\rho} \tilde{\psi}_{i,k} | =
		\big| e^{-ik\theta} \big( \tilde{\Psi}_{i,k} \cdot \pp_{\rho} E_1
		+
		E_1 \cdot \pp_{\rho} \tilde{\Psi}_{i,k}
		+
		i \tilde{\Psi}_{i,k} \cdot \pp_{\rho} E_2
		+
		i  E_2
		\cdot \pp_{\rho} \tilde{\Psi}_{i,k}
		\big)
		\big|
		\lesssim
		| \tilde{\Psi}_{i,k} |
		\langle \rho \rangle^{-2}
		+
		|\pp_{\rho} \tilde{\Psi}_{i,k} |.
	\end{equation}
	Since $\tilde{\Psi}_{i,k}\in \B_{i,k}$, for $ \tilde{\ell}< \ell$, we have
	\begin{align}\notag
			&
			|A_{0}[ \tilde{\psi}_{i,k} ]| + |(1-\eta_{R_0}) h_k |
			\lesssim
			D_i \1_{\{ R_0 \le \rho \le 2R_0 \} } v(\tau) R_0^{-\ell} \ln R_0  \| h_k \| +
			\1_{\{ \rho\ge R_0 \}} v(\tau) \rho^{-\ell } \| h_k\|
			\\ \notag
			\lesssim \ &
			D_i \ln R_0 \| h_k\| v(\tau) \rho^{-\ell } \1_{\{ \rho\ge R_0 \}}
			\lesssim
			D_i  R_0^{(\tilde{\ell}-\ell)/2} \| h_k \|   v(\tau)  \rho^{-\tilde{\ell} } \1_{ \{ \rho\ge R_0 \} },
			\\ \label{qd24July13-1}
			&
			\rho^{-|k+1|}
			\big( \big|A_0[\tilde{\psi}_{i,k}] \big| + \big| (1-\eta_{R_0}) h_k \big| \big) \1_{\{ \rho \le 4R \} }
			\lesssim
			D_i R_0^{(\tilde{\ell}-\ell)/2} \| h_k \|
			v(\tau)
			\rho^{-|k+1| -\tilde{\ell} }
			\1_{\{ R_0 \le \rho \le 4R \} }.
		\end{align}
	
	For $k=-3, -2$, provided $\frac{3-\tilde{\ell}}{2} +1 +\mathbf{P}_1[v(\tau)] >0$, $1 < \tilde{\ell}< \min\{\ell,3\}$, by Lemma \ref{qd24July11-1-lem}, we have
	\begin{align}
			&
			\rho^{|k+1|}
			\big| \Gamma_{2|k+1|+2}^{\natural} \big|**\Big\{
			|z |^{-|k+1|}
			\Big[ \big|A_0[\tilde{\psi}_{i,k}] \big| + \big| (1-\eta_{R_0}) h_k \big| \Big] \1_{\{ |z| \le 4R(s) \} } \Big\}
			\nonumber
			\\
			\lesssim \ &  D_i R_0^{(\tilde{\ell}-\ell)/2} \| h_k \| v(\tau) \big(
			\rho^{|k+1|}
			R_0^{2-|k+1|-\tilde{\ell}}\1_{\{\rho \le R_0\}}
			+
			\rho^{2-\tilde{\ell}} \1_{ \{ \rho >R_0 \} }
			\big)
			\nonumber
			\\
			\lesssim \ &  D_i R_0^{(\tilde{\ell}-\ell)/2} \| h_k \| v(\tau) \big(
			\rho \1_{\{\rho \le 1\}}
			+
			\rho^{2-\tilde{\ell}} \1_{ \{ \rho >1 \} }
			\big).
			\label{qd240819-1}
	\end{align}
	The spatial decay rate near $\rho=0$ is restricted by the case $k=-2$.
	
	For $|k+1|$ large, since the direct pointwise estimate of $\big| \Gamma_{2|k+1|+2}^{\natural} \big|**[\cdot]$ will lead to an upper bound with a multiplicity of a constant with exponential growth in $|k+1|$ (with the form $a^{-|k+1|}$), which is a disaster for the convergence of summation. Instead, we search for another strategy. Set $\psi_{o,k}^{*}(y,\tau) = e^{i(k+1)\theta} \psi_{o,k}(\rho,\tau)$. Then
	\begin{equation*}
		\pp_{\tau} \psi_{o,k}^{*}
		=
		(a-ib)
		\Delta_{\mathbb{R}^2} \psi_{o,k}^{*}
		+
		e^{i(k+1)\theta}
		J[\psi_{o,k},\psi_{i,k}]
		\1_{\{ \rho \le 4R \} }
		\mbox{ \ in \ }
		\RR^2 \times (\tau_0,\infty),
		\quad
		\psi_{o,k}^{*}(\cdot,\tau_0) =0 \mbox{ \ in \ } \mathbb{R}^2,
	\end{equation*}
and $\psi_{o,k}^{*}$ is given by
	\begin{equation*}
		\psi_{o,k}^{*}(y,\tau) = \int_{\tau_0}^{\tau} \int_{\mathbb{R}^2} \Gamma_2^{\natural}(y-z,\tau-s)
		e^{i(k+1)\theta(z)}
		J[\psi_{o,k},\psi_{i,k}](|z|,s)
		\1_{\{ |z| \le 4R(s) \} } dz ds.
	\end{equation*}
	Similar to \eqref{H-Z6}, we will find $\tilde{P}_{k}(y,\tau)$ satisfying
	\begin{equation*}
		\Delta_{\mathbb{R}^2 } \tilde{P}_{k}(y,\tau) =  e^{i(k+1)\theta} J[\psi_{o,k},\psi_{i,k}](|y|,\tau)
		\1_{\{ |y| \le 4R(\tau) \} }
		\mbox{ \ in \ } \mathbb{R}^2.
	\end{equation*}
	Set $\tilde{P}_{k}(y,\tau) = e^{i(k+1)\theta} \rho^{|k+1|} \tilde{p}_{k,1}(\rho,\tau)$, where we take $\tilde{p}_{k,1}$ as
	\begin{equation*}
		\tilde{p}_{k,1}(\rho,\tau)
		=
		- \rho^{-2|k+1|}\int_{0}^\rho
		u^{2|k+1|-1} \int_{u}^\infty
		r r^{-|k+1|} J[\psi_{o,k},\psi_{i,k}](r,\tau)
		\1_{\{ r\le 4R(\tau) \} } dr du.
	\end{equation*}
	Then
	$\psi_{o,k}(\rho,\tau)$ in \eqref{H-z3} can be rewritten as $\psi_{o,k}(\rho,\tau) = e^{-i(k+1)\theta} \psi_{o,k}^{*}(y,\tau)$ with
	\begin{align*}
			&
			\psi_{o,k}^{*}(y,\tau) = \int_{\tau_0}^{\tau} \int_{\mathbb{R}^2} \Gamma_2^{\natural}(y-z,\tau-s) \Delta_{\mathbb{R}^2 } \tilde{P}_{k}(z,s) dz ds
			=
			\int_{\tau_0}^{\tau} \int_{\mathbb{R}^2} \Delta_{\mathbb{R}^2,z }\Gamma_2^{\natural}(y-z,\tau-s)  \tilde{P}_{k}(z,s) dz ds
			\\
			= \ &
			\int_{\tau_0}^{\tau}
			(a-ib)^{-1}	[4\pi (\tau-s)]^{-1}
			\int_{\mathbb{R}^2}
			e^{-\frac{|y-z|^2}{4(a-ib)(\tau-s)}}
			\frac{|y-z|^2 - 4(a-ib)(\tau-s)}{4(a-ib)^2(\tau-s)^2}
			e^{i(k+1)\theta(z) } |z|^{|k+1|}
			\tilde{p}_{k,1}(|z|,s) dz ds
			\\
			= \ &
			\int_{\tau_0}^{\tau}
			(a-ib)^{-1}	[4\pi (\tau-s)]^{-1}
			\int_{\mathbb{R}^2}
			\Big[
			e^{-\frac{|y-z|^2}{4(a-ib)(\tau-s)}}
			\frac{|y-z|^2 - 4(a-ib)(\tau-s)}{4(a-ib)^2(\tau-s)^2}
			\\
			&
			-
			e^{-\frac{|z|^2}{4(a-ib)(\tau-s)}}
			\frac{|z|^2 - 4(a-ib)(\tau-s)}{4(a-ib)^2(\tau-s)^2}
			\Big]
			e^{i(k+1)\theta(z) } |z|^{|k+1|}
			\tilde{p}_{k,1}(|z|,s) dz ds,
		\end{align*}
	where we used $\int_0^{2\pi} e^{i(k+1)\theta } d\theta =0$ for the last equality. By the last two equalities above, we have
	\begin{equation}\label{qd24Aug16-1}
		|\psi_{o,k} | \lesssim
		\min\big\{
		F_1[ |\tilde{p}_{k,1}(|y|,\tau) | ],~
		F_2[ |\tilde{p}_{k,1}(|y|,\tau) | ]
		\big\},
	\end{equation}
	where for any $f(y,\tau)$, we denote
	\begin{align*}
			&
			F_1[f] =
			F_1[f](y,\tau) :=
			\int_{\tau_0}^{\tau} (\tau-s)^{-2}
			\int_{\mathbb{R}^2}
			e^{-\frac{a |y-z|^2}{8 (\tau-s)}} |z|^{|k+1|}
			f(z,s) dz ds,
			\\
			&
			F_2[f] =
			F_2[f](y,\tau) := |y|
			\int_0^1
			\int_{\tau_0}^{\tau} (\tau-s)^{-5/2}
			\int_{\mathbb{R}^2}
			e^{-\frac{a |u y-z|^2}{8 (\tau-s)}} |z|^{|k+1|}
			f(z,s) dz ds du.
	\end{align*}
	By \eqref{qd24July13-1}, we have
	\begin{align*}
			& |\tilde{p}_{k,1} |
			\lesssim M_1 + M_2, \mbox{ \ where \ }
			M_1:= \rho^{-2|k+1|}\int_{0}^\rho
			u^{2|k+1|-1} \int_{u}^\infty
			r r^{-|k+1|} \1_{\{ R_0 \le r\le 4R(\tau) \} }  |\tilde{V}_k(r) \psi_{o,k}(r,\tau)|   dr du,
			\\
			& M_2:= \rho^{-2|k+1|}\int_{0}^\rho
			u^{2|k+1|-1} \int_{u}^\infty
			r \1_{\{ R_0 \le r\le 4R(\tau) \} }
			D_i R_0^{(\tilde{\ell}-\ell)/2} \| h_k \|    v(\tau)  r^{-|k+1|-\tilde{\ell} } dr du.
		\end{align*}
	For $k\le -4$ or $k\ge 2$, by Lemma \ref{ellptic-compa} (with $f(x) = |x|^{-|k+1|} \1_{\{ |x|\le 1\}} + |x|^{-|k+1|-\tilde{\ell}} \1_{\{ |x|> 1\}}$), and $\tilde{\ell} \in (1,3)$, we have
	\begin{equation*}
		\rho^{|k+1|}
		M_2
		\lesssim
		D_i R_0^{(\tilde{\ell}-\ell)/2}    \| h_k \|
		|k|^{-2}
		v(\tau)
		\big(
		\1_{\{\rho \le 1\}}
		\rho^2
		+
		\1_{\{\rho > 1\}}
		\rho^{2 -\tilde{\ell}}
		\big).
	\end{equation*}

	For $\tilde{\ell}\in (2,3)$, using $\1_{\{\rho \le 1\}}
	\rho^2
	+
	\1_{\{\rho > 1\}}
	\rho^{2 -\tilde{\ell}} = \rho^2 \big( \1_{\{\rho \le 1\}}
	+
	\1_{\{\rho > 1\}}
	\rho^{-\tilde{\ell}} \big) $, provided $1 + \mathbf{P}_1[v(\tau) ]>0$, by similar convolution estimates in $\mathbb{R}^4$ as Lemma \ref{qd24July11-1-lem} and \cite[Lemma A.2]{infi4d}, we have
	\begin{equation}\label{qd24Aug16-2}
		F_1[M_2] \lesssim
		D_i R_0^{(\tilde{\ell}-\ell)/2}    \| h_k \|
		|k|^{-2} v(\tau) \langle \rho \rangle^{2-\tilde{\ell}}.
	\end{equation}
	
	For $\tilde{\ell}\in (1,3)$, using $\1_{\{\rho \le 1\}}
	\rho^2
	+
	\1_{\{\rho > 1\}}
	\rho^{2 -\tilde{\ell}} = \rho^3 \big( \1_{\{\rho \le 1\}} \rho^{-1}
	+
	\1_{\{\rho > 1\}}
	\rho^{-1-\tilde{\ell}} \big) $, provided $1 + \mathbf{P}_1[v(\tau) ]>0$, by similar convolution estimates in $\mathbb{R}^5$ as Lemma \ref{qd24July11-1-lem} and \cite[Lemmas A.1, A.2]{infi4d}, we have
	\begin{equation}\label{qd24Aug16-3}
		\begin{aligned}
			F_2[M_2] \lesssim \ &
			D_i R_0^{(\tilde{\ell}-\ell)/2}    \| h_k \|
			|k|^{-2} v(\tau) |y|
			\int_0^1
			\big( \1_{\{u|y| \le 1\}} + |u y|^{1-\tilde{\ell}} \1_{\{u |y| > 1\}} \big) du
			\\
			\lesssim \ &
			D_i R_0^{(\tilde{\ell}-\ell)/2}    \| h_k \|
			|k|^{-2} v(\tau)
			\bigg(
			\1_{\{\rho \le 1\}} \rho
			+
			\1_{\{\rho > 1\}}
			\begin{cases}
				\rho^{2-\tilde{\ell}}, & \tilde{\ell} \in (1,2)
				\\
				\langle \ln \rho \rangle,  & \tilde{\ell} =2
				\\
				1, & \tilde{\ell} \in (2,3)
			\end{cases}
			\bigg).
		\end{aligned}
	\end{equation}
	
	By \eqref{qd240819-1}, \eqref{qd24Aug16-1}, \eqref{qd24Aug16-2} and \eqref{qd24Aug16-3}, we solve $\psi_{o,k}$ for $|k|\ge 2$ in the space
	\begin{equation*}
		\begin{aligned}
			&
			\B_{o,k} :=
			\big\{
			f(\rho,\tau) \ | \
			|f(\rho,\tau)| \le D_o D_i
			R_0^{(\tilde{\ell}-\ell)/2} \| h_k \|	w_{o,k}(\rho,\tau)
			\big\},
			\\
			\mbox{ \ where \ }	&
			w_{o,k}(\rho,\tau) := |k|^{-2}
			v(\tau)
			\Big(
			\1_{\{\rho \le 1\}} \rho
			+
			\1_{\{\rho > 1\}}
			\begin{cases}
				\rho^{2-\tilde{\ell}}, & \tilde{\ell} \in (1,2) \cup (2,3)
				\\
				\langle \ln \rho \rangle,  & \tilde{\ell} =2
			\end{cases}
			\Big)
		\end{aligned}
	\end{equation*}
	and $D_o$ is a sufficiently large constant.
	For any $\tilde{\psi}_{o,k} \in \B_{o,k}$,
	\begin{align}
				&
				\big| \1_{\{R_0 \le  \rho \le 4R \} } \tilde{V}_k(\rho)
				\tilde{\psi}_{o,k} \big|
				\lesssim
				|k|^{-1} D_o D_i
				R_0^{(\tilde{\ell}-\ell)/2} \| h_k \| v(\tau) \rho^{-2-\tilde{\ell}} \langle \ln \rho \rangle
				\1_{\{R_0 \le  \rho \le 4R \} }
				\nonumber
				\\
				\lesssim \ &
				(R_{0}^{-2} \ln R_0) D_o D_i
				R_0^{(\tilde{\ell}-\ell)/2}
				\| h_k \| v(\tau) \rho^{-\tilde{\ell}}  \1_{\{R_0 \le  \rho \le 4R \} }.
				\label{qd24July13-2}
	\end{align}
	Compared with \eqref{qd24July13-1}, since $R_{0}^{-2} \ln R_0$ is small, it follows that
	\begin{equation*}
		\rho^{|k+1|}
		\Gamma_{2|k+1|+2}^{\natural}**\big(
		|z|^{-|k+1|}
		J[\tilde{\psi}_{o,k},\tilde{\psi}_{i,k}]  \1_{\{ |z| \le 4R(s) \} } \big)  \in \B_{o,k}.
	\end{equation*}
	We can deduce the contraction mapping property in the same way.
	Now we have found a solution $\psi_{o,k} = \psi_{o,k}[\tilde{\psi}_{i,k}]\in \B_{o,k}$. Let us estimate the following term in $\DD_{2R_0}$.
	\begin{equation}\label{qd24Aug12-1}
		\big|  \tilde{V}_k(\rho)
		\psi_{o,k} \big|
		\lesssim
		|k|^{-1}
		D_o D_i
		R_0^{(\tilde{\ell}-\ell)/2}  \ln R_0 \| h_k \|
		v(\tau) \big(
		\1_{\{\rho \le 1\}} \rho^{-1}
		+
		\1_{ \{ \rho >1\} }
		\rho^{-2-\tilde{\ell} }
		\big)
		\mbox{ \ in \ }  \DD_{2R_0}.
	\end{equation}
	Provided $2+\tilde{\ell} > \ell$, $\mathbf{P}_1[v(\tau) ]> -3/2$, by Lemma \ref{modek-rough} and the scaling argument,
	\begin{equation*}
		\begin{aligned}
			&
			\Big|\TT_{kr}^{2R_0} \Big\{\Big[ e^{ik\theta} (a-ib) \tilde{V}_k(\rho)
			\psi_{o,k} \Big]_{\mathbb{C}^{-1} } \Big\}  \Big|
			\lesssim
			|k|^{-1}
			D_o D_i R_0^{(\tilde{\ell}-\ell)/2} \ln R_0
			\| h_k \| w_{i,k}(\rho,\tau),
			\\
			&
			\Big| \nabla \TT_{kr}^{2R_0} \Big\{\Big[ e^{ik\theta} (a-ib)\tilde{V}_k(\rho)
			\psi_{o,k} \Big]_{\mathbb{C}^{-1} } \Big\} \Big| \lesssim D_o D_i R_0^{(\tilde{\ell}-\ell)/2} \ln R_0 \| h_k \| w_{i,k,2}(\rho,\tau).
		\end{aligned}
	\end{equation*}
	In sum, we take $1<\tilde{\ell}<\min\{\ell,3\}$, $\ell\in (1,3)$. By the small quantity $R_0^{(\tilde{\ell}-\ell)/2} \ln R_0$, we have
	\begin{equation*}
		\TT_{kr}^{2R_0} \Big[ \Big(K[ \psi_{o,k}[\tilde{\psi}_{i,k}] ] e^{ik\theta}
		\Big)_{\mathbb{C}^{-1} } \Big] \in \B_{i,k}.
	\end{equation*}
	The contraction property can be deduced in the same way, and thus, we find a solution $\Psi_{i,k}=\Psi_{i,k}[h_k] \in \B_{i,k}$. Finally we find a solution $(\psi_{o,k},\Psi_{i,k})$ for \eqref{phio-eq} and \eqref{Phii-eq}.
	
	Substituting the right-hand side  $h_{k}$ by $c_{1} h_{k}^{(1)}$, $c_{2} h_{k}^{(2)}$, $c_{1} h_{k}^{(1)} + c_{2} h_{k}^{(2)}$ respectively, where $c_1,c_2$ are arbitrary constants and $h_{k}^{(1)}$, $h_{k}^{(2)}$ are in the same topology as $h_k$, then making subtraction and repeating process above,
	we deduce that $\Psi_{i,k}[h_k]$ and $\psi_{o,k}[h_k]$ are linear mappings about $h_k$. So does $\psi_k$.
	
	We will regard $D_o$, $D_i$, and $R_0$ as general constants hereafter. Recalling \eqref{qd24July13-1}, \eqref{qd24July13-2}, $2+\tilde{\ell}>\ell$, we have
	$ |J[\psi_{o,k},\psi_{i,k}]| \1_{\{ \rho \le 4R \} } \lesssim \| h_k\|
	v(\tau) \rho^{- \ell } \1_{\{R_0 \le  \rho \le 4R \} }  $.
	Similar to \eqref{qd240819-1}, \eqref{qd24Aug16-2}, \eqref{qd24Aug16-3}, for $\rho \le \tau^{\frac 12}$, the spatial decay of $\psi_{o,k}$ can be improved to
	\begin{equation}\label{qd240817-2}
		|\psi_{o,k}| \lesssim
		|k|^{-2} \|h_k\| v(\tau) \Big(
		\1_{\{\rho \le 1\}} \rho
		+
		\1_{\{\rho > 1\}}
		\begin{cases}
			\rho^{2-\ell}, & \ell \in (1,2) \cup (2,3)
			\\
			\langle \ln \rho \rangle,  & \ell =2
		\end{cases}
		\Big).
	\end{equation}
	By the upper bounds of $\psi_{o,k}$ in \eqref{qd240817-2} and $\Psi_{i,k}$ in \eqref{qd240817-1}, we get the upper bound of $\Psi_{k}$ in $\DD_{R}$.
\end{proof}

\subsection{Mode $0$}
\begin{prop}\label{m0-nonortho}
	Consider
	\begin{equation*}
		\begin{cases}
			\pp_\tau \Psi_0 =\left(a-b W\wedge\right) \left( L_{\rm{in}}  \Psi_0 \right)  + H_0
			\mbox{ \ in \ } \DD_{R},
			\\
			\Psi_0 = 0 	
			\mbox{ \ on \ } \pp\DD_{R},	
			\quad
			\Psi_0(\cdot,\tau_0) = 0
			\mbox{ \ in \ } B_{R(\tau_0)} ,
		\end{cases}
	\end{equation*}
	where $H_0 = \left( h_{0} (\rho,\tau) \right)_{\mathbb{C}^{-1}}$, $\| H_0 \|_{v,\ell}^{R}<\infty$. Suppose $\ln R \in \mathbf{AP}$, \eqref{nu-assump}, $\mathbf{P}_1[\tau^2 \theta_{R,\ell}
	v(\tau) ]>0$ with $\theta_{R,\ell}$ given in \eqref{theta-Rl-def}, then there exists a linear mapping $\Psi_0 = \TT_{00}^{R}[H_0]$ with the  estimate
	\begin{equation*}
		|\Psi_0 |
		\lesssim
		\|H_0 \|_{v,\ell}^{R} v(\tau)
		\big( \1_{\{|y|\le 1\}} |y| + \1_{\{|y| > 1\}} |y|^{-1}  \big)
		\begin{cases}
			R^2 \ln R
			&\mbox{ \ if \ } \ell>1
			\\
			R^2 (\ln R)^{\frac 32}
			& \mbox{ \ if \ } \ell=1
			\\
			R^{3-\ell} \ln R
			& \mbox{ \ if \ } \ell<1 .
		\end{cases}
	\end{equation*}
	Moreover, $\Psi_0\cdot W=0$ and $\left( \Psi_0 \right)_{\mathbb{C}}$ is radial in space.
	
\end{prop}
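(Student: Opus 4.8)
The plan is to follow the same \emph{re-gluing} strategy already carried out for the higher modes in Proposition~\ref{modek-prop}, adapted to the critical mode $k=0$. First I would use Lemma~\ref{complex-lem} to pass to the scalar complex equation $\partial_\tau\psi_0=(a-ib)\mathcal L_0\psi_0+h_0$ on $\mathcal D_R$ with zero lateral and initial data, where $\mathcal L_0 f=\partial_{\rho\rho}f+\rho^{-1}\partial_\rho f+\big(-\rho^{-2}+8(\rho^2+1)^{-2}\big)f$. Lemma~\ref{energy est} (the $k=0$ case, together with the embedding \eqref{X embedding} and the eigenvalue estimate $\tilde\lambda_{R,0}\sim(R^2\ln R)^{-1}$ from Lemma~\ref{eigenvalue le}) then gives the rough bound $\|\psi_0(\cdot,\tau)\|_{L^\infty(B_R)}\lesssim R^2\ln R\,\theta_{R,\ell}\,v(\tau)\|h_0\|_{v,\ell}^R$, with $\theta_{R,\ell}$ as in \eqref{theta-Rl-def}. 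This already produces the three $R$--dependent prefactors claimed in the proposition (namely $R^2\ln R$, $R^2(\ln R)^{3/2}$, $R^{3-\ell}\ln R$ for $\ell>1$, $\ell=1$, $\ell<1$), but with no spatial decay, and the remaining task is to upgrade it to the stated $\langle y\rangle^{-1}$ decay.

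To gain that decay I would split $\psi_0=\eta_{R_0}\psi_{i,0}+\psi_{o,0}$ with $R_0$ a \emph{fixed} large constant and set up the coupled inner--outer system exactly as in Proposition~\ref{modek-prop} with $k=0$: the outer part solves $\partial_\tau\psi_{o,0}=(a-ib)\big(\partial_{\rho\rho}+\rho^{-1}\partial_\rho-\rho^{-2}\big)\psi_{o,0}+J[\psi_{o,0},\psi_{i,0}]\,\1_{\{\rho\le 4R\}}$ on $\R^2\times(\tau_0,\infty)$, where $J$ carries the cross term $(1-\eta_{R_0})\tfrac{8}{(\rho^2+1)^2}\psi_{o,0}$ (which is $O(R_0^{-4})$ on its support $\{\rho\ge R_0\}$), the commutator $A_0[\psi_{i,0}]$, and $(1-\eta_{R_0})h_0$, while the inner part solves $\partial_\tau\psi_{i,0}=(a-ib)\mathcal L_0\psi_{i,0}+K[\psi_{o,0}]$ on the \emph{fixed bounded} cylinder $\mathcal D_{2R_0}$. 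The outer equation is a free radial equation with angular momentum $(k+1)^2=1$, so by Lemma~\ref{convert dim} it is equivalent to the heat equation in $\R^4$ via $\psi_{o,0}=\rho\,\Gamma_4^{\natural}**\big(|z|^{-1}J\big)$; feeding in the spatial profile of $J$ (which, after using the rough $L^\infty$ control of $\psi_{o,0}$, decays like $v(\tau)\langle z\rangle^{-\ell}$ at large $|z|$ and is mild near $\rho=0$) and applying the convolution bound of Lemma~\ref{con-infi-Lem} with $d=4$ yields $|\psi_{o,0}|\lesssim v(\tau)\langle y\rangle^{-1}\times(\text{the claimed }R\text{--prefactor})$ in $\mathcal D_R$, the loss (only $\langle y\rangle^{1-\ell}$, not more) being exactly the price of the slow decay of the mode--$0$ kernel $\mathcal Z_{0,1}\sim\rho^{-1}$. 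The inner part, living on the fixed ball $B_{2R_0}$, is handled by classical parabolic theory after straightening the cylinder together with the energy estimate Lemma~\ref{energy est} applied with the fixed radius $2R_0$; its contribution is $O(R_0^{O(1)})\,v(\tau)\|h_0\|_{v,\ell}^R$, which is absorbed into $\langle y\rangle^{-1}v(\tau)R^2\ln R\|h_0\|_{v,\ell}^R$ on $\{|y|\le 2R_0\}$ since $R(\tau)\gg R_0$, and is killed by $\eta_{R_0}$ for $|y|\ge 2R_0$.

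The coupled system is closed by a contraction argument in the product of the two weighted balls, precisely as in the proof of Proposition~\ref{modek-prop}: the cross terms gain a genuine small factor (from $(1-\eta_{R_0})\tfrac{8}{(\rho^2+1)^2}=O(R_0^{-4})$ and from trading a little spatial decay in the commutator $A_0$), so the map is a contraction once $R_0$ is fixed large; linearity of $H_0\mapsto\Psi_0$, the constraint $\Psi_0\cdot W=0$, and the radiality of $(\Psi_0)_{\mathbb C}$ all follow from the construction and from $h_0$ being a mode--$0$ datum. Reassembling $\Psi_0=\big(\eta_{R_0}\psi_{i,0}+\psi_{o,0}\big)_{\mathbb C^{-1}}$ and using $\langle y\rangle^{-3}\lesssim\langle y\rangle^{-1}$ on $B_{2R_0}$ gives the stated estimate. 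The main obstacle is the critical slow decay of mode $0$: unlike the $|k|\ge2$ case of Lemma~\ref{modek-rough} and Proposition~\ref{modek-prop}, after the $\R^4$ reduction the effective exponent $\ell+1$ need not exceed the threshold $2$ of Lemma~\ref{con-infi-Lem} when $\ell\le1$, so the borderline regimes $\ell=1$ and $\ell<1$ must be treated directly and are responsible for the logarithmic loss $(\ln R)^{3/2}$ and the power $R^{3-\ell}\ln R$; one must also keep the cross-term potential $\tfrac{8}{(\rho^2+1)^2}$ carefully separated from the $-\rho^{-2}$ part so that the re-gluing coupling is genuinely small in the fixed parameter $R_0$.
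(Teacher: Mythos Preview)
Your plan differs from the paper's proof, and as written it has a genuine gap concerning the boundary condition.

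The paper does \emph{not} re-glue here. After passing to the scalar equation and invoking Lemma~\ref{energy est} to obtain the rough bound $\|\psi_0(\cdot,\tau)\|_{L^\infty(B_R)}\lesssim R^2\ln R\,\theta_{R,\ell}\,v(\tau)\|h_0\|$, the paper simply moves the potential to the right and rewrites
\[
\pp_\tau\psi_0=(a-ib)\Big(\pp_{\rho\rho}+\tfrac1\rho\pp_\rho-\tfrac1{\rho^2}\Big)\psi_0+\tilde h_0,
\qquad \tilde h_0:=(a-ib)\,\tfrac{8}{(\rho^2+1)^2}\,\psi_0+h_0,
\]
sets $\psi_0=\rho\,\psi_{*0}$ so that $\psi_{*0}$ solves a $4$-dimensional heat equation on the \emph{same} varying Dirichlet cylinder $\mathcal D_R$, and bounds $\psi_{*0}$ by the Duhamel formula with the Dirichlet fundamental solution $\Gamma_4$ (Gaussian upper bound, exactly as in the argument leading to \eqref{H-Z5}). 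Because $\psi_0$ is the already-constructed Dirichlet solution, its $L^\infty$ bound is available to control the term $\tfrac{8}{(\rho^2+1)^2}\psi_0$ inside $\tilde h_0$ \emph{directly}: there is no fixed-point, only one convolution estimate. The $\langle y\rangle^{-1}$ decay then falls out, using the trade $|z|^{-1}\langle z\rangle^{-\ell}\1_{\{|z|\le R\}}\lesssim R^{1-\ell+\epsilon}|z|^{-1}\langle z\rangle^{-1-\epsilon}$ for the regimes $\ell\le 1$.

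Your re-gluing scheme, transplanted from Proposition~\ref{modek-prop}, is more elaborate and does not produce a solution of the stated problem: you pose the outer equation on all of $\R^2\times(\tau_0,\infty)$, so $\psi_{o,0}$ has no reason to vanish on $\pp B_{R(\tau)}$, and since $\eta_{R_0}\psi_{i,0}$ is supported well inside $B_R$, the reassembled $\psi_0=\eta_{R_0}\psi_{i,0}+\psi_{o,0}$ fails the Dirichlet condition $\Psi_0=0$ on $\pp\mathcal D_R$. Note that Proposition~\ref{modek-prop} and the subsequent orthogonal mode-$0$ proposition deliberately do \emph{not} impose a lateral boundary condition, precisely because the re-gluing construction cannot enforce one; the present proposition is different. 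If instead you meant to decompose the existing Dirichlet solution rather than construct a new one, then the contraction argument is out of place. Either way, the paper's direct bootstrap is both correct and considerably shorter.
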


\begin{proof}
	Denote $\|h_0\| = \|h_0\|_{v,\ell}^{R}$.
	In order to find a solution with the form $\Psi_{0} = \left(  \psi_{0} (\rho,\tau) \right)_{\mathbb{C}^{-1}}$, by Lemma \ref{complex-lem}, it is equivalent to considering
	\begin{equation}\label{psi0-eq}
		\begin{cases}
			\pp_\tau \psi_0 =
			(a-ib)\mathcal{L}_0\psi_0 + h_0
			\mbox{ \ in \ } \DD_{R},
			\\
			\psi_0 = 0 	
			\mbox{ \ on \ } \pp\DD_{R},	
			\quad
			\psi_0(\cdot,\tau_0) = 0
			\mbox{ \ in \ } B_{R(\tau_0)}.
		\end{cases}
	\end{equation}
	By \eqref{nu-assump}, \eqref{gassump} is true. Then Lemma \ref{energy est} gives $
	\|\psi_0(\cdot, \tau) \|_{L^\infty(B_{R})}
	\lesssim
	R^2 \ln R \theta_{R,\ell}
	v(\tau) \|h_0 \| $. To improve the spatial decay, we  reformulate \eqref{psi0-eq} into the following form
	\begin{equation}\label{0Z-1}
		\begin{cases}
			\pp_{\tau} \psi_0 =	
			(a-ib)
			\big(
			\pp_{\rho\rho} \psi_0
			+
			\frac{1}{\rho} \pp_{\rho} \psi_0
			-
			\frac{1}{\rho^2}  \psi_0
			\big)
			+ \tilde{h}_0
			\mbox{ \ in \ } \mathcal{D}_{R},
			\\
			\psi_0 = 0
			\mbox{ \ on \ } \pp\mathcal{D}_{R},
			\quad
			\psi_0(\cdot,\tau_0) = 0
			\mbox{ \ in \ } B_{R(\tau_0)} ,
		\end{cases}
	\end{equation}
	where $\tilde{h}_0 :=
	(a-ib) \frac{8}{(\rho^2+1)^2}
	\psi_0 + h_0$.
	Set $\psi_{0}=\rho \psi_{*0}$. Then \eqref{0Z-1} is equivalent to
	\begin{equation}\label{0Z-2}
		\begin{cases}
			\pp_{\tau} \psi_{*0} =	
			(a-ib)
			\Delta_{\mathbb{R}^4 } \psi_{*0}
			+ |y|^{-1} \tilde{h}_0
			\mbox{ \ in \ } \mathcal{D}_{R},
			\\
			\psi_{*0} = 0
			\mbox{ \ on \ } \pp\mathcal{D}_{R},
			\quad
			\psi_{*0}(\cdot,\tau_0) = 0
			\mbox{ \ in \ } B_{R(\tau_0)},
		\end{cases}
	\end{equation}
	where we abuse the symbol $\mathcal{D}_{R}=\left\{(y,\tau) \ | \ y\in \mathbb{R}^4, \ |y|\le R(\tau) \right\}$ as the corresponding time-varying domain in $\mathbb{R}^4$ and similarly $\pp\mathcal{D}_{R}$, $B_{R(\tau_0)} $.
	By the same argument for deducing \eqref{H-Z5}, the fundamental solution for \eqref{0Z-2} is given by $\Gamma_4(x,y,t,s)$ with the bound
	\begin{equation*}
		| \Gamma_4(x,y,\tau,s)| \lesssim ( \tau-s )^{-2} e^{-\frac{\kappa|x-y|^2}{\tau-s}} \mbox{ \ for a constant \ } \kappa>0.
	\end{equation*}
	Provided $\mathbf{P}_1[\tau^2 \ln R \theta_{R,\ell}
	v(\tau) ]>0$, for $\rho \le R(\tau)$, we have
	\begin{align}
			&
			|\psi_0| = \rho |\psi_{*0} |
			\lesssim
			\rho
			\big|\Gamma_{4}  **\big( |z|^{-1}
			|\tilde{h}_0 |
			\1_{\{ |z|\le R(s) \} } \big)  \big|
			\lesssim
			\|h_0\|
			\rho
			\int_{\tau_0}^{\tau} \int_{\mathbb{R}^4}
			(\tau-s )^{-2} e^{-\frac{\kappa|y-z|^2}{\tau-s}}
			\nonumber
			\\
			& \qquad\times
			\Big[
			(R^2 \ln R \theta_{R,\ell} v)(s)
			\Big( \1_{\{|z|\le 1\}} |z|^{-1} + \1_{\{ 1<|z| \le R(s) \}} |z|^{-5} \Big)
			+
			v(s) \1_{\{ 1<|z| \le R(s) \}} |z|^{-1-\ell}
			\Big] dz ds
			\nonumber
			\\
			\lesssim \ &
			R^2 \ln R \theta_{R,\ell}
			v(\tau)
			\big( \1_{\{\rho\le 1\}} \rho + \1_{\{\rho > 1\}} \rho^{-1}  \big) \|h_0\|,
			\label{typical-m0}
	\end{align}
	where for the last ``$\lesssim$'', we used the following calculation.
	By Lemma \ref{qd24July10-2-lem},
	\begin{equation}\label{qd240818-1}
		\begin{aligned}
			&
			\int_{\tau_0}^{\tau} \int_{\mathbb{R}^4}
			(\tau-s )^{-2} e^{-\frac{\kappa|y-z|^2}{\tau-s}}
			(R^2 \ln R \theta_{R,\ell} v)(s) \1_{\{ 1<|z| \le R(s) \}} |z|^{-5} dz ds
			\\
			\lesssim \ &
			R^2 \ln R \theta_{R,\ell} v(\tau)
			\big( \1_{\{|y|\le 1\}} + \1_{\{ 1<|y|\le R \} } |y|^{-2} \big)
		\end{aligned}
	\end{equation}
	provided $\mathbf{P}_1[\tau^2 \ln R \theta_{R,\ell}
	v(\tau) ]>0$.
	By \cite[Lemma A.1]{infi4d},
	\begin{align*}
		&
		\int_{\tau_0}^{\tau} \int_{\mathbb{R}^4}
		(\tau-s )^{-2} e^{-\frac{\kappa|y-z|^2}{\tau-s}}
		(R^2 \ln R \theta_{R,\ell} v)(s) \1_{\{|z|\le 1\}} |z|^{-1} dz ds
		\\
		\lesssim \ &
		\tau^{-2} e^{-\frac{|y|^2}{16\tau}} \int_{\tau_0}^{\tau} (R^2 \ln R \theta_{R,\ell} v)(s) ds
		+
		R^2 \ln R \theta_{R,\ell} v(\tau)
		\big( \1_{ \{ |y|\le 1 \} } + \1_{\{ 1<|y|\le R \} } |y|^{-2} e^{-\frac{|y|^2}{16 \tau}} \big).
	\end{align*}
	To get the upper bound in \eqref{qd240818-1}, we require $
	\int_{\tau_0}^{\tau} (R^2 \ln R \theta_{R,\ell} v)(s) ds \lesssim
	\tau^{2}  \ln R \theta_{R,\ell} v(\tau) $.
	
	If $\mathbf{P}_1[R^2 \ln R \theta_{R,\ell} v(\tau)] >-1$, it holds since $ \tau R^2 \ln R \theta_{R,\ell} v(\tau) \lesssim  \tau^{2} \ln R \theta_{R,\ell} v(\tau) \Leftrightarrow R^2\lesssim \tau$, which is true. If $\mathbf{P}_1[R^2 \ln R \theta_{R,\ell} v(\tau)] = -1$, it suffices to make $ \tau (\ln \tau)^m  R^2 \ln R \theta_{R,\ell} v(\tau) \lesssim
	\tau^2  \ln R \theta_{R,\ell} v(\tau)
	\Leftrightarrow
	R^2 (\ln \tau)^m \lesssim \tau$
	for some large constant $m \ge 0$, which is true by \eqref{nu-assump}.
If $\mathbf{P}_1[R^2 \ln R \theta_{R,\ell} v(\tau)] < -1$, since $\mathbf{P}_1[\tau^2 \ln R \theta_{R,\ell}
	v(\tau) ]>0$, we only need to ensure $
	\tau_0 (R^2 \ln R \theta_{R,\ell} v)(\tau_0) \lesssim
	\tau_0^{2} (\ln R \theta_{R,\ell} v)(\tau_0)
	\Leftrightarrow
	R^2(\tau_0) \lesssim
	\tau_0$, which is true.
	
	The estimate including $\1_{\{ 1<|z| \le R(s) \}} |z|^{-1-\ell}$ is deduced by \cite[Lemma A.1]{infi4d}, due to the property of $\theta_{R,\ell}$, we only need to consider $\ell <3$. We omit the details.
\end{proof}

In contrast to \eqref{scalr-Z}$_4$ for mode $k$, $|k|\ge 2$, the elliptic operator in mode $0$
admits a {\it bounded} kernel function with {\it decay}, and as a consequence, the decay information of the right-hand side might get lost when deriving estimates. In fact, decay of the solution can be recovered if an orthogonality condition is imposed. The linear theory of mode $0$ with the orthogonality condition is given below.

\begin{lemma}\label{m0-rough}
	Consider
	\begin{equation*}
		\pp_\tau \Psi_0 =
		\left(a-b W\wedge\right) \left( L_{\rm{in}} \Psi_0 \right)  + H_0
		\mbox{ \ in \ } \DD_{R}, \quad \Psi_0(\cdot, \tau_0) =0 \mbox{ \ in \ } B_{R(\tau_0)},
	\end{equation*}
	where $H_0$ is defined in $\DD_{R_*}$ with $ R \le R_* \le \infty$, $H_0 = \left( h_{0} (\rho,\tau) \right)_{\mathbb{C}^{-1}}$, $\| H_0 \|_{v,\ell}^{R_*} <\infty$ with $\ell \in (1,3)$ and the orthogonality condition
	\begin{equation}\label{m0-orth-cond}
		\int_{0}^{R_*(\tau)} h_0(r,\tau) \mathcal{Z}_{0,1}(r) r dr =0
		\mbox{ \ for \ } \tau\in(\tau_0,\infty)
	\end{equation}
	holds. Suppose $\ln R \in \mathbf{AP}$, \eqref{nu-assump}, $\mathbf{P}_1[\tau^2 R^{3-\ell} v(\tau)]>0$, then there exists a solution $\Psi_0 = \TT_{0r}^{R}[H_0]$ as a linear mapping in $H_0$ with the estimate
	\begin{equation}\label{Phi0-rough-est}
		\langle y\rangle
		|\nabla \Psi_0 |
		+
		|\Psi_0|
		\lesssim v(\tau) R^{5-\ell} \ln R
		\langle y \rangle^{-3} \| H_0 \|_{v,\ell}^{R_*} \mbox{ \ in \ } \DD_{R}.
	\end{equation}
	Moreover, $\Psi_0\cdot W=0$ and $\left(\Psi_0\right)_{\mathbb{C}}$ is radial in space.
	
\end{lemma}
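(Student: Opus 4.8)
The plan is to run, for the resonant mode $k=0$ with the orthogonality constraint \eqref{m0-orth-cond}, the same three–step scheme as in the proof of Lemma \ref{modek-rough}: an elliptic inversion of $(a-bW\wedge)L_{\rm{in}}$, an energy estimate for the resulting parabolic problem, and a heat–kernel refinement of the spatial decay, followed by $\Psi_0:=(a-bW\wedge)(L_{\rm{in}}\Phi_0)$. The only genuinely new feature compared with $|k|\ge 2$ (and with Proposition \ref{m0-nonortho}) is that the homogeneous solutions of $\mathcal L_0$ from \eqref{scalr-Z}, namely $\mathcal Z_{0,1}(\rho)=\tfrac{\rho}{\rho^2+1}\sim\rho^{-1}$ and $\mathcal Z_{0,2}(\rho)\sim\rho$ at infinity, make the naive elliptic inversion both singular at the origin and linearly growing at infinity; it is precisely \eqref{m0-orth-cond} that removes both pathologies.

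\emph{Step 1 (elliptic inversion).} Assume $h_0\equiv 0$ outside $\DD_R$. By Lemma \ref{complex-lem} it suffices to solve $(a-ib)\mathcal L_0 g_0=h_0$ in the radial sector, and I would take the particular solution
\begin{equation*}
g_0(\rho,\tau)=-(a+ib)\Big[\mathcal Z_{0,2}(\rho)\int_\rho^{\infty}\mathcal Z_{0,1}(r)h_0(r,\tau)\,r\,dr+\mathcal Z_{0,1}(\rho)\int_0^{\rho}\mathcal Z_{0,2}(r)h_0(r,\tau)\,r\,dr\Big].
\end{equation*}
Since $h_0$ is supported in $B_{R(\tau)}$, one has $\int_\rho^{\infty}\mathcal Z_{0,1}h_0\,r\,dr=\int_\rho^{R(\tau)}\mathcal Z_{0,1}h_0\,r\,dr$, which by \eqref{m0-orth-cond} equals $-\int_0^\rho\mathcal Z_{0,1}h_0\,r\,dr$; this cancellation kills the $\mathcal Z_{0,2}(\rho)\rho^{-1}$ singularity at $\rho\to0$ and the $\mathcal Z_{0,2}(\rho)\sim\rho$ growth at $\rho\to\infty$, leaving (using the explicit behaviour of $\mathcal Z_{0,j}$ at $0$ and $\infty$, and $1<\ell<3$)
\begin{equation*}
|G_0(y,\tau)|\lesssim C(\ell)\,v(\tau)\,\langle y\rangle^{2-\ell}\,\|H_0\|_{v,\ell}^{R}\quad\text{on }B_{2R(\tau)},
\end{equation*}
with $G_0:=(g_0)_{\mathbb C^{-1}}$, $G_0\cdot W=0$, $(G_0)_{\mathbb C}$ radial, and $C(\ell)\to\infty$ as $\ell\to1$ or $\ell\to3$ (from $\int^\rho r^{2-\ell}$ and $\int_\rho r^{-\ell}$). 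Linearity of $H_0\mapsto G_0$ is clear.

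\emph{Steps 2--3 (parabolic solve, refinement, conclusion).} Next solve
\begin{equation*}
\partial_\tau\Phi_0=(a-bW\wedge)(L_{\rm{in}}\Phi_0)+G_0\ \text{ in }\DD_{2R},\qquad \Phi_0=0\text{ on }\partial\DD_{2R},\quad\Phi_0(\cdot,\tau_0)=0,
\end{equation*}
whose solvability is classical after flattening the domain in $\tau$. In complex form $\partial_\tau\phi_0=(a-ib)\mathcal L_0\phi_0+g_0$, Lemma \ref{energy est} for mode $0$ ($\tilde\lambda_{R,0}\sim(R^2\ln R)^{-1}$, and $\min\{\tau,R^2\ln R\}=R^2\ln R$ since $R\ll\tau^{1/2}$) yields an $L^\infty(B_{2R})$ bound of the form $C(\ell)\,v(\tau)R^{5-\ell}\ln R\,\|H_0\|_{v,\ell}^{R}$. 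To upgrade this to genuine $\langle y\rangle^{-1}$ decay I would reformulate $\partial_\tau\phi_0=(a-ib)(\partial_{\rho\rho}+\rho^{-1}\partial_\rho-\rho^{-2})\phi_0+\tilde g_0$ with $\tilde g_0=(a-ib)\tfrac{8}{(\rho^2+1)^2}\phi_0+g_0$, and then argue exactly as in Proposition \ref{m0-nonortho} / Lemma \ref{modek-rough}: pass to $\mathbb R^4$ via $\phi_0=\rho\psi_{*0}$ (or, for the sharp decay, solve a Poisson problem $\Delta_{\mathbb R^2}\tilde P_0=e^{i\theta}\tilde g_0$ and use the gradient bound $|\nabla_y\Gamma_2|\lesssim(t-s)^{-3/2}e^{-\kappa|x-y|^2/(t-s)}$), then estimate the convolution by Lemma \ref{con-infi-Lem} and the $\mathbb R^3$/$\mathbb R^4$ convolution estimates. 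This gives $|\Phi_0(y,\tau)|\lesssim C(\ell)\,v(\tau)R^{5-\ell}\ln R\,\langle y\rangle^{-1}\|H_0\|_{v,\ell}^{R}$ in $\DD_{3R/2}$. Finally, set $\Psi_0:=(a-bW\wedge)(L_{\rm{in}}\Phi_0)=\partial_\tau\Phi_0-G_0$; since $(a-bW\wedge)L_{\rm{in}}$ carries no $\tau$–dependence and commutes with $\partial_\tau$, $\Psi_0$ solves the stated equation with right-hand side $(a-bW\wedge)(L_{\rm{in}}G_0)=H_0$ and zero initial data, and is linear in $H_0$. Interior parabolic regularity on balls of radius $\sim\langle y\rangle$ gives $\langle y\rangle^2|D^2\Phi_0|+\langle y\rangle|D\Phi_0|\lesssim C(\ell)v(\tau)R^{5-\ell}\ln R\,\langle y\rangle^{-1}\|H_0\|_{v,\ell}^{R}$; in $L_{\rm{in}}\Phi_0$ the leading term is $\Delta\Phi_0$ (the potential terms $|\nabla W|^2\Phi_0$, $\nabla(W\cdot\Phi_0)\nabla W$, $(\nabla W\cdot\nabla\Phi_0)W$ all carry two extra powers of $\langle y\rangle^{-1}$), so $\langle y\rangle|\nabla\Psi_0|+|\Psi_0|\lesssim C(\ell)v(\tau)R^{5-\ell}\ln R\,\langle y\rangle^{-3}\|H_0\|_{v,\ell}^{R}$, which is \eqref{Phi0-rough-est}; radiality of $(\Psi_0)_{\mathbb C}$ and $\Psi_0\cdot W=0$ are inherited from the radial structure of every object in the construction.

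\emph{Main obstacle.} The delicate part is the weight bookkeeping in Steps 1--2: one must check that the cancellation from \eqref{m0-orth-cond} removes the $\mathcal Z_{0,1}$–resonance \emph{uniformly} down to $\rho\sim R(\tau)$ and up from $\rho\to0$ without introducing a logarithmic loss for $1<\ell<3$, and then thread the resulting weight $\langle y\rangle^{2-\ell}$ through Lemma \ref{energy est} and the $\mathbb R^3$/$\mathbb R^4$ convolution estimates so that the final prefactor is exactly $R^{5-\ell}\ln R$ and the decay exactly $\langle y\rangle^{-3}$ — in particular handling the fact that $\langle y\rangle^{2-\ell}\1_{\{|y|\le2R\}}$ is only slowly decaying (or mildly growing for $\ell<2$), so that bounding it crudely by $R^{4-\ell}\langle y\rangle^{-2}$ costs an unwanted power of $R$ and a sharper, region-by-region treatment of the convolution (as in Lemma \ref{modek-rough}, Step 2) is required; and verifying, as in \eqref{theta-Rl-def}, how the logarithms compound.
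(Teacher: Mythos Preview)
Your proposal is correct and follows essentially the same route as the paper: elliptic inversion $(a-ib)\mathcal L_0 g_0=h_0$ using the orthogonality \eqref{m0-orth-cond} to get $\|G_0\|_{v,\ell-2}^\infty\lesssim C(\ell)\|H_0\|_{v,\ell}^R$, a parabolic solve for $\Phi_0$ in $\DD_{2R}$, then $\Psi_0:=(a-bW\wedge)(L_{\rm in}\Phi_0)$ plus interior regularity. The only differences are cosmetic: the paper writes $g_0$ with both integrals starting at $0$ (the form $\mathcal Z_{0,2}\int_0^\rho \mathcal Z_{0,1}h_0 - \mathcal Z_{0,1}\int_0^\rho \mathcal Z_{0,2}h_0$), which is identical to your $\int_\rho^\infty$ version once you invoke \eqref{m0-orth-cond}; and the paper packages your Steps~2--3 as a single citation of Proposition~\ref{m0-nonortho} applied with weight exponent $\ell-2<1$ (so $\theta_{R,\ell-2}=R^{3-\ell}$, giving $R^2\ln R\cdot R^{3-\ell}=R^{5-\ell}\ln R$ and the $\langle y\rangle^{-1}$ decay), rather than redoing the energy estimate and $\mathbb R^4$ reformulation inline.
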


\begin{proof}
	
	Denote $\| h_0 \| = \| h_0 \|_{v,\ell}^{R_*}$ and assume $h_0=0$ in $\DD_{R_*}^{c}$. We consider
	\begin{equation*}
		\left(a-b W\wedge\right) \left( L_{\rm{in}} G_0 \right) = H_0 \mbox{ \ where \ } G_0 = \left(g_0(\rho,\tau)  \right)_{\mathbb{C}^{-1}} .
	\end{equation*}
	By Lemma \ref{complex-lem}, it is equivalent to $ \left(a-ib\right) \mathcal{L}_0 g_0 = h_0 $,
	where $g_0$ is given by
	\begin{equation*}
		g_0(\rho,\tau) =
		\left(a+ib\right)
		\Big(
		\mathcal{Z}_{0,2}(\rho) \int_{0}^{\rho} h_0(r,\tau) \mathcal{Z}_{0,1}(r) r dr
		-
		\mathcal{Z}_{0,1}(\rho) \int_{0}^{\rho} h_0(r,\tau) \mathcal{Z}_{0,2}(r) r dr
		\Big).
	\end{equation*}
	It follows that
	\begin{equation*}
		\partial_{\rho} g_0(\rho,\tau) =
		\left(a+ib\right)
		\Big(
		\mathcal{Z}'_{0,2}(\rho) \int_{0}^{\rho} h_0(r,\tau) \mathcal{Z}_{0,1}(r) r dr
		-
		\mathcal{Z}'_{0,1}(\rho) \int_{0}^{\rho} h_0(r,\tau) \mathcal{Z}_{0,2}(r) r dr
		\Big).
	\end{equation*}

	By \eqref{scalr-Z}, the orthogonality condition \eqref{m0-orth-cond}, if $1<\ell<3$, for all $\rho \in (0,\infty)$, we have
	\begin{equation}\label{G0-est}
		\rho |\partial_{\rho} g_0 | +
		|g_0| \lesssim
		\| h_0\| v(\tau) \big( \1_{\{ \rho\le 1\}} \rho^2 + \1_{\{ \rho > 1\}} \rho^{2-\ell}  \big),
		\quad
		\| G_0 \|_{v,\ell-2}^{\infty} = \| g_0 \|_{v,\ell-2}^{\infty} \lesssim \| h_0 \|.
	\end{equation}
	Similar to the computations on the $\mathsf{DMO_x}$ semi-norm of $G_k$ in Lemma \ref{modek-rough} (but simpler since we do not need its dependence on $k$ now), we have
	\begin{equation*}
		[ G_0]_{\mathsf{|DMO|_x}(Q_2^{-}(0,\tau))} \lesssim \|h_0\| v(\tau),
		\quad
		[ G_0(x_*+ \rho_*  z , t_* + \rho_*^2 s) ]_{\mathsf{|DMO|_x}(Q_2^{-}(0))}
		\lesssim \| h_0\| v(t_*) |x_*|^{1-\ell}
		\rho_*
	\end{equation*}
	for $t_*>\tau_0$, $1 \le |x_*| \le 3R(t_*)/2$, $\rho_* = |x_*|/100$.
	Next, let us consider
	\begin{equation*}
		\begin{cases}
			\pp_{\tau} \Phi_0 =
			\left(a-bW\wedge \right) \left( L_{\rm{in}} \Phi_0 \right) + G_0
			\mbox{ \ in \ } \DD_{2R},
			\\
			\Phi_0 = 0
			\mbox{ \ on \ } \pp\DD_{2R}, \quad
			\Phi_0(\cdot,\tau_0) = 0
			\mbox{ \ in \ } B_{2R(\tau_0)} .
		\end{cases}
	\end{equation*}
	Provided $\ln R \in \mathbf{AP}$, $\mathbf{P}_1[\tau^2 R^{3-\ell} v(\tau)]>0$, by \eqref{G0-est} and Proposition \ref{m0-nonortho}, there exists a solution $\Phi_0 = \Phi_0[ G_0 ]$ with the form $\Phi_{ 0} = \left(  \phi_{0} (\rho,\tau)   \right)_{\mathbb{C}^{-1}}$ for some scalar function $\phi_{0}$ and the estimate
	\begin{equation*}
		|\Phi_0( y ,\tau)|
		\lesssim
		v(\tau) R^{5-\ell} \ln R
		\langle y \rangle^{-1} 	 \| G_0 \|_{v,\ell-2}^{\infty}.
	\end{equation*}
	By Proposition \ref{qd240725-6-prop} (for $|y|\le 1$), Proposition \ref{scaling-prop-0722} (for $|y|> 1$), and the interpolation inequality, then
	\begin{equation}\label{Phi0-est}
		\langle y\rangle^2 |D^2 \Phi_0| + \langle y\rangle |D \Phi_0| + 	|\Phi_0| \lesssim  v(\tau) R^{5-\ell} \ln R \langle y \rangle^{-1} \| h_0 \|  \mbox{ \ in \ } \mathcal{D}_{3R/2}.
	\end{equation}
	We take the desired strong solution as $\Psi_0 = \left(a-bW\wedge \right) \left( L_{\rm{in}} \Phi_0 \right)$. Combining  \eqref{Phi0-est}  and the scaling argument, we conclude \eqref{Phi0-rough-est}. Applying $\Phi_{0} = ( \phi_{0} (\rho,\tau) )_{\mathbb{C}^{-1}}$ and Lemma \ref{complex-lem}, we have $\left(\Psi_0\right)_{\mathbb{C}} =  (a-ib)\mathcal{L}_0 \phi_0$.
\end{proof}

\begin{prop}\label{Re-m0-prop}
	Consider
	\begin{equation*}
		\pp_\tau \Psi_0 =
		\left(a-b W\wedge \right) \left(L_{\rm{in}}\Psi_0 \right) + H_0 +
		\left(
		c_0(\tau) \eta(\rho) \mathcal{Z}_{0,1}(\rho)  \right)_{\mathbb{C}^{-1}}
		\mbox{ \ in \ } \DD_{R},
		\quad
		\Psi_0(\cdot, \tau_0) =0  \mbox{ \ in \ } B_{R(\tau_0)},
	\end{equation*}
	where $H_0$ is defined in $\DD_{R_*}$ with $ R \le R_* \le \infty$,
	$H_0 = \left( h_{0} (\rho,\tau) \right)_{\mathbb{C}^{-1}}$, $\| H_0 \|_{v,\ell}^{R_*} <\infty$. Suppose \eqref{nu-assump}, $\ln R_0 \in \mathbf{AP}$,
	$\ell\in (1, 3) $,
	$\mathbf{P}_1[v(\tau)] > (\ell-5)/2$,
	then there exists a solution $(\Psi_0,c_{0} ) = ( \TT_0^{R}[H_0], c_{0}[H_0] )$ as a linear mapping in $H_0$ with the estimates
	\begin{equation*}
		\langle y\rangle |\nabla \Psi_{0}| + 	|\Psi_{0}|
		\lesssim \ln R_0 v(\tau)
		\left(
		R_0^{5-\ell }  \langle y \rangle^{-3}
		\1_{\{ |y|\le 2R_0\}}
		+
		\langle y \rangle^{2-\ell} \1_{\{ |y|> 2R_0\}} \right) \| H_0 \|_{v,\ell}^{R_*},
	\end{equation*}
	\begin{equation*}
		c_{0}(\tau)
		=
		-\Big(\int_{0}^2 \eta(r) \mathcal{Z}_{0,1}^2(r) r dr\Big)^{-1}
		\Big( \int_{0}^{R_1}
		h_0(r,\tau)  \mathcal{Z}_{0,1}(r) r dr
		+
		c_{*0}[H_0](\tau) \Big),
	\end{equation*}
	where $2R_0 \le R_1 \le R_*$, $c_{*0}[H_0] $ is a scalar function linearly depending on $H_0$ and satisfies
	$|c_{*0}[H_0](\tau) | \lesssim  R_0^{1 -\ell}\ln R_0  v(\tau) \| H_0 \|_{v,\ell}^{R_*}$.
	Moreover, $\Psi_0\cdot W=0$ and $ \left(\Psi_0\right)_{\mathbb{C}}$ is radial in space.
\end{prop}

\begin{proof}
	Denote $\|h_0\| = \| h_0 \|_{v,\ell}^{R_*}$ and
	set $h_0=0$ in $\DD_{R_*}^c$. By Lemma \ref{complex-lem}, to find a solution $\Psi_0$ with the form $\Psi_0 = \left( \psi_{0} (\rho,\tau) \right)_{\mathbb{C}^{-1}}$, it is equivalent to
	\begin{equation*}
		\pp_\tau \psi_0 =
		(a-ib)\mathcal{L}_0\psi_0 + h_0 +
		c_0(\tau) \eta(\rho) \mathcal{Z}_{0,1}(\rho)
		\mbox{ \ in \ } \DD_{R},
		\quad
		\psi_0(\cdot, \tau_0) =0  \mbox{ \ in \ } B_{R(\tau_0)}.
	\end{equation*}
	Set $\psi_0 = \eta_{R_0}(\rho) \psi_{i,0}(\rho,\tau) + \psi_{o,0}(\rho,\tau) $, where $\eta_{R_0}(\rho) = \eta(\rho / R_0)$. In order to find a solution $\psi_0$, it suffices to  consider the following inner-outer system
	\begin{equation}\label{phio-0-eq}
		\left\{
		\begin{aligned}
			& \pp_{\tau} \psi_{o,0}
			=
			(a-ib)\Big(	\pp_{\rho\rho} \psi_{o,0}
			+
			\frac 1{\rho}
			\pp_{\rho} \psi_{o,0}
			-
			\frac{1}{\rho^2} \psi_{o,0}
			\Big)
			+
			J[\psi_{o,0},\psi_{i,0}]
			\1_{\{ \rho \le 4R \} }
			\mbox{ \ in \ }
			\RR^2 \times (\tau_0,\infty) ,
			\\
			&
			\psi_{o,0}(\cdot,\tau_0) =0  \mbox{ \ in \ }
			\RR^2 ,
		\end{aligned}
		\right.
	\end{equation}
	\begin{equation}\label{phii-0-eq}
		\pp_{\tau} \psi_{i,0}
		=
		(a-ib)	\mathcal{L}_0 \psi_{i,0}
		+
		K[\psi_{o,0}] + c_0(\tau) \eta(\rho) \mathcal{Z}_{0,1}(\rho)
		\mbox{ \ in \ } \DD_{2R_0} ,
		\quad 	
		\psi_{i,0}(\cdot,\tau_0) =0	
		\mbox{ \ in \ } B_{2R_0(\tau_0 )},
	\end{equation}
	where
	\begin{equation*}
		\begin{aligned}
			&	J[\psi_{o,0},\psi_{i,0}]=
			(a-ib) (1-\eta_{R_0})
			\tilde{V}_0(\rho) \psi_{o,0}
			+
			A_0[\psi_{i,0}] + (1-\eta_{R_0}) h_0,
			\quad
			\tilde{V}_0(\rho) = \frac{8}{(\rho^2+1)^2},
			\\
			&
			K[\psi_{o,0}] =
			(a-ib) \tilde{V}_0(\rho)
			\psi_{o,0}
			+ h_0 ,
			\quad	A_{0}[\psi_{i,0}] =
			(a-ib)
			\Big[
			\Big( \pp_{\rho\rho}\eta_{R_0}
			+ \frac{\pp_{\rho}\eta_{R_0} }{\rho}
			\Big) \psi_{i,0}
			+2 \pp_{\rho} \eta_{R_0} \pp_{\rho} \psi_{i,0}
			\Big] - \psi_{i,0}  \pp_{\tau} \eta_{R_0}.
		\end{aligned}
	\end{equation*}
	Set $\Psi_{i,0}(y,\tau) = (\psi_{i,0} )_{\mathbb{C}^{-1}}$,  that is, $\psi_{i,0}= \Psi_{i,0} \cdot E_1 + i \Psi_{i,0} \cdot E_2$. By Lemma \ref{complex-lem}, \eqref{phii-0-eq} is equivalent to
	\begin{equation}\label{Phii-0-eq}
		\pp_{\tau} \Psi_{i,0}
		=
		\left(a-bW\wedge\right)	L_{\rm{in}} \Psi_{i,0}
		+
		\left(	K[\psi_{o,0}] + c_0(\tau) \eta(\rho) \mathcal{Z}_{0,1}(\rho)
		\right)_{\mathbb{C}^{-1} }
		\mbox{ \ in \ } \DD_{2R_0} ,
		\quad 	
		\Psi_{i,0}(\cdot,\tau_0) =0	
		\mbox{ \ in \ } B_{2R_0(\tau_0 )}.
	\end{equation}
	
	 For $2R_0 \le R_1 \le R_*$, in order for the orthogonality condition \eqref{m0-orth-cond} to hold in \eqref{Phii-0-eq}, we take
	\begin{equation*}
		c_{0}(\tau) =  c_{0}[\psi_{o,0}](\tau) :=
		C_{0,1} \int_{0}^{R_1}
		\Big[ (a-ib) \tilde{V}_0(r) \psi_{o,0}(r,\tau)
		+ h_0(r,\tau) \Big] \mathcal{Z}_{0,1}(r) r dr,
		\
		 C_{0,1} :=  - \Big(\int_{0}^2 \eta(r) \mathcal{Z}_{0,1}^2(r) r dr \Big)^{-1}.
	\end{equation*}

	By Lemma \ref{convert dim} and Lemma \ref{m0-rough}, we reformulate \eqref{phio-0-eq} and \eqref{Phii-0-eq} into the following form formally
	\begin{equation*}
		\begin{aligned}
			\psi_{o,0}(\rho,\tau) = \ &
			\rho
			\big[
			\Gamma_{4}^{\natural}**\big(
			|z|^{-1}
			J[\psi_{o,0},\psi_{i,0}] \1_{\{ |z| \le 4R(s) \} } \big)  \big] (\rho,\tau,\tau_0)   ,
			\\
			\Psi_{i,0}(y,\tau) = \ &
			\TT_{0r}^{2R_0} \left[ \left(K[\psi_{o,0} ] + c_0[\psi_{o,0}](\tau) \eta(\rho) \mathcal{Z}_{0,1}(\rho)
			\right)_{\mathbb{C}^{-1}} \right].
		\end{aligned}
	\end{equation*}
	We will solve this system using the contraction mapping theorem.
	
	Denote $H_{I}:= \big[h_0+ C_{0,1}
	\big( \int_{0}^{R_1}
	h_0(r,\tau) \mathcal{Z}_{0,1}(r) r dr
	\big) \eta(\rho) \mathcal{Z}_{0,1}(\rho) \big]_{\mathbb{C}^{-1} } $. It is easy to have $\| H_{I} \|_{v,\ell}^{R_1} \lesssim \|h_0\|$. In view of Lemma \ref{m0-rough}, if $(H_I)_{\mathbb{C}}$ satisfies the orthogonality condition \eqref{m0-orth-cond} (with $R_*=R_1$) and $\ln R_0 \in \mathbf{AP}$, $2 + \mathbf{P}_1[v(\tau)] + (3-\ell) \mathbf{P}_1[R_0]>0$, we have the estimate
	\begin{equation}\label{qd24July09-4}
		\langle y \rangle
		|\nabla \TT_{0r}^{2R_{0}} [H_{I}] (y,\tau) |
		+	|\TT_{0r}^{2R_{0}} [H_I] (y,\tau) | \le
		D_i   w_{i,0}(\rho,\tau) \| h_0 \|
		,
	\end{equation}
	where  $D_{i}\ge 1$ is a constant and
	$
	w_{i,0}(\rho,\tau): =
	v(\tau)
	R_0^{5-\ell } \ln R_0 \langle \rho \rangle^{-3} $.
	Denote
	\begin{equation*}
		\begin{aligned}
			\B_{i,0} := \ &
			\Big\{
			F(y,\tau) \in C^{1}\left(B_{2R_0} , \mathbb{R}^3 \right) \ | \ F(y,\tau) = \left( f(\rho,\tau) \right)_{\mathbb{C}^{-1}}
			\mbox{ \ for some radial scalar function}
			\\
			& f(\rho,\tau) \mbox{ \ and \ }
			\langle y \rangle |\nabla F(y,\tau)|  + |F(y,\tau)| \le
			2D_i  w_{i,0}(\rho,\tau) \| h_0 \|
			\Big\} .
		\end{aligned}
	\end{equation*}
	For any $\tilde{\Psi}_{i,0}\in \B_{i,0}$, denote $\tilde{\psi}_{i,0} =  \tilde{\Psi}_{i,0}\cdot E_1 + i \tilde{\Psi}_{i,0}\cdot E_2 $.
	We will find a solution $\psi_{o,0} = \psi_{o,0}[\tilde{\psi}_{i,0}]$ of \eqref{phio-0-eq} by the contraction mapping theorem. Let us estimate $J[\psi_{o,0},\tilde{\psi}_{i,0}]$ term by term.
	By \eqref{Frenet-deri},
	\begin{align*}
			&
			\big| \pp_{\rho} \tilde{\psi}_{i,0} \big|=
			\big|  \tilde{\Psi}_{i,0}\cdot \pp_{\rho} E_1
			+
			E_1 \cdot \pp_{\rho} \tilde{\Psi}_{i,0}
			+ i \tilde{\Psi}_{i,0}\cdot \pp_{\rho} E_2
			+ i  E_2\cdot \pp_{\rho}  \tilde{\Psi}_{i,0}
			\big|
			\\
			\lesssim \ &
			|\tilde{\Psi}_{i,0}| \langle \rho\rangle^{-2}
			+
			| \nabla \tilde{\Psi}_{i,0}  |
			\lesssim
			D_i v(\tau)
			R_0^{5-\ell } \ln R_0 \langle \rho \rangle^{-4} \| h_0 \|.
		\end{align*}
	Since $|R_0'| =O( R_0^{-1} )$ in \eqref{nu-assump}, we have
	\begin{equation}\label{qd24July12-1}
		\begin{aligned}
			|A_{0}[ \tilde{\psi}_{i,0} ]|
			+
			\left|(1-\eta_{R_0}) h_0  \right|
			\lesssim \ &
			D_i \1_{\{ R_0 \le \rho \le 2R_0 \} }  v(\tau) R_0^{-\ell} \ln R_0  \| h_0 \| +
			\1_{\{ \rho\ge R_0 \}} v(\tau)  \rho^{-\ell } \| h_0\|
			\\
			\lesssim  \ &
			D_i \1_{\{ \rho \ge R_0 \} }  v(\tau) \ln R_0 \rho^{-\ell} \| h_0 \|.
		\end{aligned}
	\end{equation}
	Provided \eqref{nu-assump}, $\ln R_0 \in \mathbf{AP}$, $1<\ell <3$, and
	$\mathbf{P}_1[v(\tau)] > (\ell-5)/2 $, by Lemma \ref{qd24July11-1-lem}, we have
	\begin{equation*}
		\Big|\rho
		\Gamma_{4}^{\natural}**\Big\{
		|z|^{-1}
		\Big[A_0[\tilde{\psi}_{i,0}] + (1-\eta_{R_0}) h_0\Big] \1_{\{ |z| \le 4R(s) \} }  \Big\}
		\Big|
		\le
		D_{o} D_i w_{o,0}(\rho,\tau) \| h_0 \|,
	\end{equation*}
	where $D_o\ge 1$ is a  large constant,
	$w_{o,0}(\rho,\tau) :=
	v(\tau) \ln R_0
	\big( \rho R_0^{1-\ell} \1_{\{ \rho \le R_0 \}} + \rho^{2-\ell} \1_{\{ \rho > R_0 \}}\big) $.
	Denote
	\begin{equation}\label{qd24July12-3}
		\B_{o,0} :=
		\{
		f(\rho,\tau) \ | \
		|f(\rho,\tau)| \le
		2 D_o D_i w_{o,0}(\rho,\tau)  \| h_0 \|
		\mbox{ \ for \ } \rho\ge 0, \tau\ge  \tau_0 \}.
	\end{equation}
	For any $\tilde{\psi}_{o,0} \in \B_{o,0}$, we estimate
	\begin{equation*}
		\begin{aligned}
			&
			|
			(1-\eta_{R_0}) \tilde{V}_0(\rho)
			\tilde{\psi}_{o,0}  \1_{\{ \rho \le 4R \} } |
			\lesssim
			D_o D_i   v(\tau) \ln R_0 \rho^{-2-\ell }
			\1_{\{R_0 <  \rho \le 4R \} } \| h_0 \|
			\\
			\lesssim \ &
			D_o D_i v(\tau)  \ln R_0  \rho^{-\ell }
			\1_{\{R_0 < \rho \le 4R \} }
			\| h_0 \| \big(\inf_{s\ge \tau_0 } R_{0}(s) \big)^{-2}.
		\end{aligned}
	\end{equation*}
	Compared with \eqref{qd24July12-1}, due to the small quantity $\big(\inf_{s\ge \tau_0 } R_{0}(s) \big)^{-2}$ by $\inf_{s\ge \tau_0} R_0(s) \gg 1$ in \eqref{nu-assump}, we have
	\begin{equation*}
		\rho
		\Gamma_{4}^{\natural}**\left(
		|z|^{-1}
		J[\tilde{\psi}_{o,0},\tilde{\psi}_{i,0}] \1_{\{ |z| \le 4R(s) \} }  \right)  \in \B_{o,0} .
	\end{equation*}
	We can deduce the contraction mapping property in the same way.
	
	Now we have found a solution $\psi_{o,0} = \psi_{o,0}[\tilde{\psi}_{i,0}]\in \B_{o,0}$. It follows that for $\ell \in (1,3)$,
	\begin{equation*}
		\Big\|  \big| \tilde{V}_0(\rho) \psi_{o,0}[\tilde{\psi}_{i,0}] \big|
		+
		C_{0,1} \Big|\int_{0}^{R_1}
		\tilde{V}_0(r) \psi_{o,0}[\tilde{\psi}_{i,0}](r,\tau)  \mathcal{Z}_{0,1}(r) dr \Big|
		\eta(\rho) \mathcal{Z}_{0,1}(\rho) \Big\|_{v,\ell}^{2 R_0}
		\lesssim
		D_o D_i
		\sup_{s\ge \tau_0}
		\big( R_0^{1-\ell} \ln R_0 \big)(s) \| h\|.
	\end{equation*}
	Due to the choice of $c_0(\tau)$, $h_{II}:=K[\psi_{o,0}[\tilde{\psi}_{i,0}]] + c_0[\psi_{o,0}[\tilde{\psi}_{i,0}]](\tau) \eta(\rho) \mathcal{Z}_{0,1}(\rho)$ satisfies the orthogonality condition \eqref{m0-orth-cond} with $R_*=R_1$. Since $\sup_{s\ge \tau_0}
	\big( R_0^{1-\ell} \ln R_0 \big)(s)$ provides small quantity by \eqref{nu-assump},
	similar to \eqref{qd24July09-4}, we have
	\begin{equation*}
		\TT_{0r}^{2R_0}[ (h_{II})_{\mathbb{C}^{-1} }] \in \B_{i,0}.
	\end{equation*}

	The contraction property can be deduced in the same way. Therefore, we find a solution $\Psi_{i,0}=\Psi_{i,0}[h_0] \in \B_{i,0}$. Finally we find a solution $(\psi_{o,0},\Psi_{i,0})$ for \eqref{phio-0-eq} and \eqref{Phii-0-eq}.
	From the construction process,  $\psi_{i,0}[h_0]$, $\psi_{o,0}[h_0]$ and $c_0[h_0]$ are linear mappings in $h_0$. So does $\psi_0$.
	
	We will regard $D_o$, $D_i$  as general constants hereafter. Since $\psi_{o,0}[h_0] \in \B_{o,0}$, then
	\begin{equation*}
		c_{0}[h_0](\tau)
		=
		C_{0,1} \Big(\int_{0}^{R_1}
		h_0(r,\tau)  \mathcal{Z}_{0,1}(r) r dr
		+
		c_{*0}[h_0](\tau) \Big),
	\end{equation*}
	where $c_{*0}[h_0](\tau)$ is a linear mapping in $h_0$, and $|c_{*0}[h_0](\tau)|\lesssim R_0^{ 1-\ell}\ln R_0  v(\tau) \|h_0\|$.

	Combining the upper bound of $\psi_{o,0}$ and $\Psi_{i,0}$, we have
	\begin{equation*}
		| \Psi_{0}(y,\tau)|
		\lesssim \ln R_0 v(\tau)
		\big(
		R_0^{5-\ell }  \langle y \rangle^{-3}
		\1_{\{|y| \le 2R_0\}}
		+
		\langle y \rangle^{2-\ell} \1_{\{ |y|> 2R_0\}}\big) \|h_0\|
		\mbox{ \ in \ } \DD_{R}.
	\end{equation*}
	By the scaling argument, we conclude the validity of the proposition.
\end{proof}

\begin{remark}
	The reason that we solve $\psi_{o,0}$ in \eqref{qd24July12-3} with elaborated pointwise bound in $\rho > 4 R$ is to give a uniform estimate of $c_{*0}[H_0]$ when $R_1= R_*=\infty$. It will be more convenient when solving the reduced equations. The reasoning is the same for the refined estimates of $\psi_{o,1}$ in Proposition \ref{qd24July12-8-prop} below.
	
\end{remark}

\subsection{Mode $1$}

\begin{prop}\label{m1-nonortho}
	Consider
	\begin{equation*}
		\begin{cases}
			\pp_\tau \Psi_1 =\left(a-b W\wedge\right) \left( L_{\rm{in}}  \Psi_1 \right)  + H_1
			\mbox{ \ in \ } \DD_{R},
			\\
			\Psi_1 = 0 	
			\mbox{ \ on \ } \pp\DD_{R},	
			\quad
			\Psi_1(\cdot,\tau_0) = 0
			\mbox{ \ in \ } B_{R(\tau_0)} ,
		\end{cases}
	\end{equation*}
	where $H_1 = \left( h_{1} (\rho,\tau) e^{i\theta} \right)_{\mathbb{C}^{-1}}$, $\| H_1 \|_{v,\ell}^{R}<\infty$. Suppose $\ln R \in \mathbf{AP}$ when $\ell =1$, \eqref{nu-assump},
	\begin{equation}\label{qd24July09-8}
		\mbox{either Case 1: \ } \mathbf{P}_1[R]<1/4
		\mbox{ \ or Case 2: \ }
		\mathbf{P}_1[R] > 1/4, \
		4 \mathbf{P}_1[R] + 2 \mathbf{P}_1[\theta_{R,\ell}] + 2 \mathbf{P}_1[v(\tau)] > -1
		\mbox{ holds},
	\end{equation}
	and $3+ \min\{ 1/2 , \mathbf{P}_1[R^2] \} + \mathbf{P}_1[ R^{-2} \theta_{R,\ell} v(\tau) ] >0$  with $\theta_{R,\ell}$ given in \eqref{theta-Rl-def},
	then there exists a linear mapping $\Psi_1 = \TT_{10}^{R}[H_1]$ with the estimate
	\begin{equation*}
		|\Psi_1(y,\tau)|
		\lesssim
		\min\{\tau^{\frac 12}, R^2\} R^2 \theta_{R,\ell}  v(\tau) 	\langle y\rangle^{-2}  \| H_1 \|_{v,\ell}^{R} .
	\end{equation*}
	Moreover, $\Psi_1\cdot W=0$ and $ e^{-i\theta}\left(\Psi_1\right)_{\mathbb{C}}$ is radial in space.
	
\end{prop}

\begin{proof}
	
	To find a solution $\Psi_1 = \left( \psi_1(\rho,\tau) e^{i\theta}\right)_{\mathbb{C}^{-1} }$,	by Lemma \ref{complex-lem}, we consider
	\begin{equation*}
		\begin{cases}
			\pp_\tau \psi_1 =
			(a-ib)\mathcal{L}_1 \psi_1 + h_1
			=	(a-ib)
			\big(
			\pp_{\rho\rho} \psi_1
			+
			\frac{1}{\rho} \pp_{\rho} \psi_1
			-
			\frac{4}{\rho^2}  \psi_1
			\big)
			+ \tilde{h}_1
			\mbox{ \ in \ } \DD_{R},
			\\
			\psi_1 = 0 	
			\mbox{ \ on \ } \pp\DD_{R},	
			\quad
			\psi_1(\cdot,\tau_0) = 0
			\mbox{ \ in \ } B_{R(\tau_0)},
		\end{cases}
	\end{equation*}
	where $\tilde{h}_1 =
	(a-ib) \frac{12 \rho^2 +4}{(\rho^2+1)^2}
	\frac{1}{\rho^2}
	\psi_1 + h_1$. Denote $\|h_1\| = \|h_1\|_{v,\ell}^{R}$. By \eqref{qd24July09-8} and Lemma \ref{energy est}, we get $
	\|\psi_1(\cdot, \tau)\|_{L^{\infty}(B_R)}
	\lesssim
	\min\{\tau^{\frac 12}, R^2\} R^2 \theta_{R,\ell}  v(\tau) \|h_1 \| $.

	To get spatial decay, by similar argument for \eqref{typical-m0}, for $\rho \le R$,
	\begin{align}
			& |\psi_1|
			\lesssim
			\rho^{2}
			\Big|\Gamma_{6} **\Big( |y|^{-2}
			|\tilde{h}_1 |
			\1_{ \{|y|\le R \} }
			\Big) \Big|
			\lesssim
			\|h_1 \| \rho^{2} \int_{\tau_0}^{\tau} \int_{\mathbb{R}^6}
			(\tau-s)^{-3} e^{-\frac{\kappa|y-z|^2}{\tau-s}}
			\nonumber
			\\
			&\quad \times
			\Big[
			\big( \min\{\tau^{\frac 12}, R^2\} R^2 \theta_{R,\ell}  v \big)(s)
			\Big( \1_{\{ |z|\le 1 \}} |z|^{-4} + \1_{\{ 1< |z|\le R(s) \}} |z|^{-6}  \Big)
			+
			v(s) \1_{\{ 1< |z|\le R(s) \}} |z|^{-2-\ell}
			\Big] dz ds
			\nonumber
			\\
			\lesssim \ &
			\min\{\tau^{\frac 12}, R^2\} R^2 \theta_{R,\ell}  v(\tau) \|h_1 \| \big( \1_{\{ \rho\le 1\}} +
			\1_{\{ \rho > 1\}} \rho^{-2} \langle \ln \rho \rangle \big),
			\label{qd240820-1}
	\end{align}
	where $ | \Gamma_6(x,y,t,s)| \lesssim (t-s)^{-3} e^{-\frac{\kappa|x-y|^2}{t-s}} $ with a constant $\kappa>0$, and we used $\ln R \in \mathbf{AP}$ when $\ell =1$, \eqref{nu-assump}, $3+ \min\{ 1/2 , \mathbf{P}_1[R^2] \} + \mathbf{P}_1[ R^{-2} \theta_{R,\ell} v(\tau) ] >0$, Lemma \ref{qd24July10-2-lem} for the term $\1_{\{ 1< |z|\le R(s) \}} |z|^{-6}$ and \cite[Lemma A.1]{infi4d} for the other two terms. Since $\min\{\tau^{\frac 12}, R^2\} $ may not stay in $\mathbf{AP}$, we have used $\tau^{\frac 12}$ and $R^2$ in the calculations separately instead. In the last step, $ \1_{\{ \rho > 1\}} \rho^{-2} \langle \ln \rho \rangle$ is from the convolution related to $\1_{\{ 1< |z|\le R(s) \}} |z|^{-6}$, and the other two terms give the bound $ \1_{\{ \rho > 1\}} \rho^{-2}$.
	
	Plugging the new upper bound of $|\psi|$ into \eqref{qd240820-1}, the bound $ \1_{\{ 1< |z|\le R(s) \}} |z|^{-6}$ can be improved to $ \1_{\{ 1< |z|\le R(s) \}} |z|^{-7}$, for $\rho \le R$, we obtain $ |\psi_1|
	\lesssim
	\min\{\tau^{\frac 12}, R^2\} R^2 \theta_{R,\ell}  v(\tau) 	\langle \rho\rangle^{-2}  \|h_1 \| $.
\end{proof}

\begin{lemma}\label{m1-rough}
	Consider
	\begin{equation*}
		\pp_\tau \Psi_1 =
		\left(a-b W\wedge\right) \left( L_{\rm{in}} \Psi_1 \right)  + H_1
		\mbox{ \ in \ } \DD_{R}, \quad \Psi_1(\cdot, \tau_0) =0 \mbox{ \ in \ } B_{R(\tau_0)},
	\end{equation*}
	where $H_1$ is defined in $\DD_{R_*}$ with $ R \le R_* \le \infty$,
	$H_1 = \left( h_{1} (\rho,\tau) e^{i\theta} \right)_{\mathbb{C}^{-1}}$, $\|H_1\|_{v,\ell}^{R_*}<\infty$ with $\ell \in (0,3)$ and the orthogonality condition
	\begin{equation}\label{h1-orth-N1}
		\int_{0}^{R_*(\tau)} h_1(r,\tau) \mathcal{Z}_{1,1}(r) r d r = 0
		\mbox{ \ for \ } \tau \in (\tau_{0},\infty).
	\end{equation}
	Suppose \eqref{nu-assump} and one of the following cases:
	\begin{align}
			&
			\mbox{either Case 1: \ } \mathbf{P}_1[R]<1/4
			\mbox{ \ or Case 2: \ }
			\mathbf{P}_1[R] > 1/4, \
			(10-2\ell) \mathbf{P}_1[R] + 2 \mathbf{P}_1[v(\tau)] > -1
			\mbox{ holds},
			\nonumber
			\\
			&
			3+ \min\{ 1/2 , \mathbf{P}_1[R^2] \} + \mathbf{P}_1[ R^{1-\ell}  v(\tau) ] >0,
			\label{qd24July09-9}
	\end{align}
	then there exists a solution $\Psi_1 = \TT_{1r}^{R}[H_1]$ as a linear mapping in $H_1$ with the estimate
	\begin{equation}\label{qd240821-10}
		\langle y \rangle |\nabla \Psi_1 |  + 	|\Psi_1 |
		\lesssim
		\min\{\tau^{\frac 12}, R^2\} R^{5-\ell}  v(\tau) 	\langle y\rangle^{-4}  \|H_1\|_{v,\ell}^{R_*}
		\mbox{ \ in \ } \DD_{R}.
	\end{equation}
	Moreover, $\Psi_1\cdot W=0$ and $ e^{-i\theta}\left(\Psi_1\right)_{\mathbb{C}}$ is radial in space.
	
\end{lemma}

\begin{proof}
	
	The proof is the same as Lemma \ref{m0-rough}. Denote $\|h_1\| = \|h_1\|_{v,\ell}^{R_*}$ and set $h_1=0$ in $\DD_{R_*}^c$. Consider
	\begin{equation*}
		(a-b W\wedge) ( L_{\rm{in}} G_1 ) = H_1, \mbox{ \ where \ } G_1 = (g_1(\rho,\tau) e^{i\theta} )_{\mathbb{C}^{-1}} .
	\end{equation*}
	By Lemma \ref{complex-lem}, it is equivalent to $(a-ib ) \mathcal{L}_1 g_1 = h_1 $,	where $g_1$ is given by
	\begin{equation*}
		g_1(\rho,\tau ) =
		(a+ib)
		\Big(
		\mathcal{Z}_{1,2}(\rho)\int^{\rho}_{0}
		h_1(r,\tau) \mathcal{Z}_{1,1}(r) r d r
		-
		\mathcal{Z}_{1,1}(\rho)
		\int_{0}^{\rho}
		h_1(r,\tau) \mathcal{Z}_{1,2}(r) r d r \Big).
	\end{equation*}
	
	Similar to Lemma \ref{m0-rough}, we derive the following by \eqref{scalr-Z} and the orthogonality condition \eqref{h1-orth-N1}, for $0<\ell <4$,
	\begin{equation}\label{m1-z1}
		\begin{aligned}
			&
			|g_1| \lesssim
			\| h_1\| v(\tau)
			\big(
			\1_{\{\rho \le 1\}} \rho^2 \langle  \ln \rho \rangle  + \1_{\{\rho > 1\}} \rho^{2-\ell} \big),
			\quad
			|\partial_{\rho} g_1| \lesssim
			\| h_1\| v(\tau)
			\big(
			\1_{\{\rho \le 1\}} \rho+ \1_{\{\rho > 1\}}  \rho^{1-\ell} \big),
			\\
			&
			\| G_1 \|_{v,\ell-2}^{\infty} =	\| g_1 \|_{v,\ell-2}^{\infty}
			\lesssim  \|h_1 \|,
			\\
			&
			[G_1]_{\mathsf{|DMO|_x}(Q_2^{-}(0,\tau))} \lesssim \|h_0\| v(\tau),
			\quad
			[ G_1(x_*+ \rho_*  z , t_* + \rho_*^2 s) ]_{\mathsf{|DMO|_x}(Q_2^{-}(0))}
			\lesssim \| h_1\| v(t_*) |x_*|^{1-\ell}
			\rho_*
		\end{aligned}
	\end{equation}
	for $t_*>\tau_0$, $1 \le |x_*| \le 3R(t_*)/2$, $\rho_* = |x_*|/100$. Next, let us consider
	\begin{equation*}
		\begin{cases}
			\pp_{\tau} \Phi_1 =
			\left(a-bW\wedge \right) \left( L_{\rm{in}} \Phi_1 \right) + G_1
			\mbox{ \ in \ } \DD_{2R},
			\\
			\Phi_1 = 0
			\mbox{ \ on \ } \pp\DD_{2R}, \quad
			\Phi_1(\cdot,\tau_0) = 0
			\mbox{ \ in \ } B_{2R(\tau_0)} .
		\end{cases}
	\end{equation*}
	Suppose \eqref{nu-assump}, $\ell<3$,
	\eqref{qd24July09-9},
	then $\Phi_1$ is given by Proposition \ref{m1-nonortho} satisfying $\Phi_1\cdot W=0$ and $ \Phi_1 = (\phi_1(\rho,\tau ) e^{i\theta} )_{\mathbb{C}^{-1}}$ for some radial function $\phi_1$. Using \eqref{m1-z1}, we have the estimate
	\begin{equation*}
		|\Phi_1|
		\lesssim
		\min\{\tau^{\frac 12}, R^2\} R^{5-\ell}  v(\tau) 	\langle y \rangle^{-2}  \| h_1 \|  \mbox{ \ in \ } \DD_{2R}.
	\end{equation*}
	By Propositions \ref{qd240725-6-prop}, \ref{scaling-prop-0722}, and the interpolation inequality, we have
	\begin{equation}\label{qd240821-6}
		\langle y\rangle^2 |D^2 \Phi_1| + \langle y\rangle |D \Phi_1| + 	|\Phi_1| \lesssim   \min\{\tau^{\frac 12}, R^2\} R^{5-\ell}  v(\tau) 	\langle y \rangle^{-2}  \| h_1 \| \mbox{ \ in \ } \mathcal{D}_{3R/2}.
	\end{equation}

	The desired strong solution is given by $\Psi_1 = \left(a-bW\wedge \right) \left(L_{\rm{in}} \Phi_1 \right)$. By \eqref{qd240821-6} and the scaling argument, we conclude \eqref{qd240821-10}. By Lemma \ref{complex-lem}, $(\Psi_1)_{\mathbb{C}} = (a-ib) e^{i\theta} \mathcal{L}_1 \phi_1$.
\end{proof}

\begin{prop}\label{qd24July12-8-prop}
	Consider
	\begin{equation*}
		\pp_\tau \Psi_1 =
		\left(a-b W\wedge \right) \left(L_{\rm{in}}\Psi_1 \right) + H_1 +
		\big(
		c_1(\tau) \eta(\rho) \mathcal{Z}_{1,1}(\rho) e^{i\theta} \big)_{\mathbb{C}^{-1}}
		\mbox{ \ in \ } \DD_{R},
		\quad
		\Psi_1(\cdot, \tau_0) =0  \mbox{ \ in \ } B_{R(\tau_0)},
	\end{equation*}
	where $H_1$ is defined in $\DD_{R_*}$ with $ R \le R_* \le \infty$, $H_1 = \left( h_{1} (\rho,\tau) e^{i\theta} \right)_{\mathbb{C}^{-1}}$, $\| H_1 \|_{v,\ell}^{R_*} <\infty$. Suppose \eqref{nu-assump},
	$\ell\in (1,3)$,
	$\mathbf{P}_1[R_0] <1/4$,
	$\mathbf{P}_1[R_0 v(\tau)] > ( \ell -6)/2$,
	then there exists a solution $(\Psi_1,c_{1} ) = ( \TT_1^{R}[H_1], c_{1}[H_1] )$ as a linear mapping in $H_1$ with the estimates
	\begin{equation*}
		\langle y \rangle |\nabla \Psi_{1}| +	|\Psi_{1}|
		\lesssim R_0 v(\tau)
		\big(
		R_0^{6-\ell }  \langle y \rangle^{-4}
		\1_{\{|y|\le 2R_0\}}
		+
		\langle y \rangle^{2-\ell} \1_{\{ |y|> 2R_0\}} \big) \| H_1 \|_{v,\ell}^{R_*},
	\end{equation*}
	\begin{equation*}
		c_{1}(\tau)
		=
		-\Big(\int_{0}^{2} \eta(r) \mathcal{Z}_{1,1}^2(r) r dr\Big)^{-1} \Big( \int_{0}^{R_1}
		h_1(r,\tau)  \mathcal{Z}_{1,1}(r) r dr
		+
		c_{*1}[H_1](\tau) \Big),
	\end{equation*}
	where $2R_0 \le R_1 \le R_*$, $c_{*1}[H_1] $ is a scalar function linearly depending on $H_1$ and satisfies
	$| c_{*1}[H_1](\tau) |\lesssim R_0^{1 -\ell} v(\tau) \| H_1 \|_{v,\ell}^{R_*}$.
	Moreover, $\Psi_1\cdot W=0$ and $ e^{-i\theta}\left(\Psi_1\right)_{\mathbb{C}}$ is radial in space.
\end{prop}

\begin{proof}

	The proof is the verbatim repetition of Proposition \ref{Re-m0-prop}.
	In order to find a solution $\Psi_1$ with the form $\Psi_1 = \left( \psi_1(\rho,\tau) e^{i\theta}\right)_{\mathbb{C}^{-1} }$, by Lemma \ref{complex-lem}, it is equivalent to
	\begin{equation*}
		\pp_\tau \psi_1 =
		(a-ib)\mathcal{L}_1\psi_1 + h_1 +
		c_1(\tau) \eta(\rho) \mathcal{Z}_{1,1}(\rho)
		\mbox{ \ in \ } \DD_{R},
		\quad
		\psi_1(\cdot, \tau_0) =0  \mbox{ \ in \ } B_{R(\tau_0)} .
	\end{equation*}
	Denote $\|h_1\| = \| h_1 \|_{v,\ell}^{R_*}$ and
	take $h_1 =0$ in $\DD_{R_*}^c$. Set $\psi_1 = \eta_{R_0}(\rho)  \psi_{i,1}(\rho,\tau) + \psi_{o,1}(\rho,\tau) $, where $\eta_{R_0}(\rho)  = \eta(\rho/R_0)$. In order to find a solution $\psi_1$, it suffices to  consider the following inner-outer system
	\begin{equation}\label{phio-1-eq}
		\left\{
		\begin{aligned}
			& \pp_{\tau} \psi_{o,1}
			=
			(a-ib) \Big(\pp_{\rho\rho} \psi_{o,1}
			+
			\frac 1{\rho}
			\pp_{\rho} \psi_{o,1}
			-
			\frac{4}{\rho^2} \psi_{o,1}
			\Big)
			+
			J[\psi_{o,1},\psi_{i,1}]
			\1_{\{ \rho \le 4R \} }
			\mbox{ \ in \ }
			\RR^2 \times (\tau_0,\infty),
			\\
			& \psi_{o,1}(\cdot,\tau_0)=0  \mbox{ \ in \ }
			\RR^2,
		\end{aligned}
		\right.
	\end{equation}
	\begin{equation}\label{phii-1-eq}
		\pp_{\tau} \psi_{i,1}
		=
		(a-ib)
		\mathcal{L}_1 \psi_{i,1}
		+
		K[\psi_{o,1}]
		+
		c_1(\tau) \eta(\rho) \mathcal{Z}_{1,1}(\rho)
		\mbox{ \ in \ } \DD_{2R_0} ,
		\quad
		\psi_{i,1}(\cdot, \tau_0) =0 \mbox{ \ in \ } B_{2R_0(\tau_0)},
	\end{equation}
	where we denote
	\begin{small}
		\begin{align*}
			&	J[\psi_{o,1},\psi_{i,1}]
			=
			(a-ib)(1-\eta_{R_0}) \tilde{V}_1(\rho)
			\psi_{o,1}
			+
			A_0[\psi_{i,1}] + (1-\eta_{R_0}) h_1,
			\quad
			\tilde{V}_1(\rho) = \frac{12 \rho^2 +4}{(\rho^2+1)^2 \rho^2},
			\\
			&
			K[\psi_{o,1}] =   (a-ib) \tilde{V}_1(\rho)  \psi_{o,1}
			+ h_1,
			\quad
			A_{0}[\psi_{i,1}] =
			(a-ib)
			\Big[
			\Big( \pp_{\rho\rho}\eta_{R_0}
			+ \frac {\pp_{\rho}\eta_{R_0}}{\rho}
			\Big) \psi_{i,1}
			+2 \pp_{\rho} \eta_{R_0} \pp_{\rho} \psi_{i,1} \Big] - \psi_{i,1} \pp_{\tau}\eta_{R_0}.
		\end{align*}
	\end{small}
	Set $\Psi_{i,1}(y,\tau) = ( \psi_{i,1} e^{i\theta} )_{\mathbb{C}^{-1} }$, that is, $\psi_{i,1} = e^{-i\theta} \left( \Psi_{i,1}\cdot E_1 + i \Psi_{i,1}\cdot E_2 \right)$. Then \eqref{phii-1-eq} is equivalent to
	\begin{equation}\label{phii-1-eq-1}
		\pp_{\tau} \Psi_{i,1}
		=
		\left(a-bW\wedge \right)
		L_{\rm{in}} \Psi_{i,1}
		+
		\big[
		\big(
		K[\psi_{o,1}]
		+
		c_1(\tau) \eta(\rho) \mathcal{Z}_{1,1}(\rho)
		\big) e^{i\theta}
		\big]_{\mathbb{C}^{-1} }
		\mbox{ \ in \ } \DD_{2R_0},
		\quad
		\Psi_{i,1}(\cdot, \tau_0) =0 \mbox{ \ in \ } B_{2R_0(\tau_0)}.
	\end{equation}
	To meet the orthogonality condition \eqref{h1-orth-N1} for $2R_0 \le R_1 \le R_*$ to solve \eqref{phii-1-eq-1}, we take
	\begin{small}
	\begin{equation}\label{qd240725-1}
		c_{1}(\tau) = c_{1}[\psi_{o,1}](\tau) := C_{1,1} \int_{0}^{R_1} \Big[ (a-ib) \tilde{V}_1(r) \psi_{o,1}(r,\tau)
		+ h_1(r,\tau) \Big] \mathcal{Z}_{1,1}(r) r dr,
		\quad
		C_{1,1} :=  -\Big(\int_{0}^2 \eta(r) \mathcal{Z}_{1,1}^2(r) r dr \Big)^{-1}.
	\end{equation}
	\end{small}
	
	By Lemma \ref{convert dim} and Lemma \ref{m1-rough}, we reformulate \eqref{phio-1-eq} and \eqref{phii-1-eq-1} into the following form formally
	\begin{equation}\label{m1-z3}
		\begin{aligned}
			\psi_{o,1}(\rho,\tau) = \ &
			\rho^{2}
			\big[
			\Gamma_{6}^{\natural}**\big(
			|z|^{-2}
			J[\psi_{o,1},\psi_{i,1}]   \1_{\{ |z| \le 4R(s) \} }\big) \big](\rho,\tau,\tau_0),
			\\
			\Psi_{i,1}(y,\tau) = \ &
			\TT_{1r}^{2R_0} \big[ \big[
			\big(
			K[\psi_{o,1}]
			+
			c_1(\tau) \eta(\rho) \mathcal{Z}_{1,1}(\rho)
			\big) e^{i\theta}
			\big]_{\mathbb{C}^{-1} } \big].
		\end{aligned}
	\end{equation}
	
	Denote $H_{I}:=\big\{ \big[ h_1+ C_{1,1} \big(\int_{0}^{R_1}
	h(r,\tau) \mathcal{Z}_{1,1}(r) r dr \big) \eta(\rho) \mathcal{Z}_{1,1}(\rho) \big] e^{i\theta} \big\}_{\mathbb{C}^{-1} }$. Obviously, $\| H_{I} \|_{v,\ell}^{R_1} \lesssim \|h_1 \|$. Inspired Lemma \ref{m1-rough}, if $\ell \in (0,3)$, $\mathbf{P}_1[R_0] <1/4$, $3+ \mathbf{P}_1[R_0^{3-\ell} v(\tau)] >0$, and
	$e^{-i\theta} (H_{I})_{\mathbb{C}}$ satisfies the orthogonality condition \eqref{h1-orth-N1} (with $R_*=R_1$),
	then we have the estimate
	\begin{equation}\label{qd24July12-4}
		\langle y \rangle
		\big|\nabla \TT_{1r}^{2R_{0}}[H_{I}] \big|
		+
		\big|\TT_{1r}^{2R_{0}} [H_{I}] \big| \le
		D_i  w_{i,1}(\rho,\tau) \| h_1 \|
		\mbox{ \ in \ } \DD_{2R_0},
	\end{equation}
	where the constant $D_{i}\ge 1$ is large and  $w_{i,1}(\rho,\tau): =
	R_0^{7-\ell}  v(\tau) 	\langle \rho\rangle^{-4}  $.
	Denote
	\begin{equation*}
		\begin{aligned}
			\B_{i,1} := \ &
			\Big\{
			F(y,\tau) \in C^{1}\left(B_{2R_0} , \mathbb{R}^3 \right) \ | \ F(y,\tau) = \big( f(\rho,\tau) e^{i\theta} \big)_{\mathbb{C}^{-1}}
			\mbox{ \ for some radial scalar function}
			\\
			&~ f(\rho,\tau) \mbox{ \ and \ }
			\langle y \rangle |\nabla F(y,\tau)|  + |F(y,\tau)| \le
			2D_i  w_{i,1}(\rho,\tau) \| h_1 \|
			\Big\}.
		\end{aligned}
	\end{equation*}
	Given $\tilde{\Psi}_{i,1}\in \B_{i,1}$, denote $\tilde{\psi}_{i,1} = e^{-i\theta} \big( \tilde{\Psi}_{i,1}\cdot E_1 + i \tilde{\Psi}_{i,1}\cdot E_2 \big)$. We will find a solution $\psi_{o,1} = \psi_{o,1}[\tilde{\Psi}_{i,1}]$ of \eqref{phio-1-eq} by the contraction mapping theorem. Let us estimate $J[\psi_{o,1},\tilde{\psi}_{i,1}]$. Similar to \eqref{Sep10-04}, we have
	\begin{equation*}
		| \pp_{\rho} \tilde{\psi}_{i,1} |
		\lesssim
		|\tilde{\Psi}_{i,1}| \langle \rho\rangle^{-2}
		+
		| \nabla \tilde{\Psi}_{i,1}  |
		\lesssim
		D_i R_0^{7-\ell}  v(\tau) 	\langle \rho\rangle^{-5} \| h_1 \|.
	\end{equation*}
	By $|R_0'| =O( R_0^{-1} )$ in \eqref{nu-assump}, we have
	\begin{equation*}
		|A_{0}[ \tilde{\psi}_{i,1} ]|
		+
		|(1-\eta_{R_0}) h_1 |
		\lesssim
		D_i \1_{\{ R_0 \le \rho \le 2R_0 \} }  v(\tau) R_0^{1-\ell} \| h_1 \| +
		\1_{\{ \rho\ge R_0 \}} v(\tau)  \rho^{-\ell } \| h_1\|
		\lesssim D_i
		\1_{\{ \rho\ge R_0 \}} R_0 v(\tau)  \rho^{-\ell } \| h_1\|.
	\end{equation*}
	Provided \eqref{nu-assump}, $\ell\in (0,4)$, $\mathbf{P}_1[R_0 v(\tau)] > ( \ell -6)/2$,  by Lemma \ref{qd24July11-1-lem},
	it follows that
	\begin{equation}\label{qd24July11-1}
		\begin{aligned}
			&
			\Big|\rho^{2}
			\Gamma_{6}^{\natural}**\Big\{
			|z|^{-2}
			\big[ A_0[\tilde{\psi}_{i,1}] + (1-\eta_{R_0}) h_1 \big] \1_{\{ |z| \le 4R(s) \} }  \Big\}
			\Big|
			\\
			\le \ &
			C D_i
			\rho^{2}
			|\Gamma_{6}^{\natural}|**\Big[
			\1_{\{ R_0(s) \le |z| \le 4R(s) \} } R_0(s) v(s) |z|^{-2-\ell} \| h_1\|   \Big]
			\le D_{o} D_i w_{o,1}(\rho,\tau) \| h_1 \|
		\end{aligned}
	\end{equation}
	with a large constant $D_o\ge 1$,
	$w_{o,1}(\rho,\tau) := R_0 v(\tau)
	\big( \rho^2 R_0^{-\ell} \1_{\{ \rho\le R_0 \}}
	+   \rho^{2-\ell}
	\1_{\{ \rho>R_0 \}} \big)$.
	Denote
	\begin{equation*}
		\B_{o,1} :=
		\left\{
		f(\rho,\tau) \ \big| \
		|f(\rho,\tau)| \le
		2 D_o D_i w_{o,1}(\rho,\tau)  \| h_1 \|
		\mbox{ \ for \ } \rho\ge 0, \tau\ge \tau_0
		\right\} .
	\end{equation*}
	For any $\tilde{\psi}_{o,1} \in \B_{o,1}$,
	\begin{equation*}
		\big|(1-\eta_{R_0}) \tilde{V}_1(\rho)
		\tilde{\psi}_{o,1}  \1_{\{ \rho \le 4R \} } \big|
		\lesssim
		(\inf_{s\ge \tau_0} R_{0}(s) )^{-2}
		D_o D_i
		R_0 v(\tau) \rho^{-\ell}
		\1_{\{R_0 <  \rho \le 4R \} } \| h_1\|.
	\end{equation*}
	Similar to the estimate of \eqref{qd24July11-1}, due to the small quantity $(\inf_{s\ge \tau_0} R_{0}(s) )^{-2}$, we have
	\begin{equation*}
		\rho^{2}
		\Gamma_{6}^{\natural}**\big(
		|z|^{-2}
		J[\tilde{\psi}_{o,1},\tilde{\psi}_{i,1}] \1_{\{ |z| \le 4R(s) \} }   \big)  \in \B_{o,1}.
	\end{equation*}
The contraction mapping property can be deduced similarly. Thus we find a solution $\psi_{o,1} = \psi_{o,1}[\tilde{\psi}_{i,1}]\in \B_{o,1}$.  Then for $\rho \le 2R_0$, we have
	\begin{equation*}
		\begin{aligned}
			&
			\big| \tilde{V}_1(\rho) \psi_{o,1}[\tilde{\psi}_{i,1}]
			\big|
			\lesssim
			D_o D_i
			R_0^{ \max\{-1, 1-\ell \} }
			v(\tau)
			\langle \rho\rangle^{-\ell } \| h_1 \|,
			\\
			&
			\Big|C_{1,1} \Big( \int_{0}^{R_1} \tilde{V}_1(r) \psi_{o,1}[\tilde{\psi}_{i,1}](r,\tau)
			\mathcal{Z}_{1,1}(r) r dr \Big)\eta(\rho) \mathcal{Z}_{1,1}(\rho) \Big|
			\lesssim
			D_o D_i
			R_0^{1-\ell}
			v(\tau)
			\langle \rho\rangle^{-\ell } \| h_1\| .
		\end{aligned}
	\end{equation*}
	
	Due to the choice of $c_1(\tau)$, $h_{II}:=K[\psi_{o,1}[\tilde{\psi}_{i,1}]] + c_1 [\psi_{o,1}[\tilde{\psi}_{i,1}]](\tau) \eta(\rho) \mathcal{Z}_{1,1}(\rho)$ satisfies the orthogonality condition \eqref{h1-orth-N1} (with $R_*=R_1$).
	For $\ell>1$, $\big( \inf_{s\ge \tau_0}R_0(s) \big)^{ \max\{-1, 1-\ell \} }$ provides a small quantity.
	Under the same parameters restriction for deriving \eqref{qd24July12-4}, by Lemma \ref{m1-rough},
	we have
	\begin{equation*}
		\TT_{1r}^{2R_0}[(h_{II} e^{i\theta})_{\mathbb{C}^{-1} }] \in \B_{i,1}
	\end{equation*}
	The contraction property can be deduced similarly.
	Thus we find a solution $\Psi_{i,1}=\Psi_{i,1}[h_1] \in \B_{i,1}$, and then a solution $(\psi_{o,1},\Psi_{i,1})$ for \eqref{phio-1-eq} and \eqref{phii-1-eq-1}. $\psi_{o,1}$, $\Psi_{i,1}$ $c_1(\tau)$ depends on $h_1$ linearly.
	
	We will regard $D_o$, $D_i$  as general constants hereafter. Since $\psi_{o,1}[h_1] \in \B_{o,1}$, then
	\begin{equation*}
		c_{1}[h_1](\tau)
		=
		C_{1,1} \Big( \int_{0}^{R_1}
		h_1(r,\tau)  \mathcal{Z}_{1,1}(r) r dr
		+
		c_{*1}[h_1](\tau) \Big),
	\end{equation*}
	where $c_{*1}[h_1]$ depends on $h_1$ linearly and $| c_{*1}[h_1](\tau) |\lesssim R_0^{1-\ell} v(\tau) \|h_1\|$. By estimates of $\psi_{o,1}$, $\Psi_{i,1}$, then
	\begin{equation*}
		| \Psi_{1}(y,\tau)|
		\lesssim R_0 v(\tau)
		\big(
		R_0^{6-\ell }  \langle \rho \rangle^{-4}
		\1_{\{\rho\le 2R_0\}}
		+
		\langle \rho \rangle^{2-\ell} \1_{\{\rho> 2R_0\}} \big) \|h_1 \|
		\mbox{ \ in \ } \DD_{R}.
	\end{equation*}
	By the scaling argument, the proposition is concluded.
\end{proof}

\subsection{Mode $-1$}\label{sec-linearinner-1}

To prepare for the linear theory, we first summarize the properties of the spectrum, Fourier basis $\Phi^{-1}(\rho,\xi)$, spectrum measure $\rho_{-1}(d \xi)$ of $-\tilde{\LLL}_{-1}$ in the next proposition, where $$\tilde{\LLL}_{-1}:= \pp_{\rho\rho}
+
\frac 14 \rho^{-2} + V_{-1}(\rho) = \pp_{\rho\rho}
+
\frac 14 \rho^{-2}
-\frac{4}{\rho^2}
-\frac{-4}{\rho^2+1}
+\frac{8}{(\rho^2+1)^2}.
$$

\begin{prop}[\cite{KMS20WM}]\label{Phi-1 estimate}
	
	The spectrum of $-\tilde{\LLL}_{-1}$ is $[0,\infty)$, which is also the essential spectrum. Denote the Fourier basis of $-\tilde{\LLL}_{-1}$ as $\Phi^{-1}(\rho,\xi)$, where $\Phi^{-1}(\rho,\xi)$ satisfies
	\begin{equation*}
		-\tilde{\LLL}_{-1}
		\Phi^{-1}(\rho,\xi)
		= \xi \Phi^{-1}(\rho,\xi) \mbox{ \ for all \ } \xi \ge 0.
	\end{equation*}
	
	For $\rho\ge 0$, $\xi\ge 0$, we have
	\begin{equation}\label{qd24Sep21-6}
		\begin{aligned}
			&
			|\Phi^{-1}(\rho,\xi) |
			\lesssim
			\rho^{\frac 52} \langle \rho \rangle^{-2}
			\1_{\{ \rho^2\xi \le 1 \}}
			+
			\xi^{-\frac 14}
			\langle
			\xi \rangle^{-1}
			\1_{\{ \rho^2\xi > 1 \}},
			\\
			&
			|\partial_{\rho}\Phi^{-1}(\rho,\xi)|\lesssim \rho^{\frac 32} \1_{\{ \rho^2\xi \le 1 \}}
			+
			\xi^{\frac 14}
			\langle
			\xi \rangle^{-1}
			\1_{\{ \rho^2\xi > 1 \}}.
		\end{aligned}
	\end{equation}
	$\Phi^{-1}(\rho,\xi)$ has the expansion
	\begin{equation}\label{qd24Sep21-5}
		\Phi^{-1}(\rho,\xi)
		=
		\Phi^{-1}_{0}(\rho)
		+
		\rho^{\frac 12}
		\sum\limits_{j=1}^{\infty}
		(-\rho^2 \xi)^{j} \Phi_{j}(\rho^2),
	\end{equation}
	which converges absolutely, where $\Phi_0^{-1}(\rho) = \rho^{\frac 52} (1+\rho^2)^{-1}$. It converges uniformly if $\rho \xi^{\frac 12} $ remains bounded. Here $\Phi_{j}(u) \ge 0$ are smooth functions of $u\ge 0$ satisfying
	$ \Phi_{j}(u) \le \frac{1}{j!} \frac{u}{1+u}  $, $|\Phi_j'(u)| \le \frac{21}{j!}$  for $u\ge 0$, $j\ge 1$,
	and $\Phi_{1}(u) \ge c_1 \frac{u}{1+u}$ for $u\ge 0$ with a constant $c_1 >0$.
	
	The spectrum measure $\rho_{-1}(d \xi)$ of $-\tilde{\LLL}_{-1}$ is supported in $\xi \in [0,\infty)$ and absolutely continuous on $\xi\ge 0$ with density
	$ \frac{d \rho_{-1}(\xi)}{d \xi}
	\sim \langle \xi \rangle^2$.
	
\end{prop}

\begin{proof}

	Most of the estimates can be found in \cite[Propositions 5.1, 5.3, 5.4, 5.5]{KMS20WM}. Derivative estimate \eqref{qd24Sep21-6}$_2$ can be derived similarly as in \cite{KMS20WM}. Indeed, by similar induction in \cite[pp. 32-33]{KMS20WM}, we have $|\Phi_j'(u)| \le \frac{21}{j!}$ for $u\ge 0$, $j\ge 1$. Acting $\partial_{\rho}$ on \eqref{qd24Sep21-5}, for $\rho^2 \xi\le 1$, we have $|\partial_{\rho}\Phi^{-1}(\rho,\xi)|\lesssim \rho^{\frac 32}$. For the remote region $\rho^2\xi > 1$, the use of \cite[Propositions 5.4, 5.5]{KMS20WM} and the relation between the Weyl-Titchmarsh function and $\Phi^{-1}(\rho,\xi)$ gives the estimate $|\partial_{\rho} \Phi^{-1}(\rho,\xi) |
	\lesssim
	\xi^{\frac 14}
	\langle
	\xi \rangle^{-1}$.
\end{proof}

We emphasize that the assumption \eqref{nu-assump} is not required in the next proposition.
\begin{prop}\label{qd24July13-3-prop}
	
	Consider
	\begin{equation*}
		\pp_{\tau} \Phi_{-1} =	(a-bW\wedge)( L_{\rm{in}} \Phi_{-1}) + H_{-1}
		\mbox{ \ in \ } \mathbb{R}^2 \times (\tau_0,\infty),
		\quad
		\Phi_{-1}(\cdot,\tau_0) = 0
		\mbox{ \ in \ } \mathbb{R}^2,
	\end{equation*}
	where $\tau_{0}\ge 2$, $H_{-1} = (h(\rho,\tau) e^{-i\theta} )_{\mathbb{C}^{-1}}$, $
	\| h \|_{v,\ell}^{\infty} <\infty$, $0 \le v(\tau) \in L_{\rm{loc}}^{\infty}( [\tau_0,\infty) )$, $\ell > 3/2$. $\Phi_{-1} = \TT_{-1}[H_{-1}]$ is given as a linear mapping in $H_{-1}$ by the convolution via the fundamental solution of the parabolic system. Moreover, $\Phi_{-1}(y,\tau) = (\phi_{-1}(\rho, \tau) e^{-i\theta})_{\mathbb{C}^{-1}}$ and $\phi_{-1}$ satisfies
	\begin{equation}\label{qd240729-1}
		\pp_{\tau} \phi_{-1} =	(a-ib)\mathcal{L}_{-1}  \phi_{-1} + h
		\mbox{ \ for \ } (\rho,\tau) \in (0,\infty)\times (\tau_0,\infty),
		\quad
		\phi_{-1}(\rho,\tau_0) = 0 \mbox{ \ for \ } \rho \in (0,\infty)
	\end{equation}
	with the estimate
	\begin{align}
		|\phi_{-1}(\rho,\tau)| \lesssim \ &
		\| h \|_{v,\ell}^{\infty} \1_{ \{ \rho\le \tau^{\frac 12} \} }
		\begin{cases}
			\tau^{1-\frac{\ell}{2}} \sup\limits_{s\in [\tau/2,\tau]} v(s)
			+
			\tau^{-\frac{\ell}{2}}
			\int_{\frac{\tau_0}{2}}^{\frac{\tau}{2}}
			v(s) ds
			&
			\mbox{ \ if  \ } \ell <2
			\\
			(\ln \tau)^2  \sup\limits_{s\in [\tau/2,\tau]} v(s)
			+
			\tau^{-1} \ln \tau
			\int_{\frac{\tau_0}{2}}^{\frac{\tau}{2}}
			v(s) ds
			&
			\mbox{ \ if  \ } \ell =2
			\\
			\ln \tau  \sup\limits_{s\in [\tau/2,\tau]} v(s)
			+
			\tau^{-1}
			\int_{\frac{\tau_0}{2}}^{\frac{\tau}{2}}
			v(s) ds
			&
			\mbox{ \ if  \ } \ell >2
		\end{cases}
		\nonumber
		\\
		&
		+
		\| h \|_{v,\ell}^{\infty} \1_{ \{ \rho > \tau^{\frac 12} \} }
		\rho^{-\frac 12}
		\begin{cases}
			\tau^{\frac 54-\frac{\ell}{2}} \sup\limits_{s\in [\tau/2,\tau]} v(s)
			+
			\tau^{\frac 14 - \frac{\ell}{2}}
			\int_{\frac{\tau_0}{2}}^{ \frac{\tau}{2} }
			v(s)  ds
			&
			\mbox{ \ if  \ }\ell<2
			\\
			\tau^{\frac 14}  \ln \tau  \sup\limits_{s\in [\tau/2,\tau]} v(s)
			+
			\tau^{-\frac 34} \ln \tau
			\int_{\frac{\tau_0}{2}}^{ \frac{\tau}{2} }
			v(s) ds
			&
			\mbox{ \ if  \ }\ell=2
			\\
			\tau^{\frac 14 } \sup\limits_{s\in [\tau/2,\tau]} v(s)
			+
			\tau^{-\frac 34 }
			\int_{\frac{\tau_0}{2}}^{ \frac{\tau}{2} }
			v(s)  ds
			&
			\mbox{ \ if  \ } \ell>2,
		\end{cases}
		\label{phi-1-nonorth}
	\end{align}
	where we assume $v(s)=0$ for $s\le \tau_0$.
	Moreover,  if we suppose in addition $2<\ell < 5/2$ and the orthogonality condition
	\begin{equation}\label{h-1-ortho}
		\int_{0}^{\infty} h(r,\tau) \mathcal{Z}_{-1,1}(r) r d r =0
		\mbox{ \ for \ }\tau > \tau_{0}
	\end{equation}
	holds, then we have the estimate
	\begin{equation}\label{phi-1-orth}
		|\phi_{-1}(\rho,\tau)|
		\lesssim
		\| h \|_{v,\ell}^{\infty}
		\begin{cases}
			\langle \rho\rangle^{2 - \ell } \sup\limits_{s\in [\tau/2,\tau]} v(s)
			+
			\tau^{-\frac{\ell}{2}}
			\int_{\frac{\tau_0 }{2}}^{\frac{\tau }{2}}
			v(s) ds
			&
			\mbox{ \ if \ } \rho\le \tau^{\frac 12}
			\\
			\rho^{-\frac 12}
			\Big(
			\tau^{\frac 54 -\frac{\ell}{2}} \sup\limits_{s\in [\tau/2,\tau]} v(s)
			+
			\tau^{\frac 14-\frac{\ell}{2}} \int_{\frac{\tau_0}{2}}^{\frac{\tau}{2}}
			v(s)  ds
			\Big)
			&
			\mbox{ \ if \ } \rho > \tau^{\frac 12}.
		\end{cases}
	\end{equation}

\end{prop}

\begin{proof}
	
	Similar to the argument in Lemma \ref{energy est}, the theory of the parabolic system guarantee the existence of $\Phi_{-1}$ and $\phi_{-1}$. And $\Phi_{-1}, \nabla \Phi_{-1} \in L^{\infty} ( \Omega \times (\tau_0,\tau_1) )$ for any bounded domain $\Omega \subset \mathbb{R}^2$ and $\tau_1>\tau_0$. Using the argument in \eqref{Sep10-04}, we have that $\phi_{-1}, \partial_{\rho} \phi_{-1}$ is bounded in $(0,\rho_1)\times (\tau_0,\tau_1)$ for any $\rho_1>0$ and $\tau_1 > \tau_0$.
	
	We will give a representation formula for $\phi_{-1}$. First, besides $
	\| h \|_{v,\ell}^{\infty} <\infty$, we assume
	\begin{equation}\label{qd24Sep22-1}
		\begin{aligned}
			&
			\mbox{$h(\rho,\tau)$ is smooth in spatial variable $\rho\in \mathbb{R}$ and for  $\tau_0 < \tau <\tau_1<\infty$,}
			\\
			&
			\mbox{$h(\rho,\tau) = 0$ for $|\rho|\ge M_{\tau_1}$ with a constant $M_{\tau_1}>0$ depending on $\tau_1$.}
		\end{aligned}
	\end{equation}

	Take $\phi_{-1} = \rho^{-\frac{1}{2}} f(\rho,\tau)$. Since $\mathcal{L}_{-1} (\rho^{-\frac{1}{2}} f ) = \big( \rho^{-\frac{1}{2}} \partial_{\rho \rho} + \frac{1}{4} \rho^{-\frac 52} + V_{-1}(\rho) \rho^{-\frac{1}{2}} \big) f $, then
	\begin{equation}\label{qd24Sep20-3}
		\pp_{\tau} f
		= (a-ib) \tilde{\LLL}_{-1} f  +    \rho^{\frac{1}{2}} h \mbox{ \ for \ } (\rho,\tau) \in (0,\infty)\times (\tau_0,\infty),
		\quad
		f(\rho,\tau_0) = 0
		\mbox{ \ for \ } \rho \in (0,\infty).
	\end{equation}
	Due to the assumption \eqref{qd24Sep22-1}, given a fixed $\tau>\tau_0$, $\Phi_{-1}$, $\nabla \Phi_{-1}$ have fast spatial decay as $|y|\to \infty$. So do $\phi_{-1}$, $\partial_{\rho} \phi_{-1}$, $f$, and $\partial_{\rho} f$. And $|f(\rho,\tau)| \lesssim \rho^{\frac{1}{2}}$ and $|\partial_{\rho}f(\rho,\tau)| \lesssim \rho^{-\frac{1}{2}}$ as $\rho \downarrow 0$. Combining these with \eqref{qd24Sep21-6}, we are able to
	multiply \eqref{qd24Sep20-3} by $\Phi^{-1}(\rho,\xi)$ and integrate by parts in $\rho \in (0,\infty)$ to deduce
	\begin{equation*}
		\partial_{\tau} \hat{f}(\xi,\tau) + (a-ib) \xi \hat{f}(\xi,\tau) =
		\int_{0}^{\infty} \rho^{\frac{1}{2}} h(\rho,\tau) \Phi^{-1}(\rho,\xi)
		d \rho,
		\quad
		\hat{f}(\xi,\tau_0) = 0,
	\end{equation*}
	where we denote $	\hat{f}(\xi,\tau) :=\int_{0}^{\infty}
	f(\rho,\tau)
	\Phi^{-1}(\rho,\xi)
	d \rho$. It follows that
	\begin{equation*}
		\hat{f}(\xi,\tau)
		=
		\int_{\tau_0}^{\tau} e^{-(a-ib)\xi(\tau-s)}
		\int_{0}^{\infty} x^{\frac{1}{2}} h(x,s) \Phi^{-1}(x,\xi)
		d x ds.
	\end{equation*}
	Using the distorted Fourier transform, we get the representation formula
	\begin{align}
			& \phi_{-1} =
			\rho^{-\frac{1}{2}} f(\rho,\tau) = \rho^{-\frac{1}{2}} \int_0^{\infty} \Phi^{-1}(\rho,\xi) \hat{f}(\xi,\tau) \rho_{-1}(d\xi)
			\nonumber
			\\
			= \ &
			\rho^{-\frac{1}{2}}
			\int_0^{\infty} \Phi^{-1}(\rho,\xi) \int_{\tau_0}^{\tau} e^{-(a-ib)\xi(\tau-s)}
			\int_{0}^{\infty} x^{\frac{1}{2}} h(x,s) \Phi^{-1}(x,\xi)
			d x ds \rho_{-1}(d\xi)
			\nonumber
			\\
			= \ & \rho^{-\frac{1}{2}}
			\int_{\tau_0}^{\tau}
			\int_{0}^{\infty}
			\int_0^{\infty}   e^{-(a-ib)\xi(\tau-s)}
			\Phi^{-1}(\rho,\xi)
			\Phi^{-1}(x,\xi)
			x^{\frac{1}{2}} h(x,s) \rho_{-1}(d\xi) d x ds.
			\label{phi-n-Duhamel}
	\end{align}
	
	For general $h$ satisfying $
	\| h \|_{v,\ell}^{\infty} <\infty$, if the last integral in \eqref{phi-n-Duhamel} is absolutely integrable, then \eqref{phi-n-Duhamel} gives the representation formula of $\phi_{-1}$. Hereafter, without loss of generality, we assume $\| h \|_{v,\ell}^{\infty}=1$ and will prove that \eqref{phi-n-Duhamel} is absolutely integrable and give pointwise estimate of $\phi_{-1}$.
	
\noindent	\textbf{Estimate without orthogonality.} Using the property of $\rho_{-1}(d \xi)$ in Proposition \ref{Phi-1 estimate}, we have
	\begin{equation*}
		|\phi_{-1}|
		\lesssim
		\rho^{-\frac 12}
		\int_{\tau_{0}}^{\tau}
		v(s)
		\int_{0}^{\infty}
		\int_{0}^{\infty}
		e^{- a \xi (\tau -s)}
		|\Phi^{-1} (\rho,\xi) |
		|\Phi^{-1}(x,\xi)|
		x^{\frac 12}
		\langle x\rangle^{-\ell}
		\langle \xi \rangle^{2}
		d x d\xi d s.
	\end{equation*}
	
	We will use the estimate of $|\Phi^{-1}|$ in \eqref{qd24Sep21-6} repetitively. First, we consider
	\begin{equation*}
		F(\xi) :=
		\int_{0}^{\infty}
		|\Phi^{-1}(x,\xi)|
		x^{\frac 12} \langle x \rangle^{-\ell}
		d x
		= \int_{0}^{\xi^{-\frac 12}} + \int_{\xi^{-\frac 12}}^{\infty} \cdots : = F_1 + F_2 .
	\end{equation*}
	For $F_1$, one has
	\begin{equation*}
		F_1\lesssim \int_{0}^{\xi^{-\frac 12}}
		x^{\frac 52} \langle x \rangle^{-2}
		x^{\frac 12} \langle x \rangle^{-\ell}
		d x
		\lesssim
		\1_{\{ \xi\le 1 \}}
		\begin{cases}
			\xi^{\frac{\ell}{2} -1}
			&
			\mbox{ \ if  \ } \ell<2
			\\
			\langle \ln \xi \rangle
			&
			\mbox{ \ if  \ } \ell=2
			\\
			1
			&
			\mbox{ \ if  \ } \ell>2
		\end{cases}
		\quad
		+
		\1_{\{ \xi > 1 \}}
		\xi^{-2}.
	\end{equation*}
	For $F_2$, since $\ell >\frac 32$, $
	F_2 \lesssim \xi^{-\frac 14} \langle \xi \rangle^{-1} \int_{\xi^{-\frac 12}}^{\infty}	
	x^{\frac 12} \langle x \rangle^{-\ell}
	d x
	\lesssim
	\1_{\{ \xi \le 1 \}}
	\xi^{\frac{\ell}{2} -1}
	+
	\1_{\{ \xi > 1 \}}
	\xi^{-\frac 54} $,
	and thus
	\begin{equation}\label{F-est-DTF}
		F(\xi) \lesssim
		\1_{\{\xi\le 1\}}
		\begin{cases}
			\xi^{\frac{\ell}{2} -1}
			&
			\mbox{ \ if  \ } \ell<2
			\\
			\langle \ln \xi \rangle
			&
			\mbox{ \ if  \ } \ell=2
			\\
			1
			&
			\mbox{ \ if  \ } \ell>2
		\end{cases}
		\quad
		+ \1_{\{ \xi > 1 \}}
		\xi^{-\frac 54}.
	\end{equation}
	
	Next, let us estimate
	\begin{equation*}
		P(\rho,\tau,s): = \int_{0}^{\infty}
		e^{- a \xi (\tau -s)}
		|\Phi^{-1} (\rho,\xi) |
		F(\xi)
		\langle \xi \rangle^{2}
		d\xi
		=
		\int_{0}^{\frac{1}{\rho^2}}
		+
		\int_{\frac{1}{\rho^2}}^{\infty}
		\cdots :=
		P_1 + P_2.
	\end{equation*}
	First, let us estimate $P_1$.
	Note that $ P_{1} \lesssim
	\rho^{\frac 52} \langle \rho \rangle^{-2}
	\int_{0}^{\rho^{-2}}
	e^{- a \xi (\tau -s)}
	F(\xi)
	\langle \xi \rangle^{2}
	d\xi $.
	For $\rho \ge 1$, since $\xi \le \rho^{-2} \le 1$, by Lemma \ref{asy-lem-1},
		\begin{align*}
			P_{1} \lesssim \ &
			\rho^{\frac 12}
			\int_{0}^{\rho^{-2}}
			e^{- a \xi (\tau -s)}
			\begin{cases}
				\xi^{\frac{\ell}{2} -1}
				&
				\mbox{ \ if  \ } \ell<2
				\\
				\langle \ln \xi \rangle
				&
				\mbox{ \ if  \ } \ell=2
				\\
				1
				&
				\mbox{ \ if  \ } \ell>2
			\end{cases}
			d\xi
			\\
			\lesssim \ &
			\begin{cases}
				\begin{cases}
					\rho^{\frac 12-\ell }
					&
					\mbox{ \ if \ } \tau -s \le \rho^2
					\\
					\rho^{\frac 12}
					(\tau-s)^{-\frac{\ell}{2}}
					&
					\mbox{ \ if \ } \tau -s > \rho^2
				\end{cases}
				&
				\mbox{ \ if  \ } \ell<2
				\\
				\begin{cases}
					\rho^{-\frac 32} \langle \ln \rho \rangle
					&
					\mbox{ \ if \ } \tau -s \le \rho^2
					\\
					\rho^{\frac 12}  (\tau-s)^{-1} \langle \ln(a(\tau-s) ) \rangle
					&
					\mbox{ \ if \ } \tau -s > \rho^2
				\end{cases}
				&
				\mbox{ \ if  \ } \ell=2
				\\
				\begin{cases}
					\rho^{-\frac 32}
					&
					\mbox{ \ if \ } \tau -s \le \rho^2
					\\
					\rho^{\frac 12} (\tau-s)^{-1}
					&
					\mbox{ \ if \ } \tau -s > \rho^2
				\end{cases}
				&
				\mbox{ \ if  \ } \ell>2.
			\end{cases}
		\end{align*}
	For $\rho < 1$,
	$ P_{1} \lesssim
	\rho^{\frac 52}
	\big(
	\int_{0}^{1}
	+
	\int_{1}^{\rho^{-2}}
	\big)
	e^{- a \xi (\tau -s)}
	F(\xi)
	\langle \xi \rangle^{2}
	d\xi $. By the same estimate above,
\begin{align*}
		\int_{0}^{1}
		e^{- a \xi (\tau -s)}
		F(\xi)
		\langle \xi \rangle^{2}
		d\xi
		\lesssim
		\begin{cases}
			\begin{cases}
				1
				&
				\mbox{ \ if \ } \tau -s \le 1
				\\
				(\tau-s)^{-\frac{\ell}{2}}
				&
				\mbox{ \ if \ } \tau -s > 1
			\end{cases}
			&
			\mbox{ \ for  \ } \ell<2
			\\
			\begin{cases}
				1
				&
				\mbox{ \ if \ } \tau -s \le 1
				\\
				(\tau-s)^{-1} \langle \ln(a(\tau-s) ) \rangle
				&
				\mbox{ \ if \ } \tau -s > 1
			\end{cases}
			&
			\mbox{ \ for  \ } \ell=2
			\\
			\begin{cases}
				1
				&
				\mbox{ \ if \ } \tau -s \le 1
				\\
				(\tau-s)^{-1}
				&
				\mbox{ \ if \ } \tau -s > 1
			\end{cases}
			&
			\mbox{ \ for  \ } \ell>2.
		\end{cases}
\end{align*}
	Using \eqref{F-est-DTF} and direct calculation, we have
\begin{align*}
			&
			\int_{1}^{\frac{1}{\rho^2}}
			e^{- a \xi (\tau -s)}
			F(\xi)
			\langle \xi \rangle^{2}
			d\xi
			\lesssim
			\int_{1}^{\frac{1}{\rho^2}}
			e^{- a \xi (\tau -s)}
			\xi^{\frac 34}
			d\xi
			\sim
			(\tau-s)^{-\frac 74}
			\int_{a(\tau-s)}^{\frac{a(\tau-s)}{\rho^2}} e^{-z} z^{\frac 34} dz
			\\
			\lesssim \ &
			\begin{cases}
				\rho^{-\frac 72}
				&
				\mbox{ \ if \ } \tau-s \le \rho^2
				\\
				(\tau-s)^{-\frac 74}
				&
				\mbox{ \ if \ } \rho^2 < \tau-s \le 1
				\\
				(\tau-s)^{-\frac 74}
				e^{-\frac{a(\tau-s)}{2}}
				&
				\mbox{ \ if \ }  \tau-s > 1.
			\end{cases}
		\end{align*}
	Thus for $\rho<1$,
	\begin{equation*}
		P_{1} \lesssim
		\begin{cases}
			\begin{cases}
				\rho^{-1}
				&
				\mbox{ \ if \ } \tau-s \le \rho^2
				\\
				\rho^{\frac 52}  (\tau-s)^{-\frac 74}
				&
				\mbox{ \ if \ } \rho^2 < \tau-s \le 1
				\\
				\rho^{\frac 52} (\tau-s)^{-\frac{\ell}{2}}
				&
				\mbox{ \ if \ } \tau -s > 1
			\end{cases}
			&
			\mbox{ \ for  \ } \ell<2
			\\
			\begin{cases}
				\rho^{-1}
				&
				\mbox{ \ if \ } \tau-s \le \rho^2
				\\
				\rho^{\frac 52} (\tau-s)^{-\frac 74}
				&
				\mbox{ \ if \ } \rho^2 < \tau-s \le 1
				\\
				\rho^{\frac 52} (\tau-s)^{-1} \langle \ln(a(\tau-s) ) \rangle
				&
				\mbox{ \ if \ } \tau -s > 1
			\end{cases}
			&
			\mbox{ \ for  \ } \ell=2
			\\
			\begin{cases}
				\rho^{-1}
				&
				\mbox{ \ if \ } \tau-s \le \rho^2
				\\
				\rho^{\frac 52} 	(\tau-s)^{-\frac 74}
				&
				\mbox{ \ if \ } \rho^2 < \tau-s \le 1
				\\
				\rho^{\frac 52} (\tau-s)^{-1}
				&
				\mbox{ \ if \ } \tau -s > 1
			\end{cases}
			&
			\mbox{ \ for  \ } \ell>2.
		\end{cases}
	\end{equation*}
	
	Next, let us estimate $P_2$. By \eqref{qd24Sep21-6}, $ P_{2} \lesssim
	\int_{\frac{1}{\rho^2}}^{\infty}
	e^{- a \xi (\tau -s)}
	\xi^{-\frac 14} \langle \xi \rangle^{-1}
	F(\xi)
	\langle \xi \rangle^{2}
	d\xi $.
	For $\rho\le 1$, by \eqref{F-est-DTF},
	\begin{equation*}
		P_{2} \lesssim
		\int_{\frac{1}{\rho^2}}^{\infty}
		e^{- a \xi (\tau -s)}
		\xi^{-\frac 12}
		d\xi
		\sim
		(\tau-s)^{-\frac 12}
		\int_{\frac{a(\tau-s)}{\rho^2}}^\infty
		e^{-z} z^{-\frac 12} dz
		\lesssim
		\begin{cases}
			(\tau-s)^{-\frac 12}
			&
			\mbox{ \ if \ } \tau-s \le \rho^2
			\\
			(\tau-s)^{-\frac 12}
			e^{- \frac{a(\tau-s)}{2\rho^2}}
			&
			\mbox{ \ if \ } \tau-s > \rho^2.
		\end{cases}
	\end{equation*}
	For $\rho> 1$, $ P_{2} \lesssim
	\big(
	\int_{1}^{\infty}
	+
	\int_{\frac{1}{\rho^2}}^{1}
	\big)
	e^{- a \xi (\tau -s)}
	\xi^{-\frac 14} \langle \xi \rangle^{-1}
	F(\xi)
	\langle \xi \rangle^{2}
	d\xi $.
	For the same reason as above, we have
	\begin{equation*}
		\int_{1}^{\infty}
		e^{- a \xi (\tau -s)}
		\xi^{-\frac 14} \langle \xi \rangle^{-1}
		F(\xi)
		\langle \xi \rangle^{2}
		d\xi
		\lesssim
		\begin{cases}
			(\tau-s)^{-\frac 12}
			&
			\mbox{ \ if \ } \tau-s \le 1
			\\
			(\tau-s)^{-\frac 12}
			e^{- \frac{a(\tau-s)}{2}}
			&
			\mbox{ \ if \ } \tau-s > 1.
		\end{cases}
	\end{equation*}
	By \eqref{F-est-DTF}, $\ell > 1/2$, and Lemma \ref{asy-lem-1},
\begin{align*}
			&
			\int_{\frac{1}{\rho^2}}^{1}
			e^{- a \xi (\tau -s)}
			\xi^{-\frac 14} \langle \xi \rangle^{-1}
			F(\xi)
			\langle \xi \rangle^{2}
			d\xi
			\lesssim
			\int_{\frac{1}{\rho^2}}^{1}
			e^{- a \xi (\tau -s)}
			\begin{cases}
				\xi^{\frac{\ell}{2} -\frac 54}
				&
				\mbox{ \ if  \ } \ell<2
				\\
				\xi^{-\frac 14} \langle \ln \xi \rangle
				&
				\mbox{ \ if  \ } \ell=2
				\\
				\xi^{-\frac 14}
				&
				\mbox{ \ if  \ } \ell>2
			\end{cases}
			d\xi
			\\
			\lesssim \ &
			\begin{cases}
				\begin{cases}
					1
					&
					\mbox{ \ if \ } \tau-s\le 1
					\\
					(\tau-s)^{\frac 14 - \frac{\ell}{2}}
					&
					\mbox{ \ if \ } 1<\tau-s\le \rho^2
					\\
					(\tau-s)^{\frac 14 - \frac{\ell}{2}}
					e^{-\frac{a(\tau-s)}{2\rho^2}}
					&
					\mbox{ \ if \ } \tau-s >\rho^2
				\end{cases}
				&
				\mbox{ \ for  \ } \ell<2
				\\
				\begin{cases}
					1
					&
					\mbox{ \ if \ } \tau-s\le 1
					\\
					(\tau-s)^{-\frac 34}
					\langle \ln (a(\tau-s)) \rangle
					&
					\mbox{ \ if \ } 1<\tau-s\le \rho^2
					\\
					(\tau-s)^{-\frac 34}
					\langle \ln (a(\tau-s)) \rangle
					e^{-\frac{a(\tau-s)}{2\rho^2}}
					&
					\mbox{ \ if \ } \tau-s >\rho^2
				\end{cases}
				&
				\mbox{ \ for  \ } \ell=2
				\\
				\begin{cases}
					1
					&
					\mbox{ \ if \ } \tau-s\le 1
					\\
					(\tau-s)^{-\frac 34}
					&
					\mbox{ \ if \ } 1<\tau-s\le \rho^2
					\\
					(\tau-s)^{-\frac 34}
					e^{-\frac{a(\tau-s)}{2\rho^2}}
					&
					\mbox{ \ if \ } \tau-s >\rho^2
				\end{cases}
				&
				\mbox{ \ for  \ } \ell>2.
			\end{cases}
		\end{align*}
	Thus, for $\rho>1$,
	\begin{equation*}
		P_2 \lesssim
		\begin{cases}
			\begin{cases}
				(\tau-s)^{-\frac 12}
				&
				\mbox{ \ if \ } \tau-s\le 1
				\\
				(\tau-s)^{\frac 14 - \frac{\ell}{2}}
				&
				\mbox{ \ if \ } 1<\tau-s\le \rho^2
				\\
				(\tau-s)^{\frac 14 - \frac{\ell}{2}}
				e^{-\frac{a(\tau-s)}{4\rho^2}}
				&
				\mbox{ \ if \ } \tau-s >\rho^2
			\end{cases}
			&
			\mbox{ \ for  \ } \ell<2
			\\
			\begin{cases}
				(\tau-s)^{-\frac 12}
				&
				\mbox{ \ if \ } \tau-s\le 1
				\\
				(\tau-s)^{-\frac 34}
				\langle \ln (a(\tau-s)) \rangle
				&
				\mbox{ \ if \ } 1<\tau-s\le \rho^2
				\\
				(\tau-s)^{-\frac 34}
				\langle \ln (a(\tau-s)) \rangle
				e^{-\frac{a(\tau-s)}{4 \rho^2}}
				&
				\mbox{ \ if \ } \tau-s >\rho^2
			\end{cases}
			&
			\mbox{ \ for  \ } \ell=2
			\\
			\begin{cases}
				(\tau-s)^{-\frac 12}
				&
				\mbox{ \ if \ } \tau-s\le 1
				\\
				(\tau-s)^{-\frac 34}
				&
				\mbox{ \ if \ } 1<\tau-s\le \rho^2
				\\
				(\tau-s)^{-\frac 34}
				e^{-\frac{a(\tau-s)}{4 \rho^2}}
				&
				\mbox{ \ if \ } \tau-s >\rho^2
			\end{cases}
			&
			\mbox{ \ for  \ } \ell>2.
		\end{cases}
	\end{equation*}

	Combining the estimates of $P_1$ and $P_2$, we have the following estimates of $P$. For $\rho\le 1$,
	\begin{equation*}
		P \lesssim
		\begin{cases}
			\begin{cases}
				(\tau-s)^{-\frac 12}
				&
				\mbox{ \ if \ } \tau-s \le \rho^2
				\\
				\rho^{\frac 52}  (\tau-s)^{-\frac 74}
				&
				\mbox{ \ if \ } \rho^2 < \tau-s \le 1
				\\
				\rho^{\frac 52} (\tau-s)^{-\frac{\ell}{2}}
				&
				\mbox{ \ if \ } \tau -s > 1
			\end{cases}
			&
			\mbox{ \ for  \ } \ell<2
			\\
			\begin{cases}
				(\tau-s)^{-\frac 12}
				&
				\mbox{ \ if \ } \tau-s \le \rho^2
				\\
				\rho^{\frac 52} (\tau-s)^{-\frac 74}
				&
				\mbox{ \ if \ } \rho^2 < \tau-s \le 1
				\\
				\rho^{\frac 52} (\tau-s)^{-1} \langle \ln(a(\tau-s) ) \rangle
				&
				\mbox{ \ if \ } \tau -s > 1
			\end{cases}
			&
			\mbox{ \ for  \ } \ell=2
			\\
			\begin{cases}
				(\tau-s)^{-\frac 12}
				&
				\mbox{ \ if \ } \tau-s \le \rho^2
				\\
				\rho^{\frac 52} 	(\tau-s)^{-\frac 74}
				&
				\mbox{ \ if \ } \rho^2 < \tau-s \le 1
				\\
				\rho^{\frac 52} (\tau-s)^{-1}
				&
				\mbox{ \ if \ } \tau -s > 1
			\end{cases}
			&
			\mbox{ \ for  \ } \ell>2.
		\end{cases}
	\end{equation*}
	For $\rho>1$ with $\ell \ge 1/2$,
	\begin{equation*}
		P \lesssim
		\begin{cases}
			\begin{cases}
				(\tau-s)^{-\frac 12}
				&
				\mbox{ \ if \ } \tau-s\le 1
				\\
				(\tau-s)^{\frac 14 - \frac{\ell}{2}}
				&
				\mbox{ \ if \ } 1<\tau-s\le \rho^2
				\\
				\rho^{\frac 12}
				(\tau-s)^{-\frac{\ell}{2}}
				&
				\mbox{ \ if \ } \tau-s >\rho^2
			\end{cases}
			&
			\mbox{ \ for  \ } \ell<2
			\\
			\begin{cases}
				(\tau-s)^{-\frac 12}
				&
				\mbox{ \ if \ } \tau-s\le 1
				\\
				(\tau-s)^{-\frac 34}
				\langle \ln (a(\tau-s)) \rangle
				&
				\mbox{ \ if \ } 1<\tau-s\le \rho^2
				\\
				\rho^{\frac 12}  (\tau-s)^{-1} \langle \ln(a(\tau-s) ) \rangle
				&
				\mbox{ \ if \ } \tau-s >\rho^2
			\end{cases}
			&
			\mbox{ \ for  \ } \ell=2
			\\
			\begin{cases}
				(\tau-s)^{-\frac 12}
				&
				\mbox{ \ if \ } \tau-s\le 1
				\\
				(\tau-s)^{-\frac 34}
				&
				\mbox{ \ if \ } 1<\tau-s\le \rho^2
				\\
				\rho^{\frac 12} (\tau-s)^{-1}
				&
				\mbox{ \ if \ } \tau-s >\rho^2
			\end{cases}
			&
			\mbox{ \ for  \ } \ell>2.
		\end{cases}
	\end{equation*}
	
	Now we will use the upper bound of $P$ to estimate $\phi_{-1}$. For $\rho\le 1$, since we assume $\tau_0\ge 2$, then
	\begin{align*}
		&
		|\phi_{-1}|
		\lesssim
		\rho^{-\frac 12}
		\Big(
		\int_{\tau-\rho^2}^{\tau}
		+
		\int_{\tau-1}^{\tau-\rho^2}
		+
		\int_{\frac{\tau_{0}}{2}}^{\tau-1}
		\Big)
		v(s) P(\rho,\tau,s) ds
		\\
		\lesssim \ &
		\rho^{-\frac 12} \bigg[ \tilde{v}(\tau) \int_{\tau-\rho^2}^{\tau}
		(\tau-s)^{-\frac 12} ds
		+
		\tilde{v}(\tau) \rho^{\frac 52}\int_{\tau-1}^{\tau-\rho^2}
		(\tau-s)^{-\frac 74} ds
		\\
		&
		+
		\rho^{\frac 52}
		\begin{cases}
			\int_{\frac{\tau_{0}}{2}}^{\tau-1}
			v(s) (\tau-s)^{-\frac{\ell}{2}} ds
			&
			\mbox{ \ if \ } \ell <2
			\\
			\int_{\frac{\tau_{0}}{2}}^{\tau-1}
			v(s) (\tau-s)^{-1} \langle \ln(a(\tau-s) ) \rangle  ds
			&
			\mbox{ \ if \ } \ell =2
			\\
			\int_{\frac{\tau_{0}}{2}}^{\tau-1}
			v(s) (\tau-s)^{-1} ds
			&
			\mbox{ \ if \ } \ell >2
		\end{cases}
		\bigg]
		\\
		\lesssim \ &
		\tilde{v}(\tau) \rho^{\frac 12}
		+
		\rho^{2}
		\begin{cases}
			\tilde{v}(\tau) \tau^{1-\frac{\ell}{2}}
			+
			\tau^{-\frac{\ell}{2}}
			\int_{\frac{\tau_{0}}{2}}^{ \frac{\tau}{2} }
			v(s)  ds
			&
			\mbox{ \ if \ } \ell <2
			\\
			\tilde{v}(\tau) (\ln \tau)^2
			+
			\tau^{-1} \ln \tau
			\int_{\frac{\tau_{0}}{2}}^{\frac{\tau}{2}}
			v(s)   ds
			&
			\mbox{ \ if \ } \ell =2
			\\
			\tilde{v}(\tau) \ln \tau
			+
			\tau^{-1}
			\int_{\frac{\tau_{0}}{2}}^{ \frac{\tau}{2} } v(s) ds
			&
			\mbox{ \ if \ } \ell >2,
		\end{cases}
	\end{align*}
	where we denote $\tilde{v}(\tau):=\sup\limits_{s\in [\tau/2,\tau]} v(s)$.
	For $1<\rho\le (\tau/2)^{1/2} $,
	\begin{align*}
			&
			|\phi_{-1}|
			\lesssim
			\rho^{-\frac 12}
			\Big(
			\int_{\tau -1}^{\tau}
			+	
			\int_{\tau -\rho^2}^{\tau-1}
			+
			\int_{\frac{\tau_0}{2}}^{\tau -\rho^2}
			\Big)
			v(s) P(\rho, \tau, s) ds
			\\
			\lesssim \ &
			\rho^{-\frac 12}
			\tilde{v}(\tau)
			+	
			\tilde{v}(\tau)
			\begin{cases}
				\rho^{2 -\ell}
				&
				\mbox{ \ if \ }\ell <2
				\\
				\langle \ln \rho \rangle
				&
				\mbox{ \ if \ }\ell =2
				\\
				1
				&
				\mbox{ \ if \ }\ell >2
			\end{cases}
			+
			\begin{cases}
				\tilde{v}(\tau) \tau^{1-\frac{\ell}{2}}
				+
				\tau^{-\frac{\ell}{2}}
				\int_{\frac{\tau_0}{2}}^{\frac{\tau}{2}}
				v(s) ds
				&
				\mbox{ \ if  \ } \ell <2
				\\
				\tilde{v}(\tau)
				\int_{\rho^2}^{\frac{\tau}{2}} \langle \ln z \rangle z^{-1} dz
				+
				\tau^{-1} \ln \tau
				\int_{\frac{\tau_0}{2}}^{\frac{\tau}{2}}
				v(s) ds
				&
				\mbox{ \ if  \ } \ell =2
				\\
				\tilde{v}(\tau) \ln (\frac{\tau}{2\rho^2} )
				+
				\tau^{-1}
				\int_{\frac{\tau_0}{2}}^{\frac{\tau}{2}}
				v(s) ds
				&
				\mbox{ \ if  \ } \ell >2
			\end{cases}
			\\
			\lesssim \ &
			\begin{cases}
				\tilde{v}(\tau) \tau^{1-\frac{\ell}{2}}
				+
				\tau^{-\frac{\ell}{2}}
				\int_{\frac{\tau_0}{2}}^{\frac{\tau}{2}}
				v(s) ds
				&
				\mbox{ \ if  \ } \ell <2
				\\
				\tilde{v}(\tau) (\langle \ln \rho \rangle +
				\int_{\rho^2}^{\frac{\tau}{2}} \langle \ln z \rangle z^{-1} dz)
				+
				\tau^{-1} \ln \tau
				\int_{\frac{\tau_0}{2}}^{\frac{\tau}{2}}
				v(s) ds
				&
				\mbox{ \ if  \ } \ell =2
				\\
				\tilde{v}(\tau) \langle \ln (\frac{\tau}{2\rho^2} ) \rangle
				+
				\tau^{-1}
				\int_{\frac{\tau_0}{2}}^{\frac{\tau}{2}}
				v(s) ds
				&
				\mbox{ \ if  \ } \ell >2.
			\end{cases}
	\end{align*}
	For $(\tau/2)^{\frac 12} \le \rho \le \tau^{\frac 12}$,
	\begin{align*}
			&
			|\phi_{-1}|
			\lesssim
			\rho^{-\frac 12}
			\Big[
			\int_{\tau -1}^{\tau}
			+	
			\Big(
			\int_{\frac{\tau}{2}}^{\tau-1}
			+
			\int_{\tau -\rho^2}^{\frac{\tau}{2}}
			\Big)
			+
			\int_{\frac{\tau_0}{2}}^{\tau -\rho^2}
			\Big]
			v(s) P(\rho,\tau,s) ds
			\\
			\lesssim \ &
			\rho^{-\frac 12}
			\Big[
			\tilde{v}(\tau)
			+	
			\tilde{v}(\tau)
			\begin{cases}
				\tau^{\frac 54 - \frac{\ell}{2}}
				&
				\mbox{ \ if \ }\ell <2
				\\
				\tau^{\frac 14}
				\langle \ln\tau \rangle
				&
				\mbox{ \ if \ }\ell =2
				\\
				\tau^{\frac 14}
				&
				\mbox{ \ if \ }\ell >2
			\end{cases}
			+
			\int_{\tau -\rho^2}^{\frac{\tau}{2}}
			v(s) ds
			\begin{cases}
				\tau^{\frac 14 - \frac{\ell}{2}}
				&
				\mbox{ \ if \ }\ell <2
				\\
				\tau^{-\frac 34}
				\langle \ln \tau \rangle
				&
				\mbox{ \ if \ }\ell =2
				\\
				\tau^{-\frac 34}
				&
				\mbox{ \ if \ }\ell >2
			\end{cases}
			\\
			&
			+
			\rho^{\frac 12}
			\int_{\frac{\tau_0}{2}}^{\tau -\rho^2}
			v(s) ds
			\begin{cases}
				\tau^{-\frac{\ell}{2}}
				&
				\mbox{ \ if  \ } \ell <2
				\\
				\tau^{-1} \langle \ln\tau  \rangle
				&
				\mbox{ \ if  \ } \ell =2
				\\
				\tau^{-1}
				&
				\mbox{ \ if  \ } \ell >2
			\end{cases}
			\Big]
			\lesssim
			\begin{cases}
				\tau^{1 - \frac{\ell}{2}} \tilde{v}(\tau) +
				\tau^{ - \frac{\ell}{2}} \int_{ \frac{\tau_0}{2} }^{\frac{\tau}{2}}
				v(s) ds
				&
				\mbox{ \ if \ }\ell <2
				\\
				\langle \ln\tau \rangle \tilde{v}(\tau)+
				\tau^{-1}
				\langle \ln \tau \rangle \int_{ \frac{\tau_0}{2} }^{\frac{\tau}{2}}
				v(s) ds
				&
				\mbox{ \ if \ }\ell =2
				\\
				\tilde{v}(\tau)
				+
				\tau^{-1} \int_{ \frac{\tau_0}{2} }^{\frac{\tau}{2}}
				v(s) ds
				&
				\mbox{ \ if \ }\ell >2.
			\end{cases}
	\end{align*}
	For $\rho\ge \tau^{\frac 12}$,
	\begin{small}
		\begin{equation*}
			|\phi_{-1}|
			\lesssim
			\rho^{-\frac 12}
			\Big(
			\int_{\tau -1}^{\tau}
			+	
			\int_{\frac{\tau_0}{2}}^{\tau-1}
			\Big)
			v(s) P(\rho,\tau,s) ds
			\lesssim
			\tilde{v}(\tau) \rho^{-\frac 12}
			+
			\rho^{-\frac 12}
			\begin{cases}
				\tilde{v}(\tau) \tau^{\frac 54-\frac{\ell}{2}}
				+
				\tau^{\frac 14 - \frac{\ell}{2}}
				\int_{\frac{\tau_0}{2}}^{ \frac{\tau}{2} }
				v(s)  ds
				&
				\mbox{if  \ }\ell<2
				\\
				\tilde{v}(\tau) \tau^{\frac 14} \langle \ln \tau \rangle
				+
				\tau^{-\frac 34}
				\langle \ln \tau \rangle
				\int_{\frac{\tau_0}{2}}^{ \frac{\tau}{2} }
				v(s) ds
				&
				\mbox{if  \ }\ell=2
				\\
				\tilde{v}(\tau) \tau^{\frac 14 }
				+
				\tau^{-\frac 34 }
				\int_{\frac{\tau_0}{2}}^{ \frac{\tau}{2} }
				v(s)  ds
				&
				\mbox{if  \ }\ell>2.
			\end{cases}
		\end{equation*}
	\end{small}
	
	In sum, we have proved that \eqref{phi-n-Duhamel} is absolutely integrable and \eqref{phi-1-nonorth} holds.
	
	\medskip
	
\noindent	\textbf{Estimate with orthogonality.} Recalling the estimates of $|\phi_{-1}|$ in four cases above, we have
	\begin{equation}\label{phi-1-1}
		\rho^{-\frac 12}
		\Big| \int_{\tau- 1}^{\tau}
		\int_{0}^{\infty}
		\int_{0}^{\infty}
		e^{-(a-ib)\xi (\tau -s)}
		\Phi^{-1} (\rho,\xi)
		\Phi^{-1}(x,\xi)
		x^{\frac 12}
		h(x,s)
		\rho_{-1}(d \xi)
		d x d s \Big| \lesssim \tilde{v}(\tau)
		\Big(\rho^{\frac 12} \1_{\{\rho\le 1\}} + \rho^{-\frac 12} \1_{\{\rho > 1\}} \Big).
	\end{equation}
	For the other part, we denote
	\begin{equation*}
		\tilde{\phi}_{-1} :=
		\rho^{-\frac 12}
		\Big| \int_{\tau_{0}}^{\tau- 1}
		\int_{0}^{\infty}
		\int_{0}^{\infty}
		e^{-(a-ib)\xi (\tau -s)}
		\Phi^{-1} (\rho,\xi)
		\Phi^{-1}(x,\xi)
		x^{\frac 12}
		h(x,s)
		\rho_{-1}(d \xi)
		d x d s	\Big|.
	\end{equation*}

	By the orthogonality condition \eqref{h-1-ortho}, we have
	\begin{equation*}
		\tilde{F}(\xi,s):=   \Big|\int_{0}^{\infty}
		\Phi^{-1}(x,\xi)
		x^{\frac 12}
		h(x,s)
		d x \Big|
		=
		\Big| \Big(\int_{0}^{\xi^{-\frac 12}}
		+
		\int_{\xi^{-\frac 12}}^{\infty} \Big)
		\Big( \Phi^{-1}(x,\xi)
		-
		\frac{x^{\frac 52}}{1+x^2} \Big)
		x^{\frac 12}
		h(x,s)
		d x \Big|.
	\end{equation*}
	
	We will use Proposition \ref{Phi-1 estimate} repetitively. Firstly, for $\ell <4$, we have
	\begin{equation*}
		\Big| \int_{0}^{\xi^{-\frac 12}}
		\Big( \Phi^{-1}(x,\xi)
		-
		\frac{x^{\frac 52}}{1+x^2} \Big)
		x^{\frac 12}
		h(x,s)
		d x \Big|
		\lesssim
		v(s)\int_{0}^{\xi^{-\frac 12}}
		\frac{x^{\frac 52}}{1+x^2}
		x^2\xi
		x^{\frac 12}  \langle x \rangle^{-\ell}
		d x
		\lesssim
		v(s) \big( \xi^{\frac{\ell}{2} -1} \1_{\{ \xi \le 1 \}} + \xi^{-2} \1_{\{ \xi > 1 \}} \big).
	\end{equation*}
	
	Secondly,
	\begin{equation*}
		\Big| \int_{\xi^{-\frac 12}}^{\infty}
		\Phi^{-1}(x,\xi)
		x^{\frac 12}
		h(x,s)
		d x \Big|
		\lesssim
		v(s)
		\xi^{-\frac 14} \langle \xi \rangle^{-1}
		\int_{\xi^{-\frac 12}}^{\infty}
		x^{\frac 12}
		\langle x\rangle^{-\ell}
		d x
		\sim
		v(s) \big( \xi^{\frac {\ell}2 -1}  \1_{\{ \xi \le 1 \}} + \xi^{ -\frac 54} \1_{\{ \xi > 1 \}} \big),
	\end{equation*}
	where we require $\ell> 3/2$ to guarantee the integrability.
	
	Thirdly, by \eqref{h-1-ortho} and $\ell>2$, we have $\int_{\xi^{-\frac 12}}^{\infty}
	\frac{x^{\frac 52}}{1+x^2}
	x^{\frac 12}
	h(x,s)
	d x
	=
	-
	\int_{0}^{\xi^{-\frac 12}}
	\frac{x^{\frac 52}}{1+x^2}
	x^{\frac 12}
	h(x,s)
	d x$. Then
	\begin{equation*}
		\Big|
		\int_{\xi^{-\frac 12}}^{\infty}
		\frac{x^{\frac 52}}{1+x^2}
		x^{\frac 12}
		h(x,s)
		d x
		\Big|
		\lesssim
		v(s) \big( \xi^{\frac{\ell}{2} -1} \1_{\{\xi \le 1\}}  + \xi^{ -2} \1_{\{ \xi > 1 \}} \big),
	\end{equation*}
	and thus
	\begin{equation}\label{tildeF-est-DFT}
		\tilde{F}(\xi,s) \lesssim
		v(s)
		(
		\xi^{\frac {\ell}2 -1} \1_{\{\xi \le 1 \}}
		+
		\xi^{ -\frac 54} \1_{\{\xi > 1 \}}
		).
	\end{equation}
	
	Next, using $ \frac{d \rho_{-1}(\xi)}{d \xi}
	\sim \langle \xi \rangle^2$ on $\xi\ge 0$ in Proposition \ref{Phi-1 estimate}, we will estimate
	\begin{equation*}
		\tilde{P}(\rho,\tau,s)
		:=
		\int_{0}^{\infty}
		e^{- a \xi (\tau -s)}
		|\Phi^{-1} (\rho,\xi) |
		\tilde{F}(\xi,s)
		\langle \xi \rangle^2
		d \xi
		=
		\int_{0}^{\frac{1}{\rho^2}} + 	\int_{\frac{1}{\rho^2}}^{\infty}
		\cdots
		:= \tilde{P}_1 +\tilde{P}_2.
	\end{equation*}
	
	Let us estimate $\tilde{P}_{1}$. For $\rho\ge 1$, by Proposition \ref{Phi-1 estimate}, \eqref{tildeF-est-DFT}, $\ell>0$, and Lemma \ref{asy-lem-1},
	\begin{equation*}
		\begin{aligned}
			\tilde{P}_1 \lesssim \ &
			v(s)
			\int_{0}^{\frac{1}{\rho^2}}
			e^{- a \xi (\tau -s)}
			\rho^{\frac 52} \langle \rho \rangle^{-2}
			\xi^{\frac {\ell}2 -1}
			\langle \xi \rangle^2
			d \xi
			\sim
			v(s) \rho^{\frac 12}
			\int_{0}^{\frac{1}{\rho^2}}
			e^{- a \xi (\tau -s)}
			\xi^{\frac {\ell}2 -1}
			d \xi
			\\
			\lesssim \ &
			v(s)
			\big[
			\rho^{\frac 12-\ell} \1_{\{ \tau-s\le \rho^2 \}}
			+
			\rho^{\frac 12}(\tau-s)^{-\frac{\ell}{2}}
			\1_{\{ \tau-s > \rho^2 \}}
			\big].
		\end{aligned}
	\end{equation*}
	For $\rho<1$,
	\begin{equation*}
		\begin{aligned}
			& \tilde{P}_{1} \lesssim
			\int_{0}^{\frac{1}{\rho^2}}
			e^{- a \xi (\tau -s)}
			\rho^{\frac 52} \langle \rho \rangle^{-2}
			\tilde{F}(\xi,s)
			\langle \xi \rangle^2
			d \xi
			\sim
			\rho^{\frac 52}
			\Big(\int_{0}^{1}  + \int_{1}^{\frac{1}{\rho^2}} \Big)
			e^{- a \xi (\tau -s)}
			\tilde{F}(\xi,s)
			\langle \xi \rangle^2
			d \xi
			\\
			\lesssim  \ &
			v(s) \rho^{\frac 52}
			\Big( \int_{0}^{1} e^{- a \xi (\tau -s)}
			\xi^{\frac{\ell}{2} -1 }
			d \xi  + \int_{1}^{\frac{1}{\rho^2}} e^{- a \xi (\tau -s)}
			\xi^{\frac{3}{4}}
			d \xi \Big)
			\lesssim
			v(s) \rho^{\frac 52}
			\begin{cases}
				\rho^{-\frac 72}
				&
				\mbox{ \ if \ } \tau-s \le \rho^2
				\\
				(\tau-s)^{-\frac 74}
				&
				\mbox{ \ if \ } \rho^2 <\tau-s \le 1
				\\
				(\tau-s)^{-\frac{\ell}{2}}
				&
				\mbox{ \ if \ } \tau-s > 1
			\end{cases}
		\end{aligned}
	\end{equation*}
	since
	$
	\int_{0}^{1} e^{- a \xi (\tau -s)}
	\xi^{\frac{\ell}{2} -1 }
	d \xi
	\lesssim
	\1_{\{ \tau-s\le 1 \}}
	+
	(\tau-s)^{-\frac{\ell}{2}} \1_{\{ \tau-s > 1 \}}
	$
	by $\ell>0$ and Lemma \ref{asy-lem-1}, and
	\begin{equation*}
		\int_{1}^{\frac{1}{\rho^2}} e^{- a \xi (\tau -s)}
		\xi^{\frac{3}{4}}
		d \xi
		\sim
		(\tau-s)^{-\frac 74} \int_{a(\tau-s)}^{\frac{a(\tau-s)}{\rho^2}} e^{-z} z^{\frac 34} dz
		\lesssim
		\begin{cases}
			\rho^{-\frac 72}
			&
			\mbox{ \ if \ } \tau-s \le \rho^2
			\\
			(\tau-s)^{-\frac 74}
			&
			\mbox{ \ if \ } \rho^2 <\tau-s \le 1
			\\
			(\tau-s)^{-\frac 74} e^{-\frac{a(\tau-s)}{2}}
			&
			\mbox{ \ if \ } \tau-s > 1.
		\end{cases}
	\end{equation*}
	
	Next, we will use Proposition \ref{Phi-1 estimate} and \eqref{tildeF-est-DFT} to estimate $\tilde{P}_2$. For $\rho\le 1$,
	\begin{equation*}
		\begin{aligned}
			\tilde{P}_{2} \lesssim \ &
			v(s)
			\int_{\frac{1}{\rho^2}}^{\infty}
			e^{- a \xi (\tau -s)}
			\xi^{-\frac 14 }\langle \xi\rangle^{-1}
			\xi^{ -\frac 54}
			\langle \xi \rangle^2
			d \xi
			\sim
			v(s)
			\int_{\frac{1}{\rho^2}}^{\infty}
			e^{- a \xi (\tau -s)}  \xi^{-\frac 12} d \xi
			\\
			\sim \ &
			v(s) (\tau-s)^{-\frac 12}
			\int_{\frac{a(\tau-s)}{\rho^2}}^{\infty}
			e^{-z} z^{-\frac 12} dz
			\lesssim
			v(s) (\tau-s)^{-\frac 12}
			\Big(
			\1_{\{ \tau-s\le \rho^2 \}}
			+
			e^{-\frac{a(\tau-s)}{2\rho^2}}
			\1_{\{ \tau-s > \rho^2 \}}
			\Big).
		\end{aligned}
	\end{equation*}
	For $\rho > 1$,
\begin{align*}
			\tilde{P}_{2} \lesssim \ &
			\Big( \int_{\frac{1}{\rho^2}}^{1}
			+	\int_{1}^{\infty} \Big)
			e^{- a \xi (\tau -s)}
			\xi^{-\frac 14 }\langle \xi\rangle^{-1}
			\tilde{F}(\xi,s)
			\langle \xi \rangle^2
			d \xi
			\lesssim
			v(s)\Big(
			\int_{\frac{1}{\rho^2}}^{1}
			e^{- a \xi (\tau -s)}
			\xi^{\frac{\ell}{2}-\frac 54 }
			d \xi
			+
			\int_{1}^{\infty}
			e^{- a \xi (\tau -s)}
			\xi^{-\frac 12}
			d \xi \Big)
			\\
			\lesssim \ &
			v(s)\Big[
			\1_{\{ \tau-s \le 1 \}}
			+
			(\tau-s)^{\frac 14-\frac{\ell}{2}}
			\1_{\{ 1 < \tau-s \le \rho^2 \}}
			+
			(\tau-s)^{\frac 14-\frac{\ell}{2}}
			e^{-\frac{a(\tau-s)}{2\rho^2}}
			\1_{\{ \tau-s > \rho^2 \}}
			\\
			&
			+
			(\tau-s)^{-\frac 12} \1_{\{ \tau-s\le 1 \}}
			+
			(\tau-s)^{-\frac 12}
			e^{-\frac{a(\tau-s)}{2 }} \1_{\{ \tau-s > 1 \}} \Big]
			\\
			\lesssim \ &
			v(s)
			\Big[
			(\tau-s)^{-\frac 12} \1_{\{ \tau-s \le 1 \}}
			+
			(\tau-s)^{\frac 14-\frac{\ell}{2}}
			\1_{\{ 1 < \tau-s \le \rho^2 \}}
			+
			(\tau-s)^{\frac 14-\frac{\ell}{2}}
			e^{-\frac{a(\tau-s)}{4\rho^2}}
			\1_{\{ \tau-s > \rho^2 \}}
			\Big],
		\end{align*}
	where we used $\ell>1/2$ and Lemma \ref{asy-lem-1} to estimate $\int_{1/\rho^2}^{1}
	e^{- a \xi (\tau -s)}
	\xi^{\frac{\ell}{2}-\frac 54 }
	d \xi$. Combining the estimates of $\tilde{P}_1$, $\tilde{P}_2$, since $\ell\ge 1/2$, we have
	\begin{equation*}
		\tilde{P}(\rho,\tau,s)\lesssim
		\1_{\{ \rho\le 1 \}}
		v(s)
		\begin{cases}
			(\tau-s)^{-\frac 12}
			&
			\mbox{if \ } \tau-s \le \rho^2
			\\
			\rho^{\frac 52} (\tau-s)^{-\frac 74}
			&
			\mbox{if \ } \rho^2 <\tau-s \le 1
			\\
			\rho^{\frac 52} (\tau-s)^{-\frac{\ell}{2}}
			&
			\mbox{if \ } \tau-s > 1
		\end{cases}
		\
		+
		\
		\1_{\{ \rho>1 \}}
		v(s)
		\begin{cases}
			(\tau-s)^{-\frac 12}
			&
			\mbox{if \ } \tau-s \le 1
			\\
			(\tau-s)^{\frac 14-\frac{\ell}{2}}
			&
			\mbox{if \ } 1 < \tau-s \le \rho^2
			\\
			\rho^{\frac 12}(\tau-s)^{-\frac{\ell}{2}}
			&
			\mbox{if \ }  \tau-s > \rho^2.
		\end{cases}
	\end{equation*}
	
	Finally, we will estimate $\tilde{\phi}_{-1}$. Obviously,
	$
	\tilde{\phi}_{-1} \lesssim
	\rho^{-\frac 12} \int_{\frac{\tau_0}{2}}^{\tau-1}
	\tilde{P}(\rho,\tau,s) ds$.
	For $\rho\le 1$, since $\ell>2$,
	\begin{equation*}
		\tilde{\phi}_{-1} \lesssim
		\rho^{-\frac 12} \int_{\frac{\tau_0}{2}}^{\tau-1}
		v(s) \rho^{\frac 52} (\tau-s)^{-\frac{\ell}{2}} ds
		\lesssim
		\rho^2 \Big( \tilde{v}(\tau) + \tau^{-\frac{\ell}{2}}\int_{\frac{\tau_0}{2}}^{\frac{\tau}{2}} v(s) ds
		\Big).
	\end{equation*}
	For $1<\rho \le (\tau/2)^{\frac 12}$,
	$2< \ell< 5/2$,
	\begin{align*}
			&	\tilde{\phi}_{-1}
			\lesssim
			\rho^{-\frac 12} \Big(
			\int_{\tau-\rho^2 }^{\tau-1}
			+
			\int_{\frac{\tau_0}{2}}^{ \tau-\rho^2 }
			\Big)
			\tilde{P}(\rho,\tau,s) ds
			\\
			\lesssim \ &
			\rho^{-\frac 12} \Big[
			\tilde{v}(\tau)
			\int_{ \tau-\rho^2  }^{\tau-1}
			(\tau-s)^{\frac 14-\frac{\ell}{2}}
			ds
			+
			\Big(
			\int_{\frac{\tau }{2}}^{ \tau-\rho^2 }
			+
			\int_{\frac{\tau_0 }{2}}^{\frac{\tau }{2}} \Big)
			v(s)
			\rho^{\frac 12}(\tau-s)^{-\frac{\ell}{2}}
			ds \Big]
			\lesssim
			\tilde{v}(\tau)
			\rho^{2 - \ell }
			+
			\tau^{-\frac{\ell}{2}}
			\int_{\frac{\tau_0 }{2}}^{\frac{\tau }{2}}
			v(s) ds.
	\end{align*}
	For $ (\tau/2)^{\frac 12} < \rho \le \tau^{\frac 12}$, $\ell<5/2$,
	\begin{equation*}
		\tilde{\phi}_{-1}
		\lesssim
		\rho^{-\frac 12} \Big[
		\Big(
		\int_{\frac{\tau}{2}}^{\tau-1}
		+
		\int_{\tau-\rho^2 }^{\frac{\tau}{2}}
		\Big)
		+
		\int_{\frac{\tau_0}{2}}^{ \tau-\rho^2 }
		\Big]
		\tilde{P}(\rho,\tau,s) ds
		\lesssim
		\tilde{v}(\tau)
		\rho^{2 - \ell }
		+
		\tau^{-\frac{\ell}{2}}
		\int_{\frac{\tau_0 }{2}}^{\frac{\tau }{2}}
		v(s) ds.
	\end{equation*}
	For $\rho>\tau^{\frac 12}$, $\ell< 5/2$,
	\begin{equation*}
		\tilde{\phi}_{-1} \lesssim
		\rho^{-\frac 12} \int_{\frac{\tau_0}{2}}^{\tau-1}
		v(s) (\tau-s)^{\frac 14-\frac{\ell}{2}} ds
		\lesssim
		\rho^{-\frac 12}
		\Big(
		\tilde{v}(\tau) \tau^{\frac 54 -\frac{\ell}{2}}
		+
		\tau^{\frac 14-\frac{\ell}{2}} \int_{\frac{\tau_0}{2}}^{\frac{\tau}{2}}
		v(s)  ds
		\Big).
	\end{equation*}
	
	Combining \eqref{phi-1-1} with all the estimates of $\tilde{\phi}_{-1}$ above, we conclude the estimate \eqref{phi-1-orth}.
\end{proof}

\appendix

\section{Pointwise estimates for heat and Laplace equations}

Recall {\it algebraic power type} ($\mathbf{AP}$) defined at the very beginning of Section \ref{sec-linearinner}.

\begin{lemma}\label{qd24July10-2-lem}
	
	Suppose $n>2$, $v(t), l_1(t), l_2(t) \in \mathbf{AP}$, $b\in \RR$, $l_1(t) \le l_2(t) \le C_{*} t^{\frac{1}{2}}$ with a constant $C_*>0$,
	\begin{equation*}
		\begin{cases}
			\mathbf{P}_1[t^{\frac n2}
			v(t)
			l_2^{2-b}(t)] >0 & \mbox{ \ if \ } b\le n
			\\
			\mathbf{P}_1[t^{\frac n2} v(t)
			l_2^{2-n}(t)	l_1^{n-b}(t)] >0
			& \mbox{ \ if \ }
			b>n,
		\end{cases}
	\end{equation*}
	given constants $c_1, c_2>0$ and $f(x,t) =  v(t) |x|^{-b} \1_{\{ l_1(t) \le |x| \le l_2(t) \}}$ or $v(t) (|x| + l_1(t))^{-b} \1_{\{ |x| \le l_2(t) \}}$, then for all $|x|\le l_2(t)$, we have
\begin{align*}
			&
			\int_{t_0}^t \int_{\mathbb{R}^n} (t-s)^{-\frac{n}{2}} e^{-c_1\big(\frac{|x-y|^2}{t-s}\big)^{c_2}} f(y,s) dy ds
			\\
			\lesssim \ & v(t)
			\begin{cases}
				\begin{cases}
					l_2^{2-b}(t)
					& \mbox{ \ if \ }
					b<2
					\\
					\langle	\ln (\frac{l_2(t)}{l_1(t)}) \rangle
					& \mbox{ \ if \ }
					b=2
					\\
					l_1^{2-b}(t)
					& \mbox{ \ if \ }
					b>2
				\end{cases}
				&
				\mbox{ \ for \ }  |x| \le l_1(t)
				\\
				\begin{cases}
					l_2^{2-b}(t)
					&
					\mbox{ \ if \ } b<2
					\\
					\langle\ln(\frac{l_2(t)}{|x|} )  \rangle
					&
					\mbox{ \ if \ } b=2
					\\
					|x|^{2-b}
					&
					\mbox{ \ if \ } 2<b<n
					\\
					|x|^{2-n} \langle	\ln (\frac{|x|}{l_1(t)}) \rangle
					&
					\mbox{ \ if \ } b=n
					\\
					|x|^{2-n}	l_1^{n-b}(t)
					&
					\mbox{ \ if \ } b>n
				\end{cases}
				&
				\mbox{ \ for \ }
				l_1(t) < |x| \le l_2(t),
			\end{cases}
		\end{align*}
	where ``$\lesssim$'' is independent of $t_0$.
\end{lemma}

\begin{lemma}\label{qd24July11-1-lem}
	
	Suppose $2<b<n$, $v(t), l_1(t), l_2(t) \in \mathbf{AP}$, $l_1(t) \le l_2(t) \le C_{*} t^{\frac{1}{2}}$ with a constant $C_*>0$, $\frac n2 -\frac{b}{2} +1+ \mathbf{P}_1[v(t)] >0$,
	given constants $c_1, c_2>0$ and $f(x,t) =  v(t) |x|^{-b} \1_{\{ l_1(t) \le |x| \le l_2(t) \}}$ or $v(t) (|x| + l_1(t))^{-b} \1_{\{ |x| \le l_2(t) \}}$, then for all $(x,t) \in \mathbb{R}^n \times (t_0,\infty)$, we have
	\begin{equation*}
		\int_{t_0}^t \int_{\mathbb{R}^n} (t-s)^{-\frac{n}{2}} e^{-c_1\big(\frac{|x-y|^2}{t-s}\big)^{c_2}} f(y,s) dy ds
		\lesssim v(t)
		\big(
		l_1^{2-b}(t) \1_{\{ |x| \le l_1(t) \}}
		+
		|x|^{2-b} \1_{\{ |x|>l_1(t) \}}
		\big),
	\end{equation*}
	where ``$\lesssim$'' is independent of $t_0$.
\end{lemma}

\begin{proof}[Proof of Lemmas \ref{qd24July10-2-lem} and \ref{qd24July11-1-lem}]
	The proof is a direct application of the analog of \cite[Lemma A.1]{infi4d}. We omit details.
\end{proof}

The estimates in Lemmas \ref{qd24July10-2-lem} and \ref{qd24July11-1-lem} do not show the dependence on parameters clearly.
To get estimates with precise dependence on $k$ in the linear theory of mode $k$, $|k|\ge 2$ in Subsection \ref{hmode-subsec}, we need the following lemma.
\begin{lemma}\label{ellptic-compa}
	Consider $ -\Delta u = f(x) $ in $\mathbb{R}^n\backslash \{0\}$,
	where $n\ge 3$, $f(x) = f(|x|)$ is radial with the upper bound $|f(x)|\lesssim |x|^{-l_{1}} \1_{\{|x|\le 1\}} + |x|^{-l} \1_{\{|x| > 1\}} $, $l_1<n$, $l>2$. $u$ is given by
	\begin{equation}\label{qd240808-2}
		u(x ) = \frac{1}{(n-2)|S^{n-1}| } \int_{\RR^n} |x-y|^{2-n} f(y) dy,
	\end{equation}	
	where $|S^{n-1}|$ is the volume of the unit sphere $S^{n-1}$. Then
	\begin{equation}\label{qd240808-1}
		u(x) = u(|x|) =	|x|^{2-n} \int_{0}^{|x|} a^{n-3}
		\int_{a}^{\infty} b f(b) db da ,
		\quad
		\pp_{|x|}u =
		-|x|^{1-n} \int_0^{|x|} f(a) a^{n-1} da.
	\end{equation}
	When $f(x) = |x|^{-l_{1}} \1_{\{|x|\le 1\}} + |x|^{-l} \1_{\{|x|> 1\}} $, we have
	\begin{equation}\label{pp|x|-u}
		\pp_{|x|}u = \frac{-|x|^{1-l_1}}{n-l_1} \1_{\{ |x|\le 1 \}}
		-
		\bigg(
		\frac{|x|^{1-n}}{n-l_1} +
		\begin{cases}
			\frac{|x|^{1-l} - |x|^{1-n}}{n-l}
			& \mbox{ \ if \ } 2<l<n
			\\
			|x|^{1-n} \ln|x|
			& \mbox{ \ if \ } l=n
			\\
			\frac{ |x|^{1-n} - |x|^{1-l} }{l-n}
			& \mbox{ \ if \ } l>n
		\end{cases}
		\bigg)
		\1_{\{ |x|> 1 \}};
	\end{equation}
	for $|x|\le 1$,
	\begin{equation*}
		u(x)	=
		\frac{1}{(l-2)(n-2)}  +
		\begin{cases}
			\frac{1}{(2-l_{1})(n-2)}
			-
			\frac{ |x|^{2-l_1} }{(2-l_1)(n-l_1)}
			&
			\mbox{ \ if \ } l_{1} <2
			\\
			\frac{-\ln |x|}{n-2}
			+
			\frac{1}{(n-2)^2}
			&
			\mbox{ \ if \ } l_{1} =2
			\\
			\frac{ |x|^{2-l_{1} } }{(l_{1} -2)(n-l_{1})}
			- \frac{1}{(l_1-2)(n-2)}
			&
			\mbox{ \ if \ } 2<l_{1} <n ,
		\end{cases}
	\end{equation*}
	for $|x| \ge 1$,
\begin{align*}
			& 	u(x)
			=
			\frac{	|x|^{2-n} }{n-2}\Big(\frac{1}{l-2} + \frac{1}{n-l_1} \Big)
			+
			\begin{cases}
				\frac{|x|^{2-l}-	|x|^{2-n} }{(l-2)(n-l)}
				&
				\mbox{ \ if \ } 2<l<n
				\\
				\frac{	|x|^{2-n}  \ln |x|}{n-2}
				&
				\mbox{ \ if \ } l=n
				\\
				\frac{	|x|^{2-n} -|x|^{2-l}}{(l-2)(l-n)}
				&
				\mbox{ \ if \ } l>n
			\end{cases}
			\\
			= \ &
			\begin{cases}
				\frac{|x|^{2-l} }{(l-2)(n-l)}
				+
				\frac{|x|^{2-n}}{(n-2)(n-l_1)}-\frac{|x|^{2-n}}{(n-2)(n-l)}
				&
				\mbox{if \ } 2<l<n
				\\
				\frac{	|x|^{2-n}  \ln |x|}{n-2}
				+
				\frac{	|x|^{2-n} }{n-2}(\frac{1}{n-2} + \frac{1}{n-l_1})
				&
				\mbox{if \ } l=n
				\\
				\frac{|x|^{2-n}}{(n-2)(l-n)} + \frac{|x|^{2-n}}{(n-2)(n-l_1)}
				-\frac{|x|^{2-l}}{(l-2)(l-n)}
				&
				\mbox{if \ } l>n
			\end{cases}
			=
			\begin{cases}
				\frac{|x|^{2-l} }{(l-2)(n-l)}
				+
				\frac{(l_1 -l)|x|^{2-n}}{(n-2)(n-l_1)(n-l)}
				&
				\mbox{if \ } 2<l<n
				\\
				\frac{	|x|^{2-n}  \ln |x|}{n-2}
				+
				\frac{	|x|^{2-n} }{n-2}(\frac{1}{n-2} + \frac{1}{n-l_1})
				&
				\mbox{if \ } l=n
				\\
				\frac{(l-l_1)|x|^{2-n}}{(n-2)(n-l_1)(l-n)}
				-\frac{|x|^{2-l}}{(l-2)(l-n)}
				&
				\mbox{if \ } l>n .
			\end{cases}
		\end{align*}
	In particular,
	for $|x|\ge 1$,
	\begin{equation*}
		u(x)
		\ge
		\begin{cases}
			\frac{ |x|^{2-l} }{(l-2)(n-2) }
			+
			\frac{|x|^{2-n}}{(n-2)(n-l_1)}	&
			\mbox{ \ if \ } 2<l<n
			\\
			\frac{	|x|^{2-n}  \ln |x|}{n-2}
			+
			\frac{	|x|^{2-n} }{n-2}(\frac{1}{n-2} + \frac{1}{n-l_1})
			&
			\mbox{ \ if \ } l=n
			\\
			\frac{|x|^{2-n}}{(l-2)(n-2)} + \frac{|x|^{2-n}}{(n-2)(n-l_1)}	&
			\mbox{ \ if \ } l>n .
		\end{cases}
	\end{equation*}
\end{lemma}

\begin{proof}
	
	$l_1<n$ and $l>2$ ensure the integrability of \eqref{qd240808-2}. Since $f(x)=f(|x|)$, it is easy to see that $u$ is radial. Due to the upper bound of $f(|x|)$, by the removable singularity theorem for harmonic functions (It is used for the case $2\le l_1 <n$) and maximum principle, we have the formula of $u(x)$ in \eqref{qd240808-1}. And the deduction of $\pp_{|x|}u$ in \eqref{qd240808-1} is straightforward.

	When $f(r) = r^{-l_{1}} \1_{\{r\le 1\}} + r^{-l} \1_{\{ r > 1\}} $ with $r=|x|$, we only present the calculation of $u(x)$. $\pp_{|x|}u$ is similar. For $a\ge 1$, $ l>2$, then $ \int_{a}^{\infty} b f(b) db
	=
	\frac{a^{2-l}}{l-2}$.
	For $0<a\le 1$,
	\begin{equation*}
		\int_{a}^{\infty} b f(b) db
		=
		\int_{1}^{\infty} b^{1-l} db
		+
		\int_{a}^{1} b^{1-l_{1}} db
		=
		\frac{1}{l-2} +
		\begin{cases}
			\frac{1}{2-l_{1}}
			(1-a^{2-l_1} )
			&
			\mbox{ \ if \ } l_{1} <2
			\\
			-\ln a
			&
			\mbox{ \ if \ } l_{1} =2
			\\
			\frac{1 }{l_{1} -2} (a^{2-l_{1}} -1 )
			&
			\mbox{ \ if \ } l_{1} >2.
		\end{cases}
	\end{equation*}
	For $0<r\le 1$,
	\begin{equation}\label{ZO1}
		\int_{0}^{r} a^{n-3}
		\int_{a}^{\infty} b f(b) db da
		=
		\frac{r^{n-2}}{(l-2)(n-2)}  +
		\begin{cases}
			\frac{r^{n-2}}{(2-l_{1})(n-2)}
			-
			\frac{ r^{n-l_1} }{(2-l_1)(n-l_1)}
			&
			\mbox{ \ if \ } l_{1} <2
			\\
			\frac{r^{n-2}(-\ln r)}{n-2}
			+
			\frac{r^{n-2}}{(n-2)^2}
			&
			\mbox{ \ if \ } l_{1} =2
			\\
			\frac{ r^{n-l_{1} } }{(l_{1} -2)(n-l_{1})}
			- \frac{r^{n-2}}{(l_1-2)(n-2)}
			&
			\mbox{ \ if \ } 2<l_{1} <n,
		\end{cases}
	\end{equation}
	where $l_{1}<n$ guarantees the integrability around $0$.
	For $r\ge 1$, since
	\begin{equation*}
		\int_{1}^{r} a^{n-3}
		\int_{a}^{\infty} b f(b) db da
		=
		\int_{1}^{r} \frac{a^{n-1-l}}{l-2} da
		=
		\begin{cases}
			\frac{r^{n-l}-1}{(l-2)(n-l)}
			&
			\mbox{ \ if \ } 2<l<n
			\\
			\frac{\ln r}{n-2}
			&
			\mbox{ \ if \ } l=n
			\\
			\frac{1-r^{n-l}}{(l-2)(l-n)}
			&
			\mbox{ \ if \ } l>n ,
		\end{cases}
	\end{equation*}
	then combining \eqref{ZO1}, we have
	\begin{equation*}
		\int_{0}^{r} a^{n-3}
		\int_{a}^{\infty} b f(b) db da
		=
		\frac{1}{n-2}(\frac{1}{l-2} + \frac{1}{n-l_1})
		+
		\begin{cases}
			\frac{r^{n-l}-1}{(l-2)(n-l)}
			&
			\mbox{ \ if \ } 2<l<n
			\\
			\frac{\ln r}{n-2}
			&
			\mbox{ \ if \ } l=n
			\\
			\frac{1-r^{n-l}}{(l-2)(l-n)}
			&
			\mbox{ \ if \ } l>n.
		\end{cases}
	\end{equation*}
	The left calculations are direct, where for the last lower bound, we used that for $|x|\ge 1$, $|x|^{2-n} \le |x|^{2-l}$ when $l<n$, and $|x|^{2-l} \le |x|^{2-n}$ when $l>n$.
\end{proof}

\section{Integral estimates for the distorted Fourier transform}

\begin{lemma}\label{asy-lem-1}
	
	Suppose $0\le \lambda <\infty$, $a,b\in \mathbb{R}$, $
	\int_{0}^{1}  x^{a} \langle \ln x \rangle^{b} dx < \infty $,
	that is, either $(a,b) \in (-1,\infty) \times \mathbb{R}$ or
	$(a,b) \in \{-1\} \times (-\infty,-1)$ holds, then for $0\le x_{0} \le x_{1} \le 1/2$, we have	
	\begin{equation}\label{general case}
		\int_{x_{0}}^{x_{1}}  e^{-\lambda x} x^{a}
		(-\ln x)^{b} dx
		\le C_1
		\begin{cases}
			\begin{cases}
				x_{1}^{a+1}  (-\ln x_{1})^{b} & \mbox{if \ } a>-1
				\\
				( -\ln x_{1})^{b+1}
				-
				( -\ln x_{0})^{b+1}
				& \mbox{if \ } a=-1, b<-1
			\end{cases}
			&
			\mbox{for \ }
			0\le \lambda \le x_{1}^{-1}
			\\
			\frac{ (\ln \lambda)^b }{\lambda^{a+1}}
			+
			\begin{cases}
				0
				&
				\mbox{if \ } a>-1
				\\
				(\ln \lambda)^{b+1} - (-\ln x_{0})^{b+1}
				&
				\mbox{if \ } a=-1, b<-1
			\end{cases}
			&
			\mbox{for \ }
			x_{1}^{-1} \le \lambda  \le x_{0}^{-1}
			\\
			\frac{ (\ln \lambda)^b }{\lambda^{a+1}} e^{-\frac{x_{0 } \lambda}{2}}
			&
			\mbox{for \ }  \lambda \ge x_{0}^{-1},
		\end{cases}
	\end{equation}	
	where the constant $C_1>0$ only depends on $a, b$, and the case $\lambda \ge x_{0}^{-1}$ is vacuum when $x_0=0$.
	
	For $0\le x_{0} \le 1/2 < x_{1}$, we have
	\begin{equation}\label{qd24Sep22-3}
		\int_{x_{0}}^{x_1}  e^{-\lambda x} x^{a}
		\langle \ln x \rangle^{b} dx
		\le C_2
		\begin{cases}
			1
			&
			\mbox{for \ }
			0\le \lambda \le 2
			\\
			\frac{ (\ln \lambda)^b }{\lambda^{a+1}}
			+
			\begin{cases}
				0
				&
				\mbox{if \ } a>-1
				\\
				(\ln \lambda)^{b+1} - (-\ln x_{0})^{b+1}
				&
				\mbox{if \ } a=-1, b<-1
			\end{cases}
			&
			\mbox{for \ }
			2 \le \lambda  \le x_{0}^{-1}
			\\
			\frac{ (\ln \lambda)^b }{\lambda^{a+1}} e^{-\frac{x_{0 } \lambda}{2}}
			&
			\mbox{for \ } \lambda \ge x_{0}^{-1},
		\end{cases}
	\end{equation}
	where the constant $C_2>0$ only depends on $a, b, ~\int_{1/2}^{x_{1}} x^{a}
	\langle \ln x\rangle^{b} dx$.
	
\end{lemma}

\begin{proof}
	
	We first consider the case $0\le x_{0} \le x_{1} \le 1/2$. For $0\le \lambda\le x_{1}^{-1}$,
	\begin{equation}\label{z2}
			\int_{x_{0}}^{x_{1}}  e^{-\lambda x} x^{a} ( -\ln x)^{b} dx
			\sim
			\int_{x_{0}}^{x_{1}}   x^{a} (-\ln x)^{b} dx
			\lesssim
			\begin{cases}
				x_{1}^{a+1}  (-\ln x_{1})^{b} & \mbox{if \ } a>-1
				\\
				( -\ln x_{1})^{b+1}
				-
				( -\ln x_{0})^{b+1}
				& \mbox{if \ } a=-1, b<-1,
			\end{cases}
	\end{equation}	
	where for the last step for the case $a>-1$, we used the following calculation. If $a>-1$,
	\begin{equation*}
		\int_{x_{0}}^{x_{1}}   x^{a} (-\ln x)^{b} dx
		=
		\frac{1}{a+1} \big[ x_1^{a+1} (-\ln x_1)^b - x_0^{a+1} (-\ln x_0)^b \big]
		+
		\frac{b}{a+1} \int_{x_0}^{x_1} x^a (-\ln x)^{b-1} dx.
	\end{equation*}
	When $x_1 \le C_3$ with a constant $0<C_3<1/2$ sufficiently small depending on $a, b$, we have $\int_{x_{0}}^{x_{1}}   x^{a} (-\ln x)^{b} dx \lesssim x_{1}^{a+1}  (-\ln x_{1})^{b}$. When $C_3 < x_1 \le 1/2$, the estimate holds due to the assumption $
	\int_{0}^{1}  x^{a} \langle \ln x \rangle^{b} dx < \infty $.

	For $\lambda \ge x_{0}^{-1}$,
	\begin{equation}\label{z5}
		\begin{aligned}
			&
			\int_{x_{0}}^{x_{1}}  e^{-\lambda x} x^{a}
			( - \ln x )^{b} dx
			=
			\frac{ 1 }{\lambda^{a+1}}
			\int_{x_{0 } \lambda}^{x_{1} \lambda}
			e^{-z} z^{a} (\ln \lambda -\ln z)^b dz
			\\
			\lesssim \ &
			\frac{ 1 }{\lambda^{a+1}}
			\begin{cases}
				(\ln \lambda)^b e^{-\frac{x_{0 } \lambda}{2}}
				&
				\mbox{ \ if \ } x_{0}^{-1} \le \lambda \le x_{0}^{-2}
				\\
				e^{-\frac{3 x_{0} \lambda}{4 }}
				&
				\mbox{ \ if \ } \lambda \ge x_{0}^{-2}
			\end{cases}
			\
			\lesssim
			\frac{ (\ln \lambda)^b }{\lambda^{a+1}} e^{-\frac{x_{0 } \lambda}{2}}.
		\end{aligned}
	\end{equation}
	In order to get the first ``$\lesssim$'' above, we need the following estimates.
	
	If $ x_{1} \lambda \le \lambda^{\frac 12}$, that is, $\lambda \le x_{1}^{-2}$, then
	\begin{equation}\label{z3}
		\int_{x_{0 } \lambda}^{x_{1} \lambda}
		e^{-z} z^{a} (\ln \lambda -\ln z)^b dz
		\sim
		(\ln \lambda)^b
		\int_{x_{0 } \lambda}^{x_{1} \lambda}
		e^{-z} z^{a}  dz
		\lesssim
		(\ln \lambda)^b e^{-\frac{x_{0 } \lambda}{2}}.
	\end{equation}

	If $x_{0} \lambda \ge \lambda^{\frac 12}$, that is, $\lambda \ge x_{0}^{-2}$, then
	\begin{equation}\label{z4}
		\int_{x_{0 } \lambda}^{x_{1} \lambda}
		e^{-z} z^{a} (\ln \lambda -\ln z)^b dz
		\lesssim   e^{-\frac{3 x_{0} \lambda}{4 }},
	\end{equation}
since $-\ln x_{1} \le \ln \lambda - \ln z \le \ln (\frac{\lambda}{x_{0} \lambda})  \le \frac{\ln \lambda}{2}$,
$z^{a} \lesssim
\begin{cases}
	1 & \mbox{if \ } a\le 0
	\\
	\lambda^a
	& \mbox{if \ } a > 0
\end{cases}$,
$(\ln \lambda -\ln z)^b
\lesssim
\begin{cases}
	1 & \mbox{if \ } b\le 0
	\\
	(\ln  \lambda)^b
	& \mbox{if \ } b > 0
\end{cases}$.

	If $x_{0} \lambda \le \lambda^{\frac 12} \le x_{1} \lambda $, that is, $x_{1}^{-2} \le \lambda \le x_{0}^{-2}$, by \eqref{z3}, \eqref{z4}, then
	\begin{equation*}
		\int_{x_{0 } \lambda}^{x_{1} \lambda}
		e^{-z} z^{a} (\ln \lambda -\ln z)^b dz
		=
		\int_{x_{0 } \lambda}^{ \lambda^{\frac 12}}
		+
		\int_{ \lambda^{\frac 12} }^{x_{1} \lambda}
		\cdots
		\lesssim
		(\ln \lambda)^b e^{-\frac{x_{0 } \lambda}{2}}
		+
		e^{-\frac{3 \lambda^{\frac 12} }{4}}
		\lesssim
		(\ln \lambda)^b e^{-\frac{x_{0 } \lambda}{2}},
	\end{equation*}
	where we used $x_{0}^{-1} \le \lambda \le x_{0}^{-2}$ for the last step.

	For $x_{1}^{-1} \le \lambda \le x_{0}^{-1}$, by \eqref{z2}, \eqref{z5}, we have
	\begin{equation*}
		\int_{x_{0}}^{x_{1}}  e^{-\lambda x} x^{a}
		(-\ln x )^{b} dx
		= \int_{x_{0}}^{1/\lambda} +
		\int_{ 1/\lambda }^{x_{1}}
		\cdots
		\lesssim
		\frac{ (\ln \lambda)^b }{\lambda^{a+1}}
		+
		\begin{cases}
			0
			&
			\mbox{if \ } a>-1
			\\
			(\ln \lambda)^{b+1} - (-\ln x_{0})^{b+1}
			&
			\mbox{if \ } a=-1, b<-1.
		\end{cases}
	\end{equation*}
	In sum, we complete the proof of \eqref{general case}.
	
	For $0\le x_{0} \le 1/2 < x_{1}$, $\int_0^{1/2}  e^{-\lambda x} x^{a}
	\langle \ln x\rangle^{b} dx$ can be handled by \eqref{general case}. $\int_{1/2}^{x_{1}}  e^{-\lambda x} x^{a}
	\langle \ln x\rangle^{b} dx \le  e^{-\frac{\lambda}{2}} \int_{1/2}^{x_{1}} x^{a}
	\langle \ln x\rangle^{b} dx$. Thus we have \eqref{qd24Sep22-3}.
\end{proof}

\section{Convolution estimates in finite time}\label{convo-finite-tim-sec}
\subsection{Preliminaries}

We need the following relationship repetitively: for $s\le t$ and $t\le t_* \le T$,
\begin{equation}\label{tsplit}
	\begin{aligned}
	&	(T-s)/2 \le t-s \le T-s
		\mbox{ \ for \ }
		s \le  t-(T-t);
		& &
		T-t \le T-s \le 2(T-t)
		\mbox{ \ for \ }
		s\ge t-(T-t);
		\\
	&	(t_*-s)/2 \le t-s \le t_*-s
		\mbox{ \ for \ } s\le t-(t_*-t);
		& &
		t_*-t\le t_* -s \le 2(t_*-t)
		\mbox{ \ for \ } s\ge t-(t_*-t).
	\end{aligned}
\end{equation}

\begin{lemma}
	
	Given $x,q \in\RR^d$, $p>0$, $b\ge 0$, and $L>0$, $0\le L_1 \le  L_2 \le \infty$, we have
	\begin{equation}\label{basic cal in x}
			\int_{\RR^d}
			e^{ - c (\frac{|x-y|}{\sqrt{L}} )^{p} }
			|y-q|^{-b} \1_{ \{ L_1 \le |y-q| \le L_2 \} } dy
			\lesssim
			\begin{cases}
				0 & \mbox{if \ } L_1 =L_2
				\\
				\begin{cases}
					L^{\frac d2 } L_1^{-b}
					&
					\mbox{if \ } L \le L_1^2
					\\
					\begin{cases}
						L^{\frac d2 -\frac{b}{2}}
						&
						\mbox{if \ } b<d
						\\
						\langle \ln(\frac{L}{L_1^2 } ) \rangle
						&
						\mbox{if \ } b=d
						\\
						L_1^{d-b}
						&
						\mbox{if \ } b>d
					\end{cases}
					&
					\mbox{if \ }  L_1^2 < L \le  L_2^2
					\\
					\begin{cases}
						L_2^{d-b}
						&
						\mbox{if \ } b<d
						\\
						\langle \ln(\frac{ L_2 }{L_1 } ) \rangle
						&
						\mbox{if \ } b=d
						\\
						L_1^{d-b}
						&
						\mbox{if \ } b>d
					\end{cases}
					&
					\mbox{if \ }  L > L_2^2
				\end{cases}
				&
				\mbox{if \ }
				L_1 <L_2.
			\end{cases}
	\end{equation}
	In particular, for $L_3\ge CL>0$ with a constant $C>0$, we have
	\begin{equation}\label{basic-x-far}
		\int_{\RR^d}
		e^{ - c (\frac{|x-y|}{\sqrt{L}} )^{p} }
		|y-q|^{-b} \1_{ \{ |y-q| \ge \sqrt{L_3} \} }
		dy
		\lesssim
		L^{\frac{d}{2}} L_3^{-\frac{b}{2}}.
	\end{equation}
	
\end{lemma}

\begin{remark}
	The estimate for the case $b<0$ is different, and we do not analyze it here.
\end{remark}

\begin{proof}[Proof of \eqref{basic cal in x}]
	
	For $L_1=L_2$, the conclusion is trivial.
	For $L_1<L_2$,
	\begin{equation*}
		\int_{\RR^d}
		e^{ - c (\frac{|x-y|}{\sqrt{L}} )^{p} }
		|y-q|^{-b} \1_{ \{ L_1 \le |y-q| \le L_2 \} } dy
		=
		L^{\frac d2 -\frac{b}{2}}
		\int_{\RR^d} e^{-c|\tilde{x} - z|^{p}} |z|^{-b}
		\1_{\{ L_1 L^{-\frac 12} \le |z| \le L_2 L^{-\frac 12} \}} dz,
	\end{equation*}
	where $\tilde{x} = (x-q)L^{-\frac 12}$.
	If $L_1=0$,
	\begin{align*}
			&
			L^{\frac d2 -\frac{b}{2}}
			\int_{\RR^d} e^{-c|\tilde{x} - z|^{p}} |z|^{-b}
			\1_{\{ |z| \le L_2 L^{-\frac 12} \}} dz
			\le
			L^{\frac d2 -\frac{b}{2}}
			\int_{\RR^d} e^{-c| z|^{p}}
			|z|^{-b}
			\1_{\{  |z| \le L_2 L^{-\frac 12} \}} dz
			\\
			\lesssim \ &
			\begin{cases}
				\infty
				&
				\mbox{ \ if \ }
				b\ge d
				\\
				\begin{cases}
					L^{\frac d2 -\frac{b}{2}}
					&
					\mbox{ \ if \ }   L \le L_2^2
					\\
					L_2^{d-b}
					&
					\mbox{ \ if \ }  L > L_2^2
				\end{cases}
				&	\mbox{ \ if \ } b<d.
			\end{cases}
	\end{align*}
	If $L_1>0$,
	\begin{align*}
			&
			L^{\frac d2 -\frac{b}{2}}
			\int_{\RR^d} e^{-c|\tilde{x} - z|^{p}} |z|^{-b}
			\1_{\{ L_1 L^{-\frac 12} \le |z| \le L_2 L^{-\frac 12} \}} dz
			\le
			L^{\frac d2 -\frac{b}{2}}
			\int_{\RR^d} e^{-c|\tilde{x} - z|^{p}}
			\min\{
			|z|^{-b}, (L_1 L^{-\frac 12} )^{-b}
			\}
			\1_{\{  |z| \le L_2 L^{-\frac 12} \}} dz
			\\
			\le \ &
			L^{\frac d2 -\frac{b}{2}}
			\int_{\RR^d}
			e^{ -c |z|^{p} }
			\left[
			( L_1 L^{-\frac 12})^{-b}
			\1_{ \{ |z| \le  L_1 L^{-\frac 12}  \} } +
			|z|^{-b}
			\1_{ \{ L_1 L^{-\frac 12}  < |z| \le  L_2 L^{-\frac 12}  \} }
			\right]
			dz,
	\end{align*}
whose upper bound is presented in \eqref{basic cal in x} for the case $L_1 < L_2$.
\end{proof}

Next, we want to establish the basic calculation of the time variable.
Given $0\le L_1 \le  L_2 \le \infty$, $t>0$, for $s<t$,  we set
\begin{equation}
	g(s) :=
	\begin{cases}
		(t-s)^{\frac d2 -d_* } L_1^{-b}
		&
		\mbox{ \ if \ } t-s \le L_1^2
		\\
		\begin{cases}
			(t-s)^{\frac d2 -\frac{b}{2}-d_*}
			&
			\mbox{ \ if \ } b<d
			\\
			(t-s)^{-d_*}
			\langle \ln(\frac{t-s}{L_1^2 } ) \rangle
			&
			\mbox{ \ if \ } b=d
			\\
			(t-s)^{-d_*} L_1^{d-b}
			&
			\mbox{ \ if \ } b>d
		\end{cases}
		&
		\mbox{ \ if \ }  L_1^2 < t-s \le  L_2^2
		\\
		\begin{cases}
			(t-s)^{-d_*} L_2^{d-b}
			&
			\mbox{ \ if \ } b<d
			\\
			(t-s)^{-d_*}	\langle \ln(\frac{ L_2 }{L_1 } ) \rangle
			&
			\mbox{ \ if \ } b=d
			\\
			(t-s)^{-d_*}	L_1^{d-b}
			&
			\mbox{ \ if \ } b>d
		\end{cases}
		&
		\mbox{ \ if \ }  t-s > L_2^2,
	\end{cases}
\end{equation}
where we use the convention $L_1^{-1} L_2=1$ if $L_1=L_2=0$.

Claim:
for $\delta >  0$, if $\frac{d}{2} +1 > d_* > \frac{d}{2} +1 -\delta$,
$ x^{-\delta}
\int_{t-x}^{t} g(s) ds
\le  \infty$,
and for $\delta = 0$, $d_* < \frac{d}{2} +1 $,
\begin{small}
	\begin{equation}\label{t-int-del=0}
		\int_{t-x}^{t} g(s) ds
		\lesssim
		\begin{cases}
			\begin{cases}
				(\max\{x,L_2^2 \} )^{1-d_*} L_2^{d-b}
				&
				\mbox{ \ if \ } d_*<1
				\\
				\langle \ln(\frac{\max\{x,L_2^2 \}}{L_2^2}) \rangle  L_2^{d-b}
				&
				\mbox{ \ if \ } d_*=1
				\\
				L_2^{ d+2-b-2d_*}
				&
				\mbox{ \ if \ } 1< d_* <1+\frac{d-b}{2}
				\\
				\langle \ln(\frac{L_2}{L_1}) \rangle
				&
				\mbox{ \ if \ } d_* = 1+\frac{d-b}{2}
				\\
				L_1^{ d +2 -b-2d_*}
				&
				\mbox{ \ if \ } d_* > 1+\frac{d-b}{2}
			\end{cases}
			&
			\mbox{ \ if \ } b<d
			\\
			\begin{cases}
				(\max\{x,L_2^2 \} )^{1-d_* } \langle \ln(\frac{ L_2 }{L_1 } ) \rangle
				&
				\mbox{ \ if \ } d_*<1
				\\
				\langle \ln(\frac{\max\{x,L_2^2 \}}{L_1^2}) \rangle \langle \ln(\frac{ L_2 }{L_1 } ) \rangle
				&
				\mbox{ \ if \ } d_*=1
				\\
				L_1^{2-2d_*}
				&
				\mbox{ \ if \ } d_*>1
			\end{cases}
			&
			\mbox{ \ if \ } b=d
			\\
			\begin{cases}
				( \max\{x,L_2^2 \} )^{1-d_*  } L_1^{d-b}
				&
				\mbox{ \ if \ } d_*<1
				\\
				\langle\ln(\frac{\max\{x,L_2^2 \}}{L_1^2})  \rangle
				L_1^{d-b}
				&
				\mbox{ \ if \ } d_*=1
				\\
				L_1^{d+2-b-2d_*}
				&
				\mbox{ \ if \ } d_*>1
			\end{cases}
			&
			\mbox{ \ if \ } b>d;
		\end{cases}
	\end{equation}
\end{small}
and if $\delta > 0$, $d_* \le \frac{d}{2} +1 -\delta$,
\begin{equation}\label{t-int-del>0}
	x^{-\delta}
	\int_{t-x}^{t} g(s) ds
	\lesssim
	\begin{cases}
		\begin{cases}
			(\max\{x,L_2^2 \})^{1-d_*-\delta} L_2^{d-b}
			&
			\mbox{if \ } d_* \le 1- \delta
			\\
			L_2^{d+2-b -2d_*-2\delta}
			&
			\mbox{if \ }  1- \delta  < d_* \le 1+\frac{d-b}{2} - \delta
			\\
			L_1^{d+2-b -2d_*-2\delta}
			&
			\mbox{if \ }
			d_* > 1+\frac{d-b}{2} - \delta
		\end{cases}
		&
		\mbox{ \ if \ } b<d
		\\
		\begin{cases}
			(\max\{x,L_2^2 \})^{1-d_*-\delta} \langle \ln(\frac{ L_2 }{L_1 } ) \rangle
			&
			\mbox{if \ } d_*\le 1-\delta
			\\
			L_1^{2-2d_*-2\delta}
			&
			\mbox{if \ } d_*> 1-\delta
		\end{cases}
		&
		\mbox{ \ if \ } b=d
		\\
		\begin{cases}
			(\max\{x,L_2^2 \} )^{1-d_*-\delta} L_1^{d-b}
			&
			\mbox{if \ } d_*\le 1- \delta
			\\
			L_1^{d+2-b-2d_*-2\delta}
			&
			\mbox{if \ } d_*> 1-\delta
		\end{cases}
		&
		\mbox{ \ if \ } b>d.
	\end{cases}
\end{equation}

\begin{proof}
	For $x\le L_1^2 $, when $d_*<\frac d2 +1$, then $
		\int_{t-x}^{t} g(s) ds \lesssim x^{\frac d2 +1 -d_*} L_1^{-b} $;  For $L_1^2  < x \le L_2^2 $,
	\begin{small}
		\begin{align*}
			&
			\int_{t-x}^{t} g(s) ds = \Big( \int_{t-L_1^2 }^{t}
			+ \int_{t-x}^{t -L_1^2  } \Big) g(s) ds
			\\
			\lesssim \ &
			L_1^{d+2-b-2d_*}
			+
			\begin{cases}
				\begin{cases}
					x^{\frac d2 +1 -\frac{b}{2}-d_*} ,
					&
					b<d+2-2d_*
					\\
					\ln(\frac{x}{L_1^2}),
					&
					b=d+2-2d_*
					\\
					L_1^{ d +2 -b-2d_*} ,
					&
					b>d+2-2d_*
				\end{cases}
				&
				\mbox{if \ } b<d
				\\
				\begin{cases}
					x^{1-d_*} \langle \ln (\frac{x}{L_1^2} ) \rangle ,
					&  d_*<1
					\\
					\langle \ln (\frac{x}{L_1^2} ) \rangle^2 ,
					&  d_*=1
					\\
					L_1^{2-2d_*} , &
					d_*>1
				\end{cases}
				&
				\mbox{if \ } b=d
				\\
				\begin{cases}
					x^{1-d_*} L_1^{d-b} ,
					& d_*<1
					\\
					L_1^{d-b} \ln(\frac{x}{L_1^2}) ,
					&  d_*=1
					\\
					L_1^{d+2-b-2d_*} ,
					&  d_*>1
				\end{cases}
				&
				\mbox{if \ } b>d
			\end{cases}
			\sim
			\begin{cases}
				\begin{cases}
					x^{\frac d2 +1 -\frac{b}{2}-d_*} ,
					&
					b<d+2-2d_*
					\\
					\langle \ln(\frac{x}{L_1^2}) \rangle ,
					&
					b=d+2-2d_*
					\\
					L_1^{ d +2 -b-2d_*} ,
					&
					b>d+2-2d_*
				\end{cases}
				&
				\mbox{if \ } b<d
				\\
				\begin{cases}
					x^{1-d_*} \langle \ln (\frac{x}{L_1^2} ) \rangle ,
					&  d_*<1
					\\
					\langle \ln (\frac{x}{L_1^2} ) \rangle^2 ,
					&  d_*=1
					\\
					L_1^{2-2d_*} , &  d_*>1
				\end{cases}
				&
				\mbox{if \ } b=d
				\\
				\begin{cases}
					x^{1-d_*} L_1^{d-b} ,
					&  d_*<1
					\\
					L_1^{d-b} \langle  \ln(\frac{x}{L_1^2})\rangle ,
					&  d_*=1
					\\
					L_1^{d+2-b-2d_*} ,
					&  d_*>1
				\end{cases}
				&
				\mbox{if \ } b>d;
			\end{cases}
	\end{align*}
	\end{small}
	For  $x > L_2^2  $,
	\begin{small}
	\begin{align*}
				&
				\int_{t-x}^{t} g(s) ds = \Big( \int_{t-L_2^2 }^{t}
				+ \int_{t-x}^{t -L_2^2 } \Big) g(s) ds
				\\
				\lesssim \ &
				\begin{cases}
					\begin{cases}
						L_2^{d +2 -b-2d_*} ,
						&
						b<d+2-2d_*
						\\
						\langle \ln(\frac{L_2}{L_1}) \rangle ,
						&
						b=d+2-2d_*
						\\
						L_1^{ d +2 -b-2d_*} ,
						&
						b>d+2-2d_*
					\end{cases}
					&
					\mbox{if \ } b<d
					\\
					\begin{cases}
						L_2^{2-2d_*} \langle \ln (\frac{L_2}{L_1} ) \rangle ,
						&  d_*<1
						\\
						\langle \ln (\frac{L_2}{L_1} ) \rangle^2 ,
						&  d_*=1
						\\
						L_1^{2-2d_*} , &
						d_*>1
					\end{cases}
					&
					\mbox{if \ } b=d
					\\
					\begin{cases}
						L_2^{2-2d_*} L_1^{d-b} ,
						&  d_*<1
						\\
						L_1^{d-b} \langle  \ln(\frac{L_2}{L_1})\rangle ,
						&  d_*=1
						\\
						L_1^{d+2-b-2d_*} ,
						&  d_*>1
					\end{cases}
					&
					\mbox{if \ } b>d
				\end{cases}
				+
				\begin{cases}
					\begin{cases}
						x^{1-d_*} L_2^{d-b} ,
						&  d_*<1
						\\
						\ln(\frac{x}{L_2^2}) L_2^{d-b} ,
						&  d_*=1
						\\
						L_2^{ d+2-b-2d_*} ,
						&  d_*>1
					\end{cases}
					&
					\mbox{if \ } b<d
					\\
					\begin{cases}
						x^{1-d_*} \langle \ln(\frac{ L_2 }{L_1 } ) \rangle ,
						&  d_*<1
						\\
						\ln(\frac{x}{L_2^2}) \langle \ln(\frac{ L_2 }{L_1 } ) \rangle ,
						&  d_*=1
						\\
						L_2^{2-2d_*} \langle \ln(\frac{ L_2 }{L_1 } ) \rangle ,
						&  d_*>1
					\end{cases}
					&
					\mbox{if \ } b=d
					\\
					\begin{cases}
						x^{1-d_*} L_1^{d-b} ,
						&  d_*<1
						\\
						\ln(\frac{x}{L_2^2}) L_1^{d-b} ,
						&  d_*=1
						\\
						L_2^{2-2d_*} L_1^{d-b} ,
						&  d_*>1
					\end{cases}
					&
					\mbox{if \ } b>d
				\end{cases}
				\\
				\sim \ &
				\begin{cases}
					\begin{cases}
						x^{1-d_*} L_2^{d-b} ,
						&  d_*<1
						\\
						L_2^{d-b} \langle \ln(\frac{x}{L_2^2}) \rangle ,
						&  d_*=1
						\\
						L_2^{ d+2-b-2d_*} ,
						&  1< d_* <1+\frac{d-b}{2}
						\\
						\langle \ln(\frac{L_2}{L_1}) \rangle ,
						&  d_* = 1+\frac{d-b}{2}
						\\
						L_1^{ d +2 -b-2d_*} ,
						&  d_* > 1+\frac{d-b}{2}
					\end{cases}
					&
					\mbox{if \ } b<d
					\\
					\begin{cases}
						x^{1-d_*} \langle \ln(\frac{ L_2 }{L_1 } ) \rangle ,
						&  d_*<1
						\\
						\langle \ln(\frac{x}{L_1^2}) \rangle \langle \ln(\frac{ L_2 }{L_1 } ) \rangle ,
						&  d_*=1
						\\
						L_1^{2-2d_*} ,
						&  d_*>1
					\end{cases}
					&
					\mbox{if \ } b=d
					\\
					\begin{cases}
						x^{1-d_*} L_1^{d-b} ,
						&  d_*<1
						\\
						L_1^{d-b}	\langle\ln(\frac{x}{L_1^2})  \rangle ,
						&  d_*=1
						\\
						L_1^{d+2-b-2d_*} ,
						&  d_*>1
					\end{cases}
					&
					\mbox{if \ } b>d.
				\end{cases}
	\end{align*}
	\end{small}
	
	Thus,
	for $d_*<\frac d2 +1$, if $x\le L_1^2$,
$\int_{t-x}^{t} g(s) ds \lesssim x^{\frac d2 +1 -d_*} L_1^{-b}$;
if $L_1^2 < x \le L_2^2$,
\begin{small}
\begin{equation*}
\int_{t-x}^{t} g(s) ds \lesssim \begin{cases}
	\begin{cases}
		x^{\frac d2 +1 -\frac{b}{2}-d_*}
		&
		\mbox{ \ if \ }
		b<d+2-2d_*
		\\
		\langle \ln(\frac{x}{L_1^2}) \rangle
		&
		\mbox{ \ if \ }
		b=d+2-2d_*
		\\
		L_1^{ d +2 -b-2d_*}
		&
		\mbox{ \ if \ }
		b>d+2-2d_*
	\end{cases}
	&
	\mbox{ \ if \ } b<d
	\\
	\begin{cases}
		x^{1-d_*} \langle \ln (\frac{x}{L_1^2} ) \rangle
		&
		\mbox{ \ if \ } d_*<1
		\\
		\langle \ln (\frac{x}{L_1^2} ) \rangle^2
		&
		\mbox{ \ if \ } d_*=1
		\\
		L_1^{2-2d_*} &
		\mbox{ \ if \ } d_*>1
	\end{cases}
	&
	\mbox{ \ if \ } b=d
	\\
	\begin{cases}
		x^{1-d_*} L_1^{d-b}
		&
		\mbox{ \ if \ } d_*<1
		\\
		L_1^{d-b} \langle  \ln(\frac{x}{L_1^2})\rangle
		&
		\mbox{ \ if \ } d_*=1
		\\
		L_1^{d+2-b-2d_*}
		&
		\mbox{ \ if \ } d_*>1
	\end{cases}
	&
	\mbox{ \ if \ } b>d;
\end{cases}
\end{equation*}
\end{small}
if $x > L_2^2$,
\begin{equation*}
\int_{t-x}^{t} g(s) ds \lesssim
\begin{cases}
	\begin{cases}
		x^{1-d_*} L_2^{d-b}
		&
		\mbox{ \ if \ } d_*<1
		\\
		L_2^{d-b} \langle \ln(\frac{x}{L_2^2}) \rangle
		&
		\mbox{ \ if \ } d_*=1
		\\
		L_2^{ d+2-b-2d_*}
		&
		\mbox{ \ if \ } 1< d_* <1+\frac{d-b}{2}
		\\
		\langle \ln(\frac{L_2}{L_1}) \rangle
		&
		\mbox{ \ if \ } d_* = 1+\frac{d-b}{2}
		\\
		L_1^{ d +2 -b-2d_*}
		&
		\mbox{ \ if \ } d_* > 1+\frac{d-b}{2}
	\end{cases}
	&
	\mbox{ \ if \ } b<d
	\\
	\begin{cases}
		x^{1-d_*} \langle \ln(\frac{ L_2 }{L_1 } ) \rangle
		&
		\mbox{ \ if \ } d_*<1
		\\
		\langle \ln(\frac{x}{L_1^2}) \rangle \langle \ln(\frac{ L_2 }{L_1 } ) \rangle
		&
		\mbox{ \ if \ } d_*=1
		\\
		L_1^{2-2d_*}
		&
		\mbox{ \ if \ } d_*>1
	\end{cases}
	&
	\mbox{ \ if \ } b=d
	\\
	\begin{cases}
		x^{1-d_*} L_1^{d-b}
		&
		\mbox{ \ if \ } d_*<1
		\\
		L_1^{d-b}	\langle\ln(\frac{x}{L_1^2})  \rangle
		&
		\mbox{ \ if \ } d_*=1
		\\
		L_1^{d+2-b-2d_*}
		&
		\mbox{ \ if \ } d_*>1
	\end{cases}
	&
	\mbox{ \ if \ } b>d.
\end{cases}
\end{equation*}
	
	Then for $\delta \in \mathbb{R}$, if
	$x\le L_1^2$,
\begin{equation*}
x^{-\delta}
\int_{t-x}^{t} g(s) ds
\lesssim
\begin{cases}
	L_1^{d +2 -b -2d_* -2\delta}
	&
	\mbox{ \ if \ }
	\delta\le \frac{d}{2} +1 -d_*
	\\
	\infty
	&
	\mbox{ \ if \ }
	\delta > \frac{d}{2} +1 -d_*;
\end{cases}
\end{equation*}
if $L_1^2 < x \le L_2^2$,
\begin{small}
\begin{equation*}
x^{-\delta}
\int_{t-x}^{t} g(s) ds
\lesssim
\begin{cases}
	\begin{cases}
		\begin{cases}
			L_2^{d +2 -b -2d_* -2\delta}
			&
			\mbox{ \ if \ }
			b\le 	d+2-2d_*-2\delta
			\\
			L_1^{d +2 -b -2d_* -2\delta}
			&
			\mbox{ \ if \ }
			b> d+2-2d_*-2\delta
		\end{cases}
		&
		\mbox{if \ }
		b<d+2-2d_*
		\\
		\begin{cases}
			L_2^{-2\delta} \langle \ln(\frac{L_2}{L_1}) \rangle
			&
			\mbox{ \ if \ } \delta\le 0
			\\
			L_1^{-2\delta}
			&
			\mbox{ \ if \ } \delta>0
		\end{cases}
		&
		\mbox{if \ }
		b=d+2-2d_*
		\\
		\begin{cases}
			L_2^{ -2\delta } L_1^{ d +2 -b-2d_*}
			&
			\mbox{ \ if \ } \delta \le 0
			\\
			L_1^{ d +2 -b-2d_*-2\delta}
			&
			\mbox{ \ if \ } \delta > 0
		\end{cases}
		&
		\mbox{if \ }
		b>d+2-2d_*
	\end{cases}
	&
	\mbox{if \ } b<d
	\\
	\begin{cases}
		\begin{cases}
			L_2^{2-2d_*-2\delta} \langle \ln (\frac{L_2}{L_1} ) \rangle
			&
			\mbox{ \ if \ } \delta\le 1-d_*
			\\
			L_1^{2-2d_*-2\delta}
			&
			\mbox{ \ if \ } \delta > 1-d_*
		\end{cases}
		&
		\mbox{ \ if \ } d_*<1
		\\
		\begin{cases}
			L_2^{-2\delta}\langle \ln (\frac{L_2}{L_1} ) \rangle^2
			&
			\mbox{ \ if \ } \delta\le 0
			\\
			L_1^{-2\delta}
			&
			\mbox{ \ if \ } \delta > 0
		\end{cases}
		&
		\mbox{ \ if \ } d_*=1
		\\
		\begin{cases}
			L_2^{-2\delta}	L_1^{2-2d_*}
			&
			\mbox{ \ if \ } \delta\le 0
			\\
			L_1^{2-2d_* -2\delta}
			&
			\mbox{ \ if \ } \delta > 0
		\end{cases}
		&
		\mbox{ \ if \ } d_*>1
	\end{cases}
	&
	\mbox{if \ } b=d
	\\
	\begin{cases}
		\begin{cases}
			L_2^{2-2d_*-2\delta} L_1^{d-b}
			&
			\mbox{ \ if \ } \delta \le 1-d_*
			\\
			L_1^{d+2-b-2d_*-2\delta}
			&
			\mbox{ \ if \ } \delta > 1-d_*
		\end{cases}
		&
		\mbox{ \ if \ } d_*<1
		\\
		\begin{cases}
			L_2^{-2\delta} 	L_1^{d-b} \langle  \ln(\frac{L_2}{L_1 })\rangle
			&
			\mbox{ \ if \ } \delta \le 0
			\\
			L_1^{d-b-2\delta}
			&
			\mbox{ \ if \ } \delta > 0
		\end{cases}
		&
		\mbox{ \ if \ } d_*=1
		\\
		\begin{cases}
			L_2^{-2\delta} L_1^{d+2-b-2d_*}
			&
			\mbox{ \ if \ } \delta\le 0
			\\
			L_1^{d+2-b-2d_* -2\delta}
			&
			\mbox{ \ if \ } \delta > 0
		\end{cases}
		&
		\mbox{ \ if \ } d_*>1
	\end{cases}
	&
	\mbox{if \ } b>d;
\end{cases}
\end{equation*}
\end{small}
if $x>L_2^2$,
\begin{small}
\begin{align*}
		x^{-\delta}
		\int_{t-x}^{t} g(s) ds
		\lesssim
		\begin{cases}
			\begin{cases}
				\begin{cases}
					x^{1-d_*-\delta} L_2^{d-b}
					&
					\mbox{if \ } \delta\le 1-d_*
					\\
					L_2^{d+2-b -2d_*-2\delta}
					&
					\mbox{if \ } \delta > 1-d_*
				\end{cases}
				&
				\mbox{ \ if \ } d_*<1
				\\
				\begin{cases}
					x^{-\delta} \langle \ln(\frac{x}{L_2^2}) \rangle  L_2^{d-b}
					&
					\mbox{if \ }
					\delta\le 0
					\\
					L_2^{d-b-2\delta}
					&
					\mbox{if \ }
					\delta > 0
				\end{cases}
				&
				\mbox{ \ if \ } d_*=1
				\\
				\begin{cases}
					x^{-\delta} L_2^{ d+2-b-2d_*}
					&
					\mbox{if \ } \delta\le 0
					\\
					L_2^{ d+2-b-2d_*-2\delta}
					&
					\mbox{if \ } \delta > 0
				\end{cases}
				&
				\mbox{ \ if \ } 1< d_* <1+\frac{d-b}{2}
				\\
				\begin{cases}
					x^{-\delta}	\langle \ln(\frac{L_2}{L_1}) \rangle
					&
					\mbox{if \ } \delta\le 0
					\\
					L_2^{-2\delta}	\langle \ln(\frac{L_2}{L_1}) \rangle
					&
					\mbox{if \ } \delta > 0
				\end{cases}
				&
				\mbox{ \ if \ } d_* = 1+\frac{d-b}{2}
				\\
				\begin{cases}
					x^{-\delta} L_1^{ d +2 -b-2d_*}
					&
					\mbox{if \ } \delta\le 0
					\\
					L_2^{-2\delta} L_1^{ d +2 -b-2d_*}
					&
					\mbox{if \ } \delta> 0
				\end{cases}
				&
				\mbox{ \ if \ } d_* > 1+\frac{d-b}{2}
			\end{cases}
			&
			\mbox{ \ if \ } b<d
			\\
			\begin{cases}
				\begin{cases}
					x^{1-d_*-\delta} \langle \ln(\frac{ L_2 }{L_1 } ) \rangle
					&
					\mbox{if \ } \delta\le 1-d_*
					\\
					L_2^{2-2d_*-2\delta} \langle \ln(\frac{ L_2 }{L_1 } ) \rangle
					&
					\mbox{if \ } \delta> 1-d_*
				\end{cases}
				&
				\mbox{ \ if \ } d_*<1
				\\
				\begin{cases}
					x^{-\delta}	\langle \ln(\frac{x}{L_1^2}) \rangle \langle \ln(\frac{ L_2 }{L_1 } ) \rangle
					&
					\mbox{if \ } \delta \le 0
					\\
					L_2^{-2\delta}	 \langle \ln(\frac{ L_2 }{L_1 } ) \rangle^2
					&
					\mbox{if \ } \delta > 0
				\end{cases}
				&
				\mbox{ \ if \ } d_*=1
				\\
				\begin{cases}
					x^{-\delta}L_1^{2-2d_*}
					&
					\mbox{if \ } \delta\le 0
					\\
					L_2^{-2\delta}L_1^{2-2d_*}
					&
					\mbox{if \ } \delta> 0
				\end{cases}
				&
				\mbox{ \ if \ } d_*>1
			\end{cases}
			&
			\mbox{ \ if \ } b=d
			\\
			\begin{cases}
				\begin{cases}
					x^{1-d_*-\delta} L_1^{d-b}
					&
					\mbox{if \ } \delta\le 1-d_*
					\\
					L_2^{2-2d_*-2\delta} L_1^{d-b}
					&
					\mbox{if \ } \delta> 1-d_*
				\end{cases}
				&
				\mbox{ \ if \ } d_*<1
				\\
				\begin{cases}
					x^{-\delta}		\langle\ln(\frac{x}{L_1^2})  \rangle
					L_1^{d-b}
					&
					\mbox{if \ } \delta\le 0
					\\
					L_2^{-2\delta}	L_1^{d-b}	\langle\ln(\frac{L_2}{L_1})  \rangle
					&
					\mbox{if \ } \delta > 0
				\end{cases}
				&
				\mbox{ \ if \ } d_*=1
				\\
				\begin{cases}
					x^{-\delta}	L_1^{d+2-b-2d_*}
					&
					\mbox{if \ } \delta\le 0
					\\
					L_2^{-2\delta}	L_1^{d+2-b-2d_*}
					&
					\mbox{if \ } \delta> 0
				\end{cases}
				&
				\mbox{ \ if \ } d_*>1
			\end{cases}
			&
			\mbox{ \ if \ } b>d.
		\end{cases}
	\end{align*}
\end{small}
	
	In particular,
	for $\delta = 0$,
	\begin{equation*}
		\int_{t-x}^{t} g(s) ds
		\lesssim
		\begin{cases}
			\begin{cases}
				L_1^{d +2 -b -2d_*}
				&
				\mbox{ \ if \ }
				d_* \le \frac{d}{2} +1
				\\
				\infty
				&
				\mbox{ \ if \ }
				d_* > \frac{d}{2} +1
			\end{cases}
			&
			\mbox{if \ }
			x\le L_1^2
			\\
			\begin{cases}
				\begin{cases}
					L_2^{d +2 -b -2d_* }
					&
					\mbox{ \ if \ }
					d_* <1+\frac{d-b}{2}
					\\
					\langle \ln(\frac{L_2}{L_1}) \rangle
					&
					\mbox{ \ if \ }
					d_* = 1+\frac{d-b}{2}
					\\
					L_1^{d +2 -b -2d_* }
					&
					\mbox{ \ if \ }
					d_* > 1+\frac{d-b}{2}
				\end{cases}
				&
				\mbox{if \ } b<d
				\\
				\begin{cases}
					L_2^{2-2d_*} \langle \ln (\frac{L_2}{L_1} ) \rangle
					&
					\mbox{ \ if \ } d_*< 1
					\\
					\langle \ln (\frac{L_2}{L_1} ) \rangle^2
					&
					\mbox{ \ if \ } d_*= 1
					\\
					L_1^{2-2d_*}
					&
					\mbox{ \ if \ } d_* > 1
				\end{cases}
				&
				\mbox{if \ } b=d
				\\
				\begin{cases}
					L_2^{2-2d_*} L_1^{d-b}
					&
					\mbox{ \ if \ } d_*<1
					\\
					L_1^{d-b} \langle  \ln(\frac{L_2}{L_1 })\rangle
					&
					\mbox{ \ if \ } d_*=1
					\\
					L_1^{d+2-b-2d_*}
					&
					\mbox{ \ if \ } d_*>1
				\end{cases}
				&
				\mbox{if \ } b>d
			\end{cases}
			&
			\mbox{if \ }
			L_1^2 < x \le L_2^2;
		\end{cases}
	\end{equation*}
	if $x>L_2^2$,
	\begin{equation*}
		\int_{t-x}^{t} g(s) ds
		\lesssim
		\begin{cases}
			\begin{cases}
				x^{1-d_*} L_2^{d-b}
				&
				\mbox{ \ if \ } d_*<1
				\\
				\langle \ln(\frac{x}{L_2^2}) \rangle  L_2^{d-b}
				&
				\mbox{ \ if \ } d_*=1
				\\
				L_2^{ d+2-b-2d_*}
				&
				\mbox{ \ if \ } 1< d_* <1+\frac{d-b}{2}
				\\
				\langle \ln(\frac{L_2}{L_1}) \rangle
				&
				\mbox{ \ if \ } d_* = 1+\frac{d-b}{2}
				\\
				L_1^{ d +2 -b-2d_*}
				&
				\mbox{ \ if \ } d_* > 1+\frac{d-b}{2}
			\end{cases}
			&
			\mbox{ \ if \ } b<d
			\\
			\begin{cases}
				x^{1-d_* } \langle \ln(\frac{ L_2 }{L_1 } ) \rangle
				&
				\mbox{ \ if \ } d_*<1
				\\
				\langle \ln(\frac{x}{L_1^2}) \rangle \langle \ln(\frac{ L_2 }{L_1 } ) \rangle
				&
				\mbox{ \ if \ } d_*=1
				\\
				L_1^{2-2d_*}
				&
				\mbox{ \ if \ } d_*>1
			\end{cases}
			&
			\mbox{ \ if \ } b=d
			\\
			\begin{cases}
				x^{1-d_*  } L_1^{d-b}
				&
				\mbox{ \ if \ } d_*<1
				\\
				\langle\ln(\frac{x}{L_1^2})  \rangle
				L_1^{d-b}
				&
				\mbox{ \ if \ } d_*=1
				\\
				L_1^{d+2-b-2d_*}
				&
				\mbox{ \ if \ } d_*>1
			\end{cases}
			&
			\mbox{ \ if \ } b>d .
		\end{cases}
	\end{equation*}

	For $\delta > 0$,
	\begin{equation*}
		x^{-\delta}
		\int_{t-x}^{t} g(s) ds
		\lesssim
		\begin{cases}
			\begin{cases}
				L_1^{d +2 -b -2d_* -2\delta}
				&
				\mbox{ \ if \ }
				\delta\le \frac{d}{2} +1 -d_*
				\\
				\infty
				&
				\mbox{ \ if \ }
				\delta > \frac{d}{2} +1 -d_*
			\end{cases}
			&
			\mbox{if \ }
			x\le L_1^2
			\\
			\begin{cases}
				\begin{cases}
					L_2^{d +2 -b -2d_* -2\delta}
					&
					\mbox{ \ if \ }
					d_*\le 1+\frac{d-b}{2} -\delta
					\\
					L_1^{d +2 -b -2d_* -2\delta}
					&
					\mbox{ \ if \ }
					d_* > 1+\frac{d-b}{2} -\delta
				\end{cases}
				&
				\mbox{if \ } b<d
				\\
				\begin{cases}
					L_2^{2-2d_*-2\delta} \langle \ln (\frac{L_2}{L_1} ) \rangle
					&
					\mbox{ \ if \ } \delta\le 1-d_*
					\\
					L_1^{2-2d_*-2\delta}
					&
					\mbox{ \ if \ } \delta > 1-d_*
				\end{cases}
				&
				\mbox{if \ } b=d
				\\
				\begin{cases}
					L_2^{2-2d_*-2\delta} L_1^{d-b}
					&
					\mbox{ \ if \ } \delta \le 1-d_*
					\\
					L_1^{d+2-b-2d_*-2\delta}
					&
					\mbox{ \ if \ } \delta > 1-d_*
				\end{cases}
				&
				\mbox{if \ } b>d
			\end{cases}
			&
			\mbox{if \ }
			L_1^2 < x \le L_2^2;
		\end{cases}
	\end{equation*}
	if $x>L_2^2$,
	\begin{equation*}
		x^{-\delta}
		\int_{t-x}^{t} g(s) ds
		\lesssim
		\begin{cases}
			\begin{cases}
				\begin{cases}
					x^{1-d_*-\delta} L_2^{d-b}
					&
					\mbox{if \ } \delta\le 1-d_*
					\\
					L_2^{d+2-b -2d_*-2\delta}
					&
					\mbox{if \ } \delta > 1-d_*
				\end{cases}
				&
				\mbox{ \ if \ } d_* <1+\frac{d-b}{2}
				\\
				L_2^{-2\delta}	\langle \ln(\frac{L_2}{L_1}) \rangle
				&
				\mbox{ \ if \ } d_* = 1+\frac{d-b}{2}
				\\
				L_2^{-2\delta} L_1^{ d +2 -b-2d_*}
				&
				\mbox{ \ if \ } d_* > 1+\frac{d-b}{2}
			\end{cases}
			&
			\mbox{ \ if \ } b<d
			\\
			\begin{cases}
				\begin{cases}
					x^{1-d_*-\delta} \langle \ln(\frac{ L_2 }{L_1 } ) \rangle
					&
					\mbox{if \ } \delta\le 1-d_*
					\\
					L_2^{2-2d_*-2\delta} \langle \ln(\frac{ L_2 }{L_1 } ) \rangle
					&
					\mbox{if \ } \delta> 1-d_*
				\end{cases}
				&
				\mbox{ \ if \ } d_*<1
				\\
				L_2^{-2\delta}	 \langle \ln(\frac{ L_2 }{L_1 } ) \rangle^2
				&
				\mbox{ \ if \ } d_*=1
				\\
				L_2^{-2\delta}L_1^{2-2d_*}
				&
				\mbox{ \ if \ } d_*>1
			\end{cases}
			&
			\mbox{ \ if \ } b=d
			\\
			\begin{cases}
				\begin{cases}
					x^{1-d_*-\delta} L_1^{d-b}
					&
					\mbox{if \ } \delta\le 1-d_*
					\\
					L_2^{2-2d_*-2\delta} L_1^{d-b}
					&
					\mbox{if \ } \delta> 1-d_*
				\end{cases}
				&
				\mbox{ \ if \ } d_*<1
				\\
				L_2^{-2\delta}	L_1^{d-b}	\langle\ln(\frac{L_2}{L_1})  \rangle
				&
				\mbox{ \ if \ } d_*=1
				\\
				L_2^{-2\delta}	L_1^{d+2-b-2d_*}
				&
				\mbox{ \ if \ } d_*>1
			\end{cases}
			&
			\mbox{ \ if \ } b>d .
		\end{cases}
	\end{equation*}

\end{proof}

\medskip

\begin{lemma}
	Given $c>0$, $p>0$, $b\ge 0$, $d_*<\frac d2 +1$, $c_1>0$, $t\in [0,T]$, suppose
	\begin{equation}\label{23Nov02-1}
		\begin{aligned}
			&
			v(s)\ge 0 \mbox{ \ for \ } s\in [0,T],
			\quad
			C_l^{-1} l_i(t) \le l_i(s) \le C_l l_i(t)
			\mbox{ \ for \ } i=1,2, \ s\in \left[ \left[t-(T-t) \right]_+,t \right] ,
			\\
			&
			0\le l_1(s) \le l_2(s) \le C (T-s)^{\frac 12}
			\mbox{ \ for \ } s\in [0,T]
		\end{aligned}
	\end{equation}
	with some constants $C_l\ge 1$, $C>0$ independent of $T$, then for any $x\in\mathbb{R}^d$, we have
	\begin{align*}
			&
			\int_0^t v(s)  (t-s)^{-d_*} \int_{\RR^d}
			e^{ - c (\frac{|x-y|}{\sqrt{t-s}} )^{p} }
			|y-q|^{-b} \1_{ \{ |y-q| \ge c_1 (T-s)^{\frac 12} \} }
			dy ds
			\\
			\lesssim \ &
			\int_0^{[t-(T-t)]_+}  v(s) (T-s)^{\frac d2 -d_* -\frac b2}  ds
			+
			\sup\limits_{t_1\in  [ [t-(T-t)]_+, t  ] }v(t_1) (T-t)^{1-d_* +\frac d2 -\frac b2 }  ,
		\end{align*}
	\begin{equation}\label{annular-lem}
		\int_0^t  v(s) (t-s)^{-d_*} \int_{\RR^d}
		e^{ -c  (\frac{|x-y|}{\sqrt{t-s}} )^{p} }
		|y-q|^{-b} \1_{ \{ l_1(s) \le |y-q| \le l_2(s) \} } dy ds
		\lesssim
		\tilde{P}_1 + \tilde{P}_2   ,
	\end{equation}
	where $c_+ :=\max\left\{c,0\right\}$ for any $c\in \mathbb{R}$,
	\begin{equation*}
		\begin{aligned}
			&
			\tilde{P}_1:=
			\int_0^{[t-(T-t)]_+} v(s) (T-s)^{ -d_*  }
			\begin{cases}
				l_2^{d-b}(s)
				&
				\mbox{ \ if \ } b<d
				\\
				\langle \ln(\frac{l_2(s)}{l_1(s)}) \rangle
				&
				\mbox{ \ if \ } b=d
				\\
				l_1^{d-b}(s)
				&
				\mbox{ \ if \ } b>d
			\end{cases}
			ds,
			\\
			&
			\tilde{P}_2:=
			\sup\limits_{t_1\in  [ [t-(T-t)]_+, t  ] }v(t_1)
			\begin{cases}
				\begin{cases}
					(T-t )^{1-d_*} l_2^{d-b}(t)
					&
					\mbox{ \ if \ } d_*<1
					\\
					\langle \ln(\frac{T-t}{l_2^2(t)}) \rangle  l_2^{d-b}(t)
					&
					\mbox{ \ if \ } d_*=1
					\\
					l_2^{ d+2-b-2d_*}(t)
					&
					\mbox{ \ if \ } 1< d_* <1+\frac{d-b}{2}
					\\
					\langle \ln(\frac{l_2(t)}{l_1(t)}) \rangle
					&
					\mbox{ \ if \ } d_* = 1+\frac{d-b}{2}
					\\
					l_1^{ d +2 -b-2d_*}(t)
					&
					\mbox{ \ if \ } d_* > 1+\frac{d-b}{2}
				\end{cases}
				&
				\mbox{ \ if \ } b<d
				\\
				\begin{cases}
					(T-t)^{1-d_* } \langle \ln(\frac{ l_2(t) }{l_1(t) } ) \rangle
					&
					\mbox{ \ if \ } d_*<1
					\\
					\langle \ln(\frac{T-t}{l_1^2(t) }) \rangle \langle \ln(\frac{ l_2(t) }{l_1(t) } ) \rangle
					&
					\mbox{ \ if \ } d_*=1
					\\
					l_1^{2-2d_*}(t)
					&
					\mbox{ \ if \ } d_*>1
				\end{cases}
				&
				\mbox{ \ if \ } b=d
				\\
				\begin{cases}
					(T-t )^{1-d_*  } l_1^{d-b}(t)
					&
					\mbox{ \ if \ } d_*<1
					\\
					\langle\ln(\frac{T-t}{l_1^2(t)})  \rangle
					l_1^{d-b}(t)
					&
					\mbox{ \ if \ } d_*=1
					\\
					l_1^{d+2-b-2d_*}(t)
					&
					\mbox{ \ if \ } d_*>1
				\end{cases}
				&
				\mbox{ \ if \ } b>d.
			\end{cases}
		\end{aligned}
	\end{equation*}

\end{lemma}

\begin{remark}
	When $b=0<d$, the cases $d_* =1+ \frac {d-b}2$ and
	$d_* > 1+ \frac {d-b}2$ are vacuum.
\end{remark}

\begin{proof}
	
	For the first part,
	by \eqref{basic-x-far}, $d_*<\frac d2 +1$,
	\begin{align*}
			&
			\int_0^t v(s)  (t-s)^{-d_*} \int_{\RR^d}
			e^{ - c (\frac{|x-y|}{\sqrt{t-s}} )^{p} }
			|y-q|^{-b} \1_{ \{ |y-q| \ge c_1 (T-s)^{\frac 12} \} }
			dy ds
			\\
			\lesssim \ &
			\Big( \int_0^{[t-(T-t)]_+}  + \int_{[t-(T-t)]_+}^t \Big)  v(s) (t-s)^{\frac d2 -d_* } (T-s)^{ -\frac b2 }  ds
			\\
			\lesssim \ &
			\int_0^{[t-(T-t)]_+}  v(s) (T-s)^{\frac d2 -d_* -\frac b2}  ds
			+ \sup\limits_{t_1\in  [ [t-(T-t)]_+, t  ] }v(t_1) (T-t)^{1-d_* +\frac d2 -\frac b2 }.
		\end{align*}
	
	For the second part,
	by \eqref{basic cal in x},
	\begin{align*}
				&
				\int_0^t  v(s) (t-s)^{-d_*} \int_{\RR^d}
				e^{ -c  (\frac{|x-y|}{\sqrt{t-s}} )^{p} }
				|y-q|^{-b} \1_{ \{ l_1(s) \le |y-q| \le l_2(s) \} } dy ds
				\\
				\lesssim \ &
				\int_0^t  v(s)
				\begin{cases}
					(t-s)^{\frac d2 -d_* } l_1^{-b}(s)
					&
					\mbox{ \ if \ } t-s \le l_1^2(s)
					\\
					\begin{cases}
						(t-s)^{\frac d2 -\frac{b}{2} -d_* }
						&
						\mbox{ \ if \ } b<d
						\\
						(t-s)^{ -d_*} 	\langle \ln(\frac{t-s}{l_1^2(s) } ) \rangle
						&
						\mbox{ \ if \ } b=d
						\\
						(t-s)^{ -d_*}  l_1^{d-b}(s)
						&
						\mbox{ \ if \ } b>d
					\end{cases}
					&
					\mbox{ \ if \ }  l_1^2(s) < t-s \le  l_2^2(s)
					\\
					\begin{cases}
						(t-s)^{ -d_*}  l_2^{d-b}(s)
						&
						\mbox{ \ if \ } b<d
						\\
						(t-s)^{ -d_*}  \langle \ln(\frac{ l_2(s) }{l_1(s) } ) \rangle
						&
						\mbox{ \ if \ } b=d
						\\
						(t-s)^{ -d_*}  l_1^{d-b}(s)
						&
						\mbox{ \ if \ } b>d
					\end{cases}
					&
					\mbox{ \ if \ }  t-s > l_2^2(s)
				\end{cases}
				ds
				\\
			= \ &  \int_0^{[t-(T-t)]_+} + \int_{[t-(T-t)]_+}^{t} \cdots	
			:=
				P_1 + P_2.
	\end{align*}
	
	For $P_{1}$, since $(T-s)/2 \le t-s \le T-s$, $l_2^2(s) \le C(T-s) $, we have $P_1
	\lesssim
	\tilde{P}_1 $.
	
	For $P_2$, since $T-t\le T-s \le 2(T-t)$, $d_*<\frac{d}{2}+1$, we have
	\begin{equation*}
		P_2 \lesssim
		\sup\limits_{t_1\in  [ [t-(T-t)]_+, t  ] }v(t_1)
		\int_{[t-(T-t)]_+}^t
		\begin{cases}
			(t-s)^{\frac d2 -d_* } l_1^{-b}(t)
			&
			\mbox{if \ } t-s \le l_1^2(t)
			\\
			\begin{cases}
				(t-s)^{\frac d2 -\frac{b}{2} -d_* }
				&
				\mbox{if \ } b<d
				\\
				(t-s)^{ -d_*} 	\langle \ln(\frac{t-s}{l_1^2(t) } ) \rangle
				&
				\mbox{if \ } b=d
				\\
				(t-s)^{ -d_*}  l_1^{d-b}(t)
				&
				\mbox{if \ } b>d
			\end{cases}
			&
			\mbox{if \ }  l_1^2(t) < t-s \le  l_2^2(t)
			\\
			\begin{cases}
				(t-s)^{ -d_*}  l_2^{d-b}(t)
				&
				\mbox{if \ } b<d
				\\
				(t-s)^{ -d_*}  \langle \ln(\frac{ l_2(t) }{l_1(t) } ) \rangle
				&
				\mbox{if \ } b=d
				\\
				(t-s)^{ -d_*}  l_1^{d-b}(t)
				&
				\mbox{if \ } b>d
			\end{cases}
			&
			\mbox{if \ }  t-s > l_2^2(t)
		\end{cases}
		ds
		\lesssim
		\tilde{P}_2
	\end{equation*}
	by \eqref{t-int-del=0} (used for the second ``$\lesssim$'') and $l_2^2(t)\le C(T-t)$.
\end{proof}

\subsection{Convolution involving $v(t) |x-q|^{-b} \1_{ \{ l_1(t) \le |x-q| \le l_2(t) \} } $ }

\begin{prop}\label{qd24Apr11-02-prop}
	
	Let $d\ge 1$ be an integer, $b\ge 0$, $0\le t<T$, $q\in \mathbb{R}^d$. Given $\Gamma(x,t,y,s)$ in Proposition \ref{funda-prop} and $
	|f(y,s)| \le v(s) |y-q|^{-b} \1_{ \{ l_1(s) \le |y-q| \le l_2(s) \} } $ for $(y,s)\in \mathbb{R}^d\times (0,T)$
	with functions $v, l_{1}, l_2$ satisfying \eqref{23Nov02-1}, denote
	$  \mathcal{T}_{d}^{\rm{out} }[f] (x,t) :=
	\int_{0}^{t} \int_{\RR^d}
	\Gamma(x,t,y,s) f(y,s) dyds $.
	Then, using the convention $C_1 / C_2=1$ if $C_1=C_2=0$, we have
	\begin{equation}\label{bound-annular}
		\begin{aligned}
			|\mathcal{T}_{d}^{\rm{out} }[f] (x,t)|
			\lesssim \ &
			\int_0^{[t-(T-t)]_+} v(s) (T-s)^{ -\frac{d}{2}}
			\begin{cases}
				l_2^{d-b}(s)
				&
				\mbox{ \ if \ } b<d
				\\
				\langle \ln(\frac{l_2(s)}{l_1(s)}) \rangle
				&
				\mbox{ \ if \ } b=d
				\\
				l_1^{d-b}(s)
				&
				\mbox{ \ if \ } b>d
			\end{cases}
			ds
			\\
			& +
			\sup\limits_{t_1\in  [ [t-(T-t)]_+, t  ] }v(t_1)
			\begin{cases}
				\begin{cases}
					(T-t)^{1-\frac{d}{2}} l_2^{d-b}(t)
					&
					\mbox{ \ if \ } d<2
					\\
					l_2^{2-b}(t) \langle \ln (\frac{T-t}{l_2^2(t)} ) \rangle
					&
					\mbox{ \ if \ } d=2
					\\
					l_2^{2-b }(t)
					&
					\mbox{ \ if \ }
					d>2,b<2
					\\
					\langle \ln(\frac{l_2(t)}{l_1(t)} ) \rangle
					&
					\mbox{ \ if \ } b=2
					\\
					l_1^{ 2 -b }(t)
					&
					\mbox{ \ if \ } b>2
				\end{cases}
				&
				\mbox{ \ if \ } b<d
				\\
				\begin{cases}
					(T-t)^{1-\frac{d}{2}} \langle \ln( \frac{l_2(t)}{l_1(t)}) \rangle
					&
					\mbox{ \ if \ } d<2
					\\
					\langle \ln (\frac{T-t}{l_1^2(t)} ) \rangle  \langle  \ln( \frac{l_2(t)}{l_1(t)}) \rangle
					&
					\mbox{ \ if \ } d=2
					\\
					l_1^{2-d }(t)
					&
					\mbox{ \ if \ } d>2
				\end{cases}
				&
				\mbox{ \ if \ } b=d
				\\
				\begin{cases}
					(T-t)^{1-\frac{d}{2}} l_1^{d-b}(t)
					&
					\mbox{ \ if \ } d<2
					\\
					l_1^{2-b}(t)
					\langle \ln (\frac{T-t}{l_1^2(t)} )  \rangle
					&
					\mbox{ \ if \ } d=2
					\\
					l_1^{2-b}(t)
					&
					\mbox{ \ if \ } d>2
				\end{cases}
				&
				\mbox{ \ if \ } b>d .
			\end{cases}
		\end{aligned}
	\end{equation}
	\begin{equation}\label{nabla-annular}
		\begin{aligned}
			|\nabla \mathcal{T}_{d}^{\rm{out} }[f](x,t)|
			\lesssim \ &
			\int_0^{[t-(T-t)]_+} v(s) (T-s)^{ -\frac{d+1}{2}  }
			\begin{cases}
				l_2^{d-b}(s)
				&
				\mbox{ \ if \ } b<d
				\\
				\langle \ln(\frac{l_2(s)}{l_1(s)}) \rangle
				&
				\mbox{ \ if \ } b=d
				\\
				l_1^{d-b}(s)
				&
				\mbox{ \ if \ } b>d
			\end{cases}
			ds
			\\
			& +
			\sup\limits_{t_1\in  [ [t-(T-t)]_+, t  ] }v(t_1)
			\begin{cases}
				\begin{cases}
					l_2^{1-b}(t) \langle \ln (\frac{T-t}{l_2^2(t)} ) \rangle
					&
					\mbox{ \ if \ } d=1
					\\
					l_2^{1-b }(t)
					&
					\mbox{ \ if \ } d>1, b<1
					\\
					\langle \ln(\frac{l_2(t)}{l_1(t)} ) \rangle
					&
					\mbox{ \ if \ } b=1
					\\
					l_1^{1-b }(t)
					&
					\mbox{ \ if \ } b>1
				\end{cases}
				&
				\mbox{ \ if \ } b<d
				\\
				\begin{cases}
					\langle \ln (\frac{T-t}{l_1^2(t)} ) \rangle  \langle  \ln( \frac{l_2(t)}{l_1(t)}) \rangle
					&
					\mbox{ \ if \ } d=1
					\\
					l_1^{1-d}(t)
					&
					\mbox{ \ if \ } d>1
				\end{cases}
				&
				\mbox{ \ if \ } b=d
				\\
				\begin{cases}
					l_1^{1-b}(t)
					\langle \ln (\frac{T-t}{l_1^2(t)} )  \rangle
					&
					\mbox{ \ if \ } d=1
					\\
					l_1^{1-b}(t)
					&
					\mbox{ \ if \ } d>1
				\end{cases}
				&
				\mbox{ \ if \ } b>d .
			\end{cases}
		\end{aligned}
	\end{equation}
	\begin{equation}\label{T-t-annular}
		\left|\mathcal{T}_{d}^{\rm{out} }[f] (x,t) - \mathcal{T}_{d}^{\rm{out} }[f] (x,T)\right|
		\lesssim
		\tilde{T}_{31}
		+
		\tilde{T}_{32}
		+
		\tilde{T}_{33} ,
	\end{equation}
	where
	\begin{equation*}
		\begin{aligned}
			& \tilde{T}_{31} :=
			(T-t)
			\int_{0}^{[t-(T-t)]_+ } v(s) (T-s)^{-1-\frac d2}
			\begin{cases}
				l_2^{d-b}(s)
				&
				\mbox{ \ if \ } b<d
				\\
				\langle \ln(\frac{ l_2(s) }{l_1(s) } ) \rangle
				&
				\mbox{ \ if \ } b=d
				\\
				l_1^{d-b}(s)
				&
				\mbox{ \ if \ } b>d
			\end{cases}
			ds,
			\\
			& \tilde{T}_{32} :=
			\sup\limits_{t_1\in  [ [t-(T-t)]_+, t  ] }v(t_1)
			\int_{[t-(T-t)]_+}^{t}
			\begin{cases}
				l_1^{-b}(t)
				&
				\mbox{ \ if \ } t-s \le l_1^2(t)
				\\
				\begin{cases}
					(t-s)^{ -\frac{b}{2}}
					&
					\mbox{ \ if \ } b<d
					\\
					(t-s)^{-\frac d2}  	\langle \ln(\frac{t-s}{l_1^2(t) } ) \rangle
					&
					\mbox{ \ if \ } b=d
					\\
					l_1^{d-b}(t) (t-s)^{-\frac d2}
					&
					\mbox{ \ if \ } b>d
				\end{cases}
				&
				\mbox{ \ if \ }  l_1^2(t) < t-s \le  l_2^2(t)
				\\
				\begin{cases}
					l_2^{d-b}(t) 	(t-s)^{-\frac d2}
					&
					\mbox{ \ if \ } b<d
					\\
					\langle \ln(\frac{ l_2(t) }{l_1(t) } ) \rangle
					(t-s)^{-\frac d2}
					&
					\mbox{ \ if \ } b=d
					\\
					l_1^{d-b}(t) (t-s)^{-\frac d2}
					&
					\mbox{ \ if \ } b>d
				\end{cases}
				&
				\mbox{ \ if \ }  t-s > l_2^2(t)
			\end{cases}
			ds
			\\
			& \qquad\qquad +
			\sup\limits_{t_1\in  [ [t-(T-t)]_+, t  ] }v(t_1) (T-t)^{1-\frac d2}
			\begin{cases}
				l_2^{d-b}(t)
				&
				\mbox{ \ if \ } b<d
				\\
				\langle \ln(\frac{ l_2(t) }{l_1(t) } ) \rangle
				&
				\mbox{ \ if \ } b=d
				\\
				l_1^{d-b}(t)
				&
				\mbox{ \ if \ } b>d ,
			\end{cases}
			\\
			&
			\tilde{T}_{33} :=
			\int_{t}^{T}  (T-s)^{-\frac d2} v(s)
			\begin{cases}
				l_2^{d-b}(s)
				&
				\mbox{ \ if \ } b<d
				\\
				\langle \ln(\frac{ l_2(s) }{l_1(s) } ) \rangle
				&
				\mbox{ \ if \ } b=d
				\\
				l_1^{d-b}(s)
				&
				\mbox{ \ if \ } b>d
			\end{cases}
			ds.
		\end{aligned}
	\end{equation*}
	
	For $0<\alpha<1$,
	\begin{equation}\label{nablaT-t-annular}
		\left|\nabla \mathcal{T}_{d}^{\rm{out} }[f](x,t) - \nabla  \mathcal{T}_{d}^{\rm{out} }[f](x,T) \right|
		\lesssim  C(\alpha)
		\big(
		\tilde{T}_{41}  +  \tilde{T}_{42}
		\big) + \tilde{T}_{43},
	\end{equation}
	where
	\begin{align*}
		&
		\tilde{T}_{41}:= \left(T-t\right)^{\frac{\alpha}{2}}
		\int_0^{[t-(T-t)]_+} v(s) (T-s)^{ - \frac {d+1+\alpha}2  }
		\begin{cases}
			l_2^{d-b}(s)
			&
			\mbox{ \ if \ } b<d
			\\
			\langle  \ln(\frac{l_2(s)}{l_1(s)}) \rangle
			&
			\mbox{ \ if \ } b=d
			\\
			l_1^{d-b}(s)
			&
			\mbox{ \ if \ } b>d
		\end{cases}
		ds,
		\\
		&
		\tilde{T}_{42} :=\sup\limits_{t_1\in  [ [t-(T-t)]_+, t  ] }v(t_1)
		\begin{cases}
			\begin{cases}
				\langle \ln(\frac{T-t}{l_2^2(t)}) \rangle  l_2^{1-b}(t)
				&
				\mbox{ \ if \ } d=1
				\\
				l_2^{ 1-b }(t)
				&
				\mbox{ \ if \ }
				d>1, b<1
				\\
				\langle \ln(\frac{l_2(t)}{l_1(t)}) \rangle
				&
				\mbox{ \ if \ } b=1
				\\
				l_1^{ 1 -b }(t)
				&
				\mbox{ \ if \ } b>1
			\end{cases}
			&
			\mbox{ \ if \ } b<d
			\\
			\begin{cases}
				\langle \ln(\frac{T-t}{l_1^2(t)}) \rangle \langle \ln(\frac{ l_2(t) }{l_1(t) } ) \rangle
				&
				\mbox{ \ if \ } d=1
				\\
				l_1^{1-d}(t)
				&
				\mbox{ \ if \ } d>1
			\end{cases}
			&
			\mbox{ \ if \ } b=d
			\\
			\begin{cases}
				\langle\ln(\frac{T-t}{l_1^2(t)})  \rangle
				l_1^{1-b}(t)
				&
				\mbox{ \ if \ } d=1
				\\
				l_1^{1-b }(t)
				&
				\mbox{ \ if \ } d>1
			\end{cases}
			&
			\mbox{ \ if \ } b>d ,
		\end{cases}
		\\
		&
		\tilde{T}_{43}:=
		\int_{t}^{T} v(s)  (T-s)^{-\frac {d+1}2}
		\begin{cases}
			l_2^{d-b}(s)
			&
			\mbox{ \ if \ } b<d
			\\
			\langle \ln(\frac{ l_2(s) }{l_1(s) } ) \rangle
			&
			\mbox{ \ if \ } b=d
			\\
			l_1^{d-b}(s)
			&
			\mbox{ \ if \ } b>d
		\end{cases}
		ds.
	\end{align*}
	
	For $0<\alpha<1$ and $t<t_* \le (T+t)/2$, suppose additional  assumption
	\begin{equation}\label{qd24Apr11-1}
			C_l^{-1} l_i(t) \le l_i(s) \le C_l l_i(t) \mbox{ \ for \ } i=1,2, \quad s\in [t,(T+t)/2 ],
	\end{equation}
	we have
	\begin{equation}\label{nablat*-t-annular}
			|\nabla \mathcal{T}_{d}^{\rm{out} }[f](x,t) - \nabla \mathcal{T}_{d}^{\rm{out} }[f](x_*,t_*) |
			\lesssim
			C(\alpha)
			(| x - x_*| + \sqrt{| t- t_*|} )^{\alpha}
			( \tilde{T}_{1}^{1} + \tilde{T}_{2}^{1} + \tilde{T}_{3}^{1}),
	\end{equation}
	where for $\gamma \in \mathbb{R}$, we define
	\begin{align*}
	&	\tilde{T}_{1}^{\gamma} := \int_0^{[t-(T-t)]_+} v(s) (T-s)^{ - \frac {d+\gamma+\alpha}2  }
		\begin{cases}
			l_2^{d-b}(s)
			&
			\mbox{ \ if \ } b<d
			\\
			\langle  \ln(\frac{l_2(s)}{l_1(s)}) \rangle
			&
			\mbox{ \ if \ } b=d
			\\
			l_1^{d-b}(s)
			&
			\mbox{ \ if \ } b>d
		\end{cases}
		ds,
\\
&
\tilde{T}_{2}^{\gamma}:= \sup\limits_{t_1\in [ [t-(T-t)]_+, [t -(t_*-t)]_+  ] }v(t_1)
\begin{cases}
	l_2^{2-\gamma-b-\alpha}(t)
	&
	\mbox{ \ if \ } b< 2-\gamma-\alpha
	\\
	\langle \ln (\frac{l_2(t)}{l_1(t)}) \rangle
	&
	\mbox{ \ if \ } b= 2-\gamma-\alpha
	\\
	l_1^{2-\gamma-b-\alpha}(t)
	&
	\mbox{ \ if \ } b>2-\gamma-\alpha,
\end{cases}
\\
&
\tilde{T}_{3}^{\gamma}:=  \sup\limits_{t_1\in [[t-(t_*-t) ]_+,t_*]}v(t_1)
\begin{cases}
	l_2^{2-\gamma-b-\alpha}(t)
	& \mbox{ \ if \ }
	b\le 2-\gamma-\alpha
	\\
	l_1^{2-\gamma-b-\alpha}(t)
	& \mbox{ \ if \ }
	b>2-\gamma-\alpha.
\end{cases}
	\end{align*}

	For $0 <\alpha <1$ and $t<t_* \le (T+t)/2$, suppose $d>2-\alpha$, \eqref{qd24Apr11-1} additionally, we have
	\begin{equation}\label{qd24July17-1}
		|\mathcal{T}_{d}^{\rm{out} }[f](x,t) -  \mathcal{T}_{d}^{\rm{out} }[f](x_*,t_*)|
		\lesssim
		C(\alpha)
		(| x - x_*| + \sqrt{| t- t_*|} )^{\alpha}
		( \tilde{T}_{1}^{0} + \tilde{T}_{2}^{0} + \tilde{T}_{3}^{0}).
	\end{equation}
	All ``$\lesssim$'' above are independent of $T$.
\end{prop}

\begin{proof}[Proof of \eqref{bound-annular}, \eqref{nabla-annular}]
	\eqref{bound-annular} and \eqref{nabla-annular} are derived by \eqref{Ga-est} and \eqref{annular-lem}.
\end{proof}

\begin{proof}[Proof of \eqref{T-t-annular}]
	\begin{align*}
				& \mathcal{T}_{d}^{\rm{out} }[f](x,t) - \mathcal{T}_{d}^{\rm{out} }[f](x,T) = \int_{0}^{[t-(T-t)]_+} \int_{\RR^d}
				(\Gamma(x,t,y,s) - \Gamma(x,T,y,s) )
				f(y,s) dyds
				\\
				& +
				\int_{[t-(T-t)]_+}^{t} \int_{\RR^d}
				(\Gamma(x,t,y,s) - \Gamma(x,T,y,s) )
				f(y,s) dyds
				-
				\int_{t}^{T} \int_{\RR^d}
				\Gamma(x,T,y,s)
				f(y,s) dyds:= I_1 + I_2 + I_3.
	\end{align*}
	
	By \eqref{Ga-est}, one has
	\begin{align*}
			&	|I_1| \lesssim
			(T-t)
			\int_{0}^{[t-(T-t)]_+} \int_{\RR^d}
			\int_0^1  \left| (\pp_{t} \Gamma)(x,\theta t + (1-\theta)T,y,s)  \right|
			|f(y,s)| dy ds
			\\
			\lesssim \ &
			(T-t)
			\int_{0}^{[t-(T-t)]_+ } \int_{\RR^d}
			\int_0^1
			\left[\theta t + (1-\theta)T-s\right]^{-1-\frac d2}
			e^{ -c  \big(\frac{|x-y|}{\sqrt{\theta t + (1-\theta)T-s}} \big)^{2-\delta} }
			v(s) |y-q|^{-b} \1_{ \{ l_1(s) \le |y-q| \le l_2(s) \} }
			d\theta
			dy ds
			\\
			\lesssim \ &
			(T-t)
			\int_{0}^{[t-(T-t)]_+ } v(s) (T-s)^{-1-\frac d2}  \int_{\RR^d}
			e^{ -c  \big(\frac{|x-y|}{\sqrt{T-s}} \big)^{2-\delta} }
			|y-q|^{-b} \1_{ \{ l_1(s) \le |y-q| \le l_2(s) \} }
			dy ds
			\lesssim
			\tilde{T}_{31},
	\end{align*}
	where we used \eqref{tsplit} for the third ``$\lesssim$''; \eqref{basic cal in x}, $l_2^2(s) \le C(T-s)$ for the fourth ``$\lesssim$''.
	
	By \eqref{Ga-est}, we estimate
\begin{align*}
			|I_2|
			\lesssim \ &
			\int_{[t-(T-t)]_+}^{t} v(s) (t-s)^{-\frac d2}  \int_{\RR^d}
			e^{ -c  \big(\frac{|x-y|}{\sqrt{t-s}} \big)^{2-\delta} }
			|y-q|^{-b} \1_{ \{ l_1(s) \le |y-q| \le l_2(s) \} }
			dy ds
			\\
			& +
			\int_{[t-(T-t)]_+}^{t} v(s) (T-s)^{-\frac d2}  \int_{\RR^d}
			e^{ -c  \big(\frac{|x-y|}{\sqrt{T-s}} \big)^{2-\delta} }
			|y-q|^{-b} \1_{ \{ l_1(s) \le |y-q| \le l_2(s) \} }
			dy ds
			\\
			\lesssim \ &
			\int_{[t-(T-t)]_+}^{t} v(s) (t-s)^{-\frac d2}
			\begin{cases}
				(t-s)^{\frac d2 } l_1^{-b}(s)
				&
				\mbox{ \ if \ } t-s \le l_1^2(s)
				\\
				\begin{cases}
					(t-s)^{\frac d2 -\frac{b}{2}}
					&
					\mbox{ \ if \ } b<d
					\\
					\langle \ln(\frac{t-s}{l_1^2(s) } ) \rangle
					&
					\mbox{ \ if \ } b=d
					\\
					l_1^{d-b}(s)
					&
					\mbox{ \ if \ } b>d
				\end{cases}
				&
				\mbox{ \ if \ }  l_1^2(s) < t-s \le  l_2^2(s)
				\\
				\begin{cases}
					l_2^{d-b}(s)
					&
					\mbox{ \ if \ } b<d
					\\
					\langle \ln(\frac{ l_2(s) }{l_1(s) } ) \rangle
					&
					\mbox{ \ if \ } b=d
					\\
					l_1^{d-b}(s)
					&
					\mbox{ \ if \ } b>d
				\end{cases}
				&
				\mbox{ \ if \ }  t-s > l_2^2(s)
			\end{cases}
			ds
			\\
			& +
			\int_{[t-(T-t)]_+}^{t} v(s) (T-s)^{-\frac d2}
			\begin{cases}
				l_2^{d-b}(s)
				&
				\mbox{ \ if \ } b<d
				\\
				\langle \ln(\frac{ l_2(s) }{l_1(s) } ) \rangle
				&
				\mbox{ \ if \ } b=d
				\\
				l_1^{d-b}(s)
				&
				\mbox{ \ if \ } b>d
			\end{cases}
			ds
			\
			\lesssim
			\tilde{T}_{32},
		\end{align*}
	where we used \eqref{basic cal in x}, $l_2^2 (s)\le C(T-s)$ in the second ``$\lesssim$''; and \eqref{tsplit} in the third ``$\lesssim$''.
	
	By \eqref{Ga-est}, \eqref{basic cal in x}, and $l_2^2(s)\le C(T-s)$, we have $ |I_3|
	\lesssim
	\tilde{T}_{33}$.
\end{proof}

\begin{proof}[Proof of \eqref{nablaT-t-annular}]
	\begin{align*}
			&
			\pp_{x_i}\mathcal{T}_{d}^{\rm{out} }[f](x,t) - \pp_{x_i}\mathcal{T}_{d}^{\rm{out} }[f](x,T)
			\\
			= \ &
			\int_{0}^{t} \int_{\RR^d}
			( \pp_{x_i} \Gamma(x,t,y,s) - \pp_{x_i} \Gamma(x,T,y,s)  ) f(y,s) dyds
			-
			\int_{t}^{T} \int_{\RR^d}
			\pp_{x_i} \Gamma(x,T,y,s) f(y,s) dyds
			:= I_1 + I_2.
		\end{align*}
	
	For $I_1$, by \eqref{nabla-Ga-Holder} and \eqref{basic cal in x},
	\begin{align*}
				| I_1 |
				\lesssim \ &
				C(\alpha) (T-t)^{\frac{\alpha}{2}}
				\int_{0}^{t} \int_{\RR^d}
				(T-s)^{-\frac{\alpha}{2} }
				\big[
				(t-s)^{-\frac {d+1}2}
				e^{ -c  (\frac{|x-y|}{\sqrt{t-s}} )^{2-\delta} }
				+
				(T-s)^{-\frac {d+1}2}
				e^{ -c (\frac{|x-y|}{\sqrt{T-s}} )^{2-\delta} }
				\big]
				\\
				& \times
				v(s) |y-q|^{-b} \1_{ \{ l_1(s) \le |y-q| \le l_2(s) \} }
				dyds
				\lesssim  C(\alpha)
				(T-t)^{\frac{\alpha}{2}}
				\\
				& \times
				\Bigg[ \int_{0}^{t} v(s) (T-s)^{-\frac{\alpha}{2} }
				\begin{cases}
					(t-s)^{-\frac {1}2}  l_1^{-b}(s)
					&
					\mbox{if \ } t-s \le l_1^2(s)
					\\
					\begin{cases}
						(t-s)^{ -\frac{b+1}{2}}
						&
						\mbox{ \ if \ } b<d
						\\
						(t-s)^{-\frac {d+1}2}  \langle \ln(\frac{t-s}{l_1^2(s) } ) \rangle
						&
						\mbox{ \ if \ } b=d
						\\
						(t-s)^{-\frac {d+1}2}  l_1^{d-b}(s)
						&
						\mbox{ \ if \ } b>d
					\end{cases}
					&
					\mbox{if \ }  l_1^2(s) < t-s \le  l_2^2(s)
					\\
					\begin{cases}
						(t-s)^{-\frac {d+1}2}  l_2^{d-b}(s)
						&
						\mbox{ \ if \ } b<d
						\\
						(t-s)^{-\frac {d+1}2}  \langle \ln(\frac{ l_2(s) }{l_1(s) } ) \rangle
						&
						\mbox{ \ if \ } b=d
						\\
						(t-s)^{-\frac {d+1}2}  l_1^{d-b}(s)
						&
						\mbox{ \ if \ } b>d
					\end{cases}
					&
					\mbox{if \ }  t-s > l_2^2(s)
				\end{cases}
				ds
				\\
				&\qquad +
				\int_{0}^{t} v(s)
				\begin{cases}
					(T-s)^{-\frac{d+1+\alpha}{2} }  l_2^{d-b}(s)
					&
					\mbox{ \ if \ } b<d
					\\
					(T-s)^{-\frac{d+1+\alpha}{2} }  \langle \ln(\frac{ l_2(s) }{l_1(s) } ) \rangle
					&
					\mbox{ \ if \ } b=d
					\\
					(T-s)^{-\frac{d+1+\alpha}{2} }  l_1^{d-b}(s)
					&
					\mbox{ \ if \ } b>d
				\end{cases}
				ds
				\Bigg]
				\\
				= \ & C(\alpha)(T-t)^{\frac{\alpha}{2}}
				\Big(
				\int_{0}^{[t-(T-t)]_+}   + \int_{[t-(T-t)]_+}^t \cdots
				\Big)
				:= C(\alpha)
				(T-t)^{\frac{\alpha}{2}} ( I_{11} + I_{12}  ).
	\end{align*}
	
	For $I_{11}$, by $t-s\sim T-s\gtrsim l_2^2(s)$, we have
	$I_{11}
	\lesssim  \left(T-t\right)^{-\frac{\alpha}{2}} \tilde{T}_{41} $.

	For $I_{12}$, by \eqref{23Nov02-1}, \eqref{t-int-del=0},
\begin{small}
\begin{align*}
			I_{12}
			\lesssim \ &
			\sup\limits_{t_1\in  [ [t-(T-t)]_+, t  ] }v(t_1) (T-t)^{-\frac{\alpha}{2} }
			\int_{[t-(T-t)]_+}^{t}
			\begin{cases}
				(t-s)^{-\frac {1}{2} }	 l_1^{-b}(t)
				&
				\mbox{if \ } t-s\le l_1^2(t)
				\\
				\begin{cases}
					(t-s)^{-\frac {1}{2} -\frac b2}
					&
					\mbox{ \ if \ } b<d
					\\
					(t-s)^{-\frac {1}{2} -\frac d2}	\langle \ln(\frac{t-s}{l_1^2(t)} ) \rangle
					&
					\mbox{ \ if \ } b=d
					\\
					(t-s)^{-\frac {1}{2} -\frac d2}	l_1^{d-b}(t)
					&
					\mbox{ \ if \ } b>d
				\end{cases}
				&
				\mbox{if \ }  l_1^2(t) < t-s\le  l_2^2(t)
				\\
				\begin{cases}
					(t-s)^{-\frac {1}{2} -\frac d2}	 l_2^{d-b}(t)
					&
					\mbox{ \ if \ } b<d
					\\
					(t-s)^{-\frac {1}{2} -\frac d2}	\langle \ln(\frac{ l_2(t) }{l_1(t)} ) \rangle
					&
					\mbox{ \ if \ } b=d
					\\
					(t-s)^{-\frac {1}{2} -\frac d2}	l_1^{d-b}(t)
					&
					\mbox{ \ if \ } b>d
				\end{cases}
				&
				\mbox{if \ }  t-s > l_2^2(t)
			\end{cases}
			ds
			\\
			& +
			\sup\limits_{t_1\in  [ [t-(T-t)]_+, t  ] }v(t_1)  (T-t)^{ -\frac {\alpha}2 }
			\begin{cases}
				(T-t)^{\frac 12 -\frac d2} 	 l_2^{d-b}(t)
				&
				\mbox{ \ if \ } b<d
				\\
				(T-t)^{\frac 12 -\frac d2} 	\langle \ln(\frac{ l_2(t) }{l_1(t)} ) \rangle
				&
				\mbox{ \ if \ } b=d
				\\
				(T-t)^{\frac 12  -\frac d2}  l_1^{d-b}(t)
				&
				\mbox{ \ if \ } b>d
			\end{cases}
			\
			\lesssim   (T-t)^{-\frac{\alpha}{2} } \tilde{T}_{42}.
		\end{align*}
	\end{small}

	For $I_2$, by \eqref{Ga-est}, \eqref{basic cal in x}, we have $|I_2| \lesssim \tilde{T}_{43}$.
\end{proof}

\begin{proof}[Proof of \eqref{nablat*-t-annular} and \eqref{qd24July17-1}]
	
	For brevity, denote $|X-X_*|=| x - x_*| + \sqrt{| t- t_*|}$.
	By \eqref{nabla-Ga-Holder}, \eqref{Ga-est}, \eqref{Ga-Holder},
	\begin{equation*}
		\begin{aligned}
			&
			\big| \pp_{x_i}\mathcal{T}_{d}^{\rm{out} }[f](x,t) - \pp_{x_i}\mathcal{T}_{d}^{\rm{out} }[f](x_*,t_*)  \big|
			\\
			\le \ &
			\int_{0}^{t} \int_{\RR^d}
			| \pp_{x_i} \Gamma(x,t,y,s) - \pp_{x_i} \Gamma(x_*,t_*,y,s)  | |f(y,s)| dyds
			+
			\int_{t}^{t_*} \int_{\RR^d}
			|\pp_{x_i} \Gamma(x_*,t_*,y,s)| |f(y,s)| dyds
			\\
			\lesssim \ &
			C(\alpha)
			|X-X_*|^{\alpha}
			\int_{0}^{t} v(s) \int_{\RR^d}
			(t_*-s)^{-\frac{\alpha}{2} }
			\Big[
			(t-s)^{-\frac {d+1}2}
			e^{ -c (\frac{|x-y|}{\sqrt{t-s}} )^{2-\delta} }
			+
			(t_*-s)^{-\frac {d+1}2}
			e^{ -c (\frac{|x_*-y|}{\sqrt{t_*-s}} )^{2-\delta} }
			\Big] |y-q|^{-b}
			\\
			& \times
			  \1_{ \{ l_1(s) \le |y-q| \le l_2(s) \} }
			dy ds
			 +
			\int_{t}^{t_*} v(s) (t_*-s)^{-\frac {d+1}2}  \int_{\RR^d}
			e^{ -c (\frac{|x_*-y|}{\sqrt{t_*-s}} )^{2-\delta} }
			|y-q|^{-b} \1_{ \{ l_1(s) \le |y-q| \le l_2(s) \} } dy ds,
		\end{aligned}
	\end{equation*}
\begin{equation*}
	\begin{aligned}
		&
		\big| \mathcal{T}_{d}^{\rm{out} }[f](x,t) -  \mathcal{T}_{d}^{\rm{out} }[f](x_*,t_*) \big|
		\\
		\le \ &
		\int_{0}^{t} \int_{\RR^d}
		| \Gamma(x,t,y,s) - \Gamma(x_*,t_*,y,s) | |f(y,s)| dyds
		+
		\int_{t}^{t_*} \int_{\RR^d}
		|\Gamma(x_*,t_*,y,s) | |f(y,s)| dyds
		\\
		\lesssim \ &
		C(\alpha)
		|X-X_*|^{\alpha}
		\int_{0}^{t} v(s) \int_{\RR^d}
		(t_*-s)^{-\frac{\alpha}{2} }
		\Big[
		(t-s)^{-\frac {d}2}
		e^{ -c (\frac{|x-y|}{\sqrt{t-s}} )^{2-\delta} }
		+
		(t_*-s)^{-\frac {d}2}
		e^{ -c (\frac{|x_* -y|}{\sqrt{t_*-s}} )^{2-\delta} }
		\Big] |y-q|^{-b}
		\\
		& \times
		  \1_{ \{ l_1(s) \le |y-q| \le l_2(s) \} }
		dy ds
		 +
		\int_{t}^{t_*} v(s) (t_*-s)^{-\frac {d}2}  \int_{\RR^d}
		e^{ -c (\frac{|x_* -y|}{\sqrt{t_*-s}} )^{2-\delta} }
		|y-q|^{-b} \1_{ \{ l_1(s) \le |y-q| \le l_2(s) \} } dy ds.
	\end{aligned}
\end{equation*}
These two cases can be solved uniformly by the following Lemma.

\begin{lemma}

Given a constant $\gamma$ satisfying $d>2-\gamma-\alpha$, $\gamma+\alpha<2$, denote
\begin{equation*}
	\begin{aligned}
		&
		I_1 := \int_{0}^{t} v(s) \int_{\RR^d}
		(t_*-s)^{-\frac{\alpha}{2} }
		\Big[
		(t-s)^{-\frac {d+\gamma}2}
		e^{ -c (\frac{|x-y|}{\sqrt{t-s}} )^{2-\delta} }
		+
		(t_*-s)^{-\frac {d+\gamma}2}
		e^{ -c (\frac{|x_*-y|}{\sqrt{t_*-s}} )^{2-\delta} }
		\Big] |y-q|^{-b}
		\1_{ \{ l_1(s) \le |y-q| \le l_2(s) \} }
		dy ds,
		\\
		&
		I_2:= \int_{t}^{t_*} v(s) (t_*-s)^{-\frac {d+\gamma}2}  \int_{\RR^d}
		e^{ -c (\frac{|x_*-y|}{\sqrt{t_*-s}} )^{2-\delta} }
		|y-q|^{-b} \1_{ \{ l_1(s) \le |y-q| \le l_2(s) \} } dy ds.
	\end{aligned}
\end{equation*}
Then we have $
I_1 \lesssim \tilde{T}_{1}^{\gamma} + \tilde{T}_{2}^{\gamma} + \tilde{T}_{3}^{\gamma}$. Suppose $t_*-t\le (T-t)/2$, \eqref{qd24Apr11-1} additionally, then
$I_2 \lesssim C(\alpha) (t_* -t)^{\alpha/2} \tilde{T}_{3}^{\gamma}$.
\end{lemma}

The remaining text in this part will be dedicated to proving the above Lemma. For $I_1$, by \eqref{basic cal in x},
	\begin{align*}
				 I_1
				\lesssim \ &
				\int_{0}^{t} v(s) (t_*-s)^{-\frac{\alpha}{2} }
				\begin{cases}
					(t-s)^{-\frac {\gamma}2}  l_1^{-b}(s)
					&
					\mbox{ \ if \ } t-s \le l_1^2(s)
					\\
					\begin{cases}
						(t-s)^{ -\frac{b+\gamma}{2}}
						&
						\mbox{ \ if \ } b<d
						\\
						(t-s)^{-\frac {d+\gamma}2}  \langle \ln(\frac{t-s}{l_1^2(s) } ) \rangle
						&
						\mbox{ \ if \ } b=d
						\\
						(t-s)^{-\frac {d+\gamma}2}  l_1^{d-b}(s)
						&
						\mbox{ \ if \ } b>d
					\end{cases}
					&
					\mbox{ \ if \ }  l_1^2(s) < t-s \le  l_2^2(s)
					\\
					\begin{cases}
						(t-s)^{-\frac {d+\gamma}2}  l_2^{d-b}(s)
						&
						\mbox{ \ if \ } b<d
						\\
						(t-s)^{-\frac {d+\gamma}2}  \langle \ln(\frac{ l_2(s) }{l_1(s) } ) \rangle
						&
						\mbox{ \ if \ } b=d
						\\
						(t-s)^{-\frac {d+\gamma}2}  l_1^{d-b}(s)
						&
						\mbox{ \ if \ } b>d
					\end{cases}
					&
					\mbox{ \ if \ }  t-s > l_2^2(s)
				\end{cases}
				ds
				\\
				& \qquad+
				\int_{0}^{t} v(s)
				\begin{cases}
					(t_*-s)^{-\frac{\gamma+\alpha}{2} }
					l_1^{-b}(s)
					&
					\mbox{ \ if \ } t_*-s \le l_1^2(s)
					\\
					\begin{cases}
						(t_*-s)^{-\frac{b+\gamma+\alpha}{2} }
						&
						\mbox{ \ if \ } b<d
						\\
						(t_*-s)^{-\frac{d+\gamma+\alpha}{2} }  \langle \ln(\frac{t_*-s}{l_1^2(s) } ) \rangle
						&
						\mbox{ \ if \ } b=d
						\\
						(t_*-s)^{-\frac{d+\gamma+\alpha}{2} }  l_1^{d-b}(s)
						&
						\mbox{ \ if \ } b>d
					\end{cases}
					&
					\mbox{ \ if \ }  l_1^2(s) < t_*-s \le  l_2^2(s)
					\\
					\begin{cases}
						(t_*-s)^{-\frac{d+\gamma+\alpha}{2} }  l_2^{d-b}(s)
						&
						\mbox{ \ if \ } b<d
						\\
						(t_*-s)^{-\frac{d+\gamma+\alpha}{2} }  \langle \ln(\frac{ l_2(s) }{l_1(s) } ) \rangle
						&
						\mbox{ \ if \ } b=d
						\\
						(t_*-s)^{-\frac{d+\gamma+\alpha}{2} }  l_1^{d-b}(s)
						&
						\mbox{ \ if \ } b>d
					\end{cases}
					&
					\mbox{ \ if \ }  t_*-s > l_2^2(s)
				\end{cases}
				ds
				\\
				= \ &
				\int_{0}^{[t-(T-t)]_+}  +
				\int_{[t-(T-t)]_+ }^{[t-(t_*-t)]_+} + \int_{[t-(t_*-t)]_+}^t \cdots
				:=  I_{11} + I_{12} + I_{13}.
	\end{align*}

	For $I_{11}$, by \eqref{tsplit}, \eqref{23Nov02-1}, we have $t-s\sim t_*-s\sim T-s\gtrsim l_2^2(s)$. Then $I_{11} \lesssim
\tilde{T}_{1}^{\gamma}$.

	For $I_{12}$, denote ${\rm Int}_{12} := [ [t-(T-t)]_+, [t -(t_*-t)]_+ ]$. By \eqref{tsplit}, \eqref{23Nov02-1}, \eqref{basic cal in x}, $d>2-\gamma-\alpha$, $\gamma+\alpha<2$,
	\begin{align*}
		I_{12}
		\lesssim \ &
		\sup\limits_{t_1\in {\rm Int}_{12} }v(t_1)
		\int_{[t-(T-t)]_+}^{[t -(t_*-t)]_+ }
		\begin{cases}
			(t-s)^{-\frac {\gamma+\alpha}2 }
			l_1^{-b}(t)
			&
			\mbox{if \ } t-s \le l_1^2(t)
			\\
			\begin{cases}
				(t-s)^{-\frac {b+\gamma+\alpha}2 }
				&
				\mbox{ \ if \ } b<d
				\\
				(t-s)^{-\frac {d+\gamma+\alpha}2 }
				\langle \ln(\frac{t-s}{l_1^2(t) } ) \rangle
				&
				\mbox{ \ if \ } b=d
				\\
				(t-s)^{-\frac {d+\gamma+\alpha}2 }
				l_1^{d-b}(t)
				&
				\mbox{ \ if \ } b>d
			\end{cases}
			&
			\mbox{if \ }  l_1^2(t) < t-s \le  l_2^2(t)
			\\
			\begin{cases}
				(t-s)^{-\frac {d+\gamma+\alpha}2 }
				l_2^{d-b}(t)
				&
				\mbox{ \ if \ } b<d
				\\
				(t-s)^{-\frac {d+\gamma+\alpha}2 }
				\langle \ln(\frac{ l_2(t) }{l_1(t) } ) \rangle
				&
				\mbox{ \ if \ } b=d
				\\
				(t-s)^{-\frac {d+\gamma+\alpha}2 }
				l_1^{d-b}(t)
				&
				\mbox{ \ if \ } b>d
			\end{cases}
			&
			\mbox{if \ }  t-s > l_2^2(t)
		\end{cases}
		ds
		\\
		\lesssim \ &
		\sup\limits_{t_1\in {\rm Int}_{12} }v(t_1)
		\begin{cases}
			\begin{cases}
				l_2^{2-\gamma-b-\alpha}(t)
				&
				\mbox{ \ if \ } b< 2-\gamma-\alpha
				\\
				\langle \ln (\frac{l_2(t)}{l_1(t)}) \rangle
				&
				\mbox{ \ if \ } b= 2-\gamma-\alpha
				\\
				l_1^{2-\gamma-b-\alpha}(t)
				&
				\mbox{ \ if \ } b>2-\gamma-\alpha
			\end{cases}
			&
			\mbox{if \ }
			t_*-t \le l_1^2(t)
			\\
			\begin{cases}
				\begin{cases}
					l_2^{2-\gamma-b-\alpha}(t)
					&
					\mbox{ \ if \ } b< 2-\gamma-\alpha
					\\
					\langle \ln(\frac{l_2^2(t)}{t_*-t}) \rangle
					&
					\mbox{ \ if \ } b= 2-\gamma-\alpha
					\\
					(t_*-t)^{\frac {2-\gamma-b-\alpha}2 }
					&
					\mbox{ \ if \ } b> 2-\gamma-\alpha
				\end{cases}
				&
				\mbox{if \ } b<d
				\\
				(t_*-t)^{\frac{2-\gamma-d-\alpha}{2}}
				\langle \ln (\frac{t_*-t}{l_1^2(t) } )\rangle
				&
				\mbox{if \ } b=d
				\\
				(t_*-t)^{\frac {2-\gamma-d-\alpha}2 }
				l_1^{d-b}(t)
				&
				\mbox{if \ } b>d
			\end{cases}
			&
			\mbox{if \ }
			l_1^2(t) < t_*-t \le  l_2^2(t)
			\\
			(t_*-t)^{\frac{2-\gamma-d-\alpha}{2}}
			\begin{cases}
				l_2^{d-b}(t)
				&
				\mbox{ \ if \ } b<d
				\\
				\langle \ln(\frac{ l_2(t) }{l_1(t) } ) \rangle
				&
				\mbox{ \ if \ } b=d
				\\
				l_1^{d-b}(t)
				&
				\mbox{ \ if \ } b>d
			\end{cases}
			&
			\mbox{if \ } t_*-t > l_2^2(t)
		\end{cases}
	\
	\lesssim \tilde{T}_{2}^{\gamma},
	\end{align*}
	where for the second ``$\lesssim$'', we used $d>2-\gamma-\alpha$ and the following calculations. First, we check the case $t_*-t > l_2^2(t)$ directly; Next, for $l_1^2(t) < t_*-t \le  l_2^2(t)$, we split $\int_{[t-(T-t)]_+}^{[t -(t_*-t)]_+ } = \int_{[t-(T-t)]_+}^{ [t-l_2^2(t)]_+ } + \int_{[t-l_2^2(t)]_+}^{[t -(t_*-t)]_+ }$, and then apply the estimate for $t_*-t > l_2^2(t)$ (with $t_*-t = l_2^2(t)$) to $\int_{[t-(T-t)]_+}^{ [t-l_2^2(t)]_+ }$ and direct calculation to $\int_{[t-l_2^2(t)]_+}^{[t -(t_*-t)]_+ }$. It follows the desired upper bound in the second ``$\lesssim$'' for the case $l_1^2(t) < t_*-t \le  l_2^2(t)$;
Finally, the case $ t_*-t \le l_1^2(t)$ is deduced by $\int_{[t-(T-t)]_+}^{[t -(t_*-t)]_+ } =
	\int_{[t-(T-t)]_+}^{[t -l_1^2(t)]_+ } + \int_{[t-l_1^2(t)]_+}^{[t -(t_*-t)]_+ }$, where we used the estimate for $l_1^2(t) < t_*-t \le  l_2^2(t)$ (with $t_*-t = l_1^2(t)$) to $\int_{[t-(T-t)]_+}^{[t -l_1^2(t)]_+ }$, and $\gamma+\alpha<2$ to $\int_{[t-l_1^2(t)]_+}^{[t -(t_*-t)]_+ }$.

	For $I_{13}$, denote ${\rm Int}_{13} := [[t-(t_*-t) ]_+,t ] $. By \eqref{tsplit}, \eqref{23Nov02-1},
	\begin{small}
		\begin{align*}
			I_{13}
			\lesssim \ &
			\sup\limits_{t_1\in {\rm Int}_{13} }v(t_1) (t_*-t)^{-\frac{\alpha}{2} }
			\int_{[t-(t_*-t) ]_+}^{t}
			\begin{cases}
				(t-s)^{-\frac {\gamma}{2} }	 l_1^{-b}(t)
				&
				\mbox{if \ } t-s\le l_1^2(t)
				\\
				\begin{cases}
					(t-s)^{-\frac {\gamma}{2} -\frac b2}
					&
					\mbox{ \ if \ } b<d
					\\
					(t-s)^{-\frac {\gamma}{2} -\frac d2}	\langle \ln(\frac{t-s}{l_1^2(t)} ) \rangle
					&
					\mbox{ \ if \ } b=d
					\\
					(t-s)^{-\frac {\gamma}{2} -\frac d2}	l_1^{d-b}(t)
					&
					\mbox{ \ if \ } b>d
				\end{cases}
				&
				\mbox{if \ }  l_1^2(t) < t-s\le  l_2^2(t)
				\\
				\begin{cases}
					(t-s)^{-\frac {\gamma}{2} -\frac d2}	 l_2^{d-b}(t)
					&
					\mbox{ \ if \ } b<d
					\\
					(t-s)^{-\frac {\gamma}{2} -\frac d2}	\langle \ln(\frac{ l_2(t) }{l_1(t)} ) \rangle
					&
					\mbox{ \ if \ } b=d
					\\
					(t-s)^{-\frac {\gamma}{2} -\frac d2}	l_1^{d-b}(t)
					&
					\mbox{ \ if \ } b>d
				\end{cases}
				&
				\mbox{if \ }  t-s > l_2^2(t)
			\end{cases}
			ds
			\\
			& +
			\sup\limits_{t_1\in {\rm Int}_{13} }v(t_1)  (t_*-t)^{1-\frac{\gamma + \alpha}2 }
			\begin{cases}
				l_1^{-b}(t)
				&
				\mbox{if \ } t_*-t\le l_1^2(t)
				\\
				\begin{cases}
					(t_*-t)^{-\frac b2}
					&
					\mbox{ \ if \ } b<d
					\\
					(t_*-t)^{-\frac d2} 	\langle \ln(\frac{t_*-t}{l_1^2(t)} ) \rangle
					&
					\mbox{ \ if \ } b=d
					\\
					(t_*-t)^{-\frac d2} 	 l_1^{d-b}(t)
					&
					\mbox{ \ if \ } b>d
				\end{cases}
				&
				\mbox{if \ }  l_1^2(t) < t_*-t\le  l_2^2(t)
				\\
				\begin{cases}
					(t_*-t)^{-\frac d2} 	 l_2^{d-b}(t)
					&
					\mbox{ \ if \ } b<d
					\\
					(t_*-t)^{-\frac d2} 	\langle \ln(\frac{ l_2(t) }{l_1(t)} ) \rangle
					&
					\mbox{ \ if \ } b=d
					\\
					(t_*-t)^{-\frac d2}  l_1^{d-b}(t)
					&
					\mbox{ \ if \ } b>d
				\end{cases}
				&
				\mbox{if \ }  t_*-t > l_2^2(t)
			\end{cases}
			\
			\lesssim
			  \tilde{T}_{3}^{\gamma},
		\end{align*}
	\end{small}
	where we used \eqref{t-int-del>0}, $d>2-\gamma-\alpha$, $\gamma+\alpha \le 2$ for the last ``$\lesssim$''. For $I_2$, by \eqref{basic cal in x},
	\begin{equation*}
		I_2
		\lesssim
		\int_{t}^{t_*} v(s)  \begin{cases}
			(t_*-s)^{-\frac {\gamma}2}   l_1^{-b}(s)
			&
			\mbox{ \ if \ } t_*-s \le l_1^2(s)
			\\
			\begin{cases}
				(t_*-s)^{-\frac{\gamma}{2} -\frac{b}{2}}
				&
				\mbox{ \ if \ } b<d
				\\
				(t_*-s)^{-\frac {d+\gamma}2} 	\langle \ln(\frac{t_*-s}{l_1^2(s) } ) \rangle
				&
				\mbox{ \ if \ } b=d
				\\
				(t_*-s)^{-\frac {d+\gamma}2} 	l_1^{d-b}(s)
				&
				\mbox{ \ if \ } b>d
			\end{cases}
			&
			\mbox{ \ if \ }  l_1^2(s) < t_*-s \le  l_2^2(s)
			\\
			\begin{cases}
				(t_*-s)^{-\frac {d+\gamma}2} 	l_2^{d-b}(s)
				&
				\mbox{ \ if \ } b<d
				\\
				(t_*-s)^{-\frac {d+\gamma}2} 	\langle \ln(\frac{ l_2(s) }{l_1(s) } ) \rangle
				&
				\mbox{ \ if \ } b=d
				\\
				(t_*-s)^{-\frac {d+\gamma}2} 	l_1^{d-b}(s)
				&
				\mbox{ \ if \ } b>d
			\end{cases}
			&
			\mbox{ \ if \ }  t_*-s > l_2^2(s)
		\end{cases}  	
	ds.
	\end{equation*}
Suppose $t_*-t\le (T-t)/2$, \eqref{qd24Apr11-1} additionally, similar to $I_{13}$,
	by \eqref{t-int-del>0}, $d>2-\gamma-\alpha$, $\gamma+\alpha \le 2$, then
	\begin{align*}
			I_2
			\lesssim \ &
			\sup\limits_{t_1\in [t,t_*]} v(t_1) \int_{t_*-(t_*-t)}^{t_*}
			\begin{cases}
				(t_*-s)^{-\frac {\gamma}2}  l_1^{-b}(t)
				&
				\mbox{ \ if \ } t_*-s \le l_1^2(t)
				\\
				\begin{cases}
					(t_*-s)^{-\frac {\gamma}{2} -\frac{b}{2}}
					&
					\mbox{ \ if \ } b<d
					\\
					(t_*-s)^{-\frac {d+\gamma}2}  	\langle \ln(\frac{t_*-s}{l_1^2(t) } ) \rangle
					&
					\mbox{ \ if \ } b=d
					\\
					(t_*-s)^{-\frac {d+\gamma}2}  	l_1^{d-b}(t)
					&
					\mbox{ \ if \ } b>d
				\end{cases}
				&
				\mbox{ \ if \ }  l_1^2(t) < t_*-s \le  l_2^2(t)
				\\
				\begin{cases}
					(t_*-s)^{-\frac {d+\gamma}2}  	l_2^{d-b}(t)
					&
					\mbox{ \ if \ } b<d
					\\
					(t_*-s)^{-\frac {d+\gamma}2}  	\langle \ln(\frac{ l_2(t) }{l_1(t) } ) \rangle
					&
					\mbox{ \ if \ } b=d
					\\
					(t_*-s)^{-\frac {d+\gamma}2}  	l_1^{d-b}(t)
					&
					\mbox{ \ if \ } b>d
				\end{cases}
				&
				\mbox{ \ if \ }  t_*-s > l_2^2(t)
			\end{cases}
			ds
			\\
			\lesssim \ & C(\alpha)
			(t_*-t)^{\alpha/2}  	\tilde{T}_{3}^{\gamma}.
	\end{align*}
\end{proof}

\subsection{Convolution involving $v(t) |x-q|^{-b} \1_{ \{ |x-q| \ge (T-t)^{1/2} \} }$}

\begin{prop}\label{qd24Apr11-03-prop}
	Let $d\ge 1$ be an integer, $b\ge 0$, $0\le t<T$, $q\in \mathbb{R}^d$. Given $\Gamma(x,t,y,s)$ in Proposition \ref{funda-prop},
	$v(s)\ge 0$ for $s\in [0,T]$, $
	|f(y,s)| \le
	v(s) |y-q|^{-b} \1_{ \{ |y-q| \ge (T-s)^{1/2} \} } $ for $(y,s)\in \mathbb{R}^d \times (0,T)$, denote $
	\mathcal{T}_{d}^{\rm{out} }[f] (x,t)  := \int_{0}^{t} \int_{\RR^d}
	\Gamma(x,t,y,s) f(y,s) dyds $.
	Then,
	\begin{equation}\label{qd2023Nov24-1}
		\left|\mathcal{T}_{d}^{\rm{out} }[f](x,t) \right|
		\lesssim
		\int_0^{t}  v(s) (T-s)^{-\frac b2}  ds ,
		\quad
		\left|\nabla \mathcal{T}_{d}^{\rm{out} }[f](x,t) \right|
		\lesssim
		\int_0^{t}  v(s) (t-s)^{-\frac{1}{2}} (T-s)^{-\frac b2}  ds,
	\end{equation}
	\begin{equation}\label{t-Holder-far}
		\begin{aligned}
			&
			\left|\mathcal{T}_{d}^{\rm{out} }[f](x,t) - \mathcal{T}_{d}^{\rm{out} }[f](x,T) \right|
			\\
			\lesssim \ &
			(T-t)
			\int_{0}^{[t-(T-t)]_+}  v(s) (T-s)^{-1-\frac b2}   ds
			+
			\sup\limits_{t_1\in [[t-(T-t)]_+,t]} v(t_1)  (T-t)^{1-\frac b2}
			+
			\int_{t}^{T} v(s) (T-s)^{-\frac{b}{2}} ds  ,
		\end{aligned}
	\end{equation}
	\begin{equation}\label{nabla-T-t-far}
		\left|\nabla \mathcal{T}_{d}^{\rm{out} }[f](x,t) - \nabla \mathcal{T}_{d}^{\rm{out} }[f] (x,T) \right|
		\lesssim   C(\alpha)	\left( T-t \right)^{\frac{\alpha}{2} }  \tilde{S}_{41}
		+
		\int_{t}^{T }  v(s)
		(T-s)^{-\frac {1+b}2}	ds ,
	\end{equation}	
	where
	\begin{equation*}
		\tilde{S}_{41}:=
		\int_0^{[t-(T-t)]_+}  v(s) (T-s)^{-\frac {1+b+\alpha}2}  ds   +
		\sup\limits_{t_1\in[[t-(T-t) ]_+ , t]} v(t_1) \left(T-t\right)^{\frac{1-b-\alpha}{2}}.
	\end{equation*}
	
	For $0<\alpha<1$, $0\le t < t_* \le T$,
	\begin{equation}\label{nabla-Holder-far}
		|\nabla \mathcal{T}_{d}^{\rm{out} }[f](x,t) - \nabla  \mathcal{T}_{d}^{\rm{out} }[f] (x_*,t_*) |
		\lesssim   C(\alpha)
	(| x - x_*| + \sqrt{| t- t_*|} )^{\alpha}
		\tilde{S}_{51}
		+
		\tilde{S}_{52},
	\end{equation}
	where
	\begin{equation*}
		\begin{aligned}
			&
			\tilde{S}_{51}:=
			\int_0^{[t-(T-t)]_+}  v(s) (T-s)^{-\frac {1+b+\alpha}2}  ds
			+
			\sup\limits_{t_1\in [[t-(T-t)]_+, t]} v(t_1)
			(T-t)^{ \frac{1-b-\alpha}{2} } ,
			\\
			&
			\tilde{S}_{52}:=
			\1_{\{ t_*\le \frac{T+t}{2} \}}
			\sup\limits_{t_1\in [t,t_*]}
			v(t_1)  (T-t)^{-\frac b2}
			(t_*-t)^{\frac 12}
			+
			\1_{\{ t_* > \frac{T+t}{2} \}}
			\int_{t}^{t_* }  v(s)
			(t_*-s)^{-\frac 12} (T-s)^{-\frac b2} ds.
		\end{aligned}
	\end{equation*}

For $0<\alpha<1$, $0\le t < t_* \le T$,
	\begin{equation}\label{qd24July17-2}
		| \mathcal{T}_{d}^{\rm{out} }[f](x,t) -   \mathcal{T}_{d}^{\rm{out} }[f](x_*,t_*)  |
		\lesssim
		C(\alpha) (| x - x_*| + \sqrt{| t- t_*|} )^{\alpha} \tilde{S}_{61}
		+
		\int_{t}^{t_*} v(s) (T-s)^{-\frac b2}	ds,
	\end{equation}
	where
	\begin{equation*}
		\tilde{S}_{61}:=
		\int_0^{[t-(T-t)]_+}  v(s) (T-s)^{-\frac {b+\alpha}2}  ds
		+
		\sup\limits_{t_1\in [[t-(T-t)]_+, t]} v(t_1)
		(T-t)^{1- \frac{b+\alpha}{2} }.
	\end{equation*}

\end{prop}

\begin{proof}[Proof of \eqref{qd2023Nov24-1}]
	\eqref{qd2023Nov24-1} is deduced by \eqref{Ga-est} and \eqref{basic-x-far} directly.
\end{proof}

\begin{proof}[Proof of \eqref{t-Holder-far}]
	\begin{equation*}
		\begin{aligned}
			&
			\mathcal{T}_{d}^{\rm{out} }[f](x,t) - \mathcal{T}_{d}^{\rm{out} }[f](x,T)
			= \int_{0}^{[t-(T-t)]_+} \int_{\RR^d}
			(\Gamma(x,t,y,s) - \Gamma(x,T,y,s) )
			f(y,s) dyds
			\\
			&
			+
			\int_{[t-(T-t)]_+}^{t} \int_{\RR^d}
			(\Gamma(x,t,y,s) - \Gamma(x,T,y,s) )
			f(y,s) dyds
			-
			\int_{t}^{T} \int_{\RR^d}
			\Gamma(x,T,y,s)
			f(y,s) dyds:= I_1 + I_2 + I_3.
		\end{aligned}
	\end{equation*}
	By \eqref{Ga-est} and \eqref{basic-x-far},
	\begin{align*}
				&	|I_1| \le
				(T-t)
				\int_{0}^{[t-(T-t)]_+} \int_{\RR^d} \int_0^1
				\left| (\pp_{t} \Gamma)(x,\theta t + (1-\theta)T,y,s) \right|
				|f(y,s)| d\theta dy ds
				\\
				\lesssim \ &
				(T-t)
				\int_{0}^{[t-(T-t)]_+ } v(s) (T-s)^{-1-\frac d2}  \int_{\RR^d}
				e^{ -c  (\frac{|x-y|}{\sqrt{T-s}} )^{2-\delta} }
				|y-q|^{-b} \1_{ \{ |y-q| \ge (T-s)^{\frac 12} \} } dyds
				\\
				\lesssim \ &
				(T-t)
				\int_{0}^{[t-(T-t)]_+}  v(s) (T-s)^{-1-\frac b2}   ds,
	\end{align*}
\begin{align*}
			&	|I_2| \lesssim
			\int_{[t-(T-t)]_+}^{t} v(s) (t-s)^{-\frac d2}  \int_{\RR^d}
			e^{ -c  (\frac{|x-y|}{\sqrt{t-s}} )^{2-\delta} }
			|y-q|^{-b} \1_{ \{ |y-q| \ge (T-s)^{\frac 12} \} } dy ds
			\\
			& +
			\int_{[t-(T-t)]_+}^{t} v(s) (T-s)^{-\frac d2}  \int_{\RR^d}
			e^{ -c  (\frac{|x-y|}{\sqrt{T-s}} )^{2-\delta} }
			|y-q|^{-b} \1_{ \{ |y-q| \ge (T-s)^{\frac 12} \} } dy ds
			\lesssim
			\sup\limits_{t_1\in [[t-(T-t)]_+,t]} v(t_1)  (T-t)^{1-\frac b2},
		\end{align*}
	\begin{equation*}
		|I_3| \lesssim
		\int_{t}^{T} \int_{\RR^d}
		(T-s)^{-\frac d2}
		e^{ -c  (\frac{|x-y|}{\sqrt{T-s}} )^{2-\delta} }
		v(s) |y-q|^{-b} \1_{ \{ |y-q| \ge (T-s)^{\frac 12} \} }
		dy ds
		\lesssim
		\int_{t}^{T} v(s) (T-s)^{-\frac{b}{2}} ds.
	\end{equation*}
\end{proof}	

\begin{proof}[Proof of \eqref{nabla-T-t-far}]
	\begin{equation*}
		\begin{aligned}
			&
			\pp_{x_i} \mathcal{T}_{d}^{\rm{out} }[f](x,t) - \pp_{x_i} \mathcal{T}_{d}^{\rm{out} }[f](x,T)
			\\
			= \ &
			\int_{0}^{t} \int_{\RR^d}
			\left( \pp_{x_i} \Gamma(x,t,y,s) - \pp_{x_i} \Gamma(x,T,y,s)  \right) f(y,s) dyds
			-
			\int_{t}^{T} \int_{\RR^d}
			\pp_{x_i} \Gamma(x,T,y,s) f(y,s) dyds
			:= I_1 + I_2.
		\end{aligned}
	\end{equation*}

	For $I_1$, by \eqref{nabla-Ga-Holder}, \eqref{basic-x-far},
	\begin{equation*}
			| I_1 |
			\lesssim C(\alpha)
			(T-t)^{\frac{\alpha}{2} }
			\int_{0}^{t} v(s)
			\left[
			\left(t-s\right)^{-\frac{1}{2}} \left(T-s\right)^{-\frac{b+\alpha}{2}}
			+
			\left(T-s\right)^{-\frac{1+b+\alpha}{2}}
			\right]
			ds
			\lesssim
			C(\alpha)	\left( T-t \right)^{\frac{\alpha}{2} }  \tilde{S}_{41}.
	\end{equation*}

	For $I_2$, by \eqref{Ga-est}, \eqref{basic-x-far}, we have $
	|I_2| \lesssim
	\int_{t}^{T }  v(s)
	(T-s)^{-\frac {1+b}2}	ds $.
\end{proof}

\begin{proof}[Proof of \eqref{nabla-Holder-far}]
	\begin{equation*}
		\begin{aligned}
			&
			\pp_{x_i} \mathcal{T}_{d}^{\rm{out} }[f](x,t) - \pp_{x_i} \mathcal{T}_{d}^{\rm{out} }[f](x_*,t_*)
			\\
			= \ &
			\int_{0}^{t} \int_{\RR^d}
			\left( \pp_{x_i} \Gamma(x,t,y,s) - \pp_{x_i} \Gamma(x_*,t_*,y,s)  \right) f(y,s) dyds
			-
			\int_{t}^{t_*} \int_{\RR^d}
			\pp_{x_i} \Gamma(x_*,t_*,y,s) f(y,s) dyds
			:= I_1 + I_2
			.
		\end{aligned}
	\end{equation*}
	
	For $I_1$, by \eqref{nabla-Ga-Holder}, \eqref{basic-x-far},
	\begin{equation*}
			| I_1 | \big(| x - x_*| + \sqrt{| t- t_*|} \big)^{-\alpha}
			\lesssim
			C(\alpha)
			\int_{0}^{t}
			v(s) \left(t_*-s\right)^{-\frac{\alpha}{2}} \left[
			\left(t-s\right)^{-\frac{1}{2}}
			+
			\left(t_*-s\right)^{-\frac{1}{2}}
			\right] \left(T-s\right)^{-\frac{b}{2}} ds
			\lesssim
			C(\alpha) \tilde{S}_{51},
	\end{equation*}	
	where in the last step, we split the integral into $\int_0^t= \int_{0}^{[t-(T-t)]_+}  +
	\int_{ [t-(T-t)]_+ }^{[t-(t_*-t)]_+} + \int_{[t-(t_*-t)]_+}^t$ to estimate.

	For $I_2$, by \eqref{Ga-est}, \eqref{basic-x-far}, then $ |I_2|
	\lesssim
	\int_{t}^{t_*} v(s)
	(t_*-s)^{-\frac 12} (T-s)^{-\frac b2}	ds
	\lesssim \tilde{S}_{52} $.
\end{proof}

\begin{proof}[Proof of \eqref{qd24July17-2}]
	\begin{equation*}
		\begin{aligned}
			&  \mathcal{T}_{d}^{\rm{out} }[f](x,t) -   \mathcal{T}_{d}^{\rm{out} }[f](x_*,t_*)
			\\
			= \ &
			\int_{0}^{t} \int_{\RR^d}
			\left(  \Gamma(x,t,y,s) -  \Gamma(x_*,t_*,y,s)  \right) f(y,s) dyds
			-
			\int_{t}^{t_*} \int_{\RR^d}
			\Gamma(x_*,t_*,y,s) f(y,s) dyds
			:= I_1 + I_2.
		\end{aligned}
	\end{equation*}
	
	For $I_1$, by \eqref{Ga-Holder}, \eqref{basic-x-far},
	\begin{equation*}
		| I_1 | \big(| x - x_*| + \sqrt{| t- t_*|} \big)^{-\alpha}
			\lesssim
			C(\alpha)
			\int_{0}^{t}
			v(s) \left(t_*-s\right)^{-\frac{\alpha}{2}}  \left(T-s\right)^{-\frac{b}{2}} ds
			\lesssim
			C(\alpha) \tilde{S}_{61},
	\end{equation*}	
	where for the last step, we split the integral into $\int_0^t= \int_{0}^{[t-(T-t)]_+}  +
	\int_{ [t-(T-t)]_+ }^{[t-(t_*-t)]_+} + \int_{[t-(t_*-t)]_+}^t$ to estimate and used \eqref{tsplit}.
	For $I_2$, by \eqref{Ga-est}, \eqref{basic-x-far}, then $ |I_2|
	\lesssim
	\int_{t}^{t_*} v(s) (T-s)^{-\frac b2}	ds$.
\end{proof}

\section{Derivation of the weighted topology for the outer problem}\label{out-top-Deri-sec}

\begin{prop}\label{convolu-prop}
	Given $|f(x,t)| \lesssim \sum_{j=1}^N \big(\varrho_1^{\J}+\varrho_2^{\J}\big)+\varrho_3 $ with $\varrho_1^{\J}, \varrho_2^{\J}, \varrho_3$ given in \eqref{rho-weights}, suppose
	\begin{align}
			&
			0< \Theta<\beta<1/2, \ 0<\alpha<1, \ \Theta < \alpha/2,
			\nonumber
			\\
			&
			0< \sigma_0<\beta, \ \beta-\sigma_0 <\alpha/2, \
			1-\sigma_0-(1+\alpha)(1-\beta)<0,
			\
			\Theta +2\sigma_0-\beta <0,
			\nonumber
			\\
			&
			0<A_{\rm{o,h}} <\min\{ \Theta+ (1-\beta)(1-\alpha), 1-2\sigma_0-\alpha(1-\beta), 1-\sigma_0-\frac{\alpha}{2} \},
			\label{para-rho-con}
	\end{align}
	then for
	$  \mathcal{T}_{2}^{\rm{out} }[f] (x,t) :=
	\int_{0}^{t} \int_{\RR^2}
	\Gamma(x,t,y,s) f(y,s) dy ds $ with $\Gamma(x,t,y,s)$ given in Proposition \ref{funda-prop} with dimension $d=2$, we have
	\begin{equation*}
		| \mathcal{T}_{2}^{\rm{out} }[f] |
		\lesssim |\ln T |
		\lambda_*^{\Theta+1}(0) R(0) ,
		\quad
		|\nabla \mathcal{T}_{2}^{\rm{out} }[f] |
		\lesssim  \lambda_*^{\Theta} (0) ,
	\end{equation*}
	\begin{equation*}
			|\mathcal{T}_{2}^{\rm{out} }[f](x,t) -  \mathcal{T}_{2}^{\rm{out} }[f] (x,T)|
			\lesssim
			|\ln(T-t)|	\lambda_*^{\Theta+1} R,
	\quad
		|\nabla \mathcal{T}_{2}^{\rm{out} }[f](x,t) - \nabla \mathcal{T}_{2}^{\rm{out} }[f](x,T) |
		\lesssim \lambda_*^{\Theta},
	\end{equation*}
	and for $0<t<t_* \le (T+t)/2$,
	\begin{equation*}
		|\nabla \mathcal{T}_{2}^{\rm{out} }[f](x,t) - \nabla \mathcal{T}_{2}^{\rm{out} }[f](x_*,t_*) |
		\lesssim
	(| x - x_*| + \sqrt{| t- t_*|})^{\alpha}
		\lambda_*^{\Theta}(t) ( \lambda_*  R)^{-\alpha }(t),
	\end{equation*}
	\begin{equation*}
		\left| \mathcal{T}_{2}^{\rm{out} }[f](x,t) - \mathcal{T}_{2}^{\rm{out} }[f](x,t_*) \right|
		\lesssim T^{A_{\rm{o,h}}} (t_* -t)^{\alpha/2}.
	\end{equation*}
	
\end{prop}

\begin{proof}
	
	\textbf{Convolution estimates about $\varrho_1^{\J}=  \lambda_*^{\Theta} (\lambda_*R)^{-1} \1_{\{ |x-q^{\J}|\le 3 \lambda_*  R \} } $}.
	For $ |f|\lesssim	\varrho_1^{\J}$ with $\lambda_*  R \le (T-t)^{1/2}$ provided $\beta<1/2$,
	by \eqref{bound-annular},
	\begin{equation}\label{c-1-1-upp}
		\begin{aligned}
			\mathcal{T}_{2}^{\rm{out} }[f]
			\lesssim \ &
			\int_0^{[t-(T-t)]_{+}}  \lambda_*^{\Theta}(s) (\lambda_*R)^{-1}(s) (T-s)^{ -1}
			(\lambda_*  R )^2(s)
			ds
			+
			\lambda_*^{\Theta} (\lambda_*R)^{-1}
			(\lambda_*  R)^2 |\ln (T-t) |
			\\
			= \ &
			\int_0^{[t-(T-t)]_{+}}  \lambda_*^{\Theta}(s) (\lambda_*R)(s) (T-s)^{ -1}
			ds
			+
			\lambda_*^{\Theta} \lambda_*R |\ln (T-t) |
			\lesssim
			\lambda_*^{\Theta}(0) (\lambda_*R)(0) |\ln T |
		\end{aligned}
	\end{equation}
	provided
	\begin{equation}\label{Con-1-1}
		\beta<1/2, \quad
		1+\Theta -\beta >0.
	\end{equation}
	
	By \eqref{nabla-annular},
	\begin{equation}\label{c-1-2-nab}
			|\nabla \mathcal{T}_{2}^{\rm{out} }[f] |
			\lesssim
			\int_0^{[t-(T-t)]_+} \lambda_*^{\Theta}(s) (\lambda_*R)(s) (T-s)^{ -\frac{3}{2}  }
			ds
			+
			\lambda_*^{\Theta}
			\lesssim
			\lambda_*^{\Theta} (0)
	\end{equation}
	provided
	\begin{equation}\label{Con-1-2}
		\beta<1/2,
		\quad  \beta-\Theta<1/2,
		\quad  \Theta\ge 0.
	\end{equation}
	
	By \eqref{T-t-annular},
	\begin{equation}\label{c-1-3-T-t}
		\begin{aligned}
			&
			|\mathcal{T}_{2}^{\rm{out} }[f](x,t) -  \mathcal{T}_{2}^{\rm{out} }[f] (x,T)|
			\lesssim
			(T-t)
			\int_{0}^{[t-(T-t)]_+ }  \lambda_*^{\Theta}(s) (\lambda_*R)^{-1}(s) (T-s)^{-2}
			(\lambda_*  R)^2(s)
			ds
			\\
			& ~+
			\lambda_*^{\Theta} (\lambda_*R)^{-1}
			\int_{[t-(T-t)]_+}^{t}
			\big[ \1_{\{ t-s \le  (\lambda_*  R)^2(t) \}}
			+
			(\lambda_*  R)^{2}(t) 	(t-s)^{-1}  \1_{\{ t-s > (\lambda_*  R)^2(t) \}}
		\big]
			ds
			+
			\lambda_*^{\Theta +1} R
			\\
			& ~
			+\int_{t}^{T}  (T-s)^{-1} \lambda_*^{\Theta}(s) (\lambda_*R)^{-1}(s)
			(\lambda_*  R)^2(s)
			ds
			\
			\lesssim
			(T-t)
			\int_{0}^{[t-(T-t)]_+ }  \lambda_*^{\Theta+1}(s) R(s) (T-s)^{-2}
			ds
			\\
			& ~
			+
			\lambda_*^{\Theta+1} R |\ln(T-t)|
			+\int_{t}^{T}  (T-s)^{-1} \lambda_*^{\Theta+1}(s) R(s)
			ds
			\lesssim
			\lambda_*^{\Theta+1} R |\ln(T-t)|
		\end{aligned}
	\end{equation}
	provided
	\begin{equation}\label{Con-1-3}
		0<\beta - \Theta <1.
	\end{equation}
	
	By \eqref{nablaT-t-annular},
	\begin{align}
			&
			|\nabla \mathcal{T}_{2}^{\rm{out} }[f](x,t) - \nabla  \mathcal{T}_{2}^{\rm{out} }[f](x,T) |
			\lesssim
			(T-t)^{\frac{\alpha}{2}}
			\int_0^{[t-(T-t)]_+} \lambda_*^{\Theta}(s) (\lambda_*R)(s) (T-s)^{ - \frac {3+\alpha}2  }   ds
			+
			\lambda_*^{\Theta}
			\nonumber
			\\
			&
			\qquad +
			\int_{t}^{T} \lambda_*^{\Theta}(s)  (\lambda_*R)(s)  (T-s)^{-\frac {3}2}
			ds
			\lesssim
			\lambda_*^{\Theta}
			\label{c-1-4-nabT-t}
	\end{align}
	provided
	\begin{equation}\label{Con-1-4}
		0<\alpha<1, \quad
		\beta<1/2, \quad
		\beta -\Theta <1/2, \quad
		\Theta < \alpha/2.
	\end{equation}
	
	By \eqref{nablat*-t-annular}, for $0 <\alpha <1$ and $0<t<t_* \le (T+t)/2$,
	\begin{equation}\label{c-1-5-nabt*-t}
		\begin{aligned}
			&
			\left| \nabla \mathcal{T}_{2}^{\rm{out} }[f](x,t) - \nabla \mathcal{T}_{2}^{\rm{out} }[f](x_*,t_*) \right| \big(| x - x_*| + \sqrt{| t- t_*|} \big)^{-\alpha}
			\\
			\lesssim \ &
			\int_0^{[t-(T-t)]_+}  \lambda_*^{\Theta}(s) (\lambda_*R)(s) (T-s)^{ - \frac {3+\alpha}2  }
			ds
			+
			\lambda_*^{\Theta}(t) ( \lambda_*  R)^{-\alpha }(t)
			\lesssim
			\lambda_*^{\Theta}(t) ( \lambda_*  R)^{-\alpha }(t)
		\end{aligned}
	\end{equation}
	provided
	\begin{equation}\label{Con-1-5}
		\Theta -\alpha (1-\beta) < 0,
		\ \beta< 1/2.
	\end{equation}

	By \eqref{qd24July17-1}, for $0 <\alpha <1$, $\beta<1/2$, and $0<t<t_* \le (T+t)/2$,  and
	\begin{equation}\label{qd24July18-1}
		\begin{aligned}
			&
			\left| \mathcal{T}_{2}^{\rm{out} }[f](x,t) - \mathcal{T}_{2}^{\rm{out} }[f](x,t_*) \right| (t_* -t)^{-\alpha/2}
			\\
			\lesssim \ &
			\int_0^{[t-(T-t)]_+}
			\lambda_*^{\Theta+1}(s) R(s) (T-s)^{-1-\frac{\alpha}{2}} ds
			+
			\lambda_*^{\Theta} ( \lambda_* R )^{1-\alpha}
			\lesssim
			\big( \lambda_*^{\Theta} ( \lambda_* R )^{1-\alpha} \big)(0).
		\end{aligned}
	\end{equation}

	\textbf{Convolution estimates about $\varrho_2^{\J}=  T^{-\sigma_0}
		\lambda_*^{1-\sigma_0} |x-q^{\J}|^{-2}
		\1_{ \{  \lambda_* R/2 \le |x-q^{\J}|\le d_q \} } $}.
	Consider
	\begin{equation*}
		|f| \le  \lambda_*^{1-\sigma_0} |x-q^{\J}|^{-2}
		\big(
		\1_{ \{  \lambda_* R/2 \le |x-q^{\J}|\le (T-t)^{\frac{1}{2}} \} }
		+
		\1_{ \{  (T-t)^{\frac{1}{2}}<  |x-q^{\J}|\le d_q \} }  \big).
	\end{equation*}
	We will use Propositions \ref{qd24Apr11-02-prop} and \ref{qd24Apr11-03-prop} repetitively hereafter. Provided
	$\sigma_0<1$,
	\begin{equation}\label{c-2-1-upp}
		| \mathcal{T}_{2}^{\rm{out} }[f] |
			\lesssim
			\int_0^{[t-(T-t)]_+}
			\frac{\lambda_*^{1-\sigma_0}(s)
				|\ln(T-s)|}{T-s} ds
			+
			\lambda_*^{1-\sigma_0}
			|\ln (T-t)|^2
			+
			\int_0^{t}
			\frac{\lambda_*^{1-\sigma_0}(s)}{T-s}    ds
			\lesssim
			\lambda_*^{1-\sigma_0}(0)
			(\ln T)^2.
	\end{equation}
	\begin{equation}\label{c-2-2-nab}
		\begin{aligned}
			| \nabla \mathcal{T}_{2}^{\rm{out} }[f] |
			\lesssim \ &
			\int_0^{[t-(T-t)]_+} \lambda_*^{1-\sigma_0}(s) (T-s)^{ -\frac{3}{2}  }
			|\ln(T-s)|
			ds
			+
			\lambda_*^{1-\sigma_0} (\lambda_* R)^{-1}
			\\
			&
			+
			\int_0^{t}  \lambda_*^{1-\sigma_0}(s)
			(t-s)^{-\frac{1}{2}} (T-s)^{-1}  ds
			\lesssim
			\lambda_*^{1-\sigma_0}(0) (\lambda_* R)^{-1}(0),
		\end{aligned}
	\end{equation}
	where for the second ``$\lesssim $'', we used integration by part for $\int_0^{t}  \lambda_*^{1-\sigma_0}(s)
	(t-s)^{-\frac{1}{2}} (T-s)^{-1}  ds $, and
	\begin{equation}\label{Con-2-1}
		\sigma_0<\beta<1/2.
	\end{equation}
	\begin{align}\notag
			& |\mathcal{T}_{2}^{\rm{out} }[f] (x,t) - \mathcal{T}_{2}^{\rm{out} }[f] (x,T)|
			\lesssim
			(T-t)
			\int_{0}^{[t-(T-t)]_+ } \lambda_*^{1-\sigma_0}(s) (T-s)^{-2}
			|\ln (T-s)|
			ds
			\\ \notag
			& ~+
			\lambda_*^{1-\sigma_0}
			\int_{[t-(T-t)]_+}^{t}
			\begin{cases}
				(\lambda_* R)^{-2}(t)
				&
				\mbox{ \ if \ } t-s \le (\lambda_* R)^2(t)
				\\
				(t-s)^{-1}  	\langle \ln(\frac{t-s}{(\lambda_* R )^2(t) } ) \rangle
				&
				\mbox{ \ if \ }  (\lambda_* R )^2(t) < t-s \le  T-t
			\end{cases}
			ds
			\\ \notag
			& ~+
			\lambda_*^{1-\sigma_0}
			|\ln (T-t)|
			+\int_{t}^{T}  (T-s)^{-1} \lambda_*^{1-\sigma_0} (s)
			|\ln(T-s)|
			ds
			+
			(T-t)
			\int_{0}^{[t-(T-t)]_+}  \lambda_*^{1-\sigma_0}(s) (T-s)^{-2}   ds
			\\
			& ~
			+
			\lambda_*^{1-\sigma_0}
			+
			\int_{t}^{T} \lambda_*^{1-\sigma_0}(s) (T-s)^{-1} ds
			\lesssim
			\lambda_*^{1-\sigma_0}  \ln^2 (T-t), \label{c-2-3-T-t}
		\end{align}
	where we used $\int_{[t-(T-t)]_+}^{t- (\lambda_* R)^2(t) }
	(t-s)^{-1}  	\langle \ln(\frac{t-s}{(\lambda_* R )^2(t) } ) \rangle ds
	\le
	\int_{1}^{(T-t) / (\lambda_*R)^2(t)} z^{-1} \langle \ln z\rangle dz \lesssim \ln^2 (T-t)$, and
	\begin{equation}\label{Con-2-2}
		0<\sigma_0<1, \quad \beta<1/2.
	\end{equation}
	
	For $0<\alpha<1$,
	\begin{align}
			&
			|\nabla \mathcal{T}_{2}^{\rm{out} }[f](x,t) - \nabla  \mathcal{T}_{2}^{\rm{out} }[f](x,T) |
			\lesssim
			(T-t)^{\frac{\alpha}{2}}
			\int_0^{[t-(T-t)]_+} \lambda_*^{1-\sigma_0}(s) (T-s)^{ - \frac {3+\alpha}2  }
			|\ln(T-s)|
			ds
			\nonumber
			\\
			&
			+
			\lambda_*^{1-\sigma_0}  (\lambda_* R)^{-1}
			 +
			\int_{t}^{T} \lambda_*^{1-\sigma_0}(s)  (T-s)^{-\frac {3}2}
			|\ln(T-s)|
			ds
			\nonumber
			\\
			& +
			\left( T-t \right)^{\frac{\alpha}{2} }
			\Big[ \int_0^{[t-(T-t)]_+}  \lambda_*^{1-\sigma_0}(s) (T-s)^{-\frac {3+\alpha}2}  ds + 	\lambda_*^{1-\sigma_0}  (T-t)^{\frac{-1-\alpha}{2} } \Big]
			+
			\int_{t}^{T }  \lambda_*^{1-\sigma_0}(s)
			(T-s)^{-\frac {3}2}	ds
			\nonumber
			\\
			\lesssim \ &
			(T-t)^{\frac{\alpha}{2}}
			\int_0^{[t-(T-t)]_+} \lambda_*^{1-\sigma_0}(s) (T-s)^{ - \frac {3+\alpha}2  }
			|\ln(T-s)|
			ds
			+
			\lambda_*^{1-\sigma_0}  (\lambda_* R)^{-1}
			\lesssim \lambda_*^{1-\sigma_0}  (\lambda_* R)^{-1},
			\label{c-2-4-nabT-t}
	\end{align}
	where for the last ``$\lesssim$'', we discussed three cases $1-\sigma_0 - \frac {1+\alpha}2 < 0, =0, >0 $, and used
	\begin{equation}\label{Con-2-3}
		\beta<1/2, \quad \sigma_0<1/2, \quad \beta-\sigma_0<\alpha/2.
	\end{equation}
	
	For $0<\alpha<1$ and $0< t < t_* \le (T+t)/2$,
	\begin{equation}\label{c-2-5-nabt*-t}
		\begin{aligned}
			&
			|\nabla \mathcal{T}_{2}^{\rm{out} }[f](x,t) - \nabla \mathcal{T}_{2}^{\rm{out} }[f](x_*,t_*) | (| x - x_*| + \sqrt{| t- t_*|} )^{-\alpha}
			\\
			\lesssim \ &
			\int_0^{[t-(T-t)]_+} \lambda_*^{1-\sigma_0}(s) (T-s)^{ - \frac {3+\alpha}2  }
			|\ln(T-s)|
			ds
			+
			\lambda_*^{1-\sigma_0}(t)	(\lambda_* R)^{ -1-\alpha }(t)
			\\
			& +
			\int_0^{[t-(T-t)]_+}  \lambda_*^{1-\sigma_0}(s) (T-s)^{-\frac {3+\alpha}2}  ds
			+
			\lambda_*^{1-\sigma_0}(t) (T-t)^{ \frac{-1-\alpha}{2}}
			\lesssim
			\lambda_*^{1-\sigma_0}(t)	(\lambda_* R)^{ -1-\alpha }(t),
		\end{aligned}
	\end{equation}
	where for the last ``$\lesssim$'', we discussed three cases $1-\sigma_0 -\frac {1+\alpha}2 < 0, = 0, >0$ and required
	\begin{equation}\label{Con-2-4}
		\beta<1/2, \quad 1-\sigma_0 - (1+\alpha)(1-\beta)<0.
	\end{equation}

	For $0 <\alpha <1$ and $0<t<t_* \le (T+t)/2$,
	by \eqref{qd24July17-1},  \eqref{qd24July17-2}, and $\beta<1/2$,
	\begin{equation}\label{qd24July18-2}
		\begin{aligned}
			&
			| \mathcal{T}_{2}^{\rm{out} }[f](x,t) -  \mathcal{T}_{2}^{\rm{out} }[f](x,t_*) | (t_* -t)^{-\alpha/2}
			\\
			\lesssim \ &
			\int_0^{[t-(T-t)]_+} \lambda_*^{1-\sigma_0}(s) (T-s)^{-1-\frac{\alpha}{2}} |\ln(T-s)| ds
			+\lambda_*^{1-\sigma_0} (\lambda_* R)^{-\alpha}
			\lesssim
			\lambda_*^{1-\sigma_0}(0) (\lambda_* R)^{-\alpha}(0),
		\end{aligned}
	\end{equation}
	where the last step is guaranteed by the restriction \eqref{Con-2-1}.

	\textbf{Convolution estimates about $\varrho_3=  T^{-\sigma_0}$}. Consider $
	|f|\le \1_{ \{ |x|\le \sqrt{T-t} \} } + \1_{ \{ |x| > \sqrt{T-t} \} } $. Then
	\begin{equation}\label{c-3-1-upp}
		| \mathcal{T}_{2}^{\rm{out} }[f] |
		\lesssim T,
		\quad
		| \nabla \mathcal{T}_{2}^{\rm{out} }[f] |
		\lesssim  T^{1/2},
		\quad
		| \mathcal{T}_{2}^{\rm{out} }[f] (x,t) -  \mathcal{T}_{2}^{\rm{out} }[f] (x,T)|
		\lesssim
		(T-t) |\ln(T-t)|.
	\end{equation}
	
	For $0<\alpha<1$,
	\begin{equation}\label{c-3-4-nabT-t}
		|\nabla \mathcal{T}_{2}^{\rm{out} }[f] (x,t) - \nabla  \mathcal{T}_{2}^{\rm{out} }[f] (x,T) |
		\lesssim
		(T-t)^{\frac{\alpha}{2}}
		\int_0^{[t-(T-t)]_+}  (T-s)^{ - \frac {1+\alpha}2  }
		ds + (T-t)^{\frac{1}{2}}
		\lesssim T^{\frac{1-\alpha}{2}}
		(T-t)^{\frac{\alpha}{2}}.
	\end{equation}
	
	For $0<\alpha<1$ and $0< t < t_* \le (T+t)/2$,
	\begin{equation}\label{c-3-5-nabt*-t}
		\begin{aligned}
			&
			|\nabla \mathcal{T}_{2}^{\rm{out} }[f](x,t) - \nabla \mathcal{T}_{2}^{\rm{out} }[f](x_*,t_*) |  (| x - x_*| + \sqrt{| t- t_*|} )^{-\alpha}
			\\
			\lesssim \ &
			\int_0^{[t-(T-t)]_+} (T-s)^{ - \frac {1+\alpha}2  }
			ds + (T-t)^{\frac{1-\alpha}{2}}
			\lesssim
			T^{\frac{1-\alpha}{2}}.
		\end{aligned}
	\end{equation}

	For $0 <\alpha <1$ and $0<t<t_* \le (T+t)/2$,
	by \eqref{qd24July17-1},  \eqref{qd24July17-2},
	\begin{equation}\label{qd24July18-3}
		|\mathcal{T}_{2}^{\rm{out} }[f](x,t) - \mathcal{T}_{2}^{\rm{out} }[f](x,t_*) |  (t_*-t)^{-\alpha/2}
		\lesssim
		T^{1- \frac{\alpha}{2}}.
	\end{equation}

	In sum,	for $
	|f| \lesssim \sum_{j=1}^N \big(\varrho_1^{\J}+\varrho_2^{\J}\big)+\varrho_3 $,
	combining \eqref{c-1-1-upp}, \eqref{c-2-1-upp} and \eqref{c-3-1-upp}, we have
	\begin{equation*}
		| \mathcal{T}_{2}^{\rm{out} }[f] |
		\lesssim
		\lambda_*^{\Theta}(0) (\lambda_*R)(0) |\ln T |
		+
		T^{-\sigma_0} \lambda_*^{1-\sigma_0}(0)
		(\ln T)^2 + T^{1-\sigma_0}
		\lesssim
		\lambda_*^{\Theta}(0) (\lambda_*R)(0) |\ln T |,
	\end{equation*}
	where for the last ``$\lesssim$'', we require
	\begin{equation}\label{Con-4-1}
		\Theta +2\sigma_0 - \beta < 0, \ \sigma_0>0.
	\end{equation}
	
	Combining \eqref{c-1-2-nab}, \eqref{c-2-2-nab} and \eqref{c-3-1-upp}, we have
	\begin{equation*}
		|\nabla \mathcal{T}_{2}^{\rm{out} }[f] |
		\lesssim  \lambda_*^{\Theta} (0)
		+
		T^{-\sigma_0} \lambda_*^{1-\sigma_0}(0) (\lambda_* R)^{-1}(0)
		+
		T^{\frac{1}{2}-\sigma_0}
		\lesssim  \lambda_*^{\Theta} (0),
	\end{equation*}
	where for the last ``$\lesssim$'', we used
	\begin{equation}\label{Con-4-2}
		\Theta +2\sigma_0 -\beta<0, \ \sigma_0>0, \ \beta<1/2.
	\end{equation}

	Combining \eqref{c-1-3-T-t}, \eqref{c-2-3-T-t} and \eqref{c-3-1-upp}, then
	\begin{equation*}
		\begin{aligned}
			|\mathcal{T}_{2}^{\rm{out} }[f](x,t) -  \mathcal{T}_{2}^{\rm{out} }[f] (x,T)|
			\lesssim \ &
			\lambda_*^{\Theta} (\lambda_*R) |\ln(T-t)|
			+
			T^{-\sigma_0}
			\lambda_*^{1-\sigma_0}  \ln^2 (T-t)
			+
			T^{-\sigma_0} (T-t) |\ln(T-t)|
			\\
			\lesssim \ &
			\lambda_*^{\Theta} (\lambda_*R) |\ln(T-t)|,
		\end{aligned}
	\end{equation*}
	where for the last ``$\lesssim$'', we used
	\begin{equation}\label{Con-4-3}
		\Theta +2\sigma_0 -\beta<0, \ \sigma_0>0.
	\end{equation}
	
	Combining \eqref{c-1-4-nabT-t}, \eqref{c-2-4-nabT-t} and \eqref{c-3-4-nabT-t}, then
	\begin{equation*}
		|\nabla \mathcal{T}_{2}^{\rm{out} }[f](x,t) - \nabla  \mathcal{T}_{2}^{\rm{out} }[f](x,T) |
		\lesssim
		\lambda_*^{\Theta}
		+
		T^{-\sigma_0} 	\lambda_*^{1-\sigma_0}  (\lambda_* R)^{-1}
		+
		T^{-\sigma_0}
		T^{\frac{1-\alpha}{2}}
		(T-t)^{\frac{\alpha}{2}}
		\lesssim \lambda_*^{\Theta},
	\end{equation*}
	where for the last ``$\lesssim$'', we used
	\begin{equation}\label{Con-4-4}
		\Theta +2\sigma_0-\beta <0,
		\
		\sigma_0>0,
		\
		\Theta <\alpha/2, \
		\Theta + \sigma_0 <1/2.
	\end{equation}
	
	Combining \eqref{c-1-5-nabt*-t}, \eqref{c-2-5-nabt*-t}, \eqref{c-3-5-nabt*-t}, for $0 <\alpha <1$ and $0< t < t_* \le (T+t)/2$, we have
	\begin{equation*}
		\begin{aligned}
			&
			|\nabla \mathcal{T}_{2}^{\rm{out} }[f](x,t) - \nabla \mathcal{T}_{2}^{\rm{out} }[f](x_*,t_*) |
			\big(| x - x_*| + \sqrt{| t- t_*|} \big)^{-\alpha}
			\\
			\lesssim \ &
			\lambda_*^{\Theta}(t) ( \lambda_*  R)^{-\alpha }(t) +
			T^{-\sigma_0}
			\lambda_*^{1-\sigma_0}(t)	(\lambda_* R)^{ -1-\alpha }(t)
			+
			T^{-\sigma_0} T^{\frac{1-\alpha}{2}}
			\lesssim
			\lambda_*^{\Theta}(t) ( \lambda_*  R)^{-\alpha }(t),
		\end{aligned}
	\end{equation*}
	where for the last ``$\lesssim $'', we used
	\begin{equation}\label{Con-4-5}
		\Theta +2\sigma_0-\beta <0, \
		\sigma_0>0,
		\
		\Theta -\alpha(1-\beta) < 0, \
		\Theta +\sigma_0 - \frac{1}{2} -\alpha(\frac{1}{2} -\beta) <0.
	\end{equation}

	Combining \eqref{qd24July18-1}, \eqref{qd24July18-2}, \eqref{qd24July18-3},
	for $0 <\alpha <1$ and $0<t<t_* \le (T+t)/2$, we have
	\begin{equation*}
		\begin{aligned}
			&
			\left| \mathcal{T}_{2}^{\rm{out} }[f](x,t) - \mathcal{T}_{2}^{\rm{out} }[f](x,t_*) \right| (t_* -t)^{-\alpha/2}
			\\
			\lesssim \ &
			\big( \lambda_*^{\Theta} ( \lambda_* R )^{1-\alpha} \big)(0) +
			T^{-\sigma_0} \lambda_*^{1-\sigma_0}(0) (\lambda_* R)^{-\alpha}(0) +
			T^{-\sigma_0} T^{1- \frac{\alpha}{2}}
			\lesssim T^{A_{\rm{o,h}}},
		\end{aligned}
	\end{equation*}
	provided
	\begin{equation}\label{qd24July18-4}
		0<A_{\rm{o,h}} <\min\{ \Theta+ (1-\beta)(1-\alpha), 1-2\sigma_0-\alpha(1-\beta), 1-\sigma_0-\frac{\alpha}{2} \}.
	\end{equation}

	Collecting \eqref{Con-1-1}, \eqref{Con-1-2}, \eqref{Con-1-3}, \eqref{Con-1-4}, \eqref{Con-1-5}, \eqref{Con-2-1}, \eqref{Con-2-2}, \eqref{Con-2-3}, \eqref{Con-2-4}, \eqref{Con-4-1}, \eqref{Con-4-2}, \eqref{Con-4-3},
	\eqref{Con-4-4}, \eqref{Con-4-5},  and  \eqref{qd24July18-4}, we conclude  the restrictions  \eqref{para-rho-con} on the parameters.
\end{proof}

\section{Estimates of $\mathcal{G}$ in $(\ref{G-def})$}\label{G-est-sec}

\subsection{Estimates for terms involving $U_*$, $\Phi_{\rm {out}}$, $\Phi_{\rm {in}}^{\J}$, $\Phi^{*{\J}}_0$}

First, we prepare some useful formulas.  By \eqref{nablaW},
\begin{equation}\label{nabU*-est}
	\left| \nabla_x U^{\K} \right| \lesssim \lambda_k^{-1} \langle \rho_k \rangle^{-2},
	\quad 	\left|\nabla_x U_* \right|
	\lesssim
	\sum\limits_{j=1}^N \1_{\{ |x-q^{\J}| < 3d_q\}}  \lambda_*^{-1}\langle \rho_j \rangle^{-2}
	+   \1_{\{ \cap_{j=1}^N \{|x-q^{\J}| \ge 3 d_q  \} \}}  \lambda_*.
\end{equation}
\begin{equation}\label{Del-U*-est}
	\left| \Delta_x U_*  \right|
	\lesssim \sum\limits_{j=1}^N \lambda_*^{-2} \langle \rho_j \rangle^{-4}
	\lesssim
	\sum\limits_{j=1}^N \1_{\{ |x-q^{\J}| < 3d_q\}}  \lambda_*^{-2}\langle \rho_j \rangle^{-4}
	+   \1_{\{ \cap_{j=1}^N \{|x-q^{\J}| \ge 3 d_q  \} \}}  \lambda_*^2.
\end{equation}
\begin{align}
		| U_* \cdot \nabla_x U_* |
		= \ &
		\Big|  \sum\limits_{m=1}^N \sum\limits_{k\ne m} \left(U^{\K} -U_{\infty} \right) \cdot  \nabla_x U^{\M}\Big|
		\lesssim
		\sum\limits_{m=1}^N \sum\limits_{k\ne m}
		\langle \rho_k\rangle^{-1} \lambda_m^{-1} \langle \rho_m \rangle^{-2}
		\nonumber
		\\
		\lesssim \ &
		\sum\limits_{j=1}^N
		\1_{\{|x-q^{\J}| < 3d_q \}}
		\langle \rho_j\rangle^{-2}
		+
		\1_{\{\cap_{j=1}^N |x-q^{\J}| \ge 3d_q \}} \lambda_*^2,
		\label{U*cdot-nabU*-split}
\end{align}
where for the last step, we used that for any fixed $j=1,2,\dots, N$,
\begin{align*}
		&
		\1_{\{|x-q^{\J}|< 3d_q\}} \Big[
		\lambda_j^{-1} \langle \rho_j\rangle^{-2}
		\sum\limits_{k\ne j}
		\langle \rho_k\rangle^{-1}
		+
		\sum\limits_{m\ne j} \sum\limits_{k\ne m}
		\langle \rho_k\rangle^{-1} \lambda_m^{-1} \langle \rho_m \rangle^{-2}
		\Big]
		\\
		\lesssim \ &
		\1_{\{|x-q^{\J}| < 3d_q\}}
		\Big(
		\langle \rho_j\rangle^{-2}
		+ \lambda_*
		\sum\limits_{m\ne j} \sum\limits_{k\ne m}
		\langle \rho_k\rangle^{-1}
		\Big)
		\sim
		\1_{\{|x-q^{\J}| < 3d_q \}}
		\langle \rho_j\rangle^{-2}
		;
		\\
		&
		\1_{ \{  \cap_{j=1}^N \{ |x-q^{\J}| \ge 3d_q \} \} } \sum\limits_{m=1}^N \sum\limits_{k\ne m}
		\langle \rho_k\rangle^{-1} \lambda_m^{-1} \langle \rho_m \rangle^{-2}
		\lesssim  \lambda_*^2 .
\end{align*}

By the same argument for \eqref{U*cdot-nabU*-split}, then
\begin{align}\label{DeltaU*-type}
\bigg|  \sum\limits_{j=1}^N |\nabla_x U^{\J}|^2
		\left(U^{\J} -U_* \right)  \bigg|
		\lesssim
		\sum\limits_{j=1}^{N}
		\1_{\{ |x-q^{\J}| <3d_q \}}
		\left(
		\lambda_*^{-1} \langle \rho_j\rangle^{-4}
		+
		\lambda_*^{2}  \langle \rho_j \rangle^{-1}
		\right)
		+
		\1_{\{ \cap_{j=1}^N \{|x-q^{\J}| \ge 3d_q \} \}}
		\lambda_*^3.
\end{align}

Notice
\begin{equation*}
	\begin{aligned}
		& |\nabla_x U_*|^2 + U_* \cdot \Delta_x U_*
		=
		\bigg| \sum\limits_{j=1}^N  \nabla_x U^{\J} \bigg|^2 -
		\sum\limits_{j=1}^N |\nabla_x U^{\J}|^2
		-
		\sum\limits_{j=1}^N |\nabla_x U^{\J}|^2
		\left( U_* - U^{\J} \right) \cdot  U^{\J}
		\\
		= \ &
		\sum\limits_{j=1}^N  \sum\limits_{k\ne j}  \nabla_x U^{\J} \cdot \nabla_x U^{\K}  -
		\sum\limits_{j=1}^N |\nabla_x U^{\J}|^2
		\left( U_* - U^{\J} \right) \cdot  U^{\J}
		.
	\end{aligned}
\end{equation*}
Then
\begin{equation}\label{Delta|U*|^2}
	\begin{aligned}
		\left| |\nabla_x U_*|^2 + U_* \cdot \Delta_x U_* \right|
		\lesssim \ &
		\sum\limits_{m=1}^N  \sum\limits_{k\ne m}  \lambda_m^{-1}\lambda_k^{-1} \langle \rho_m\rangle^{-2}   \langle \rho_k\rangle^{-2} +
		\sum\limits_{m=1}^N \sum\limits_{k\ne m}  \lambda_m^{-2} \langle \rho_m\rangle^{-4}
		\langle \rho_k\rangle^{-1}
		\\
		\lesssim \ &
		\sum\limits_{j=1}^N
		\1_{\{ |x-q^{\J}| <3d_q \}}
		\left(  \langle \rho_j\rangle^{-2}
		+   \lambda_*^{-1} \langle \rho_j\rangle^{-4}
		\right)
		+
		\1_{\{\cap_{j=1}^N \{ |x-q^{\J}| \ge 3d_q \} \}}
		\lambda_*^2,
	\end{aligned}
\end{equation}
where for the last step, we used that for any fixed $j=1,2,\dots, N$,
\begin{align*}
		&
		\1_{\{ |x-q^{\J}| <3d_q \}}
		\Big[ \lambda_j^{-1} \langle \rho_j\rangle^{-2}  \sum\limits_{k\ne j}  \lambda_k^{-1}   \langle \rho_k\rangle^{-2}
		+  \lambda_j^{-2} \langle \rho_j\rangle^{-4} \sum\limits_{k\ne j}
		\langle \rho_k\rangle^{-1}
		\\
		&
		\quad
		+
		\sum\limits_{m\ne j}  \sum\limits_{k\ne m}
		\left( \lambda_m^{-1}\lambda_k^{-1} \langle \rho_m\rangle^{-2}   \langle \rho_k\rangle^{-2}
		+
		\lambda_m^{-2} \langle \rho_m \rangle^{-4}
		\langle \rho_k\rangle^{-1}
		\right)   \Big]
		\\
		\lesssim \ &
		\1_{\{ |x-q^{\J}| <3d_q \}}
		\Big[  \langle \rho_j\rangle^{-2}
		+   \lambda_*^{-1} \langle \rho_j\rangle^{-4}
		+
		\sum\limits_{m\ne j}  \sum\limits_{k\ne m}
		\left( \langle \rho_k\rangle^{-2}
		+
		\lambda_*^{2}
		\langle \rho_k\rangle^{-1}
		\right)  \Big]
		\sim
		\1_{\{ |x-q^{\J}| <3d_q \}}
		\left(  \langle \rho_j\rangle^{-2}
		+   \lambda_*^{-1} \langle \rho_j\rangle^{-4}
		\right) ;
	\end{align*}
\begin{equation*}
	\1_{\{\cap_{j=1}^N \{ |x-q^{\J}| \ge 3d_q \} \}} \Big( \sum\limits_{m=1}^N  \sum\limits_{k\ne m}  \lambda_m^{-1}\lambda_k^{-1} \langle \rho_m\rangle^{-2}   \langle \rho_k\rangle^{-2} +
	\sum\limits_{m=1}^N \sum\limits_{k\ne m}  \lambda_m^{-2} \langle \rho_m\rangle^{-4}
	\langle \rho_k\rangle^{-1}  \Big)
	\lesssim
	\1_{\{\cap_{j=1}^N \{ |x-q^{\J}| \ge 3d_q \} \}}
	\lambda_*^2.
\end{equation*}
\begin{align}
		&
		\left|U_*\wedge \Delta_x U_*  \right|
		=
		\bigg|U_*\wedge \sum\limits_{j=1}^N \left| \nabla_x U^{\J}\right|^2 U^{\J} \bigg|
		\lesssim
		\sum\limits_{j=1}^N \lambda_j^{-2} \langle \rho_j \rangle^{-4}  \sum\limits_{k\ne j} \langle \rho_j \rangle^{-1}
		\nonumber
		\\
		\lesssim \ &
		\sum\limits_{j=1}^{N}
		\1_{\{ |x-q^{\J}| <3d_q \}}
		\left(
		\lambda_*^{-1} \langle \rho_j\rangle^{-4}
		+
		\lambda_*^{2}  \langle \rho_j \rangle^{-1}
		\right)
		+
		\1_{\{ \cap_{j=1}^N \{|x-q^{\J}| \ge 3d_q \} \}}
		\lambda_*^3,
		\label{U*wed-DeltaU*}
\end{align}
where we used \eqref{DeltaU*-type} for the last step.

\medskip

Next, we derive some estimates about $\Phi_{\rm {out}}$, $\Phi_{\rm {in}}$ that will be used frequently in the estimate of $\mathcal{G}$.

For $\Phi_{\rm {out} } \in B_{\rm{out}}$ defined in \eqref{out-space}, since $\Phi_{\rm {out}}(q^{\J},T)=0$ for all $j=1,2,\dots,N$, then
\begin{equation*}
	\begin{aligned}
		&
		\left| \Phi_{\rm {out} }(x,t) \right|
		=
		\left| \Phi_{\rm {out}}(x,t) - \Phi_{\rm {out}}(x,T)  + \Phi_{\rm {out}}(x,T)
		- \Phi_{\rm {out}}(q^{\J},T) \right|
		\\
		\lesssim \ &
		\| \Phi_{\rm {out} }\|_{\sharp, \Theta,\alpha}
		\left[
		|\ln(T-t)|
		\lambda^{\Theta+1}_*  R
		+
		(T-t) \| Z_* \|_{C^3(\R^2)}
		+
		|x-q^{\J}| \left(\lambda^{\Theta}_*(0)
		+
		\| Z_* \|_{C^3(\R^2)}  \right)
		\right].
	\end{aligned}
\end{equation*}
Thus
\begin{equation}\label{out-upp}
	\begin{aligned}
		&
		\left| \Phi_{\rm {out} } \right|
		\lesssim
		\| \Phi_{\rm {out} }\|_{\sharp, \Theta,\alpha}
		\min\Big\{  |\ln T| \lambda^{\Theta+1}_*(0) R(0)
		+
		\| Z_* \|_{C^3(\R^2)}
		,
		\\
		&  |\ln(T-t)|
		\lambda^{\Theta+1}_*  R
		+
		(T-t) \| Z_* \|_{C^3(\R^2)}
		+
		\inf_{j=1,\dots,N}
		|x-q^{\J}| \left(\lambda^{\Theta}_*(0)
		+
		\| Z_* \|_{C^3(\R^2)}  \right)
		\Big\}.
	\end{aligned}
\end{equation}
Combining \eqref{lam-ansatz} and the parameter restriction $\Theta<\beta$, we have
\begin{equation}\label{out-upp-split}
	\left| \Phi_{\rm {out} } \right|
	\lesssim
	\sum\limits_{j=1}^N \1_{\{ |x-q^{\J}| < 3d_q \}}  \| \Phi_{\rm {out} }\|_{\sharp, \Theta,\alpha}
	\left(
	|\ln(T-t)|
	\lambda^{\Theta+1}_*  R
	+
	\lambda_j \rho_j
	\right)
	+
	\1_{\{ \cap_{j=1}^N \{|x-q^{\J}| \ge  3d_q \} \}}
	\| \Phi_{\rm {out} }\|_{\sharp, \Theta,\alpha}.
\end{equation}

By \eqref{out-topo}, we have
\begin{equation}\label{out-nabla-upp}
	|\nabla_x \Phi_{\rm {out} } | \le
	\| \Phi_{\rm {out} }\|_{\sharp, \Theta,\alpha} \left(\lambda^{\Theta}_*(0)
	+
	\| Z_* \|_{C^3(\R^2)}  \right).
\end{equation}

For $|x-\xi^{\J}(t)|\le 2\lambda_* R$, by \eqref{lam-ansatz}, then for $T \ll 1$, we have $|x-q^{\J}|\le 3\lambda_* R$. Recall $\Phi$ given in \eqref{u-def}, then
\begin{equation*}
	\Phi - \Phi_{\rm{out}}
	=
	\sum_{j=1}^{N}
	\left( \eta_R^{\J}Q_{\gamma_j}\Phi_{\rm in}^{\J}(y^{\J},t)
	+
	\eta_{d_q}^{\J} \Phi^{*{\J}}_0(r_j,t)\right) .
\end{equation*}
By \eqref{inn-topo} and \eqref{Phi*-0-j-upp}, we have
\begin{align}
		&
		\left| \Phi - \Phi_{\rm{out}}  \right|
		\lesssim
		\sum_{j=1}^{N}
		\left[ \eta_R^{\J} \| \Phi_{\rm{in} }^{\J} \|_{{\rm in},\nu-\delta_0,l} \lambda_*^{\nu-\delta_0} \langle \rho_j\rangle^{-l}
		+
		\eta_{d_q}^{\J}
		\left(
		z_j \1_{\{ z_j^2< t+T \}}  +  T |\ln T|^{-1} z_j^{-1}  \1_{\{ z_j^2\ge t+T \}}
		\right)
		\right]
		\nonumber
		\\
		\lesssim \ &
		\sum_{j=1}^{N}
		\left[
		\1_{\{ |x-q^{\J}|\le 3\lambda_* R \}}
		\left(
		\| \Phi_{\rm{in} }^{\J} \|_{{\rm in},\nu-\delta_0,l} \lambda_*^{\nu-\delta_0} \langle \rho_j\rangle^{-l}
		+
		\lambda_* \langle \rho_j\rangle
		\right)
		+
		\1_{\{  3\lambda_* R < |x-q^{\J}| <3d_q \}}
		\lambda_* \langle \rho_j\rangle
		\right].
		\label{P-P_o-est}
\end{align}

By \eqref{inn-topo}, we get
\begin{align}
		&
		\big|\nabla_x
		\big( \eta_R^{\J}Q_{\gamma_j}\Phi_{\rm in}^{\J}(y^{\J},t)
		\big) \big|
		=
		\big| \eta_R^{\J} \nabla_x  \big(Q_{\gamma_j}\Phi_{\rm in}^{\J}(y^{\J},t) \big)
		+
		Q_{\gamma_j}\Phi_{\rm in}^{\J}(y^{\J},t)  \nabla_x  \eta_R^{\J}
		\big|
		\nonumber
		\\
		\lesssim \ &
		\eta_R^{\J} \lambda_j^{-1}
		\| \Phi_{\rm{in} }^{\J} \|_{{\rm in},\nu-\delta_0,l} \lambda_*^{\nu-\delta_0} \langle y^{\J}\rangle^{-l-1}
		+
		(\lambda_* R)^{-1} \1_{\{ \lambda_* R \le |x-\xi^{\J}| \le 2  \lambda_* R \}}
		\| \Phi_{\rm{in} }^{\J} \|_{{\rm in},\nu-\delta_0,l} \lambda_*^{\nu-\delta_0} \langle y^{\J}\rangle^{-l}
		\nonumber
		\\
		\lesssim \ &
		\1_{\{ |x-q^{\J}| \le 3\lambda_* R \}}
		\| \Phi_{\rm{in} }^{\J} \|_{{\rm in},\nu-\delta_0,l} \lambda_*^{\nu-\delta_0-1} \langle \rho_j\rangle^{-l-1}.
		\label{nabla-eta*Phiin-upp}
\end{align}

By \eqref{Phi*-0-j-upp}, we have
\begin{equation}\label{nabla-eta*Phi*-upp}
	\big|
	\nabla_x
	\big(
	\eta_{d_q}^{\J} \Phi^{*{\J}}_0(r_j,t)\big) \big|
	=
	\big|
	\eta_{d_q}^{\J} \nabla_x  \Phi^{*{\J}}_0(r_j,t)
	+
	\Phi^{*{\J}}_0(r_j,t) \nabla_x \eta_{d_q}^{\J}  \big|
	\lesssim
	\1_{\{  |x-q^{\J}| < 3d_q\}}.
\end{equation}

Combining \eqref{nabla-eta*Phiin-upp} and \eqref{nabla-eta*Phi*-upp}, we have
\begin{equation}\label{nabla-P-P_o-est}
| \nabla_x ( \Phi - \Phi_{\rm{out}} ) |
	\lesssim
	\sum_{j=1}^{N}
	\Big[
	\1_{\{ |x-q^{\J}| \le 3\lambda_* R \}}
	\Big(
	\| \Phi_{\rm{in} }^{\J} \|_{{\rm in},\nu-\delta_0,l} \lambda_*^{\nu-\delta_0-1} \langle \rho_j\rangle^{-l-1}
	+1
	\Big)
	+ \1_{\{ 3\lambda_* R  < |x-q^{\J}| < 3d_q\}}
	\Big].
\end{equation}

By \eqref{Phi*-0-j-upp} and \eqref{inn-topo}, it holds that
\begin{align}
		&
		|\Delta_x ( \Phi - \Phi_{\rm{out}} )|
		=
		\Big| \sum_{j=1}^{N}
		\Big[
		\eta_R^{\J}
		\Delta_x
		\Big(
		Q_{\gamma_j}\Phi_{\rm in}^{\J}(y^{\J},t)
		\Big)
		+
		2 \nabla_x
		\eta_R^{\J}
		\nabla_x  \Big( Q_{\gamma_j}\Phi_{\rm in}^{\J}(y^{\J},t) \Big)
		+
		Q_{\gamma_j}\Phi_{\rm in}^{\J}(y^{\J},t)
		\Delta_x
		\eta_R^{\J}
		\nonumber
		\\
		&
		+
		\eta_{d_q}^{\J}  \Delta_x  \Phi^{*{\J}}_0(r_j,t)
		+
		2\nabla_x\eta_{d_q}^{\J} \nabla_x \Phi^{*{\J}}_0(r_j,t)
		+
		\Phi^{*{\J}}_0(r_j,t)
		\Delta_x  \eta_{d_q}^{\J}
		\Big] \Big|
		\nonumber
		\\
		\lesssim \ &
		\sum_{j=1}^{N}
		\Big[
		\1_{\{ |x-q^{\J}| \le 3 \lambda_* R \}}
		\Big(
		\| \Phi_{\rm{in} }^{\J} \|_{{\rm in},\nu-\delta_0,l} \lambda_*^{\nu-\delta_0-2} \langle \rho_j\rangle^{-l-2}
		+
		\lambda_*^{-1} \langle \rho_j\rangle^{-1}
		\Big)
		+
		\1_{\{ 3 \lambda_* R  < |x-q^{\J}| < 3d_q \}}
		\lambda_*^{-1} \langle \rho_j\rangle^{-1}
		\Big].
		\label{Delta-P-P_o-est}
\end{align}

Combining \eqref{out-upp-split} and \eqref{P-P_o-est}, we have
\begin{equation}\label{Phi-upp}
	\begin{aligned}
		&	|\Phi|
		\lesssim
		\sum_{j=1}^{N}
		\Big[
		\1_{\{ |x-q^{\J}|\le 3\lambda_* R \}}
		\left(
		1+ \|\Phi_{\rm {out} }\|_{\sharp, \Theta,\alpha}
		+
		\| \Phi_{\rm{in} }^{\J} \|_{{\rm in},\nu-\delta_0,l}
		\right)
		\left(
		\lambda_*^{\nu-\delta_0} \langle \rho_j\rangle^{-l}
		+
		\lambda_* \langle \rho_j\rangle
		+
		|\ln(T-t)|
		\lambda^{\Theta+1}_*  R
		\right)
		\\
		&~
		+
		\1_{\{  3\lambda_* R < |x-q^{\J}| <3d_q \}}
		\left(1+
		\| \Phi_{\rm {out} }\|_{\sharp, \Theta,\alpha} \right)
		\left(
		\lambda_* \langle  \rho_j  \rangle
		+
		|\ln(T-t)|
		\lambda^{\Theta+1}_*  R
		\right)
		\Big]
		+
		\1_{\{ \cap_{j=1}^N \{|x-q^{\J}| \ge  3d_q \} \}}
		\| \Phi_{\rm {out} }\|_{\sharp, \Theta,\alpha}.
	\end{aligned}
\end{equation}

Integrating \eqref{out-nabla-upp}, \eqref{nabla-P-P_o-est}, we have
\begin{align}
		|\nabla_x \Phi |
		\lesssim \ &
		\sum_{j=1}^{N}
		\Big[
		\1_{\{ |x-q^{\J}| \le 3\lambda_* R \}}
		\left(
		1
		+
		\| \Phi_{\rm {out} }\|_{\sharp, \Theta,\alpha}
		+
		\| \Phi_{\rm{in} }^{\J} \|_{{\rm in},\nu-\delta_0,l}
		\right)
		\left(
		\lambda_*^{\nu-\delta_0-1} \langle \rho_j\rangle^{-l-1}
		+1
		\right)
		\nonumber
		\\
		&
		+ \1_{\{ 3\lambda_* R  < |x-q^{\J}| < 3d_q\}}
		\left(1+ \| \Phi_{\rm {out} }\|_{\sharp, \Theta,\alpha}   \right)
		\Big]
		+
		\1_{\{ \cap_{j=1}^N \{|x-q^{\J}| \ge 3 d_q  \} \}}
		\| \Phi_{\rm {out} }\|_{\sharp, \Theta,\alpha}.
		\label{nabla-Phi-upp}
\end{align}

By \eqref{Phi-upp}, \eqref{nabla-Phi-upp}, then
\begin{equation}\label{Phi-nablaPhi}
	\begin{aligned}
		&	|\Phi|  |\nabla_x \Phi |
		\lesssim
		\sum_{j=1}^{N}
		\Big[
		\1_{\{ |x-q^{\J}|\le 3\lambda_* R \}}
		\left(
		1+ \|\Phi_{\rm {out} }\|_{\sharp, \Theta,\alpha}
		+
		\| \Phi_{\rm{in} }^{\J} \|_{{\rm in},\nu-\delta_0,l}
		\right)^2
		\\
		&~\times
		\left(
		\lambda_*^{2\nu-2\delta_0-1}\langle \rho_j \rangle^{-2l -1}
		+
		\lambda_*^{\nu-\delta_0} \langle \rho_j \rangle^{-l }
		+
		|\ln(T-t)| \lambda_*^{\nu-\delta_0+\Theta} R
		\langle \rho_j \rangle^{-l-1}
		+
		\lambda_* \langle \rho_j\rangle
		+
		|\ln(T-t)|
		\lambda^{\Theta+1}_*  R
		\right)
		\\
		&~
		+
		\1_{\{  3\lambda_* R < |x-q^{\J}| <3d_q \}}
		\left(1+
		\| \Phi_{\rm {out} }\|_{\sharp, \Theta,\alpha} \right)^2
		\left(
		\lambda_* \langle  \rho_j  \rangle
		+
		|\ln(T-t)|
		\lambda^{\Theta+1}_*  R
		\right)
		\Big]
		+
		\1_{\{ \cap_{j=1}^N \{|x-q^{\J}| \ge  3d_q \} \}}
		\| \Phi_{\rm {out} }\|_{\sharp, \Theta,\alpha}^2.
	\end{aligned}
\end{equation}

Recalling $\Phi_{\rm in}^{\J} \cdot W^{\J}=0$ in \eqref{u-def} yields
\begin{equation*}
	\bigg( 	\sum_{j=1}^{N} \eta_R^{\J}  Q_{\gamma_j}\Phi_{\rm in}^{\J}
	\bigg)  \cdot U_* =
	\sum_{j=1}^{N}  \eta_R^{\J}  Q_{\gamma_j}\Phi_{\rm in}^{\J}
	\cdot \left( U_* -U^{\J} \right),
\end{equation*}
which implies
\begin{equation}\label{Phiin-cdot-U*-up}
	\bigg|	\bigg( 	\sum_{j=1}^{N} \eta_R^{\J}  Q_{\gamma_j}\Phi_{\rm in}^{\J}
	\bigg)  \cdot U_*  \bigg| \lesssim
	\sum_{j=1}^{N}  \eta_R^{\J}  \big| \Phi_{\rm in}^{\J}  \big|
	\lambda_*
	\lesssim
	\sum_{j=1}^{N}
	\1_{\{ |x-q^{\J}|\le 3\lambda_* R \}}
	\| \Phi_{\rm{in} }^{\J} \|_{{\rm in},\nu-\delta_0,l} \lambda_*^{\nu-\delta_0 +1} \langle \rho_j\rangle^{-l}.
\end{equation}

By \eqref{Phiin-cdot-U*-up}, \eqref{Phi*-0-j-upp} and \eqref{out-upp-split}, we obtain
\begin{equation}\label{Phi-cdot-U*-upp}
	\begin{aligned}
		&
		|\Phi\cdot U_* |
		\lesssim
		\sum_{j=1}^{N}
		\Big[
		\1_{\{ |x-q^{\J}|\le 3\lambda_* R \}}
		\Big(1+ \| \Phi_{\rm {out} }\|_{\sharp, \Theta,\alpha} + \| \Phi_{\rm{in} }^{\J} \|_{{\rm in},\nu-\delta_0,l}  \Big)
		\Big(
		\lambda_*^{\nu-\delta_0+1} \langle \rho_j\rangle^{-l}
		+
		\lambda_* \langle \rho_j\rangle
		+
		|\ln(T-t)|
		\lambda^{\Theta+1}_* R \Big)
		\\
		& ~
		+
		\1_{\{  3\lambda_* R < |x-q^{\J}| <3d_q \}}
		( 1 + \| \Phi_{\rm {out} }\|_{\sharp, \Theta,\alpha}  )
		\big(
		|\ln(T-t)|
		\lambda^{\Theta+1}_*  R
		+
		\lambda_* \langle \rho_j \rangle
		\big)
		\Big]
		+
		\1_{\{ \cap_{j=1}^N \{|x-q^{\J}| \ge 3 d_q  \} \}}
		\| \Phi_{\rm {out} }\|_{\sharp, \Theta,\alpha}.
	\end{aligned}
\end{equation}

Using \eqref{inn-topo}, \eqref{nabla-eta*Phiin-upp}, \eqref{nabU*-est}, we have
\begin{align} \notag
		&
		\Big|
		\nabla_x\Big[ \eta_R^{\J}  Q_{\gamma_j}\Phi_{\rm in}^{\J}   \cdot ( U_* -U^{\J} )  \Big] \Big|
		=
		\Big|
		( U_* -U^{\J} )  \cdot \nabla_x\Big(\eta_R^{\J}  Q_{\gamma_j}\Phi_{\rm in}^{\J} \Big)
		+
		\Big( \sum\limits_{k\ne j } \nabla_x U^{\K} \Big)  \cdot
		\eta_R^{\J}  Q_{\gamma_j}\Phi_{\rm in}^{\J}
		\Big|
		\\ \notag
		\lesssim \ &
		\1_{\{ |x-q^{\J}| \le 3\lambda_* R \}}
		\big(
		\lambda_*
		\| \Phi_{\rm{in} }^{\J} \|_{{\rm in},\nu-\delta_0,l} \lambda_*^{\nu-\delta_0-1} \langle \rho_j\rangle^{-l-1}
		+
		\lambda_* \| \Phi_{\rm{in} }^{\J} \|_{{\rm in},\nu-\delta_0,l} \lambda_*^{\nu-\delta_0} \langle \rho_j\rangle^{-l}  \big)
		\\ \label{nab-eatPhiin(U*-U)-up}
		\lesssim \ &
		\1_{\{ |x-q^{\J}| \le 3\lambda_* R \}}
		\| \Phi_{\rm{in} }^{\J} \|_{{\rm in},\nu-\delta_0,l} \lambda_*^{\nu-\delta_0} \langle \rho_j\rangle^{-l-1} .
	\end{align}

\eqref{Phi*-0-j-upp},  \eqref{nabla-eta*Phi*-upp}, and \eqref{nabU*-est} imply
\begin{align}
		&
		\big|
		\nabla_x\big( \eta_{d_q}^{\J} \Phi^{*{\J}}_0 \cdot U_*  \big) \big|
		=
		\big|
		U_* \cdot 	\nabla_x\big( \eta_{d_q}^{\J} \Phi^{*{\J}}_0 \big)
		+
		\eta_{d_q}^{\J} \Phi^{*{\J}}_0  \cdot \nabla_x U_*
		\big|
		\nonumber
		\\
		\lesssim \ &
		\1_{\{ |x-q^{\J}| < 3d_q\}}
		\Big( 1
		+
		\lambda_j \langle \rho_j \rangle
		\sum\limits_{k=1}^N \lambda_k^{-1} \langle \rho_k \rangle^{-2}
		\Big)
		\lesssim
		\1_{\{ |x-q^{\J}| < 3d_q\}}.
		\label{nab-eatPhi0*U*-up}
\end{align}

By \eqref{out-upp-split}, \eqref{out-nabla-upp} and \eqref{nabU*-est}, we have
\begin{equation}\label{nab-out*U*-up}
	\left|
	\nabla_x\left( \Phi_{\rm out} \cdot U_*  \right) \right|
	\lesssim
	\sum\limits_{j=1}^N \1_{\{ |x-q^{\J}| < 3d_q \}}
	\| \Phi_{\rm {out} }\|_{\sharp, \Theta,\alpha}
	\left(
	|\ln(T-t)|
	\lambda^{\Theta}_*  R \langle \rho_j \rangle^{-2}
	+
	1\right)
	+
	\1_{\{ \cap_{j=1}^N \{|x-q^{\J}| \ge 3d_q  \} \}}
	\| \Phi_{\rm {out} }\|_{\sharp, \Theta,\alpha}.
\end{equation}

Combining \eqref{nab-eatPhiin(U*-U)-up}, \eqref{nab-eatPhi0*U*-up} and \eqref{nab-out*U*-up}, we have
\begin{align}
		| \nabla_x (\Phi\cdot U_* ) |
		\lesssim \ &
		\sum\limits_{j=1}^N
		\Big[
		\1_{\{ |x-q^{\J}| \le 3\lambda_* R \}}
		\left(1+
		\| \Phi_{\rm {out} }\|_{\sharp, \Theta,\alpha} + \| \Phi_{\rm{in} }^{\J} \|_{{\rm in},\nu-\delta_0,l}  \right)
		\left(
		|\ln(T-t)|
		\lambda^{\Theta}_*  R \langle \rho_j \rangle^{-2}
		+
		1\right)
		\nonumber
		\\
		& +
		\1_{\{ 3\lambda_*R < |x-q^{\J}| < 3d_q \}}
		\left(1+
		\| \Phi_{\rm {out} }\|_{\sharp, \Theta,\alpha}  \right)
		\Big]
		+
		\1_{\{ \cap_{j=1}^N \{|x-q^{\J}| \ge 3d_q  \} \}}
		\| \Phi_{\rm {out} }\|_{\sharp, \Theta,\alpha}.
		\label{nab-Phi-cdot-U*}
\end{align}

\subsection{Estimates of $\nabla_x A$}

Claim:
Suppose that $l>0,
0<\delta_0<\nu <1 $ given in  \eqref{inn-top0-para},
\begin{equation}\label{nab-A-para}
	\Theta>0, \quad
	\Theta+\beta+ 2\delta_0 -2\nu<0, \quad
	\Theta+\beta-1<0,\quad \beta< 1/2,
	\quad
	\Theta+\beta+4\delta_0-4\nu+1<0,\quad
	3\beta< \Theta + 1,
\end{equation}
then for $\nabla_x A$ given in \eqref{nab-A-def} and  $\epsilon>0$ sufficiently small, we have
\begin{align}
\notag
		&	\nabla_x A =  - U_* \cdot \nabla_x U_* - \Phi\cdot \nabla_x \Phi
		+ O\bigg( \sum_{j=1}^{N}
		\Big[ 	\1_{\{ |x-q^{\J}|\le 3\lambda_* R \}}   \left(1+ \| \Phi_{\rm {out} }\|_{\sharp, \Theta,\alpha}   + \| \Phi_{\rm{in} }^{\J} \|_{{\rm in},\nu-\delta_0,l} \right)^4
		\\ \notag
		&~
		\times
		\left(
		\lambda_*^{\epsilon +1} \lambda_*^{\Theta}
		(\lambda_* R)^{-1}
		+
		\lambda_* \langle \rho_j\rangle \right)
		+
		\1_{\{  3\lambda_* R < |x-q^{\J}| <3d_q \}}
		\left(
		1
		+
		\| \Phi_{\rm {out} }\|_{\sharp, \Theta,\alpha}
		\right)^4 \left(
		|\ln(T-t)|
		\lambda^{\Theta+1}_*  R
		+
		\lambda_* \langle \rho_j \rangle
		\right)   \Big]
		\\ \label{nab-A-est}
		& ~
		+
		\1_{\{ \cap_{j=1}^N \{|x-q^{\J}| \ge  3d_q \} \}}
		\left(
		1
		+
		\| \Phi_{\rm {out} }\|_{\sharp, \Theta,\alpha}
		\right)^4 \bigg).
	\end{align}
Using \eqref{U*cdot-nabU*-split}, \eqref{Phi-nablaPhi}, then
\begin{align}
		&	\nabla_x A =   O\bigg( \sum_{j=1}^{N}
		\Big[ 	\1_{\{ |x-q^{\J}|\le 3\lambda_* R \}}   \left(1+ \| \Phi_{\rm {out} }\|_{\sharp, \Theta,\alpha}   + \| \Phi_{\rm{in} }^{\J} \|_{{\rm in},\nu-\delta_0,l} \right)^4
		\nonumber
		\\
		&~
		\times
		\left( \langle \rho_j\rangle^{-2} +
		\lambda_*^{2\nu-2\delta_0-1}\langle \rho_j \rangle^{-2l -1}
		+
		\lambda_*^{\nu-\delta_0} \langle \rho_j \rangle^{-l }
		+
		|\ln(T-t)| \lambda_*^{\nu-\delta_0+\Theta} R
		\langle \rho_j \rangle^{-l-1}
		+
		\lambda_*^{\epsilon +1} \lambda_*^{\Theta}
		(\lambda_* R)^{-1}
		+
		\lambda_* \langle \rho_j\rangle \right)
		\nonumber
		\\
		&~
		+
		\1_{\{  3\lambda_* R < |x-q^{\J}| <3d_q \}}
		\left(
		1
		+
		\| \Phi_{\rm {out} }\|_{\sharp, \Theta,\alpha}
		\right)^4 \left(
		\langle \rho_j\rangle^{-2}
		+
		|\ln(T-t)|
		\lambda^{\Theta+1}_*  R
		+
		\lambda_* \langle \rho_j \rangle
		\right)   \Big]
		\nonumber
		\\
		& ~
		+
		\1_{\{ \cap_{j=1}^N \{|x-q^{\J}| \ge  3d_q \} \}}
		\left(
		1
		+
		\| \Phi_{\rm {out} }\|_{\sharp, \Theta,\alpha}
		\right)^4 \bigg).
		\label{nab-A-rough}
\end{align}

\begin{proof}[Proof of Claim]
	
	First, let us simplify \eqref{nab-A-def}.
	\begin{align*}
			&
			|\Pi_{U_*^{\perp}}\Phi|^2
			=
			|\Phi|^2 +
			( | U_* |^2 -2 )
			( \Phi\cdot U_* )^2 ,\quad
			U_* \cdot  \Pi_{U_*^{\perp}}\Phi
			=
			( 1-|U_*|^2 ) ( \Phi\cdot U_* ) ,
			\\
			&
			\nabla_x (	|\Pi_{U_*^{\perp}}\Phi|^2 )
			=
			2\Phi\cdot \nabla_x \Phi +
			2 ( \Phi\cdot U_* )^2  U_*\cdot \nabla_x U_*
			+
			2 (  | U_*|^2 -2)
			( \Phi\cdot U_* )\nabla_x ( \Phi\cdot U_* ) ,
			\\
			&
			\nabla_x ( U_* \cdot  \Pi_{U_*^{\perp}}\Phi )
			=
			( 1-|U_*|^2 ) \nabla_x ( \Phi\cdot U_* )
			-2 ( \Phi\cdot U_* ) U_*\cdot \nabla_x U_*.
	\end{align*}
	
	By \eqref{A-est}, \eqref{U*-norm}, \eqref{lam-ansatz} and \eqref{RPhi-ansatz}, we have
	\begin{equation}\label{one in nabla A}
		(1+A)|U_*|^2+( U_* \cdot
		\Pi_{U_*^{\perp}}\Phi )
		=
		1+ O\big(\lambda_*+|\Phi|^2\big),
		\mbox{ \ then \ }
		\big[ (1+A)|U_*|^2+( U_* \cdot
		\Pi_{U_*^{\perp}}\Phi ) \big]^{-1}
		= 1+ O\big(\lambda_*+|\Phi|^2\big).
	\end{equation}
	Thus we obtain
	\begin{align*}
				&
				\nabla_x A =  -
				\left( 1+ O\left(\lambda_*+|\Phi|^2\right) \right)
				\Big\{ (1+A)^2  U_* \cdot \nabla_x U_*
				+ \Phi\cdot \nabla_x \Phi +
				\left( \Phi\cdot U_*\right)^2  U_*\cdot \nabla_x U_*
				\\
				&
				+
				\left( \left| U_* \right|^2 -2 \right)
				\left( \Phi\cdot U_*\right)\nabla_x\left( \Phi\cdot U_*\right)
				+(1+A) \left[ \left( 1-|U_*|^2\right) \nabla_x\left( \Phi\cdot U_*\right)
				-2 \left( \Phi\cdot U_*\right) U_*\cdot \nabla_x U_* \right] \Big\}
				\\
				= \ &
				-
				\left( 1+ O\left(\lambda_*+|\Phi|^2\right) \right)
				\Big\{ \left[ 1+ A(2+A)  -2(1+A) \left( \Phi\cdot U_*\right) + \left( \Phi\cdot U_*\right)^2   \right] U_* \cdot \nabla_x U_*
				+ \Phi\cdot \nabla_x \Phi
				\\
				&
				+
				\left( \left| U_* \right|^2 -2 \right)
				\left( \Phi\cdot U_*\right)\nabla_x\left( \Phi\cdot U_*\right)
				+(1+A) \left( 1-|U_*|^2\right) \nabla_x\left( \Phi\cdot U_*\right)  \Big\}
				\\
				= \ &
				- U_* \cdot \nabla_x U_*
				-
				\Phi\cdot \nabla_x \Phi
				+ \left(2  \Phi\cdot U_* +
				O\left(\lambda_*+|\Phi|^2\right)  \right)  U_* \cdot \nabla_x U_*
				-  O\left(\lambda_*+|\Phi|^2\right)     \Phi\cdot \nabla_x \Phi
				\\
				&
				-
				\left( 1+ O\left(\lambda_*+|\Phi|^2\right) \right)
				\Big[
				\left( \left| U_* \right|^2 -2 \right)
				\left( \Phi\cdot U_*\right)\nabla_x\left( \Phi\cdot U_*\right)
				+(1+A) \left( 1-|U_*|^2\right) \nabla_x\left( \Phi\cdot U_*\right)  \Big],
	\end{align*}
	where we used $ A =O( \lambda_* + |\Phi|^2 ) \ll 1$ by \eqref{A-est} and \eqref{RPhi-ansatz}.

	By \eqref{Phi-upp}, \eqref{Phi-cdot-U*-upp}, and parameter assumption \eqref{inn-top0-para}, we have
	\begin{align}
	\notag
			&  |\Phi\cdot U_*|+ \lambda_* + |\Phi|^2
			\\ \notag
			\lesssim \ &
			\sum_{j=1}^{N}
			\Big[
			\1_{\{ |x-q^{\J}|\le 3\lambda_* R \}}
			\left(
			1+ \|\Phi_{\rm {out} }\|_{\sharp, \Theta,\alpha}
			+
			\| \Phi_{\rm{in} }^{\J} \|_{{\rm in},\nu-\delta_0,l}
			\right)^2
			\left(
			\lambda_*^{2\nu-2\delta_0} \langle \rho_j\rangle^{-l}
			+
			\lambda_* \langle \rho_j\rangle
			+
			|\ln(T-t)|
			\lambda^{\Theta+1}_*  R
			\right)
			\\ \notag
			&
			+
			\1_{\{  3\lambda_* R < |x-q^{\J}| <3d_q \}}
			\left(1+
			\| \Phi_{\rm {out} }\|_{\sharp, \Theta,\alpha} \right)^2
			\left(
			\lambda_* \langle  \rho_j  \rangle
			+
			|\ln(T-t)|
			\lambda^{\Theta+1}_*  R
			\right)
			\Big]
			\\ \label{toget-est-1}
			&
			+
			\1_{\{ \cap_{j=1}^N \{|x-q^{\J}| \ge  3d_q \} \}}
			\left(1+
			\| \Phi_{\rm {out} }\|_{\sharp, \Theta,\alpha}
			\right)^2.
		\end{align}
	
	Notice $\lambda_*^{\epsilon +1} \lambda_*^{\Theta}
	(\lambda_* R)^{-1}  =  \lambda_*^{\epsilon +\Theta+\beta}  $. Then  using \eqref{U*cdot-nabU*-split} and \eqref{toget-est-1}, we get
	\begin{align}
	\notag
			&
			\left|
			\left(2  \Phi\cdot U_* +
			O\left(\lambda_*+|\Phi|^2\right)  \right)  U_* \cdot \nabla_x U_* \right|
			\lesssim
			\sum_{j=1}^{N}
			\bigg\{ 	\1_{\{ |x-q^{\J}|\le 3\lambda_* R \}} \left(1+ \| \Phi_{\rm {out} }\|_{\sharp, \Theta,\alpha}   + \| \Phi_{\rm{in} }^{\J} \|_{{\rm in},\nu-\delta_0,l} \right)^2
			\\ \notag
			& \times \Big(
			\lambda_*^{2\nu-2\delta_0} \langle \rho_j\rangle^{-l-2}
			+
			\lambda_* \langle \rho_j\rangle^{-1} +
			|\ln(T-t)|
			\lambda^{\Theta+1}_* R  \langle \rho_j\rangle^{-2}
			\Big)
			\\ \notag
			&
			+
			\1_{\{  3\lambda_* R < |x-q^{\J}| <3d_q \}}
			\left(
			1
			+
			\| \Phi_{\rm {out} }\|_{\sharp, \Theta,\alpha}
			\right)^2
			\left(
			\lambda_* \langle  \rho_j  \rangle^{-1}
			+
			|\ln(T-t)|
			\lambda^{\Theta+1}_*  R
			\langle \rho_j\rangle^{-2}
			\right)  \bigg\}
			\\ \notag
			&
			+
			\1_{\{ \cap_{j=1}^N \{|x-q^{\J}| \ge  3d_q \} \}}
			\left(
			1
			+
			\| \Phi_{\rm {out} }\|_{\sharp, \Theta,\alpha}
			\right)^2  \lambda_*^2
			\\ \notag
			\lesssim \ &
			\sum_{j=1}^{N}
			\Big[ 	\1_{\{ |x-q^{\J}|\le 3\lambda_* R \}}
			\left(1+ \| \Phi_{\rm {out} }\|_{\sharp, \Theta,\alpha}   + \| \Phi_{\rm{in} }^{\J} \|_{{\rm in},\nu-\delta_0,l} \right)^2
			\lambda_*^{\epsilon +1} \lambda_*^{\Theta}
			(\lambda_* R)^{-1}
			\\ \label{Z-nabA-1}
			&
			+
			\1_{\{  3\lambda_* R < |x-q^{\J}| <3d_q \}}
			\left(
			1
			+
			\| \Phi_{\rm {out} }\|_{\sharp, \Theta,\alpha}
			\right)^2
			\lambda_* \langle  \rho_j  \rangle^{-1}    \Big]
			+
			\1_{\{ \cap_{j=1}^N \{|x-q^{\J}| \ge  3d_q \} \}}
			\left(
			1
			+
			\| \Phi_{\rm {out} }\|_{\sharp, \Theta,\alpha}
			\right)^2  \lambda_*^2
		\end{align}
	for some $\epsilon>0$, where for the last ``$\lesssim$'', we require
	\begin{equation}\label{N-nabA-para-1}
		\Theta>0 ,\quad
		\Theta+\beta+ 2\delta_0 -2\nu<0,\quad
		\Theta+\beta-1<0,\quad \beta< 1/2.
	\end{equation}

	By \eqref{Phi-upp} and \eqref{nabla-Phi-upp}, it follows that
	\begingroup
	\allowdisplaybreaks
	\begin{align}
		&
		\left|   O\left(\lambda_*+|\Phi|^2\right)  \Phi\cdot \nabla_x \Phi  \right|
		\lesssim
		\big( |\Phi|^3 + \lambda_* |\Phi| \big) \left|\nabla_x \Phi \right|  \nonumber
		\\
		\lesssim \ &
		\sum_{j=1}^{N}
		\Big[
		\1_{\{ |x-q^{\J}|\le 3\lambda_* R \}}  \left(
		1+ \|\Phi_{\rm {out} }\|_{\sharp, \Theta,\alpha}
		+
		\| \Phi_{\rm{in} }^{\J} \|_{{\rm in},\nu-\delta_0,l}
		\right)^4   \nonumber
		\\
		&\times
		\Big(
		\lambda_*^{4\nu-4\delta_0-1}
		+
		\lambda_*^3 \langle \rho_j\rangle^3
		+
		\lambda_*^{\nu-\delta_0+2} \langle \rho_j\rangle^{2-l}
		+
		|\ln(T-t)|^3
		\lambda^{\nu-\delta_0+3\Theta+2}_*  R^3   \nonumber
		\\
		&
		+
		\lambda_*^{2\nu-2\delta_0}
		+
		|\ln(T-t)| \lambda_*^{\nu-\delta_0+\Theta+1} R
		+
		\lambda_*^2 \langle \rho_j\rangle
		\Big)   \nonumber
		\\
		&
		+
		\1_{\{  3\lambda_* R < |x-q^{\J}| <3d_q \}}
		\left(1+
		\| \Phi_{\rm {out} }\|_{\sharp, \Theta,\alpha} \right)^4
		\left(
		\lambda_*^3 \langle  \rho_j  \rangle^3
		+
		|\ln(T-t)|^3
		\lambda^{3\Theta+3}_*  R^3
		+
		\lambda_*^2 \langle  \rho_j  \rangle
		+
		|\ln(T-t)|
		\lambda^{\Theta+2}_*  R
		\right)
		\Big]    \nonumber
		\\
		&
		+
		\1_{\{ \cap_{j=1}^N \{|x-q^{\J}| \ge  3d_q \} \}}
		\left(1+ \| \Phi_{\rm {out} }\|_{\sharp, \Theta,\alpha} \right)^4    \nonumber
		\\
		\lesssim \ &
		\sum_{j=1}^{N}
		\Big[
		\1_{\{ |x-q^{\J}|\le 3\lambda_* R \}}
		\left(
		1+ \|\Phi_{\rm {out} }\|_{\sharp, \Theta,\alpha}
		+
		\| \Phi_{\rm{in} }^{\J} \|_{{\rm in},\nu-\delta_0,l}
		\right)^4
		\lambda_*^{\epsilon +1} \lambda_*^{\Theta}
		(\lambda_* R)^{-1}   \nonumber
		\\
		&
		+
		\1_{\{  3\lambda_* R < |x-q^{\J}| <3d_q \}}
		\left(1+
		\| \Phi_{\rm {out} }\|_{\sharp, \Theta,\alpha} \right)^4
		\left(
		\lambda_*^3 \langle  \rho_j  \rangle^3
		+
		|\ln(T-t)|^3
		\lambda^{3\Theta+3}_*  R^3
		+
		\lambda_*^2 \langle  \rho_j  \rangle
		+
		|\ln(T-t)|
		\lambda^{\Theta+2}_*  R
		\right)
		\Big]   \nonumber
		\\
		&
		+
		\1_{\{ \cap_{j=1}^N \{|x-q^{\J}| \ge  3d_q \} \}}
		\left(1+ \| \Phi_{\rm {out} }\|_{\sharp, \Theta,\alpha} \right)^4 \label{Z-nabA-2},
\end{align}
\endgroup
where  we require \eqref{inn-top0-para} and additional parameters restriction for the last ``$\lesssim$''
\begin{equation}\label{N-nabA-para-2}
	\beta< 1/2,
	\quad
	\Theta+\beta+4\delta_0-4\nu+1<0,\quad
	\Theta +\beta +2\delta_0-2\nu<0,
	\quad \Theta+ 2\beta <2 .
\end{equation}

Combining \eqref{Phi-cdot-U*-upp} and \eqref{nab-Phi-cdot-U*}, we have
\begin{equation}\label{Z-nabA-3}
	\begin{aligned}
		&
		\left| \left( \Phi\cdot U_*\right)\nabla_x\left( \Phi\cdot U_*\right) \right|
		\lesssim
		\sum_{j=1}^{N}
		\Big[
		\1_{\{ |x-q^{\J}|\le 3\lambda_* R \}}
		\left(1+ \| \Phi_{\rm {out} }\|_{\sharp, \Theta,\alpha} + \| \Phi_{\rm{in} }^{\J} \|_{{\rm in},\nu-\delta_0,l}  \right)^2
		\\
		&\times
		\left( |\ln(T-t)|
		\lambda_*^{\nu-\delta_0+1+\Theta} R
		+
		|\ln(T-t)|
		\lambda^{\Theta+1}_*  R
		+
		|\ln(T-t)|^2
		\lambda^{2\Theta+1}_* R^2
		+
		\lambda_*^{\nu-\delta_0+1}
		+
		\lambda_* \langle \rho_j\rangle
		\right)
		\\
		&
		+
		\1_{\{  3\lambda_* R < |x-q^{\J}| <3d_q \}}
		\left( 1 + \| \Phi_{\rm {out} }\|_{\sharp, \Theta,\alpha}  \right)^2
		\left(
		|\ln(T-t)|
		\lambda^{\Theta+1}_*  R
		+
		\lambda_* \langle \rho_j \rangle
		\right)
		\Big]
		+
		\1_{\{ \cap_{j=1}^N \{|x-q^{\J}| \ge 3 d_q  \} \}}
		\| \Phi_{\rm {out} }\|_{\sharp, \Theta,\alpha}^2
		\\
		\lesssim \ &
		\sum_{j=1}^{N}
		\Big[
		\1_{\{ |x-q^{\J}|\le 3\lambda_* R \}}
		\left(1
		+  \| \Phi_{\rm {out} }\|_{\sharp, \Theta,\alpha} + \| \Phi_{\rm{in} }^{\J} \|_{{\rm in},\nu-\delta_0,l} \right)^2
		\left(
		\lambda_*^{\epsilon +1} \lambda_*^{\Theta}
		(\lambda_* R)^{-1}
		+
		\lambda_* \langle\rho_j\rangle
		\right)
		\\
		&
		+
		\1_{\{  3\lambda_* R < |x-q^{\J}| <3d_q \}}
		\left(1+
		\| \Phi_{\rm {out} }\|_{\sharp, \Theta,\alpha} \right)^2
		\left(
		|\ln(T-t)|
		\lambda^{\Theta+1}_*  R
		+
		\lambda_* \langle \rho_j \rangle
		\right)
		\Big]
		+
		\1_{\{ \cap_{j=1}^N \{|x-q^{\J}| \ge 3 d_q  \} \}} 	\| \Phi_{\rm {out} }\|_{\sharp, \Theta,\alpha}^2,
	\end{aligned}
\end{equation}
where for the last step, we require
\begin{equation}\label{N-nabA-para-3}
	\beta<1/2,\quad 3\beta<\Theta+1,\quad \Theta+\beta<
	\nu-\delta_0 +1.
\end{equation}

By \eqref{U*-norm}, \eqref{nab-Phi-cdot-U*}, we have
\begin{align}
		&
		\left| \left( 1-|U_*|^2\right) \nabla_x\left( \Phi\cdot U_*\right)  \right|
		\lesssim
		\sum\limits_{j=1}^N
		\Big[
		\1_{\{ |x-q^{\J}|\le 3\lambda_* R \}}
		\left(
		1+ \| \Phi_{\rm {out} }  \|_{\sharp, \Theta,\alpha}
		+ \| \Phi_{\rm{in} }^{\J} \|_{{\rm in},\nu-\delta_0,l}
		\right)
		\lambda_*^{\epsilon +1} \lambda_*^{\Theta}
		(\lambda_* R)^{-1}
		\nonumber
		\\
		&
		+ \1_{\{ 3\lambda_* R  < |x-q^{\J}| < 3d_q\}}
		\lambda_* \left(
		1+ \| \Phi_{\rm {out} }\|_{\sharp, \Theta,\alpha}
		\right)
		\Big]
		+
		\1_{\{ \cap_{j=1}^N \{|x-q^{\J}| \ge  3 d_q  \} \}}
		\lambda_*
		\| \Phi_{\rm {out} }\|_{\sharp, \Theta,\alpha}
		\label{Z-nabA-4}
\end{align}
provided
\begin{equation}\label{N-nabA-para-4}
	\Theta+\beta<1,\quad \beta<1/2 .
\end{equation}

Under the parameters restriction \eqref{nab-A-para}, which is the combination of \eqref{N-nabA-para-1}, \eqref{N-nabA-para-2}, \eqref{N-nabA-para-3}, and \eqref{N-nabA-para-4},
we conclude the validity of \eqref{nab-A-est} from \eqref{Z-nabA-1}, \eqref{Z-nabA-2}, \eqref{Z-nabA-3}, and \eqref{Z-nabA-4}.
\end{proof}

\subsection{Complete estimates of $\mathcal{G}$}

\begin{lemma}\label{G-est-lem}
For $\mathcal{G}$ given in \eqref{G-def},
suppose that the ansatz \eqref{lam-ansatz} holds,
$\Phi_{\rm {out} } \in B_{\rm{out}}$ defined in \eqref{out-space},
$\| \Phi_{\rm{in} }^{\J} \|_{{\rm in},\nu-\delta_0,l} \le \Lambda_{\rm{in}}$, $ \Phi_{\rm{in} }^{\J} \cdot W^{\J}=0$ for $j=1,2,\dots,N$, under the parameter assumptions
\begin{align}
		& l>0,\quad
		0<\Theta<\beta<1/2,
		\quad
		\Theta  +\beta + \delta_0 	-\nu <0,
		\quad 3\beta< 1+\Theta , \quad
		\beta(l+1)-1+\nu-\delta_0 -\Theta>0 ,
		\nonumber
		\\
		&
		\Theta+	2\beta -1<0, \quad 0<\delta_0  < \nu<1 , \quad 	2\beta+\delta_0-\nu<0,
		\quad \Theta+\beta+1+3\delta_0-3\nu<0,
		\label{G-para}
\end{align}
then there exists $0<\epsilon \ll 1$ such that
$\| \mathcal{G}\|_{**} \lesssim T^\epsilon $ with the norm  $\|\cdot\|_{**}$ defined in \eqref{G-topo-def}.

\end{lemma}

\begin{remark}\label{qd240727-1-rem}
In the process of the analysis, there is a delicate cancellation for $\Delta_x U_* -2\left(U_* \cdot \nabla_x U_*\right)\cdot \nabla_x U_*$. See \eqref{cancel-1} and \eqref{special-1}, \eqref{special-2}.
\end{remark}

\begin{proof}

\noindent $\bullet$
Recall $\eta_R^{\J}$ given in \eqref{qd24Apr12-3}.
By \eqref{lam-ansatz}, $R(t) = \lambda_*^{-\beta}(t)$ with $\beta>0$ given \eqref{RPhi-ansatz}, $T\ll 1$, then $1-\eta_R^{\J} \le \1_{\{ |x-q^{\J}| \ge \lambda_* R/2 \}}$ and $\1_{\{ |x-q^{\J}| \ge \lambda_* R/2 \}} |x-\xi^{\J}| \sim \1_{\{ |x-q^{\J}| \ge \lambda_* R/2 \}} |x-q^{\J}|$.
By \eqref{nabU*-est}, \eqref{out-upp}, and \eqref{out-nabla-upp}, we have
\begin{align*}
		&
		\big|
		(1-\eta_R^{\J} )
		( a-bU^{\J}  \wedge )
		\big[	|\nabla_x U^{\J} |^2 \Phi_{\rm {out} }
		- 2 \nabla_x ( U^{\J} \cdot \Phi_{\rm out} ) \cdot \nabla_x U^{\J}
		\big]
		\big|
		\\
		\lesssim \ &
		\1_{\{ |x-q^{\J}| \ge \lambda_* R/2 \}}
		\big(	 | \nabla_x \Phi_{\rm {out} }  | \lambda_j^{-1} \langle \rho_j\rangle^{-2}
		+
		\lambda_j^{-2}\langle \rho_j\rangle^{-4} |\Phi_{\rm {out} } |
		\big)
		\\
		\lesssim \ &
		\1_{\{ |x-q^{\J}| \ge \lambda_* R/2 \}}
		\| \Phi_{\rm {out} }\|_{\sharp, \Theta,\alpha}
		\big\{	
		(\lambda^{\Theta}_*(0)
		+
		\| Z_* \|_{C^3(\R^2)} )
		\lambda_* |x-q^{\J}|^{-2}
		 + \lambda_*^{2} |x-q^{\J}|^{-4}
		\big[
		|\ln(T-t)|
		\lambda^{\Theta+1}_* R
		\\
		&
		+
		(T-t) \| Z_* \|_{C^3(\R^2)}
		+
		|x-q^{\J}| (\lambda^{\Theta}_*(0)
		+
		\| Z_* \|_{C^3(\R^2)} )
		\big]
		\big\}
		\lesssim
		T^{\epsilon} (\varrho_2^{\J} + \varrho_3 ).
	\end{align*}

\noindent $\bullet$
\begin{align*}
		&
		\Big|
		\left(1-\eta_R^{\J}\right)
		\Big\{
		-\pp_t (\eta_{d_q}^{\J} \Phi_0^{*\J} )
		+
		\left( a-bU^{\J}  \wedge \right)
		\Big[
		\Delta_x (\eta_{d_q}^{\J}\Phi_0^{*\J} ) +
		|\nabla_x U^{\J} |^2 \eta_{d_q}^{\J} \Phi^{*{\J}}_0
		\\
		&
		- 2 \nabla_x \left(
		U^{\J} \cdot
		\eta_{d_q}^{\J} \Phi^{*{\J}}_0  \right)
		\cdot \nabla_x U^{\J}
		\Big]
		- \partial_t U^{\J}
		\Big\} \Big|
		\\
		= \ &
		\Big|
		\left(1-\eta_R^{\J}\right) \eta_{d_q}^{\J}
		\Big\{
		- \pp_t (\Phi_0^{*\J})
		+
		\left( a-bU^{\J}  \wedge \right)
		\Big[
		\Delta_x \Phi_0^{*\J}
		+
		|\nabla_x U^{\J} |^2 \Phi^{*{\J}}_0
		-
		2 \nabla_x \left(
		U^{\J} \cdot
		\Phi^{*{\J}}_0  \right) \cdot \nabla_x U^{\J}
		\Big]
		\\
		&
		- \partial_t U^{\J}
		\Big\}
		- (1-\eta_{d_q}^{\J}) \partial_t U^{\J}
		+
		\left(1-\eta_R^{\J}\right)
		\Big\{
		-  \Phi_0^{*\J}  \pp_t \eta_{d_q}^{\J}
		+
		\left( a-bU^{\J}  \wedge \right)
		\Big[
		2\nabla_x \eta_{d_q}^{\J} \cdot \nabla_x \Phi_0^{*\J}
		+
		\Phi_0^{*\J} \Delta_x  \eta_{d_q}^{\J}
		\\
		&
		-
		2  \left(
		U^{\J} \cdot
		\Phi^{*{\J}}_0  \right) \nabla_x \eta_{d_q}^{\J} \cdot \nabla_x U^{\J}
		\Big]
		\Big\} \Big|
		\\
		\lesssim \ &
		\1_{\{ \lambda_* R \le |x-\xi^{\J}| \le 2d_q \}}
		\big(
		\lambda_*^{-1} \langle \rho_j\rangle^{-2}
		+
		|\dot{\lambda}_*|
		\langle \rho_j\rangle^{-1}
		+
		|\dot{\xi}^{\J}|
		\big)
		\\
		&
		+ \1_{\{  |x-\xi^{\J}| \ge d_q \}} \big[
		\big(\la_j^{-1}|\dot\la_j| + |\dot\gamma_j |  \big) \langle \rho_j \rangle^{-1}
		+
		\la_j^{-1}|\dot{\xi}^{\J}| \langle \rho_j \rangle^{-2}
		\big]
		+
		\1_{\{ d_q  \le |x-\xi^{\J}| \le 2d_q \}}
		\\
		\lesssim \ &
		\1_{\{ \lambda_* R/2 \le |x-q^{\J}| \le 3d_q \}}
		\big(
		\lambda_* |x-q^{\J}|^{-2}
		+
		|\dot{\lambda}_*| R^{-1}
		+
		|\dot{\xi}^{\J}|
		\big)
		\\
		&
		+ \1_{\{  |x-\xi^{\J}| \ge d_q \}} \big[
		\big( |\dot\la_j| + \la_j |\dot\gamma_j |  \big)
		+
		\la_j |\dot{\xi}^{\J}|
		\big]
		+
		\1_{\{ d_q  \le |x-\xi^{\J}| \le 2d_q \}}
		\lesssim
		T^\epsilon \big(  \varrho_2^{\J} + \varrho_3
		\big),
\end{align*}
where we used  \eqref{Sj-est}, \eqref{ppt-U-est}, \eqref{Phi*-0-j-upp}, \eqref{nabU*-est} for the first ``$\lesssim$'',
and \eqref{lam-ansatz} for the second and third ``$\lesssim$''.

\noindent $\bullet$
By \eqref{C-cal-inverse}, \eqref{M-est}, we have
\begin{equation*}
	\big| \eta_R^{\J}
	\big( \tilde{M}_0^{\J} + e^{i\theta_j} \tilde{M}_1^{\J} + e^{-i\theta_j} M_{-1}^{\J}\big)_{\mathcal{C}_j^{-1}} \big|
	\lesssim \eta_R^{\J}
	\big(
	|\dot{\lambda}_*|
	\langle \rho_j\rangle^{-1}
	+ |\dot{\xi}^{\J}| \big)
	\lesssim T^{\epsilon} \varrho_1.
\end{equation*}

\noindent $\bullet$
Using $| \dot{\gamma}_j(t) |\le C_{\gamma} (T-t)^{-1}$,
$| \dot{\xi}^{\J}(t) | \le C_{\xi} \lambda_*^{\epsilon_{\xi}}(t)$ in \eqref{lam-ansatz}, one has
\begin{equation*}
	\left| \eta_R^{\J}Q_{\gamma_j}\left[
	\left(\la_j^{-1}\dot\la_j y^{\J}+\la_j^{-1}\dot\xi^{\J} \right)
	\cdot\nabla_{y^{\J}} \Phi_{\rm in}^{\J}-\dot\gamma_j J\Phi_{\rm in}^{\J}\right] \right|
	\lesssim
	\eta_R^{\J}
	(T-t)^{-1}
	\|  \Phi_{\rm{in} }^{\J} \|_{{\rm in},\nu-\delta_0,l} \lambda_*^{\nu-\delta_0}
	\lesssim T^{\epsilon} \varrho_1^{\J}
\end{equation*}
provided
\begin{equation}\label{o-para-2}
	\Theta  + \delta_0 +\beta	-\nu <0.
\end{equation}

\noindent $\bullet$
Using $\| \Phi_{\rm{in} }^{\J} \|_{{\rm in},\nu-\delta_0,l} \le \Lambda_{\rm{in}}$, $ \Phi_{\rm{in} }^{\J} \cdot W^{\J}=0$, and \eqref{lam-ansatz}, we have
\begin{align}
\notag
		&
		\left|
		Q_{\gamma_j}
		\left\{ - \Phi_{\rm in}^{\J} \pp_t \eta_R^{\J}
		+
		\left( 	a -b W^{\J}  \wedge \right)
		\left[ \Phi_{\rm in}^{\J} \Delta_x \eta_R^{\J}
		+2 \nabla_x \eta_R^{\J}
		\cdot \nabla_x \Phi_{\rm in}^{\J}
		-
		\left(
		W^{\J} \cdot   \Phi_{\rm in}^{\J} \right) \left(
		2  \nabla_x \eta_R^{\J} \cdot \nabla_x W^{\J}
		\right)
		\right] \right\} \right|
		\\ \notag
		= \ &
		\Big| \Phi_{\rm in}^{\J} (\nabla \eta ) \Big(\frac{x-\xi^{\J} }{\la_* R }\Big)
		\cdot
		\Big( \frac{\dot{\xi}^{\J}}{\lambda_* R} +
		\frac{x-\xi^{\J}}{\lambda_* R } \frac{(\lambda_* R )'}{\lambda_* R }
		\Big)
		\\ \notag
		&
		+
		\left( 	a -b W^{\J}  \wedge \right)
		\Big[ \Phi_{\rm in}^{\J}  (\la_* R)^{-2} ( \Delta \eta ) \Big(\frac{x-\xi^{\J} }{\la_* R }\Big)
		+2 (\la_* R)^{-1} ( \nabla \eta )\Big(\frac{x-\xi^{\J} }{\la_* R }\Big)  \cdot \la_j^{-1} \nabla_{y^{\J}} \Phi_{\rm in}^{\J} \Big|
		\\ \notag
		\lesssim \ &
		\| \Phi_{\rm{in} }^{\J} \|_{{\rm in},\nu-\delta_0,l}
		\1_{\{ \la_* R \le |x-\xi^{\J}| \le 2\la_* R \}}
		\Big[
		(T-t)^{-1}
		\lambda_*^{\nu-\delta_0} \langle y^{\J}\rangle^{-l}
		+
		(\la_* R)^{-2}  \lambda_*^{\nu-\delta_0} \langle y^{\J}\rangle^{-l}
		\\ \label{coupling est}
		&
		+ (\la_* R)^{-1}    \la_j^{-1}
		\lambda_*^{\nu-\delta_0} \langle y^{\J}\rangle^{-l-1}
		\Big]
		\lesssim
		\1_{\{ \la_* R/2 \le |x-q^{\J}| \le 3\la_* R \}}
		(\la_* R)^{-2}  \lambda_*^{\nu-\delta_0} R^{-l}
		\lesssim
		T^{\epsilon} \varrho_1
	\end{align}
provided
\begin{equation}\label{o-para-3} 	\beta<1/2, \quad
	\nu-\delta_0 +\beta l -(1-\beta) > \Theta.
\end{equation}

\noindent $\bullet$
Using
$U^{\J} \cdot ( Q_{\gamma_j}\Phi_{\rm in}^{\J} )
=
W^{\J} \cdot \Phi_{\rm in}^{\J}
=0
$, we have
\begin{align*}
		&
		( U_*-U^{\J} ) \wedge  \Big\{
		\Delta_x \Big(\eta_{d_q}^{\J}\Phi_0^{*\J} \Big)
		+ \eta_R^{\J}Q_{\gamma_j}  \Delta_x \Phi_{\rm in}^{\J}
		+ Q_{\gamma_j}\Big( \Phi_{\rm in}^{\J} \Delta_x \eta_R^{\J}
		+2 \nabla_x \eta_R^{\J}
		\cdot \nabla_x \Phi_{\rm in}^{\J} \Big)
		\\
		&
		- 2 \nabla_x ( U^{\J} \cdot \Phi_{\rm out}  )\cdot \nabla_x U^{\J}
		- 2 \nabla_x \Big[
		U^{\J} \cdot
		\Big( \eta_R^{\J}Q_{\gamma_j}\Phi_{\rm in}^{\J} + \eta_{d_q}^{\J} \Phi^{*{\J}}_0 \Big) \Big] \cdot \nabla_x U^{\J}
		\Big\}
		\\
		= \ &
		( U_*-U^{\J} ) \wedge  \Big\{
		\Delta_x \Big(\eta_{d_q}^{\J}\Phi_0^{*\J} \Big)
		- 2 \nabla_x \Big(
		U^{\J} \cdot \eta_{d_q}^{\J}\Phi_0^{*\J}   \Big) \cdot \nabla_x U^{\J}
		+ Q_{\gamma_j}\Big( \Phi_{\rm in}^{\J} \Delta_x \eta_R^{\J}
		+2 \nabla_x \eta_R^{\J}
		\cdot \nabla_x \Phi_{\rm in}^{\J} \Big)
		\\
		&
		+ \eta_R^{\J}Q_{\gamma_j}  \Delta_x \Phi_{\rm in}^{\J}
		- 2 \nabla_x ( U^{\J} \cdot \Phi_{\rm out}  ) \cdot \nabla_x U^{\J}
		\Big\}.
\end{align*}
Here, by \eqref{U*-norm}, \eqref{Phi*-0-j-upp}, \eqref{nabU*-est}, it follows that
\begin{align*}
		&
		\left|
		\left( U_*-U^{\J}
		\right) \wedge  \left[ \Delta_x \left(\eta_{d_q}^{\J}\Phi_0^{*\J} \right)
		- 2 \nabla_x \left(
		U^{\J} \cdot \eta_{d_q}^{\J}\Phi_0^{*\J}   \right) \cdot \nabla_x U^{\J}    \right] \right|
		\\
		= \ &
		\Big|
		\left( U_*-U^{\J}
		\right) \wedge  \Big[ \Phi_0^{*\J}  \Delta_x \eta_{d_q}^{\J}
		+
		2\nabla_x \eta_{d_q}^{\J}
		\cdot \nabla_x \Phi_0^{*\J}
		+
		\eta_{d_q}^{\J} \Delta_x \Phi_0^{*\J}
		\\
		&
		- 2 \left(
		U^{\J} \cdot \Phi_0^{*\J}  \right) \nabla_x \eta_{d_q}^{\J}
		\cdot \nabla_x U^{\J}
		- 2 \eta_{d_q}^{\J} \left(
		U^{\J} \cdot \nabla_x  \Phi_0^{*\J}   \right)
		\cdot \nabla_x U^{\J}
		- 2 \eta_{d_q}^{\J} \left(
		\Phi_0^{*\J}  \cdot \nabla_x   U^{\J}  \right) \cdot \nabla_x U^{\J}
		\Big]
		\Big|
		\\
		\lesssim \ &
		\lambda_*  \left(1+
		\lambda_j^{-1} \langle \rho_j\rangle^{-1}
		+
		\lambda_*^{-1} \langle \rho_j\rangle^{-2}
		+
		\lambda_j \langle \rho_j\rangle \lambda_*^{-2} \langle \rho_j\rangle^{-4}\right) \1_{\{|x-q^{\J}|\le 3d_q \}}
		\lesssim T^\epsilon \varrho_3.
\end{align*}
\noindent $\bullet$
By \eqref{U*-norm}, similar to the estimate in \eqref{coupling est},
\begin{equation*}
	\left|
	\left( U_*-U^{\J}
	\right) \wedge
	\left[
	Q_{\gamma_j}\left( \Phi_{\rm in}^{\J} \Delta_x \eta_R^{\J}
	+2 \nabla_x \eta_R^{\J}
	\cdot \nabla_x \Phi_{\rm in}^{\J} \right)
	\right] \right|
	\lesssim
	\lambda_*
	\left|
	\Phi_{\rm in}^{\J} \Delta_x \eta_R^{\J}
	+2 \nabla_x \eta_R^{\J}
	\cdot \nabla_x \Phi_{\rm in}^{\J}
	\right|
	\lesssim
	T^{\epsilon} \varrho_1.
\end{equation*}

\noindent $\bullet$
By \eqref{U*-norm},
\begin{align*}
		&
		\big|
		( U_*-U^{\J} ) \wedge   \big( \eta_R^{\J}
		Q_{\gamma_j}  \Delta_x \Phi_{\rm in}^{\J}   \big)
		\big|
		\lesssim
		\1_{\{  |x-\xi^{\J}| \le 2\la_* R \}}
		\lambda_*^{-1}
		\| \Phi_{\rm{in} }^{\J} \|_{{\rm in},\nu-\delta_0,l} \lambda_*^{\nu-\delta_0}
		\langle y^{\J}\rangle^{-l-2}
		\lesssim T^{\epsilon}\varrho_1
\end{align*}
provided
\begin{equation}\label{o-para-4}
	\Theta+ \beta +\delta_0 -\nu<0.
\end{equation}
\noindent $\bullet$
For a fixed $j$, by \eqref{U*-norm}, \eqref{nabU*-est},
\begin{equation*}
	\begin{aligned}
		&
		\left|
		\left( U_*-U^{\J}
		\right) \wedge
		\left[
		\nabla_x \left( U^{\J} \cdot \Phi_{\rm out}  \right)\cdot \nabla_x U^{\J}
		\right]
		\right|
		\lesssim
		\sum\limits_{k\ne j}  \langle \rho_k \rangle^{-1}
		\big(
		|\nabla_x  \Phi_{\rm {out} } |
		\lambda_{j}^{-1} \langle \rho_j \rangle^{-2}
		+
		|\Phi_{\rm {out} } | \lambda_{j}^{-2} \langle \rho_j \rangle^{-4}
		\big).
	\end{aligned}
\end{equation*}

We claim that
\begin{equation}\label{rho_j*rho_k-est}
	\langle \rho_j \rangle \langle \rho_k \rangle
	\gtrsim
	\lambda_*^{-1} \min\left\{\langle \rho_j \rangle,\langle \rho_k \rangle \right\}
	\mbox{ \ for \ } j\ne k .
\end{equation}
Indeed, for $|x-\xi^{\J}|\le
	|\xi^{\J}-\xi^{\K}|/2$, then $\langle \rho_k \rangle \sim \lambda_*^{-1}$, which implies
	\begin{equation*}
		\langle \rho_j \rangle \langle \rho_k \rangle
		\sim \lambda_*^{-1} \langle \rho_j \rangle \sim \lambda_*^{-1} \min\left\{\langle \rho_j \rangle,\langle \rho_k \rangle \right\} .
	\end{equation*}
	For $|x-\xi^{\K}|\le
	|\xi^{\J}-\xi^{\K}|/2$, similarly, we have $
	\langle \rho_j \rangle \langle \rho_k \rangle
	\sim \lambda_*^{-1} \min\left\{\langle \rho_j \rangle,\langle \rho_k \rangle \right\} $.
	For $|x-\xi^{\J}| >
	|\xi^{\J}-\xi^{\K}|/2$ and  $|x-\xi^{\K}| >
	|\xi^{\J}-\xi^{\K}|/2$, then
	\begin{equation*}
		\langle \rho_j \rangle \langle \rho_k \rangle
		\sim \lambda_*^{-2} |x-\xi^{\J}| |x-\xi^{\K}|
		\gtrsim
		\lambda_*^{-1} \min\left\{\langle \rho_j \rangle,\langle \rho_k \rangle \right\}.
	\end{equation*}

Recall $\| \Phi_{\rm {out} }\|_{\sharp, \Theta,\alpha}$ given in \eqref{out-topo}. For $j\ne k$, by \eqref{rho_j*rho_k-est},
\begin{equation*}
	\langle \rho_k \rangle^{-1}
	|\nabla_x  \Phi_{\rm {out} } |
	\lambda_{j}^{-1} \langle \rho_j \rangle^{-2}
	\lesssim
	\| \Phi_{\rm {out} }\|_{\sharp, \Theta,\alpha} \left(\lambda^{\Theta}_*(0)
	+
	\| Z_* \|_{C^3(\R^2)}  \right)
	\lesssim
	T^{\epsilon} \varrho_3 .
\end{equation*}
Under the restriction $\Theta<\beta <1/2$, by \eqref{out-upp-split},
\begin{align*}
		&
		\langle \rho_k \rangle^{-1}
		|\Phi_{\rm {out} } | \lambda_{j}^{-2} \langle \rho_j \rangle^{-4}
		\lesssim
		\| \Phi_{\rm {out} }\|_{\sharp, \Theta,\alpha}
		\Big[
		\1_{\{ |x-q^{\K}| < 3d_q \}}
		\left(
		|\ln(T-t)|
		\lambda^{\Theta+1}_*  R
		\lambda_*^2 \langle \rho_k \rangle^{-1}
		+
		\lambda_*^3
		\right)
		\\
		& +
		\1_{\{ |x-q^{\J}| < 3d_q \}}
		\left(
		|\ln(T-t)|
		\lambda^{\Theta}_*  R
		\langle \rho_j\rangle^{-4}
		+
		\langle \rho_j\rangle^{-3}
		\right)
		+
		\sum\limits_{m=1, m\ne j,k }^N \1_{\{ |x-q^{\M}| < 3d_q \}}
		\left(
		|\ln(T-t)|
		\lambda^{\Theta+4}_*  R
		+
		\lambda_*^3
		\right)
		\\
		&
		+
		\1_{\{ \cap_{m=1}^N \{|x-q^{\M}| \ge  3d_q \} \}}
		\lambda_*^3
		\Big]
		\lesssim
		T^{\epsilon} \big( \varrho_1^{\J} + \varrho_3 \big).
	\end{align*}

\noindent $\bullet$
By \eqref{U*-norm}, \eqref{nabU*-est}, and \eqref{rho_j*rho_k-est},
\begin{align*}
	&
	\Big| (a -bU_* \wedge ) \Big\{
	 \nabla_x \big[
	\Phi\cdot (  U_*- U^{\J}  )
	\big] \cdot \nabla_x U^{\J}
	\Big\} \Big|
	\lesssim
	\Big(
	| \nabla_x \Phi | \sum\limits_{k\ne j} \langle \rho_k \rangle^{-1}
	+
	|\Phi |
	\sum\limits_{k\ne j}
	\lambda_{*}^{-1} \langle \rho_k \rangle^{-2}
	\Big) \lambda_{*}^{-1} \langle \rho_j \rangle^{-2}
	\\
	\lesssim \ &
	\left| \nabla_x \Phi \right|
	\langle \rho_j \rangle^{-1} \sum\limits_{k\ne j} \left(\min\left\{\langle \rho_j \rangle,\langle \rho_k \rangle \right\} \right)^{-1}
	+
	\left|	\Phi \right|
	\sum\limits_{k\ne j}
	\left(\min\left\{\langle \rho_j \rangle,\langle \rho_k \rangle \right\} \right)^{-2}.
\end{align*}

By \eqref{out-upp}, \eqref{P-P_o-est}, and the assumption $\delta_0<\nu$ in \eqref{inn-top0-para}, we have
\begin{equation*}
	\left|	\Phi \right|
	\left(\min\left\{\langle \rho_j \rangle,\langle \rho_k \rangle \right\} \right)^{-2}
	\lesssim T^{\epsilon} \varrho_3.
\end{equation*}

By \eqref{out-nabla-upp} and \eqref{nabla-P-P_o-est}, we have
\begin{align*}
		&
		\left| \nabla_x \Phi \right|
		\langle \rho_j \rangle^{-1} \left(\min\left\{\langle \rho_j \rangle,\langle \rho_k \rangle \right\} \right)^{-1}
		\lesssim
		\| \Phi_{\rm {out} }\|_{\sharp, \Theta,\alpha} \left(\lambda^{\Theta}_*(0)
		+
		\| Z_* \|_{C^3(\R^2)}  \right)
		\\
		&
		+
		\sum_{m=1}^{N}
		\left[
		\1_{\{ |x-q^{\M}| \le 3\lambda_* R \}}
		\left(
		\| \Phi_{\rm{in} }^{\M} \|_{{\rm in},\nu-\delta_0,l} \lambda_*^{\nu-\delta_0-1} \langle \rho_m \rangle^{-l-1}
		+1
		\right)
		+ \1_{\{ 3\lambda_* R  < |x-q^{\M}| < 3d_q\}}
		\right]
		\lesssim
		T^\epsilon \Big(
		\sum\limits_{m=1}^{N} \varrho_1^{\M} + \varrho_3 \Big)
\end{align*}
provided
\begin{equation}\label{o-para-6}
	\Theta +\beta +\delta_0 -\nu<0.
\end{equation}

\noindent $\bullet$ To estimate
$
\left(a -bU_* \wedge \right) \Big\{
- 2 \sum\limits_{j=1}^{N}\nabla_x \Big[
U^{\J} \cdot \sum_{k=1, k\ne j}^{N} \left( \eta_R^{\K}Q_{\gamma_k}\Phi_{\rm in}^{\K} + \eta_{d_q}^{\K} \Phi^{*{\K}}_0 \right) \Big]
\cdot \nabla_x U^{\J}
\Big\} $.
For $k\ne j$,
by \eqref{nabU*-est}, and $\| \cdot \|_{{\rm in},\nu-\delta_0,l }$ given in \eqref{inn-topo},
\begin{align*}
		&
		\Big|
		\nabla_x \left[
		U^{\J} \cdot \left( \eta_R^{\K}Q_{\gamma_k}\Phi_{\rm in}^{\K} \right) \right]\cdot \nabla_x U^{\J} \Big|
		\\
		\lesssim \ &
		\| \Phi_{\rm{in} }^{\K} \|_{{\rm in},\nu-\delta_0,l} \lambda_*^{\nu-\delta_0}
		\Big[
		\1_{\{ |x-\xi^{\K}| \le 2\la_* R \}}
		\left(   \lambda_k^{-1}
		\langle \rho_k \rangle^{-l-1}
		+
		\langle \rho_k \rangle^{-l} \lambda_*^{-1} \langle \rho_j\rangle^{-2}
		\right)
		\\
		&
		+
		\langle \rho_k \rangle^{-l}
		(\lambda_* R)^{-1}
		\1_{\{ \la_* R \le |x-\xi^{\K}| \le 2\la_* R \}}
		\Big] \lambda_*^{-1}
		\langle \rho_j\rangle^{-2}
		\\
		\lesssim \ &
		\1_{\{ |x-q^{\K}| \le 3 \lambda_* R \}}  \lambda_*^{\nu-\delta_0}
		\big(
		\langle \rho_k \rangle^{-l-1}
		+
		\langle \rho_k \rangle^{-l} \lambda_*^2
		\big)
		\lesssim
		T^{\epsilon} \varrho_1^{\K}
	\end{align*}
when
\begin{equation}\label{o-para-7}
	\nu - \delta_0 >\Theta+\beta-1;
\end{equation}
additionally, by \eqref{Phi*-0-j-upp}, \eqref{nabU*-est},
\begin{equation*}
	\begin{aligned}
		&
		\left|
		\nabla_x \left[
		U^{\J} \cdot \left(\eta_{d_q}^{\K} \Phi^{*{\K}}_0 \right) \right] \cdot \nabla_x U^{\J}  \right|
		\lesssim
		\lambda_* \1_{\{ |x-\xi^{\K}|\le 2d_{q}\}}
		\lesssim T^{\epsilon} \varrho_3 .
	\end{aligned}
\end{equation*}

\noindent $\bullet$
To estimate $	\sum_{j=1}^{N} |\nabla_x U^{\J} |^2
\left( a-bU^{\J}  \wedge \right)
\sum_{k=1, k\ne j}^{N} \left( \eta_R^{\K}Q_{\gamma_k}\Phi_{\rm in}^{\K} + \eta_{d_q}^{\K} \Phi^{*{\K}}_0 \right) $.
For $k\ne j$, by \eqref{nabU*-est}, \eqref{Phi*-0-j-upp},
\begin{equation*}
	\begin{aligned}
		\left|
		|\nabla_x U^{\J} |^2 \eta_R^{\K}Q_{\gamma_k}\Phi_{\rm in}^{\K}  \right|
		\lesssim \lambda_*^2 \eta_R^{\K}
		\| \Phi_{\rm{in} }^{\K} \|_{{\rm in},\nu-\delta,l} \lambda_*^{\nu-\delta_0} \langle \rho_k \rangle^{-l}
		\lesssim T^{\epsilon} \varrho_1^{\K} ,
\quad
		|\nabla_x U^{\J} |^2
		| \eta_{d_q}^{\K} \Phi^{*{\K}}_0   |
		\lesssim
		\lambda_*^{2} \lesssim T^\epsilon \varrho_3
	\end{aligned}
\end{equation*}
under the assumption $2+\nu - \delta_0 >\Theta+\beta-1$.

\noindent $\bullet$
To estimate $a \Phi \sum_{j,k=1, j\ne k}^{N} \nabla_x U^{\J}\cdot \nabla_x U^{\K}$. For $j\ne k$,
by \eqref{nabU*-est}, \eqref{rho_j*rho_k-est}, \eqref{out-upp}, and  \eqref{P-P_o-est},
\begin{equation*}
	\left| \Phi  \nabla_x U^{\J}\cdot \nabla_x U^{\K}
	\right|
	\lesssim
	| \Phi | \lambda_*^{-2} \langle \rho_j \rangle^{-2}  \langle \rho_k \rangle^{-2}
	\lesssim
	| \Phi | \left(
	\min\left\{\langle \rho_j \rangle,\langle \rho_k \rangle \right\}
	\right)^{-2}
	\lesssim
	T^\epsilon \varrho_3
\end{equation*}
under the assumption \eqref{o-para-7}.

\noindent $\bullet$
By \eqref{A-est}, \eqref{ppt-U-est}, \eqref{toget-est-1}, and the ansatz \eqref{lam-ansatz},
\begin{align*}
		&
		\left| [(\Phi\cdot U_*)-A]\pp_t U_* \right|
		\lesssim \left(|\Phi\cdot U_*|+ \lambda_* + |\Phi|^2 \right)
		\sum\limits_{j=1}^N
		\left[
		\left(\la_j^{-1}|\dot\la_j| + |\dot\gamma_j |  \right) \langle \rho_j \rangle^{-1}
		+
		\la_j^{-1}|\dot{\xi}^{\J}| \langle \rho_j \rangle^{-2}
		\right]
		\\
		\lesssim \ &
		\bigg\{
		\sum_{j=1}^{N}
		\Big[
		\1_{\{ |x-q^{\J}|\le 3\lambda_* R \}}
		\left(
		1+ \|\Phi_{\rm {out} }\|_{\sharp, \Theta,\alpha}
		+
		\| \Phi_{\rm{in} }^{\J} \|_{{\rm in},\nu-\delta_0,l}
		\right)^2
		\left(
		\lambda_*^{2\nu-2\delta_0} \langle \rho_j\rangle^{-l}
		+
		\lambda_* \langle \rho_j\rangle
		+
		|\ln(T-t)|
		\lambda^{\Theta+1}_*  R
		\right)
		\\
		&
		+
		\1_{\{  3\lambda_* R < |x-q^{\J}| <3d_q \}}
		\left(1+
		\| \Phi_{\rm {out} }\|_{\sharp, \Theta,\alpha} \right)^2
		\left(
		\lambda_* \langle  \rho_j  \rangle
		+
		|\ln(T-t)|
		\lambda^{\Theta+1}_*  R
		\right)
		\Big]
		\\
		&
		+
		\1_{\{ \cap_{j=1}^N \{|x-q^{\J}| \ge  3d_q \} \}}
		\left(1+
		\| \Phi_{\rm {out} }\|_{\sharp, \Theta,\alpha}
		\right)^2 \bigg\}
		\sum\limits_{j=1}^N
		\left(\la_j^{-1}|\dot\la_j| + |\dot\gamma_j | + \la_j^{-1}|\dot{\xi}^{\J}|  \right) \langle \rho_j \rangle^{-1}
		\lesssim
		T^{\epsilon} \bigg(\sum_{j=1}^N \varrho_1^{\J} +\varrho_3 \bigg)
	\end{align*}
provided
\begin{equation}\label{o-para-7.1}
	\Theta+\beta+2\delta_0-2\nu<0, \quad \beta<1/2.
\end{equation}

\noindent $\bullet$  To estimate
\begin{equation*}
	\begin{aligned}
		&
		\sum_{j=1}^{N}
		\eta_R^{\J} \left( U^{\J} -U_{*} \right)		
		\Big[
		-
		2a   \left( 	\nabla_x W^{\J} \cdot    \nabla_x \Phi_{\rm in}^{\J} \right)
		+
		a
		|\nabla_x U^{\J} |^2
		\left(U^{\J} \cdot \Phi_{\rm {out} } \right)
		\\
		& +
		\Big\{
		-\pp_t (\Phi_0^{*\J} )
		+
		\left( a-bU^{\J}  \wedge \right)
		\left[
		\Delta_x \Phi_0^{*\J} +
		|\nabla_x U^{\J} |^2 \Phi^{*{\J}}_0
		- 2 \nabla_x \left(
		U^{\J} \cdot
		\Phi^{*{\J}}_0  \right)\cdot \nabla_x U^{\J}
		\right]
		- \partial_t U^{\J}
		\Big\} \cdot U^{\J}
		\Big]
		.
	\end{aligned}
\end{equation*}
We estimate term by term. First, by \eqref{nablaW}, \eqref{U*-norm},
\begin{equation*}
	\begin{aligned}
		&
		\left|
		\eta_R^{\J} \left( U^{\J} -U_{*} \right)		
		\left( 	\nabla_x W^{\J} \cdot    \nabla_x \Phi_{\rm in}^{\J} \right)  \right|
		\lesssim
		\lambda_j^{-2}
		\1_{\{  |x-\xi^{\J}| \le 2\la_* R \}}
		\lambda_*		
		\langle \rho_j \rangle^{-2}    \| \Phi_{\rm{in} }^{\J} \|_{{\rm in},\nu-\delta_0,l} \lambda_*^{\nu-\delta_0} \langle \rho_j \rangle^{-l-1}
		\lesssim
		T^{\epsilon} \varrho_1^{\J}
	\end{aligned}
\end{equation*}
provided
\begin{equation}\label{o-para-8}
	\Theta+\delta_0+\beta-\nu<0 .
\end{equation}
Next, by \eqref{U*-norm}, \eqref{nabU*-est}, \eqref{out-upp-split},
\begin{equation*}
	\left|	\eta_R^{\J} \left( U^{\J} -U_{*} \right)		
	|\nabla_x U^{\J} |^2
	\left(U^{\J} \cdot \Phi_{\rm {out} } \right) \right|
	\lesssim
	\| \Phi_{\rm {out} }\|_{\sharp, \Theta,\alpha}
	\eta_R^{\J}  \lambda_{*}^{-1}
	\langle \rho_j\rangle^{-4}
	\left(
	|\ln(T-t)|
	\lambda^{\Theta+1}_*  R
	+
	\lambda_j \rho_j
	\right)
	\lesssim T^\epsilon \varrho_1^{\J}
\end{equation*}
provided $\Theta<\beta<1/2$. Finally, by \eqref{U*-norm} and \eqref{Sj-U-dire-est}, we obtain
\begin{align*}
		& \Big|
		\eta_R^{\J} \left( U^{\J} -U_{*} \right)		
		\Big[
		\Big\{
		-\pp_t (\Phi_0^{*\J} )
		+
		\left( a-bU^{\J}  \wedge \right)
		\left[
		\Delta_x \Phi_0^{*\J} +
		|\nabla_x U^{\J} |^2 \Phi^{*{\J}}_0
		- 2 \nabla_x \left(
		U^{\J} \cdot
		\Phi^{*{\J}}_0  \right)\cdot \nabla_x U^{\J}
		\right]
		\\
		&
		- \partial_t U^{\J}
		\Big\} \cdot U^{\J}
		\Big] \Big|
		\lesssim
		\eta_R^{\J} \lambda_* \left(| \dot{\xi}^{\J}| \langle  \rho_j \rangle^{-1}
		+ |\lambda_j|^{-1}  \langle  \rho_j \rangle^{-2} \right)
		\lesssim T^{\epsilon} \varrho_3 .
	\end{align*}

\medskip

\noindent$\bullet$
By \eqref{one in nabla A}, we have
\begin{align*}
		&
		(\Phi\wedge U_*)
		\left[(1+A)|U_*|^2+\left( U_*  \cdot \Pi_{U_*^{\perp}}\Phi \right)\right]^{-1}
		\left[ \Phi
		+ \left( 1+A-\Phi\cdot U_* \right) U_* \right] \cdot \Delta_x \left(\Phi-
		\Phi_{\rm {out}} \right)
		\\
		&
		- 	\left(
		AU_* +  \Pi_{U_*^{\perp}} \Phi \right) \wedge \Delta_x  \left(\Phi-
		\Phi_{\rm {out}} \right)
		\\
		= \ &
		(\Phi\wedge U_*)
		\left(  1+ O\left(\lambda_*+|\Phi|^2\right) \right)
		\left[ \Phi
		+ \left( 1+A-\Phi\cdot U_* \right) U_* \right] \cdot \Delta_x \left(\Phi-
		\Phi_{\rm {out}} \right)
		\\
		&
		- 	\left[
		AU_* +  \Phi - (\Phi\cdot U_*) U_* \right] \wedge \Delta_x  \left(\Phi-
		\Phi_{\rm {out}} \right)
		\\
		= \ &
		(\Phi\wedge U_*)
		\left[ U_* \cdot \Delta_x \left(\Phi-
		\Phi_{\rm {out}} \right)  \right]
		- \Phi \wedge \Delta_x  \left(\Phi-
		\Phi_{\rm {out}} \right)
		\\
		& +
		(\Phi\wedge U_*)
		\left[ \Phi
		+ \left( A-\Phi\cdot U_* \right) U_* \right] \cdot \Delta_x \left(\Phi-
		\Phi_{\rm {out}} \right)
		\\
		& +
		(\Phi\wedge U_*) O\left(\lambda_*+|\Phi|^2\right)
		\left[ \Phi
		+ \left( 1+A-\Phi\cdot U_* \right) U_* \right] \cdot \Delta_x \left(\Phi-
		\Phi_{\rm {out}} \right)
		\\
		&
		- 	\left[
		AU_*  - (\Phi\cdot U_*) U_* \right] \wedge \Delta_x  \left(\Phi-
		\Phi_{\rm {out}} \right)  .
	\end{align*}
For above terms, we estimate by \eqref{A-est} and the ansatz $|\Phi|\ll 1$ in \eqref{RPhi-ansatz},
\begin{equation*}
	\begin{aligned}
		&
		\Big|
		(\Phi\wedge U_*)
		\left[ \Phi
		+ \left( A-\Phi\cdot U_* \right) U_* \right] \cdot \Delta_x \left(\Phi-
		\Phi_{\rm {out}} \right)
		\\
		& +
		(\Phi\wedge U_*) O\left(\lambda_*+|\Phi|^2\right)
		\left[ \Phi
		+ \left( 1+A-\Phi\cdot U_* \right) U_* \right] \cdot \Delta_x \left(\Phi-
		\Phi_{\rm {out}} \right)
		\\
		&
		- 	\left[
		AU_*  - (\Phi\cdot U_*) U_* \right] \wedge \Delta_x  \left(\Phi-
		\Phi_{\rm {out}} \right)
		\Big|
		\lesssim
		\left(
		\lambda_* + |\Phi|^2 +  \left|\Phi\cdot U_*\right| \right)
		\left| \Delta_x  \left(\Phi-
		\Phi_{\rm {out}} \right) \right| .
	\end{aligned}
\end{equation*}

Using \eqref{toget-est-1} and \eqref{Delta-P-P_o-est}, we have
\begin{align*}
		&
		(
		\lambda_* + |\Phi|^2 +  |\Phi\cdot U_* | )
		| \Delta_x  (\Phi-
		\Phi_{\rm {out}} ) |
		\\
		\lesssim \ &
		\sum_{j=1}^{N}
		\Big[
		\1_{\{ |x-q^{\J}|\le 3\lambda_* R \}}
		\Big(
		\lambda_*^{3\nu-3\delta_0-2} +
		\lambda_*^{\nu-\delta_0-1}
		+
		|\ln(T-t)|
		\lambda_*^{\Theta+\nu-\delta_0-1} R
		+
		\lambda_*^{2\nu-2\delta_0-1}
		+1
		+
		|\ln(T-t)|
		\lambda_*^{\Theta} R
		\Big)
		\\
		& 	
		+
		\1_{\{  3\lambda_* R < |x-q^{\J}| <3d_q \}}
		\Big]  \lesssim T^\epsilon \Big(
		\sum_{j=1}^{N}
		\varrho_1^{\J}
		+
		\varrho_3\Big)
\end{align*}
provided
\begin{equation}\label{qd24May12-5}
	\Theta+\beta+1+3\delta_0 -3\nu<0,
	\quad
	\Theta+\beta+\delta_0-\nu<0,
	\quad
	2\beta+\delta_0-\nu<0,
	\quad
	\Theta+\beta+2\delta_0-2\nu<0,
	\quad
	\beta<1/2.
\end{equation}

We need more refined estimates for the other part. Recalling \eqref{u-def}, we have
\begin{align*}
		&
		\left(\Phi\wedge U_* \right)
		\left[ U_* \cdot \Delta_x \left(\Phi-
		\Phi_{\rm {out}} \right)  \right]
		- \Phi \wedge \Delta_x  \left(\Phi-
		\Phi_{\rm {out}} \right)
		=
		- \Phi \wedge \left\{\Delta_x  \left(\Phi-
		\Phi_{\rm {out}} \right)
		-
		\left[ U_* \cdot \Delta_x \left(\Phi-
		\Phi_{\rm {out}} \right)  \right] U_*
		\right\}
		\\
		= \ &
		-
		\Big(
		\sum_{j=1}^{N}
		\eta_{d_q}^{\J}  \Phi^{*{\J}}_0  +\Phi_{\rm out}  \Big)
		\wedge \left\{\Delta_x  \left(\Phi-
		\Phi_{\rm {out}} \right)
		-
		\left[ U_* \cdot \Delta_x \left(\Phi-
		\Phi_{\rm {out}} \right)  \right] U_*
		\right\}
		\\
		&
		-  \sum_{j=1}^{N}  \eta_R^{\J}  (Q_{\gamma_j}\Phi_{\rm in}^{\J} )
		\wedge \left\{
		\left[ U^{\J} \cdot \Delta_x \left(\Phi-
		\Phi_{\rm {out}} \right)  \right] U^{\J}
		-
		\left[ U_* \cdot \Delta_x \left(\Phi-
		\Phi_{\rm {out}} \right)  \right] U_*
		\right\}
		\\
		&
		- \sum_{j=1}^{N}  \eta_R^{\J}  (Q_{\gamma_j}\Phi_{\rm in}^{\J} )
		\wedge \left\{\Delta_x  \left(\Phi-
		\Phi_{\rm {out}} \right)
		-
		\left[ U^{\J} \cdot \Delta_x \left(\Phi-
		\Phi_{\rm {out}} \right)  \right] U^{\J}
		\right\}.
	\end{align*}

By \eqref{Phi*-0-j-upp} and \eqref{out-upp-split}, one has
\begin{equation*}
	\Big| \sum_{j=1}^{N}
	\eta_{d_q}^{\J}  \Phi^{*{\J}}_0  +\Phi_{\rm out}  \Big|
	\lesssim
	\sum\limits_{j=1}^N \1_{\{ |x-q^{\J}| < 3d_q \}}
	\left(
	\lambda_j\langle \rho_j\rangle
	+
	|\ln(T-t)|
	\lambda^{\Theta+1}_*  R
	\right)
	+
	\1_{\{ \cap_{j=1}^N \{|x-q^{\J}| \ge  3d_q \} \}}.
\end{equation*}
Combining \eqref{Delta-P-P_o-est}, we get
\begin{align*}
		&
		\Big| \Big(
		\sum_{j=1}^{N}
		\eta_{d_q}^{\J}  \Phi^{*{\J}}_0  +\Phi_{\rm out}  \Big)
		\wedge \left\{\Delta_x  \left(\Phi-
		\Phi_{\rm {out}} \right)
		-
		\left[ U_* \cdot \Delta_x \left(\Phi-
		\Phi_{\rm {out}} \right)  \right] U_*
		\right\} \Big|
		\\
		\lesssim \ &
		\sum_{j=1}^{N}
		\Big[
		\1_{\{ |x-q^{\J}| \le 3 \lambda_* R \}}
		\left(
		\lambda_*^{\nu-\delta_0-1} \langle \rho_j\rangle^{-l-1}
		+
		1+
		|\ln(T-t)| \lambda_*^{\nu-\delta_0+\Theta-1} R \langle \rho_j\rangle^{-l-2}
		+
		|\ln(T-t)| \lambda_*^{\Theta} R \langle \rho_j\rangle^{-1}
		\right)
		\\
		&
		+
		\1_{\{ 3 \lambda_* R  < |x-q^{\J}| < 3d_q \}}  \Big]
		\lesssim
		T^{\epsilon}\Big(\sum_{j=1}^N \varrho_1^{\J}+\varrho_3
		\Big)
\end{align*}
provided
\begin{equation}\label{2deri-para-1}
	\Theta +\beta+\delta_0 -\nu<0,\quad
	2\beta+\delta_0-\nu<0,
	\quad \beta<1/2.
\end{equation}

By \eqref{U*-norm}, \eqref{inn-topo}, and \eqref{Delta-P-P_o-est}, we have
\begin{align}
		&
		\big|
		\eta_R^{\J}  (Q_{\gamma_j}\Phi_{\rm in}^{\J} )
		\wedge \big\{
		\big[ U^{\J} \cdot \Delta_x (\Phi-
		\Phi_{\rm {out}} )  \big] U^{\J}
		-
		\big[ U_* \cdot \Delta_x (\Phi-
		\Phi_{\rm {out}} )  \big] U_*
		\big\}
		\big|
		\nonumber
		\\
		= \ &
		\big|
		\eta_R^{\J}  (Q_{\gamma_j}\Phi_{\rm in}^{\J} )
		\wedge \big\{
		\big[ (U^{\J} - U_* ) \cdot \Delta_x (\Phi-
		\Phi_{\rm {out}} )  \big] U^{\J}
		+
		\big[ U_* \cdot \Delta_x (\Phi-
		\Phi_{\rm {out}} )  \big] (U^{\J}-U_* )
		\big\}
		\big|
		\nonumber
		\\
		\lesssim \ &
		\eta_R^{\J} \lambda_* | \Phi_{\rm in}^{\J} |
		| \Delta_x  (\Phi-
		\Phi_{\rm {out}} )  |
		\lesssim
		\1_{\{ |x-q^{\J}| \le 3\lambda_* R \}}
		\big(
		\lambda_*^{2\nu-2\delta_0-1} \langle \rho_j \rangle^{-2l-2}
		+
		\lambda_*^{\nu-\delta_0} \langle \rho_j \rangle^{-l-1}
		\big)
		\lesssim
		T^{\epsilon}  \varrho_1^{\J}
		\label{qd24May4-1}
\end{align}
provided
\begin{equation}\label{2deri-para-2}
	\Theta+\beta+2\delta_0-2\nu<0 , \quad
	\nu-\delta_0 >\Theta+\beta-1.
\end{equation}

Since $ \Phi_{\rm{in} }^{\J} \cdot W^{\J}=0$, we get
\begin{equation*}
	\eta_R^{\J}  (Q_{\gamma_j}\Phi_{\rm in}^{\J})
	\wedge \left\{\Delta_x  \left(\Phi-
	\Phi_{\rm {out}} \right)
	-
	\left[ U^{\J} \cdot \Delta_x \left(\Phi-
	\Phi_{\rm {out}} \right)  \right] U^{\J}
	\right\} = \eta_R^{\J} f_j(x,t) U^{\J}
\end{equation*}
with a scalar function
$ f_j(x,t) = \pm
\big| Q_{\gamma_j}\Phi_{\rm in}^{\J}
\wedge \big\{ \Delta_x  (\Phi-
\Phi_{\rm {out}} )
-
\big[ U^{\J} \cdot \Delta_x (\Phi-
\Phi_{\rm {out}} )  \big] U^{\J}
\big\} \big| $.
By $U_*$-operation, it suffices to estimate $ \eta_R^{\J} f_j(x,t) (U^{\J} -U_*)$. By \eqref{U*-norm}, and same estimate as \eqref{qd24May4-1}, then
\begin{equation*}
	\big| \eta_R^{\J} f_j(x,t) (U^{\J} -U_* ) \big|
	\lesssim
	\eta_R^{\J} \lambda_* | \Phi_{\rm in}^{\J} |
	\left| \Delta_x  (\Phi-
	\Phi_{\rm {out}} )  \right|
	\lesssim
	T^{\epsilon}  \varrho_1^{\J}.
\end{equation*}

$\bullet$ The remaining terms will not be strictly handled in order.
Under the parameter assumptions \eqref{nab-A-para}, by \eqref{nab-A-est} and \eqref{nabU*-est}, we have
\begin{equation}\label{nabA-toget-1}
	\begin{aligned}
		&
		\left| \left(\nabla_x A + U_* \cdot \nabla_x U_* + \Phi\cdot \nabla_x \Phi \right) \cdot \nabla_x U_*  \right|
		\\
		\lesssim \ &
		\bigg\{
		\sum_{j=1}^{N}
		\left[	\1_{\{ |x-q^{\J}|\le 3\lambda_* R \}}
		\left(
		\lambda_*^{\epsilon +1} \lambda_*^{\Theta}
		(\lambda_* R)^{-1}
		+
		\lambda_* \langle \rho_j\rangle \right)
		+
		\1_{\{  3\lambda_* R < |x-q^{\J}| <3d_q \}}
		\left(
		|\ln(T-t)|
		\lambda^{\Theta+1}_*  R
		+
		\lambda_* \langle \rho_j \rangle
		\right)  \right]
		\\
		&
		+
		\1_{\{ \cap_{j=1}^N \{|x-q^{\J}| \ge  3d_q \} \}}
		\bigg\}
		\bigg(
		\sum\limits_{j=1}^N \1_{\{ |x-q^{\J}| < 3d_q\}}  \lambda_*^{-1}\langle \rho_j \rangle^{-2}
		+   \1_{\{ \cap_{j=1}^N \{|x-q^{\J}| \ge 3 d_q  \} \}}  \lambda_*
		\bigg)
		\lesssim
		T^\epsilon \bigg(\sum\limits_{j=1}^N \varrho_1^{\J} + \varrho_3 \bigg) .
	\end{aligned}
\end{equation}

\noindent$\bullet$
To estimate $ \left(A -  \Phi\cdot U_* \right) \Delta_x U_*
= - \left(A -  \Phi\cdot U_* \right)
\sum\limits_{j=1}^N |\nabla_x U^{\J}|^2  U^{\J}  $.
By $U_*$-operation and \eqref{U*-norm}, \eqref{A-est}, \eqref{nabU*-est}, it suffices to estimate
\begin{equation*}
	\Big|
	\left(A -  \Phi\cdot U_* \right)
	\sum\limits_{j=1}^N |\nabla_x U^{\J}|^2
	\left( U^{\J} - U_* \right)
	\Big|
	\lesssim \left(
	\lambda_* + |\Phi|^2 + |\Phi\cdot U_*|
	\right)
	\sum\limits_{j=1}^N  \lambda_*^{-2}
	\langle \rho_j\rangle^{-4}
	\sum\limits_{k=1, k\ne j}^N \langle \rho_k\rangle^{-1},
\end{equation*}
which will be dealt with uniformly in \eqref{toget-2} later.

\noindent$\bullet$
By \eqref{one in nabla A}, one has
\begin{align*}
		&
		\big|
		(\Phi\wedge U_*)
		\big[(1+A)|U_*|^2+\big( U_*  \cdot \Pi_{U_*^{\perp}}\Phi \big)\big]^{-1}
		( 1+A-\Phi\cdot U_* )
		( 2\nabla_x \Phi\cdot\nabla_x U_*  )
		-
		(\Phi\wedge U_*) ( 2\nabla_x \Phi\cdot\nabla_x U_*  )
		\big|
		\\
		= \ &
		\big|
		(\Phi\wedge U_*)
		\left(
		1+ O\left(\lambda_*+|\Phi|^2+ |\Phi\cdot U_*| \right)  \right)^2
		\left( 2\nabla_x \Phi\cdot\nabla_x U_*  \right)
		-   (\Phi\wedge U_*) \left( 2\nabla_x \Phi\cdot\nabla_x U_*  \right)
		\big|
		\\
		\lesssim \ &
		|\Phi |
		\left( \lambda_*+|\Phi|^2+ |\Phi\cdot U_*| \right)
		\left|\nabla_x \Phi\cdot\nabla_x U_*  \right|
		\lesssim
		\left( \lambda_*+|\Phi|^2+ |\Phi\cdot U_*| \right)
		\big(
		|\Phi |
		\left|\nabla_x U_*  \right|^2
		+
		|\Phi |
		\left|\nabla_x \Phi \right|^2  \big),
	\end{align*}
which will be controlled by \eqref{toget-2} and \eqref{|nabPhi|^2*Phi} later.

\noindent$\bullet$ By \eqref{one in nabla A}, $|U_*|=1+O(\lambda_*)$ by \eqref{U*-norm}, \eqref{A-est},
\begin{align*}
		&
		\Big|
		\Big\{
		|\nabla_x A|^2 |U_*|^2
		+
		2(1+A)\nabla_x A \cdot \left( U_* \cdot \nabla_x U_*\right)
		+
		A(2+A) \left|\nabla_x U_*\right|^2
		\\
		&
		+
		2 \sum\limits_{k=1}^{2}
		\Big\{
		\left[  \left(\pp_{x_k} A  \right) U_{*}\cdot
		\pp_{x_k} \Phi   + A \pp_{x_k} U_{*} \cdot
		\pp_{x_k} \Phi  \right]
		-
		\pp_{x_k}\left( U_{*} \cdot  \Phi \right) \left[ |U_{*}|^2 \pp_{x_k} A + (1+A)
		U_{*} \cdot  \pp_{x_k} U_{*} \right]
		\\
		&
		- \left(U_* \cdot \Phi \right) \Big[ \left(\pp_{x_k} A \right) U_{*} \cdot   \pp_{x_k} U_*   + (1+A) \left|\pp_{x_k} U_{*}\right|^2 \Big]
		\Big\}
		\\
		&
		+
		\sum\limits_{k=1}^2
		\left| \pp_{x_k} \Phi
		-
		U_* \pp_{x_k}\left(
		\Phi\cdot  U_* \right)
		-
		(\Phi\cdot U_*) \pp_{x_k} U_*
		\right|^2
		\Big\}
		\Pi_{U_*^{\perp}} \Phi
		\Big|
		\\
		&
		+
		\Big|
		-b\Big\{
		-2^{-1} (\Phi\wedge U_*)
		\Big[(1+A)|U_*|^2+\Big( U_*  \cdot \Pi_{U_*^{\perp}}\Phi \Big)\Big]^{-1}
		\Big\{  2 ( 1+A-\Phi\cdot U_* )
		(  \Phi\cdot \Delta_x U_*  )
		\\
		& +2(|U_*|^2-2)| \nabla_x \left(\Phi\cdot  U_* \right)  |^2+2|\nabla_x \Phi|^2
		+8[(\Phi\cdot U_*)-(1+A)] (U_* \cdot \nabla_x U_*)\cdot  \nabla_x\left(\Phi\cdot  U_* \right)
		\\
		& +2|U_*|^2|\nabla_x A|^2+4
		\left[
		-2(\Phi\cdot U_*) U_*\cdot \nabla_x U_*
		+
		(1-|U_*|^2)
		\nabla_x
		\left(
		\Phi\cdot  U_*
		\right)
		\right]
		\cdot \nabla_x A
		\\
		&
		+8(1+A)\left(U_* \cdot  \nabla_x  U_*\right)\cdot \nabla_x A
		+ 2\left[(\Phi\cdot U_*)-(1+A)\right]^2  \left(|\nabla_x U_*|^2 + U_* \cdot \Delta_x U_*\right) \Big\}
		\\
		& -(\Pi_{U_*^{\perp}} \Phi +AU_*)\wedge \left[2 \nabla_x \left(\Phi\cdot  U_* \right)\cdot \nabla_x U_* \right]+[A-(\Phi\cdot U_*)]\Phi\wedge \Delta_x U_*\\
		& +\Pi_{U_*^{\perp}} \Phi \wedge (2\nabla_x A \cdot \nabla_x U_*)+
		\left[(\Phi\cdot U_*)^2-2A(\Phi\cdot U_*) -
		2(\Phi\cdot U_*)\right] U_*\wedge \Delta_x U_*
		\\
		& +
		(1+A)U_* \wedge  \left[ A \Delta_x U_* + 2\left( \nabla_x A + U_* \cdot \nabla_x U_* + \Phi\cdot \nabla_x \Phi \right) \cdot \nabla_x U_* + \Delta_x U_* -2\left( U_* \cdot \nabla_x U_*  \right)\cdot \nabla_x U_* \right] \Big\}
		\\
		& +2b
		A U_* \wedge
		\left[
		\left(   \Phi\cdot \nabla_x \Phi \right) \cdot \nabla_x U_* \right]
		\Big|
		\\
		\lesssim \ &
		|\nabla_x A|^2  |\Phi|
		+
		\big(\lambda_* + |\Phi|^2 + |U_* \cdot \Phi |  \big) |\Phi|
		\big( |\nabla_x U_* |^2 + |\Delta_x U_*|  \big)
		\\
		&
		+
		\left|U_{*} \cdot  \nabla_x U_{*} \right|^2  |\Phi|
		+
		| \nabla_x \Phi |^2  |\Phi|
		+
		\Big|  \Phi\cdot \sum\limits_{j=1}^N |\nabla_x U^{\J}|^2 U^{\J}  \Big| |\Phi|
		+   \left| |\nabla_x U_*|^2 + U_* \cdot \Delta_x U_*\right| |\Phi|
		\\
		&
		+ ( \lambda_* + |\Phi| )   \left| \nabla_x \left(\Phi\cdot  U_* \right) \right| \left| \nabla_x U_* \right|
		+  |\Phi|\left| \nabla_x A \cdot \nabla_x U_* \right| +
		\left(\lambda_* + |\Phi|^2 + \left|U_* \cdot \Phi \right|  \right) \left|U_*\wedge \Delta_x U_*  \right|
		\\
		&
		+
		\left| U_* \wedge  \left[   \Delta_x U_* -2\left( U_* \cdot \nabla_x U_*  \right)\cdot \nabla_x U_* \right] \right|
		+
		\left| \left( \nabla_x A + U_* \cdot \nabla_x U_* + \Phi\cdot \nabla_x \Phi \right) \cdot \nabla_x U_*\right|
		\\
		\lesssim \ &
		(\lambda_* + |\Phi|^2 + |U_* \cdot \Phi | )
		\big[
		|\Phi|
		\big( |\nabla_x U_* |^2 + |\Delta_x U_*|  \big)
		+
		|U_*\wedge \Delta_x U_*  |
		\big]
		\\
		&
		+
		\left|U_{*} \cdot  \nabla_x U_{*} \right|^2  |\Phi|
		+
		| \nabla_x \Phi |^2  |\Phi|
		+
		\Big|  \Phi\cdot \sum\limits_{j=1}^N |\nabla_x U^{\J}|^2 U^{\J}  \Big| |\Phi|
		+   \left| |\nabla_x U_*|^2 + U_* \cdot \Delta_x U_*\right| |\Phi|
		\\
		&
		+ ( \lambda_* + |\Phi| )   \left| \nabla_x \left(\Phi\cdot  U_* \right) \right| \left| \nabla_x U_* \right|
		+
		|\Phi|\left| \left(
		U_* \cdot \nabla_x U_*   \right)\cdot \nabla_x U_* \right|
		+
		|\Phi|\left| \left(   \Phi\cdot \nabla_x \Phi  \right)\cdot \nabla_x U_* \right|
		\\
		&
		+
		\left| U_* \wedge  \left[   \Delta_x U_* -2\left( U_* \cdot \nabla_x U_*  \right) \cdot \nabla_x U_* \right] \right|
		+
		T^{\epsilon}
		\Big(\sum\limits_{j=1}^N \varrho_1^{\J} + \varrho_3 \Big),
\end{align*}
where for the last ``$\lesssim$'', we require the assumption  \eqref{nab-A-para} and then by \eqref{nab-A-est},
\begin{equation*}
	|\nabla_x A|^2  |\Phi|
	\lesssim
	\left| U_* \cdot \nabla_x U_* \right|^2 |\Phi|
	+
	|\Phi|^3 |\nabla_x \Phi |^2
	+
	T^\epsilon \varrho_3,
\end{equation*}
\begin{equation*}
	\begin{aligned}
		|\Phi|\left| \nabla_x A \cdot \nabla_x U_* \right|
		\le \ &
		|\Phi|\left| \left(\nabla_x A
		+
		U_* \cdot \nabla_x U_* + \Phi\cdot \nabla_x \Phi  \right) \cdot \nabla_x U_* \right|
		\\
		&
		+
		|\Phi|\left| \left(
		U_* \cdot \nabla_x U_*   \right) \cdot \nabla_x U_* \right|
		+
		|\Phi|\left| \left(   \Phi\cdot \nabla_x \Phi  \right) \cdot \nabla_x U_* \right|,
	\end{aligned}
\end{equation*}
and $\left| \left( \nabla_x A + U_* \cdot \nabla_x U_* + \Phi\cdot \nabla_x \Phi \right) \cdot \nabla_x U_*\right|  $ has been controlled by \eqref{nabA-toget-1}.

\noindent$\bullet$
Combining \eqref{nabU*-est}, \eqref{Del-U*-est}, \eqref{U*wed-DeltaU*}, \eqref{toget-est-1}, and \eqref{Phi-upp}, we then obtain
\begin{align}
		&
		(\lambda_* + |\Phi|^2 + |U_* \cdot \Phi |  ) \big[ |\Phi|
		\big( |\nabla_x U_* |^2 + |\Delta_x U_*|  \big)
		+
		|U_*\wedge \Delta_x U_* | \big]
		\lesssim
		\left(\lambda_* + |\Phi|^2 + \left|U_* \cdot \Phi \right|  \right)
		\nonumber
		\\
		&\times
		\bigg[ \sum\limits_{j=1}^N \1_{\{ |x-q^{\J}| < 3d_q\}}
		\left(
		|\Phi| \lambda_*^{-2}\langle \rho_j \rangle^{-4}
		+
		\lambda_*^{-1} \langle \rho_j\rangle^{-4}
		+
		\lambda_*^{2}  \langle \rho_j \rangle^{-1}
		\right)
		+   \1_{\{ \cap_{j=1}^N \{|x-q^{\J}| \ge 3 d_q  \} \}}  \left(|\Phi| \lambda_*^2  + \lambda_*^3 \right)
		\bigg]
		\nonumber
		\\
		\lesssim \ &
		\sum_{j=1}^{N}
		\Big[
		\1_{\{ |x-q^{\J}|\le 3\lambda_* R \}}
		\left(
		\lambda_*^{3\nu-3\delta_0-2}
		+
		|\ln(T-t)|
		\lambda^{\nu-\delta_0+\Theta-1}_*  R
		+
		|\ln(T-t)|^2
		\lambda^{2\Theta}_*  R^2
		\right)
		\nonumber
		\\
		&
		+
		\1_{\{  3\lambda_* R < |x-q^{\J}| <3d_q \}}
		\langle  \rho_j  \rangle^{-2}
		\Big]
		+
		\1_{\{ \cap_{j=1}^N \{|x-q^{\J}| \ge  3d_q \} \}}
		\lambda_*^2
		\lesssim
		T^\epsilon \Big(\sum\limits_{j=1}^N \varrho_1^{\J} + \varrho_3 \Big)
		\label{toget-2}
\end{align}
provided $\delta_0<\nu$ in \eqref{inn-top0-para},
\begin{equation}\label{NN-par-1}
	\Theta<\beta,
	\quad
	\Theta+\beta+1+3\delta_0-3\nu<0,\quad
	2\beta+\delta_0-\nu<0,\quad 3\beta<1+\Theta.
\end{equation}

\noindent$\bullet$
By \eqref{U*cdot-nabU*-split}, \eqref{Phi-upp}, and $\delta_0<\nu$ in \eqref{inn-top0-para}, we get $ \left| U_* \cdot \nabla_x U_* \right|^2 |\Phi| \lesssim T^{\epsilon} \varrho_3 $.

\noindent$\bullet$ By \eqref{Phi-upp}, \eqref{nabla-Phi-upp},
\begin{align}
		&
		| \nabla_x \Phi |^2  |\Phi|
		\lesssim
		\sum_{j=1}^{N}
		\Big[
		\1_{\{ |x-q^{\J}|\le 3\lambda_* R \}}
		\left(
		\lambda_* \langle \rho_j\rangle
		+
		\lambda_*^{3\nu-3\delta_0-2}
		+
		|\ln(T-t)|
		\lambda_*^{2\nu-2\delta_0+\Theta-1} R
		\right)
		\nonumber
		\\
		&
		+
		\1_{\{  3\lambda_* R < |x-q^{\J}| <3d_q \}}
		\left(
		\lambda_* \langle  \rho_j  \rangle
		+
		|\ln(T-t)|
		\lambda^{\Theta+1}_*  R
		\right)
		\Big]
		+
		\1_{\{ \cap_{j=1}^N \{|x-q^{\J}| \ge  3d_q \} \}}
		\lesssim
		T^\epsilon \Big(\sum\limits_{j=1}^N \varrho_1^{\J} + \varrho_3 \Big)
		\label{|nabPhi|^2*Phi}
\end{align}
provided \eqref{inn-top0-para},
\begin{equation}\label{NN-par-2}
	\Theta<\beta,
	\quad
	\Theta + 2\beta<2,
	\quad
	\Theta+\beta+1+3\delta_0-3\nu<0,\quad \beta+\delta_0-\nu <0 .
\end{equation}

\noindent$\bullet$
By \eqref{DeltaU*-type}, \eqref{nabU*-est}, we have
\begin{align*}
		&
		\bigg|  \Phi\cdot \sum\limits_{j=1}^N |\nabla_x U^{\J}|^2 U^{\J}  \bigg| |\Phi|
		\le
		\bigg|   \sum\limits_{j=1}^N |\nabla_x U^{\J}|^2 \Phi\cdot \left(U^{\J} - U_* \right)  \bigg| |\Phi|
		+
		\bigg|   \sum\limits_{j=1}^N |\nabla_x U^{\J}|^2 \Phi\cdot U_*  \bigg| |\Phi|
		\\
		\lesssim \ &
		|\Phi|^2
		\bigg[
		\sum\limits_{j=1}^{N}
		\1_{\{ |x-q^{\J}| <3d_q \}}
		\left(
		\lambda_*^{-1} \langle \rho_j\rangle^{-4}
		+
		\lambda_*^{2}  \langle \rho_j \rangle^{-1}
		\right)
		+
		\1_{\{ \cap_{j=1}^N \{|x-q^{\J}| \ge 3d_q \} \}}
		\lambda_*^3
		\bigg]
		\\
		&
		+
		\left| \Phi\cdot U_*  \right| |\Phi|
		\bigg[
		\sum\limits_{j=1}^N \1_{\{ |x-q^{\J}| < 3d_q\}}  \lambda_*^{-2}\langle \rho_j \rangle^{-4}
		+   \1_{\{ \cap_{j=1}^N \{|x-q^{\J}| \ge 3 d_q  \} \}}  \lambda_*^2
		\bigg] \lesssim
		T^\epsilon \bigg(\sum\limits_{j=1}^N \varrho_1^{\J} + \varrho_3 \bigg),
	\end{align*}
where the last step is derived by the same way as \eqref{toget-2} under the parameter assumption \eqref{NN-par-1}.

\noindent$\bullet$
By \eqref{Delta|U*|^2} and \eqref{Phi-upp}, we get
\begin{align}
		&
		\big| |\nabla_x U_*|^2 + U_* \cdot \Delta_x U_*\big| |\Phi|
		\lesssim
		\sum_{j=1}^{N}
		\Big[
		\1_{\{ |x-q^{\J}|\le 3\lambda_* R \}}
		\left(
		\lambda_*^{\nu-\delta_0-1}
		+
		|\ln(T-t)|
		\lambda^{\Theta}_*  R
		\right)
		\nonumber
		\\
		&
		+
		\1_{\{  3\lambda_* R < |x-q^{\J}| <3d_q \}}
		\left(
		\lambda_* \langle  \rho_j  \rangle^{-1}
		+
		|\ln(T-t)|
		\lambda^{\Theta+1}_*  R
		\langle  \rho_j  \rangle^{-2}
		+
		\langle \rho_j\rangle^{-3}
		+
		|\ln(T-t)|
		\lambda^{\Theta}_*  R  \langle \rho_j\rangle^{-4}
		\right)
		\Big]
		\nonumber
		\\
		&
		+
		\1_{\{ \cap_{j=1}^N \{|x-q^{\J}| \ge  3d_q \} \}} \lambda_*^2
		\lesssim
		T^\epsilon \Big(\sum\limits_{j=1}^N \varrho_1^{\J} + \varrho_3 \Big)
		\label{toget-3}
\end{align}
provided
\begin{equation}\label{NN-par-3}
	\Theta<\beta<1/2,
	\quad	\Theta+\beta+\delta_0-\nu<0 .
\end{equation}

\noindent$\bullet$
Combining \eqref{Phi-upp}, \eqref{nab-Phi-cdot-U*}, and \eqref{nabU*-est}, one has
\begin{align*}
		&
		( \lambda_* + |\Phi| )   \left| \nabla_x \left(\Phi\cdot  U_* \right) \right| \left| \nabla_x U_* \right|
		\lesssim
		\sum_{j=1}^{N}
		\Big[
		\1_{\{ |x-q^{\J}|\le 3\lambda_* R \}}
		\left(
		|\ln(T-t)|
		\lambda_*^{\nu-\delta_0+\Theta-1} R
		+
		|\ln(T-t)|^2
		\lambda^{2\Theta}_*  R^2
		\right)
		\\
		&
		+
		\1_{\{  3\lambda_* R < |x-q^{\J}| <3d_q \}}
		\left(
		\langle \rho_j \rangle^{-1}
		+
		|\ln(T-t)|
		\lambda^{\Theta}_*  R
		\langle \rho_j \rangle^{-2}
		\right)
		\Big]
		+
		\1_{\{ \cap_{j=1}^N \{|x-q^{\J}| \ge  3d_q \} \}}
		\lambda_*
		\lesssim
		T^\epsilon \bigg(\sum\limits_{j=1}^N \varrho_1^{\J} + \varrho_3 \bigg)
\end{align*}
provided
\begin{equation}\label{NN-par-4}
	\Theta<\beta,
	\quad
	2\beta+\delta_0-\nu<0,\quad  3\beta<1+\Theta.
\end{equation}

\noindent$\bullet$
By \eqref{nabU*-est} and \eqref{U*cdot-nabU*-split}, we have
\begin{equation*}
	|\Phi|\left| \left(
	U_* \cdot \nabla_x U_*   \right) \cdot \nabla_x U_* \right|
	\lesssim
	|\Phi|
	\bigg(
	\sum\limits_{j=1}^N \1_{\{ |x-q^{\J}| < 3d_q\}}  \lambda_*^{-1}\langle \rho_j \rangle^{-4}
	+   \1_{\{ \cap_{j=1}^N \{|x-q^{\J}| \ge 3 d_q  \} \}}  \lambda_*^3
	\bigg)
	\lesssim
	T^\epsilon \bigg(\sum\limits_{j=1}^N \varrho_1^{\J} + \varrho_3 \bigg),
\end{equation*}
which is obtained by the same calculation as in \eqref{toget-3} under the parameter assumption \eqref{NN-par-3}.

\noindent$\bullet$
By \eqref{Phi-upp}, \eqref{nabla-Phi-upp}, and \eqref{nabU*-est}, we get
\begin{align*}
		&
		|\Phi|\left| \left(   \Phi\cdot \nabla_x \Phi  \right) \cdot \nabla_x U_* \right|
		\lesssim
		\sum_{j=1}^{N}
		\Big[
		\1_{\{ |x-q^{\J}|\le 3\lambda_* R \}}
		\Big(
		\lambda_*^{3\nu-3\delta_0-2}
		+
		|\ln(T-t)|^2
		\lambda^{2\Theta+\nu-\delta_0}_*  R^2
		\Big)
		\\
		& \quad
		+
		\1_{\{  3\lambda_* R < |x-q^{\J}| <3d_q \}}
		\lambda_*
		\Big]
		+
		\1_{\{ \cap_{j=1}^N \{|x-q^{\J}| \ge  3d_q \} \}}
		\lambda_*
		\lesssim
		T^\epsilon \bigg(\sum\limits_{j=1}^N \varrho_1^{\J} + \varrho_3 \bigg)
\end{align*}
provided $\delta_0<\nu <1$ in \eqref{inn-top0-para},
\begin{equation}\label{NN-par-5}
	\Theta+\beta+1+3\delta_0-3\nu<0,\quad
	3\beta<\Theta+1+\nu-\delta_0 .
\end{equation}

\noindent$\bullet$
$	\left| U_* \wedge  \left[   \Delta_x U_* -2\left( U_* \cdot \nabla_x U_*  \right) \cdot \nabla_x U_* \right] \right|
\lesssim
T^\epsilon \Big[
\sum\limits_{j=1}^{N} \left( \varrho_1^{\J} + \varrho_2^{\J} \right) + \varrho_3 \Big] $ will be deduced by \eqref{cancel-1} later.

\noindent$\bullet$  To estimate
\begin{small}
	\begin{align*}
			&
			2\left(a  -b U_* \wedge  \right)
			\bigg[
			\left( \nabla_x U_* \cdot \nabla_x \Phi \right) \Phi - \left(   \Phi\cdot \nabla_x \Phi \right) \cdot \nabla_x U_*
			\\
			&
			-
			\sum\limits_{j=1}^N
			\left\{ \left[\nabla_x U_* \cdot \nabla_x \left( \eta_R^{\J}  Q_{\gamma_j}\Phi_{\rm in}^{\J}  \right) \right]  \left( \eta_R^{\J}  Q_{\gamma_j}\Phi_{\rm in}^{\J}  \right) -  \left[  \left( \eta_R^{\J}  Q_{\gamma_j}\Phi_{\rm in}^{\J}  \right) \cdot \nabla_x \left( \eta_R^{\J}  Q_{\gamma_j}\Phi_{\rm in}^{\J}  \right) \right] \cdot \nabla_x U_*  \right\}
			\\
			& + \sum\limits_{j=1}^N
			\left\{ \left[\nabla_x U_* \cdot \nabla_x \left( \eta_R^{\J}  Q_{\gamma_j}\Phi_{\rm in}^{\J}  \right) \right]  \left( \eta_R^{\J}  Q_{\gamma_j}\Phi_{\rm in}^{\J}  \right) -  \left[  \left( \eta_R^{\J}  Q_{\gamma_j}\Phi_{\rm in}^{\J}  \right) \cdot \nabla_x \left( \eta_R^{\J}  Q_{\gamma_j}\Phi_{\rm in}^{\J}  \right) \right] \cdot \nabla_x U_*  \right\}
			\\
			& -
			\sum\limits_{j=1}^N
			\left\{ \left[\nabla_x U^{\J} \cdot \nabla_x \left( \eta_R^{\J}  Q_{\gamma_j}\Phi_{\rm in}^{\J}  \right) \right]  \left( \eta_R^{\J}  Q_{\gamma_j}\Phi_{\rm in}^{\J}  \right) -  \left[  \left( \eta_R^{\J}  Q_{\gamma_j}\Phi_{\rm in}^{\J}  \right) \cdot \nabla_x \left( \eta_R^{\J}  Q_{\gamma_j}\Phi_{\rm in}^{\J}  \right) \right] \cdot \nabla_x U^{\J}  \right\}
			\bigg]
			\\
			&
			+
			2\left(a  -b U_* \wedge  \right)
			\sum\limits_{j=1}^N
			\left\{ \left[\nabla_x U^{\J} \cdot \nabla_x \left( \eta_R^{\J}  Q_{\gamma_j}\Phi_{\rm in}^{\J}  \right) \right]  \left( \eta_R^{\J}  Q_{\gamma_j}\Phi_{\rm in}^{\J}  \right) -  \left[  \left( \eta_R^{\J}  Q_{\gamma_j}\Phi_{\rm in}^{\J}  \right) \cdot \nabla_x \left( \eta_R^{\J}  Q_{\gamma_j}\Phi_{\rm in}^{\J}  \right) \right] \cdot \nabla_x U^{\J}  \right\}
			\\
			&
			-
			\sum\limits_{j=1}^N
			2\left(
			a -
			b U^{\J} \wedge \right)
			\Big\{ \left[\nabla_x U^{\J} \cdot \nabla_x \left( \eta_R^{\J}  Q_{\gamma_j}\Phi_{\rm in}^{\J}  \right) \right]  \left( \eta_R^{\J}  Q_{\gamma_j}\Phi_{\rm in}^{\J}  \right)
			-  \left[  \left( \eta_R^{\J}  Q_{\gamma_j}\Phi_{\rm in}^{\J}  \right) \cdot \nabla_x \left( \eta_R^{\J}  Q_{\gamma_j}\Phi_{\rm in}^{\J}  \right) \right] \cdot \nabla_x U^{\J}  \Big\}.
	\end{align*}
\end{small}

Due to the cut-off functions, \eqref{nabU*-est}, \eqref{nabla-Phi-upp}, and \eqref{Phi*-0-j-upp}, \eqref{out-upp-split} with $\Phi_{\rm {out} } \in B_{\rm{out}}$ and $\Theta<\beta$, \eqref{out-nabla-upp}, it follows that
\begin{small}
	\begin{align*}
			&
			\Big|
			(\nabla_x U_* \cdot \nabla_x \Phi )  \Phi
			-
			\sum\limits_{j=1}^N
			\left[\nabla_x U_* \cdot \nabla_x \left( \eta_R^{\J}  Q_{\gamma_j}\Phi_{\rm in}^{\J}  \right) \right]  \left( \eta_R^{\J}  Q_{\gamma_j}\Phi_{\rm in}^{\J}  \right)
			\Big|
			\\
			= \ &
			\bigg|
			\left(\nabla_x U_* \cdot \nabla_x \Phi \right)  \Phi
			-
			\bigg[ \nabla_x U_* \cdot \nabla_x \bigg(\sum_{j=1}^{N} \eta_R^{\J}  Q_{\gamma_j}\Phi_{\rm in}^{\J}  \bigg) \bigg]  \bigg(\sum_{j=1}^{N} \eta_R^{\J}  Q_{\gamma_j}\Phi_{\rm in}^{\J}  \bigg)
			\bigg|
			\\
			= \ &
			\bigg|
			\bigg(\nabla_x U_* \cdot \nabla_x \Phi \bigg)
			\bigg(  \Phi
			-
			\sum_{j=1}^{N} \eta_R^{\J}  Q_{\gamma_j}\Phi_{\rm in}^{\J}  \bigg)
			+
			\bigg[ \nabla_x U_* \cdot \nabla_x \bigg(\Phi- \sum_{j=1}^{N} \eta_R^{\J}  Q_{\gamma_j}\Phi_{\rm in}^{\J}  \bigg) \bigg]  \bigg(\sum_{j=1}^{N} \eta_R^{\J}  Q_{\gamma_j}\Phi_{\rm in}^{\J}  \bigg)
			\bigg|
			\\
			\lesssim \ &
			\bigg( \sum\limits_{j=1}^N \1_{\{ |x-q^{\J}| < 3d_q\}}  \lambda_*^{-1}\langle \rho_j \rangle^{-2}
			+   \1_{\{ \cap_{j=1}^N \{|x-q^{\J}| \ge 3 d_q  \} \}}  \lambda_* \bigg)
			\\
			&\times
			\bigg[
			\bigg\{
			\sum_{j=1}^{N}
			\Big[
			\1_{\{ |x-q^{\J}| \le 3\lambda_* R \}}
			\left(
			\lambda_*^{\nu-\delta_0-1} \langle \rho_j\rangle^{-l-1}
			+1
			\right)
			+ \1_{\{ 3\lambda_* R  < |x-q^{\J}| < 3d_q\}}
			\Big]
			+
			\1_{\{ \cap_{j=1}^N \{|x-q^{\J}| \ge 3 d_q  \} \}}
			\bigg\}
			\\
			&
			\times
			\bigg\{
			\sum_{j=1}^{N}
			\1_{\{ |x-q^{\J}| <3d_q \}}
			\left(
			\lambda_* \langle  \rho_j  \rangle
			+
			|\ln(T-t)|
			\lambda^{\Theta+1}_*  R
			\right)
			+
			\1_{\{ \cap_{j=1}^N \{|x-q^{\J}| \ge  3d_q \} \}}
			\bigg\}
			+  \sum_{j=1}^{N}
			\1_{\{ |x-q^{\J}| \le 3\lambda_* R \}}
			\lambda_*^{\nu-\delta_0} \langle \rho_j\rangle^{-l}   \bigg]
			\\
			\lesssim \ &
			\sum_{j=1}^{N}
			\Big[
			\1_{\{ |x-q^{\J}| \le 3\lambda_* R \}}
			\left(
			\lambda_*^{\nu-\delta_0-1}
			+
			|\ln (T-t)| \lambda_*^{\Theta +\nu-\delta_0-1} R
			\right)
			+ \1_{\{ 3\lambda_* R  < |x-q^{\J}| < 3d_q\}}
			\langle \rho_j\rangle^{-1}
			\Big]
			+
			\1_{\{ \cap_{j=1}^N \{|x-q^{\J}| \ge 3 d_q  \} \}} \lambda_*
			\\
			\lesssim \ &
			T^\epsilon \bigg(\sum\limits_{j=1}^N \varrho_1^{\J} + \varrho_3 \bigg)
	\end{align*}
\end{small}
provided $\delta_0<\nu <1$ in \eqref{inn-top0-para},
\begin{equation}\label{NN-par-6}
	\Theta+\beta+\delta_0-\nu<0,
	\quad
	2\beta+\delta_0-\nu<0 .
\end{equation}

We estimate by \eqref{nabU*-est}, \eqref{nabla-eta*Phiin-upp} that
\begin{equation*}
	\begin{aligned}
		&
		\big|
		\big[\nabla_x U_* \cdot \nabla_x \big( \eta_R^{\J}  Q_{\gamma_j}\Phi_{\rm in}^{\J}  \big) \big]  \big( \eta_R^{\J}  Q_{\gamma_j}\Phi_{\rm in}^{\J}  \big)
		-
		\big[\nabla_x U^{\J} \cdot \nabla_x \big( \eta_R^{\J}  Q_{\gamma_j}\Phi_{\rm in}^{\J}  \big) \big]  \big( \eta_R^{\J}  Q_{\gamma_j}\Phi_{\rm in}^{\J}  \big)
		\big|
		\\
		\lesssim \ &
		\lambda_*
		\big|  \nabla_x \big( \eta_R^{\J}  Q_{\gamma_j}\Phi_{\rm in}^{\J}  \big) \big|  \big|  \eta_R^{\J}  Q_{\gamma_j}\Phi_{\rm in}^{\J}
		\big|
		\lesssim
		\1_{\{ |x-q^{\J}| \le 3\lambda_* R \}}
		\lambda_*^{2\nu-2\delta_0} \langle \rho_j\rangle^{-2l-1}
		\lesssim T^{\epsilon} \varrho_1^{\J}
	\end{aligned}
\end{equation*}
under the assumption $\delta_0<\nu$ in \eqref{inn-top0-para} and $\Theta+\beta-1<0$ in \eqref{out-topo-para}.
By \eqref{U*-norm}, \eqref{nabU*-est}, \eqref{nabla-eta*Phiin-upp},
\begin{equation*}
	\big|
	( U_*- U^{\J} ) \wedge
	\big[\nabla_x U^{\J} \cdot \nabla_x \big( \eta_R^{\J}  Q_{\gamma_j}\Phi_{\rm in}^{\J}  \big) \big]  \big( \eta_R^{\J}  Q_{\gamma_j}\Phi_{\rm in}^{\J}  \big)
	\big|
	\lesssim
	\1_{\{ |x-q^{\J}| \le 3\lambda_* R \}}    \lambda_*^{2\nu-2\delta_0-1} \langle \rho_j\rangle^{-2l-3}
	\lesssim T^{\epsilon} \varrho_1^{\J}
\end{equation*}
provided
\begin{equation}\label{NN-par-5.1}
	\Theta+\beta+2\delta_0-2\nu<0.
\end{equation}

The other terms in this collection can be handled in the same way.

\noindent$\bullet$
Before proceeding, we take a closer look at $\Delta_x U_*$ and $\nabla_x(|U_*|^2) \cdot \nabla_x U_{*} = 2 (U_* \cdot \nabla_x U_* )\cdot \nabla_x U_*$.
In the single bubble case $N=1$, $\Delta_x U_*$ can be neglected  by the $U_*$-operation and $\nabla_x(|U_*|^2) \cdot \nabla_x U_{*} $ vanishes automatically. However, in the case of multiple bubbles, the phenomenon is different. There exists delicate cancellation for $\Delta_x U_*-2 (U_* \cdot \nabla_x U_* )\cdot \nabla_x U_*$.

Recall the definition of $f_{\mathcal{C}_j^{-1}}$ given in \eqref{C-cal-inverse}. Claim:
\begin{align}
\notag
		&
		\Delta_x U_* =	- \sum\limits_{j=1}^N |\nabla_x U^{\J}|^2 U^{\J}
		\\ \notag
		= \ & \sum\limits_{j=1}^N \sum\limits_{k\ne j} \Big\{ 16 \eta_{R}^{\J}
		\lambda_j^{-2}
		\rho_j^2
		(\rho_j^2+1)^{-3}
		\lambda_k
		|q^{\J}-q^{\K}|^{-2}
		\big[  q^{\J}_1- q^{\K}_1 + i \big(  q^{\J}_2 - q^{\K}_2 \big) \big]
		e^{i ( \gamma_k -\gamma_j )}
		e^{-i \theta_j}
		\\ \notag
		&
		-
		16 \eta_{R}^{\J}
		\lambda_j^{-2}
		(\rho_j^2+1)^{-3}
		\lambda_k
		|q^{\J}-q^{\K}|^{-2}
		\big[  q^{\J}_1- q^{\K}_1 - i \big(  q^{\J}_2 - q^{\K}_2 \big) \big]
		e^{-i ( \gamma_k -\gamma_j )}
		e^{i \theta_j} \Big\}_{\mathcal{C}_j^{-1}}
		\\ \label{special-1}
		& + O(T^\epsilon) \bigg[
		\sum\limits_{j=1}^{N} \big( \varrho_1^{\J} + \varrho_2^{\J} \big) + \varrho_3 \bigg] - \Xi_1(x,t) U_*
	\end{align}
for some scalar function $\Xi_1(x,t)$ when
\begin{equation}\label{para-special1}
	\Theta + 2\beta  -1<0.
\end{equation}
Under the assumption \eqref{para-special1}, then
\begin{equation}\label{special-2}
	\begin{aligned}
		&
		\nabla_x(|U_*|^2) \cdot \nabla_x U_{*}
		=
		2 (U_* \cdot \nabla_x U_* ) \cdot \nabla_x U_{*}
		\\
		= \ &  \sum\limits_{j=1}^{N} \sum\limits_{m\ne j}
		\Big\{  16 \eta_{R}^{\J}
		\lambda_j^{-2}
		\rho_j^2
		(\rho_j^2+1)^{-3}
		\lambda_m
		|q^{\J}-q^{\M}|^{-2}
		\big[  q^{\J}_1- q^{\M}_1 + i \big(  q^{\J}_2 - q^{\M}_2\big) \big]
		e^{i ( \gamma_m -\gamma_j )}
		e^{-i \theta_j}
		\\
		&
		-
		16 \eta_{R}^{\J}
		\lambda_j^{-2}
		(\rho_j^2+1)^{-3}
		\lambda_m
		|q^{\J}-q^{\M}|^{-2}
		\big[  q^{\J}_1- q^{\M}_1 - i \big(  q^{\J}_2 - q^{\M}_2\big) \big]
		e^{-i ( \gamma_m -\gamma_j )}
		e^{i \theta_j} \Big\}_{\mathcal{C}_j^{-1}}
		\\
		& +  O(T^{\epsilon} ) \bigg[ \sum\limits_{j=1}^N \big(\varrho_1^{\J} +\varrho_2^{\J} \big) + \varrho_3 \bigg].
	\end{aligned}
\end{equation}
In particular,
\begin{equation}\label{cancel-1}
	\Delta_x U_* -2\left(U_* \cdot \nabla_x U_*\right)\cdot \nabla_x U_*
	=   O(T^\epsilon) \bigg[
	\sum\limits_{j=1}^{N} \left( \varrho_1^{\J} + \varrho_2^{\J} \right) + \varrho_3 \bigg] - \Xi_1(x,t) U_*.
\end{equation}

\begin{proof}[Proof of \eqref{special-1}]
	Given $j \in \{1,\dots,N\}$,
	$|\nabla_x U^{\J}|^2 U^{\J}
	=   |\nabla_x U^{\J}|^2 [( U^{\J} -U_{*} )
	+  U_{*} ]$.
	By \eqref{nabU*-est}, \eqref{U*-norm},
	\begin{equation*}
		\begin{aligned}
			&
			\big| |\nabla_x U^{\J}|^2 ( U^{\J} -U_{*} ) \big|
			\lesssim
			\1_{\{ |x-q^{\J}| < 3d_q\}}  \lambda_*^{-1}\langle \rho_j \rangle^{-4}
			+   \1_{\{ |x-q^{\J}| \ge 3 d_q \}}  \lambda_*^2,
			\\
			&
			\big|
			\big( 1-\eta_{R}^{\J} \big)
			|\nabla_x U^{\J}|^2 ( U^{\J} -U_{*} )
			\big|
			\lesssim
			\1_{\{ \lambda_* R /2 \le  |x-q^{\J}| < 3d_q\}}  \lambda_*^3 |x-q^{\J}|^{-4}
			+   \1_{\{ |x-q^{\J}| \ge 3 d_q \}}  \lambda_*^2
			\lesssim
			T^\epsilon \big(  \varrho_2^{\J} + \varrho_3 \big).
		\end{aligned}
	\end{equation*}
	
	The estimate $\big| \eta_{R}^{\J}
	|\nabla_x U^{\J}|^2 ( U^{\J} -U_{*} )
	\big|
	\lesssim \eta_{R}^{\J} \lambda_*^{-1}\langle \rho_j \rangle^{-4}$  is too rough and can not be controlled by the outer topology. More sophisticated analysis will be applied. Indeed, by \eqref{nablaW} and the representation \eqref{rota-vector},
	\begin{equation*}
		\begin{aligned}
			&
			|\nabla_x U^{\J}|^2 \left( U^{\J} -U_{*} \right)
			=
			-2  \lambda_j^{-2}|\nabla_{y^{\J}} U^{\J}|^2
			\sum\limits_{k\ne j}
			(|y^{\K}|^2+1)^{-1}
			Q_{\gamma_k}
			\begin{bmatrix}
				y^{\K}_1, y^{\K}_2, -1
			\end{bmatrix}^{\tr}
			\\
			= \ &
			-16  \lambda_j^{-2}
			(\rho_j^2+1)^{-2}
			\sum\limits_{k\ne j}
			( \rho_k^2+1)^{-1}
			\Big[
			e^{i\gamma_k}\left(y^{\K}_1 + i y^{\K}_2 \right), -1
			\Big]^{\tr}.
		\end{aligned}
	\end{equation*}
	For $k\ne j$,
	\begin{equation*}
		\begin{aligned}
			&
			\rho_k^2 = \lambda_k^{-2} |x-\xi^{\K}|^2
			=
			\lambda_k^{-2}
			|\xi^{\J}-\xi^{\K}|^2
			\left\{
			1+
			|\xi^{\J}-\xi^{\K}|^{-2}
			\left[
			|x-\xi^{\J}|^2
			+
			2(x-\xi^{\J})\cdot (\xi^{\J}-\xi^{\K})
			\right]
			\right\},
			\\
			&
			\left(\rho_k^2 + 1\right)^{-1}
			=
			\lambda_k^{2}
			|\xi^{\J}-\xi^{\K}|^{-2}
			\left\{
			1+
			|\xi^{\J}-\xi^{\K}|^{-2}
			\left[
			\lambda_k^{2}
			+
			|x-\xi^{\J}|^2
			+
			2(x-\xi^{\J})\cdot (\xi^{\J}-\xi^{\K})
			\right]
			\right\}^{-1}.
		\end{aligned}
	\end{equation*}
	In particular,
	\begin{equation}\label{rhok in eta-j}
		\eta_R^{\J}
		\left(\rho_k^2 + 1\right)^{-1}
		=
		\eta_R^{\J}
		\lambda_k^{2}
		|\xi^{\J}-\xi^{\K}|^{-2}
		\left(
		1+ O\left(\lambda_*^2 + \lambda_* R\right)
		\right),
	\end{equation}
	which implies
	\begin{equation}\label{qd24May11-1}
		\begin{aligned}
			&
			\eta_{R}^{\J}  |\nabla_x U^{\J}|^2 \left( U^{\J} -U_{*} \right)
			=
			-16 \eta_{R}^{\J}
			\lambda_j^{-2}
			(\rho_j^2+1)^{-2}
			\sum\limits_{k\ne j}
			( \rho_k^2+1)^{-1}
			\Big[
			e^{i\gamma_k}\left(y^{\K}_1 + i y^{\K}_2 \right), -1
			\Big]^{\tr}
			\\
			= \ &
			-16 \eta_{R}^{\J}
			\lambda_j^{-2}
			(\rho_j^2+1)^{-2}
			\sum\limits_{k\ne j}
			\lambda_k^{2}
			|\xi^{\J}-\xi^{\K}|^{-2}
			\left(
			1+ O\left(\lambda_*^2 + \lambda_* R\right)
			\right)
			\Big[
			e^{i\gamma_k}\left(y^{\K}_1 + i y^{\K}_2 \right), -1
			\Big]^{\tr}  .
		\end{aligned}
	\end{equation}
	Notice by \eqref{polar-coor},
	\begin{align*}
			y^{\K}_1 + i y^{\K}_2
			= \ & \big[ \lambda_j y_1^{\J}+ \xi^{\J}_1- \xi^{\K}_1 + i \big( \lambda_j y_2^{\J} + \xi^{\J}_2 - \xi^{\K}_2\big) \big]
			\lambda_k^{-1}
			\\
			= \ &
			\lambda_j \lambda_k^{-1} \rho_j e^{i\theta_j}
			+
			\lambda_k^{-1}
			\big[  \xi^{\J}_1- \xi^{\K}_1 + i \big( \xi^{\J}_2 - \xi^{\K}_2\big) \big].
		\end{align*}
	Recall $\mathbf{f}_{\mathcal{C}_j }$ defined in \eqref{qd24May12-1}. Combining \eqref{proj-cal-C}, we have
	\begin{align}
				&
				\Big(\Pi_{U^{\J \perp}} \Big[
				e^{i\gamma_k}\left(y^{\K}_1 + i y^{\K}_2 \right), -1
				\Big]^{\tr}  \Big)_{\mathcal{C}_j}
				=
				\Big(1
				-
				\frac{2}{\rho_j^2+1}
				{\rm{Re}}
				\Big) \left[ \left(y^{\K}_1 + i y^{\K}_2 \right)
				e^{i\left(-\theta_j+ \gamma_k -\gamma_j\right)}\right]
				+
				\frac{2\rho_j}{\rho_j^2+1}
				\nonumber
				\\
				= \ &
				\lambda_j
				\lambda_k^{-1} \rho_j  \Big(1
				-
				\frac{2}{\rho_j^2+1}
				{\rm{Re}}
				\Big) \left[
				e^{i\left( \gamma_k -\gamma_j\right)}
				\right]
				+
				\frac{2\rho_j}{\rho_j^2+1}
				\nonumber
				\\
				& +
				\lambda_k^{-1}
				\Big(1
				-
				\frac{2}{\rho_j^2+1}
				{\rm{Re}}
				\Big) \left\{ \left[  \xi^{\J}_1- \xi^{\K}_1 + i \left(  \xi^{\J}_2 - \xi^{\K}_2\right) \right]
				e^{i\left(-\theta_j+ \gamma_k -\gamma_j\right)}
				\right\} ,
				\nonumber
				\\
				&
				\Big[
				e^{i\gamma_k}\left(y^{\K}_1 + i y^{\K}_2 \right), -1
				\Big]^{\tr} \cdot U^{\J}
				=
				\frac{2\rho_j}{\rho_j^2+1}
				{\rm{Re}} \left[
				\left(y^{\K}_1 + i y^{\K}_2 \right) e^{i\left(-\theta_j+ \gamma_k -\gamma_j\right)}
				\right]
				-  \frac{\rho_j^2-1}{\rho_j^2+1}
				\nonumber
				\\
				= \ &
				\lambda_j \lambda_k^{-1}  \frac{2\rho_j^2}{\rho_j^2+1}
				{\rm{Re}} \left[
				e^{i\left( \gamma_k -\gamma_j\right)}
				\right]
				-  \frac{\rho_j^2-1}{\rho_j^2+1}  +
				\lambda_k^{-1}
				\frac{2\rho_j}{\rho_j^2+1}
				{\rm{Re}} \left\{
				\left[  \xi^{\J}_1- \xi^{\K}_1 + i \left( \xi^{\J}_2 - \xi^{\K}_2\right) \right] e^{i\left(-\theta_j+ \gamma_k -\gamma_j\right)}
				\right\}.
				\label{Uk-U-infty-proj}
	\end{align}
	Then
	\begin{equation}\label{qd24May12-3}
		\begin{aligned}
			&
			\eta_{R}^{\J}
			\left\{\Pi_{U^{\J \perp}}
			\left[
			|\nabla_x U^{\J}|^2 \left( U^{\J} -U_{*} \right)
			\right]
			\right\}_{\mathcal{C}_j}
			=
			-16 \eta_{R}^{\J}
			\lambda_j^{-2}
			(\rho_j^2+1)^{-2}
			\sum\limits_{k\ne j}
			\lambda_k^{2}
			|\xi^{\J}-\xi^{\K}|^{-2}
			\left(
			1+ O\left(\lambda_*^2 + \lambda_* R\right)
			\right)
			\\
			&\quad \times
			\bigg\{
			\lambda_j
			\lambda_k^{-1} \rho_j  \Big(1
			-
			\frac{2}{\rho_j^2+1}
			{\rm{Re}}
			\Big) \left[
			e^{i\left( \gamma_k -\gamma_j\right)}
			\right]
			+
			\frac{2\rho_j}{\rho_j^2+1}
			\bigg\}
			\\
			&\quad
			-16 \eta_{R}^{\J}
			\lambda_j^{-2}
			(\rho_j^2+1)^{-2}
			\sum\limits_{k\ne j}
			\lambda_k
			|\xi^{\J}-\xi^{\K}|^{-2}
			\left(
			1+ O\left(\lambda_*^2 + \lambda_* R\right)
			\right)
			\\
			&\quad \times
			\Big(1
			-
			\frac{2}{\rho_j^2+1}
			{\rm{Re}}
			\Big) \left\{ \left[  \xi^{\J}_1- \xi^{\K}_1 + i \left(  \xi^{\J}_2 - \xi^{\K}_2\right) \right]
			e^{i\left(-\theta_j+ \gamma_k -\gamma_j\right)}
			\right\}.
		\end{aligned}
	\end{equation}
	Here, due to $\Theta+\beta-1<0$ given in \eqref{out-topo-para},
	\begin{equation*}
		\begin{aligned}
			&
			\Big| 16 \eta_{R}^{\J}
			\lambda_j^{-2}
			(\rho_j^2+1)^{-2}
			\sum\limits_{k\ne j}
			\lambda_k^{2}
			|\xi^{\J}-\xi^{\K}|^{-2}
			\left(
			1+ O\left(\lambda_*^2 + \lambda_* R\right)
			\right)
			\\
			&\times
			\Big\{
			\lambda_j
			\lambda_k^{-1} \rho_j  \Big(1
			-
			\frac{2}{\rho_j^2+1}
			{\rm{Re}}
			\Big) \left[
			e^{i\left( \gamma_k -\gamma_j\right)}
			\right]
			+
			\frac{2\rho_j}{\rho_j^2+1}
			\Big\} \Big|
			\lesssim
			\eta_{R}^{\J}
			\langle \rho_j \rangle^{-3} \lesssim
			T^{\epsilon} \varrho_1^{\J} ,
		\end{aligned}
	\end{equation*}
	\begin{equation*}
		\begin{aligned}
			&
			\Big| 16 \eta_{R}^{\J}
			\lambda_j^{-2}
			(\rho_j^2+1)^{-2}
			\sum\limits_{k\ne j}
			\lambda_k
			|\xi^{\J}-\xi^{\K}|^{-2}
			\left(
			O\left(\lambda_*^2 + \lambda_* R\right)
			\right)
			\\
			&\times
			\Big(1
			-
			\frac{2}{\rho_j^2+1}
			{\rm{Re}}
			\Big) \left\{ \left[  \xi^{\J}_1- \xi^{\K}_1 + i \left(  \xi^{\J}_2 - \xi^{\K}_2\right) \right]
			e^{i\left(-\theta_j+ \gamma_k -\gamma_j\right)}
			\right\}   \Big|
			\lesssim
			\eta_{R}^{\J} R
			\langle \rho_j \rangle^{-4}
			\lesssim T^{\epsilon} \varrho_1^{\J} ,
		\end{aligned}
	\end{equation*}
	where for the second inequality, we require
	\begin{equation}\label{qd24May12-4}
		\Theta + 2\beta  -1<0.
	\end{equation}
	\begin{align*}
				& \Big|
				-16 \eta_{R}^{\J}
				\lambda_j^{-2}
				(\rho_j^2+1)^{-2}
				\sum\limits_{k\ne j}
				\lambda_k
				|\xi^{\J}-\xi^{\K}|^{-2}
				\Big(1
				-
				\frac{2}{\rho_j^2+1}
				{\rm{Re}}
				\Big) \left\{ \left[  \xi^{\J}_1- \xi^{\K}_1 + i \left(  \xi^{\J}_2 - \xi^{\K}_2\right) \right]
				e^{i\left(-\theta_j+ \gamma_k -\gamma_j\right)}
				\right\}
				\\
				& +
				16 \eta_{R}^{\J}
				\lambda_j^{-2}
				(\rho_j^2+1)^{-2}
				\sum\limits_{k\ne j}
				\lambda_k
				|q^{\J}-q^{\K}|^{-2}
				\Big(1
				-
				\frac{2}{\rho_j^2+1}
				{\rm{Re}}
				\Big) \left\{ \left[  q^{\J}_1- q^{\K}_1 + i \left(  q^{\J}_2 - q^{\K}_2\right) \right]
				e^{i\left(-\theta_j+ \gamma_k -\gamma_j\right)}
				\right\}   \Big|
				\\
				\lesssim \ &
				\eta_{R}^{\J}
				|\ln T|^{-1} \ln^2(T-t)
				\langle \rho_j \rangle^{-4}
				\lesssim T^{\epsilon} \varrho_1^{\J}.
	\end{align*}
	\begin{align*}
				&
				-16 \eta_{R}^{\J}
				\lambda_j^{-2}
				(\rho_j^2+1)^{-2}
				\sum\limits_{k\ne j}
				\lambda_k
				|q^{\J}-q^{\K}|^{-2}
				\Big(1
				-
				\frac{2}{\rho_j^2+1}
				{\rm{Re}}
				\Big) \left\{ \left[  q^{\J}_1- q^{\K}_1 + i \left(  q^{\J}_2 - q^{\K}_2\right) \right]
				e^{i\left(-\theta_j+ \gamma_k -\gamma_j\right)}
				\right\}
				\\
				= \ &
				-16 \eta_{R}^{\J}
				\lambda_j^{-2}
				\rho_j^2
				(\rho_j^2+1)^{-3}
				\sum\limits_{k\ne j}
				\lambda_k
				|q^{\J}-q^{\K}|^{-2}
				\left[  q^{\J}_1- q^{\K}_1 + i \left(  q^{\J}_2 - q^{\K}_2\right) \right]
				e^{i\left( \gamma_k -\gamma_j\right)}
				e^{-i \theta_j}
				\\
				&
				+
				16 \eta_{R}^{\J}
				\lambda_j^{-2}
				(\rho_j^2+1)^{-3}
				\sum\limits_{k\ne j}
				\lambda_k
				|q^{\J}-q^{\K}|^{-2}
				\left[  q^{\J}_1- q^{\K}_1 - i \left(  q^{\J}_2 - q^{\K}_2\right) \right]
				e^{-i\left( \gamma_k -\gamma_j\right)}
				e^{i \theta_j},
	\end{align*}
	which will be put into mode  $-1$ and mode $1$ respectively.

	For the projection in $U^{\J}$, by \eqref{qd24May11-1}, \eqref{Uk-U-infty-proj}, we calculate
	\begin{equation*}
		\begin{aligned}
			&
			\eta_{R}^{\J}  |\nabla_x U^{\J}|^2 \left( U^{\J} -U_{*} \right) \cdot  U^{\J}
			\\
			= \ &
			-16 \eta_{R}^{\J}
			\lambda_j^{-2}
			(\rho_j^2+1)^{-2}
			\sum\limits_{k\ne j}
			\lambda_k^{2}
			|\xi^{\J}-\xi^{\K}|^{-2}
			\left(
			1+ O\left(\lambda_*^2 + \lambda_* R\right)
			\right)
			\\
			& \times
			\Big[
			\lambda_j \lambda_k^{-1}  \frac{2\rho_j^2}{\rho_j^2+1}
			{\rm{Re}} \left[
			e^{i\left( \gamma_k -\gamma_j\right)}
			\right]
			-  \frac{\rho_j^2-1}{\rho_j^2+1}  +
			\lambda_k^{-1}
			\frac{2\rho_j}{\rho_j^2+1}
			{\rm{Re}} \left\{
			\left[  \xi^{\J}_1- \xi^{\K}_1 + i \left( \xi^{\J}_2 - \xi^{\K}_2\right) \right] e^{i\left(-\theta_j+ \gamma_k -\gamma_j\right)}
			\right\}
			\Big].
		\end{aligned}
	\end{equation*}
	Thus by $U_*$-operation and \eqref{U*-norm},
	\begin{equation*}
		\big|
		\eta_{R}^{\J}  |\nabla_x U^{\J}|^2
		[ ( U^{\J} -U_{*} ) \cdot  U^{\J} ] ( U^{\J} -U_{*} )
		\big|
		\lesssim
		\eta_{R}^{\J} \langle \rho_j\rangle^{-4}
		\lesssim T^{\epsilon }
		\varrho_1^{\J}.
	\end{equation*}
	In sum, we conclude the validity of \eqref{special-1}.	
\end{proof}

\begin{proof}[Proof of \eqref{special-2}]
	Note that
	$2 ( U_* \cdot \nabla_x U_* )
	\cdot \nabla_x U_{*}
	=
	2
	\Big(  \sum\limits_{j=1}^{N} U_* \cdot \nabla_x U^{\J} \Big) \cdot
	\sum\limits_{n=1}^{N} \nabla_x U^{\N}
	=
	2
	\Big[ \sum\limits_{j=1}^{N}  \sum\limits_{m\ne j}   (U^{\M} - U_{\infty} ) \cdot \nabla_x U^{\J} \Big] \cdot
	\sum\limits_{n=1}^{N} \nabla_x U^{\N}$.
	For any fixed $j \in \{1,\dots,N\}$ and any $m\ne j$, $n\ne j$, by \eqref{nabU*-est},
	\begin{align*}
			&
			\left|
			\left[    \left(U^{\M} - U_{\infty} \right) \cdot \nabla_x U^{\J} \right] \cdot \nabla_x U^{\N} \right|
			\lesssim
			\left| \nabla_x U^{\J} \right| \left| \nabla_x U^{\N}  \right|
			\\
			\lesssim \ &
			\1_{\{ |x-q^{\J}| < 3d_q\}}  \langle \rho_j \rangle^{-2}
			+
			\1_{\{ |x-q^{\N}| < 3d_q\}}  \langle \rho_n \rangle^{-2}
			+   \1_{\{ \{ |x-q^{\J}| \ge 3 d_q  \} \cap \{ |x-q^{\N}| \ge 3 d_q  \} \} }  \lambda_*^2
			\lesssim
			T^{\epsilon} \varrho_3,
			\\
			&
			\big|
			(1-\eta_R^{\J} )
			\big[   (U^{\M} - U_{\infty} ) \cdot \nabla_x U^{\J} \big] \cdot
			\nabla_x U^{\J}
			\big|
			\\
			\lesssim \ &
			\1_{ \{\lambda_* R/2 \le |x-q^{\J}| \le d_q \} } \lambda_*^3 |x-q^{\J}|^{-4}
			+
			\1_{ \{ |x-q^{\J}| > d_q \} }
			\lambda_*^2
			\lesssim
			T^\epsilon \big(  \varrho_2^{\J} + \varrho_3 \big).
	\end{align*}
	A more refined analysis is needed for $ 2 \eta_R^{\J}
	\big[  \sum\limits_{m \ne j}   (U^{\M} - U_{\infty} ) \cdot \nabla_x U^{\J} \big] \cdot
	\nabla_x U^{\J}  $.
	For any $m\ne j$,
		\begin{equation*}
			\Big[  \big(U^{\M} - U_{\infty} \big) \cdot \nabla_x U^{\J} \Big] \cdot
			\nabla_x U^{\J}
			=
			\sum\limits_{k=1}^2
			\Big[  \big(U^{\M} - U_{\infty} \big) \cdot \pp_{x_k} U^{\J} \Big]
			\partial_{x_k} U^{\J}
			=
			\lambda_j^{-2}
			\sum\limits_{k=1}^2
			\Big[  \big(U^{\M} - U_{\infty} \big) \cdot \pp_{y_k^{\J}} U^{\J} \Big] \pp_{y_k^{\J}} U^{\J}.
		\end{equation*}
Recall \eqref{polar-coor}. By \eqref{Frenet-deri} and \eqref{nablaW}, we write in polar coordinates
	\begin{equation*}
		\begin{aligned}
			\pp_{y_1^{\J}} U^{\J} = \ &  \cos \theta_j \pp_{\rho_j} U^{\J} -
			\rho_j^{-1} \sin \theta_j \pp_{\theta_j} U^{\J}
			=
			-2  (\rho_j^2+1)^{-1} \big(\cos \theta_j Q_{\gamma_j}E_1^{\J} + \sin \theta_j Q_{\gamma_j}  E_2^{\J} \big)
			,
			\\
			\pp_{y_2^{\J}} U^{\J} = \ &
			\sin \theta_j \pp_{\rho_j} U^{\J} +
			\rho_j^{-1} \cos \theta_j \pp_{\theta_j} U^{\J}
			=
			-2	(\rho_j^2+1)^{-1}  \big(\sin \theta_j  Q_{\gamma_j} E_1^{\J} -
			\cos \theta_j Q_{\gamma_j} E_2^{\J} \big),
		\end{aligned}
	\end{equation*}
	which implies
	\begin{align*}
				&
				\lambda_j^{-2}
				\sum\limits_{k=1}^2
				\Big[  (U^{\M} - U_{\infty} ) \cdot \pp_{y_k^{\J}} U^{\J} \Big] \pp_{y_k^{\J}} U^{\J}
				\\
				= \ &
				4
				\lambda_j^{-2}
				(\rho_j^2+1)^{-2}
				\Big\{
				\left[  \left(U^{\M} - U_{\infty} \right) \cdot \left(\cos \theta_j Q_{\gamma_j}E_1^{\J} + \sin \theta_j Q_{\gamma_j}  E_2^{\J} \right) \right] \left(\cos \theta_j Q_{\gamma_j}E_1^{\J} + \sin \theta_j Q_{\gamma_j}  E_2^{\J} \right)
				\\
				&
				+
				\left[  \left(U^{\M} - U_{\infty} \right) \cdot \left(\sin \theta_j  Q_{\gamma_j} E_1^{\J} -
				\cos \theta_j Q_{\gamma_j} E_2^{\J} \right)  \right] \left(\sin \theta_j  Q_{\gamma_j} E_1^{\J} -
				\cos \theta_j Q_{\gamma_j} E_2^{\J} \right)
				\Big\}
				\\
				= \ &
				4
				\lambda_j^{-2}
				(\rho_j^2+1)^{-2}
				\left\{
				\left[  \left(U^{\M} - U_{\infty} \right) \cdot  Q_{\gamma_j}E_1^{\J}  \right]  Q_{\gamma_j}E_1^{\J}
				+
				\left[  \left(U^{\M} - U_{\infty} \right) \cdot  Q_{\gamma_j} E_2^{\J}  \right]  Q_{\gamma_j} E_2^{\J}
				\right\}.
	\end{align*}
The representation \eqref{rota-vector} then reads
	\begin{equation*}
		U^{\M} -U_{\infty}
		=
		2
		(|y^{\M}|^2+1)^{-1}
		Q_{\gamma_m}
		\begin{bmatrix}
			y^{\M}_1, y^{\M}_2, -1
		\end{bmatrix}^{\tr}
		=
		2
		( \rho_m^2+1)^{-1}
		\Big[
		e^{i\gamma_m} (y^{\M}_1 + i y^{\M}_2 ), -1
		\Big]^{\tr}.
	\end{equation*}
From this and \eqref{Uk-U-infty-proj}, it follows that
	\begin{align*}
				&
				\Big( \lambda_j^{-2}
				\sum\limits_{k=1}^2
				\Big[  (U^{\M} - U_{\infty} ) \cdot \pp_{y_k^{\J}} U^{\J} \Big] \pp_{y_k^{\J}} U^{\J} \Big)_{\mathcal{C}_j}
				\\
				= \ &
				4
				\lambda_j^{-2}
				(\rho_j^2+1)^{-2}
				\Big[
				(U^{\M} - U_{\infty} ) \cdot  Q_{\gamma_j}E_1^{\J}
				+
				i
				(U^{\M} - U_{\infty} ) \cdot  Q_{\gamma_j} E_2^{\J}
				\Big]
				\\
				= \ &
				8
				\lambda_j^{-2}
				(\rho_j^2+1)^{-2}
				( \rho_m^2+1)^{-1}
				\Big( \Pi_{U^{\J \perp}}
				\Big[
				e^{i\gamma_m}\big(y^{\M}_1 + i y^{\M}_2 \big), -1
				\Big]^{\tr} \Big)_{\mathcal{C}_j}
				\\
				= \ &
				8
				\lambda_j^{-2}
				(\rho_j^2+1)^{-2}
				( \rho_m^2+1)^{-1}
				\Big[
				\lambda_j
				\lambda_m^{-1} \rho_j  \Big(1
				-
				\frac{2}{\rho_j^2+1}
				{\rm{Re}}
				\Big) \left[
				e^{i\left( \gamma_m -\gamma_j\right)}
				\right]
				+
				\frac{2\rho_j}{\rho_j^2+1}
				\\
				& +
				\lambda_m^{-1}
				\Big(1
				-
				\frac{2}{\rho_j^2+1}
				{\rm{Re}}
				\Big) \left\{ \left[  \xi^{\J}_1- \xi^{\M}_1 + i \left(  \xi^{\J}_2 - \xi^{\M}_2\right) \right]
				e^{i\left(-\theta_j+ \gamma_m -\gamma_j\right)}
				\right\}
				\Big].
	\end{align*}
	Combining \eqref{rhok in eta-j}, we have
	\begin{align*}
			&
			2 \eta_R^{\J}
			\Big( \lambda_j^{-2}
			\sum\limits_{k=1}^2
			\Big[  (U^{\M} - U_{\infty} ) \cdot \pp_{y_k^{\J}} U^{\J} \Big] \pp_{y_k^{\J}} U^{\J} \Big)_{\mathcal{C}_j}
			\\
			= \ &
			16 \eta_R^{\J}
			\lambda_j^{-2}
			(\rho_j^2+1)^{-2}
			\lambda_m^{2}
			|\xi^{\J}-\xi^{\M}|^{-2}
			\left(
			1+ O\left(\lambda_*^2 + \lambda_* R\right)
			\right)
			\Big[
			\lambda_j
			\lambda_m^{-1} \rho_j  \Big(1
			-
			\frac{2}{\rho_j^2+1}
			{\rm{Re}}
			\Big) \big[
			e^{i ( \gamma_m -\gamma_j )}
			\big]
			+
			\frac{2\rho_j}{\rho_j^2+1}
			\Big]
			\\
			& +
			16 \eta_R^{\J}
			\lambda_j^{-2}
			(\rho_j^2+1)^{-2}
			\lambda_m
			|\xi^{\J}-\xi^{\M}|^{-2}
			\left(
			1+ O\left(\lambda_*^2 + \lambda_* R\right)
			\right)
			\\
			& \times
			\Big(1
			-
			\frac{2}{\rho_j^2+1}
			{\rm{Re}}
			\Big) \left\{ \left[  \xi^{\J}_1- \xi^{\M}_1 + i \left(  \xi^{\J}_2 - \xi^{\M}_2\right) \right]
			e^{i\left(-\theta_j+ \gamma_m -\gamma_j\right)}
			\right\}
			\\
			= \ & 16 \eta_{R}^{\J}
			\lambda_j^{-2}
			\rho_j^2
			(\rho_j^2+1)^{-3}
			\lambda_m
			|q^{\J}-q^{\M}|^{-2}
			\big[  q^{\J}_1- q^{\M}_1 + i \big(  q^{\J}_2 - q^{\M}_2\big) \big]
			e^{i ( \gamma_m -\gamma_j )}
			e^{-i \theta_j}
			\\
			&
			-
			16 \eta_{R}^{\J}
			\lambda_j^{-2}
			(\rho_j^2+1)^{-3}
			\lambda_m
			|q^{\J}-q^{\M}|^{-2}
			\big[  q^{\J}_1- q^{\M}_1 - i \big(  q^{\J}_2 - q^{\M}_2\big) \big]
			e^{-i ( \gamma_m -\gamma_j )}
			e^{i \theta_j}  +
			O(T^\epsilon) \varrho_1^{\J},
		\end{align*}
	where for the last equality, we used the assumption \eqref{qd24May12-4} and the same estimates for \eqref{qd24May12-3}.	
\end{proof}

Combining $l>0,
0<\delta_0<\nu <1 $ given in  \eqref{inn-top0-para}, \eqref{nab-A-para} (These are used for the estimate of $\nabla_x A$ in \eqref{nab-A-est}),  \eqref{o-para-2}, \eqref{o-para-3}, \eqref{o-para-4},  \eqref{o-para-6}, \eqref{o-para-7}, \eqref{o-para-7.1}, \eqref{o-para-8},
\eqref{qd24May12-5},  \eqref{2deri-para-1}, \eqref{2deri-para-2},
\eqref{NN-par-1}, \eqref{NN-par-2}, \eqref{NN-par-3}, \eqref{NN-par-4}, \eqref{NN-par-5},  \eqref{NN-par-6}, \eqref{NN-par-5.1}, and \eqref{para-special1},
we get the parameter requirement \eqref{G-para}.
\end{proof}

\medskip

\section{Index}\label{Index}

\medskip

In this section, some frequently used terminologies and symbols are compiled.

\medskip

\noindent
\begin{tabular}{|c|c|c|c|}
	\hline
	{\it $U_*$-operation} &  {\it Algebraic power type} (${\mathbf{AP}}$)   & {\it Re-gluing process} & {\it Sub-Gaussian estimates} \\
	\hline
	\eqref{U-direc-trick} & the context near \eqref{P1-def}
	& Propositions \ref{modek-prop}, \ref{Re-m0-prop}, \ref{qd24July12-8-prop}
	& Subsection \ref{fund-out-sec}
	\\
	\hline
\end{tabular}

\noindent
	\begin{tabular}{|c|c|c|c|c|c|c|c|c|c|c|}
		\hline
		 $W$ & $Q_{\gamma}$ & {~} $y^{\J}$, $\rho_j$, $r_j$, $\theta_j$ {~} &  $W^{\J}$  &  $U^{\J}$ & $U_*$ &  $E_1^{\J}$, $E_2^{\J}$  & $Z_{k,l}^{\J}$ & $\mathcal{Z}_{k,l}$ & $\mathbf{f}_{\mathcal{C}_j }$ & $f_{\mathcal{C}_j^{-1}} $   \\
		\hline
		 \eqref{W-def} & \eqref{Q-gamma-def} & \eqref{polar-coor} &  \eqref{def-Wjjj} & \eqref{def-U1U2} & \eqref{def-U*} & \eqref{def-E1E2}
		& \eqref{def-kernels}
		& \eqref{scalr-Z} & \eqref{qd24May12-1} & \eqref{C-cal-inverse}
		\\
		\hline
	\end{tabular}

\noindent
\begin{tabular}{|c|c|c|c|c|c|c|c|c|c|c|}
	\hline
	 $\mathbf{g}_{\mathbb{C}_j }$ & $g_{\mathbb{C}_j^{-1}}$ & $\Pi_{\gbf^{\perp}} \fbf$ & $d_q$, $p_j(t)$ & $\la_*(t)$  & $S[\cdot]$  & $z_j$, $x^{\J}$
	& $\Phi^{*{\J}}_0$ & $\Phi^{\J}_0$, $K_0(\zeta_j)$ &  $\zeta_j$, $\iota_j$ & $\mathcal{S}^{\J}$ \\
	\hline
	 \eqref{Cbb-def} & \eqref{Cbb-inverse}   & \eqref{def-proj} & \eqref{dq-pj-def}
	&  \eqref{lam-ansatz} & \eqref{S-error fun}  & \eqref{zj-def}
	& \eqref{def-globalcorrection-J} &  \eqref{def-Phi0} &  \eqref{zetaj} & \eqref{Sj-def}
	\\
	\hline
\end{tabular}

\noindent
	\begin{tabular}{|c|c|c|c|c|c|c|c|c|c|c|c|}
		\hline
		 $M_0^{\J}$ & $\tilde{M}_0^{\J}$ & $M_1^{\J}$ & $\tilde{M}_1^{\J}$ & $M_{-1}^{\J}$
		& $\Phi$ & {~} $\eta_R^{\J}$,
		$\eta_{d_q}^{\J}$  & $R(t)$ & $A$ & $\mathcal N[\Phi]$ & $\Phi_{\rm out}$ &  $\Phi_{\rm in}^{\J}$  \\
		\hline
		 \eqref{M0-def-mu3}
		& \eqref{til-M0-def}
		& \eqref{M1-def} & \eqref{tilde-M1-def}
		& \eqref{M(-1)-def}
		& \eqref{u-def} & \eqref{qd24Apr12-3} & \eqref{RPhi-ansatz} & \eqref{A-def} & \eqref{def-N} & \eqref{outer-eq} & \eqref{inner-eq}
		\\
		\hline
	\end{tabular}

\noindent
	\begin{tabular}{|c|c|c|c|c|c|c|c|c|}
		\hline
		{\quad}  $\mathsf{D}_{2 C_{\lambda} R}$, \quad  $\mathcal{H}^{\J}$ {\quad} & {~} $\mathcal{H}_1^{\J}$ {~} & {~} $\mathcal{H}_{ \rm{in} }^{\J}$ {~} & {~} $\mathcal{G}$ {~} & {~} $\mathbf{B}_{\Phi,U_{*}}$ {~} & {~}  $\tilde{ \mathbf{B} }_{\Phi, U_{*}}$ {~}
		& {\quad} $Z_*$  {\quad} & {~} $\vartheta_{mn}$ {~} & {~} $\tau_j(t)$ {~} \\
		\hline
		 \eqref{Hj-def} & \eqref{Hj-1-def} & \eqref{def-HPhi2} & \eqref{G-def} & \eqref{B-matrix}  & \eqref{til-B-matrix}
		& \eqref{Z*-def}  & \eqref{vartheta-def} &  \eqref{tau-j-def}
		\\
		\hline
	\end{tabular}

\noindent
	\begin{tabular}{|c|c|c|c|c|c|}
		\hline
		 {~} $\| \cdot \|_{{\rm in},\nu-\delta_0,l }$, $[\cdot]_{{\rm in},\nu-\delta_0,l,\varsigma_{\rm{in} }}$, $\|\cdot\|_{{\rm in},\nu-\delta_0,l,\varsigma_{\rm{in} }}$ {~} & {~} $B_{\rm{in}}^{\J}$ {~} & {~} $\varrho_1^{\J}$, $\varrho_2^{\J}$, $\varrho_3$ {~} & {~} $\|\cdot\|_{**}$ {~} & $\| \cdot\|_{\sharp, \Theta,\alpha}$ & $B_{\rm{out}}$  \\
		\hline
		 \eqref{inn-topo} & \eqref{inner-space} & \eqref{rho-weights}
		& \eqref{G-topo-def} & \eqref{out-topo}
		&  \eqref{out-space}
		\\
		\hline
	\end{tabular}

\noindent
	\begin{tabular}{|c|c|c|c|c|c|c|}
		\hline
		 \  $\tilde{L}_{j}^{\#}[\Phi_{\rm {out} }]$, $\tilde{L}_{j,k}^{\#}[\Phi_{\rm {out} }]$ \  & {~}  $\tilde{l}_{j}^{\#}[\Phi_{\rm {out} }]$ {~} & {~} $\tilde{l}_{j,k}^{\#}[\Phi_{\rm {out} }]$ {~} & {~} $\Phi_{\rm in}^{\mbox{{\tiny{$[j1]$}}}}$ {~} & $\Phi_{\rm in}^{\mbox{{\tiny{$[j2]$}}}}$ & $\mathbf{R}_0$, ${\rm DC}_j[\cdot]$ & $\mathcal{R}_0[\cdot]$ \\
		\hline
		 \eqref{Llarge}
		& \eqref{Lsmall} & \eqref{l-jk0-def} & \eqref{inner-eq-1} & \eqref{inner-eq-2}
		& \eqref{R0-def} & Proposition \ref{keyprop}
		\\
		\hline
	\end{tabular}

\noindent
	\begin{tabular}{|c|c|c|c|c|c|c|c|}
		\hline
		 {~} $c_{0}^{\J}$, $c_{*0}^{\J}$, $c_{1}^{\J}$, $c_{*1}^{\J}$ {~} & {~} $\mathcal{B}_0[\cdot]$ {~} & {~} $\mathsf{DMO_x}$, $\mathsf{|DMO|_x}$ spaces {~} & $\Phi_{\rm per}$ & $u_*$ & ${\mathbf{P}}_1[\cdot]$ &  $\| \cdot \|_{v,\ell}^{\mathcal{R}}$ & $\mathcal{L}_k$, $V_k$
		  \\
		\hline
		  \eqref{qd24July14-1} & \eqref{qd240802-3}
		&  Subsection \ref{DMO-sec} & \eqref{Phi-per-def}
		& \eqref{u*-def-1018}
		& \eqref{P1-def} &   \eqref{qd24Dec07-1} & \eqref{cal-L-ope}
		\\
		\hline
	\end{tabular}

\bigskip

\section*{Acknowledgements}

We would like to thank Professor Hongjie Dong for the helpful discussion. J. Wei is partially supported by GRF of Hong Kong ``New frontiers in singularity formations of nonlinear partial differential equations''. Q. Zhang is partially supported by by Hebei Province Yanzhao Golden Stage Talent Gathering Program Key Talent Project (Study Abroad Returning Platform) [Grant No. B2024018]. Y. Zhou is supported in part by the Fundamental Research Funds for the Central Universities.

\bigskip





\begin{thebibliography}{100}

\bibitem{92Alouges}
Fran\c{c}ois Alouges and Alain Soyeur.
\newblock On global weak solutions for {L}andau-{L}ifshitz equations: existence
  and nonuniqueness.
\newblock {\em Nonlinear Anal.}, 18(11):1071--1084, 1992.

\bibitem{Bahouri-Marachli-Perelman2019}
Hajer Bahouri, Alaa Marachli, and Galina Perelman.
\newblock Blow-up dynamics for the hyperbolic vanishing mean curvature flow of
  surfaces asymptotic to a {S}imons cone.
\newblock {\em C. R. Math. Acad. Sci. Paris}, 357(10):778--783, 2019.

\bibitem{SMF3}
I.~Bejenaru, A.~D. Ionescu, and C.~E. Kenig.
\newblock Global existence and uniqueness of {S}chr\"{o}dinger maps in
  dimensions {$d\geq 4$}.
\newblock {\em Adv. Math.}, 215(1):263--291, 2007.

\bibitem{SMF1}
I.~Bejenaru, A.~D. Ionescu, C.~E. Kenig, and D.~Tataru.
\newblock Global {S}chr\"{o}dinger maps in dimensions {$d\geq 2$}: small data
  in the critical {S}obolev spaces.
\newblock {\em Ann. of Math. (2)}, 173(3):1443--1506, 2011.

\bibitem{SMF2}
Ioan Bejenaru.
\newblock Global results for {S}chr\"{o}dinger maps in dimensions {$n\geq 3$}.
\newblock {\em Comm. Partial Differential Equations}, 33(1-3):451--477, 2008.

\bibitem{d2Smap}
Ioan Bejenaru, Mohandas Pillai, and Daniel Tataru.
\newblock Near soliton evolution for $2 $-equivariant Schr\"{o}dinger maps in
  two space dimensions.
\newblock {\em arXiv preprint arXiv:2408.16973}, 2024.

\bibitem{14-BejenaruTataru-book}
Ioan Bejenaru and Daniel Tataru.
\newblock Near soliton evolution for equivariant {S}chr\"{o}dinger maps in two
  spatial dimensions.
\newblock {\em Mem. Amer. Math. Soc.}, 228(1069):vi+108, 2014.

\bibitem{Chang92}
Kung-Ching Chang, Wei~Yue Ding, and Rugang Ye.
\newblock Finite-time blow-up of the heat flow of harmonic maps from surfaces.
\newblock {\em J. Differential Geom.}, 36(2):507--515, 1992.

\bibitem{chen2024asymptotic}
Gong Chen and Jonas Luhrmann.
\newblock Asymptotic stability of the sine-{G}ordon kink.
\newblock {\em arXiv preprint arXiv:2411.07004}, 2024.

\bibitem{ChenDing1990}
Yun~Mei Chen and Wei~Yue Ding.
\newblock Blow-up and global existence for heat flows of harmonic maps.
\newblock {\em Invent. Math.}, 99(3):567--578, 1990.

\bibitem{89ChenStruwe}
Yun~Mei Chen and Michael Struwe.
\newblock Existence and partial regularity results for the heat flow for
  harmonic maps.
\newblock {\em Math. Z.}, 201(1):83--103, 1989.

\bibitem{Collot-Duyckaerts-Kenig-Merle2024}
Charles Collot, Thomas Duyckaerts, Carlos Kenig, and Frank Merle.
\newblock Soliton resolution for the radial quadratic wave equation in space
  dimension 6.
\newblock {\em Vietnam J. Math.}, 52(3):735--773, 2024.

\bibitem{CMR2020JAMS}
Charles Collot, Frank Merle, and Pierre Rapha\"{e}l.
\newblock Strongly anisotropic type {II} blow up at an isolated point.
\newblock {\em J. Amer. Math. Soc.}, 33(2):527--607, 2020.

\bibitem{CoronHMF}
Jean-Michel Coron and Jean-Michel Ghidaglia.
\newblock Explosion en temps fini pour le flot des applications harmoniques.
\newblock {\em C. R. Acad. Sci. Paris S\'{e}r. I Math.}, 308(12):339--344,
  1989.

\bibitem{Green16JEMS}
Carmen Cort\'{a}zar, Manuel del Pino, and Monica Musso.
\newblock Green's function and infinite-time bubbling in the critical nonlinear
  heat equation.
\newblock {\em J. Eur. Math. Soc. (JEMS)}, 22(1):283--344, 2020.

\bibitem{Costin-Donninger-Schlag-Tanveer2012}
Ovidiu Costin, Roland Donninger, Wilhelm Schlag, and Saleh Tanveer.
\newblock Semiclassical low energy scattering for one-dimensional
  {S}chr\"{o}dinger operators with exponentially decaying potentials.
\newblock {\em Ann. Henri Poincar\'{e}}, 13(6):1371--1426, 2012.

\bibitem{Costin-etc2008}
Ovidiu Costin, Wilhelm Schlag, Wolfgang Staubach, and Saleh Tanveer.
\newblock Semiclassical analysis of low and zero energy scattering for
  one-dimensional {S}chr\"{o}dinger operators with inverse square potentials.
\newblock {\em J. Funct. Anal.}, 255(9):2321--2362, 2008.

\bibitem{gluingKS}
Juan D\'{a}vila, Manuel del Pino, Jean Dolbeault, Monica Musso, and Juncheng
  Wei.
\newblock Existence and stability of infinite time blow-up in the
  {K}eller-{S}egel system.
\newblock {\em Arch. Ration. Mech. Anal.}, 248(4):Paper No. 61, 154, 2024.

\bibitem{GluingEuler}
Juan Davila, Manuel Del~Pino, Monica Musso, and Juncheng Wei.
\newblock Gluing methods for vortex dynamics in {E}uler flows.
\newblock {\em Arch. Ration. Mech. Anal.}, 235(3):1467--1530, 2020.

\bibitem{EulerFilament}
Juan D\'{a}vila, Manuel del Pino, Monica Musso, and Juncheng Wei.
\newblock Travelling helices and the vortex filament conjecture in the
  incompressible {E}uler equations.
\newblock {\em Calc. Var. Partial Differential Equations}, 61(4):Paper No. 119,
  30, 2022.

\bibitem{leapfrogging}
Juan D\'{a}vila, Manuel del Pino, Monica Musso, and Juncheng Wei.
\newblock Leapfrogging vortex rings for the three-dimensional incompressible
  {E}uler equations.
\newblock {\em Comm. Pure Appl. Math.}, 77(10):3843--3957, 2024.

\bibitem{17HMF}
Juan D\'{a}vila, Manuel del Pino, and Juncheng Wei.
\newblock Singularity formation for the two-dimensional harmonic map flow into
  {$S^2$}.
\newblock {\em Invent. Math.}, 219(2):345--466, 2020.

\bibitem{LLGsurvey}
Andr{\'e} De~Laire.
\newblock Recent results for the Landau--Lifshitz equation.
\newblock {\em SeMA Journal}, 79(2):253--295, 2022.

\bibitem{DKW07CPAM}
Manuel del Pino, Michal Kowalczyk, and Jun-Cheng Wei.
\newblock Concentration on curves for nonlinear {S}chr\"odinger equations.
\newblock {\em Comm. Pure Appl. Math.}, 60(1):113--146, 2007.

\bibitem{DKW11Annals}
Manuel del Pino, Michal Kowalczyk, and Juncheng Wei.
\newblock On {D}e {G}iorgi's conjecture in dimension {$N\geq 9$}.
\newblock {\em Ann. of Math. (2)}, 174(3):1485--1569, 2011.

\bibitem{type25D}
Manuel del Pino, Monica Musso, and Jun~Cheng Wei.
\newblock Type {II} {B}low-up in the 5-dimensional {E}nergy {C}ritical {H}eat
  {E}quation.
\newblock {\em Acta Math. Sin. (Engl. Ser.)}, 35(6):1027--1042, 2019.

\bibitem{TowerNLH}
Manuel del Pino, Monica Musso, and Juncheng Wei.
\newblock Existence and stability of infinite time bubble towers in the energy
  critical heat equation.
\newblock {\em Anal. PDE}, 14(5):1557--1598, 2021.

\bibitem{17type2}
Manuel del Pino, Monica Musso, and Juncheng Wei.
\newblock Geometry driven type {II} higher dimensional blow-up for the critical
  heat equation.
\newblock {\em J. Funct. Anal.}, 280(1):108788, 49, 2021.

\bibitem{07WangDing}
Shijin Ding and Changyou Wang.
\newblock Finite time singularity of the {L}andau-{L}ifshitz-{G}ilbert
  equation.
\newblock {\em Int. Math. Res. Not. IMRN}, (4):Art. ID rnm012, 25, 2007.

\bibitem{DT95CAG}
Weiyue Ding and Gang Tian.
\newblock Energy identity for a class of approximate harmonic maps from
  surfaces.
\newblock {\em Comm. Anal. Geom.}, 3(3-4):543--554, 1995.

\bibitem{dong21-C0-para}
Hongjie Dong, Luis Escauriaza, and Seick Kim.
\newblock On {$C^{1/2,1}$}, {$C^{1,2}$}, and {$C^{0,0}$} estimates for linear
  parabolic operators.
\newblock {\em J. Evol. Equ.}, 21(4):4641--4702, 2021.

\bibitem{dong11-higherLp}
Hongjie Dong and Doyoon Kim.
\newblock On the {$L_p$}-solvability of higher order parabolic and elliptic
  systems with {BMO} coefficients.
\newblock {\em Arch. Ration. Mech. Anal.}, 199(3):889--941, 2011.

\bibitem{14-DongKim-varyingtime}
Hongjie Dong and Seick Kim.
\newblock Green's functions for parabolic systems of second order in
  time-varying domains.
\newblock {\em Commun. Pure Appl. Anal.}, 13(4):1407--1433, 2014.

\bibitem{dong22-non-divergence}
Hongjie Dong, Seick Kim, and Sungjin Lee.
\newblock Estimates for fundamental solutions of parabolic equations in
  non-divergence form.
\newblock {\em J. Differential Equations}, 340:557--591, 2022.

\bibitem{Donninger-Huang-Krieger-Schlag2014}
Roland Donninger, Min Huang, Joachim Krieger, and Wilhelm Schlag.
\newblock Exotic blowup solutions for the {$u^5$} focusing wave equation in
  {$\Bbb{R}^3$}.
\newblock {\em Michigan Math. J.}, 63(3):451--501, 2014.

\bibitem{Krieger13MA}
Roland Donninger and Joachim Krieger.
\newblock Nonscattering solutions and blowup at infinity for the critical wave
  equation.
\newblock {\em Math. Ann.}, 357(1):89--163, 2013.

\bibitem{Donninger-Schlag2010}
Roland Donninger and Wilhelm Schlag.
\newblock Decay estimates for the one-dimensional wave equation with an inverse
  power potential.
\newblock {\em Int. Math. Res. Not. IMRN}, (22):4276--4300, 2010.

\bibitem{Donninger-Schlag-Soffer2011}
Roland Donninger, Wilhelm Schlag, and Avy Soffer.
\newblock A proof of {P}rice's law on {S}chwarzschild black hole manifolds for
  all angular momenta.
\newblock {\em Adv. Math.}, 226(1):484--540, 2011.

\bibitem{Donninger-Schlag-Soffer2012}
Roland Donninger, Wilhelm Schlag, and Avy Soffer.
\newblock On pointwise decay of linear waves on a {S}chwarzschild black hole
  background.
\newblock {\em Comm. Math. Phys.}, 309(1):51--86, 2012.

\bibitem{DJKM2017GAFA}
Thomas Duyckaerts, Hao Jia, Carlos Kenig, and Frank Merle.
\newblock Soliton resolution along a sequence of times for the focusing energy
  critical wave equation.
\newblock {\em Geom. Funct. Anal.}, 27(4):798--862, 2017.

\bibitem{Duyckaerts-Jia-Kenig-Merle2018}
Thomas Duyckaerts, Hao Jia, Carlos Kenig, and Frank Merle.
\newblock Universality of blow up profile for small blow up solutions to the
  energy critical wave map equation.
\newblock {\em Int. Math. Res. Not. IMRN}, (22):6961--7025, 2018.

\bibitem{Duyckaerts-Kenig-Martel-Merle2022}
Thomas Duyckaerts, Carlos Kenig, Yvan Martel, and Frank Merle.
\newblock Soliton resolution for critical co-rotational wave maps and radial
  cubic wave equation.
\newblock {\em Comm. Math. Phys.}, 391(2):779--871, 2022.

\bibitem{Duyckaerts-Kenig-Merle2023}
Thomas Duyckaerts, Carlos Kenig, and Frank Merle.
\newblock Soliton resolution for the radial critical wave equation in all odd
  space dimensions.
\newblock {\em Acta Math.}, 230(1):1--92, 2023.

\bibitem{Freire95}
Alexandre Freire.
\newblock Uniqueness for the harmonic map flow from surfaces to general
  targets.
\newblock {\em Comment. Math. Helv.}, 70(2):310--338, 1995.

\bibitem{freire1995uniqueness}
Alexandre Freire.
\newblock Uniqueness for the harmonic map flow in two dimensions.
\newblock {\em Calculus of Variations and Partial Differential Equations},
  3:95--105, 1995.

\bibitem{Germain2022}
Pierre Germain and Fabio Pusateri.
\newblock Quadratic {K}lein-{G}ordon equations with a potential in one
  dimension.
\newblock {\em Forum Math. Pi}, 10:Paper No. e17, 172, 2022.

\bibitem{GZ06Mana}
Fritz Gesztesy and Maxim Zinchenko.
\newblock On spectral theory for {S}chr\"{o}dinger operators with strongly
  singular potentials.
\newblock {\em Math. Nachr.}, 279(9-10):1041--1082, 2006.

\bibitem{LLG-G}
Thomas~L Gilbert.
\newblock A phenomenological theory of damping in ferromagnetic materials.
\newblock {\em IEEE transactions on magnetics}, 40(6):3443--3449, 2004.

\bibitem{Goldberg-Schlag2004}
M.~Goldberg and W.~Schlag.
\newblock Dispersive estimates for {S}chr\"{o}dinger operators in dimensions
  one and three.
\newblock {\em Comm. Math. Phys.}, 251(1):157--178, 2004.

\bibitem{Guan-Gustafson-Tsai2009}
Meijiao Guan, Stephen Gustafson, and Tai-Peng Tsai.
\newblock Global existence and blow-up for harmonic map heat flow.
\newblock {\em J. Differential Equations}, 246(1):1--20, 2009.

\bibitem{93BolingMinchunCVPDE}
Bo~Ling Guo and Min~Chun Hong.
\newblock The {L}andau-{L}ifshitz equation of the ferromagnetic spin chain and
  harmonic maps.
\newblock {\em Calc. Var. Partial Differential Equations}, 1(3):311--334, 1993.

\bibitem{Gustafson07}
S.~Gustafson, K.~Kang, and T.-P. Tsai.
\newblock Schr\"{o}dinger flow near harmonic maps.
\newblock {\em Comm. Pure Appl. Math.}, 60(4):463--499, 2007.

\bibitem{Gustafson08}
Stephen Gustafson, Kyungkeun Kang, and Tai-Peng Tsai.
\newblock Asymptotic stability of harmonic maps under the {S}chr\"{o}dinger
  flow.
\newblock {\em Duke Math. J.}, 145(3):537--583, 2008.

\bibitem{Gustafson10}
Stephen Gustafson, Kenji Nakanishi, and Tai-Peng Tsai.
\newblock Asymptotic stability, concentration, and oscillation in harmonic map
  heat-flow, {L}andau-{L}ifshitz, and {S}chr\"{o}dinger maps on {$\Bbb R^2$}.
\newblock {\em Comm. Math. Phys.}, 300(1):205--242, 2010.

\bibitem{19-Susana-Andre}
Susana Guti\'{e}rrez and Andr\'{e} de~Laire.
\newblock The {C}auchy problem for the {L}andau-{L}ifshitz-{G}ilbert equation
  in {BMO} and self-similar solutions.
\newblock {\em Nonlinearity}, 32(7):2522--2563, 2019.

\bibitem{04Harpes}
Paul Harpes.
\newblock Uniqueness and bubbling of the 2-dimensional {L}andau-{L}ifshitz
  flow.
\newblock {\em Calc. Var. Partial Differential Equations}, 20(2):213--229,
  2004.

\bibitem{SMF4}
Alexandru~D. Ionescu and Carlos~E. Kenig.
\newblock Low-regularity {S}chr\"{o}dinger maps. {II}. {G}lobal well-posedness
  in dimensions {$d\geq 3$}.
\newblock {\em Comm. Math. Phys.}, 271(2):523--559, 2007.

\bibitem{Jendrej-Lawrie2018}
Jacek Jendrej and Andrew Lawrie.
\newblock Two-bubble dynamics for threshold solutions to the wave maps
  equation.
\newblock {\em Invent. Math.}, 213(3):1249--1325, 2018.

\bibitem{Jendrej-Lawrie2022MRL}
Jacek Jendrej and Andrew Lawrie.
\newblock Continuous time soliton resolution for two-bubble equivariant wave
  maps.
\newblock {\em Math. Res. Lett.}, 29(6):1745--1766, 2022.

\bibitem{JL2023CVPDE}
Jacek Jendrej and Andrew Lawrie.
\newblock Bubble decomposition for the harmonic map heat flow in the
  equivariant case.
\newblock {\em Calc. Var. Partial Differential Equations}, 62(9):Paper No. 264,
  36, 2023.

\bibitem{Jendrej-Lawrie2023Ann.PDE}
Jacek Jendrej and Andrew Lawrie.
\newblock Soliton resolution for the energy-critical nonlinear wave equation in
  the radial case.
\newblock {\em Ann. PDE}, 9(2):Paper No. 18, 117, 2023.

\bibitem{JLS2023Pi}
Jacek Jendrej, Andrew Lawrie, and Wilhelm Schlag.
\newblock Continuous in time bubble decomposition for the harmonic map heat
  flow.
\newblock {\em arXiv preprint arXiv:2304.05927, Forum Math. Pi}, to appear.

\bibitem{KimMerle}
Kihyun Kim and Frank Merle.
\newblock On classification of global dynamics for energy-critical equivariant
  harmonic map heat flows and radial nonlinear heat equation.
\newblock {\em arXiv preprint arXiv:2404.04247}, 2024.

\bibitem{Klainerman-Machedon1993}
S.~Klainerman and M.~Machedon.
\newblock Space-time estimates for null forms and the local existence theorem.
\newblock {\em Comm. Pure Appl. Math.}, 46(9):1221--1268, 1993.

\bibitem{Klainerman-Machedon1995-Duke}
S.~Klainerman and M.~Machedon.
\newblock Smoothing estimates for null forms and applications.
\newblock volume~81, pages 99--133 (1996). 1995.
\newblock A celebration of John F. Nash, Jr.

\bibitem{Klainerman-Machedon1997}
Sergiu Klainerman and Matei Machedon.
\newblock On the regularity properties of a model problem related to wave maps.
\newblock {\em Duke Math. J.}, 87(3):553--589, 1997.

\bibitem{Klainerman-Selberg1997}
Sergiu Klainerman and Sigmund Selberg.
\newblock Remark on the optimal regularity for equations of wave maps type.
\newblock {\em Comm. Partial Differential Equations}, 22(5-6):901--918, 1997.

\bibitem{05KoJoy}
Joy Ko.
\newblock The construction of a partially regular solution to the
  {L}andau-{L}ifshitz-{G}ilbert equation in {$\Bbb R^2$}.
\newblock {\em Nonlinearity}, 18(6):2681--2714, 2005.

\bibitem{Krieger-Nakanishi-Schlag2012}
J.~Krieger, K.~Nakanishi, and W.~Schlag.
\newblock Global dynamics above the ground state energy for the one-dimensional
  {NLKG} equation.
\newblock {\em Math. Z.}, 272(1-2):297--316, 2012.

\bibitem{KS06JAMS}
J.~Krieger and W.~Schlag.
\newblock Stable manifolds for all monic supercritical focusing nonlinear
  {S}chr\"{o}dinger equations in one dimension.
\newblock {\em J. Amer. Math. Soc.}, 19(4):815--920, 2006.

\bibitem{Krieger-Schlag-JEMS2009}
J.~Krieger and W.~Schlag.
\newblock Non-generic blow-up solutions for the critical focusing {NLS} in
  1-{D}.
\newblock {\em J. Eur. Math. Soc. (JEMS)}, 11(1):1--125, 2009.

\bibitem{Krieger08}
J.~Krieger, W.~Schlag, and D.~Tataru.
\newblock Renormalization and blow up for charge one equivariant critical wave
  maps.
\newblock {\em Invent. Math.}, 171(3):543--615, 2008.

\bibitem{KST09AIM}
J.~Krieger, W.~Schlag, and D.~Tataru.
\newblock Renormalization and blow up for the critical {Y}ang-{M}ills problem.
\newblock {\em Adv. Math.}, 221(5):1445--1521, 2009.

\bibitem{krieger2020stability}
Joachim Krieger and Shuang Miao.
\newblock On the stability of blowup solutions for the critical corotational
  wave-map problem.
\newblock {\em Duke Math. J.}, 169(3):435--532, 2020.

\bibitem{KMS20WM}
Joachim Krieger, Shuang Miao, and Wilhelm Schlag.
\newblock A stability theory beyond the co-rotational setting for critical wave
  maps blow up.
\newblock {\em arXiv preprint arXiv:2009.08843}, 2020.

\bibitem{Krieger-Nakanishi-Schlag2014}
Joachim Krieger, Kenji Nakanishi, and Wilhelm Schlag.
\newblock Threshold phenomenon for the quintic wave equation in three
  dimensions.
\newblock {\em Comm. Math. Phys.}, 327(1):309--332, 2014.

\bibitem{Krieger14JMPA}
Joachim Krieger and Wilhelm Schlag.
\newblock Full range of blow up exponents for the quintic wave equation in
  three dimensions.
\newblock {\em J. Math. Pures Appl. (9)}, 101(6):873--900, 2014.

\bibitem{Krieger09Duke}
Joachim Krieger, Wilhelm Schlag, and Daniel Tataru.
\newblock Slow blow-up solutions for the {$H^1(\Bbb R^3)$} critical focusing
  semilinear wave equation.
\newblock {\em Duke Math. J.}, 147(1):1--53, 2009.

\bibitem{krieger2024finite-I}
Joachim Krieger and Tobias Schmid.
\newblock Finite time blow up for the energy critical {Z}akharov system {I}:
  approximate solutions.
\newblock {\em arXiv preprint arXiv:2407.19971}, 2024.

\bibitem{krieger2024finite-II}
Joachim Krieger and Tobias Schmid.
\newblock Finite time blow up for the energy critical {Z}akharov system {II}:
  exact solutions.
\newblock {\em arXiv preprint arXiv:2407.19972}, 2024.

\bibitem{Krieger-Wong2014}
Joachim Krieger and Willie Wong.
\newblock On type {I} blow-up formation for the critical {NLW}.
\newblock {\em Comm. Partial Differential Equations}, 39(9):1718--1728, 2014.

\bibitem{LLG-LL}
L.~Landau and E.~Lifshitz.
\newblock On the theory of the dispersion of magnetic permeability in
  ferromagnetic bodies.
\newblock {\em Physik. Z. Sowjetunion}, 8:153--169, 1935.

\bibitem{Lawrie-Schlag2013}
A.~Lawrie and W.~Schlag.
\newblock Scattering for wave maps exterior to a ball.
\newblock {\em Adv. Math.}, 232:57--97, 2013.

\bibitem{LW98CVPDE}
Fanghua Lin and Changyou Wang.
\newblock Energy identity of harmonic map flows from surfaces at finite
  singular time.
\newblock {\em Calc. Var. Partial Differential Equations}, 6(4):369--380, 1998.

\bibitem{LinWangbook08}
Fanghua Lin and Changyou Wang.
\newblock {\em The analysis of harmonic maps and their heat flows}.
\newblock World Scientific Publishing Co. Pte. Ltd., Hackensack, NJ, 2008.

\bibitem{lin2010uniqueness}
Fanghua Lin and Changyou Wang.
\newblock On the uniqueness of heat flow of harmonic maps and hydrodynamic flow
  of nematic liquid crystals.
\newblock {\em Chinese Annals of Mathematics, Series B}, 31:921--938, 2010.

\bibitem{15-Changyou}
Junyu Lin, Baishun Lai, and Changyou Wang.
\newblock Global well-posedness of the {L}andau-{L}ifshitz-{G}ilbert equation
  for initial data in {M}orrey spaces.
\newblock {\em Calc. Var. Partial Differential Equations}, 54(1):665--692,
  2015.

\bibitem{Lindblad-Luhrmann-Schlag-Soffer2023}
Hans Lindblad, Jonas L\"{u}hrmann, Wilhelm Schlag, and Avy Soffer.
\newblock On modified scattering for 1{D} quadratic {K}lein-{G}ordon equations
  with non-generic potentials.
\newblock {\em Int. Math. Res. Not. IMRN}, (6):5118--5208, 2023.

\bibitem{LuhrmannDuke}
Jonas L\"{u}hrmann and Wilhelm Schlag.
\newblock Asymptotic stability of the sine-{G}ordon kink under odd
  perturbations.
\newblock {\em Duke Math. J.}, 172(14):2715--2820, 2023.

\bibitem{Luhrmann2024}
Jonas L\"{u}hrmann and Wilhelm Schlag.
\newblock On codimension one stability of the soliton for the 1{D} focusing
  cubic {K}lein-{G}ordon equation.
\newblock {\em Comm. Amer. Math. Soc.}, 4:230--356, 2024.

\bibitem{MMR2014Acta}
Yvan Martel, Frank Merle, and Pierre Rapha\"{e}l.
\newblock Blow up for the critical generalized {K}orteweg--de {V}ries equation.
  {I}: {D}ynamics near the soliton.
\newblock {\em Acta Math.}, 212(1):59--140, 2014.

\bibitem{Melcher05}
Christof Melcher.
\newblock Existence of partially regular solutions for {L}andau-{L}ifshitz
  equations in {$\Bbb R^3$}.
\newblock {\em Comm. Partial Differential Equations}, 30(4-6):567--587, 2005.

\bibitem{C-Melcher12}
Christof Melcher.
\newblock Global solvability of the {C}auchy problem for the
  {L}andau-{L}ifshitz-{G}ilbert equation in higher dimensions.
\newblock {\em Indiana Univ. Math. J.}, 61(3):1175--1200, 2012.

\bibitem{MR2005Annals}
Frank Merle and Pierre Raphael.
\newblock The blow-up dynamic and upper bound on the blow-up rate for critical
  nonlinear {S}chr\"{o}dinger equation.
\newblock {\em Ann. of Math. (2)}, 161(1):157--222, 2005.

\bibitem{13MerleRaphaelRodnianski}
Frank Merle, Pierre Rapha\"{e}l, and Igor Rodnianski.
\newblock Blowup dynamics for smooth data equivariant solutions to the critical
  {S}chr\"{o}dinger map problem.
\newblock {\em Invent. Math.}, 193(2):249--365, 2013.

\bibitem{MRRS2022Inventiones}
Frank Merle, Pierre Rapha\"{e}l, Igor Rodnianski, and Jeremie Szeftel.
\newblock On blow up for the energy super critical defocusing nonlinear
  {S}chr\"{o}dinger equations.
\newblock {\em Invent. Math.}, 227(1):247--413, 2022.

\bibitem{MRRS2022Annals1}
Frank Merle, Pierre Rapha\"{e}l, Igor Rodnianski, and Jeremie Szeftel.
\newblock On the implosion of a compressible fluid {I}: {S}mooth self-similar
  inviscid profiles.
\newblock {\em Ann. of Math. (2)}, 196(2):567--778, 2022.

\bibitem{MRRS2022Annals2}
Frank Merle, Pierre Rapha\"{e}l, Igor Rodnianski, and Jeremie Szeftel.
\newblock On the implosion of a compressible fluid {II}: {S}ingularity
  formation.
\newblock {\em Ann. of Math. (2)}, 196(2):779--889, 2022.

\bibitem{moser02partial}
Roger Moser.
\newblock Partial regularity for the Landau-Lifshitz equation in small
  dimensions.
\newblock {\em Preprint series, Max- Planck-Institute for Mathematics in the
  Sciences 26}, 2002.

\bibitem{Kaj17}
Kaj Nystr\"{o}m.
\newblock {$L^2$} solvability of boundary value problems for divergence form
  parabolic equations with complex coefficients.
\newblock {\em J. Differential Equations}, 262(3):2808--2939, 2017.

\bibitem{PusateriGL}
Jos{\'e}~M Palacios and Fabio Pusateri.
\newblock Linearized dynamic stability for vortices of {G}inzburg-{L}andau
  evolutions.
\newblock {\em arXiv preprint arXiv:2409.04393}, 2024.

\bibitem{Perelman14}
Galina Perelman.
\newblock Blow up dynamics for equivariant critical {S}chr\"{o}dinger maps.
\newblock {\em Comm. Math. Phys.}, 330(1):69--105, 2014.

\bibitem{Pillai2023}
Mohandas Pillai.
\newblock Global, non-scattering solutions to the energy critical wave maps
  equation.
\newblock {\em Comm. Math. Phys.}, 399(3):1857--1990, 2023.

\bibitem{Qing95CAG}
Jie Qing.
\newblock On singularities of the heat flow for harmonic maps from surfaces
  into spheres.
\newblock {\em Comm. Anal. Geom.}, 3(1-2):297--315, 1995.

\bibitem{QT97CPAM}
Jie Qing and Gang Tian.
\newblock Bubbling of the heat flows for harmonic maps from surfaces.
\newblock {\em Comm. Pure Appl. Math.}, 50(4):295--310, 1997.

\bibitem{RR2012IHES}
Pierre Rapha\"{e}l and Igor Rodnianski.
\newblock Stable blow up dynamics for the critical co-rotational wave maps and
  equivariant {Y}ang-{M}ills problems.
\newblock {\em Publ. Math. Inst. Hautes \'{E}tudes Sci.}, 115:1--122, 2012.

\bibitem{Raphael13}
Pierre Rapha\"{e}l and Remi Schweyer.
\newblock Stable blowup dynamics for the 1-corotational energy critical
  harmonic heat flow.
\newblock {\em Comm. Pure Appl. Math.}, 66(3):414--480, 2013.

\bibitem{Raphael14}
Pierre Rapha\"{e}l and Remi Schweyer.
\newblock Quantized slow blow-up dynamics for the corotational energy-critical
  harmonic heat flow.
\newblock {\em Anal. PDE}, 7(8):1713--1805, 2014.

\bibitem{Tristan95}
Tristan Rivi\`ere.
\newblock Everywhere discontinuous harmonic maps into spheres.
\newblock {\em Acta Math.}, 175(2):197--226, 1995.

\bibitem{Rodnianski-Sterbenz2010}
Igor Rodnianski and Jacob Sterbenz.
\newblock On the formation of singularities in the critical {${\rm O}(3)$}
  {$\sigma$}-model.
\newblock {\em Ann. of Math. (2)}, 172(1):187--242, 2010.

\bibitem{SchlagSurvey}
W.~Schlag.
\newblock Dispersive estimates for {S}chr\"{o}dinger operators: a survey.
\newblock In {\em Mathematical aspects of nonlinear dispersive equations},
  volume 163 of {\em Ann. of Math. Stud.}, pages 255--285. Princeton Univ.
  Press, Princeton, NJ, 2007.

\bibitem{Schlag-2007}
W.~Schlag.
\newblock A remark on {L}ittlewood-{P}aley theory for the distorted {F}ourier
  transform.
\newblock {\em Proc. Amer. Math. Soc.}, 135(2):437--451, 2007.

\bibitem{Schlag-Soffer-Staubach2010-I}
Wilhelm Schlag, Avy Soffer, and Wolfgang Staubach.
\newblock Decay for the wave and {S}chr\"{o}dinger evolutions on manifolds with
  conical ends. {I}.
\newblock {\em Trans. Amer. Math. Soc.}, 362(1):19--52, 2010.

\bibitem{Schlag-Soffer-Staubach2010-II}
Wilhelm Schlag, Avy Soffer, and Wolfgang Staubach.
\newblock Decay for the wave and {S}chr\"{o}dinger evolutions on manifolds with
  conical ends. {II}.
\newblock {\em Trans. Amer. Math. Soc.}, 362(1):289--318, 2010.

\bibitem{ShatahStruwe}
Jalal Shatah and Michael Struwe.
\newblock {\em Geometric wave equations}, volume~2 of {\em Courant Lecture
  Notes in Mathematics}.
\newblock New York University, Courant Institute of Mathematical Sciences, New
  York; American Mathematical Society, Providence, RI, 1998.

\bibitem{17halfHMF}
Yannick Sire, Juncheng Wei, and Youquan Zheng.
\newblock Infinite time blow-up for half-harmonic map flow from {$\Bbb R$} into
  {$\Bbb S^1$}.
\newblock {\em Amer. J. Math.}, 143(4):1261--1335, 2021.

\bibitem{HMFwFB}
Yannick Sire, Juncheng Wei, and Youquan Zheng.
\newblock Singularity formation in the harmonic map flow with free boundary.
\newblock {\em Amer. J. Math.}, 145(4):1273--1314, 2023.

\bibitem{Struwe1985CMH}
Michael Struwe.
\newblock On the evolution of harmonic mappings of {R}iemannian surfaces.
\newblock {\em Comment. Math. Helv.}, 60(4):558--581, 1985.

\bibitem{StruweHD}
Michael Struwe.
\newblock On the evolution of harmonic maps in higher dimensions.
\newblock {\em J. Differential Geom.}, 28(3):485--502, 1988.

\bibitem{sun2021bubble}
Liming Sun, Jun-cheng Wei, and Qidi Zhang.
\newblock Bubble towers in the ancient solution of energy-critical heat
  equation.
\newblock {\em Calc. Var. Partial Differential Equations}, 61(6):Paper No. 200,
  47, 2022.

\bibitem{Tao2001-waveI}
Terence Tao.
\newblock Global regularity of wave maps. {I}. {S}mall critical {S}obolev norm
  in high dimension.
\newblock {\em Internat. Math. Res. Notices}, (6):299--328, 2001.

\bibitem{Tao2001-waveII}
Terence Tao.
\newblock Global regularity of wave maps. {II}. {S}mall energy in two
  dimensions.
\newblock {\em Comm. Math. Phys.}, 224(2):443--544, 2001.

\bibitem{Tao}
Terence Tao.
\newblock {\em Nonlinear dispersive equations}, volume 106 of {\em CBMS
  Regional Conference Series in Mathematics}.
\newblock Published for the Conference Board of the Mathematical Sciences,
  Washington, DC; by the American Mathematical Society, Providence, RI, 2006.
\newblock Local and global analysis.

\bibitem{Tataru1998}
Daniel Tataru.
\newblock Local and global results for wave maps. {I}.
\newblock {\em Comm. Partial Differential Equations}, 23(9-10):1781--1793,
  1998.

\bibitem{Tataru2001}
Daniel Tataru.
\newblock On global existence and scattering for the wave maps equation.
\newblock {\em Amer. J. Math.}, 123(1):37--77, 2001.

\bibitem{Topping04Annals}
Peter Topping.
\newblock Repulsion and quantization in almost-harmonic maps, and asymptotics
  of the harmonic map flow.
\newblock {\em Ann. of Math. (2)}, 159(2):465--534, 2004.

\bibitem{vandenBerg03}
Jan~Bouwe van~den Berg, Josephus Hulshof, and John~R. King.
\newblock Formal asymptotics of bubbling in the harmonic map heat flow.
\newblock {\em SIAM J. Appl. Math.}, 63(5):1682--1717, 2003.

\bibitem{VW13EJAM}
Jan~Bouwe van~den Berg and J.~F. Williams.
\newblock ({I}n-)stability of singular equivariant solutions to the
  {L}andau-{L}ifshitz-{G}ilbert equation.
\newblock {\em European J. Appl. Math.}, 24(6):921--948, 2013.

\bibitem{van_der_Hout2003}
Rein van~der Hout.
\newblock On the nonexistence of finite time bubble trees in symmetric harmonic
  map heat flows from the disk to the 2-sphere.
\newblock {\em J. Differential Equations}, 192(1):188--201, 2003.

\bibitem{Wang96HJM}
Changyou Wang.
\newblock Bubble phenomena of certain {P}alais-{S}male sequences from surfaces
  to general targets.
\newblock {\em Houston J. Math.}, 22(3):559--590, 1996.

\bibitem{Changyou06}
Changyou Wang.
\newblock On {L}andau-{L}ifshitz equation in dimensions at most four.
\newblock {\em Indiana Univ. Math. J.}, 55(5):1615--1644, 2006.

\bibitem{infi4d}
Juncheng Wei, Qidi Zhang, and Yifu Zhou.
\newblock On {F}ila-{K}ing conjecture in dimension four.
\newblock {\em J. Differential Equations}, 398:38--140, 2024.

\bibitem{xu2020blowup}
Jitao Xu and Lifeng Zhao.
\newblock Blowup dynamics for smooth equivariant solutions to energy critical
  Landau-Lifschitz flow.
\newblock {\em arXiv preprint arXiv:2012.13879}, 2020.

\end{thebibliography}

\end{document}